\documentclass[10pt]{iopart}
\usepackage[colorlinks=true,linkcolor=blue,citecolor=blue,urlcolor=blue,linktocpage=true]{hyperref}
\usepackage{times}
\usepackage{mathptmx}
\usepackage{graphicx}
\usepackage{tikz}
\usepackage{amsthm,amssymb}
\usepackage{units}
\usepackage{caption,subcaption}

\usepackage{mathabx}

\usepackage{setspace} 
\usepackage{float}
\floatstyle{ruled}
\newfloat{algorithm}{h}{loa}
\floatname{algorithm}{Algorithm}
\usepackage{algpseudocode}

\usepackage{url}

\makeatletter
\renewcommand\tableofcontents{%
  \section*{\contentsname
  }%
  \@starttoc{toc}%
}
\makeatother

\usepackage{scalerel}[2014/03/10]
\usepackage[usestackEOL]{stackengine}
\def\dashint{\,\ThisStyle{\ensurestackMath{%
  \stackinset{c}{.2\LMpt}{c}{.5\LMpt}{\SavedStyle-}{\SavedStyle\phantom{\int}}}%
  \setbox0=\hbox{$\SavedStyle\int\,$}\kern-\wd0}\int}

\theoremstyle{plain}
\newtheorem{lemma}{Lemma}[section]
\newtheorem{proposition}[lemma]{Proposition}
\newtheorem{theorem}[lemma]{Theorem}
\newtheorem{corollary}[lemma]{Corollary}

\theoremstyle{definition}
\newtheorem{definition}[lemma]{Definition}
\newtheorem{example}[lemma]{Example}

\theoremstyle{remark}
\newtheorem{remark}[lemma]{Remark}

\newcommand{\text}[1]{\mbox{#1}}
\newcommand{\eqref}[1]{{(\ref{#1})}}

\newcommand{\mod}{\mathop{\rm mod}\nolimits}

\newcommand{\dom}{\mathop{\rm dom}\nolimits}
\newcommand{\id}{\mathop{\rm id}\nolimits}

\newcommand{\diag}{\mathop{\rm diag}\nolimits}
\newcommand{\sgn}{\mathop{\rm sgn}\nolimits}
\newcommand{\divergence}{\mathop{\rm div}\nolimits}

\newcommand{\supp}{\mathop{\rm supp}\nolimits}

\newcommand{\lspan}{\mathop{\rm span}\nolimits}

\newcommand{\TV}{\mathop{\rm TV}\nolimits}
\newcommand{\NLTV}{\mathop{\rm NLTV}\nolimits}

\newcommand{\BV}{\mathop{\rm BV}\nolimits}
\newcommand{\mrg}{\mathop{\rm rg}\nolimits}
\newcommand{\TGV}{\mathop{\rm TGV}\nolimits}
\newcommand{\ICTGV}{\mathop{\rm ICTGV}\nolimits}

\newcommand{\ITGV}{\mathop{\rm ITGV}\nolimits}
\newcommand{\NLTGV}{\mathop{\rm NLTGV}\nolimits}
\newcommand{\BGV}{\mathop{\rm BGV}\nolimits}

\newcommand{\TD}{\mathop{\rm TD}\nolimits}
\newcommand{\BD}{\mathop{\rm BD}\nolimits}
\newcommand{\Per}{\mathop{\rm Per}\nolimits}

\newcommand{\Sym}{\mathop{\rm Sym}\nolimits}
\newcommand{\sym}{\mathop{\rm sym}\nolimits}

\newcommand{\trace}{\tr}
\newcommand{\prox}{\mathop{\rm prox}\nolimits}
\newcommand{\proj}{\mathop{\rm proj}\nolimits}

\newcommand{\Imag}{\mathop{\rm Imag}\nolimits}

\newcommand{\argmin}{\mathop{\rm arg\,min}\limits}
\newcommand{\KL}{\mathop{\rm KL}\nolimits}
\newcommand{\kl}{g}
\newcommand{\mS}{\mathcal{S}}

\newcommand{\osci}{\mathop{\rm osci}\nolimits}

\newcommand{\kpet}{K_{\mathop{\rm PET}}}
\newcommand{\kmr}{K_{\mathop{\rm MR}}}
\newcommand{\kpeth}{K_{\mathop{\rm PET}, h}}
\newcommand{\kmrh}{K_{\mathop{\rm MR}, h}}
\newcommand{\ktem}{K_{\mathop{\rm TEM}}}
\newcommand{\Mc}{\mathcal{M}}

\newcommand{\Gap}{\mathfrak{G}}

\newcommand{\unwrap}{\mathop{\rm unwrap}\nolimits}

\newcommand{\fg}{\mathop{\rm fg}\nolimits}

\newcommand{\doublehookrightarrow}%
{\DOTSB\lhook\joinrel\relbar\!\!\!\!\lhook\joinrel\rightarrow}

\newcommand{\longrightharpoonup}%
{\relbar\joinrel\rightharpoonup}

\newcommand{\conditionalcomma}[1]{\ifx#1\empty\else,\fi}

\newcommand{\CC}{\mathbf{C}}
\newcommand{\RR}{\mathbf{R}}
\newcommand{\NN}{\mathbf{N}}
\newcommand{\ZZ}{\mathbf{Z}}

\newcommand{\poly}{\mathbf{P}}

\newcommand{\mI}{\mathcal{I}}
\newcommand{\mN}{\mathcal{N}}
\newcommand{\mB}{\mathcal{B}}
\newcommand{\mR}{\mathcal{R}}
\newcommand{\mF}{\mathcal{F}}
\newcommand{\mG}{\mathcal{G}}
\newcommand{\mK}{\mathcal{K}}

\newcommand{\mO}{\mathcal{O}}

\newcommand{\mH}{\mathcal{H}}
\newcommand{\mX}{\mathcal{X}}
\newcommand{\mY}{\mathcal{Y}}
\newcommand{\mL}{\mathcal{L}}

\newcommand{\smallset}[2]{\{{#1}|{#2}\}}
\newcommand{\set}[2]{\{{#1} \ \bigl| \ {#2}\}}

\newcommand{\Bigset}[2]{\Bigl\{{#1} \ \Bigl| \ {#2}\Bigr\}}
\newcommand{\sett}[1]{\{{#1}\}}

\newcommand{\without}{\backslash}

\newcommand{\kernel}[1]{\ker({#1})}
\newcommand{\infconv}{\triangle}
\newcommand{\placeholder}{\,\cdot\,}

\newcommand{\bdry}{\partial}
\newcommand{\laplace}{\Delta}
\newcommand{\tensor}{\otimes}
\newcommand{\grad}{\nabla}

\newcommand{\abs}[2][]{{|{#2}|_{#1}}}
\newcommand{\bigabs}[1]{\bigl|{#1}\bigr|}
\newcommand{\Bigabs}[1]{\Bigl|{#1}\Bigr|}

\newcommand{\expE}{\mathrm{e}}
\newcommand{\inprod}{\cdot}
\newcommand{\ones}{\mathbf{1}}
\newcommand{\interleave}{|\!|\!|}
\newcommand{\scp}[3][]{\langle{#2},\, {#3}\rangle_{#1}}

\newcommand{\norm}[2][]{\|{#2}\|_{#1}}
\newcommand{\bignorm}[2][]{\bigl\|{#2}\bigr\|_{#1}}
\newcommand{\Bignorm}[2][]{\Bigl\|{#2}\Bigr\|_{#1}}

\newcommand{\dd}[1]{\ \mathrm{d}{#1}}
\newcommand{\conv}{\ast}
\newcommand{\transp}{\mathrm{T}}
\newcommand{\wrightarrow}{\rightharpoonup}
\newcommand{\wstarrightarrow}{\stackrel{*}{\rightharpoonup}}

\newcommand{\compactin}{\subset\subset}
\newcommand{\closure}[1]{\overline{#1}}

\newcommand{\compose}{\circ}

\newcommand{\range}[1]{\mrg({#1})}
\newcommand{\fourier}{\mathcal{F}}
\newcommand{\imag}{\mathrm{i}}

\newcommand{\seq}[1]{\{{#1}\}}

\newcommand{\deriv}{\mathrm{D}}
\newcommand{\embed}{\hookrightarrow}

\newcommand{\ball}[3][]{B^{#1}_{#2}({#3})}

\newcommand{\subgrad}{\partial}

\DeclareMathDelimiter{\llcornernew}{\mathopen}{AMSa}{"78}{AMSa}{"78}
\newcommand{\restricted}{\:\llcornernew\:}

\newcommand{\hausdorff}[1]{\mathcal{H}^{#1}}
\newcommand{\lebesgue}[1]{\mathcal{L}^{#1}}
\DeclareMathSymbol{\squarenew}{\mathord}{AMSa}{"03}
\newcommand{\wave}{\squarenew}
\newcommand{\breg}[2]{D^{#1}_{#2}}
\newcommand{\kronO}{\mO}

\newcommand{\Cspace}[3][]{\mathcal{C}_{#1}^{#2}({#3})}
\newcommand{\Ccspace}[2]{\mathcal{C}_{\mathrm{c}}^{#1}({#2})}

\newcommand{\lebesgueL}[1]{L^{#1}}

\newcommand{\LPspace}[3][]{\lebesgueL{#2}_{#1}({#3})}
\newcommand{\LPlocspace}[2]{\lebesgueL{#1}_{\mathrm{loc}}({#2})}

\newcommand{\Hspace}[3][]{H^{#2\conditionalcomma{#1}{#1}}(#3)}
\newcommand{\Hcspace}[3][]{H_0^{#2\conditionalcomma{#1}{#1}}(#3)}

\newcommand{\linspace}[3][]{\mathcal{L}^{#1}\bigl({#2},{#3}\bigr)}

\newcommand{\radon}{\mathcal{M}}
\newcommand{\radonspace}[1]{\mathcal{M}({#1})}
\newcommand{\tensorspace}[2][]{\mathcal{T}^{#1}(#2)}
\newcommand{\symgrad}{\mathcal{E}}
\newcommand{\borelalg}[1]{\mathcal{B}({#1})}
\newcommand{\binom}[2]{{{#1} \choose {#2}}}

\newcommand{\R}{\RR}
\newcommand{\N}{\NN}

\newcommand{\M}{\mathcal{M}}

\newlength{\formulaindentwidth}

\usepackage{pgfplots}
\pgfplotsset{compat=newest}

\usepgfplotslibrary{polar}

\begin{document}

\topical[Higher-order TV approaches and generalisations] {Higher-order total variation approaches and
  generalisations}

\author{Kristian Bredies and Martin Holler}

\address{Institute of Mathematics and Scientific Computing, University
of Graz, Heinrichstra\ss{}e 36, A-8010 Graz, Austria}
\ead{\href{mailto:kristian.bredies@uni-graz.at}{kristian.bredies@uni-graz.at}, \href{mailto:martin.holler@uni-graz.at}{martin.holler@uni-graz.at}}
\vspace{10pt}
\begin{indented}
\item[]November 2019
\end{indented}

\begin{abstract}
  Over the last decades, the total variation (TV) evolved to one of
  the most broadly-used regularisation functionals for inverse
  problems, in particular for imaging applications. When first
  introduced as a regulariser, higher-order generalisations of TV were
  soon proposed and studied with increasing interest, which led to a
  variety of different approaches being available today. We review
  several of these approaches, discussing aspects ranging from
  functional-analytic foundations to regularisation theory for linear
  inverse problems in Banach space, and provide a unified framework
  concerning well-posedness and convergence for vanishing noise level
  for respective Tikhonov regularisation. This includes general higher
  orders of TV, additive and infimal-convolution multi-order total
  variation, total generalised variation (TGV), and beyond. Further,
  numerical optimisation algorithms are developed and discussed that
  are suitable for solving the Tikhonov minimisation problem for all
  presented models. Focus is laid in particular on covering the whole
  pipeline starting at the discretisation of the problem and ending at
  concrete, implementable iterative procedures. A major part of this
  review is finally concerned with presenting examples and
  applications where higher-order TV approaches turned out to be
  beneficial. These applications range from classical inverse problems
  in imaging such as denoising, deconvolution, compressed sensing,
  optical-flow estimation and decompression, to image reconstruction
  in medical imaging and beyond, including magnetic resonance imaging
  (MRI), computed tomography (CT), magnetic-resonance positron
  emission tomography (MR-PET), and electron tomography.
\end{abstract}

\tableofcontents

\section{Introduction}

In this paper we give a review of higher-order regularisation
functionals of total-variation type, encompassing their development
from their origins to generalisations and most recent approaches.
Research in this field %
has in particular been triggered by the success of the total variation
(TV) as a regularisation functional for inverse problems on the one
hand, but on the other hand by the insight that tailored
regularisation approaches are indispensable for solving ill-posed
inverse problems in theory and in practice. The last decades comprised
active development of the latter topic which resulted in a variety of
different strategies for TV-based regularisation functionals that
model data with some inherent smoothness, possibly of
higher order or multiple orders. For these functionals, this
paper especially aims at providing a unified presentation of the
underlying regularisation aspects, giving an overview of numerical
algorithms suitable to solve associated regularised inverse problems
as well as showing the breadth of respective applications.

Let us put classical and higher-order total-variation regularisation
into an inverse problems context.
From the inverse problems point of view,
the central theme of
regularisation is the stabilisation of the inversion of an ill-posed
operator equation, which is commonly phrased as finding a $u \in X$
such that
\[
  K(u) = f
\]
for given $K: X \to Y$ and $f \in Y$, where $X$ and $Y$ are usually
Banach spaces.
Various approaches for regularisation exist, e.g., iterative regularisation, Tikhonov regularisation, regularisation based on spectral theory in Hilbert spaces, or regularisation by discretization. Being a %
regularisation and providing a stable inversion %
is mathematically well-formalised \cite{Engl96_book_regularization_ip_mh}, and usually comprises \emph{regularisation parameters}. Essentially, stable inversion means that each regularised inverse mapping from data to solution space is continuous in some topology, and being a regularisation requires in addition that, in case the measured data approximates the noiseless situation, a suitable choice of the regularisation parameters allows to approximate a solution that is meaningful and matches the noiseless data. %
These properties are %
typically referred to as \emph{stability} and %
\emph{convergence for vanishing noise}, respectively. For general non-linear
inverse problems, they usually depend on an interplay between the selected regularisation strategy and the forward operator $K$, where often, derivative-based assumptions on the local behaviour around the sought solution are made \cite{Engl96_book_regularization_ip_mh,kazufumi2014inverse}. In contrast, for linear forward operators, unified statements are commonly available such that regularisation properties solely depend on the regularisation strategy. We therefore consider linear inverse problems throughout the paper, i.e., the solution of $Ku =f$ where $K: X \to Y$ is always assumed to be linear and continuous.

Variational regularisation, which is %
the stabilised solution of such an inverse problems via energy minimisation methods, then encompasses --- and is often identified with --- Tikhonov regularisation (but comprises, for instance, also Morozov regularisation \cite{Morozov67_morozov_regularization_mh} or Ivanov regularisation \cite{Ivanov65_ivanov_regularization}). Driven by its success in practical applications, it has become a major direction of research in inverse problems. Part of its success may be explained by the fact that variational regularisation allows to incorporate a \emph{modelling} of expected solutions via \emph{regularisation functionals}. In a Tikhonov framework, this means that the solution of the operator equation $Ku=f$ is obtained via solving
\[ \min_{u \in X} \ S_f(Ku) + \mR_\alpha(u), \]
where $S_f:Y \rightarrow [0,\infty]$ is an energy that measures the discrepancy between $Ku$ and the measured data $f$, and $\mR_\alpha:X \rightarrow [0,\infty]$ is the regularisation functional that depends on regularisation parameters $\alpha$. 
From the analytical perspective, two main features of $\mR_\alpha$ are important: First, it needs to possess %
properties that allow to guarantee that the corresponding solution map enjoys the stability and convergence properties as mentioned above (typically, lower semi-continuity and coercivity in some topology). Second, it needs to provide a good model of reasonable/expected solutions of $Ku=f$ in the sense that $\mR_\alpha(u)$ is small for such reasonable solutions and $\mR_\alpha(u)$ is large for unreasonable solutions that suffer, for instance, from artefacts or noise.

While the first requirement is purely qualitative and known to be true
for a wide range of norms and seminorms, the second requirement
involves the modelling of expected solutions as well as suitable
quantification, having in particular in mind that the outcome should
be simple enough to be amenable to numerical solution algorithms.
Suitable models are for instance provided by various classical
smoothness measures such as Hilbert scales of smooth functions, i.e.,
by $H^s$-norms where $s \geq 0$, but also reflexive Banach-space norms
such as $L^p$-norms, associated Sobolev-space seminorms in $H^{k,p}$ for
$1 < p < \infty$, and Besov-space seminorms based on
wavelet-coefficient expansions \cite{Hofmann12_regularization_methods_mh,bredies2018imageprocessing,daubechies2004ista_mh}. The reflexivity of the underlying
spaces then helps to turn an ill-posed equation into a well-posed one,
since the direct method in the calculus of variations can be employed
with weak convergence.

However, there are reasons to consider Banach spaces that lack reflexivity,
with $L^1$-spaces and spaces of Radon measures being prominent examples.
Indeed, $L^1$-type norms as penalties in variational energies have seen a tremendous 
rise in popularity %
in the past two decades, %
most notably in the theory of \emph{compressed sensing} \cite{donoho2006compressedsensing_mh}. This is due to their property of
favouring \emph{sparsity} in solutions, which allows to model more specific
a-priori assumptions on the expected solutions than generic smoothness,
for instance.
While sparsity in $L^1$-type spaces over discrete domains, such as spaces of wavelet coefficients, is directly amenable to analysis, sparsity for continuous domains
requires to consider spaces
of \emph{Radon measures} and corresponding Radon-norm-type energies which are
natural generalisations of $L^1$-type norms.
Being the dual of a separable normed space then mitigates the non-reflexivity
of these spaces.
As a consequence, they %
play a major role in continuous models for sparsity-promoting variational regularisation strategies. %

A particular example %
is the total variation functional \cite{rudin1992tv_mh,chambolle1997inftv_mh}, see Section~\ref{sec:tv_reg} below for a precise definition,
which %
can be interpreted as %
the Radon norm %
realised as a dual norm
on the distributional derivative of $u$. As such, $\TV(u)$ is finite if and only if the distributional derivative of $u$ can be represented by a finite Radon measure.
The TV functional then penalises variations of $u$ via a norm on its derivative while still being finite in the case of jump discontinuities, i.e., %
when $u$ is piecewise smooth.
In particular, its minimisation
realises sparsity of the derivative which is often considered
a suitable model for piecewise constant functions.
In addition, %
it is convex and lower semi-continuous with respect to $L^p$-convergence for any $p \in [1,\infty]$, and coercive up to constants in suitable $L^p$-norm topologies.
These features make TV a reasonable model for piecewise constant %
solutions
and allow to obtain well-posedness of TV regularisation for a broad class of inverse problems. They can be considered as
some of the main reasons for the overwhelming popularity of TV in inverse problems, imaging sciences and beyond.

Naturally, the simplicity and desirable properties of TV come with a cost. As previously mentioned, interpreting TV as a functional that generalises the $L^1$-norm %
of the image gradient, compressed sensing theory suggests that this enforces sparsity of the gradient and hence piecewise constancy, i.e., one might expect that a TV-regularised function is non-constant only on low-dimensional subsets of its domain. While this might in fact be a feature if the sought solution is piecewise constant, it is not appropriate for general piecewise smooth data. Indeed, for non-piecewise-constant data, TV has the defect of producing artificial plateau-like structures in the reconstructions which became known as the \emph{staircasing effect} of TV. This effect is nowadays well-understood analytically in the case of denoising \cite{Nikolova00_mh,Caselles07_mh,Ring00_mh}, and recent results also provide an analytical confirmation of this fact in the context of inverse problems with finite-dimensional measurement data \cite{Carioni18sparsity_mh,Boyer19representer_mh}. %
The appearance of staircasing artefacts is in particular problematic since jump discontinuities are features which are, on the one hand, very prominent in visual perception and typically associated with relevant structures, and, on the other hand, important for automatic post-processing or interpretation of the data. 
As a result, it became an important research question in the past two decades how to improve upon this defect of TV regularisation while maintaining its desirable features, especially the sparsity-enforcing properties.

This review is concerned with the developments undertaken in this direction
that are related to the incorporation of higher-order derivatives, while maintaining the sparsity concepts realised by the Radon norm and the underlying
spaces of Radon measures. This resulted in a variety of different variational regularisation strategies, for which some are very successful in achieving the goal of providing an amenable model for piecewise smooth solutions.
It is also a central message of this review that %
the %
success of higher-order TV model in terms of %
modelling and regularisation effect depends very much on the structure and the functional-analytic setting in which the higher-order derivatives are included. Following this insight, we will discuss different higher-order regularisation functionals such as higher-order total variation, the infimal-convolution of higher-order TV as well as the total generalised variation (TGV), which carries out a cascadic decomposition to different orders of differentiation. Starting form the analytical framework of the total-variation functional and functions of bounded variation, we will introduce and analyse several higher-order approaches in a continuous setting, discuss their regularisation properties in a Tikhonov regularisation framework, introduce appropriate discretizations as well as numerical solution strategies for the resulting energy minimisation problems, and present various applications in image processing, computer vision, biomedical imaging and beyond.

Nevertheless, due to the broad range of the topic as well as the many
works published in its environment, it is impossible to give a complete
overview. The various references to the literature given throughout
the paper therefore only represent a selection.  Let us also point out
that we selected the presented material in particular on a basis that,
one the one hand, enables a treatment that is a unified as
possible. On the other hand, a clear focus is put on approaches for
which the whole pipeline ranging from mathematical modelling,
embedding into a functional-analytic context, proof of regularisation
properties, numerical discretization, optimisation algorithms and
efficient implementation can be covered. In addition, extensions and
further developments will shortly be pointed out when appropriate.
Especially, many of the applications in image processing, computer
vision, medical imaging and image reconstruction refer to these
extensions. The applications were further chosen to represent a wide
spectrum of inverse problems, their variational modelling and
higher-order TV-type regularisation, and, not negligible, successful
realisation of the presented
theory.  %
We finally aimed at providing a maximal amount of useful information
regarding theory and practical realisation in this context.

\section{Total-variation (TV) regularisation}
\label{sec:tv_reg}

Before discussing higher-order total variation and how it may be used
to regularise ill-posed inverse problems, let us begin with an
overview of first-order total variation. Throughout the review, we
mainly adapt a continuous viewpoint which means that the objects of
interest are usually functions on some fixed domain $\Omega$, i.e., an
non-empty, open and connected subset $\Omega \subset \RR^d$ in the
$d$-dimensional Euclidean space. This requires in particular a common
functional-analytic context for which we assume that the reader is
familiar with and refer to the books \cite{Adams2003sobolev_mh,evans1992measure_mh,Ziemer12weakly_mh} for further information.
In the following, we will make, for instance, use of the
Lebesgue spaces $\LPspace{p}{\Omega,H}$ for $H$-valued functions where
$H$ is a finite-dimensional real Hilbert space as well as their
measure-theoretic and functional-analytic properties. Also, concepts
of weak differentiability and properties of the associated Sobolev
spaces $\Hspace[p]{k}{\Omega,H}$ will be utilised without further
introduction. This moreover applies to the classical spaces such as
$\Cspace{}{\closure{\Omega},H}$, $\Ccspace{}{\Omega, H}$ and
$\Cspace[0]{}{\Omega,H}$, i.e., the spaces of uniformly continuous
functions on $\closure{\Omega}$, of compactly supported continuous
functions on $\Omega$ and its closure with respect to the supremum
norm. As usual, the respective spaces of $k$-times continuously
differentiable functions are denoted by
$\Cspace{k}{\closure{\Omega}, H}$, $\Ccspace{k}{\Omega,H}$ and
$\Cspace[0]{k}{\Omega, H}$ where $k$ could also be infinity, leading to
spaces of test functions. 

We further employ, throughout this section, basic
concepts from convex analysis and optimisation. At this point, we
would like to recall that for a convex function
$F: X \to {]{-\infty,\infty}]}$ defined on a Banach space $X$, the
\emph{subgradient} $\subgrad F (x)$ at a point $u \in X$ is the collection of
all $w \in X^*$ that satisfy the \emph{subgradient inequality}
\[
  F(u) + \scp[X^* \times X]{w}{v - u} \leq F(v) \qquad
  \text{for all} \quad v \in X.
\]
For $F$ proper, the \emph{Fenchel dual} or \emph{Fenchel
  conjugate} of $F$ is the function
$F^*: X^* \to {]{-\infty,\infty}]}$ defined by
\[
F^*(w) = \sup_{u \in X} \ \scp[X^* \times X]{w}{u} - F(u).
\]
The \emph{Fenchel inequality} then states that
$\scp[X^* \times X]{w}{u} \leq F(u) + F^*(w)$ for all $u \in X$ and
$w \in X^*$ with equality if and only if $w \in \subgrad F(u)$.
For more details regarding these notions and convex analysis in
general, we refer to research monographs covering this subject, for
instance \cite{Ekeland_book_mh,zalinescu2002convex}.

\subsection{Functions of bounded variation}

Generally, when solving a specific ill-posed inverse problem with, for
instance, Tikhonov regularisation, one usually has many choices
regarding the regularisation functional.  Now, while %
functionals associated with Hilbertian norms or seminorms possess
several advantages such as smoothness and allow, in addition, for
regularisation strategies that can be computed by solving a linear
equation, they are often not able to provide a good model for
piecewise smooth functions. This can, for instance, be illustrated as
follows.

\begin{example}
  \label{ex:sobolev-no-jump}
  Classical Sobolev spaces cannot contain non-trivial 
  piecewise constant 
  functions. 
  Let $\Omega \subset \RR^d$
  be a domain and $\Omega' \subset \Omega$ be non-empty, open with
  $\bdry\Omega'$ a null set.
  Then, the characteristic function $u = \chi_{\Omega'}$, i.e.,
  $u(x) = 1$ if $x \in \Omega'$ and $0$ otherwise, is not contained
  in $\Hspace[p]{1}{\Omega}$ for any $p \in [1,\infty]$.
  To see this, suppose that $v \in \LPspace{p}{\Omega,\RR^d}$ is
  the weak derivative of $u$.
  Let $\varphi \in \Ccspace{\infty}{\Omega'}$ be a test function.
  Clearly,
  \[
  \int_\Omega v \inprod \varphi \dd{x} = 
  -\int_\Omega u \divergence 
  \varphi \dd{x} = - \int_{\Omega'} \divergence  \varphi \dd{x} = 0.
  \]
  Hence, $v = 0$ on $\Omega'$. Likewise, one sees that also $v = 0$
  on $\Omega \without \closure{\Omega'}$. In total, 
  $v=0$ almost everywhere and as $v$ is the weak derivative
  of $u$, $u$ must be constant which is a contradiction.
\end{example}

The defect which is responsible for the failure of characteristic
function being (classical) Sobolev function can, however, be remedied
by allowing weak derivatives to be Radon measures. These are in
particular able to concentrate on Lebesgue null-sets; a property that
is necessary as the previous example just showed. In the following, we
introduce some basic notions and results about vector-valued Radon
measures, in particular, with an eye of embedding them into a
functional-analytic framework.  Moreover, we would like to have these
notions readily available when dealing with higher-order derivatives
and the associated higher-order total variation.

Throughout this section, let $\Omega \subset \RR^d$ be a domain and
$H$ a non-trivial finite-dimensional real Hilbert space with $\inprod$
and $\abs{\placeholder}$ denoting the associated scalar product and
norm, respectively. As usual, the case $H = \RR$ corresponds to the
scalar case and $H = \RR^d$ to the vector-field case, but, as we will
see later, $H$ could also be a space of higher-order tensors.
The following definitions and statements regarding basic measure theory
and can, for instance, be found in \cite{Ambrosio_mh}.
 
\begin{definition} 
  A \emph{vector-valued Radon measure} or \emph{$H$-valued
    Radon measure} on $\Omega$ is a function 
  $\mu: \borelalg{\Omega} \to H$ 
  on the Borel $\sigma$-algebra $\borelalg{\Omega}$ associated 
  with the standard topology on
  $\Omega$ satisfying the following properties:
  \begin{enumerate}
  \item 
    \label{item:radon_measure_i}
    It holds that $\mu(\emptyset) = 0$,
  \item 
    \label{item:radon_measure_ii}
    for each pairwise disjoint countable 
    collection $A_1,A_2,\ldots$ in $\borelalg{\Omega}$ it holds that
    $\mu(\bigcup_{i \in \NN} A_i) = \sum_{i=1}^\infty \mu(A_i)$ in 
    $H$.
  \end{enumerate}

  A \emph{positive Radon measure} is a function $\mu: \borelalg{\Omega} 
  \to [0,\infty]$ satisfying~\eqref{item:radon_measure_i},
  \eqref{item:radon_measure_ii} (with $H$ replaced by
  $[0,\infty]$) 
  as well as $\mu(K) < \infty$ for
  each compact $K \compactin \Omega$. It is called \emph{finite}, if
  $\mu(\Omega) < \infty$.
\end{definition}

Naturally, vector-valued Radon measures can be associated to an integral.
For $\mu$ an $H$-valued Radon measure and step functions
$u = \sum_{j=1}^N c_j \chi_{A_j}$, $v = \sum_{j=1}^N v_j \chi_{A_j}$
with $c_1,\ldots,c_N \in \RR$, $v_1,\ldots,v_N \in H$ and
$A_1,\ldots,A_N \in \borelalg{\Omega}$, the following integrals make
sense:
\[
\int_\Omega u \dd{\mu} = \sum_{j=1}^N c_j \mu(A_j) \in H, \qquad
\int_\Omega v \inprod \dd{\mu} = \sum_{j=1}^N v_j \inprod \mu(A_j) \in
\RR.
\]
For uniformly
continuous functions $u: \closure{\Omega} \to \RR$ and
$v: \closure{\Omega} \to H$, the integrals are given as
\[
\int_\Omega u \dd{\mu} = \lim_{n \to \infty} \int_\Omega u^n \dd\mu,
\qquad \int_\Omega v \inprod \dd{\mu} = \lim_{n \to \infty} \int_\Omega
v^n \inprod \dd{\mu}
\]
where $\seq{u^n}$ and $\seq{v^n}$ are sequences of step functions
converging uniformly to $u$ and $v$, respectively. Of course, the
above integrals are well-defined, meaning that there are approximating
sequences as stated and the above limits exist independently of the
specific choice of the approximating sequences.  The following
definition is the basis for introducing a norm for $H$-valued Radon
measures.

\begin{definition}
  For a vector-valued Radon measure $\mu$ 
  on $\Omega$ the positive Radon measure $\abs{\mu}$ given by
  \[
  \fl
  \abs{\mu}(A) = \sup \
  \Bigset{\sum_{i=1}^\infty \abs{\mu(A_i)} }{A_1,A_2,\ldots \in 
    \borelalg{\Omega} \ \text{pairwise disjoint}, \ 
    A_i \subset A \ \text{for all} \ i \in \NN
  }
  \]
  is called the \emph{total-variation measure} of $\mu$.
\end{definition}

The total-variation measure is always positive and finite, i.e.,
$0 \leq \abs{\mu}(A) < \infty$ for all $A \in \borelalg{\Omega}$.  By
construction, $\mu$ is absolutely continuous with respect to
$\abs{\mu}$, i.e., $\mu(A) = 0$ whenever $\abs{\mu}(A) = 0$ for a
$A \in \borelalg{\Omega}$. By Radon--Nikod\'ym's theorem, we thus have
that each $H$-valued Radon measure $\mu$ can be written as
$\mu = \sigma_\mu \abs{\mu}$ with
$\sigma_\mu \in \LPspace[\abs{\mu}]{\infty}{\Omega,H}$ such that
$\norm[\infty]{\sigma_\mu} \leq 1$ and $\abs{\sigma_\mu} = 1$ almost
everywhere with respect to $\abs{\mu}$.  In this light, integration
can also be phrased as
\[
\int_\Omega u \dd{\mu} = \int_\Omega u \sigma_\mu \dd{\abs{\mu}},
\qquad
\int_\Omega v \inprod \dd{\mu} = \int_\Omega v \inprod \sigma_\mu \dd{\abs{\mu}}
\]
for %
$u: \closure{\Omega} \to \RR$, $v: \closure{\Omega} \to H$ uniformly
continuous. The following theorem, which is a direct consequence of \cite[Theorem 6.19]{rudin2006real_complex_analysis_mh}, 
provides a useful characterisation of the space of vector valued measures as the dual of a separable space.

\begin{proposition}
  \label{prop:duality_radon_measures}
  The space $\radonspace{\Omega, H}$ of all vector-valued
  Radon measures equipped with the norm 
  $\norm[\radon]{\mu} = \abs{\mu}(\Omega)$ for $\mu \in 
  \radonspace{\Omega, H}$
  is a Banach space.

  It can be identified with the dual space $\Cspace[0]{}{\Omega, H}^*$
  as follows. For each $T \in \Cspace[0]{}{\Omega, H}^*$ there exists
  a unique $\mu \in \radonspace{\Omega, H}$ such that 
  \[
  \norm[\mathcal{C}_0^*]{T} = \norm[\radon]{\mu}, \qquad
  T(\varphi) = \int_\Omega \varphi \inprod \dd{\mu} \quad \text{for all}
  \quad \varphi \in \Cspace[0]{}{\Omega,H}.
  \]
\end{proposition}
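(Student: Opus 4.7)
The plan is to reduce the vector-valued assertion to the classical scalar Riesz representation theorem cited from Rudin by fixing an orthonormal basis of $H$ and decomposing everything componentwise. Let $\sett{e_1,\ldots,e_m}$ be an orthonormal basis of $H$. I would first observe that every $\mu \in \radonspace{\Omega,H}$ decomposes uniquely as $\mu = \sum_{i=1}^m \mu_i e_i$ with scalar Radon measures $\mu_i(A) := \scp{\mu(A)}{e_i}$, and conversely any such tuple $(\mu_1,\ldots,\mu_m)$ of scalar Radon measures defines a vector-valued measure via the above formula. The inequalities $\abs{\mu_i}(A) \leq \abs{\mu}(A)$ and $\abs{\mu}(A) \leq \sum_{i=1}^m \abs{\mu_i}(A)$, obtained directly from the definition of the total-variation measure together with $\abs{\mu_i(A)} \leq \abs{\mu(A)}$ and $\abs{\mu(A)} \leq \sum_{i=1}^m \abs{\mu_i(A)}$, show that the total-variation norm is equivalent to $\sum_{i=1}^m \norm[\radon]{\mu_i}$. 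Since scalar Radon measures form a Banach space (a classical consequence of Rudin's result), completeness of $\radonspace{\Omega,H}$ follows by componentwise completion; the norm properties are immediate from the definition of $\abs{\mu}$.

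Next, I would define the candidate embedding $\Phi: \radonspace{\Omega,H} \to \Cspace[0]{}{\Omega,H}^*$ by $\Phi(\mu)(\varphi) = \int_\Omega \varphi \inprod \dd\mu$, which is well-defined and continuous on $\Cspace[0]{}{\Omega,H}$ by the uniform-approximation definition of the integral combined with the Radon--Nikod\'ym decomposition $\mu = \sigma_\mu \abs{\mu}$ with $\abs{\sigma_\mu} = 1$ $\abs{\mu}$-almost everywhere. This directly yields the estimate $\bigabs{\Phi(\mu)(\varphi)} \leq \int_\Omega \abs{\varphi} \dd\abs{\mu} \leq \norm[\infty]{\varphi} \abs{\mu}(\Omega)$, hence $\norm[\mathcal{C}_0^*]{\Phi(\mu)} \leq \norm[\radon]{\mu}$.

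For surjectivity together with uniqueness, I would use the basis decomposition in the dual direction: given $T \in \Cspace[0]{}{\Omega,H}^*$, set $T_i(\psi) := T(\psi e_i)$ for $\psi \in \Cspace[0]{}{\Omega,\RR}$, which is a scalar continuous linear functional. The scalar Riesz representation theorem (Rudin, Theorem 6.19) yields a unique $\mu_i \in \radonspace{\Omega,\RR}$ with $T_i(\psi) = \int_\Omega \psi \dd\mu_i$, and setting $\mu := \sum_{i=1}^m \mu_i e_i$ and expanding any $\varphi = \sum_{i=1}^m \varphi_i e_i \in \Cspace[0]{}{\Omega,H}$ in the basis gives $T(\varphi) = \sum_{i=1}^m T_i(\varphi_i) = \sum_{i=1}^m \int_\Omega \varphi_i \dd\mu_i = \int_\Omega \varphi \inprod \dd\mu = \Phi(\mu)(\varphi)$. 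Uniqueness of $\mu$ follows by restricting $\Phi(\mu) = 0$ to functions of the form $\psi e_i$ and invoking the uniqueness in the scalar case.

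The only real obstacle is proving the reverse norm inequality $\norm[\radon]{\mu} \leq \norm[\mathcal{C}_0^*]{\Phi(\mu)}$, which is needed both for the isometry and to obtain the reverse inequality in surjectivity. The plan is to approximate the Radon--Nikod\'ym density $\sigma_\mu \in \LPspace[\abs{\mu}]{\infty}{\Omega,H}$, which satisfies $\abs{\sigma_\mu}=1$ almost everywhere, by $\Ccspace{}{\Omega,H}$-functions $\varphi_n$ with $\norm[\infty]{\varphi_n} \leq 1$ in $\LPspace[\abs{\mu}]{1}{\Omega,H}$. This can be done by first invoking density of $\Ccspace{}{\Omega,H}$ in $\LPspace[\abs{\mu}]{1}{\Omega,H}$ (using inner regularity of the positive finite Radon measure $\abs{\mu}$), and then truncating the approximants componentwise or radially in $H$ to guarantee $\abs{\varphi_n} \leq 1$ pointwise while preserving convergence in $\LPspace[\abs{\mu}]{1}$. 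Passing to the limit in $\Phi(\mu)(\varphi_n) = \int_\Omega \varphi_n \inprod \sigma_\mu \dd\abs{\mu}$ produces $\abs{\mu}(\Omega) = \norm[\radon]{\mu}$ in the limit, establishing the reverse inequality and completing the identification $\Phi(\mu) = T$ as an isometric isomorphism.
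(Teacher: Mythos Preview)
Your proposal is correct and follows precisely the route the paper implicitly takes: the paper does not give a proof but simply states that the result is a direct consequence of \cite[Theorem~6.19]{rudin2006real_complex_analysis_mh}, i.e., the scalar Riesz representation theorem, and your componentwise reduction via an orthonormal basis of $H$ is exactly how one passes from the scalar to the vector-valued case. The density-and-truncation argument for the reverse norm inequality is standard and correct (the radial projection onto the closed unit ball of $H$ is $1$-Lipschitz, so truncation preserves $L^1_{\abs{\mu}}$-convergence).
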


In particular, one has a notion of weak*-convergence of Radon
measures.  For a sequence $\seq{\mu^n}$ and an element $\mu^*$ in
$\radonspace{\Omega, H}$ we have that $\mu^n \wstarrightarrow \mu^*$
in $\radonspace{\Omega,H}$ if 
\[
\text{for all} \ \ \varphi \in \Cspace[0]{}{\Omega, H}: \quad
\int_\Omega
\varphi \inprod \dd{\mu^n} \to \int_\Omega \varphi \inprod \dd{\mu^*}
\ \ \text{as} \ \ n \to \infty.
\]
As the predual space $\Cspace[0]{}{\Omega,H}$ is separable, the
Banach--Alaoglu theorem yields in particular the sequential
relative weak*-compactness of bounded sets. That means for instance that
a bounded sequence always admits a weakly*-convergent subsequence, a 
property that may compensate for the lack of reflexivity of 
$\radonspace{\Omega,H}$.

The interpretation as a dual space as well as the density of 
test functions in $\Cspace[0]{}{\Omega,H}$ 
also allows to conclude that in
order for a linear functional $T$ defining a Radon measure, it
suffices to test against $\varphi \in \Ccspace{\infty}{\Omega, H}$ and
to establish $\abs{T(\varphi)} \leq C \norm[\infty]{\varphi}$ for all
$\varphi \in \Ccspace{\infty}{\Omega,H}$ and $C > 0$ independent
of $\varphi$.
This is useful for  derivatives, i.e., the 
derivative of a $u \in \LPlocspace{1}{\Omega,H}$ defines a Radon measure 
in $\radonspace{\Omega,H^d}$ if
\begin{equation}
  \label{eq:weak_derivative_radon_measure}
  \Bigabs{\int_\Omega u \inprod \divergence \varphi \dd{x}} \leq
  C \norm[\infty]{\varphi} \qquad
  \text{for all} \quad \varphi \in \Ccspace{\infty}{\Omega,H^d}.
\end{equation}
In this case, we denote by $\grad u \in \radonspace{\Omega,H^d}$ the
unique $H^d$-valued Radon measure for which
$\int_\Omega \varphi \inprod \dd{\grad u} = -\int_\Omega u \divergence \varphi 
\dd{x}$ for all $\varphi \in \Ccspace{\infty}{\Omega,H^d}$.
Here, $H^d$ is equipped with the scalar product $x \inprod y = 
\sum_{i=1}^d x_i \inprod y_i$ for $x,y \in H^d$.
In the case where~\eqref{eq:weak_derivative_radon_measure} fails, 
there exists a sequence $\seq{\varphi^n}$ in 
$\Ccspace{\infty}{\Omega,\RR^d}$ with 
$\norm[\infty]{\varphi^n} = 1$ and $\abs{\int_\Omega u \inprod \divergence \varphi
  \dd{x}} \to \infty$ as $n\to \infty$. Thus, allowing the supremum to take
the value $\infty$, this yields following definition.

\begin{definition}
  \label{def:tv}
  The \emph{total variation} of a $u \in \LPlocspace{1}{\Omega,H}$ is 
  the value
  \[
  \TV(u) = \sup \ \Bigset{\int_\Omega u \inprod \divergence \varphi 
    \dd{x}}{\varphi \in \Ccspace{\infty}{\Omega,H^d}, \ 
    \norm[\infty]{\varphi} \leq 1}.
  \]
\end{definition}

Clearly, in case $\TV(u) < \infty$, we have
$\grad u \in \radonspace{\Omega,H^d}$ with
$\norm[\radon]{\grad u} = \TV(u)$. Trivially, for scalar functions,
i.e., $H = \RR$, one recovers the well-known definition \cite{Ambrosio_mh,rudin1992tv_mh}.
Also, one immediately sees that $\TV$ is invariant to translations and
rotations, or, more generally, to Euclidean-distance preserving
transformations. This is the reason that this definition is also
referred to as the \emph{isotropic total variation}.

\begin{example}
  Piecewise constant functions may have a Radon measure as derivative.
  Let $\Omega' \subset \Omega$ be a subdomain such that
  $\bdry\Omega' \cap \Omega$ can be parameterised by finitely many Lipschitz
  mappings. Then, the outer normal $\nu$ exists
  almost everywhere in $\bdry\Omega' \cap \Omega$ with respect to
  the Hausdorff $\hausdorff{d-1}$ measure and one can employ the
  divergence theorem. This yields, for $u = \chi_{\Omega'}$ and
  $\varphi \in \Ccspace{\infty}
  {\Omega,\RR^d}$ with $\norm[\infty]{\varphi} \leq 1$ that
  \[
  \fl
  \int_\Omega u \divergence \varphi \dd{x} = \int_{\bdry \Omega' \cap \Omega}
  \varphi \inprod \nu \dd{\hausdorff{d-1}} 
  = \int_\Omega \varphi \inprod \dd{\nu\hausdorff{d-1} \restricted 
    (\bdry\Omega' \cap \Omega)}
  \leq \hausdorff{d-1}(\bdry\Omega' \cap \Omega)
  \]
  so
  $\grad u = - \nu \hausdorff{d-1} \restricted (\bdry\Omega'
    \cap \Omega)$
  is a Radon measure. One sees, for instance via approximation, that
  $\norm[\radon]{\grad u} = \hausdorff{d-1}(\bdry\Omega' \cap
  \Omega)$.
\end{example}

The class of sets $\Omega' \subset \Omega$ for which $\chi_{\Omega'}$
possesses a Radon measure as weak derivative is actually much greater
than the class of bounded Lipschitz domains.  These are the sets of
\emph{finite perimeter}, denoted by
$\Per(\Omega') = \norm[\radon]{\grad \chi_{\Omega'}}$.
One the other hand, for $u \in \Hspace[1]{1}{\Omega}$, we have
$\TV(u) = \int_\Omega \abs{\grad u} \dd{x}$ and the weak derivative
as Radon measure is just $\grad u \lebesgue{d}$, i.e., the Sobolev
derivative interpreted as a weight on the Lebesgue measure. Collecting all 
functions whose weak derivative is a Radon measure, we arrive at the following space.

\begin{definition}
  The space
  \[
  \BV(\Omega,H) = \set{u \in \LPspace{1}{\Omega,H}}{\TV(u) < \infty},
  \qquad
  \norm[\BV]{u} = \norm[1]{u} + \TV(u)
  \]
  is the space of $H$-valued \emph{functions of bounded variation}.
  In case $H=\RR$, we denote by $\BV(\Omega) = \BV(\Omega,\RR)$ and
  just refer to functions of bounded variation.
\end{definition}

\begin{proposition}
  The space $\BV(\Omega,H)$ with the associated norm 
  is a Banach space. The total variation functional
  $\TV$ is a continuous seminorm on 
  $\BV(\Omega,H)$ which vanishes exactly at the constant functions,
  i.e., $\ker(\TV) = H\ones$, with $H\ones$ being the set of constant, $H$-valued functions.
\end{proposition}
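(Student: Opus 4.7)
My plan is to verify each of the three assertions in turn, using the duality representation of vector-valued Radon measures from Proposition~\ref{prop:duality_radon_measures} as the main workhorse.

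First, I would check that $\norm[\BV]{\placeholder}$ is a norm and that $\TV$ is a seminorm. Since $\TV(u)$ is a supremum of linear functionals of $u$ (each of which is dominated by $\norm[1]{u}\norm[\infty]{\divergence\varphi}$), it is positively homogeneous, subadditive, and nonnegative, hence a seminorm on $\BV(\Omega,H)$. Adding the norm $\norm[1]{\placeholder}$ produces a norm on $\BV(\Omega,H)$, and we also record the elementary bound $\TV(u)\le\norm[\BV]{u}$.

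Second, to establish completeness, let $\seq{u^n}$ be a Cauchy sequence in $\BV(\Omega,H)$. Then it is Cauchy in $\LPspace{1}{\Omega,H}$, so $u^n\to u$ in $L^1$ for some $u\in\LPspace{1}{\Omega,H}$. Moreover, from the Cauchy property in $\BV$ and the definition of $\TV$, the sequence of gradients $\seq{\grad u^n}$ in $\radonspace{\Omega,H^d}$ is Cauchy with respect to the Radon norm, hence converges to some $\mu\in\radonspace{\Omega,H^d}$ by Proposition~\ref{prop:duality_radon_measures}. To identify $\mu=\grad u$ in the weak sense, I would test against an arbitrary $\varphi\in\Ccspace{\infty}{\Omega,H^d}$: using $\int_\Omega\varphi\inprod\dd{\grad u^n}=-\int_\Omega u^n\inprod\divergence\varphi\dd{x}$, the left-hand side converges to $\int_\Omega\varphi\inprod\dd{\mu}$ by the duality pairing with $\Cspace[0]{}{\Omega,H^d}$, while the right-hand side converges to $-\int_\Omega u\inprod\divergence\varphi\dd{x}$ by $L^1$-convergence and boundedness of $\divergence\varphi$. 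Hence $\grad u=\mu$, $u\in\BV(\Omega,H)$, and $\TV(u^n-u)=\norm[\radon]{\grad u^n-\mu}\to 0$, so $u^n\to u$ in $\BV$. This is the step I expect to be the main obstacle, as care is needed in matching the limiting Radon measure with the distributional gradient of the $L^1$-limit.

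Third, continuity of $\TV$ on $\BV(\Omega,H)$ follows from the seminorm property together with the bound $\TV(u)\le\norm[\BV]{u}$, giving the Lipschitz estimate $\abs{\TV(u)-\TV(v)}\le\TV(u-v)\le\norm[\BV]{u-v}$. Finally, for the kernel, if $u\equiv c\in H$ then $\int_\Omega c\inprod\divergence\varphi\dd{x}=c\inprod\int_\Omega\divergence\varphi\dd{x}=0$ for every $\varphi\in\Ccspace{\infty}{\Omega,H^d}$ by compact support, so $\TV(c)=0$. Conversely, $\TV(u)=0$ means $\int_\Omega u\inprod\divergence\varphi\dd{x}=0$ for all such $\varphi$, i.e.\ the distributional gradient of $u$ vanishes. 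Applied componentwise, this is the classical statement that a distribution with zero gradient on a connected open set is constant, which yields $u\in H\ones$ and concludes the proof.
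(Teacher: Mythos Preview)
Your proof is correct. The paper itself does not supply a proof at this point (the proposition is stated as a standard fact from $\BV$-theory), but later, in Lemma~\ref{lem:banach_space_plus_lsc_seminorm}, an abstract argument is given that covers the Banach-space and continuity claims as a special case: one only needs that $\TV$ is a lower semi-continuous seminorm on $\LPspace{1}{\Omega,H}$, and then for a Cauchy sequence $\seq{u^n}$ in $\BV$ with $L^1$-limit $u$, lower semi-continuity applied to $u^n - u^m$ as $m\to\infty$ directly yields $\TV(u^n-u)\le\varepsilon$. Your route is slightly different and more concrete: you use that $\seq{\grad u^n}$ is Cauchy in $\radonspace{\Omega,H^d}$, invoke completeness of the Radon-measure space, and then identify the limit measure with $\grad u$ by testing. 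Both arguments are equally valid; the abstract one has the advantage of applying verbatim to any lower semi-continuous seminorm (which the paper exploits for $\TD^k$, $\TGV$, etc.), while yours makes the measure-theoretic content explicit and does not require lower semi-continuity as a separate input. For the kernel, both approaches reduce to the connectedness of $\Omega$ and the classical fact that a distribution with vanishing gradient is constant, exactly as you do.
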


The total variation functional is just designed to possess many
convenient properties \cite{Ambrosio_mh}.

\begin{proposition}
  \mbox{}
  \begin{itemize}
  \item 
    The functional $\TV$ is proper, convex and lower semi-continuous
    on each $\LPspace{p}{\Omega,H}$, i.e., for
    $1 \leq p \leq \infty$.
  \item
    For $1 \leq p < \infty$, each
    $u \in \BV(\Omega,H) \cap \LPspace{p}{\Omega,H}$ can smoothly 
    be approximated as
    follows: For $\varepsilon > 0$, there exists $u^\varepsilon \in 
    \Cspace{\infty}{\Omega,H} \cap \BV(\Omega,H) \cap \LPspace{p}{\Omega,H}$
    such that
    \[
    \norm[p]{u - u^\varepsilon} \leq \varepsilon, 
    \qquad \abs{\TV(u) - \TV(u^\varepsilon)} \leq \varepsilon.
    \]
  \item If $\Omega$ is a bounded Lipschitz domain, then there exists a
    constant $C > 0$ such that for each $u \in \BV(\Omega,H)$ with
    $\int_\Omega u \dd{x} = 0$, the \emph{Poincar\'e--Wirtinger
      estimate}
    \[
    \norm[d/(d-1)]{u} \leq C \TV(u)
    \]
    holds.
  \end{itemize}
\end{proposition}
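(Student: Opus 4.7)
The plan is to establish the three bullets in order, each building on earlier items or on standard tools of \BV-theory \cite{Ambrosio_mh}. For the first bullet, for each fixed test field $\varphi \in \Ccspace{\infty}{\Omega,H^d}$ with $\norm[\infty]{\varphi}\leq 1$, the map
\[
\Phi_\varphi(u) = \int_\Omega u \inprod \divergence \varphi \dd{x}
\]
is linear and continuous on $\LPspace{p}{\Omega,H}$ for every $1 \leq p \leq \infty$, since $\divergence \varphi$ is bounded with compact support and hence lies in every dual space $L^{p'}$. The total variation $\TV = \sup_\varphi \Phi_\varphi$ is therefore a pointwise supremum of continuous affine functionals, and so convex and lower semi-continuous on each $\LPspace{p}{\Omega,H}$; properness is immediate from $\TV(0)=0$.

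For the second bullet I employ a Meyers--Serrin / Anzellotti--Giaquinta style smoothing. Pick an exhaustion $\emptyset = \Omega_0 \compactin \Omega_1 \compactin \cdots$ with $\bigcup_k \Omega_k = \Omega$, set $A_k = \Omega_{k+1} \without \closure{\Omega_{k-1}}$, and let $\seq{\eta_k}$ be a smooth partition of unity subordinate to $\seq{A_k}$. For a standard even mollifier $\rho_\delta$ and each $k$, choose $\delta_k > 0$ small enough that $\supp(\rho_{\delta_k} * (\eta_k u)) \subset A_k$ and
\[
\bignorm[p]{\rho_{\delta_k} * (\eta_k u) - \eta_k u} + \bignorm[1]{\rho_{\delta_k} * (u \grad \eta_k) - u \grad \eta_k} < \varepsilon \cdot 2^{-k-2},
\]
and set $u^\varepsilon := \sum_k \rho_{\delta_k} * (\eta_k u)$. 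Local finiteness yields $u^\varepsilon \in \Cspace{\infty}{\Omega,H}$, and the triangle inequality together with $\sum_k \eta_k u = u$ gives $\norm[p]{u^\varepsilon - u} \leq \varepsilon/2$. For the total-variation control, fix $\varphi \in \Ccspace{\infty}{\Omega,H^d}$ with $\norm[\infty]{\varphi}\leq 1$, transpose each convolution using evenness of $\rho_{\delta_k}$, and apply the product rule together with $\sum_k \grad\eta_k = 0$ to rewrite
\[
\fl
\int_\Omega u^\varepsilon \inprod \divergence \varphi \dd{x} = \int_\Omega u \inprod \divergence\Bigl(\textstyle\sum_k \eta_k (\rho_{\delta_k} * \varphi)\Bigr) \dd{x} - \sum_k \int_\Omega \bigl(\rho_{\delta_k} * (u \grad\eta_k) - u \grad\eta_k\bigr) \inprod \varphi \dd{x}.
\]
The first term is bounded by $\TV(u)$, since $\norm[\infty]{\rho_{\delta_k} * \varphi} \leq 1$ together with $\sum_k \eta_k = 1$ ensures $\norm[\infty]{\sum_k \eta_k(\rho_{\delta_k} * \varphi)} \leq 1$; the second is bounded by $\varepsilon/2$ by construction. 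Hence $\TV(u^\varepsilon) \leq \TV(u) + \varepsilon/2$, and combining with the lower semi-continuity from the first bullet yields $\abs{\TV(u^\varepsilon) - \TV(u)} \leq \varepsilon$.

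For the third bullet I reduce to the smooth case via the second bullet. Approximate zero-mean $u \in \BV(\Omega,H)$ by $u^n \in \Cspace{\infty}{\Omega,H} \cap \BV(\Omega,H)$ with $\norm[1]{u^n - u} \to 0$ and $\TV(u^n) \to \TV(u)$; after subtracting the vanishing mean $\meas{\Omega}^{-1}\int_\Omega u^n \dd{x}$ one may assume $\int_\Omega u^n \dd{x} = 0$. Each $u^n$ lies in $\Hspace[1]{1}{\Omega,H}$ with $\TV(u^n) = \norm[1]{\grad u^n}$, so the classical $W^{1,1}$ Poincar\'e--Wirtinger inequality on bounded Lipschitz domains---a consequence of the Gagliardo--Nirenberg--Sobolev embedding---furnishes a constant $C$, depending only on $d$ and $\Omega$, with $\norm[d/(d-1)]{u^n} \leq C \TV(u^n)$. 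Hence $\seq{u^n}$ is bounded in $\LPspace{d/(d-1)}{\Omega,H}$; extracting a weakly (or weakly-$*$ when $d=1$) convergent subsequence whose limit must coincide with $u$ by $L^1$-uniqueness, and invoking lower semi-continuity of the $\LPspace{d/(d-1)}$-norm, yields $\norm[d/(d-1)]{u} \leq \liminf_n \norm[d/(d-1)]{u^n} \leq C \TV(u)$.

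The main obstacle is the second bullet: naive global mollification $u * \rho_\delta$ is only defined on $\set{x}{\dist(x,\bdry\Omega) > \delta}$, so the core difficulty is producing a smooth approximant on all of $\Omega$ whose total variation tracks $\TV(u)$ arbitrarily closely. This forces the partition-of-unity construction with locally adapted mollification radii described above; the first and third bullets then follow routinely from this smoothing combined with standard convex-analytic and Sobolev-embedding facts.
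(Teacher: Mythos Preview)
The paper does not actually prove this proposition; it simply states the three bullets as well-known facts with a reference to \cite{Ambrosio_mh}. Your proof is correct and follows precisely the standard route one finds in that reference: the first bullet via the supremum-of-linear-functionals argument, the second via the Anzellotti--Giaquinta partition-of-unity mollification (which is exactly the construction used in \cite{Ambrosio_mh} and also, in a higher-order version, in Lemma~\ref{lem:bvk_strict_convergence} of the appendix), and the third via reduction to the $W^{1,1}$ Sobolev--Poincar\'e inequality through strict approximation and weak lower semi-continuity of the norm. There is nothing to compare against, and your argument is the expected one.
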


From the regularisation-theoretic point of view, the fact that $\TV$
is proper, convex and lower semi-continuous on Lebesgue spaces is
relevant, a property that fails for the Sobolev-seminorm
$\norm[1]{\grad \placeholder}$.  The Poincar\'e--Wirtinger estimate
can be interpreted as a coercivity property on a subspace with
codimension 1.  Also note that this estimate is the same as for
$\Hspace[1]{1}{\Omega,H}$-functions and the respective constants $C$
coincide. Consequently, the embedding properties of the latter space
transfer immediately.

\begin{proposition}
  \label{prop:bv_embeddings}
  Let $\Omega$ is a bounded Lipschitz domain.
  Then, 
  \begin{itemize}
  \item 
    the embedding $\BV(\Omega,H)
    \embed \LPspace{d/(d-1)}{\Omega,H}$ (with $d/(d-1) = \infty$ for
    $d = 1$) exists and is continuous,
  \item
    the embedding $\BV(\Omega,H) \embed \LPspace{p}{\Omega,H}$
    is compact for each $1 \leq p < d/(d-1)$,
  \item
    each bounded sequence $\seq{u^n}$ in $\BV(\Omega,H)$ possesses
    a subsequence $\seq{u^{n_k}}$ which converges to a $u \in \BV(\Omega,H)$
	weak* in $\BV(\Omega,H)$, which we define as $u^{n_k} \to u$ in $\LPspace{1}{\Omega,H}$, 
    $\grad u^{n_k} \wstarrightarrow \grad u$ in $\radonspace{\Omega,H^d}$
    as $k \to \infty$.
  \end{itemize}
\end{proposition}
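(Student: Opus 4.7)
The plan is to derive all three statements from the Poincar\'e--Wirtinger estimate stated above, the smooth approximation property, and the Banach--Alaoglu theorem applied to $\radonspace{\Omega,H^d} \cong \Cspace[0]{}{\Omega,H^d}^*$.

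For the first item, I would split any $u \in \BV(\Omega,H)$ into its mean $\bar u = \meas{\Omega}^{-1}\int_\Omega u\dd{x}$ and the zero-mean remainder $u - \bar u$. The Poincar\'e--Wirtinger estimate gives $\norm[d/(d-1)]{u-\bar u} \leq C\TV(u-\bar u) = C\TV(u)$ since $\TV$ is invariant under constants, while $\norm[d/(d-1)]{\bar u} \leq C'\norm[1]{u}$ by H\"older on the bounded domain. Combining yields $\norm[d/(d-1)]{u} \leq C''\norm[\BV]{u}$.

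For the second item, my plan is to transfer the Rellich--Kondrachov theorem from $\Hspace[1]{1}{\Omega,H}$ to $\BV(\Omega,H)$ via the smooth approximation property from the previous proposition. Given a bounded sequence $\seq{u^n}$ in $\BV(\Omega,H)$, I would pick $v^n \in \Cspace{\infty}{\Omega,H} \cap \Hspace[1]{1}{\Omega,H}$ with $\norm[1]{u^n - v^n} \leq 1/n$ and $\TV(v^n) \leq \TV(u^n) + 1/n$, so that $\seq{v^n}$ is bounded in $\Hspace[1]{1}{\Omega,H}$. The classical Rellich--Kondrachov theorem provides a subsequence converging in $\LPspace{1}{\Omega,H}$; together with $\norm[1]{u^{n_k} - v^{n_k}} \to 0$ this yields an $\LPspace{1}$-Cauchy subsequence of $\seq{u^{n_k}}$, with limit $u$ automatically in $\BV(\Omega,H)$ by the lower semi-continuity of $\TV$. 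Strong convergence in $\LPspace{p}{\Omega,H}$ for $1 \leq p < d/(d-1)$ then follows by interpolation, using the uniform bound in $\LPspace{d/(d-1)}{\Omega,H}$ coming from the first item.

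For the third item, I would take the subsequence $\seq{u^{n_k}}$ from the second item converging to $u$ in $\LPspace{1}{\Omega,H}$. The measures $\seq{\grad u^{n_k}}$ form a bounded sequence in $\radonspace{\Omega,H^d}$; since by Proposition~\ref{prop:duality_radon_measures} this space is the dual of the separable space $\Cspace[0]{}{\Omega,H^d}$, the Banach--Alaoglu theorem extracts a further weak*-convergent subsequence with some limit $\mu \in \radonspace{\Omega,H^d}$. The identification $\mu = \grad u$ is obtained by testing against $\varphi \in \Ccspace{\infty}{\Omega,H^d}$: strong $\LPspace{1}$-convergence lets us pass to the limit in $-\int_\Omega u^{n_k} \inprod \divergence\varphi\dd{x}$, while weak*-convergence handles $\int_\Omega \varphi \inprod \dd{\grad u^{n_k}}$, and a density argument closes the identification.

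The main obstacle I foresee is the smooth approximation step in the second item: one must be careful that the approximating $v^n$ really lie in $\Hspace[1]{1}{\Omega,H}$ (not merely in $\Cspace{\infty}{\Omega,H}$ with locally integrable gradient), so that Rellich--Kondrachov genuinely applies on the bounded Lipschitz domain. This may require either extending $u^n$ to a slightly larger domain before mollification or invoking a version of the smooth approximation result where the approximants can be taken in $\Hspace[1]{1}{\Omega,H}$ with control of both $\norm[1]{u - v}$ and $\norm[1]{\grad v}$ in terms of $\TV(u)$.
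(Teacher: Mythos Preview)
The paper states this proposition without proof; it is presented as a standard fact about $\BV$ on bounded Lipschitz domains (the surrounding material refers to \cite{Ambrosio_mh}). Your proposal is a correct and standard route, so there is nothing to contrast.

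One remark on the obstacle you flag: it is not actually an issue. The smooth approximation property stated in the preceding proposition gives $u^\varepsilon \in \Cspace{\infty}{\Omega,H} \cap \BV(\Omega,H)$ with $\abs{\TV(u) - \TV(u^\varepsilon)} \leq \varepsilon$. For a smooth function one has $\TV(u^\varepsilon) = \int_\Omega \abs{\grad u^\varepsilon}\dd{x}$, so finiteness of $\TV(u^\varepsilon)$ already forces $\grad u^\varepsilon \in \LPspace{1}{\Omega,H^d}$; together with $u^\varepsilon \in \LPspace{1}{\Omega,H}$ this means $u^\varepsilon \in \Hspace[1]{1}{\Omega,H}$ automatically. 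No extension to a larger domain is needed, and Rellich--Kondrachov applies directly.
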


Consequently, the total variation is suitable for regularising
ill-posed inverse problems in certain $\lebesgueL{p}$-spaces.

\subsection{Tikhonov regularisation} \label{sec:tikhonov_tv}

Let us now turn to solving ill-posed inverse problems with
Tikhonov regularisation and $\BV$-based penalty, i.e., solving \[
Ku = f
\]
for some data $f$ in a Banach space $Y$. As mentioned in the introduction,
since the focus of this review is on regularisation terms rather than tackling inverse problems in the most possible generality, we restrict ourselves here to linear and continuous forward operators $K: \LPspace{d/(d-1)}{\Omega} \to Y$.
Nevertheless we note that, building on the results developed here for the linear setting, an extension to non-linear operators typically boils down to ensuring additional requirements on the non-linear forward model rather than the regularisation term, see for instance \cite{Tikhonov98_nonlinear_ill_posed_mh,Engl96_book_regularization_ip_mh,Hofmann07_nonlinear_tikhonov_banach_mh}.

Measuring the discrepancy in terms of the norm in $Y$, the 
problem is then to solve
\[
\min_{u \in \BV(\Omega)} \ \frac{\norm[Y]{Ku - f}^q}{q} + \alpha 
\int_\Omega \dd{\abs{\grad u}}
\]
for some exponent $q \geq 1$. Usually, $Y$ is some Hilbert
space and $q = 2$, resulting in a quadratic discrepancy, which is often used in
case of Gaussian noise. For impulsive noise (or salt-and-pepper
noise), the space $Y = \LPspace{1}{\Omega'}$, with $\Omega'$ a domain, turns out to be useful. In
case of Poisson noise, however, it is not advisable to take the norm
but rather the \emph{Kullback--Leibler divergence} between $Ku$ and
$f$, i.e.~$\KL(Ku, f)$, where $\KL$ is given, for
$f \in \LPspace{1}{\Omega'}$ with $f \geq 0$ almost everywhere,
according to the non-negative integral
\begin{equation} \label{eq:kl_definition}
\KL(v, f) = \int_{\Omega'} f \Bigl( 
\frac{v}{f} - \log\Bigl( \frac{v}{f} \Bigr) - 1 \Bigr) \dd{x}
\end{equation}
provided that $v \geq 0$ a.e., and $\infty$ else. In particular, in
this context, we agree to set the integrand to $v$ where $f = 0$ and
to $\infty$ where $v = 0$ and $f > 0$.

In the following, we assume to have given a discrepancy functional
$S_f: Y \to [0,\infty]$ that is proper, convex, lower
semi-continuous and coercive. 
This is not the most general case but will be
sufficient for us in order to ensure existence of minimizers of the
Tikhonov functional.

\begin{theorem}
  \label{thm:tv_reg_existence}
  Let $\Omega$ be a bounded Lipschitz domain, $Y$ be a Banach space, $K: \LPspace{d/(d-1)}{\Omega} \to Y$ linear and continuous
  (weak*-to-weak-continuous in case $d=1$),
  $S_f: Y \to {[{0,\infty}]}$ a proper, convex, lower
  semi-continuous and coercive discrepancy functional associated with
  some data $f$ and $\alpha > 0$. Then, there exist solutions of
  \begin{equation}
    \label{eq:general_tv_min}
    \min_{u \in \LPspace{d/(d-1)}{\Omega} } \ S_f(Ku) + \alpha \TV(u). %
  \end{equation}
  If $S_f$ is strictly convex and $K$ is injective, the solution is
  unique whenever the minimum is finite.
\end{theorem}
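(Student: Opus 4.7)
The plan is to apply the direct method of the calculus of variations. Let $\seq{u^n}$ be a minimising sequence in $\LPspace{d/(d-1)}{\Omega}$. Since the infimum is not $+\infty$ (e.g., test with a constant function on which $\TV$ vanishes and $S_f(Ku)$ is finite for some admissible constant; or choose any function with finite energy), we may assume $S_f(Ku^n) + \alpha \TV(u^n) \leq C$ uniformly. In particular $\seq{\TV(u^n)}$ is bounded, and the coercivity of $S_f$ yields that $\seq{Ku^n}$ is bounded in $Y$.

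The main obstacle is to upgrade these bounds to a uniform bound on $\seq{u^n}$ in $\LPspace{d/(d-1)}{\Omega}$, since $\TV$ is only a seminorm vanishing on constants. The idea is to split $u^n = \bar u^n \ones + (u^n - \bar u^n \ones)$ where $\bar u^n$ denotes the mean of $u^n$ on $\Omega$. The Poincar\'e--Wirtinger estimate from the previous proposition bounds the oscillatory part $\norm[d/(d-1)]{u^n - \bar u^n \ones} \leq C\TV(u^n)$ uniformly, and hence also $\norm[Y]{K(u^n - \bar u^n \ones)}$ by continuity of $K$. Therefore $\bar u^n K\ones$ is bounded in $Y$. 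If $K\ones \neq 0$, this directly bounds $\abs{\bar u^n}$. If $K\ones = 0$, one simply replaces the minimising sequence by $u^n - \bar u^n \ones$, which has the same objective value and mean zero. In either case, $\seq{u^n}$ is bounded in $\BV(\Omega)$, hence in $\LPspace{d/(d-1)}{\Omega}$ by the continuous embedding in Proposition~\ref{prop:bv_embeddings}.

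Using the weak*-sequential compactness of bounded sets in $\BV(\Omega)$ (third bullet of Proposition~\ref{prop:bv_embeddings}), I extract a subsequence, not relabelled, such that $u^n \to u^*$ in $\LPspace{1}{\Omega}$ and $\grad u^n \wstarrightarrow \grad u^*$ in $\radonspace{\Omega,\RR^d}$. For $d \geq 2$, boundedness in the reflexive space $\LPspace{d/(d-1)}{\Omega}$ together with $\LPspace{1}$-convergence gives $u^n \wrightarrow u^*$ in $\LPspace{d/(d-1)}{\Omega}$, and continuity of $K$ yields $Ku^n \wrightarrow Ku^*$ in $Y$; for $d=1$ the same bound yields $u^n \wstarrightarrow u^*$ in $\LPspace{\infty}{\Omega}$ and then $Ku^n \wrightarrow Ku^*$ in $Y$ by the weak*-to-weak continuity hypothesis. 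Since $S_f$ is convex and strongly lower semi-continuous on $Y$, it is weakly lower semi-continuous, giving $S_f(Ku^*) \leq \liminf_n S_f(Ku^n)$. The weak* lower semi-continuity of $\norm[\radon]{\placeholder}$ combined with the definition of $\TV$ yields $\TV(u^*) \leq \liminf_n \TV(u^n)$. Adding these shows $u^*$ is a minimiser.

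For uniqueness, assume $S_f$ is strictly convex and $K$ is injective, and suppose two distinct minimisers $u_1 \neq u_2$ exist with finite minimum. Then $\tfrac{1}{2}(Ku_1 + Ku_2) = K\tfrac{1}{2}(u_1+u_2)$ with $Ku_1 \neq Ku_2$ by injectivity, so strict convexity of $S_f$ gives $S_f(K\tfrac{1}{2}(u_1+u_2)) < \tfrac{1}{2}(S_f(Ku_1) + S_f(Ku_2))$, while convexity of $\TV$ gives $\TV(\tfrac{1}{2}(u_1+u_2)) \leq \tfrac{1}{2}(\TV(u_1) + \TV(u_2))$, contradicting minimality.
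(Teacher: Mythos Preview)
Your proof is correct and follows essentially the same approach as the paper: the Poincar\'e--Wirtinger inequality to control the mean-free part, the case distinction on $K\ones$ to handle the constant part, compactness in $\BV(\Omega)$ to extract a subsequence, and weak lower semi-continuity of both terms to conclude. The paper's argument is nearly identical, only phrased slightly more tersely (it directly invokes compact embedding into $\LPspace{1}{\Omega}$ and then reflexivity/weak*-compactness for $Ku^{n_k} \wrightarrow Ku^*$, and for uniqueness simply observes that $S_f \compose K$ is strictly convex).
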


We provide the proof for the sake of completeness and as a prototype
for the generalisation to higher-order functionals.

\begin{proof}
  Assume that the objective functional in~\eqref{eq:general_tv_min} is
  proper, otherwise, there is nothing to show. For a minimising
  sequence $\seq{u^n}$, the Poincar\'e--Wirtinger inequality gives
  boundedness of
  $\seq{u^n - \abs{\Omega}^{-1} \int_\Omega u^n \dd{x}}$ in
  $\LPspace{d/(d-1)}{\Omega}$ while the coercivity of $S_f$ yields the
  boundedness of $\seq{Ku^n}$. By continuity,
  $\seq{K(u^n - \abs{\Omega}^{-1}\int_\Omega u^n \dd{x})}$ must be
  bounded, so if $K \ones \neq 0$, then $\seq{\int_\Omega u^n \dd{x}}$
  is bounded as otherwise, $\seq{Ku^n}$ would be unbounded.  In the
  case that $K \ones = 0$, we can without loss of generality assume
  that $\int_\Omega u^n \dd{x} = 0$ for all $n$ as shifting along
  constants does not change the functional value. In each case,
  $\seq{\int_\Omega u^n \dd{x}}$ is bounded, so $\seq{u^n}$ must be
  bounded in $\LPspace{d/(d-1)}{\Omega}$. Hence, by compact embedding
  (Proposition~\ref{prop:bv_embeddings}) we have $u^{n_k} \to u^*$ in
  $\LPspace{1}{\Omega}$ as $k \to \infty$ for a subsequence $\seq{u^{n_k}}$ and $u^* \in \BV(\Omega)$.
  Reflexivity and continuity of $K$ (weak* sequential compactness and
  weak*-to-weak continuity in case $d=1$) give
  $Ku^{n_k} \wrightarrow Ku^*$ in $Y$ for another subsequence (not
  relabelled). By lower semi-continuity, $u^*$ has to be a solution
  to~\eqref{eq:general_tv_min}.

  Finally, if $S_f$ is strictly convex and $K$ is injective,
  then $S_f \compose K$ is already strictly convex, so minimizers have to
  be unique.
\end{proof}

\pagebreak
\begin{example}
  \label{ex:discrepancies_existence}
  \mbox{}
  \begin{itemize}
  \item The discrepancy functional
    $S_f(v) = \frac1q \norm[Y]{v - f}^q$ for some $f \in Y$
    is obviously proper, convex, lower semi-continuous and coercive.
  \item It follows from Lemma \ref{lem:kl_basic_properties} in the appendix that the
    discrepancy $S_f(v) = \KL(v,f)$ defined on
    $Y = \LPspace{1}{\Omega'}$ for $f \in \LPspace{1}{\Omega'}$ with
    $f \geq 0$ almost everywhere is proper, convex and coercive in $L^1(\Omega')$.
    Lower semi-continuity in turn follows as special case of Lemma \ref{lem:kl_lim_inf_sup_estimates}.
  \end{itemize}
\end{example}

\begin{remark}
  Note that if the inversion of $K: \LPspace{p}{\Omega} \to Y$ is
  well-posed for some $p \in [1,\infty]$, then solutions
  of~\eqref{eq:general_tv_min} still exist (even for $\alpha = 0$).
  Clearly, the $\TV$ penalty is not necessary for obtaining a
  regularising effect for these problems. In this case, minimising the
  Tikhonov function with $\TV$ penalty may the interpreted as
  denoising. The most prominent example might be the
  \emph{Rudin-Osher-Fatemi} problem \cite{rudin1992tv_mh}
  which reads as
  \[
  \min_{u \in \LPspace{2}{\Omega}} \ \frac12 \int_{\Omega} \abs{u - f}^2 \dd{x}
  + \alpha \TV(u)
  \]
  for $f \in \LPspace{2}{\Omega}$.  Here, as the identity is
  ``inverted'', the effect of total-variation regularisation can be studied
  in detail.
  Minimisation problem of this type with other regularisation
  functionals are thus a good benchmark test for the properties of
  this functional.
\end{remark}

The stability of solutions in case of varying $f$ depends, of course,
on the dependence of $S_f$ on $f$. The appropriate notion here is the
convergence of the discrepancy functional, i.e., 
for a sequence $\seq{f^n}$ %
and limit $f$, %
we say that
$S_{f^n}$ %
converges to $S_f$ if
\begin{equation} \label{eq:discrepancy_convergence}
  \left\{
\begin{array}{rlrl}
  S_f(v) & \displaystyle
           \leq \liminf_{n \to \infty} 
           \ S_{f^n}(v^n)
  & \text{whenever}\ & v^n \wrightarrow
                       v \ \text{in}\ Y, \\
  S_f(v) 
         &\displaystyle
           \geq \limsup_{n \to \infty}
           \ S_{f^n} (v) & %
         \text{for each} & v \in Y.                                          
\end{array}
\right.
\end{equation}
Moreover, we say that $\seq{S_{f^n}}$ is \emph{equi-coercive} if there
is a coercive function $S_0: Y \to {[{0,\infty}]}$ such that
$S_{f^n} \geq S_0$ in $Y$ for each $n$.

\begin{theorem} \label{thm:tv_reg_stability}
  In the situation of Theorem \ref{thm:tv_reg_existence}, assume that 
  $S_{f^n}$ converges to $S_f$ in the sense of \eqref{eq:discrepancy_convergence} and $\seq{S_{f^n}}$ is
  equi-coercive. Then, for each sequence of minimizers $\seq{u^n}$
  of~\eqref{eq:general_tv_min} with discrepancy $S_{f^n}$,

  \begin{itemize}
  \item
    either $S_{f^n}(Ku^n) + \alpha \TV(u^n) \to \infty$ as
    $n \to \infty$ and~\eqref{eq:general_tv_min} with discrepancy
    $f$ does not admit a finite solution,
  \item or
    $S_{f^n}(Ku^n) + \alpha \TV(u^n) \to \min_{u \in
      \LPspace{d/(d-1)}{\Omega}} S_f(u) + \alpha \TV(u)$
    as $n \to \infty$ and there is, possibly up to constant shifts, a
    weak accumulation point $u \in \LPspace{d/(d-1)}{\Omega}$ (weak*
    accumulation point for $d=1$) that
    minimises~\eqref{eq:general_tv_min} with discrepancy $S_f$.
  \end{itemize}  
  For each subsequence $\seq{u^{n_k}} $ weakly converging to some $u$ in
  $\LPspace{d/(d-1)}{\Omega}$ ($u^{n_k} \wstarrightarrow u$ in case
  $d=1$), it holds that
  $\TV(u^{n_k}) \to \TV(u)$ as $k \to \infty$ and
  $u$ solves~\eqref{eq:general_tv_min} with discrepancy $S_f$.
  If solutions to the latter are unique, %
  we have $u^n \wrightarrow u$ in
  $\LPspace{d/(d-1)}{\Omega}$ ($u^n \wstarrightarrow u$ in case
  $d=1$).
\end{theorem}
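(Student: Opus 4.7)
The plan is to adapt the extraction scheme used in Theorem~\ref{thm:tv_reg_existence} and combine it with the $\Gamma$-convergence-type structure encoded in \eqref{eq:discrepancy_convergence}. First, I would derive an upper bound on the Tikhonov values: for any competitor $v$ with $S_f(Kv) + \alpha\TV(v) < \infty$, optimality of $u^n$ for the discrepancy $S_{f^n}$ together with the upper inequality in \eqref{eq:discrepancy_convergence} yields
\[
\limsup_{n\to\infty}\bigl[S_{f^n}(Ku^n) + \alpha\TV(u^n)\bigr]
\leq \limsup_{n\to\infty} S_{f^n}(Kv) + \alpha\TV(v)
\leq S_f(Kv) + \alpha\TV(v).
\]
Infimising over such $v$ bounds the $\limsup$ of the Tikhonov values by the infimum of the limit problem.

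Next, along any subsequence on which $\bigseq{S_{f^n}(Ku^n) + \alpha\TV(u^n)}$ stays bounded, equi-coercivity of $\bigseq{S_{f^n}}$ yields boundedness of $\bigseq{Ku^{n_k}}$ in $Y$; combined with boundedness of $\bigseq{\TV(u^{n_k})}$, the Poincar\'e--Wirtinger inequality, and the case distinction $K\ones \neq 0$ versus $K\ones = 0$ (in the latter case shifting each $u^{n_k}$ by a suitable constant along $\ones$, which leaves the objective unchanged), one obtains boundedness of the possibly shifted $\bigseq{u^{n_k}}$ in $\LPspace{d/(d-1)}{\Omega}$. Proposition~\ref{prop:bv_embeddings} and continuity of $K$ (weak*-to-weak for $d=1$) then provide a further subsequence with $u^{n_k}\wrightarrow u$ in $\LPspace{d/(d-1)}{\Omega}$ (resp.\ $u^{n_k}\wstarrightarrow u$ for $d=1$) and $Ku^{n_k}\wrightarrow Ku$ in $Y$. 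The lower inequality in \eqref{eq:discrepancy_convergence} together with weak lower semi-continuity of $\TV$ finally gives
\[
S_f(Ku) + \alpha\TV(u) \leq \liminf_{k\to\infty}\bigl[S_{f^{n_k}}(Ku^{n_k}) + \alpha\TV(u^{n_k})\bigr],
\]
so $u$ solves the limit problem.

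The dichotomy now follows by contraposition. If the limit problem admits no finite competitor, then any bounded subsequence of Tikhonov values would, by the preceding extraction, produce a finite minimiser, which is impossible; hence $S_{f^n}(Ku^n) + \alpha\TV(u^n) \to \infty$. In the converse case, the first step gives $\limsup \leq \min$ while applying the extraction to an arbitrary subsequence realises $\liminf \geq \min$, so the Tikhonov values converge to the minimum and every weak(-star) accumulation point is a minimiser. To upgrade this to $\TV(u^{n_k}) \to \TV(u)$ along a weakly convergent subsequence, I would use that the sum $S_{f^{n_k}}(Ku^{n_k}) + \alpha\TV(u^{n_k})$ converges to $S_f(Ku) + \alpha \TV(u)$, which equals the sum of the two $\liminf$ lower bounds on its non-negative summands; a routine sub-subsequence argument then forces each summand to converge to its own liminf. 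For the uniqueness statement, the standard subsequence principle together with metrisability of the weak (resp.\ weak*) topology on bounded subsets of $\LPspace{d/(d-1)}{\Omega}$ (resp.\ $\LPspace{\infty}{\Omega}$) upgrades subsequence convergence to convergence of the full sequence $\seq{u^n}$.

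The main obstacle I expect is the careful handling of the constant-shift ambiguity when $K\ones = 0$: one must ensure that the shifts used to invoke Poincar\'e--Wirtinger are compatible with the ``up to constant shifts'' accumulation asserted in the statement, and that the limsup/liminf passages assemble cleanly into the stated dichotomy without introducing spurious unbounded behaviour along unshifted subsequences.
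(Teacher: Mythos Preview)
Your proposal is correct and follows essentially the same strategy as the paper: equi-coercivity plus Poincar\'e--Wirtinger (with the $K\ones = 0$ case handled by constant shifts) for compactness, the $\limsup$-inequality in \eqref{eq:discrepancy_convergence} for the upper bound on Tikhonov values, and the $\liminf$-inequality plus weak lower semi-continuity of $\TV$ for the lower bound. The only cosmetic differences are that the paper interleaves the upper and lower bounds in a single chain of inequalities after extracting a subsequence (rather than establishing the $\limsup$ bound first), and that it proves $\TV(u^{n_k}) \to \TV(u)$ by contradiction (supposing $\limsup_k \TV(u^{n_k}) > \TV(u)$ forces $\liminf_k S_{f^{n_k}}(Ku^{n_k}) < S_f(Ku)$) rather than via your direct ``sum converges to sum of liminfs'' argument; these are equivalent.
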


\begin{proof}
  Let, in the following $\int_\Omega u^n \dd{x} = 0$ for all $n$ if
  $K\ones = 0$ and denote by $F = S_f \compose K + \alpha \TV$ as well
  as $F_n = S_{f^n} \compose K + \alpha \TV$.  First of all, suppose
  that $\seq{F_n(u^n)}$ is bounded. As $\seq{S_{f^n}}$ is
  equi-coercive, we can conclude as in the proof of
  Theorem~\ref{thm:tv_reg_existence} that $\seq{u^n}$ is
  bounded. Therefore, a weak accumulation point (weak* in case $d=1$)
  exists.

  Suppose that $u^{n_k} \wrightarrow u$ as $k \to \infty$. Then,
  \[
  S_f(Ku) \leq \liminf_{k \to \infty} \ S_{f^{n_k}}(Ku^{n_k}), \qquad \TV(u)
  \leq \liminf_{k \to \infty} \ \TV(u^{n_k})
  \]
  as well as, for each $u' \in \LPspace{d/(d-1)}{\Omega}$
  \[
  \fl 
  F(u) 
  \leq \liminf_{k \to \infty}
  S_{f^{n_k}}(Ku^{n_k}) + \alpha \TV(u^{n_k}) \leq \limsup_{k \to
    \infty} S_{f^{n_k}}(Ku') + \alpha \TV(u') \leq 
  F(u')
  \]
  Thus, $u$ is a minimizer for $F$ and plugging in $u'=u$ we see that
  $\lim_{k \to \infty} F_{n_k}(u^{n_k}) = F(u)$. In order to obtain
  $\lim_{k \to \infty} \TV(u^{n_k}) = \TV(u)$, suppose that
  $\limsup_{k \to \infty} \TV(u^{n_k}) > \TV(u)$, such that
  \[
  \liminf_{k \to \infty} \ S_{f^{n_k}}(Ku^{n_k}) 
  \leq \lim_{k\to\infty} F_{n_k}(u^{n_k}) - \alpha \limsup_{k \to \infty}
  \ \TV(u^{n_k}) < S_f(Ku)
  \]
  which is a contradiction. Thus, $\lim_{k \to \infty} \TV(u^{n_k}) = \TV(u)$.
  Finally, if $u$ is the unique minimizer for~\eqref{eq:general_tv_min} with
  discrepancy $S_f$, then $u^n \wrightarrow u$ as $n \to \infty$
  for the whole sequence ($u^n \wstarrightarrow u$ in case $d=1$) as
  any subsequence has to contain another subsequence that converges
  weakly (weakly*) to $u$.

  In order to conclude the proof, suppose that
  $\liminf_{n \to \infty} F_n(u^n) < \infty$. In that case, the above
  arguments yield an accumulation point as stated as well as a
  minimizer $u \in \BV(\Omega)$ of $F$ with
  $F(u) \leq \liminf_{n \to \infty} F_n(u^n)$. In particular, 
  $F$ is proper. By convergence of $S_{f^n}$ to $S_f$ and minimality,
  we have
  \[
  F(u) \geq \limsup_{n \to \infty} \ F_n(u) \geq \limsup_{n \to \infty} 
  \ F_n(u^n) \geq F(u)
  \]
  so the whole sequence of functional values converges. 

  Finally, in case $F_n(u^n) \to \infty$ as $n \to \infty$, $F$ cannot
  be proper: Otherwise, we obtain analogously to the above that
  $\infty > F(u) \geq \limsup_{n \to \infty} F_n(u) \geq \liminf_{n
    \to \infty} F_n(u^n)$
  for some $u \in \BV(\Omega)$ which is a contradiction.
\end{proof}

\begin{remark} The convergence of discrepancies as in \eqref{eq:discrepancy_convergence} is related to Gamma convergence. Indeed, the difference is that, for the latter, on the right hand side of the $\limsup$ inequality, an arbitrary sequence converging to $v$ is allowed (instead of the constant sequence). In this context, as can be seen in the proof of the stability result above, one could still weaken the $\limsup$-assumption in \eqref{eq:discrepancy_convergence} by allowing not only the constant recovery sequence but any sequence for which the regularisation functional converges. However, in order to maintain an assumption on the discrepancy term that is independent of the choice of regularisation, we chose the slightly stronger condition.
\end{remark}

\begin{example}
  \mbox{}
  \begin{itemize}
  \item A typical discrepancy is some power of the norm-distance in $Y$, i.e.,
    $S_f(v) = \frac1q \norm[Y]{v - f}^q$ for some $q \geq 1$.  It is
    easy to show that whenever $f^n \to f$ in $Y$, $S_{f^n}$ converges
    to $S_f$ in the above sense. Also, the equi-coercivity of
    $\seq{S_{f^n}}$ is immediate.
  \item
    For the Kullback--Leibler divergence, let
    $Y = \LPspace{1}{\Omega'}$ for some $\Omega'$
    and assume that $\seq{f^n}, f$ in 
    $\LPspace{1}{\Omega'}$ are such that
    $f_n \leq Cf$ a.e. in $\Omega'$ for some $C>0$ and
    $\KL(f,f^n) \to 0$ as $n \to \infty$.
 
    Then, it follows from Lemma \ref{lem:kl_lim_inf_sup_estimates} 
    in the appendix that $S_{f^n} = \KL(\placeholder, f^n)$ converges to
    $S_f = \KL(\placeholder, f)$, and also that $\|f_n-f\|_1\rightarrow 0$.
    The latter in particular implies boundedness of $\seq{f_n}$ in $L^1(\Omega')$
    which, together with the coercivity estimate of Lemma \ref{lem:kl_basic_properties}
   shows that $\seq{S_{f^n}}$ is equi-coercive.
  \end{itemize}
\end{example}

In addition to well-posedness of the Tikhonov-functional minimisation,
one is of course interested in regularisation results, i.e., the
convergence of solutions to a minimum-$\TV$-solution provided that the
data converges and $\alpha \to 0$ in some sense. For this purpose, let
$u^\dagger \in \BV(\Omega)$ be a minimum-$\TV$-solution of
$Ku^\dagger = f^\dagger$ for some data $f^\dagger$ in $Y$, i.e.,
$\TV(u^\dagger) \leq \TV(u)$ for each $Ku = f^\dagger$, suppose
that for each $\delta > 0$ one has given a $f^\delta \in Y$ such that
$S_{f^\delta}(f^\dagger) \leq \delta$, and denote by $u^{\alpha,\delta}$ a
solution of~\eqref{eq:general_tv_min} for parameter $\alpha > 0$ and
data $f^\delta$.

\begin{theorem}
  \label{thm:tv_reg_convergence}
   In the situation of Theorem \ref{thm:tv_reg_existence}, let the discrepancy functionals $\seq{S_{f^\delta}}$ be 
  equi-coercive and converge to
   $S_{f^\dagger}$ in the sense of \eqref{eq:discrepancy_convergence} for some
   data $f^\dagger \in Y$ with $S_{f^\dagger}(v) = 0$ if and only if $v = f^\dagger$.
  Choose for each $\delta > 0$ the parameter $\alpha > 0$ such that
  \[
  \alpha \to 0, \quad
  \frac{\delta}{\alpha} \to 0 \qquad \text{as} \qquad \delta \to 0.
  \]
  Then, again up to constant shifts, $\seq{u^{\alpha,\delta}}$ has at least one weak accumulation
  point in $L^{d/(d-1)}(\Omega)$ (weak* in case $d=1$). Each such accumulation point is a 
  minimum-$\TV$-solution of $Ku = f^\dagger$ and $\lim_{\delta \to 0}
  \TV(u^{\alpha,\delta}) = \TV(u^\dagger)$.
\end{theorem}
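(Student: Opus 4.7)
The plan is to follow the standard Tikhonov regularisation argument, adapted to the $\BV$-setting and to the generalised discrepancy framework introduced above, using the comparison functional $u^\dagger$ to bound the Tikhonov energy along the sequence $\seq{u^{\alpha,\delta}}$.

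First I would exploit the minimality of $u^{\alpha,\delta}$ by testing against $u^\dagger$. Since $Ku^\dagger = f^\dagger$ and $S_{f^\delta}(f^\dagger) \leq \delta$, one obtains
\[
S_{f^\delta}(Ku^{\alpha,\delta}) + \alpha \TV(u^{\alpha,\delta}) \leq S_{f^\delta}(Ku^\dagger) + \alpha\TV(u^\dagger) \leq \delta + \alpha\TV(u^\dagger).
\]
Dividing by $\alpha > 0$ and using the parameter choice $\delta/\alpha \to 0$ yields
$\TV(u^{\alpha,\delta}) \leq \delta/\alpha + \TV(u^\dagger)$, so that $\seq{\TV(u^{\alpha,\delta})}$ is bounded, and moreover $S_{f^\delta}(Ku^{\alpha,\delta}) \leq \delta + \alpha\TV(u^\dagger) \to 0$. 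By equi-coercivity of $\seq{S_{f^\delta}}$, the sequence $\seq{Ku^{\alpha,\delta}}$ is bounded in $Y$. Just as in the proof of Theorem~\ref{thm:tv_reg_existence}, if $K\ones \neq 0$ this boundedness together with the Poincar\'e--Wirtinger inequality controls the mean, while if $K\ones = 0$ one may shift constants so that $\int_\Omega u^{\alpha,\delta} \dd{x} = 0$ without altering the objective; in either case $\seq{u^{\alpha,\delta}}$ becomes bounded in $\BV(\Omega)$. Proposition~\ref{prop:bv_embeddings} then provides a weak accumulation point $u$ in $\LPspace{d/(d-1)}{\Omega}$ (weak* in case $d=1$), along a subsequence $\seq{u^{\alpha_k,\delta_k}}$.

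Next I would identify $u$ as a minimum-$\TV$-solution. By weak continuity of $K$ (weak-to-weak for $d\geq 2$, weak*-to-weak for $d=1$), $Ku^{\alpha_k,\delta_k} \wrightarrow Ku$. The $\liminf$-part of the discrepancy convergence \eqref{eq:discrepancy_convergence} together with $S_{f^{\delta_k}}(Ku^{\alpha_k,\delta_k}) \to 0$ gives
\[
S_{f^\dagger}(Ku) \leq \liminf_{k \to \infty} S_{f^{\delta_k}}(Ku^{\alpha_k,\delta_k}) = 0,
\]
and the assumption that $S_{f^\dagger}$ vanishes only at $f^\dagger$ forces $Ku = f^\dagger$. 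Lower semi-continuity of $\TV$ on $\LPspace{d/(d-1)}{\Omega}$ combined with the bound established above yields
\[
\TV(u) \leq \liminf_{k \to \infty} \TV(u^{\alpha_k,\delta_k}) \leq \limsup_{k\to\infty} \bigl( \tfrac{\delta_k}{\alpha_k} + \TV(u^\dagger) \bigr) = \TV(u^\dagger).
\]
Since $u^\dagger$ is a minimum-$\TV$-solution and $Ku = f^\dagger$, equality holds, so $u$ is itself a minimum-$\TV$-solution and $\TV(u^{\alpha_k,\delta_k}) \to \TV(u) = \TV(u^\dagger)$ along the subsequence. A standard subsequence--subsequence argument then upgrades this to convergence along the whole family $\delta \to 0$, establishing the final claim $\lim_{\delta\to 0} \TV(u^{\alpha,\delta}) = \TV(u^\dagger)$.

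The main obstacle I expect is the bookkeeping around the kernel of $K$: when $K\ones = 0$, the Tikhonov problem is only well-posed up to constants, so one must carefully normalise $u^{\alpha,\delta}$ (e.g.~via mean zero) to extract a $\BV$-bounded sequence from the $\TV$-bound, mirroring the device already used in Theorem~\ref{thm:tv_reg_existence}. A secondary subtlety is that the $\liminf$-inequality in \eqref{eq:discrepancy_convergence} requires weak convergence of $Ku^{\alpha_k,\delta_k}$ in $Y$, which needs the weak*-to-weak continuity hypothesis on $K$ in the one-dimensional case; everything else is a routine application of lower semi-continuity and the parameter choice.
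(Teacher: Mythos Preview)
Your proposal is correct and follows essentially the same approach as the paper: test optimality against $u^\dagger$, use the parameter choice to bound $\TV(u^{\alpha,\delta})$ and drive the discrepancy to zero, invoke equi-coercivity and the Poincar\'e--Wirtinger argument from Theorem~\ref{thm:tv_reg_existence} to obtain $\BV$-boundedness (with the constant-shift normalisation when $K\ones=0$), then identify accumulation points via the $\liminf$-part of~\eqref{eq:discrepancy_convergence} and lower semi-continuity of $\TV$, finishing with a subsequence argument. The obstacles you flag are exactly the ones the paper handles in the same way.
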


\begin{proof}
  Again we assume that $\int_\Omega u^{\alpha,\delta} \dd{x} = 0$ for all $(\alpha,\delta)$ if
  $K\ones = 0$.
  Using the optimality of $u^{\alpha,\delta}$ for~\eqref{eq:general_tv_min}
  compared to $u^\dagger$ gives
  \[
  S_{f^\delta}(Ku^{\alpha,\delta}) + \alpha \TV(u^{\alpha,\delta})
  \leq \delta + \alpha \TV(u^\dagger).
  \]
  Since $\alpha \to 0$ as $\delta \to 0$, we have that
  $S_{f^\delta}(Ku^{\alpha,\delta}) \to 0$ as $\delta \to 0$. 
  Moreover, as also $\delta/\alpha \to 0$, it follows that
  $\limsup_{\delta \to 0} \TV(u^{\alpha,\delta}) \leq \TV(u^\dagger)$.
  This allows to conclude that $\seq{u^{\alpha,\delta}}$ is bounded
  in $\BV(\Omega)$ and, by embedding, admits a weak accumulation point in $L^{d/(d-1)}(\Omega)$ (weak* in case $d=1$).
  
  Next, let $u^*$ be such an accumulation point associated with
  $\seq{\delta_n}$, $\delta_n \to 0$ as well as the corresponding
  parameters $\seq{\alpha_n}$. Then,
  $S_{f^\dagger}(Ku^*) \leq \liminf_{n \to \infty}
  S_{f^{\delta_n}}(Ku^{\alpha_n,\delta_n}) = 0$,
  so
  $Ku^* = f^\dagger$.  Moreover,
  $\TV(u^*) \leq \liminf_{n \to \infty} \TV(u^{\alpha_n,\delta_n})
  \leq \TV(u^\dagger)$, hence $u^*$ is a minimum-$\TV$-solution.
  In particular, $\TV(u^*) = \TV(u^\dagger)$, so $\lim_{n \to \infty}
  \TV(u^{\alpha_n,\delta_n}) = \TV(u^\dagger)$.

  Finally, each sequence of $\seq{\delta_n}$, $\delta_n \to 0$
  contains another subsequence $\seq{u^{\delta_n}}$ for which
  $\TV(u^{\alpha_n,\delta_n}) \to \TV(u^\dagger)$ as $n \to \infty$,
  so $\TV(u^{\alpha,\delta}) \to \TV(u^\dagger)$ as $\delta \to 0$.
\end{proof}

Finally, if a respective source condition is satisfied, we can, under
some circumstances, give rates for some \emph{Bregman distance} with respect
to $\TV$ associated with respect to a particular subgradient element \cite{Burger04_convergence_rates_convex_variational_mh}.
Recall that the Bregman distance $D^F_{x^*}(y,x)$ of $x,y \in X$ for a convex
functional $F: X \to {]{-\infty,\infty}]}$ and subgradient element $x^*\in \subgrad F(x)$ is
given by
\[
D_{x^*}^F(y,x) = F(y) - F(x) - \scp{x^*}{y-x}.
\]
The convergence rate results are then a consequence of the following
proposition.

\begin{proposition}
  \label{prop:tv_reg_convergence_rate}
   In the situation of Theorem \ref{thm:tv_reg_convergence}, 
   let $K^*w^\dagger \in \subgrad \TV(u^\dagger)$ for some
  $w^\dagger \in Y^*$. Then,
  \begin{equation}
    \label{eq:bregman_dist_est}
    \breg{\TV}{K^*w^\dagger}(u^{\alpha,\delta},u^\dagger) \leq \frac1\alpha
    \bigl( S_{f^\delta}^*(\alpha w^\dagger) + S_{f^\delta}^*(-\alpha
    w^\dagger) + 2 \delta \bigr).   
  \end{equation}
\end{proposition}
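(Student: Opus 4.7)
The plan is to combine the optimality of $u^{\alpha,\delta}$ with two applications of the Fenchel inequality in order to control both the TV-part and the inner-product part of the Bregman distance.

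First, I would use the fact that $u^\dagger$ is feasible in the minimisation problem that defines $u^{\alpha,\delta}$. Since $S_{f^\delta}(f^\dagger)\leq \delta$ and $Ku^\dagger = f^\dagger$, the optimality of $u^{\alpha,\delta}$ yields
\[
S_{f^\delta}(Ku^{\alpha,\delta}) + \alpha \TV(u^{\alpha,\delta}) \leq \delta + \alpha \TV(u^\dagger).
\]
Rearranging this inequality and subtracting the linear term $\alpha\scp{K^*w^\dagger}{u^{\alpha,\delta}-u^\dagger} = \alpha\scp{w^\dagger}{Ku^{\alpha,\delta} - Ku^\dagger}$ on both sides produces, directly from the definition of the Bregman distance,
\[
\alpha\, \breg{\TV}{K^*w^\dagger}(u^{\alpha,\delta},u^\dagger)
\leq \delta - S_{f^\delta}(Ku^{\alpha,\delta}) - \alpha\scp{w^\dagger}{Ku^{\alpha,\delta} - Ku^\dagger}.
\]

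Next, I would decompose the remaining inner product as $-\scp{\alpha w^\dagger}{Ku^{\alpha,\delta}} + \scp{\alpha w^\dagger}{Ku^\dagger}$ and estimate each piece with the Fenchel inequality applied to $S_{f^\delta}$. This gives
\[
\scp{-\alpha w^\dagger}{Ku^{\alpha,\delta}} \leq S_{f^\delta}(Ku^{\alpha,\delta}) + S_{f^\delta}^*(-\alpha w^\dagger),
\]
\[
\scp{\alpha w^\dagger}{Ku^\dagger} \leq S_{f^\delta}(Ku^\dagger) + S_{f^\delta}^*(\alpha w^\dagger) \leq \delta + S_{f^\delta}^*(\alpha w^\dagger),
\]
where in the second line I again used $S_{f^\delta}(f^\dagger) = S_{f^\delta}(Ku^\dagger) \leq \delta$.

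Adding both Fenchel bounds into the previous inequality makes the $S_{f^\delta}(Ku^{\alpha,\delta})$-terms cancel, leaving
\[
\alpha\, \breg{\TV}{K^*w^\dagger}(u^{\alpha,\delta},u^\dagger) \leq 2\delta + S_{f^\delta}^*(\alpha w^\dagger) + S_{f^\delta}^*(-\alpha w^\dagger),
\]
and division by $\alpha > 0$ gives~\eqref{eq:bregman_dist_est}. There is no real obstacle: the only subtle point is keeping track of signs in the Fenchel step and noting that the source condition $K^*w^\dagger \in \subgrad\TV(u^\dagger)$ enters only implicitly, namely through the fact that the left-hand side is a \emph{non-negative} Bregman distance with respect to this specific subgradient, which makes the bound meaningful.
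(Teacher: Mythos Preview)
Your proof is correct and follows essentially the same approach as the paper: optimality of $u^{\alpha,\delta}$ compared with $u^\dagger$, rearrangement into the Bregman distance, and two applications of the Fenchel inequality to handle $\scp{\alpha w^\dagger}{f^\dagger}$ and $\scp{-\alpha w^\dagger}{Ku^{\alpha,\delta}}$. The only cosmetic difference is that the paper keeps $S_{f^\delta}(Ku^{\alpha,\delta})$ on the left-hand side until the very end, whereas you move it to the right immediately; the cancellation is identical either way.
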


\begin{proof}
  Using the minimality of $u^{\alpha,\delta}$ yields
  $S_{f^\delta}(Ku^{\alpha,\delta}) + \alpha \TV(u^{\alpha,\delta}) \leq
  \alpha \TV(u^\dagger) + \delta$. Rearranging, adding
  $\scp{K^*w^\dagger}{u^\dagger - u^{\alpha,\delta}}$ on both sides as well as
  using Fenchel's inequality twice yields
  \begin{eqnarray*}
    \fl
    S_{f^\delta}(Ku^{\alpha,\delta}) + \alpha \breg{\TV}{K^*w^\dagger}(u^{\delta,\alpha},
    u^\dagger) 
    &\leq \alpha \scp{K^*w^\dagger}{u^\dagger - u^{\alpha,\delta}} + \delta \\
    &= \scp{\alpha w^\dagger}{f^\dagger} - \scp{\alpha w^\dagger}{Ku^{\alpha,\delta}} + \delta
      \\
    &\leq S_{f^\delta}^*(\alpha w^\dagger)
      - \scp{\alpha w^\dagger}{Ku^{\alpha,\delta}}
      + 2\delta 
    \\
    & \leq  
      S_{f^\delta}^*(\alpha w^\dagger) + S_{f^\delta}^*(-\alpha w^\dagger)
      + S_{f^\delta}(Ku^{\alpha,\delta}) 
      + 2\delta.
  \end{eqnarray*}
  Subtracting $S_{f^\delta}(Ku^{\alpha,\delta})$ and dividing by $\alpha$ gives
  the result.
\end{proof}

For well-known discrepancy terms, one easily gets parameter choice rules
that lead to rates for $\breg{\TV}{K^*w^\dagger}(u^{\alpha,\delta})$.

\begin{example} \label{ex:discrepancies_rates}
  \mbox{}
  \begin{itemize}
  \item For $S_{f^\delta}(v) = \frac1q \norm[Y]{v - f^\delta}^q$ with
    $q > 1$,
    $S_{f^\delta}^*(w) = \frac1{q^*}\norm[Y^*]{w}^{q^*} +
    \scp{f^\delta}{w}$
    where $1/q + 1/q^* = 1$, hence~\eqref{eq:bregman_dist_est} reads
    as
    \[
    \breg{\TV}{K^*w^\dagger}(u^{\alpha,\delta}, u^\dagger) \leq
    \frac{2\alpha^{q^* - 1}}{q^*} \norm[Y^*]{w^\dagger}^{q^*} +
    \frac{2\delta}{\alpha}.
    \]
    In the non-trivial case of $w^\dagger \neq 0$, the right-hand side
    becomes minimal for
    $\alpha = \norm[Y^*]{w^\dagger}^{-1} (\frac{q^*}{q^*-1})^{1/q^*}
    \delta^{1/q^*}$
    giving the well-known rate of
    $\kronO(\delta^{1/q}) = \kronO(\norm[Y]{f^\delta - f^\dagger})$
    for the Bregman distance.
  \item For the Kullback--Leibler discrepancy, i.e.,
    $S_{f^\delta}(v) = \KL(v, f^\delta)$ on $\LPspace{1}{\Omega'}$,
    a direct, pointwise computation shows that the dual functional obeys
    $S_{f^\delta}^*(w) + S_{f^\delta}^*(-w) = \int_{\Omega'} -f^\delta
    \log (1 - w^2) \dd{x}$ if $\abs{w} \leq 1$ almost everywhere,
    setting $-t \log(0) = \infty$ for $t > 0$ and
    $-0\log(0)=0$, and
    $S_{f^\delta}^*(w) + S_{f^\delta}^*(-w) = \infty$ else.  As
    $w^\dagger \in \LPspace{\infty}{\Omega'}$, we may choose
    $\alpha > 0$ such that
    $\alpha \norm[\infty]{w^\dagger} \leq \frac1{\sqrt{2}}$.  Then, the
    equivalence
    \[
    \fl
    \alpha^2 \int_{\Omega'} f^\delta (w^\dagger)^2 \dd{x} \leq - \int_{\Omega'}
    f^\delta \log(1 - \alpha^2 (w^\dagger) ^2) \dd{x} \leq
    \alpha^2 2 \log(2) \int_{\Omega'} f^{\delta} (w^\dagger) ^2 \dd{x}
    \]
    holds. Assuming $\int_{\Omega'} f^\dagger (w^\dagger)^2 \dd{x} > 0$, the weak convergence $f^\delta \wrightarrow f$ in $\LPspace{1}{\Omega'}$
    (see Lemma~\ref{lem:kl_lim_inf_sup_estimates}) implies
    $S_{f^\delta}^*(\alpha w^\dagger) + S_{f^\delta}^*(-\alpha
    w^\dagger) \sim \alpha^2$
    independent from $\delta$. Hence, choosing
    $\alpha \sim \sqrt{\delta}$ yields the rate
    $\kronO(\sqrt{\delta})$ for the Bregman distance as
    $\delta \to 0$.
  \end{itemize}
\end{example}

\subsection{Further first-order approaches}
\label{subsec:further_first_order}

Besides these functional-analytic properties, functions of bounded
variation admit interesting structural and fine properties.  Let us
briefly discuss the structure of the gradient $\grad u$ for a
$u \in \BV(\Omega)$. By Lebesgue's decomposition theorem, $\grad u$
can be split into an absolutely continuous part $\grad^{a} u$ with
respect to the Lebesgue measure and a singular part $\grad^s u$. We
tacitly identify $\grad^a u$ with the Radon--Nikod\'ym derivative, i.e.,
$\grad^a u \in \LPspace{1}{\Omega,\RR^d}$ via the measure
$\grad^a u \lebesgue{d}$.

The singular part $\grad^s u$ therefore has to capture the jump
discontinuities of $u$. Indeed, introducing the jump set,
it can further be decomposed.
Recall that a $u \in \LPspace{1}{\Omega}$ is almost everywhere
approximately continuous, i.e., for almost every $x \in \Omega$ there
exists a $z \in \RR$ such that
\[
\lim_{r \to 0} \dashint_{\ball{r}{x}} \abs{u(y) - z} \dd{y} = 0.
\]
The collections of all points $S_u$ for which $u$ is not approximately
continuous is called the \emph{discontinuity set} of $u$.

\begin{definition}
  \label{def:approximate_diff_jump}
  Let $u \in \LPlocspace{1}{\Omega}$ and $x \in \Omega$.
  \begin{enumerate}
  \item The function $u$ is called \emph{approximately differentiable}
    in $x$ if there exists a $v \in \RR^d$ such that
    \[
    \lim_{r \to 0} \frac1r \dashint_{\Omega} \abs{u(y) - u(x) - v \inprod
    (y-x)} \dd{y} = 0.
    \]
    The vector $\grad^{\approx} u(x) = v$ is called
    the \emph{approximate gradient} of $u$ at $x$.
  \item
    The point $x$ is an \emph{approximate jump point} of $u$ if there
    exist $u^+(x) > u^-(x)$ and a $\nu \in \RR^d$,
    $\abs{\nu} = 1$ such that
    \[
    \fl
    \lim_{r \to 0} \dashint_{\ball[+]{r}{x,\nu}} \abs{u(y) - u^+(x)} \dd{y} = 0,
    \qquad 
    \lim_{r \to 0} \dashint_{\ball[-]{r}{x,\nu}} \abs{u(y) - u^{-}(x)} \dd{y} = 0
    \]
    where $\ball[+]{r}{x,\nu}$ and $\ball[-]{r}{x,\nu}$ are balls cut
    by the hyperplane perpendicular to $\nu$ and containing $x$, i.e.,
    \[
    \begin{array}{rl}
      \ball[+]{r}{x,\nu} &= \set{y \in \RR^d}{ \abs{y - x} < r,
                           \ (y - x) \inprod \nu > 0}, \\[\smallskipamount]
      \ball[-]{r}{x,\nu} &= \set{y \in \RR^d}{ \abs{y - x} < r,
                           \ (y - x) \inprod \nu < 0}.
    \end{array}
    \]
    The set $J_u$ of all approximate jump points is called the
    the \emph{jump set} of $u$. 
  \end{enumerate}
\end{definition}

\begin{theorem}[\cite{Ambrosio_mh}]
  \label{thm:bv_grad_decomp}
  Let $u \in \BV(\Omega)$. Then,
  \begin{enumerate}
  \item $u$ is almost everywhere approximately differentiable with
    $\grad^a u = \grad^{\approx} u$ in $\LPspace{1}{\Omega,\RR^d}$, 
  \item
    the jump set satisfies $\hausdorff{d-1}(S_u \without J_u) = 0$
    and we have
    $\grad u \restricted J_u = (u^+ - u^-) \nu_u \hausdorff{d-1}$,
  \item
    the restriction $\grad u \restricted (\Omega \without S_u)$
    is absolutely continuous with respect to $\hausdorff{d-1}$.
  \end{enumerate}
  In particular, the involved sets and functions are Borel sets and
  functions, respectively.
\end{theorem}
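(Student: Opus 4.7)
The three claims form the classical fine-structure theorem for $\BV$ functions (Federer--Vol'pert), and I would handle them by combining the Lebesgue decomposition $\grad u = \grad^a u\, \lebesgue{d} + \grad^s u$ already available from Radon--Nikod\'ym with three measure-theoretic tools: the Lebesgue differentiation theorem, the Poincar\'e--Wirtinger inequality on small balls, and a blow-up/compactness argument in $\BV$.

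For part (i), the plan is to work $\lebesgue{d}$-a.e. at points $x$ which are simultaneously Lebesgue points of $u$ and of $\grad^a u$ and at which the singular part satisfies $\lim_{r\to 0} r^{-d} \abs{\grad^s u}(\ball{r}{x}) = 0$ (such points have full Lebesgue measure by the differentiation theorem for Radon measures). On such a ball, apply the $\BV$ Poincar\'e--Wirtinger estimate to $y \mapsto u(y) - u(x) - \grad^a u(x) \inprod (y-x)$ to get
\[
\dashint_{\ball{r}{x}} \abs{u(y) - u(x) - \grad^a u(x) \inprod (y-x)} \dd{y}
\leq C r \dashint_{\ball{r}{x}} \abs{\grad^a u(y) - \grad^a u(x)} \dd{y} + C r \frac{\abs{\grad^s u}(\ball{r}{x})}{r^d},
\]
and divide by $r$. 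Both terms on the right go to zero as $r \to 0$ by the choice of $x$, giving approximate differentiability at $x$ with approximate gradient $\grad^a u(x)$.

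For part (ii), the plan is the blow-up argument: for $\hausdorff{d-1}$-a.e.\ $x$ at which $\grad u$ has positive upper $(d-1)$-density (this is the relevant subset of $S_u$ by a standard density lemma), consider the rescalings $u_{x,r}(y) = u(x+ry)$ on the unit ball. Boundedness of $\seq{u_{x,r}}$ in $\BV$ uniform in $r$ (by scaling of $\TV$) plus compactness from Proposition~\ref{prop:bv_embeddings} yield, along a subsequence, an $L^1_{\mathrm{loc}}$ limit $u_0$ whose gradient is concentrated on a hyperplane and which therefore takes only two values $u^\pm(x)$ across a unit normal $\nu_u(x)$; this identifies $x$ as an approximate jump point and gives $\hausdorff{d-1}(S_u \without J_u) = 0$. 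The structural formula $\grad u \restricted J_u = (u^+ - u^-)\nu_u \hausdorff{d-1}$ is then obtained by applying the divergence identity $\int_\Omega \varphi \inprod \dd{\grad u} = -\int_\Omega u \divergence \varphi \dd{x}$ to test fields $\varphi$ supported in thin tubes around the rectifiable pieces of $J_u$ and passing to the blow-up limit. Part (iii) I would prove by showing that for $\hausdorff{d-1}$-a.e.\ $x \in \Omega \without S_u$ the upper density $\limsup_{r \to 0} r^{-(d-1)}\abs{\grad u}(\ball{r}{x})$ vanishes; this follows from the Poincar\'e inequality applied to $u - u(x)$ on $\ball{r}{x}$, because at a point of approximate continuity the oscillation averages to zero. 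A Besicovitch-type covering argument then upgrades the vanishing density to absolute continuity of $\grad u \restricted (\Omega \without S_u)$ with respect to $\hausdorff{d-1}$.

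The main obstacle will be part (ii), specifically the blow-up step: one needs to ensure that compactness produces a limit measure concentrated on a single hyperplane (not on a more complicated one-codimensional configuration), which requires the $\hausdorff{d-1}$-rectifiability of the portion of $S_u$ where $\grad u$ has positive upper density. Granting this rectifiability, the remaining identifications in (ii) and the density vanishing in (iii) are routine, but the rectifiability itself is the deep combinatorial-geometric content behind the whole theorem; I would either appeal to the Federer--Vol'pert theorem directly or derive it via a slicing reduction to the one-dimensional case, where $\BV(\RR)$ functions are easily decomposed into absolutely continuous, jump, and Cantor parts.
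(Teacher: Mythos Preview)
The paper does not prove this theorem at all: it is stated with the citation \cite{Ambrosio_mh} and no proof is given, since this is the classical Federer--Vol'pert structure theorem quoted as background. Your sketch is a reasonable outline of the standard argument one finds in that reference (Lebesgue differentiation plus Poincar\'e for (i), blow-up and rectifiability for (ii), density estimates for (iii)), and you correctly identify the rectifiability of $S_u$ as the deep ingredient; but there is nothing in the paper to compare it against.
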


Denoting by
\[
\grad^j u = \grad^s u \restricted J_u, \qquad
\grad^c u = \grad^s u \restricted (\Omega \without S_u)
\]
where $\grad^j u$ and $\grad^c u$ is the jump and Cantor part of $\grad u$,
respectively, the gradient of a $u \in \BV(\Omega)$ 
can be decomposed into
\begin{equation}
  \label{eq:bv_grad_decomp}
  \grad u = \grad^a u \lebesgue{d} + (u^+ - u^-) \nu_u \hausdorff{d-1} 
  \restricted J_u +
  \grad^c u
\end{equation}
with $\grad^c u$ being singular with respect to $\lebesgue{d}$ and
absolutely continuous with respect to $\hausdorff{d-1}$.

This construction allows in particular to define penalties beyond the
total variation seminorm (see, for instance \cite[Section 5.5]{Ambrosio_mh}). Letting $g: \RR^d \to [0,\infty]$ a
proper, convex and lower semi-continuous function and 
$g_\infty$ be given according to 
\[
g_\infty(x) = \lim_{t \to \infty} \frac{g(tx)}{t}
\]
with $\infty$ allowed, then the functional
\begin{equation}
  \label{eq:general_bv_penalty}
  \fl
  \mR_g(u) = \int_\Omega g(\grad^a u) \dd{x} + 
  \int_{J_u} (u^+ - u^-) g_\infty(\nu_u) \dd{\hausdorff{d-1}}
  + \int_\Omega
  g_\infty (\sigma_{\grad^c u})
  \dd{\abs{\grad^c u}}
\end{equation}
where $\sigma_{\grad^c u}$ is the sign of $\grad^c u$, i.e.,
$\grad^c u = \sigma_{\grad^c u} \abs{\grad^c u}$,
is proper, convex and lower semi-continuous on $\BV(\Omega)$.
With the Fenchel-dual functional, i.e., $g^*(y) = \sup_{x \in \RR^d} \ x \inprod y - g(x)$,
it can also be expressed in (pre-)dual form as
\[
  \mR_{g}(u) = 
  \sup \ \Bigset{\int_\Omega u \divergence \varphi - g^*(\varphi)
    \dd{x}}{\varphi \in \Ccspace{\infty}{\Omega,\RR^d}}.
\]
Obviously, the usual $\TV$-case corresponds to $g$ being the Euclidean
norm on $\RR^d$. Also, $g_\infty(x) = \infty$ for some $\abs{x} = 1$
does not allow jumps in the direction of $x$, so one usually assumes that
$g_\infty(x) < \infty$ for each $\abs{x} = 1$ in order to obtain a genuine
penalty in $\BV(\Omega)$.
In addition, if there are $c_0 > 0$ and $R > 0$ such that $g(x) \geq c_0 \abs{x}$
for each $\abs{x} \geq R$, 
then there is a constant $C > 0$ such that
\[
\mR_g(u) \geq c_0 \TV(u) - C
\]
for all $u \in \BV(\Omega)$, i.e., $\mR_g$ is as coercive as $\TV$.
Consequently, the well-posedness and convergence statements in
Theorems~\ref{thm:tv_reg_existence},~\ref{thm:tv_reg_stability} 
and~\ref{thm:tv_reg_convergence}
as well as in Proposition~\ref{prop:tv_reg_convergence_rate} can be
adapted to $\mR_g$ in a straightforward manner with the proofs following
the same line of argumentation.

\begin{example}
  \label{ex:huber_smooth_tv}
  There are several possibilities for replacing the non-differentiable
  norm function $\abs{\placeholder}$ in the $\TV$-functional
  by a smooth approximation in $0$.

  Choosing a $\varepsilon > 0$, consider
  \[
  g_\varepsilon^1(x) =
  \cases{\frac{1}{2\varepsilon} \abs{x}^2 & 
    for $\abs{x} \leq \varepsilon$,\\
    \abs{x} - \frac{\varepsilon}2 & else,}
  \qquad
  g_\varepsilon^2(x) = \sqrt{\abs{x}^2 + \varepsilon^2} - \varepsilon,
  \]  
  both being continuously differentiable in $\RR^d$ and approximating
  $\abs{\placeholder}$ for $\varepsilon \to 0$.

  The associated penalties $\mR_{g_\varepsilon^1}$ and $\mR_{g_\varepsilon^2}$
  are often referred to as \emph{Huber-$\TV$} and \emph{smooth $\TV$}, 
  respectively.
\end{example}

\begin{example}
  Taking $g$ as a non-Euclidean norm on $\RR^d$ yields functionals of
  \emph{anisotropic total-variation} type. The %
  common choice
  is $g = \abs[1]{\placeholder}$ which is also often referred to as
  anisotropic $\TV$. 
\end{example}

\begin{remark}
It is worth noting that $g$ as above can also be made spatially dependent, 
which has applications for instance the context of regularisation
for inverse problems involving multiple modalities or multiple spectra.
Under some assumptions, functionals $\mR_g$ as in \eqref{eq:general_bv_penalty}
with spatially dependent $g$ are again lower semi-continuous on $\BV$ \cite{Fusco08} and
well-posedness results for TV apply \cite{holler18structural_mh}.
\end{remark}

\subsection{Colour and multichannel images}

Colour and multichannel images are usually represented by functions
mapping into a vector-space.  Total-variation functionals and
regularisation approaches can easily be extended to such vector-valued
functions; Definition~\ref{def:tv} already contains an isotropic
variant for functions with values in a finite-dimensional space
$H$, where we used the Hilbert-space norm $|x| = (\sum_{i=1}^d {x_i \cdot x_i})^{1/2}$
as pointwise norm on $H^d$ for the test functions $\varphi \in \Ccspace{\infty}{\Omega,H^d}$.

 However, in contrast to the scalar case, this is not 
the only choice yielding $\TV$-functionals that are invariant under
distance-preserving transformations. The essential property for a norm
$\abs[\circ]{\placeholder}$ on $H^d$ needed for the latter is
\[
\abs[\circ]{Ox} = \abs[\circ]{x} \quad
\text{with} 
\qquad
\text{for all} \quad
x \in H^d \quad
\text{and} \quad O \in \RR^{d \times d}, \quad
O^*O = \id
\]
where $(Ox)_i = \sum_{j=1}^d o_{ij} x_j$. We call such norms
\emph{unitarily left invariant}. Denoting by
$\abs[\ast]{\placeholder}$ the dual norm, the associated total
variation for a $u \in \LPlocspace{1}{\Omega,H}$ is given by
\[
\TV(u) = \sup \ \Bigset{\int_\Omega u \inprod \divergence \varphi
  \dd{x}}{\varphi \in \Ccspace{\infty}{\Omega,H^d}, \
  \abs[\ast]{\varphi(x)} \leq 1 \ \forall x \in \Omega}.
\]
and invariant to distance-preserving transformations. If the norm
$\abs[\circ]{\placeholder}$ is moreover \emph{unitarily right invariant},
i.e.,
\[
\abs[\circ]{xO} = \abs[\circ]{x} \qquad
\text{for all} \quad x \in H^d \quad
O: H \to H \quad \text{unitary}
\]
where $(xO)_i = \bigl( O(x) \bigr)_i$, then it can be written as a
unitarily invariant matrix norm and hence $\abs[\circ]{x}$ only
depends on the singular values of the mapping associated with $x$ in a
permutation- and sign-invariant manner. More precisely, there exists a
norm $\abs[\Sigma]{\placeholder}$ on $\RR^{d}$ with
$\abs[\Sigma]{P \sigma} = \abs[\Sigma]{\sigma}$ for all
$\sigma \in \RR^{d}$ and $P \in \RR^{d \times d}$ with $\abs{P}$ being
a permutation matrix, such that
$\abs[\circ]{x} = \abs[\Sigma]{\sigma}$ for all $x \in H^d$, where
$\sigma$ are the singular values of the mapping $H^d \to \RR^d$ given
by $y \mapsto (x_i \inprod y)_i$. Conversely, any such norm on $\RR^d$
induces a unitarily invariant matrix norm. A common choice are the
norms generated by the $p$-vector norm, the \emph{Schatten-$p$-norms}.
For $p = 1$, $p = 2$ and $p=\infty$, they correspond to the
\emph{nuclear norm}, the \emph{Frobenius norm} and the usual spectral
norm, respectively, all of which have been proposed in the existing
literature to use in conjunction with $\TV$, see, e.g., \cite{sapiro1996anisotropic,Duran2016color_tv}.
Among those possibilities, the nuclear norm appears particularly
attractive as it provides a relaxation of the rank functional \cite{Recht2010guaranteed}.
Hence, solutions with low-rank gradients and more pronounced edges can
be expected from nuclear-norm-$\TV$ regularisation. 

Also here, the well-posedness and convergence results in
Theorems~\ref{thm:tv_reg_existence},~\ref{thm:tv_reg_stability}
and~\ref{thm:tv_reg_convergence} as well as in
Proposition~\ref{prop:tv_reg_convergence_rate} are transferable to the
vector-valued case, as can be seen from equivalence of norms.

Moreover, functionals of the type~\eqref{eq:general_bv_penalty} are
possible with $g: H^d \to [0,\infty]$ proper, convex and lower
semi-continuous such that $g_\infty$ exists. However, $u$ takes values
in $H$ which calls for some adaptations which we briefly describe in
the following.  First, concerning
Definition~\ref{def:approximate_diff_jump} (i), we are able to
generalise in a straightforward way by considering $v \in H^d$, the
norm in $H$ and the scalar product in $H^d$ such that the approximate
gradient of $u$ at $x$ is $\grad^{\approx} u(x) \in H^d$. For jump
points according to (ii), we are no longer able to require
$u^+(x) > u^-(x)$ such that we have to replace this by
$u^+(x) \neq u^{-}(x)$ and arrive at a meaningful definition replacing
the absolute value by the norm in $H$. However, $u^+$, $u^-$ and $\nu$
are then only unique up to a sign. Nevertheless,
$(u^+ - u^-) \tensor \nu$ according to
$\bigl((u^+ - u^-) \tensor \nu\bigr)_i = (u^+ - u^-)\nu_i$ is still
unique. The analogue of Theorem~\ref{thm:bv_grad_decomp}
and~\eqref{eq:bv_grad_decomp} holds with these notions, with the
following adaptation:
\[
\grad u = \grad^a u \lebesgue{d} + (u^+ - u^-) \tensor \nu_u
\hausdorff{d-1} \restricted J_u + \grad^c u
\]
with the Cantor part being of rank one, i.e.,
$\grad^c u = \sigma_{\grad^c u} \abs{\grad^c u}$ where
$\sigma_{\grad^c u}$ is rank one $\abs{\grad^c u}$-almost everywhere \cite[Theorem 3.94]{Ambrosio_mh}.
The functional $\mR_g$ according to
\[
\fl \mR_g(u) = \int_\Omega g(\grad^a u) \dd{x} + \int_{J_u}
g_\infty\bigl((u^+ - u^-) \tensor \nu_u \bigr) \dd{\hausdorff{d-1}} +
\int_\Omega g_\infty (\sigma_{\grad^c u}) \dd{\abs{\grad^c u}}
\]
then realises a regulariser with the same regularisation properties as
its counterpart for scalar functions.

\section{Higher-order TV regularisation}
\label{sec:higher_order_tv}

First-order regularisation for imaging problems might not always lead
to results of sufficient quality. Recall that taking the total 
variation as regularisation functional has the advantage that the
solution space $\BV(\Omega)$ naturally allows for discontinuities
along hypersurfaces (``jumps'') which correspond, for imaging applications,
to object boundaries. Indeed, $\TV$ has a good performance in edge 
preservation which can also be observed numerically. 

However, for noisy data, the solutions suffer from non-flat regions
appearing flat in conjunction with the introduction of undesired
edges.  This effect is called the \emph{staircasing effect}, see
Figure~\ref{fig:first-order-reg}, in particular panel (c).
Thinking of $\TV$ as a $1$-norm
type penalty for the gradient, this is, on the one hand, due to the
``linear growth'' of the Euclidean norm $\abs{\placeholder}$ at
infinity (which implies $\BV(\Omega)$ as solution space).  On the
other hand, $\abs{\placeholder}$ is non-differentiable in $0$ which can
be seen to be responsible for the flat regions in the solutions.

As we have seen in Subsection~\ref{subsec:further_first_order}, the
latter can be remedied by considering convex functions of the measure
$\grad u$ instead of $\TV$ which are smooth in the origin and have
linear growth at $\infty$, also see Example~\ref{ex:huber_smooth_tv}.
Then, $\mR_g$ can be taken as a first-order
regulariser under the same conditions as
for $\TV$ regularisation leading to solutions which are still in 
$\BV(\Omega)$ and may, in particular, admit jumps. Additionally,
less flat regions tend to appear in solutions for noisy 
data as we no longer have a singularity at $0$. 
However, this feature comes with two drawbacks: First, compared
to $\TV$, noise removal seems not to be so strong in numerical
solutions. Second, in addition to the regularisation parameter
for the inverse problem,
one has to choose the parameter $\varepsilon$ appropriately.
A too small choice might again lead to staircasing to appear
while choosing $\varepsilon$ too big may lead to edges being lost,
see Figure~\ref{fig:first-order-reg} (d).
The question remains whether we can improve on this.

Here, we like to discuss and 
study the use of \emph{higher-order derivatives}
for regularisation in imaging. This can be motivated by 
modelling images as piecewise smooth functions, i.e., 
assuming that an image is several times differentiable (in some
sense) while still allowing for
object boundaries where the function may jump. With this model
in mind,
higher-order variational approaches arise quite naturally and we refer for instance
to \cite{demengel1984bounded_hessian,hinterberger2006boundedhessian_mh,bergounioux2010secondordermodel_mh}
for spaces and regularisation approaches related to second-order variational approaches.

\begin{figure}
  \centering
  \begin{tabular}{c@{\ }c@{\ }c@{\ }c}
    \includegraphics[width=0.22\linewidth]{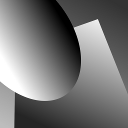}
    & 
    \includegraphics[width=0.22\linewidth]{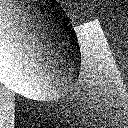}
    &
    \includegraphics[width=0.22\linewidth]{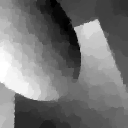}
    & 
    \includegraphics[width=0.22\linewidth]{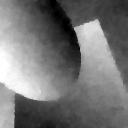}
    \\
    (a) & (b) &
    (c) & (d)
  \end{tabular}
  \caption{First-order denoising example. (a) Ground truth, (b) 
    noisy image with additive Gaussian noise, (c)
    $\TV$-regularised solution (best PSNR), 
    (d) regularisation with smooth $\TV$-like penalty $\varphi(x) 
    = \sqrt{x^2 + \varepsilon^2} - \varepsilon$ (best PSNR).}
  \label{fig:first-order-reg}
\end{figure}

\subsection{Symmetric tensor calculus}

For smooth functions, higher-order derivatives can be represented as
tensor fields, i.e., the derivative represents a tensor in each
point. As the order of partial differentiation might be interchanged,
these tensors turn out to be symmetric. Symmetric tensors are
therefore a suitable tool for representing these objects independent
from indices.  There are several ways to introduce and motivate
tensors and vector spaces of tensors. For our purposes, the following
definition will be sufficient. Note that there and throughout this chapter, 
$l\geq 0$ will always be a tensor order.

\begin{definition}
We define
  \[
  \begin{array}{cc}
    \tensorspace[l]{\RR^d}
    = \set{\xi: \underbrace{\RR^d \times \cdots \times \RR^d}
    _{l \ \mathop{\rm times}} \rightarrow \RR}{\xi \ \text{$l$-linear}},
    \\[\medskipamount]
    \Sym^l(\RR^d) = \set{\xi: \underbrace{\RR^d \times \cdots \times \RR^d}
    _{l \ \mathop{\rm times}} \rightarrow \RR}
    {\xi \ \text{$l$-linear and symmetric}},
  \end{array}
  \]
  as the vector space of \emph{$l$-tensors} and \emph{symmetric
    $l$-tensors}, respectively.
  
  Here, $\xi\in \tensorspace[l]{\RR^d}$ is called symmetric, if
  $\xi(a_1,\dots,a_l)= \xi(a_{\pi(1)},\dots,a_{\pi(l)})$ for all
  $a_1,\ldots a_l \in \RR^d$ and $\pi \in S_l$, where $S_l$ denotes
  the permutation group of $\sett{1,\ldots,l}$.

  For $\xi \in \tensorspace[k]{\RR^d}$, $k \geq 0$ and
  $\eta \in \tensorspace[l]{\RR^d}$ the \emph{tensor product} is
  defined as the element
  $\xi \tensor \eta \in \tensorspace[k+l]{\RR^d}$ obeying
  \[
  (\xi \tensor \eta)(a_1,\ldots,a_{k+l}) = \xi(a_1,\ldots,a_k)
  \eta(a_{k+1}, \ldots, a_{k+l})
  \]
  for all $a_1,\ldots,a_{k+l} \in \RR^d$.
\end{definition}
Note that the space of $l$-tensors is actually the space of
$(0,l)$-covariant tensors, however, we will not need to distinguish
between co- and contravariant tensors. We have
\[
\tensorspace[0]{\RR^d} \equiv \RR, \quad 
\tensorspace[1]{\RR^d} \equiv \RR^d,
\quad \ldots, \quad 
\tensorspace[l]{\RR^d} \equiv \RR^{d \times \cdots \times d},
\]
while for low orders, the symmetric tensor spaces
coincide with well-known spaces $\Sym^0(\RR^d) \equiv \RR$,
$\Sym^1(\RR^d) \equiv \RR^d$ and $\Sym^2(\RR^d) \equiv S^{d \times d}$, the
space of symmetric $d \times d$ matrices.

In the following, we give a brief overview of the tensor operations
that are the most relevant to define regularisation functionals on
higher-order derivatives.

\begin{remark}
  The space $\tensorspace[l]{\RR^d}$ can be associated with a unit basis.
  Indexed by $p \in \sett{1,\ldots,d}^l$, its elements are given by
  $e_p(a_1,\ldots, a_l) = \prod_{i=1}^l a_{i,p_i}$ while the
  respective coefficient for a $\xi \in \tensorspace[l]{\RR^d}$ is
  given by $\xi_p = \xi(e_{p_1}, \ldots, e_{p_l})$.
  Each $\xi \in \tensorspace[l]{\RR^d}$ thus has the representation
  \[
  \xi(a_1,\ldots,a_l) = \sum_{p \in \sett{1,\ldots,d}^l} \xi_p
  e_p(a_1,\ldots,a_l). %
  \]
  The identity of vector spaces
  $\tensorspace[l]{\RR^d} = \RR^{d \times \cdots \times d}$ is evident
  from that.

  The space $\Sym^l(\RR^d)$ is obviously a (generally proper) subspace
  of $\tensorspace[l]{\RR^d}$. A (non-symmetric) tensor 
  $\xi \in \tensorspace[l]{\RR^d}$ can be symmetrised by averaging 
  over all permuted arguments, i.e.,
  \[
  (\interleave \xi)(a_1,\ldots,a_l) = \frac1{l!} \sum_{\pi \in S_l}
  \xi(a_{\pi(1)}, \ldots, a_{\pi(l)}).
  \]
  The symmetrisation operator
  $\interleave: \tensorspace[l]{\RR^d} \to \Sym^l(\RR^d)$ obviously
  defines a projection.  A basis for $\Sym^l(\RR^d)$ is given by
  $e^{\Sym}_p = \interleave e_p$ for $p$ ranging over all
  tuples in $\sett{1,\ldots,d}^l$ with non-decreasing entries.
   The
  coefficients $\xi_p$ 
  can still be obtained by $\xi_p = \xi(e_{p_1}, \ldots, e_{p_l})$.
\end{remark}

We would like to equip the spaces with a Hilbert space structure.

\begin{definition}
  For $\xi, \eta \in \tensorspace[l]{\RR^d}$, the \emph{scalar
    product} and \emph{Frobenius norm} are defined as
  \[
  \xi \inprod \eta %
  = \sum_{p \in \sett{1,\ldots,d}^k}
  \xi(e_{p_1},\ldots,e_{p_l}) \eta(e_{p_1}, \ldots, e_{p_l}),
  \qquad
  \abs{\xi} = \sqrt{\xi \inprod \xi}.
  \]
\end{definition}

\begin{example}
  For $\xi \in \Sym^l(\RR^d)$, the 
  norm corresponds to
  the absolute value for $l=0$, the Euclidean norm in $\RR^d$
  for $l=1$ and in case $l=2$, we
  can identify $\xi \in \Sym^2(\RR^d)$ with
  \[
  \xi =
  \left(\begin{array}{ccc}
    \xi_{11} & \cdots & \xi_{1d} \\
    \vdots & \ddots & \vdots \\
    \xi_{1d} & \cdots & \xi_{dd}
  \end{array}\right)
  \quad , \quad \abs{\xi} = \Bigl( \sum_{i=1}^d \xi_{ii}^2 + 2 \sum_{i
    < j} \xi_{ij}^2 \Bigr)^{1/2}.
  \]
\end{example}

With the Frobenius norm, tensor spaces become Hilbert spaces of finite dimension and the symmetrisation becomes an orthogonal projection, see, e.g., \cite{Hackbusch_tensor_spaces_mh}.
\begin{proposition}\mbox{}
  \begin{enumerate}
  \item 
    With the above scalar-product and norm,
    the spaces $\tensorspace[l]{\RR^d}$, $\Sym^l(\RR^d)$ are
    finite-dimensional Hilbert spaces
    with $\dim \tensorspace[l]{\RR^d} = d^l$ and
    $\dim \Sym^l(\RR^d) = {d + l -1 \choose l}$.
  \item The symmetrisation $\interleave$ is the orthogonal projection
    in $\tensorspace[l]{\RR^d}$ onto $\Sym^l(\RR^d)$.
  \end{enumerate}
\end{proposition}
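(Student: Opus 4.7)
The plan is to handle the two parts in sequence, with the first being essentially book-keeping and the second requiring a short symmetry argument.

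For part (i), I would first observe that in the coordinates $\xi \mapsto (\xi_p)_{p \in \sett{1,\ldots,d}^l}$ the formula $\xi \inprod \eta = \sum_p \xi_p \eta_p$ is just the standard Euclidean inner product on $\RR^{d^l}$. Bilinearity and symmetry are immediate from its defining formula and positive definiteness reduces to a sum of squares. In finite dimensions completeness is automatic, so $\tensorspace[l]{\RR^d}$ and $\Sym^l(\RR^d)$ are Hilbert spaces. For the dimension of $\tensorspace[l]{\RR^d}$ I would verify that the $d^l$ basis elements $e_p$ indexed by $p \in \sett{1,\ldots,d}^l$ span and are linearly independent: spanning is the representation formula already stated in the preceding remark, and linear independence follows by evaluating $\sum_p c_p e_p = 0$ at $(e_{q_1},\ldots,e_{q_l})$ to obtain $c_q=0$. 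For $\Sym^l(\RR^d)$ I would argue that the components $\xi_p$ of a symmetric tensor depend only on the multiset $\sett{p_1,\ldots,p_l}$, so the expansion in the $e_p^{\Sym}$ collapses to a sum over non-decreasing multi-indices; these are in bijection with $l$-element multisets from $\sett{1,\ldots,d}$, whose number is $\binom{d+l-1}{l}$ by a stars-and-bars count.

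For part (ii), the task is to show that $\interleave$ is idempotent, self-adjoint, and has image $\Sym^l(\RR^d)$. The image is contained in $\Sym^l(\RR^d)$ by construction, and $\interleave \xi = \xi$ whenever $\xi \in \Sym^l(\RR^d)$ since then each summand $\xi(a_{\pi(1)},\ldots,a_{\pi(l)})$ equals $\xi(a_1,\ldots,a_l)$. This gives both surjectivity onto $\Sym^l(\RR^d)$ and idempotency $\interleave^2 = \interleave$. For self-adjointness I would expand
\[
\interleave \xi \inprod \eta = \frac{1}{l!} \sum_{p \in \sett{1,\ldots,d}^l} \sum_{\pi \in S_l} \xi_{(p_{\pi(1)},\ldots,p_{\pi(l)})} \, \eta_p,
\]
and re-index, for each fixed $\pi$, via $q = (p_{\pi(1)},\ldots,p_{\pi(l)})$, after which substituting $\pi \mapsto \pi^{-1}$ (a bijection on $S_l$) turns the expression into $\xi \inprod \interleave \eta$. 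An operator that is simultaneously idempotent and self-adjoint on a Hilbert space is automatically the orthogonal projection onto its image, which concludes the argument.

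The only genuine obstacle is the book-keeping in the reindexing step for self-adjointness, where care is needed to track how composing permutations acts on multi-indices; every other claim is either a direct consequence of the defining formulas or a standard combinatorial identity, so no deeper machinery is required.
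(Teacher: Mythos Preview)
Your argument is correct. The paper does not give its own proof of this proposition but simply refers to the literature (Hackbusch's monograph on tensor spaces), so your direct verification---checking that the Frobenius pairing is the Euclidean inner product in coordinates, counting non-decreasing multi-indices via stars-and-bars, and establishing that $\interleave$ is idempotent and self-adjoint by an explicit reindexing---is more detailed than what the paper provides. One small point worth making explicit is the linear independence of the $e_p^{\Sym}$ for non-decreasing $p$: evaluating at $(e_{q_1},\ldots,e_{q_l})$ for non-decreasing $q$ picks out only the term with $p=q$ (with a nonzero combinatorial coefficient), so independence follows; your ``collapse'' remark covers spanning but leaves this step implicit.
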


Tensor-valued mappings $\Omega \to \tensorspace[l]{\RR^d}$ on the
domain $\Omega \subset \RR^d$ are called \emph{tensor fields}.
The tensor-field spaces
$\Cspace{}{\closure{\Omega}, \tensorspace[l]{\RR^d}}$,
$\Ccspace{}{\closure{\Omega}, \tensorspace[l]{\RR^d}}$ and
$\Cspace[0]{}{\closure{\Omega}, \tensorspace[l]{\RR^d}}$ as well as
the Lebesgue spaces $\LPspace{p}{\Omega, \tensorspace[l]{\RR^d}}$ are
then given in the usual manner.  Also, measures can be tensor-valued,
giving $\radonspace{\Omega,\tensorspace[l]{\RR^d}}$, the space of
$l$-tensor-valued Radon measures.  Duality according to
Proposition~\ref{prop:duality_radon_measures} holds, i.e.,
$\radonspace{\Omega,\tensorspace[l]{\RR^d}} =
\Cspace[0]{}{\Omega,\tensorspace[l]{\RR^d}}^*$.
Note that for all spaces, the Frobenius norm is used as pointwise norm
in the respective definitions of the tensor-field norm.  Furthermore,
all the above applies analogously to symmetric tensor fields, i.e.,
mappings between $\Omega \to \Sym^l(\RR^d)$.

Turning to differentiation, the $k$-th Fr\'echet derivative of a
sufficiently smooth $l$-tensor field, where from now on $k\geq 1$ will always denote 
an order of differentiation, is naturally a $(k+l)$-tensor
field which we denote by
$\grad^k \tensor u: \Omega \to \tensorspace[k+l]{\RR^d}$ according to
\[
(\grad^k \tensor u)(x)(a_1,\ldots,a_{k+l}) = \bigl( \deriv^k
u(x) (a_1,\ldots, a_k) \bigr) (a_{k+1},\ldots, a_{k+l}).
\]
The fact that gradient tensor-fields are not symmetric in general
gives rise to consider the \emph{$k$-th symmetrised derivative} given
by $\symgrad^ku = \interleave \grad^k \tensor u$. This definition is
consistent as $\symgrad^{k_2}\symgrad^{k_1} = \symgrad^{k_1 + k_2}$
for $k_1,k_2 \geq 0$. Divergence operators are then, up to the sign,
formal adjoints of these differentiation operators. They are given as follows. Introducing the
\emph{trace} of a tensor $\xi \in \tensorspace[l+2]{\RR^d}$ according
to
\[
\trace(\xi) (a_1,\ldots,a_l) = \sum_{i=1}^d
\xi(e_i,a_1,\ldots,a_l,e_i)
\]
gives an $l$-tensor. It can be interpreted as the tensor contraction of
the first and the last component of the tensor.  As for the
vector-field case, the divergence is now the trace of the derivative. For
$k$-times differentiable $v: \Omega \to \tensorspace[k+l]{\RR^d}$, the
$k$-th \emph{divergence} is thus given by
\[
\divergence^k v = \trace^k(\grad^k \tensor v).
\]
Again, this is consistent with repeated application, i.e.,
$\divergence^{k_1+k_2} = \divergence^{k_2}\divergence^{k_1}$. Note
that there might be other choices of the divergence, such as
contracting the derivative with any other than the last components of
the tensor. This affects, however, only non-symmetric tensor
fields. For symmetric tensor fields, the result is independent from
the choice of the contraction components and always a symmetric tensor
field.

\begin{example}
  The symmetrised gradient of scalar functions
  $\Omega \to \Sym^0(\RR^d)$ coincides with the usual gradient while
  the divergence for mappings $\Omega \rightarrow \Sym^1(\RR^d)$
  coincides with the usual divergence.
  
  The cases $\symgrad^2 u^0$ and $\symgrad u^1$ for
  $u^0: \Omega \to \Sym^0(\RR^d)$ and $u^1: \Omega \to \Sym^1(\RR^d)$
  can be handled with the identification of $\Sym^2(\RR^d)$ and
  symmetric matrices $S^{d\times d}$:
  \[
  (\symgrad^2 u^0)_{ij} = \frac{\partial^2 u^0}{\partial x_i \partial
    x_j} \quad, \quad (\symgrad u^1)_{ij} = \frac12 \Bigl(
  \frac{\partial u_i^1}{\partial x_j} + \frac{\partial u_j^1}{\partial
    x_i} \Bigr).
  \]
  Analogously, for the divergence of a $v: \Omega \to \Sym^2(\RR^d)$,
  we have that
  \[
  (\divergence v)_i = \sum_{j=1}^d \frac{\partial v_{ij}}{\partial
    x_j} \quad , \quad \divergence^2 v = \sum_{i=1}^d \frac{\partial^2
    v_{ii}} {\partial x_i^2} + \sum_{i < j} 2 \frac{\partial^2
    v_{ij}}{\partial x_i
    \partial x_j}.
  \]
\end{example}

In particular, for $k \geq 1$, there are the usual spaces of
continuously differentiable tensor fields which are denoted by
$\Cspace{k}{\closure{\Omega},\tensorspace[l]{\RR^d}}$ and equipped
with the usual norm
$\norm[k,\infty]{u} = \max_{0 \leq m \leq k} \
\norm[\infty]{\grad^{m} \tensor u}$.
Likewise, we consider $k$-times continuously differentiable tensor
fields with compact support
$\Ccspace{k}{\Omega, \tensorspace[l]{\RR^d}}$ where $k = \infty$ leads
to the space of test tensor fields. Also, for finite $k$, the space
$\Cspace[0]{k}{\Omega, \tensorspace[l]{\RR^d}}$ is given as the
closure of $\Ccspace{k}{\Omega, \tensorspace[l]{\RR^d}}$ in
$\Cspace{k}{\closure{\Omega},\tensorspace[l]{\RR^d}}$. Of course, the
analogous constructions apply to symmetric tensor fields, leading to
the spaces $\Cspace{k}{\closure{\Omega},\Sym^l(\RR^d)}$,
$\Ccspace{k}{\Omega, \Sym^l(\RR^d)}$ and
$\Cspace[0]{k}{\Omega, \Sym^l(\RR^d)}$ as well as the space of test
symmetric tensor fields $\Ccspace{\infty}{\Omega, \Sym^l(\RR^d)}$.

As $\Omega$ is assumed to be a connected set, we are able to describe
the kernels of $\grad^k$ and $\symgrad^k$ for (symmetric) tensor
fields in terms of finite-dimensional spaces of polynomials.

\begin{proposition}
  \label{prop:grad_kernel_smooth}
  Let $u \in \Cspace{k}{\closure{\Omega},\tensorspace[l]{\RR^d}}$ such
  that $\grad^k \tensor u = 0$. Then, $u$ is a
  $\tensorspace[l]{\RR^d}$-valued polynomial of maximal order $k-1$,
  i.e., there are $\xi^m \in \tensorspace[l+m]{\RR^d}$,
  $m = 0,\ldots, k-1$ such that
  \begin{equation}
    \label{eq:grad_kernel_poly}
    u(x) = \sum_{m = 0}^{k-1} \tr^m\bigl(\xi^m \tensor (\underbrace{x
      \tensor \ldots \tensor x}_{m \,\mathop{times}}) \bigr) \qquad
    \text{for each} \ x \in \Omega.
  \end{equation}
  If $\symgrad^k u = 0$ for
  $u \in \Cspace{k+l}{\closure{\Omega}, \Sym^l(\RR^d)}$, then $u$ is a
  $\Sym^l(\RR^d)$-valued polynomial of maximal order $k + l - 1$, i.e., %
  the above representation holds for $\xi^m \in \Sym^{l+m}(\RR^d)$,
  $m = 0,\ldots, k+l-1$ with the sum ranging from $0$ to $k+l-1$. %
\begin{proof}
At first we note that any $\tensorspace[l]{\RR^d}$- and $\Sym^l(\RR^d)$-valued polynomial of maximal order $k-1$ and $k+l-1$, respectively, admits a representation as claimed. In case $\nabla^ k \otimes u = 0$ for $u \in \Cspace{k}{\closure{\Omega},\tensorspace[l]{\RR^d}} $ it follows directly from a basis representation of $u(x)$ that $u$ is a $\tensorspace[l]{\RR^d}$-valued polynomial of maximal order $k-1$. 

Now in case $\symgrad ^{k}u=0 $ for $u \in \Cspace{k+l}{\closure{\Omega}, \Sym^l(\RR^d)}$,
we get that $\grad^{k+l} \tensor u = 0$, see Lemma~\ref{lem:kernel_symgrad}.
This implies that $u$ is a $\Sym^{l}(\RR^d)$-valued polynomial of maximal degree $k+l-1$ as claimed. 
\end{proof}
\end{proposition}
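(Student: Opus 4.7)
The plan is to split the argument into the two claims and to reduce the symmetric case to the non-symmetric one. First I would verify the easy direction, namely that any expression of the form~\eqref{eq:grad_kernel_poly} with $\xi^m \in \tensorspace[l+m]{\RR^d}$ is indeed a $\tensorspace[l]{\RR^d}$-valued polynomial of order at most $k-1$ (and analogously in the symmetric setting), and conversely that every such polynomial admits such a representation. This is bookkeeping: one writes an arbitrary polynomial in a monomial basis of $\RR^d$ with tensor-valued coefficients, regroups the monomials $x_{i_1}\cdots x_{i_m}$ as traces of $x^{\tensor m}$ paired against a suitable $\xi^m$, and observes that the resulting family of representations surjects onto the space of polynomials.

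For the non-symmetric claim, my strategy would be to reduce componentwise to the classical scalar fact that a $\Cspace{k}{}$-function on a connected open set whose $k$-th-order partial derivatives all vanish must be a polynomial of degree at most $k-1$ (which is itself proved by a straightforward induction on $k$ using Taylor expansion, or by repeated application of the mean value theorem along line segments, using connectedness of $\Omega$ to patch results on overlapping convex balls). Fixing a unit basis $\sett{e_p}$ of $\tensorspace[l]{\RR^d}$ and writing $u(x) = \sum_p u_p(x) e_p$, the condition $\grad^k \tensor u = 0$ is equivalent to the vanishing of all $k$-th partial derivatives of each scalar coefficient $u_p$. Hence each $u_p$ is a polynomial of degree at most $k-1$, and reassembling gives the representation~\eqref{eq:grad_kernel_poly}.

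For the symmetric case, the main obstacle is that $\symgrad^k u = 0$ is a weaker condition than $\grad^k \tensor u = 0$ because the symmetrisation operator $\interleave$ has a non-trivial kernel on $\tensorspace[k+l]{\RR^d}$. The key step would be to promote the vanishing of the symmetrised derivative to the vanishing of an \emph{iterated} full gradient of higher order, namely $\grad^{k+l} \tensor u = 0$. This is precisely the content of the auxiliary lemma that I would invoke (referred to in the excerpt as Lemma~\ref{lem:kernel_symgrad}); intuitively it holds because once $u$ already takes values in $\Sym^l(\RR^d)$, additional differentiations together with the symmetry of the values allow one to trade indices between the value slots and the differentiation slots, so after $l$ extra derivatives every index-position becomes available for symmetrisation and the vanishing of $\symgrad^k u$ propagates to every component of $\grad^{k+l} \tensor u$. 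Once this reduction is in place, applying part~(1) with $k$ replaced by $k+l$ gives that $u$ is a $\Sym^l(\RR^d)$-valued polynomial of degree at most $k+l-1$, which together with the bookkeeping from the first paragraph yields the claimed representation.

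I expect the hard part to be precisely the symmetric-to-full reduction $\symgrad^k u = 0 \Rightarrow \grad^{k+l} \tensor u = 0$; the surrounding steps are essentially routine once this algebraic-differential identity is available.
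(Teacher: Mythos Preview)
Your proposal is correct and follows essentially the same approach as the paper: reduce the non-symmetric case componentwise to the scalar fact that vanishing $k$-th derivatives on a connected domain force a polynomial of degree at most $k-1$, and reduce the symmetric case to the non-symmetric one (with $k$ replaced by $k+l$) via Lemma~\ref{lem:kernel_symgrad}. The paper's proof is in fact more terse than yours, and your identification of the symmetric-to-full promotion as the only nontrivial step matches the paper exactly.
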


Next, we would like to introduce and discuss weak forms of
differentiation for (symmetric) tensor fields. Starting point for this
is a version of the well-known Gauss--Green theorem for smooth
(symmetric) tensor fields \cite{bredies2013boundeddeformation_mh}.

\begin{proposition}
  Let $\Omega \subset \RR^d$ be a bounded Lipschitz domain,
  $u \in \Cspace{}{\closure{\Omega},\tensorspace[l]{\RR^d}}$,
  $v \in \Cspace{1}{\closure{\Omega}, \tensorspace[l+1]{\RR^d}}$.
  Then, a \emph{Gauss--Green theorem} holds in the following form:
  \[
  \int_\Omega u \inprod \divergence v \dd{x} 
  = \int_{\bdry\Omega} (u \tensor \nu) \inprod v
  \dd{\hausdorff{d-1}}
  - \int_\Omega (\grad \tensor u) \inprod v \dd{x}
  \]
  with $\nu$ being the outward unit normal on $\bdry\Omega$.
  
  If  $u \in \Cspace{}{\closure{\Omega}, \Sym^l(\RR^d)}$,
  $v \in \Cspace{1}{\closure{\Omega}, \Sym^{l+1}(\RR^d)}$, 
  the identity reads as
  \[
  \int_\Omega u \inprod \divergence v \dd{x} 
  = \int_{\bdry\Omega} \interleave (u \tensor \nu) \inprod v
  \dd{\hausdorff{d-1}}
  - \int_\Omega \symgrad u \inprod v \dd{x}.
  \]
  If one of the tensor fields $u$ or $v$ have compact support in
  $\Omega$ the boundary term does not appear and the identities
  are valid for arbitrary domains $\Omega$.
\end{proposition}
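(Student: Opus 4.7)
The plan is to reduce the identity to the classical scalar Gauss--Green theorem componentwise. Assuming that $u$ has sufficient regularity for $\grad \tensor u$ on the right-hand side to be defined classically, I would first fix the canonical basis $\{e_p\}_{p \in \sett{1,\ldots,d}^l}$ of $\tensorspace[l]{\RR^d}$ and the induced basis of $\tensorspace[l+1]{\RR^d}$. Unfolding the definitions of trace, divergence, tensor product, and derivative yields in coordinates
\[
  (\divergence v)_p = \sum_{i=1}^d \partial_i v_{(p,i)}, \qquad
  (\grad \tensor u)_{(i,p)} = \partial_i u_p, \qquad
  (u \tensor \nu)_{(p,i)} = u_p \nu_i,
\]
where $(p,i)$ and $(i,p)$ denote the $(l+1)$-tuples obtained by appending or prepending $i$ to the $l$-tuple $p$.

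For the general-tensor identity, I would then, for each fixed $p$, apply the classical divergence theorem on the bounded Lipschitz domain $\Omega$ to the auxiliary $\mathcal{C}^1$ vector field $w^p \in \mathcal{C}^1(\closure{\Omega},\RR^d)$ with components $w^p_i = u_p v_{(p,i)}$. The product rule gives $\divergence w^p = u_p (\divergence v)_p + \sum_{i=1}^d (\partial_i u_p) v_{(p,i)}$ and $w^p \inprod \nu = u_p \sum_{i=1}^d v_{(p,i)} \nu_i$, so that after rearranging and summing over $p$ one obtains the identity with the right-hand-side sums identified as the scalar products $(u \tensor \nu) \inprod v$ on $\bdry\Omega$ and $(\grad \tensor u) \inprod v$ on $\Omega$.

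For the symmetric-tensor version, I would use that $v = \interleave v$ since $v$ takes values in $\Sym^{l+1}(\RR^d)$. Because $\interleave$ is an orthogonal projection and therefore self-adjoint with respect to the Frobenius scalar product, one may rewrite pointwise $(\grad \tensor u) \inprod v = \interleave(\grad \tensor u) \inprod v = \symgrad u \inprod v$ and analogously $(u \tensor \nu) \inprod v = \interleave(u \tensor \nu) \inprod v$. Substituting these identities into the formula from the preceding paragraph yields the symmetric form. If $u$ or $v$ has compact support in $\Omega$, the boundary integrand vanishes identically on $\bdry\Omega$, which in particular allows dropping the Lipschitz assumption on $\Omega$ by extending the compactly supported factor by zero to any bounded Lipschitz superset of its support and invoking the identity already established there.

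The main obstacle is purely bookkeeping of index orderings: the divergence contracts the gradient direction with the \emph{last} component of $v$ via the trace, whereas the definition of $\grad \tensor u$ places the gradient index \emph{first}, and $u \tensor \nu$ places the $\nu$-index last. The componentwise computation has to match these positions carefully so that the sums produced by the scalar divergence theorem coincide with the scalar products on the right-hand side of the asserted identity. Beyond this, no analytic input is required other than the classical divergence theorem applied to $d^l$ auxiliary vector fields, one per basis element of $\tensorspace[l]{\RR^d}$.
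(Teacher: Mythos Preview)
The paper does not give a proof for this proposition; it simply cites \cite{bredies2013boundeddeformation_mh}. Your componentwise reduction to the scalar divergence theorem is the standard argument and is exactly what one would expect in a proof.

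Your treatment of the symmetric case is clean and correct: once the general identity is in hand, the passage to $\symgrad u$ and $\interleave(u\tensor\nu)$ via self-adjointness of $\interleave$ against the symmetric tensor $v$ is immediate. Likewise, the compact-support argument is fine.

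One genuine issue, which you flag as ``bookkeeping'' but perhaps underestimate: with the paper's conventions ($\grad\tensor u$ places the derivative index \emph{first}, whereas $\divergence v = \trace(\grad\tensor v)$ contracts derivative against the \emph{last} index of $v$, and $u\tensor\nu$ places $\nu$ last), the term arising from integration by parts is $\sum_{p,i}(\partial_i u_p)\,v_{(p,i)}$, while $(\grad\tensor u)\inprod v = \sum_{p,i}(\partial_i u_p)\,v_{(i,p)}$. These agree only when $v$ is invariant under the relevant index permutation. So the general-tensor identity as literally stated, with the paper's own definitions, does not hold for arbitrary $v\in\tensorspace[l+1]{\RR^d}$; it does hold for symmetric $v$, which is the only case used later. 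Your remark that the indices ``have to match carefully'' is correct in spirit, but they actually do not match in the non-symmetric case without either transposing $\grad\tensor u$ or restricting to symmetric $v$. (There is also a regularity slip in the statement: $u\in\Cspace{}{\closure\Omega,\tensorspace[l]{\RR^d}}$ should be $\Cspace{1}{\closure\Omega,\tensorspace[l]{\RR^d}}$ for $\grad\tensor u$ to make classical sense, as you already note.)
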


As usual, being able to express integrals of the form
$\int_\Omega (\grad \tensor u) \inprod v \dd{x}$ and
$\int_\Omega \symgrad u \inprod v \dd{x}$ for test tensor fields
without the derivative of $u$ allows to introduce a weak notion of
$\grad \tensor u$ and $\symgrad u$, respectively, as well as
associated Sobolev spaces.

\begin{definition}
  For $u \in \LPlocspace{1}{\Omega,\tensorspace[l]{\RR^d}}$,
  $w \in \LPlocspace{1}{\Omega,\tensorspace[l+1]{\RR^d}}$ is the
  \emph{weak derivative} of $u$, denoted $w = \grad \tensor u$, if for
  all $\varphi \in \Ccspace{\infty}{\Omega,\tensorspace[l+1]{\RR^d}}$, it
  holds that
  \[
  \int_\Omega u \inprod \divergence \varphi \dd{x} = - \int_\Omega w \inprod 
  \varphi \dd{x}.
  \]
  Likewise, for $u \in \LPlocspace{1}{\Omega,\Sym^l(\RR^d)}$,
  $w \in \LPspace{1}{\Omega,\Sym^{l+1}(\RR^d)}$ is the \emph{weak
    symmetrised derivative} of $u$, denoted $w = \symgrad u$, if the
  above identity holds for all
  $\varphi \in \Ccspace{\infty}{\Omega, \Sym^{l+1}(\RR^d)}$.
\end{definition}

Like the scalar versions, $\grad$ and $\symgrad$ are well-defined and
constitute closed operators between the respective Lebesgue spaces with
dense domain.

\begin{definition}
  The \emph{Sobolev space of tensor fields of order $l$} of
  differentiation order $k$ and exponent $p \in [1,\infty]$ is
  defined as
  \[
  \begin{array}{rl}
    \Hspace[p]{k}{\Omega,\tensorspace[l]{\RR^d}} 
    &=
      \set{u \in \LPspace{p}{\Omega,\tensorspace[l]{\RR^d}}}
      {\norm[k,p]{u} < \infty}, \\
    \norm[k,p]{u} 
    &= \Bigl(\sum_{m = 0}^k
    \norm[p]{\grad^m \tensor u}^p\Bigr)^{1/p} \quad \text{if} \ p < \infty, \\
    \norm[k,\infty]{u} 
    &=
      \max_{m = 0,\ldots, k} \ \norm[\infty]{\grad^m \tensor u},
  \end{array}
  \]
  while $\Hcspace[p]{k}{\Omega,\tensorspace[l]{\RR^d}}$ 
  is the closure of the subspace
  $\Ccspace{\infty}{\Omega,\tensorspace[l]{\RR^d}}$
  with respect to the $\norm[k,p]{\placeholder}$-norm.
  
  Replacing $\tensorspace[l]{\RR^d}$ by $\Sym^l(\RR^d)$ and
  letting
  \[
  \norm[k,p]{u} = \Bigl(\sum_{m = 0}^k \norm[p]{\symgrad^m
    u}^p\Bigr)^{1/p} \quad \text{if} \ p < \infty, \quad
  \norm[\infty,k]{u} = \max_{m = 0,\ldots,k} \
  \norm[\infty]{\symgrad^m u},
  \]
  defines the \emph{Sobolev space of symmetric tensor fields}, denoted
  by $\Hspace{k,p}{\Omega,\Sym^l(\RR^d)}$. The space
  $\Hcspace{k,p}{\Omega,\Sym^l(\RR^d)}$ is again the closure
  $\Ccspace{\infty}{\Omega, \Sym^l(\RR^d)}$ with respect to the
  corresponding norm.
\end{definition}

By closedness of the differential operators, the Sobolev spaces are
Banach spaces. Also, since weak derivatives are symmetric, we have
that
$\Hspace{k,p}{\Omega,\tensorspace[0]{\RR^d}} =
\Hspace{k,p}{\Omega,\Sym^0(\RR^d)}$
in the sense of Banach space isometry, as well as coincidence with the
usual Sobolev spaces.  For $l \geq 1$, the space
$\Hspace{k,p}{\Omega,\tensorspace[l]{\RR^d}}$ corresponds to the space
where all components of $u$ are in $\Hspace{k,p}{\Omega}$.  However,
generally, for $l \geq 1$, the norm of
$\Hspace{k,p}{\Omega,\Sym^l(\RR^d)}$ is weaker than the norm in
$\Hspace{k,p}{\Omega,\tensorspace[l]{\RR^d}}$, such that only
$\Hspace{k,p}{\Omega,\tensorspace[l]{\RR^d}} \embed
\Hspace{k,p}{\Omega,\Sym^l(\RR^d)}$
in the sense of continuous embedding and the latter is a strictly
larger space.

Nevertheless, equality holds if some kind of \emph{Korn's inequality}
can be established which is, for instance, the case for the spaces
$\Hcspace{1,p}{\Omega,\Sym^1(\RR^d)}$ for $1 < p < \infty$ \cite[Section 5.6]{Kikuchi1988elasticitybook} as
well as and the spaces $\Hcspace{1,2}{\Omega,\Sym^l(\RR^d)}$ for
$l \geq 1$ (which follows from \cite[Proposition 3.6]{bredies2013boundeddeformation_mh} via smooth approximation).

Finally, let us briefly discuss (symmetric) tensor-valued
distributions and the distributional forms of $\grad^k$ and $\symgrad^k$.

\begin{definition}
  A \emph{$\tensorspace[l]{\RR^d}$-valued distribution on $\Omega$} is
  a linear mapping
  $u: \Ccspace{\infty}{\Omega, \tensorspace[l]{\RR^d}} \to \RR$ that
  satisfies the following continuity estimate: For each
  $K \compactin \Omega$, there is an $m \in \NN$ and a $C > 0$ such
  that
  \[
  \abs{u(\varphi)} \leq C \norm[m,\infty]{\varphi} \qquad \text{for all} \
  \varphi \in \Ccspace{\infty}{K, \tensorspace[l]{\RR^d}}.
  \]
  The distribution $u$ is \emph{regular} if there is a
  $\bar u \in \LPlocspace{1}{\Omega,\tensorspace[l]{\RR^d}}$ such that
  \[
  u(\varphi) = \int_\Omega \bar u \inprod \varphi \dd{x} \qquad \text{for all}
  \ \varphi \in \Ccspace{\infty}{\Omega, \tensorspace[l]{\RR^d}}.
  \]
  A \emph{$\Sym^l(\RR^d)$-valued distribution on $\Omega$} and its
  regularity is analogously defined by replacing
  $\tensorspace[l]{\RR^d}$ by $\Sym^l(\RR^d)$ in the above definition.
\end{definition}

Then, the distributional (symmetrised) derivatives are given by
$(\grad^k \tensor u)(\varphi) = (-1)^k u(\divergence^k
\varphi)$,
$\varphi \in \Ccspace{\infty}{\Omega,\tensorspace[l]{\RR^d}}$ and
$(\symgrad^k u)(\varphi) = (-1)^k u(\divergence^k \varphi)$,
$\varphi \in \Ccspace{\infty}{\Omega,\Sym^l(\RR^d)}$ which makes them a
$\tensorspace[k+l]{\RR^d}$- and $\Sym^{k+l}(\RR^d)$-valued
distribution, respectively. We then have the following generalisation
of Proposition~\ref{prop:grad_kernel_smooth} which will be useful for
analysing functionals that depend on (symmetrised) distributional
derivatives.

\begin{proposition}
  \label{prop:grad_kernel_nonsmooth}
  If $\grad^k \tensor u = 0$ for a
  $\tensorspace[l]{\RR^d}$-valued distribution, then $u$ is
  regular and a $\tensorspace[l]{\RR^d}$-valued polynomial of maximal
  degree $k-1$. 
  
  If $\symgrad^k u = 0$ for a $\Sym^l(\RR^d)$-valued
  distribution, then $u$ is regular and a
  $\Sym^l(\RR^d)$-valued polynomial of maximal degree $k+l-1$.
\end{proposition}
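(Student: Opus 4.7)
The plan is to reduce both claims to the smooth statement in Proposition~\ref{prop:grad_kernel_smooth} via mollification and then pass to the distributional limit, exploiting that polynomials of a fixed maximal degree form a finite-dimensional vector space.

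First I would fix a scalar mollifier $\rho \in \Ccspace{\infty}{\RR^d}$ with $\supp \rho \subset \ball{1}{0}$ and $\int \rho \dd{x} = 1$, set $\rho_\varepsilon(x) = \varepsilon^{-d}\rho(x/\varepsilon)$, and define $u_\varepsilon := u \conv \rho_\varepsilon$ on $\Omega_\varepsilon = \sett{x \in \Omega : \dist(x,\bdry\Omega) > \varepsilon}$ by componentwise convolution in a basis. Since $\rho$ is scalar, $u_\varepsilon$ is smooth and takes values in $\tensorspace[l]{\RR^d}$, respectively in $\Sym^l(\RR^d)$ when $u$ does. Because convolution with a smooth function commutes with distributional differentiation, the assumption $\grad^k \tensor u = 0$ (respectively $\symgrad^k u = 0$) yields $\grad^k \tensor u_\varepsilon = 0$ (respectively $\symgrad^k u_\varepsilon = 0$) pointwise on $\Omega_\varepsilon$. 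Applying Proposition~\ref{prop:grad_kernel_smooth} on each connected component of $\Omega_\varepsilon$ then identifies $u_\varepsilon$ as a polynomial of degree at most $k-1$ there in the first case, respectively $k+l-1$ in the symmetric case.

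Next I would localize and pass to the limit. Fix any non-empty connected open set $U \compactin \Omega$; for $\varepsilon < \dist(U,\bdry\Omega)$ the set $U$ is contained in a single connected component of $\Omega_\varepsilon$, so $p_\varepsilon := u_\varepsilon|_U$ lies in the finite-dimensional space $\mP$ of tensor-valued polynomials on $U$ of the claimed type and degree. Pick test tensor fields $\varphi_1,\ldots,\varphi_N \in \Ccspace{\infty}{U,\tensorspace[l]{\RR^d}}$ (respectively in $\Ccspace{\infty}{U,\Sym^l(\RR^d)}$) whose induced linear functionals $p \mapsto \int_U p \inprod \varphi_j \dd{x}$ form a dual basis for $\mP$. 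Standard mollification theory gives $u_\varepsilon \to u$ in $\distrib{U}$, so the coordinates $\int_U p_\varepsilon \inprod \varphi_j \dd{x}$ converge to $u(\varphi_j)$ as $\varepsilon \to 0$, whence $\seq{p_\varepsilon}$ converges in $\mP$ to some $p \in \mP$. Uniqueness of distributional limits then gives $u|_U = p$, proving that $u$ is regular on $U$ and equal to a polynomial of the required tensorial type and degree.

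Finally I would glue the local representations: cover the connected domain $\Omega$ by an increasing family of non-empty connected open sets $U_n \compactin \Omega$ with $\bigcup_n U_n = \Omega$ to obtain polynomials $p_n$ with $u|_{U_n} = p_n$. Since a polynomial of bounded degree is determined by its restriction to any non-empty open set, the $p_n$ agree on overlaps and define a single polynomial $p$ on $\RR^d$ of the asserted degree with $u = p$ on $\Omega$; symmetry is inherited from membership in $\mP$. The only genuinely non-routine point is the convergence in $\mP$, which hinges on the observation that distributional convergence of a sequence lying in the finite-dimensional subspace $\mP$ is equivalent to convergence in the unique Hausdorff vector-space topology of $\mP$; everything else is standard mollification bookkeeping.
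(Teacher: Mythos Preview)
Your proposal is correct and follows exactly the mollification-based route the paper indicates: the paper's own proof is just the one-line remark that the result ``can be deduced from Proposition~\ref{prop:grad_kernel_smooth} via mollification arguments similar as in \cite[Proposition 3.3]{bredies2013boundeddeformation_mh}''. Your argument spells out precisely this strategy in full detail, including the finite-dimensionality trick for passing to the limit and the gluing over an exhaustion of $\Omega$, so there is nothing to add or correct.
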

\begin{proof}
This can be deduced from Proposition \ref{prop:grad_kernel_smooth} via mollification arguments similar as in \cite[Proposition 3.3]{bredies2013boundeddeformation_mh}.
\end{proof}

\subsection{Functions of higher-order bounded variation}
\label{subsec:higher_order_tv}

In the following, we discuss functions whose derivative is a Radon
measure for a fixed order of differentiation. As higher-order
derivatives of scalar functions are always symmetric, it suffices to
consider only the symmetrised higher-order derivative $\symgrad^k$ in
this case as well as symmetric tensor fields. However, as we are also
interested in intermediate differentiation orders, we moreover discuss
spaces of symmetric tensors for which the symmetrised derivative of
some order is a Radon measure.

In the following, recall that $k \geq 1$ denotes a differentiation order and
$l \geq 0$ denotes a tensor order.

\begin{definition}
  Let $\Omega \subset \RR^d$ be a domain.
  \begin{enumerate}
  \item 
    In the case $l = 0$, for $u \in \LPlocspace{1}{\Omega}$, the
    \emph{total variation of order $k$} is defined as
    \[
    \fl
    \TV^k(u) = \sup\ \Bigset{
      \int_\Omega u \divergence^k \varphi \dd{x}}{\varphi \in \Ccspace{k}{\Omega,\Sym^k(\RR^d)},
      \ \norm[\infty]{\varphi} \leq 1}.
    \]
    For general $l \geq 0$ and
    $u \in \LPlocspace{1}{\Omega,\Sym^l(\RR^d)}$, the \emph{total
      deformation of order $k$} is  %
    \[
    \fl
    \TD^k(u) = \sup\ \Bigset{
      \int_\Omega u \inprod \divergence^k \varphi \dd{x}}
    {\varphi \in \Ccspace{k}{\Omega,\Sym^{k+l}(\RR^d)},
      \ \norm[\infty]{\varphi} \leq 1}.
    \]
  \item
    The normed space according to
    \[
    \fl
    \begin{array}{rl}
      \BD^k(\Omega,\Sym^l(\RR^d)) &= 
      \set{u \in \LPspace{1}{\Omega,\Sym^l(\RR^d)}}{\TD^k(u) < \infty},
      \\
      \norm[\BD^k]{u} &= \norm[1]{u} + \TD^k(u)
    \end{array}
    \]
    is called the space of \emph{symmetric tensor fields of bounded
      deformation of order $k$}. The scalar case, i.e., $l=0$, is
    referred to as the space of \emph{functions of bounded variation of
      order $k$}. The latter spaces are denoted by $\BV^k(\Omega)$.
  \end{enumerate}
\end{definition}
We note that the Hilbert-space norm on the tensor space for the definition of $\TV^k$ leads to a corresponding pointwise norm on the derivatives. While this choice is rather natural, and does not require to distinguish primal and dual norms, also other choices are possible for which we refer to \cite{lefkimmiatis13hessian_shatten_mh} in the second-order case.

Let us analyse some of the basic properties of these spaces.
\begin{proposition}
  \label{prop:tdk_semi_norm}
  Let $\Omega \subset \RR^d$ be a domain, $p \in {[{1,\infty}]}$.
  Then:
  \begin{enumerate}
  \item
    $\TD^k$ is proper, convex and a lower semi-continuous seminorm
    on $\LPspace{p}{\Omega,\Sym^l(\RR^d)}$.
  \item $\TD^k(u) = 0$ if and only if $\symgrad^k u = 0$. In
    particular, $\TD^k(u) = 0$ implies that $u$ is a
    $\Sym^l(\RR^d)$-valued polynomial of maximal degree $k+l-1$.
  \end{enumerate}
\end{proposition}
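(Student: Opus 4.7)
The plan is to treat parts (i) and (ii) independently. For (i) I exploit that $\TD^k$ is a supremum of a symmetric family of continuous linear functionals on $\LPspace{p}{\Omega, \Sym^l(\RR^d)}$, from which the seminorm and lower semi-continuity properties follow almost by inspection. For (ii) I translate the vanishing $\TD^k(u) = 0$ into the distributional identity $\symgrad^k u = 0$ and then invoke Proposition~\ref{prop:grad_kernel_nonsmooth} to obtain the polynomial structure.

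For (i), fix an admissible $\varphi \in \Ccspace{k}{\Omega, \Sym^{k+l}(\RR^d)}$ with $\norm[\infty]{\varphi} \leq 1$; since $\divergence^k \varphi$ is continuous with compact support in $\Omega$, it lies in $\LPspace{p'}{\Omega, \Sym^l(\RR^d)}$ for every $p' \in [1,\infty]$, and hence
\[
L_\varphi(u) = \int_\Omega u \inprod \divergence^k \varphi \dd{x}
\]
defines a continuous linear functional on $\LPspace{p}{\Omega, \Sym^l(\RR^d)}$ for each $p \in [1,\infty]$. The admissible family of test fields is symmetric under $\varphi \mapsto -\varphi$, so $\TD^k(u) = \sup_\varphi \abs{L_\varphi(u)}$ is a pointwise supremum of continuous seminorms and therefore itself a lower semi-continuous seminorm on $\LPspace{p}{\Omega, \Sym^l(\RR^d)}$. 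Properness is immediate from $\TD^k(0) = 0$.

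For (ii), the forward direction uses a scaling argument: if $\TD^k(u) = 0$, then the symmetry of the admissible set forces $L_\varphi(u) = 0$ for every admissible $\varphi$, and replacing a general $\varphi$ by $\varphi/\norm[\infty]{\varphi}$ yields $L_\varphi(u) = 0$ for all $\varphi \in \Ccspace{k}{\Omega, \Sym^{k+l}(\RR^d)}$. Restricting to the subspace $\Ccspace{\infty}{\Omega, \Sym^{k+l}(\RR^d)}$ gives $\symgrad^k u = 0$ in the distributional sense, and Proposition~\ref{prop:grad_kernel_nonsmooth} then identifies $u$ as a $\Sym^l(\RR^d)$-valued polynomial of maximal degree $k+l-1$. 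The converse direction is obtained by mollifying an arbitrary $\varphi \in \Ccspace{k}$ to smooth test fields supported in a common compact subset of $\Omega$, for which $\divergence^k$ of the mollified fields converges uniformly; the distributional identity then transfers to the full $\Ccspace{k}$ test class and yields $\TD^k(u) = 0$.

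No step poses real difficulty; the only subtlety worth highlighting is the mollification step in the reverse direction of (ii), which bridges the $\Ccspace{k}$ class used in the definition of $\TD^k$ with the $\Ccspace{\infty}$ test class implicit in the distributional derivative and in the kernel Proposition~\ref{prop:grad_kernel_nonsmooth}.
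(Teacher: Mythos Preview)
Your proof is correct and follows essentially the same approach as the paper's: both argue that $\TD^k$ is a pointwise supremum of continuous linear functionals over a sign-symmetric test class, and both reduce (ii) to the distributional identity $\symgrad^k u = 0$ together with Proposition~\ref{prop:grad_kernel_nonsmooth}. Your treatment of the converse in (ii) is slightly more explicit than the paper's, which simply asserts the equivalence between vanishing against $\Ccspace{k}$ test fields and the distributional condition; your mollification remark spells out the density argument that the paper leaves implicit.
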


\begin{proof}
  With $p^*$ being the dual exponent to $p$, each test tensor field
  obeys $\divergence^k \varphi \in \LPspace{p^*}{\Omega,\Sym^l(\RR^d)}$ for
  $\varphi \in \Ccspace{k}{\Omega,\Sym^{k+l}(\RR^d)}$. The functional
  $\TD^k$ is thus a pointwise supremum over a set of continuous linear
  functionals and, consequently, convex and lower semi-continuous.  By
  definition, it is obviously proper and positively homogeneous since
  if $\divergence^k \varphi$ is a test vector field, then also
  $-\divergence^k \varphi$ is.

  By definition of $\TD^k$ we see that $\TD^k(u) = 0$ if and only if
  $\int_\Omega u \inprod \divergence^k \varphi \dd{x} = 0$ for each
  $\varphi \in \Ccspace{k}{\Omega,\Sym^{k+1}(\RR^d)}$. But this is
  equivalent to $\symgrad^k u = 0$ in the distributional sense such
  that in
  particular, the polynomial representation follows from
  Proposition~\ref{prop:grad_kernel_nonsmooth}.
\end{proof}

In order to show more properties, for instance, that
$\BD^k(\Omega,\Sym^l(\RR^d))$ is a Banach space, let us adopt a more
abstract viewpoint.  We say that a function
$\abs{\placeholder}: X \to [0,\infty]$ for $X$ a Banach space is a
\emph{lower semi-continuous seminorm on $X$} if $\abs{\placeholder}$
is positive homogeneous, satisfies the triangle inequality and is
lower semi-continuous. The \emph{kernel} of $\abs{\placeholder}$,
denoted $\kernel{\abs{\placeholder}}$, is the set
$\set{x \in X}{\abs{x} = 0}$ which is a closed linear subspace of $X$.

\begin{lemma}
  \label{lem:banach_space_plus_lsc_seminorm}
  Let $\abs{\placeholder}$ be a lower semi-continuous seminorm on the
  Banach space $X$ with norm $\norm[X]{\placeholder}$. Then,
  \[
  Y = \set{x \in X}{\abs{x} < \infty},
  \qquad
  \norm[Y]{x} = \norm[X]{x} + \abs{x}
  \]
  is a Banach space. The seminorm $\abs{\placeholder}$ is
  continuous in $Y$.
\end{lemma}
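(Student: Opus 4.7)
The plan is to verify in turn that $Y$ is a linear subspace, that $\norm[Y]{\placeholder}$ is a norm on $Y$, that $\abs{\placeholder}$ is continuous on $Y$, and finally, as the main task, that $(Y, \norm[Y]{\placeholder})$ is complete.

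First I would check linearity of $Y$: for $x,y \in Y$ and $\lambda \in \RR$, the triangle inequality and positive homogeneity of the seminorm $\abs{\placeholder}$ yield $\abs{x+y} \leq \abs{x} + \abs{y} < \infty$ and $\abs{\lambda x} = \abs{\lambda}\abs{x} < \infty$, so both $x+y$ and $\lambda x$ lie in $Y$. Norm properties of $\norm[Y]{\placeholder}$ follow directly: positive definiteness comes from $\norm[X]{\placeholder}$ alone, while the triangle inequality and positive homogeneity follow from the corresponding properties of $\norm[X]{\placeholder}$ and of the seminorm $\abs{\placeholder}$.

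Next, continuity of $\abs{\placeholder}$ on $Y$ is a quick consequence of the reverse triangle inequality: for $x,y \in Y$ one has $\bigabs{\abs{x} - \abs{y}} \leq \abs{x - y} \leq \norm[Y]{x - y}$, so $\abs{\placeholder}$ is $1$-Lipschitz with respect to $\norm[Y]{\placeholder}$.

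The main step is completeness. Let $\seq{x^n}$ be a Cauchy sequence in $(Y, \norm[Y]{\placeholder})$. Since $\norm[X]{\placeholder} \leq \norm[Y]{\placeholder}$, it is Cauchy in $X$ and hence converges to some $x \in X$. Moreover $\seq{x^n}$ is bounded in $Y$, so in particular $\seq{\abs{x^n}}$ is bounded in $\RR$. Lower semi-continuity of $\abs{\placeholder}$ on $X$ then gives $\abs{x} \leq \liminf_{n \to \infty} \abs{x^n} < \infty$, i.e.\ $x \in Y$. To show $x^n \to x$ in $Y$, fix $\varepsilon > 0$ and choose $N$ so that $\norm[Y]{x^n - x^k} < \varepsilon$ for all $n,k \geq N$. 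For fixed $n \geq N$, the sequence $\seq{x^k - x^n}_{k}$ converges in $X$ to $x - x^n$, so lower semi-continuity of $\abs{\placeholder}$ yields
\[
\abs{x - x^n} \leq \liminf_{k \to \infty} \abs{x^k - x^n} \leq \varepsilon.
\]
Combined with $\norm[X]{x^n - x} \to 0$, this shows $\norm[Y]{x^n - x} \to 0$ as required.

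The only mildly subtle point is the last display, where lower semi-continuity must be applied to the \emph{differences} $x^k - x^n$ rather than to the sequence itself; everything else is routine bookkeeping.
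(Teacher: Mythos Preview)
Your proof is correct and follows essentially the same approach as the paper: both argue that a $Y$-Cauchy sequence is $X$-Cauchy with limit $x$, use lower semi-continuity to get $x \in Y$, and then apply lower semi-continuity to the differences $x^k - x^n$ (letting $k \to \infty$) to obtain $\abs{x^n - x} \leq \varepsilon$. The continuity of $\abs{\placeholder}$ via the reverse triangle inequality is also identical.
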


\begin{proof}
  It is immediate that $Y$ is a normed space.  Let $\seq{x^n}$ be a
  Cauchy sequence in $Y$ which is obviously a Cauchy sequence in
  $X$. Hence, a limit $x \in X$ exists for which the lower
  semi-continuity yields
  $\abs{x} \leq \liminf_{n \to \infty} \abs{x^n} < \infty$,
  the latter since $\seq{\abs{x^n}}$ is a real Cauchy sequence.
  In particular, $x \in Y$.
  
  To obtain convergence with respect to $\abs{\placeholder}$,
  choose, for $\varepsilon > 0$, an $n$ such that for all 
  $m \geq n$, $\abs{x^n - x^m} \leq \varepsilon$. Letting $m \to \infty$
  gives, as $x^n - x^m \to x^n - x$ in $X$,
  \[
  \abs{x^n - x} \leq \liminf_{n \to \infty} \ \abs{x^n - x^m} \leq \varepsilon.
  \]
  This implies $x^n \to x$ in $Y$ which is what we intended to show.

  Finally, the continuity of $\abs{\placeholder}$ follows from the
  standard estimate
  $\bigabs{\abs{x^1} - \abs{x^2}} \leq \abs{x^1 - x^2} \leq
  \norm[Y]{x^1 - x^2}$ for $x^1, x^2 \in Y$.
\end{proof}

It is then obvious from Proposition~\ref{prop:tdk_semi_norm} and
Lemma~\ref{lem:banach_space_plus_lsc_seminorm} that
$\BD^k(\Omega,\Sym^l(\RR^d))$ is a Banach space. In order to examine
the structure of these spaces, it is crucial to understand the case
$k=1$, i.e.,
$\BD(\Omega,\Sym^l(\RR^d)) = \BD^1(\Omega,\Sym^l(\RR^d))$, where the
symmetrised derivative is only a measure. For $l \geq 1$, these spaces
are strictly greater that $\BV(\Omega,\Sym^l(\Omega))$ as a
consequence of the failure of Korn's inequality. Important
properties of these spaces are summarised as follows.

\begin{theorem}[{\cite[Theorem 2.6]{holler14inversetgv_mh}}] 
  \label{thm:symgrad_measure_dist}
  If $u$ is a $\Sym^l(\RR^d)$-valued distribution on $\Omega$ a bounded
  Lipschitz domain with
  $\symgrad u \in \radonspace{\Omega,\Sym^{l+1}(\RR^d)}$, then
  $u \in \BD(\Omega,\Sym^l(\RR^d))$.
\end{theorem}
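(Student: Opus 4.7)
The plan is to upgrade $u$ from a distribution to an $L^1$ function; the bound $\TD(u) = |\symgrad u|(\Omega) < \infty$ on the total deformation then follows automatically from the hypothesis.

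First, I would regularise $u$ by mollification: for $\Omega' \compactin \Omega$ and $\varepsilon$ smaller than the distance to $\partial\Omega$, set $u_\varepsilon = u * \rho_\varepsilon$ on $\Omega'$. Then $u_\varepsilon$ is smooth and $\symgrad u_\varepsilon = (\symgrad u) * \rho_\varepsilon$ is an $L^1$ symmetric tensor field with $L^1$-norm uniformly bounded by $|\symgrad u|(\Omega)$. Next, I would apply a Korn--Poincar\'e type inequality for smooth $\Sym^l(\RR^d)$-valued tensor fields on bounded Lipschitz subdomains: there is a continuous projection $\Pi$ onto the finite-dimensional kernel $\kernel{\symgrad}$ -- by Proposition~\ref{prop:grad_kernel_nonsmooth} with $k=1$ this kernel consists of $\Sym^l(\RR^d)$-valued polynomials of degree at most $l$ -- and a constant $C$ such that $\norm[L^{d/(d-1)}(\Omega')]{v - \Pi v} \leq C \norm[L^1(\Omega')]{\symgrad v}$ for all smooth $v$. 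Applied to $u_\varepsilon$, this gives a uniform bound on $u_\varepsilon - \Pi u_\varepsilon$ in $L^{d/(d-1)}(\Omega')$; a weak compactness argument produces a subsequence converging to some $w \in L^{d/(d-1)}(\Omega')$. Since $u_\varepsilon \to u$ in the distributional sense on $\Omega'$, the sequence $\Pi u_\varepsilon$ must converge in distributions to $p := u - w$, and since $\Pi u_\varepsilon$ lies in a fixed finite-dimensional polynomial space, this convergence takes place in that finite-dimensional space and $p$ is itself a polynomial. Hence $u = w + p \in L^{d/(d-1)}(\Omega')$ on every relatively compact subdomain, and $u$ is locally integrable.

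Finally, I would globalise: knowing $u$ is a function on all of $\Omega$, I would apply the Korn--Poincar\'e inequality directly on the whole bounded Lipschitz domain $\Omega$ -- approximating $u$ by mollifications with appropriate boundary care and passing to the limit via lower semi-continuity of $|\symgrad\placeholder|(\Omega)$ -- to obtain $\norm[d/(d-1)]{u - \Pi u} \leq C |\symgrad u|(\Omega)$. Since $\Pi u$ is a polynomial on a bounded domain, this yields $u \in L^{d/(d-1)}(\Omega) \subset L^1(\Omega, \Sym^l(\RR^d))$, which together with $\TD(u) = |\symgrad u|(\Omega) < \infty$ places $u$ in $\BD(\Omega, \Sym^l(\RR^d))$.

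The main obstacle is the Korn--Poincar\'e inequality itself for $\Sym^l(\RR^d)$-valued smooth tensor fields on Lipschitz domains with projection onto the finite-dimensional kernel given by Proposition~\ref{prop:grad_kernel_nonsmooth}. For $l = 0$ this reduces to the classical Poincar\'e--Wirtinger estimate on $\BV$; for $l = 1$ it is the Temam--Strang theory of bounded deformation; and for general $l$ it requires a Peetre--Tartar type argument combining a compact embedding of the Sobolev space of symmetric tensor fields with the explicit kernel description, which is exactly the route taken in the reference \cite{bredies2013boundeddeformation_mh} cited for Theorem~\ref{thm:symgrad_measure_dist}.
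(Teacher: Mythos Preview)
The paper does not supply its own proof of this theorem; it is quoted from \cite[Theorem 2.6]{holler14inversetgv_mh} and used as a black box. Your sketch is essentially the standard route for this kind of result and is correct in outline: local mollification, a Sobolev--Korn inequality applied to the smooth approximants, weak compactness in $L^{d/(d-1)}$, and the observation that the correction $\Pi u_\varepsilon$ lives in a fixed finite-dimensional polynomial space (by Proposition~\ref{prop:grad_kernel_nonsmooth}) so that distributional convergence upgrades to norm convergence there. This cleanly gives $u\in L^{d/(d-1)}_{\mathrm{loc}}(\Omega,\Sym^l(\RR^d))$.

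The one place that hides real work is the globalisation. You cannot invoke Theorem~\ref{thm:sobolev_korn} directly on $u$, since that inequality is stated for $u\in\BD(\Omega,\Sym^l(\RR^d))$, which is precisely what you are proving; and ``mollification with appropriate boundary care'' on all of $\Omega$ requires either an extension of $u$ past $\partial\Omega$ or a partition-of-unity construction. The clean fix, once $u\in L^1_{\mathrm{loc}}$ is known, is the Anzellotti--Giaquinta-type approximation (cf.~the construction in Lemma~\ref{lem:bvk_strict_convergence}): set $u^\delta=\sum_n (u\varphi^n)*\rho_{\varepsilon_n}$ for a partition of unity $\{\varphi^n\}$ subordinate to a locally finite cover by compactly contained sets, so that each $u\varphi^n\in L^1$ with compact support, $u^\delta$ is smooth on $\Omega$ with $\|\symgrad u^\delta\|_{\radon(\Omega)}\leq|\symgrad u|(\Omega)+\delta$, and $u^\delta\to u$ in $L^1_{\mathrm{loc}}$; then apply the smooth-function Sobolev--Korn inequality on $\Omega$ and pass to the limit exactly as in your local step. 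Alternatively one exhausts $\Omega$ by Lipschitz subdomains with uniformly controlled Sobolev--Korn constants and a projection $R$ fixed by moments on a single interior ball, then lets the exhaustion fill $\Omega$ by monotone convergence. Either way the gap is technical, not conceptual, and you have correctly isolated the Sobolev--Korn inequality from \cite{bredies2013boundeddeformation_mh} as the only substantive analytic input; there is no circularity, since that inequality is proved there for $\BD$-functions via compactness and the kernel description, independently of the present statement.
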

\begin{theorem}[{\cite[Theorems 4.16 and 4.17]{bredies2013boundeddeformation_mh}}] \label{thm:bd_embedding}
For $\Omega$ a bounded Lipschitz domain and, $1 \leq p \leq d/(d-1)$, the space $\BD(\Omega,\Sym^l(\RR^d))$ is continuously embedded in $\LPspace{p}{\Omega,\Sym^l(\RR^d)}$. Moreover, for $p < d/(d-1)$, the embedding is compact.
\end{theorem}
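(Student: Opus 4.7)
The plan is to reduce the continuous embedding to a Sobolev-type inequality for smooth symmetric tensor fields, and then derive compactness from it by a mollification plus interpolation argument.

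The first key step is to establish, for every smooth $u \in \Cspace{\infty}{\closure{\Omega},\Sym^l(\RR^d)}$ on the bounded Lipschitz domain $\Omega$, an inequality of the form
\begin{equation*}
\norm[d/(d-1)]{u} \leq C\bigl(\norm[1]{u} + \norm[1]{\symgrad u}\bigr),
\end{equation*}
with $C$ depending only on $\Omega$, $d$ and $l$. The hard part here is that Korn's inequality fails in $L^1$, so one cannot bound the full gradient $\grad \tensor u$ in $L^1$ by $\symgrad u$ in $L^1$ and then apply the ordinary Sobolev embedding of $\Hspace[1]{1}{\Omega}$ componentwise. Instead I would follow the Temam--Strang strategy, which for $l=1$ is carried out in \cite{bredies2013boundeddeformation_mh}: decompose $u = R(\symgrad u) + P$, where $P$ is the projection of $u$ onto the finite-dimensional kernel of $\symgrad$ described by Proposition~\ref{prop:grad_kernel_nonsmooth} (so that $P$ is a polynomial of bounded degree and $\norm[d/(d-1)]{P} \leq C\norm[1]{P} \leq C'\norm[1]{u}$), and $R$ is a Bogovskij/Calder\'on--Zygmund-type right inverse of $\symgrad$ on its range, which maps $\LPspace{1}{\Omega,\Sym^{l+1}(\RR^d)}$ continuously into $\LPspace{d/(d-1)}{\Omega,\Sym^l(\RR^d)}$ through a Hardy--Littlewood--Sobolev-type estimate. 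For general tensor order $l$ the construction is adapted by applying the symmetric-tensor analogue of the Bogovskij operator and using Proposition~\ref{prop:grad_kernel_nonsmooth} to identify the finite-dimensional complement to factor out.

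Once the smooth inequality is in hand, the continuous embedding for all of $\BD(\Omega,\Sym^l(\RR^d))$ follows by smooth approximation in the strict $\BD$-sense: for any $u \in \BD(\Omega,\Sym^l(\RR^d))$, a mollification combined with a partition of unity produces a sequence $\seq{u^n}$ in $\Cspace{\infty}{\Omega,\Sym^l(\RR^d)}$ with $u^n \to u$ in $\LPspace{1}{\Omega,\Sym^l(\RR^d)}$ and $\int_\Omega \abs{\symgrad u^n} \dd{x} \to \norm[\radon]{\symgrad u}$. Applying the smooth inequality to $u^n$ and passing to the limit via Fatou's lemma (after extracting an almost everywhere convergent subsequence) yields the desired bound.

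For the compactness assertion in the range $p < d/(d-1)$, I would argue as follows. Given a bounded sequence $\seq{u^n}$ in $\BD(\Omega,\Sym^l(\RR^d))$, the previous step gives a uniform bound in $\LPspace{d/(d-1)}{\Omega,\Sym^l(\RR^d)}$. A mollification on scale $\varepsilon > 0$ (with a suitable extension near the boundary) produces smooth tensor fields $u_\varepsilon^n$ satisfying $\norm[1]{u_\varepsilon^n - u^n} \leq C\varepsilon\bigl(\norm[1]{u^n} + \norm[\radon]{\symgrad u^n}\bigr)$ by a standard translation-estimate argument based on the Gauss--Green formula; for each fixed $\varepsilon$, the sequence $\seq{u_\varepsilon^n}$ is equibounded and equicontinuous, hence precompact in $\LPspace{1}{\Omega,\Sym^l(\RR^d)}$ by Arzel\`a--Ascoli. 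A diagonal extraction delivers a subsequence Cauchy in $\LPspace{1}{\Omega,\Sym^l(\RR^d)}$, and H\"older interpolation with the uniform $\LPspace{d/(d-1)}{\Omega,\Sym^l(\RR^d)}$ bound upgrades this to convergence in $\LPspace{p}{\Omega,\Sym^l(\RR^d)}$ for every $1 \leq p < d/(d-1)$.
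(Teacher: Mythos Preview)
The paper does not prove this theorem at all: it is stated with a direct citation to \cite[Theorems 4.16 and 4.17]{bredies2013boundeddeformation_mh} and no argument is given. Your proof plan is therefore not comparable to any argument in the paper itself, but it is a reasonable outline of the strategy used in the cited reference, combining a Sobolev--Korn-type inequality for smooth tensor fields with strict approximation for the continuous embedding, and mollification plus interpolation for compactness.
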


\begin{theorem}[{Sobolev--Korn inequality \cite[Corollary 4.20]{bredies2013boundeddeformation_mh}}]
  \label{thm:sobolev_korn}
  For $\Omega$ a bounded Lipschitz domain and
  $R_l: \LPspace{d/(d-1)}{\Omega,\Sym^l(\RR^d)} \to \kernel{\symgrad}$ a linear and
  continuous projection onto the kernel of $\symgrad$, there exists a
  constant $C > 0$ such that for each $u \in \BD(\Omega,\Sym^l(\RR^d))$
  it follows that
  \begin{equation}
    \label{eq:sobolev-korn}
    \norm[d/(d-1)]{u - R_lu} \leq C \norm[\radon]{\symgrad u}.
  \end{equation}
\end{theorem}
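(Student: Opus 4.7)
The plan is to establish the Sobolev--Korn estimate by a two-step compactness--contradiction argument. First, I would reduce to a weaker $L^1$-Poincar\'e estimate by exploiting the continuous embedding $\BD(\Omega,\Sym^l(\RR^d)) \embed \LPspace{d/(d-1)}{\Omega,\Sym^l(\RR^d)}$ from Theorem~\ref{thm:bd_embedding}. That embedding yields a constant $C_1 > 0$ with
\[
\norm[d/(d-1)]{v} \leq C_1 \bigl(\norm[1]{v} + \norm[\radon]{\symgrad v}\bigr)
\qquad \text{for all } v \in \BD(\Omega,\Sym^l(\RR^d)).
\]
Applied to $v = u - R_l u$, together with $\symgrad R_l u = 0$, this gives
\[
\norm[d/(d-1)]{u - R_l u} \leq C_1 \norm[1]{u - R_l u} + C_1 \norm[\radon]{\symgrad u},
\]
so it suffices to establish the Poincar\'e bound $\norm[1]{u - R_l u} \leq C_2 \norm[\radon]{\symgrad u}$.

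The Poincar\'e bound I would prove by contradiction: assume it fails and select $\seq{u^n}$ with $\norm[1]{u^n - R_l u^n} > n \norm[\radon]{\symgrad u^n}$, then normalise $v^n = (u^n - R_l u^n)/\norm[1]{u^n - R_l u^n}$. Since $R_l$ is a projection, $R_l v^n = 0$; by construction $\norm[1]{v^n} = 1$ and $\norm[\radon]{\symgrad v^n} < 1/n \to 0$. Hence $\seq{v^n}$ is bounded in $\BD(\Omega,\Sym^l(\RR^d))$, and the compact embedding part of Theorem~\ref{thm:bd_embedding} (with exponent $1 < d/(d-1)$) delivers a subsequence, not relabelled, and $v$ with $v^n \to v$ strongly in $\LPspace{1}{\Omega,\Sym^l(\RR^d)}$; in particular $\norm[1]{v} = 1$. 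Passing to the limit in the distributional identity defining $\symgrad v^n$, using that $\norm[\radon]{\symgrad v^n} \to 0$, yields $\symgrad v = 0$ distributionally, so $v \in \kernel{\symgrad}$ by Proposition~\ref{prop:grad_kernel_nonsmooth}.

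To conclude $v = 0$, I still need to transfer the condition $R_l v^n = 0$ to the limit. For this I would use that $\seq{v^n}$ is also bounded in $\LPspace{d/(d-1)}{\Omega,\Sym^l(\RR^d)}$ via the continuous part of Theorem~\ref{thm:bd_embedding}; by reflexivity (or by weak* compactness in the case $d = 1$), a further subsequence converges weakly, respectively weak*, in $\LPspace{d/(d-1)}{\Omega,\Sym^l(\RR^d)}$, and its limit must coincide with $v$ by uniqueness of limits in $\LPspace{1}{\Omega,\Sym^l(\RR^d)}$. Weak (weak*) continuity of the bounded linear operator $R_l$ then gives $R_l v = \lim R_l v^n = 0$, while $v \in \kernel{\symgrad}$ combined with the projection property forces $R_l v = v$. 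Hence $v = 0$, contradicting $\norm[1]{v} = 1$.

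The main obstacle I anticipate is precisely this final matching of topologies: since $R_l$ is only guaranteed to be continuous on $\LPspace{d/(d-1)}{\Omega,\Sym^l(\RR^d)}$, one must combine the strong $L^1$ convergence coming from $\BD$-compactness with the weak(*) compactness in $L^{d/(d-1)}$ supplied by the continuous $\BD$-embedding in order to push the projection condition to the limit. Once this is handled, the remaining steps are routine; all further ingredients --- the polynomial description of $\kernel{\symgrad}$, lower semi-continuity of the total-deformation seminorm, and the embedding theorem --- have already been supplied by preceding results in the excerpt.
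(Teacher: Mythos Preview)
The paper does not prove this theorem; it is stated with a citation to \cite[Corollary 4.20]{bredies2013boundeddeformation_mh} and used as a black box. Your compactness--contradiction argument is a correct and standard route to such Poincar\'e--Korn estimates, and in fact the paper itself reuses exactly this template several times downstream (e.g., Proposition~\ref{prop:tv^k-coercive} and the coercivity part of Proposition~\ref{prop:tgvk_basic_topological_equivalence}), so your approach is entirely in the spirit of the surrounding material.

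One small simplification you could make concerns the ``matching of topologies'' step you flag as the main obstacle. Since $\kernel{\symgrad}$ is finite-dimensional (Proposition~\ref{prop:grad_kernel_nonsmooth}), the continuous projection $R_l$ factors through finitely many bounded linear functionals on $\LPspace{d/(d-1)}{\Omega,\Sym^l(\RR^d)}$, and on a bounded domain these extend continuously to $\LPspace{1}{\Omega,\Sym^l(\RR^d)}$ only if they are represented by $L^\infty$ functions --- which need not hold in general. Your workaround via weak convergence in $L^{d/(d-1)}$ is the right fix for $d \geq 2$. For $d = 1$ the weak* argument is slightly delicate since $R_l$ need not be weak*-continuous on $L^\infty$; however in that case $\Sym^l(\RR) \cong \RR$, $\symgrad = \grad$, $\BD = \BV$, and the statement reduces to the classical one-dimensional Poincar\'e--Wirtinger inequality in $\BV$, so the issue disappears.
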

Note that the projection $R_l$ as stated always exists as
$\ker(\symgrad)$ is finite-dimensional (see
Proposition~\ref{prop:grad_kernel_nonsmooth}).

Now, for general $k$ and $u \in \BD^k(\Omega,\Sym^l(\RR^d))$ fixed,
$w = \symgrad^{k-1}u$ is a $\Sym^{l+k-1}(\RR^d)$-valued distribution with the property
\[
\fl
(\symgrad w)(\varphi) = -w(\divergence \varphi) = (-1)^k u(\divergence^k \varphi) =
(-1)^k \int_\Omega u \inprod \divergence^k \varphi \dd{x} = \int_\Omega \varphi
\inprod \dd{\symgrad^k u}
\]
for $\varphi \in \Ccspace{\infty}{\Omega,\Sym^{k+l}(\RR^d)}$. In other words,
$\symgrad w = \symgrad^k u \in \radonspace{\Omega,\Sym^{k+l}(\RR^d)}$,
thus Theorem~\ref{thm:symgrad_measure_dist} implies that
$\symgrad^{k-1}u = w \in \BD(\Omega,\Sym^{k+l-1}(\RR^d))$ and, in particular,
we have $u \in \BD^{k-1}(\Omega,\Sym^l(\RR^d))$. Hence, the 
spaces are nested:
\[
\BD^k(\Omega,\Sym^l(\RR^d)) \subset \BD^{k-1}(\Omega,\Sym^l(\RR^d))
\subset \ldots \subset \BD(\Omega,\Sym^l(\RR^d)).
\]
Let us look at the norms: By the Sobolev--Korn
inequality~\eqref{eq:sobolev-korn}, for some linear projection
$R_{k+l-1}: \BD(\Omega,\Sym^{k+l-1}(\RR^d)) \to \kernel{\symgrad}$, we
see
\[
  \norm[1]{\symgrad^{k-1}u - R_{k+l-1}\symgrad^{k-1}u} \leq
  C \norm[\radon]{\symgrad^k u}
\]
which implies
\[
  \norm[\radon]{\symgrad^{k-1}u } 
  \leq 
  C \bigl( \norm[\radon]{\symgrad^k u} + \norm[1]{R_{k+l-1}\symgrad^{k-1}u} 
  \bigr).
\]
 Now, $u \mapsto R_{k+l-1}\symgrad^{k-1} u$ is well-defined on 
 $\BD^k(\Omega,\Sym^l(\RR^d))$, linear, has finite-dimensional image
 and is hence continuous. We may therefore estimate
\[
\norm[\radon]{\symgrad^{k-1} u} \leq C \bigl( \norm[1]{u} 
+ \norm[\radon]{\symgrad^k u} \bigr).
\]
Proceeding inductively, we arrive at the estimate
\begin{equation}
  \label{eq:higer_order_bd_norm_equiv1}
  \sum_{m = 0}^{k} \norm[\radon]{\symgrad^m u} \leq 
  C \bigl( \norm[1]{u} + \norm[\radon]{\symgrad^k u} \bigr)
\end{equation}
for some $C > 0$ independent of $u$.
Therefore, we obtain the following theorem.

\begin{theorem}
  \label{thm:l1-tvx-equiv-norm}
  If $\Omega \subset \RR^d$ is a bounded Lipschitz domain, then the
  norm equivalence
  \begin{equation}
    \label{eq:higer_order_bd_norm_equiv}
    \norm[1]{u} + \norm[\radon]{\symgrad^k u} \sim
    \sum_{m = 0}^k \norm[\radon]{\symgrad^m u}
  \end{equation}
  holds on $\BD^k(\Omega,\Sym^l(\RR^d))$.
  The embeddings
  \[
  \fl
  \BD^k(\Omega,\Sym^l(\RR^d)) \embed \BD^{k-1}(\Omega,\Sym^l(\RR^d))
  \embed \ldots \embed \BD(\Omega,\Sym^l(\RR^d))
  \]
  are continuous.
\end{theorem}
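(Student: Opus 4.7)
My plan is to prove the norm equivalence in two directions; the embeddings then follow as an immediate corollary. The inequality $\norm[1]{u} + \norm[\radon]{\symgrad^k u} \leq \sum_{m=0}^{k} \norm[\radon]{\symgrad^m u}$ is trivial, since the left-hand side picks out exactly the $m=0$ term (identifying $\symgrad^0 u = u \in \LPspace{1}{\Omega,\Sym^l(\RR^d)}$ with the absolutely continuous measure $u\lebesgue{d}$, whose Radon norm coincides with $\norm[1]{u}$) and the $m=k$ term of the sum.

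For the converse, I would proceed by downward induction on $m \in \sett{k,k-1,\dots,0}$, claiming $\norm[\radon]{\symgrad^m u} \leq C(\norm[1]{u} + \norm[\radon]{\symgrad^k u})$ with $C$ depending only on $\Omega$, $k$, $l$, and the chosen projections. The base case $m=k$ is trivial. For the step from $m+1$ to $m$: the distribution $\symgrad^m u$ has symmetrised gradient $\symgrad^{m+1}u$, which by the inductive hypothesis is a finite Radon measure, so Theorem~\ref{thm:symgrad_measure_dist} yields $\symgrad^m u \in \BD(\Omega,\Sym^{l+m}(\RR^d)) \subset \LPspace{1}{\Omega,\Sym^{l+m}(\RR^d)}$. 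Applying the Sobolev--Korn inequality (Theorem~\ref{thm:sobolev_korn}) with projection $R_{l+m}$ and combining with the continuous embedding $\LPspace{d/(d-1)}{\Omega} \embed \LPspace{1}{\Omega}$ on bounded domains gives
\[
\norm[1]{\symgrad^m u - R_{l+m}\symgrad^m u} \leq C \norm[\radon]{\symgrad^{m+1} u},
\]
whence the triangle inequality and the identification $\norm[\radon]{\symgrad^m u} = \norm[1]{\symgrad^m u}$ yield $\norm[\radon]{\symgrad^m u} \leq \norm[1]{R_{l+m}\symgrad^m u} + C\norm[\radon]{\symgrad^{m+1}u}$.

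The main obstacle is controlling the finite-dimensional projection term $\norm[1]{R_{l+m}\symgrad^m u}$ by $\norm[1]{u} + \norm[\radon]{\symgrad^k u}$. My plan is to invoke the closed graph theorem for the linear operator $T_m : \BD^k(\Omega,\Sym^l(\RR^d)) \to \LPspace{1}{\Omega,\Sym^{l+m}(\RR^d)}$, $T_m u = R_{l+m}\symgrad^m u$, whose image lies in the finite-dimensional kernel of $\symgrad$. Closedness can be verified by noting that if $u^n \to u$ in $\BD^k$, then $\symgrad^j u^n \to \symgrad^j u$ in the sense of distributions for every $0 \leq j \leq k$, so from continuity of $R_{l+m}$ on $\BD(\Omega,\Sym^{l+m}(\RR^d))$ together with the equivalence of all norms on the finite-dimensional range, any $\LPspace{1}{}$-limit of $T_m u^n$ must coincide with $T_m u$. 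This yields the required bound $\norm[1]{T_m u} \leq C\norm[\BD^k]{u}$; combining with the inductive hypothesis completes the induction. Summing the resulting estimates over $m$ establishes \eqref{eq:higer_order_bd_norm_equiv}. Finally, the continuous embeddings are immediate: for each $0 \leq j \leq k$ one has $\norm[\BD^j]{u} = \norm[1]{u} + \norm[\radon]{\symgrad^j u} \leq C(\norm[1]{u} + \norm[\radon]{\symgrad^k u}) = C\norm[\BD^k]{u}$, and iteration yields the full chain $\BD^k \embed \BD^{k-1} \embed \ldots \embed \BD$.
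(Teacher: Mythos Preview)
Your approach coincides with the paper's: downward induction using the Sobolev--Korn inequality to control $\symgrad^m u - R_{l+m}\symgrad^m u$, followed by a bound on the projection term $R_{l+m}\symgrad^m u$ exploiting its finite-dimensional range. The paper handles this last point simply by asserting that a linear map with finite-dimensional image is continuous; you instead invoke the closed graph theorem, which is a more careful way to justify the same step.

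Your verification of closedness, however, is incomplete. From $u^n \to u$ in $\BD^k$ you obtain $\symgrad^m u^n \to \symgrad^m u$ only in the sense of distributions, and $R_{l+m}$ is not assumed to be sequentially continuous for that convergence; invoking its continuity on $\BD(\Omega,\Sym^{l+m}(\RR^d))$ does not help either, since $\BD$-convergence of $\symgrad^m u^n$ is precisely part of what the induction is meant to establish. Here is a fix that stays within your framework: by the inductive hypothesis the map $u \mapsto \symgrad^{m+1} u$ is already continuous $\BD^k \to \radonspace{\Omega,\Sym^{l+m+1}(\RR^d)}$, so Sobolev--Korn makes $u \mapsto \symgrad^m u - T_m u$ continuous $\BD^k \to L^1$. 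Now if $u^n \to u$ in $\BD^k$ and $T_m u^n \to v$ in $L^1$, adding the two convergences gives $\symgrad^m u^n \to (\symgrad^m u - T_m u) + v$ in $L^1$, and comparison with the distributional limit $\symgrad^m u$ forces $v = T_m u$. Alternatively one can bypass the closed graph theorem and argue directly by contradiction and compactness in the finite-dimensional range, in the spirit of Lemma~\ref{lem:preparatory_tgvk_topological_equivalence}.
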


\begin{proof}
  The nontrivial estimate to establish norm equivalence has just been
  shown in~\eqref{eq:higer_order_bd_norm_equiv1}. The continuity of
  the embedding follows from the fact that the norm on the right-hand 
  side in~\eqref{eq:higer_order_bd_norm_equiv} is 
  increasing with respect to $k$.
\end{proof}

In the scalar case, we can furthermore establish Sobolev embeddings.

\begin{theorem}
  \label{thm:tvk-sobolev-embedding}
  Let $\Omega$ be a bounded Lipschitz domain and
  $0 \leq m < k$.
  \begin{description}

  \item 
    [For $k - m \leq d$:]
    The space $\BV^k(\Omega)$ is continuously embedded in
    $\Hspace[p]{m}{\Omega}$ for $1 \leq p \leq \frac{d}{d - (k-m)}$, where we set $\frac{d}{d - (k-m)} = \infty$ for $k-m = d$.

    If $p < \frac{d}{d - (k - m)}$, then the embedding is
    compact.

  \item
    [For $k - m > d$:] The space $\BV^k(\Omega)$
    is compactly embedded in $\Cspace{m,\alpha}
    {\closure{\Omega}}$ for each
    $\alpha \in {]{0,1}[}$.

  \end{description}
\end{theorem}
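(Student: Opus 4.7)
The plan is to prove the theorem by first establishing the continuous inclusion $\BV^k(\Omega) \embed \Hspace[d/(d-1)]{k-1}{\Omega}$ and then iterating the classical Sobolev embedding to reach differentiation order $m$.

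For the first step, I would argue that for every $u \in \BV^k(\Omega)$ each intermediate derivative $\grad^j u$ with $0 \leq j \leq k-1$ lies in $\BD(\Omega, \Sym^j(\RR^d))$. Indeed, the norm equivalence of Theorem~\ref{thm:l1-tvx-equiv-norm} (applied in the scalar case $l = 0$) guarantees that each $\symgrad^j u = \grad^j u$ is a Radon measure with norm controlled by $\norm[\BV^k]{u}$, and Theorem~\ref{thm:symgrad_measure_dist} then promotes the distribution $\grad^{j-1} u$ to a $\BD$-function, hence in particular to an $L^1$ function. Applying Theorem~\ref{thm:bd_embedding} to each $\grad^j u$ yields $\grad^j u \in \LPspace{d/(d-1)}{\Omega, \Sym^j(\RR^d)}$ with continuous dependence on $u$, i.e., $\BV^k(\Omega) \embed \Hspace[d/(d-1)]{k-1}{\Omega}$. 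Here I use that for scalar $u$ the distinction between $\BV$ and $\BD$ disappears, because the full gradient of $\grad^j u$ is automatically symmetric.

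From there, I would simply chain the classical Sobolev embeddings $\Hspace[q]{j+1}{\Omega} \embed \Hspace[qd/(d-q)]{j}{\Omega}$, valid for $q \in [1,d)$, starting at $j+1 = k-1$ and $q = d/(d-1)$. A direct induction on the number of iterations produces the target exponent $p = d/(d-(k-m))$ at level $m$ whenever $k - m < d$; the borderline $k - m = d$ is caught by the endpoint inclusion $\Hspace[d]{1}{\Omega} \embed \LPspace{q}{\Omega}$ for every finite $q$. The compactness assertion for each strictly subcritical $p$ follows by replacing the final Sobolev step with the corresponding Rellich--Kondrachov compact embedding. For $k - m > d$, I would iterate one further step past the critical exponent so that $\grad^{m+1} u$ enters some $\LPspace{q}{\Omega}$ with $q > d$, and then apply Morrey's inequality $\Hspace[q]{1}{\Omega} \embed \Cspace{0,1-d/q}{\closure{\Omega}}$ to conclude $\grad^m u \in \Cspace{0,\alpha}{\closure{\Omega}}$ for every $\alpha \in {]{0,1}[}$. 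Arzelà--Ascoli, applied with a strictly larger Hölder exponent, then yields compactness in $\Cspace{m,\alpha}{\closure{\Omega}}$.

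The main difficulty sits entirely in the first step: the real content is to extract intermediate-order regularity from the single assumption that the top-order symmetrised derivative $\symgrad^k u$ is a Radon measure, which is precisely what the norm equivalence of Theorem~\ref{thm:l1-tvx-equiv-norm} together with Theorem~\ref{thm:symgrad_measure_dist} achieve. Once this initial step is in place, the rest reduces to routine bookkeeping of Sobolev exponents combined with standard Morrey and Rellich--Kondrachov embeddings.
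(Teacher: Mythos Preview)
Your overall strategy matches the paper's: establish $\BV^k(\Omega) \embed \Hspace[d/(d-1)]{k-1}{\Omega}$ via Theorem~\ref{thm:l1-tvx-equiv-norm} and the $\BD$-embedding, then invoke classical Sobolev embeddings. The paper does the first step slightly differently (it applies the $\BV$ Poincar\'e inequality to each $\partial^\beta u$ with $|\beta|\leq k-1$ rather than the $\BD$ embedding to $\grad^j u$), but this is cosmetic.

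There is, however, a genuine gap in the borderline case $k-m=d$, $p=\infty$. Your chain of Sobolev embeddings, starting from $\Hspace[d/(d-1)]{k-1}{\Omega}$, lands you after $k-2-m$ steps in $\Hspace[d]{m+1}{\Omega}$, and the critical embedding $\Hspace[d]{1}{\Omega}\embed\LPspace{\infty}{\Omega}$ \emph{fails} for $d\geq 2$. You explicitly write that the borderline case gives only $\LPspace{q}{\Omega}$ for finite $q$, but the theorem actually claims $p=\infty$ (the convention $d/(d-(k-m))=\infty$ is stated). The paper closes this gap by a different route: it uses the embedding $\Hspace[1]{k}{\Omega}\embed\Hspace[\infty]{m}{\Omega}$, which \emph{does} hold in the critical case $k-m=d$ because the starting integrability exponent is $1$. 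Since $\BV^k(\Omega)\not\subset\Hspace[1]{k}{\Omega}$, the paper passes through a strict approximation $u^n\in C^\infty(\Omega)\cap\BV^k(\Omega)$ (Lemma~\ref{lem:bvk_strict_convergence}), applies the $W^{k,1}\to W^{m,\infty}$ estimate to each $u^n$, and concludes via lower semi-continuity of the $L^\infty$-norm under $L^1$-convergence. Your proposal needs this extra argument (or an equivalent one) to cover the endpoint.
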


\begin{proof}
  In the scalar case, $\norm[1]{u} + \sum_{\abs{\beta} \leq k-1} 
  \norm[\radon]{\grad \partial^\beta u}$ for $\beta \in \NN^d$ a multiindex
  and $u \in \BV^k(\Omega)$
  constitutes an equivalent norm on $\BV^k(\Omega)$, as a consequence of
  Theorem~\ref{thm:l1-tvx-equiv-norm}.
  By the Poincar\'e inequality in $\BV(\Omega)$,
  \[
  \norm[d/(d-1)]{\partial^\beta u} 
  \leq C \bigl( \norm[\radon]{\grad \partial^\beta u} + \norm[1]{u} \bigr)
  \]
  for each $\abs{\beta} \leq k-1$. This establishes the continuous embedding
  $\BV^k(\Omega) \to W^{k-1,d/(d-1)}(\Omega)$. Application of the
  well-known embedding 
  theorems for Sobolev spaces (see \cite[Theorems 5.4 and 6.2]{Adams2003sobolev_mh}) then give the results for the cases $k-m < d$ and $k - m >d$ as well as for the case $k-m=d$ and $p< \infty$.
  
  For the case $k-m=d$ and $p=\infty$ we note that again by Sobolev embeddings \cite[Theorem 5.4]{Adams2003sobolev_mh} we get for a constant $C>0$ and all $u \in \Hspace[1]{k}{\Omega}$ that
  \[ \sum_{i=0}^m \norm[\infty]{\nabla ^i u} \leq C \sum_{i=0}^k \norm[1]{\nabla ^i u} .\]
  Approximating $u \in \BV^k(\Omega)$ with a sequence $\seq{u^n}$ in $C^\infty (\Omega) \cap \BV^k(\Omega)$ strictly converging to $u$ in $\BV^k(\Omega)$ as in Lemma \ref{lem:bvk_strict_convergence}, the result follows from applying this estimate to each $u^n$ and using lower semi-continuity of the $L^\infty$-norm with respect to convergence in $L^1$.
\end{proof}

We would like to employ $\TD^k$ as a regulariser and first characterise
its kernel. 
For that purpose,
we note that $\TD^k(u) = 0$ for some $u \in \BD^k(\Omega,\Sym^l(\RR^d))$
implies that $\symgrad   u = 0$ in the distributional sense, 
hence Proposition \ref{prop:grad_kernel_nonsmooth} implies that 
$u$ is a $\Sym^ l(\RR^ d)$-valued polynomial of maximal degree $k+l-1$.
This yields the following result.

\begin{proposition}
  \label{prop:td^k-kernel}
  The space $\kernel{\TD^k}$ is a subspace of polynomials
  of degree less than $k+l$. If $l=0$, then $\kernel{\TV^k} = \poly^{k-1}
  = \set{u: \Omega \to \RR}{u \ \text{polynomial of degree} \leq k-1}$.
\end{proposition}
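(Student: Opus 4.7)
The plan is to exploit the observation already recorded in the paragraph preceding the statement: the nontrivial inclusion follows essentially for free from the distributional characterisation of $\symgrad^k$ combined with Proposition~\ref{prop:grad_kernel_nonsmooth}, and in the scalar case the reverse inclusion is just classical integration by parts.

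First I would treat the inclusion $\kernel{\TD^k} \subseteq \{\text{polynomials of degree} < k+l\}$. Take any $u \in \kernel{\TD^k}$. Since $\varphi$ and $-\varphi$ both range over $\Ccspace{k}{\Omega,\Sym^{k+l}(\RR^d)}$ in the supremum defining $\TD^k$, vanishing of $\TD^k(u)$ forces $\int_\Omega u \inprod \divergence^k \varphi \dd{x} = 0$ for every such $\varphi$, and hence in particular for every $\varphi \in \Ccspace{\infty}{\Omega,\Sym^{k+l}(\RR^d)}$. By the definition of the distributional symmetrised derivative this reads $\symgrad^k u = 0$ as a $\Sym^{k+l}(\RR^d)$-valued distribution. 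Proposition~\ref{prop:grad_kernel_nonsmooth} then immediately gives that $u$ is represented by a $\Sym^l(\RR^d)$-valued polynomial of maximal degree $k+l-1$, which is the claimed inclusion.

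For the converse in the scalar case $l=0$, I would pick an arbitrary $u \in \poly^{k-1}$, i.e.\ a polynomial of degree at most $k-1$. Such $u$ is smooth with classical $k$-th derivative identically zero, so for every $\varphi \in \Ccspace{k}{\Omega,\Sym^{k}(\RR^d)}$ repeated integration by parts (with no boundary terms, since $\varphi$ is compactly supported in $\Omega$) yields
\[
\int_\Omega u \divergence^k \varphi \dd{x} = (-1)^k \int_\Omega \grad^k u \inprod \varphi \dd{x} = 0.
\]
Taking the supremum over admissible $\varphi$ with $\norm[\infty]{\varphi} \leq 1$ gives $\TV^k(u) = 0$, so $\poly^{k-1} \subseteq \kernel{\TV^k}$. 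Combined with the first part (specialised to $l=0$, where the polynomial degree bound $k+l-1 = k-1$ coincides with $\poly^{k-1}$), this yields the equality $\kernel{\TV^k} = \poly^{k-1}$.

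I do not expect any real obstacle: both inclusions are short once Proposition~\ref{prop:grad_kernel_nonsmooth} is in hand, and the only subtlety is to remember that the test tensor fields in the definition of $\TD^k$ are only $\Cspace{k}{}$-regular, not smooth, so one has to observe that testing against $\Ccspace{\infty}{}$ is sufficient to conclude vanishing of the distributional $\symgrad^k u$ (which it is, by inclusion of spaces). No density arguments or mollification are needed here because the proof only uses that $\Ccspace{\infty}{} \subseteq \Ccspace{k}{}$.
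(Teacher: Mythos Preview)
Your proposal is correct and follows essentially the same route as the paper: the paper derives the first inclusion from $\TD^k(u)=0 \Leftrightarrow \symgrad^k u = 0$ distributionally (already noted in Proposition~\ref{prop:tdk_semi_norm}~(ii) and the paragraph preceding Proposition~\ref{prop:td^k-kernel}) together with Proposition~\ref{prop:grad_kernel_nonsmooth}, and the scalar reverse inclusion is the trivial direction. Your remark about $\Ccspace{\infty}{} \subset \Ccspace{k}{}$ being enough to identify the distributional derivative is a valid clarification that the paper leaves implicit.
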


Next, we like to discuss coercivity of the higher-order
total variation functionals.

\begin{proposition}
  \label{prop:tv^k-coercive}
  Let $k \geq 1$, $l\geq 0$ and $\Omega$ be a bounded Lipschitz domain.
  Then, $\TD^k$ is coercive in the following sense:
  For each linear and continuous projection
  $R: \LPspace{d/(d-1)}{\Omega,\Sym^l(\RR^d)} 
  \to \kernel{\TD^k}$, there is a $C > 0$ such that
  \[
  \norm[d/(d-1)]{u - Ru} \leq C \TD^k(u) \qquad
  \text{for all} \quad u \in \BD^k(\Omega,\Sym^l(\RR^d)).
  \]
\end{proposition}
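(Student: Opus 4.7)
\emph{Proof plan.} The plan is to argue by contradiction using the compactness provided by Theorem~\ref{thm:bd_embedding}. Assuming the estimate fails for the given continuous linear projection $R$, I would extract a sequence $\{u^n\} \subset \BD^k(\Omega,\Sym^l(\RR^d))$ with $\|u^n - Ru^n\|_{d/(d-1)} > n\,\TD^k(u^n)$. Setting $v^n = (u^n - Ru^n)/\|u^n - Ru^n\|_{d/(d-1)}$ and using that $R$ is a projection, one gets $Rv^n = 0$, $\|v^n\|_{d/(d-1)} = 1$ and $\TD^k(v^n) < 1/n$. Boundedness of $\Omega$ bounds $\|v^n\|_1$ as well, and via the norm equivalence of Theorem~\ref{thm:l1-tvx-equiv-norm} this gives a uniform bound on $\{v^n\}$ in $\BD^k(\Omega,\Sym^l(\RR^d))$, hence in $\BD(\Omega,\Sym^l(\RR^d))$.

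Next, I would apply the compact embedding $\BD(\Omega,\Sym^l(\RR^d)) \embed \LPspace{p}{\Omega,\Sym^l(\RR^d)}$ for $p < d/(d-1)$ from Theorem~\ref{thm:bd_embedding} to extract a subsequence (not relabelled) with $v^n \to v$ in $\LPspace{1}{\Omega,\Sym^l(\RR^d)}$. Boundedness in $\LPspace{d/(d-1)}{\Omega,\Sym^l(\RR^d)}$ together with reflexivity (Banach--Alaoglu for $d=1$) yields, along a further subsequence, weak (respectively weak$^*$) convergence $v^n \rightharpoonup v$ to the same limit. Lower semi-continuity of $\TD^k$ on $\LPspace{1}{\Omega,\Sym^l(\RR^d)}$, as a supremum of continuous linear functionals, then forces $\TD^k(v) \leq \liminf_n \TD^k(v^n) = 0$, so $v \in \kernel{\TD^k}$. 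Continuity of $R$ translates into weak (weak$^*$) continuity because its image is finite-dimensional, so $Rv^n \to Rv$; together with $Rv^n = 0$ this gives $Rv = 0$, and the relation $v = Rv$ (since $v$ lies in the range of the projection) forces $v = 0$.

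It remains to derive the contradiction. From $v^n \to 0$ in $\LPspace{1}{\Omega,\Sym^l(\RR^d)}$ and $\TD^k(v^n) \to 0$ I conclude $\|v^n\|_{\BD^k} = \|v^n\|_1 + \TD^k(v^n) \to 0$. Theorems~\ref{thm:l1-tvx-equiv-norm} and~\ref{thm:bd_embedding} combine to give the continuous embedding $\BD^k(\Omega,\Sym^l(\RR^d)) \embed \LPspace{d/(d-1)}{\Omega,\Sym^l(\RR^d)}$, hence $\|v^n\|_{d/(d-1)} \leq C \|v^n\|_{\BD^k} \to 0$, contradicting $\|v^n\|_{d/(d-1)} = 1$. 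The delicate point is precisely this last step: compactness of the $\BD$-embedding is only available strictly below the target exponent $d/(d-1)$, so the normalisation $\|v^n\|_{d/(d-1)} = 1$ cannot be passed to the limit directly in that topology. The resolution is to first identify $v = 0$ using the weaker compactness, and then \emph{upgrade} the convergence to $\LPspace{d/(d-1)}{\Omega,\Sym^l(\RR^d)}$ via the continuous (non-compact) embedding, exploiting the simultaneous vanishing of $\|v^n\|_1$ and $\TD^k(v^n)$.
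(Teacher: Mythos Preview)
Your proposal is correct and follows essentially the same contradiction-plus-compactness argument as the paper: normalise a violating sequence, use Theorems~\ref{thm:l1-tvx-equiv-norm} and~\ref{thm:bd_embedding} to extract an $L^1$-convergent subsequence, identify the limit as zero via lower semi-continuity of $\TD^k$ and the projection property, then upgrade to $L^{d/(d-1)}$ via the continuous embedding $\BD^k \embed L^{d/(d-1)}$ to reach the contradiction. Your discussion of the ``delicate point''---that compactness only holds strictly below the critical exponent and must be combined with the continuous embedding after identifying the limit---is exactly the mechanism the paper uses.
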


\begin{proof}
  At first note that by the embeddings $\BD^k(\Omega,\Sym^l(\RR^d)) \embed \BD(\Omega,\Sym^l(\RR^d)) \embed \LPspace{d/(d-1)}{\Omega,\Sym^l(\RR^d)}$ the left hand side of the claimed inequality is well defined and finite.

  We use a contradiction argument in conjunction with 
  compactness.
  Suppose for $R$ as stated above there is a sequence
  $\seq{u^n}$ such that $\norm[d/(d-1)]{u^n - Ru^n} = 1$ and
  $\TD^k(u^n) \to 0$ as $n \to \infty$.
  This implies $\seq{\norm[1]{u^n - Ru^n}}$ being bounded,
  $\TD^k(u^n - Ru^n) \to 0$ and by 
  Theorems~\ref{thm:l1-tvx-equiv-norm} and \ref{thm:bd_embedding}
  $\seq{u^n - Ru^n}$ has to be precompact in 
  $\LPspace{1}{\Omega,\Sym^l(\RR^d)}$, 
  i.e., without loss of generality, we may assume
  that $u^n - Ru^n \to u$ in $\LPspace{1}{\Omega,\Sym^l(\RR^d)}$. 
  By lower semi-continuity,
  \[
  \TD^k(u) \leq \liminf_{n \to \infty} \ \TD^k(u^n) = 0,
  \]
  hence $u \in \kernel{\TD^k} = \range{R}$. 
  On the other hand, $R(u^n - Ru^n) = 0$
  for each $n$ as $R$ is a projection, thus, $Ru = 0$ and, consequently,
  $u = 0$. In total, we have $\lim_{n \to \infty} (u^n - Ru^n) = 0$
  in $\BD^k(\Omega)$, and again by continuous embedding, also in 
  $\LPspace{d/(d-1)}{\Omega}$ which is a contradiction to $\norm[d/(d-1)]{u^n - Ru^n} 
  = 1$ for all $n$. Consequently, coercivity has to hold.
\end{proof}

\begin{corollary}
  \label{cor:tv^k-coercive-scalar}
  In the scalar case, for
  $p \in [1,\infty]$ with $p \leq \frac{d}{d-k}$ if $k < d$,
  we also have
  \[
  \norm[p]{u - Ru} \leq C \TV^k(u).
  \]
\end{corollary}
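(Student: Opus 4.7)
The plan is to combine the Sobolev-type embedding from Theorem~\ref{thm:tvk-sobolev-embedding} with the $\LPspace{d/(d-1)}{\Omega}$-coercivity already established in Proposition~\ref{prop:tv^k-coercive}. Set $v = u - Ru$; since $R$ projects onto $\kernel{\TV^k}$, one has $Rv = 0$ and, crucially, $\TV^k(v) = \TV^k(u)$, because adding a polynomial of degree at most $k-1$ leaves $\symgrad^k u$ unchanged. Theorem~\ref{thm:tvk-sobolev-embedding} applied with $m = 0$ provides, in the range of $p$ specified in the corollary, a continuous embedding $\BV^k(\Omega) \embed \LPspace{p}{\Omega}$, so that in view of the norm equivalence of Theorem~\ref{thm:l1-tvx-equiv-norm},
\[
  \norm[p]{v} \leq C_1 \bigl( \norm[1]{v} + \TV^k(v) \bigr) = C_1 \bigl( \norm[1]{v} + \TV^k(u) \bigr).
\]

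Next, I would bound $\norm[1]{v}$ by $\TV^k(u)$ using Proposition~\ref{prop:tv^k-coercive} in the scalar case, which yields $\norm[d/(d-1)]{v} \leq C_2 \TV^k(v) = C_2 \TV^k(u)$. Since $\Omega$ is bounded, Hölder's inequality gives $\norm[1]{v} \leq \abs{\Omega}^{1/d} \norm[d/(d-1)]{v} \leq C_3 \TV^k(u)$. Combining this with the embedding estimate produces
\[
  \norm[p]{u - Ru} \leq C_1(C_3 + 1) \TV^k(u),
\]
which is precisely the claimed inequality.

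The only technical point I would need to be mindful of is the compatibility of the projection $R$ across the relevant function spaces: Proposition~\ref{prop:tv^k-coercive} is formulated for continuous projections on $\LPspace{d/(d-1)}{\Omega}$, whereas the corollary implicitly treats $R$ as a projection on $\LPspace{p}{\Omega}$. Since $\kernel{\TV^k}$ is the finite-dimensional space of polynomials of degree at most $k-1$ (by Proposition~\ref{prop:td^k-kernel}), any such projection admits a representation through integration against polynomial test functions and therefore extends continuously to all $\LPspace{q}{\Omega}$ with $q \in [1,\infty]$ on the bounded domain $\Omega$. Accordingly, a single $R$ may be chosen for which both estimates apply simultaneously, and I do not anticipate any substantial obstacle beyond this bookkeeping, since all essential analytical ingredients have already been developed.
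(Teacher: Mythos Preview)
Your proof is correct and follows essentially the same approach as the paper: apply the embedding $\BV^k(\Omega) \embed \LPspace{p}{\Omega}$ from Theorem~\ref{thm:tvk-sobolev-embedding} to $v = u - Ru$ to obtain $\norm[p]{v} \leq C(\norm[1]{v} + \TV^k(u))$, then absorb $\norm[1]{v}$ using the coercivity of Proposition~\ref{prop:tv^k-coercive}. The paper compresses this into a single chain of two inequalities; your version simply spells out the intermediate H\"older step and the projection bookkeeping, which the paper leaves implicit.
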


\begin{proof}
  This follows with the embedding Theorem~\ref{thm:tvk-sobolev-embedding}:
  \[
  \norm[p]{u - Ru} \leq
  C \bigl(\norm[1]{u - Ru} + \TV^k(u) \bigr) \leq
  C \TV^k(u). \qedhere
  \]
\end{proof}

  \begin{remark} 
  The above coercivity estimate also implies that
  the Fenchel conjugate of $\TV^k$ is the
  indicator functional of closed convex set
  in $\LPspace{p^*}{\Omega} \cap \kernel{\TV^k}^\perp$ with
  non-empty interior. Indeed, for $\xi \in 
  \LPspace{p^*}{\Omega} \cap \kernel{\TV^k}^\perp$ such that
  $\norm[p^*]{\xi} \leq C^{-1}$ it follows for any
  $u \in \LPspace{p}{\Omega}$ that
  \[
  \scp{\xi}{u} = \scp{\xi}{u - Ru} \leq 
  \norm[p^*]{\xi}\norm[p]{u - Ru} \leq \TV^k(u)
  \]
  which means that $(\TV^k)^*(\xi) = 0$.
  On the other hand, if $\xi \in \LPspace{p^*}{\Omega} 
  \setminus \kernel{\TV^k}^\perp$, then $\scp{\xi}{u} > 0$
  for some $u \in \kernel{\TV^k}$. Thus,
  $\scp{\xi}{u} > \TV^k(u)$ so $(\TV^k)^*(\xi) = \infty$.
\end{remark}

It is interesting to note that a coercivity estimate similar to the one of Corollary \ref{cor:tv^k-coercive-scalar} also holds between two higher-order TV functionals of different order.

\begin{lemma} \label{lem:tv-k1_tv-k2_coercivity_estimate} Let $\Omega$ be a bounded Lipschitz domain, $1\leq k_1 < k_2$ be two orders of differentiation, $p \in [1,\infty[$ with $p \leq d/(d-k_2)$ if $k_2 < d$ and $R:L^p(\Omega) \rightarrow \ker(\TV^{k_2})$ be a continuous, linear projection. Then there exists a constant $C>0$ such that

  \begin{equation}
    \TV^{k_1}(u - Ru)
    \leq
    C \TV^{k_2}(u)
  \end{equation}
  holds for each $u \in \BV^{k_2}(\Omega)$.

\begin{proof}
  Assume the opposite, i.e., the existence of $\seq{u^n}$ such that
  $\TV^{k_1}(u^n - Ru^n) = 1$ and $\TV^{k_2}(u^n) \to 0$ as
  $n \to \infty$.
  Then, by compact embedding
  $\BD^{k_2-k_1}(\Omega,\Sym^{k_1}(\RR^d)) \to
  \LPspace{1}{\Omega,\Sym^{k_1}(\RR^d)}$, we have
  $\grad^{k_1}(u^n - Ru^n) \to v$ as $n \to \infty$ for some
  $v \in \LPspace{1}{\Omega, \Sym^{k_1}(\RR^d)}$ for a subsequence
  (not relabelled). On the other hand, the
  Poincar\'e estimate gives
  $\norm[p]{u^n - Ru^n} \leq C\TV^{k_2}(u^n)$, so
  $u^n - Ru^n \to 0$ as $n \to \infty$ in $\LPspace{1}{\Omega}$.
  By closedness of $\grad^{k_1}$ this yields $v = 0$. By convergence in
  $\LPspace{1}{\Omega,\Sym^{k_1}(\RR^d)}$, this gives the
  contradiction $\TV^{k_1}(u^n - Ru^n) \to 0$ as $n \to \infty$.
\end{proof}

\end{lemma}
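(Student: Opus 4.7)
The plan is to argue by contradiction and exploit compact embeddings together with the Sobolev-type regularity inherent in $\BV^{k_2}$. Suppose the asserted constant does not exist, so there is a sequence $\seq{u^n}$ in $\BV^{k_2}(\Omega)$ with $\TV^{k_1}(u^n - Ru^n) = 1$ and $\TV^{k_2}(u^n) \to 0$. Setting $\tilde u^n := u^n - Ru^n$, the fact that $R$ is a projection onto $\ker(\TV^{k_2})$ immediately yields $R\tilde u^n = 0$ and $\TV^{k_2}(\tilde u^n) = \TV^{k_2}(u^n)$, while the coercivity estimate of Corollary~\ref{cor:tv^k-coercive-scalar} (which is precisely why the bound $p \leq d/(d-k_2)$ is assumed when $k_2 < d$) gives $\norm[p]{\tilde u^n} \leq C \TV^{k_2}(u^n) \to 0$; in particular $\tilde u^n \to 0$ in $L^1(\Omega)$.

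The heart of the argument is to show that $w^n := \grad^{k_1}\tilde u^n$ is bounded in $\BD^{k_2-k_1}(\Omega,\Sym^{k_1}(\RR^d))$. Since the strict inequality $k_1 < k_2$ gives $k_1 \leq k_2 - 1$, the Sobolev embedding of Theorem~\ref{thm:tvk-sobolev-embedding} places $\tilde u^n$ in $W^{k_2-1, d/(d-1)}(\Omega)$, so $w^n$ is actually represented by an $L^{d/(d-1)}$ function whose $L^1$-norm coincides with $\TV^{k_1}(\tilde u^n) = 1$. Moreover, the identity $\symgrad^{k_2-k_1} w^n = \grad^{k_2} \tilde u^n$ forces $\TD^{k_2-k_1}(w^n) = \TV^{k_2}(\tilde u^n) \to 0$. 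Chaining the continuous embedding from Theorem~\ref{thm:l1-tvx-equiv-norm} with the compact embedding of Theorem~\ref{thm:bd_embedding}, one obtains $\BD^{k_2-k_1}(\Omega,\Sym^{k_1}(\RR^d)) \compactembed L^1(\Omega,\Sym^{k_1}(\RR^d))$, so a subsequence (not relabelled) satisfies $w^n \to v$ in $L^1$.

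To close, I would invoke closedness of the weak derivative $\grad^{k_1}$: combined with $\tilde u^n \to 0$ and $\grad^{k_1}\tilde u^n \to v$ in $L^1$, this forces $v = \grad^{k_1}(0) = 0$, contradicting $\norm[1]{w^n} = 1$ for every $n$. The main obstacle is the passage from the abstract statement $w^n \in \radonspace{\Omega,\Sym^{k_1}(\RR^d)}$ to the concrete membership in $\BD^{k_2-k_1}$: one has to recognise that the strict inequality $k_1 < k_2$ buys exactly one full order of integrability through the Sobolev embedding $\BV^{k_2} \embed W^{k_2-1,d/(d-1)}$, so that the intermediate gradient $w^n$ is an honest $L^1$-function rather than a singular measure, and that the natural $\TD^{k_2-k_1}$-seminorm on $w^n$ matches the highest-order term $\TV^{k_2}(\tilde u^n)$ that is being driven to zero.
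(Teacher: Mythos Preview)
Your proof is correct and follows essentially the same contradiction argument as the paper: normalise so that $\TV^{k_1}(u^n-Ru^n)=1$ while $\TV^{k_2}(u^n)\to 0$, use the compact embedding $\BD^{k_2-k_1}(\Omega,\Sym^{k_1}(\RR^d))\compactembed L^1$ to extract a convergent subsequence of $\grad^{k_1}(u^n-Ru^n)$, and combine the Poincar\'e-type estimate with closedness of $\grad^{k_1}$ to force the limit to be zero. Your version is in fact more explicit than the paper's in justifying why $w^n=\grad^{k_1}\tilde u^n$ actually lies (and is bounded) in $\BD^{k_2-k_1}$, spelling out the Sobolev embedding that makes $w^n$ an $L^1$-function rather than a mere measure.
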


\subsection{Tikhonov regularisation}

The coercivity which has just been established can be regarded as the
most important step towards existence for variational problems with
$\TV^k$-regularisation.  Here, we first prove an existence result for
linear inverse problems in a general abstract version.

\begin{theorem}
  \label{thm:general_reg_existence_linear}
  Let $X$ be a reflexive Banach space, $Y$ be a Banach space, $K: X\to Y$ be linear and continuous,
  $S_f: Y \to {[{0,\infty}]}$ a proper, convex, lower
  semi-continuous and coercive discrepancy functional associated with
  some data $f$, $\abs{\placeholder}:X \rightarrow [0,\infty]$ a lower
  semi-continuous seminorm and $\alpha > 0$. Assume that there exists a linear and continuous projection $R:X \rightarrow \ker(\abs{\placeholder})$
  and a $C>0$ such that
    \[
      \norm[X]{u - Ru} \leq C \abs{u}
      \qquad \text{for all}  \quad u \in X,
    \]
  and either 
  \begin{enumerate}
  \item $\ker(\abs{\placeholder})$ is finite-dimensional or, more generally,
  \item $\ker(K) \cap \ker(\abs{\placeholder})$ admits a complement $Z$ in $\ker(\abs{\placeholder})$ and
  $\norm[X]{u} \leq C \norm[Y]{Ku}$ for some $C > 0$ and all $u \in Z$.
  \end{enumerate}
  Then, the Tikhonov minimisation problem %
  \begin{equation}
    \label{eq:general_seminorm_min}
    \min_{u \in X} \ S_f(Ku) + \alpha \abs{u}.
  \end{equation}
  is well-posed, i.e., there exists a solution and the solution mapping is stable in sense
  that, if $S_{f^n}$ converges to $S_f$ as in \eqref{eq:discrepancy_convergence} and $\seq{S_{f^n}}$ is
  equi-coercive, then for each sequence of minimizers $\seq{u^n}$
  of~\eqref{eq:general_seminorm_min} with discrepancy $S_{f^n}$,

  \begin{itemize}
  \item
    either $S_{f^n}(Ku^n) + \alpha \abs{u^n} \to \infty$ as
    $n \to \infty$ and~\eqref{eq:general_seminorm_min} with discrepancy
    $S_f$ does not admit a finite solution,
  \item or
    $S_{f^n}(Ku^n) + \alpha \abs{u^n} \to \min_{u \in
      X} S_f(u) + \alpha \abs{u}$
    as $n \to \infty$ and there is, possibly up to shifts by functions in $\ker(K) \cap \ker(\abs{\placeholder})$, a
    weak accumulation point $u \in X$ that
    minimises~\eqref{eq:general_seminorm_min} with discrepancy $S_f$.
  \end{itemize}  
  Further, in case \eqref{eq:general_seminorm_min} with discrepancy $S_f$ admits a 
  finite solution, for each subsequence $\seq{u^{n_k}}$ weakly converging to some $u \in X$,
  it holds that $\abs{u^{n_k}} \to \abs{u}$ as $k \to \infty$.
  Also, if $S_f$ is strictly convex and $K$ is injective, finite solutions $u$ of \eqref{eq:general_seminorm_min}
  are unique and $u^n \wrightarrow u$ in $X$.
\end{theorem}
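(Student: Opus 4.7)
The plan is to adapt the proofs of Theorem~\ref{thm:tv_reg_existence} and Theorem~\ref{thm:tv_reg_stability} to the abstract reflexive setting: the assumption $\norm[X]{u - Ru} \leq C\abs{u}$ plays the role of Poincar\'e--Wirtinger, reflexivity of $X$ replaces the compact $\BV$-embedding used for extracting subsequential limits, and the two alternative conditions (i)--(ii) are exactly what is needed to bound the component of a minimising sequence living in $\kernel{\abs{\placeholder}}$.

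For existence, I take a minimising sequence $\seq{u^n}$ for the functional $F = S_f \compose K + \alpha\abs{\placeholder}$. Coercivity of $S_f$ bounds $\seq{Ku^n}$ in $Y$, while the hypothesis applied to the (automatically bounded) sequence $\seq{\abs{u^n}}$ bounds $\seq{u^n - Ru^n}$ in $X$. To bound $\seq{Ru^n}$, I decompose $Ru^n = w^n + z^n$ with $w^n \in \kernel{K} \cap \kernel{\abs{\placeholder}}$ and $z^n$ in a complement $Z$ of this intersection inside $\kernel{\abs{\placeholder}}$, and replace $u^n$ by $u^n - w^n$. This shift preserves both $Ku^n$ and $\abs{u^n}$; since $R$ is a projection that fixes $w^n$, the new sequence satisfies $R(u^n - w^n) = z^n$. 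Then $Kz^n = K(u^n - w^n) - K(u^n - Ru^n)$ is bounded in $Y$, so under (ii) the inequality $\norm[X]{z^n} \leq C\norm[Y]{Kz^n}$ bounds $\seq{z^n}$. Assumption (i) reduces to (ii) because on the finite-dimensional space $\kernel{\abs{\placeholder}}$ any subspace admits a complement and an injective linear map is automatically bounded below. Reflexivity of $X$ then yields a weakly convergent subsequence $u^{n_k} \wrightarrow u^*$; weak continuity of $K$ together with lower semi-continuity of $S_f$ and $\abs{\placeholder}$ identifies $u^*$ as a minimiser.

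For stability I follow the dichotomy of Theorem~\ref{thm:tv_reg_stability}. If $\seq{F_n(u^n)}$ with $F_n = S_{f^n} \compose K + \alpha\abs{\placeholder}$ is bounded, equi-coercivity combined with the same shift-and-decomposition argument produces a weak subsequential limit $u$. The $\liminf$-part of \eqref{eq:discrepancy_convergence} together with lower semi-continuity of $\abs{\placeholder}$ gives $F(u) \leq \liminf_k F_{n_k}(u^{n_k})$, while the $\limsup$-part applied to any fixed competitor $u' \in X$ combined with optimality yields $\limsup_k F_{n_k}(u^{n_k}) \leq \limsup_k F_{n_k}(u') \leq F(u')$; taking $u' = u$ shows that $u$ is a minimiser and $F_{n_k}(u^{n_k}) \to F(u)$. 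The alternative $F_n(u^n) \to \infty$ rules out a finite minimiser $\bar{u}$ of $F$, since otherwise $F_n(u^n) \leq F_n(\bar{u})$ together with $\limsup_n F_n(\bar{u}) \leq F(\bar{u}) < \infty$ would contradict divergence. Finally, if $\limsup_k \abs{u^{n_k}} > \abs{u}$ along a weakly convergent subsequence, using $F_{n_k}(u^{n_k}) \to F(u)$ would force $\liminf_k S_{f^{n_k}}(Ku^{n_k}) < S_f(Ku)$, contradicting the $\liminf$-inequality; hence $\abs{u^{n_k}} \to \abs{u}$.

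If $S_f$ is strictly convex and $K$ injective, then $S_f \compose K$ is strictly convex and adding the convex term $\alpha\abs{\placeholder}$ preserves this, so a finite minimiser of $F$ is unique; the usual subsequence argument then upgrades subsequential weak convergence to $u^n \wrightarrow u$ for the whole sequence. The principal obstacle I anticipate lies in the careful bookkeeping of the shifts by elements of $\kernel{K} \cap \kernel{\abs{\placeholder}}$: one must verify that the shift leaves the Poincar\'e bound on $u^n - Ru^n$ intact (which it does, since $R$ is a projection onto $\kernel{\abs{\placeholder}}$) and that these shifts correctly realise the ``up to shifts'' clause in the stability statement. Showing that hypothesis (i) really reduces to (ii) is elementary but must be stated explicitly, as it relies on the automatic bounded invertibility of an injective linear map on a finite-dimensional subspace.
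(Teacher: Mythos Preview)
Your proposal is correct and follows essentially the same approach as the paper: the decomposition $Ru^n = w^n + z^n$ with $w^n \in \kernel{K}\cap\kernel{\abs{\placeholder}}$ and $z^n \in Z$ is exactly the paper's construction $v^n = P_Z Ru^n$ (your $z^n$) and $(\id - P_Z)Ru^n$ (your $w^n$), and the bounding of $Kz^n$, the reduction of (i) to (ii), and the stability argument all mirror the paper's proof. The bookkeeping you flag as the main obstacle is handled correctly.
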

The same result is true if, for instance, instead of being reflexive, $X$ is the dual of a separable space,
and we replace weak convergence by weak* convergence in the (lower semi-) continuity assumptions on  $K$, $\abs{\placeholder}$,  $S_f$ and in \eqref{eq:discrepancy_convergence}.
\begin{proof}
  At first note that $\ker(\abs{\placeholder})$ being finite-dimensional implies condition (ii) above, hence we can assume that (ii) holds.
  We start with existence. Assume that the objective functional in~\eqref{eq:general_seminorm_min} is
  proper as otherwise, there is nothing to show. For a minimising
  sequence $\seq{u^n}$, by the coercivity assumption,
  $\seq{u^n - R u^n }$ is bounded in $X$.
  Now, (ii) implies the existence of a linear and continuous projection $P_Z: \ker(\abs{\placeholder}) \to Z$ such that $\id - P_Z$ projects $\ker(\abs{\placeholder})$ onto $\ker(K) \cap \ker(\abs{\placeholder})$. With $v^n = P_Z Ru^n$, we see that
  also
  $\seq{u^n - R u^n  + v^n}$ is a minimising sequence and it suffices
  to show boundedness of $\seq{v^n}$ to obtain a convergent subsequence.
  But the latter holds true since by assumption $\norm[p]{v^n} \leq C \norm[Y]{Kv^n}$, such that $\norm[Y]{Kv^n} \leq \norm[Y]{K(u^n- Ru^n + v^n)}
  + \norm{K}\norm[X]{u^n - Ru^n}$, with
  the right-hand side being bounded as a consequence of the coercivity of $S_f$ and the boundedness of $\seq{u^n - R u^n}$. 
  Hence, as $X$ is reflexive, %
  a subsequence of  $\seq{u^n - R u^n  + v^n}$ converges weakly
  to a limit $u \in X$.
  By continuity of $K$ and
  lower semi-continuity of both $S_f$ and $\abs{\placeholder}$ it follows that
  $u$ is a solution to~\eqref{eq:general_seminorm_min}.
  In case $S_f$ is strictly convex and $K$ is injective,
  $S_f \compose K$ is already strictly convex, so finite minimizers 
  of \eqref{eq:general_seminorm_min} have to be unique.

  Now let $\seq{u^n}$ be a sequence of minimizers of~\eqref{eq:general_seminorm_min}
  with discrepancy $S_{f^n}$. We denote by $F = S_f \compose K + \alpha \abs{\placeholder}$ as well
  as $F_n = S_{f^n} \compose K + \alpha \abs{\placeholder}$ and first suppose
  that $\seq{F_n(u^n)}$ is bounded. 
  We can then add $v^n - Ru^n \in \ker(K) \cap \ker(\abs{\placeholder})$ to $u^n$,
  with $v^n = P_ZRu^n$,
  and 
  from equi-coercivity of $\seq{S_{f^n}}$ obtain 
  boundedness of %
  $\seq{u^n - Ru^n + v^n}$ as before.  %
  This shows that by shifting the minimizers %
  within $\ker(K) \cap \ker(\abs{\placeholder})$ always leads to a bounded
  sequence, 
  i.e., we may assume without loss of generality that
  $\seq{u^n}$ is bounded such that a weak accumulation point exists.
  Suppose that $u^{n_k} \wrightarrow u$ as $k \to \infty$. Then,
  estimating as in the proof of Theorem \ref{thm:tv_reg_stability},
  we can obtain that $u$ is a minimizer for $F$ and that
  $\lim_{k \to \infty} F_{n_k}(u^{n_k}) = F(u)$ as well as
  $\lim_{k \to \infty} \abs{u^{n_k}} = \abs{u}$.
  Also, if $u$ is the unique minimizer for~\eqref{eq:general_seminorm_min} with
  discrepancy $S_f$, $u^n \wrightarrow u$ as $n \to \infty$ follows since
  any subsequence has to contain another subsequence that converges
  weakly to $u$.
  
  The result for the two remaining cases $\liminf_{n \to \infty} F_n(u^n) < \infty$ and
  $F_n(u^n) \to \infty$, respectively, finally follows
  analogously to Theorem \ref{thm:tv_reg_stability}. \qedhere

\end{proof}

Given that $\ker(\TV^k)$ is finite dimensional, the above result immediately implies well-posedness for $\abs{\placeholder} = \TV^k$ with $X=\LPspace{p}{\Omega}$, as stated in the following corollary. The crucial ingredient here is the estimate $\norm[p]{u - R u} \leq C \TV^k(u)$, which restricts the exponent of the underlying $L^p$-space to $p \leq d/(d-k)$ if $k < d$. This shows that, the higher the order of differentiation used in the regularisation, the weaker are the requirements on the underlying spaces and, consequently, on the continuity of the operator $K$.

\begin{corollary} \label{cor:existence_tvk} With $X=L^p(\Omega)$, $\Omega$ being a bounded Lipschitz domain, and $S_f$ and $K$ as in Theorem \ref{thm:general_reg_existence_linear}, 
  \begin{equation}
    \label{eq:general_tvk_min}
    \min_{u \in \LPspace{p}{\Omega}} \ S_f(Ku) + \alpha \TV^ k(u).
  \end{equation}
  is well-posed in the sense of Theorem \ref{thm:general_reg_existence_linear} whenever $p \in {]{1,\infty}[}$ with $p \leq d/(d-k)$ if $k < d$. %
\end{corollary}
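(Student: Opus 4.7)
The strategy is simply to verify the hypotheses of Theorem~\ref{thm:general_reg_existence_linear} in the concrete setting $X = \LPspace{p}{\Omega}$, $\abs{\placeholder} = \TV^k$, and then invoke that abstract result. The assumptions on $K$ and $S_f$ are inherited directly from the statement. Since $1 < p < \infty$, the space $X = \LPspace{p}{\Omega}$ is reflexive, and by Proposition~\ref{prop:tdk_semi_norm}, $\TV^k$ is a proper, lower semi-continuous seminorm on $X$.

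The next step is to identify the kernel and construct the projection. By Proposition~\ref{prop:td^k-kernel}, $\kernel{\TV^k} = \poly^{k-1}$, the space of polynomials of degree at most $k-1$, which is finite-dimensional. Hence one can choose any linear projection $R: \LPspace{p}{\Omega} \to \poly^{k-1}$ (for instance, the $\LPspace{2}$-orthogonal projection, appropriately restricted), and the finite dimension of the image guarantees continuity of $R$ on $\LPspace{p}{\Omega}$. In particular, condition~(i) of Theorem~\ref{thm:general_reg_existence_linear} on the kernel being finite-dimensional is satisfied.

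It remains to establish the coercivity estimate $\norm[p]{u - Ru} \leq C \TV^k(u)$ on $\BV^k(\Omega)$. This is precisely the content of Corollary~\ref{cor:tv^k-coercive-scalar}, provided that $p \in [1,\infty]$ satisfies $p \leq d/(d-k)$ whenever $k < d$, which matches the assumption in the corollary's statement. Once this is in place, every hypothesis of Theorem~\ref{thm:general_reg_existence_linear} holds and the conclusion --- existence, the stated stability dichotomy, weak convergence of subsequences up to shifts by elements of $\kernel{K} \cap \poly^{k-1}$, and uniqueness under strict convexity of $S_f$ and injectivity of $K$ --- follows directly.

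I do not anticipate a real obstacle here, since the work has already been done in Proposition~\ref{prop:td^k-kernel} and Corollary~\ref{cor:tv^k-coercive-scalar}; the only delicate point is noting that the projection $R$ onto the finite-dimensional polynomial space $\poly^{k-1}$ is automatically $\LPspace{p}$-continuous, which is what makes condition~(i) of the abstract theorem directly applicable without needing to appeal to the more general complement condition~(ii).
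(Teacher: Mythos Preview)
Your proposal is correct and matches the paper's approach exactly: the paper also notes that $\ker(\TV^k)$ is finite-dimensional and that the coercivity estimate of Corollary~\ref{cor:tv^k-coercive-scalar} is the crucial ingredient, after which Theorem~\ref{thm:general_reg_existence_linear} applies directly.
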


As can be easily seen from the respective proofs, also the convergence result of Theorem \ref{thm:tv_reg_convergence} and the result on convergence rates as in Proposition \ref{prop:tv_reg_convergence_rate} transfer to $\TV^k$ regularisation.

\begin{theorem}
  \label{thm:tvk_reg_convergence}
  With the assumptions of Corollary \ref{cor:existence_tvk}, let
$u^\dagger \in \BV(\Omega)$ be a minimum-$\TV^k$-solution of
$Ku^\dagger = f^\dagger$ for some data $f^\dagger$ in $Y$ and for each 
$\delta > 0$ let $f^\delta $ be such that
$S_{f^\delta}(f^\dagger) \leq \delta$ and denote by $u^{\alpha,\delta}$ a
finite solution of~\eqref{eq:general_tvk_min} for parameter $\alpha > 0$ and
data $f^\delta$.
  Let the discrepancy functionals $\seq{S_{f^\delta}}$ be equi-coercive and converge to
  $S_{f^\dagger}$ in the sense of \eqref{eq:discrepancy_convergence} and $S_{f^\dagger}(v) = 0$ if and only if $v = f^\dagger$.
  Choose for each $\delta > 0$ the parameter $\alpha > 0$ such that
  \[
  \alpha \to 0, \quad
  \frac{\delta}{\alpha} \to 0 \qquad \text{as} \qquad \delta \to 0.
  \]
  Then, up to shifts by functions in $\ker(K) \cap \poly^{k-1}$, $\seq{u^{\alpha,\delta}}$ has at least one $L^p$-weak accumulation
  point. Each $L^p$-weak accumulation point is a 
  minimum-$\TV^ k$-solution of $Ku = f^\dagger$ and $\lim_{\delta \to 0}
  \TV^ k(u^{\alpha,\delta}) = \TV^ k(u^\dagger)$.
\end{theorem}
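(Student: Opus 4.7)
The plan is to follow the blueprint of the proof of Theorem \ref{thm:tv_reg_convergence} very closely, adapting it to the higher-order setting by replacing the first-order Poincar\'e--Wirtinger estimate with the coercivity statement of Corollary \ref{cor:tv^k-coercive-scalar} and by replacing constant shifts with shifts in $\kernel{K} \cap \poly^{k-1}$. First I would exploit the optimality of $u^{\alpha,\delta}$ tested against $u^\dagger$ to obtain the standard a priori estimate
\[
S_{f^\delta}(Ku^{\alpha,\delta}) + \alpha \TV^k(u^{\alpha,\delta}) \leq S_{f^\delta}(Ku^\dagger) + \alpha\TV^k(u^\dagger) \leq \delta + \alpha \TV^k(u^\dagger),
\]
which, under the parameter choice $\alpha \to 0$, $\delta/\alpha \to 0$, yields $S_{f^\delta}(Ku^{\alpha,\delta}) \to 0$ and $\limsup_{\delta \to 0} \TV^k(u^{\alpha,\delta}) \leq \TV^k(u^\dagger)$.

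Next I would establish boundedness in $\LPspace{p}{\Omega}$ up to shifts by elements of $\kernel{K} \cap \poly^{k-1}$. The key ingredient is the coercivity estimate $\norm[p]{u - Ru} \leq C \TV^k(u)$ from Corollary \ref{cor:tv^k-coercive-scalar} applied to some projection $R$ onto $\kernel{\TV^k} = \poly^{k-1}$. Writing $\poly^{k-1} = (\kernel{K} \cap \poly^{k-1}) \oplus Z$ with $Z$ any algebraic complement in this finite-dimensional space, the restriction $K|_Z$ is injective on a finite-dimensional space and therefore bounded below. Subtracting from $Ru^{\alpha,\delta}$ its component in $\kernel{K} \cap \poly^{k-1}$ leaves a residue in $Z$ whose image under $K$ is bounded via $\seq{Ku^{\alpha,\delta}}$ (bounded by equi-coercivity of $\seq{S_{f^\delta}}$), so this residue itself is bounded. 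Combined with the Poincar\'e estimate this yields an $\LPspace{p}{\Omega}$-bounded shifted sequence, producing a weak accumulation point $u^*$ (weak* when $p=\infty$), exactly as in the proof of Theorem \ref{thm:general_reg_existence_linear}.

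The identification step is then routine: for any subsequence along which $u^{\alpha_n,\delta_n} \wrightarrow u^*$, the convergence $S_{f^{\delta_n}} \to S_{f^\dagger}$ in the sense of \eqref{eq:discrepancy_convergence} gives $S_{f^\dagger}(Ku^*) \leq \liminf_n S_{f^{\delta_n}}(Ku^{\alpha_n,\delta_n}) = 0$, and by the assumption $S_{f^\dagger}(v)=0 \Leftrightarrow v = f^\dagger$, we deduce $Ku^* = f^\dagger$. Lower semi-continuity of $\TV^k$ on $\LPspace{p}{\Omega}$ combined with the a priori bound gives $\TV^k(u^*) \leq \liminf_n \TV^k(u^{\alpha_n,\delta_n}) \leq \TV^k(u^\dagger)$, so $u^*$ is a minimum-$\TV^k$-solution and the middle inequality is in fact an equality. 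Sandwiching with the $\limsup$-bound above yields $\lim_n \TV^k(u^{\alpha_n,\delta_n}) = \TV^k(u^\dagger)$; and since this holds along every subsequence, the convergence $\TV^k(u^{\alpha,\delta}) \to \TV^k(u^\dagger)$ holds as $\delta \to 0$ by a standard subsequence argument.

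The principal obstacle will be the careful bookkeeping for shifts by elements of $\kernel{K} \cap \poly^{k-1}$: unlike the first-order case where only constants live in $\kernel{\TV}$ and one simply normalises $\int_\Omega u\dd{x} = 0$, here $\poly^{k-1}$ can be genuinely $K$-invisible in a non-trivial way, so one must explicitly decompose $\poly^{k-1}$ and argue that $K$ is bounded below on a chosen complement. Once this finite-dimensional argument is carried out, the rest of the proof mirrors Theorem \ref{thm:tv_reg_convergence} line by line.
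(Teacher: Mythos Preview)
Your proposal is correct and follows precisely the route the paper intends: it states that the proof of Theorem~\ref{thm:tv_reg_convergence} transfers directly to $\TV^k$, and your adaptation---replacing the first-order Poincar\'e--Wirtinger inequality by Corollary~\ref{cor:tv^k-coercive-scalar} and handling the finite-dimensional kernel via the complement decomposition from Theorem~\ref{thm:general_reg_existence_linear}---is exactly what is required.
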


\begin{proposition}
  \label{prop:tvk_reg_convergence_rate} In the situation of Theorem \ref{thm:tvk_reg_convergence},
  let $K^*w^\dagger \in \subgrad \TV^ k(u^\dagger)$ for some
  $w^\dagger \in Y^*$. Then,
  \begin{equation}
    \label{eq:bregman_dist_est_tvk}
    \breg{\TV^k}{K^*w^\dagger}(u^{\alpha,\delta},u^\dagger) \leq \frac1\alpha
    \bigl( S_{f^\delta}^*(\alpha w^\dagger) + S_{f^\delta}^*(-\alpha
    w^\dagger) + 2 \delta \bigr).   
  \end{equation}
\end{proposition}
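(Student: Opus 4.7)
The plan is to mimic verbatim the argument given for Proposition~\ref{prop:tv_reg_convergence_rate}, since nothing in that proof used the specific structure of the first-order total variation beyond the fact that it is a convex, proper, lower semi-continuous functional possessing a subgradient element $K^*w^\dagger$. The crucial structural observation is that the Bregman distance identity and Fenchel's inequality are purely convex-analytic, hence they apply unchanged to $\TV^k$.

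First, I would invoke the minimality of $u^{\alpha,\delta}$ in \eqref{eq:general_tvk_min} against the competitor $u^\dagger$, using $Ku^\dagger = f^\dagger$ and $S_{f^\delta}(f^\dagger) \leq \delta$, to obtain
\[
  S_{f^\delta}(Ku^{\alpha,\delta}) + \alpha \TV^k(u^{\alpha,\delta}) \leq \alpha \TV^k(u^\dagger) + \delta.
\]
Next, I would add $\alpha \scp{K^*w^\dagger}{u^\dagger - u^{\alpha,\delta}}$ to both sides and recognise the left-hand side as $S_{f^\delta}(Ku^{\alpha,\delta}) + \alpha \breg{\TV^k}{K^*w^\dagger}(u^{\alpha,\delta}, u^\dagger)$ by the definition of the Bregman distance together with the subgradient assumption $K^*w^\dagger \in \subgrad \TV^k(u^\dagger)$.

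Then I would rewrite the right-hand side using $\scp{K^*w^\dagger}{u^\dagger} = \scp{w^\dagger}{f^\dagger}$ and apply Fenchel's inequality twice: once to bound $\scp{\alpha w^\dagger}{f^\dagger} \leq S_{f^\delta}^*(\alpha w^\dagger) + S_{f^\delta}(f^\dagger) \leq S_{f^\delta}^*(\alpha w^\dagger) + \delta$, and once to bound $-\scp{\alpha w^\dagger}{Ku^{\alpha,\delta}} \leq S_{f^\delta}^*(-\alpha w^\dagger) + S_{f^\delta}(Ku^{\alpha,\delta})$. Substituting these and cancelling $S_{f^\delta}(Ku^{\alpha,\delta})$ from both sides, then dividing by $\alpha$, delivers \eqref{eq:bregman_dist_est_tvk}.

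There is essentially no obstacle: the argument never exploits properties specific to the first-order total variation, only the convexity of the regulariser, the source condition, and the two Fenchel inequalities. The only minor subtlety is ensuring that the Fenchel conjugates and the subgradient relation are formulated consistently with respect to the duality pairing between $L^p(\Omega)$ and its dual, but this is immediate since $K: L^p(\Omega) \to Y$ and $w^\dagger \in Y^*$, so $K^*w^\dagger \in L^{p^*}(\Omega)$ pairs with $u^{\alpha,\delta}, u^\dagger \in L^p(\Omega)$ without issue.
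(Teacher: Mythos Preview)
Your proposal is correct and mirrors the paper's approach exactly. The paper does not even write out a separate proof for this proposition; it simply remarks that the convergence-rate result of Proposition~\ref{prop:tv_reg_convergence_rate} transfers verbatim to $\TV^k$ regularisation, which is precisely what you have done.
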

The last result in particular guarantees convergence rates for the settings of Example \ref{ex:discrepancies_rates}. Note also that the above results remain true in case $p=1$ or in case $p=d/(d-k)= \infty$ and $K$ is weak*-to-weak continuous.

Let us finally note some first-order optimality conditions. For this
purpose, recall that for $X$ a Banach space, the \emph{normal cone}
$\mN_K(u)$ of a set $K \subset X$ at $u \in K$ is given by the
collection of all $w \in X^*$ for which
$\scp[X^* \times X]{w}{v - u} \leq 0$ for all $v \in K$. If we set $\mN_K(u) = \emptyset$ for $u \notin K$, we have that $\mN_K = \subgrad \mI_K$ where $\mI_K$ is the indicator function of $K$, i.e., $\mI_K(u) = 0$ if $u \in K$ and $\infty$ otherwise.
\begin{proposition}
  \label{prop:tv^k-tikh-optim}
  In the situation of Corollary \ref{cor:existence_tvk}, if
  $S_f(v) = \frac{1}{2}\|v - f\|_Y^2$ and $Y$ is a Hilbert space,
  $u^* \in \LPspace{p}{\Omega}$ is a solution
  of
  \begin{equation}
    \label{eq:tv^k-tikh-hilbert}
    \min_{u \in \LPspace{p}{\Omega}} \ \frac12\norm[Y]{Ku - f}^2 + \alpha \TV^k(u)
  \end{equation}
  if and only if
  \[
  u^* \in \mN_{\TV^k}\Bigl( \frac{K^*(f - Ku^*)}{\alpha} \Bigr)
  \]
  where $\mN_{\TV^k}$ is the normal cone associated with the set
  $\closure{\mB_{\TV^k}}$ where
  \[
  \mB_{\TV^k} = \set{w \in \LPspace{p^*}{\Omega}}
  {w = \divergence^k \varphi, \ \varphi \in \Ccspace{k}{\Omega,\Sym^k(\RR^d)},
    \ \norm[\infty]{\varphi} \leq 1}.
  \]
\end{proposition}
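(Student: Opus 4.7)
The approach is to exploit the fact that $\TV^k$ is, by its very definition, the support function of the convex set $\mathcal{B}_{\TV^k}$, compute its Fenchel conjugate as an indicator function, and then rewrite the standard Fermat optimality condition in terms of the associated normal cone.

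First, I observe that $F(u) = \frac{1}{2}\norm[Y]{Ku - f}^2 + \alpha \TV^k(u)$ is proper, convex and lower semi-continuous on $\LPspace{p}{\Omega}$, and the quadratic data term $u \mapsto \frac12 \norm[Y]{Ku-f}^2$ is continuous and Fréchet differentiable on all of $\LPspace{p}{\Omega}$ with derivative $K^*(Ku - f)$, so the subdifferential sum rule applies without qualification issues. Hence $u^*$ minimises $F$ if and only if
\[
 \frac{K^*(f - Ku^*)}{\alpha} \in \subgrad \TV^k(u^*).
\]

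Next, I compute $\subgrad \TV^k$ via Fenchel duality. The set $\mathcal{B}_{\TV^k} \subset \LPspace{p^*}{\Omega}$ is convex because $\divergence^k$ is linear and the unit $\norm[\infty]{\placeholder}$-ball in $\Ccspace{k}{\Omega,\Sym^k(\RR^d)}$ is convex, and by construction
\[
 \TV^k(u) \;=\; \sup_{w \in \mathcal{B}_{\TV^k}} \; \int_\Omega u w \dd{x} \;=\; \sigma_{\mathcal{B}_{\TV^k}}(u),
\]
so $\TV^k$ is the support function of $\mathcal{B}_{\TV^k}$. Equivalently, $\TV^k = (\mathcal{I}_{\mathcal{B}_{\TV^k}})^*$, where $\mathcal{I}_C$ denotes the indicator of a set $C$. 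Since $\TV^k$ is proper, convex and lower semi-continuous on $\LPspace{p}{\Omega}$, the Fenchel--Moreau theorem yields $(\TV^k)^* = (\mathcal{I}_{\mathcal{B}_{\TV^k}})^{**} = \mathcal{I}_{\closure{\mathcal{B}_{\TV^k}}}$, where the closure is taken in $\LPspace{p^*}{\Omega}$ (norm and weak closures of this convex set coincide). The standard Fenchel identity then gives
\[
 w \in \subgrad \TV^k(u) \quad\Longleftrightarrow\quad u \in \subgrad (\TV^k)^*(w) = \mathcal{N}_{\closure{\mathcal{B}_{\TV^k}}}(w).
\]

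Combining the two steps, the optimality condition for \eqref{eq:tv^k-tikh-hilbert} is equivalent to $u^* \in \mathcal{N}_{\TV^k}\bigl(K^*(f-Ku^*)/\alpha\bigr)$, as claimed. The main technical point to verify carefully is the identity $(\TV^k)^* = \mathcal{I}_{\closure{\mathcal{B}_{\TV^k}}}$: the inclusion $(\TV^k)^* \leq \mathcal{I}_{\closure{\mathcal{B}_{\TV^k}}}$ is immediate from the definition of the support function, while the reverse direction uses the Hahn--Banach separation theorem applied to a point $w \notin \closure{\mathcal{B}_{\TV^k}}$ and the convex set $\closure{\mathcal{B}_{\TV^k}}$, together with positive homogeneity of $\TV^k$, to produce a $u$ with $\langle u, w\rangle > \TV^k(u)$ and hence $(\TV^k)^*(w) = +\infty$. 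The sum-rule step and the measurability/integrability conditions needed to make all pairings between $\LPspace{p}{\Omega}$ and $\LPspace{p^*}{\Omega}$ well-defined are the remaining routine checks.
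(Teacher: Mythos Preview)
Your proposal is correct and follows essentially the same approach as the paper: apply the subdifferential sum rule using differentiability of the quadratic data term, then use $\TV^k = (\mI_{\mB_{\TV^k}})^*$ together with Fenchel--Moreau to identify $(\TV^k)^* = \mI_{\closure{\mB_{\TV^k}}}$ and hence $\subgrad(\TV^k)^* = \mN_{\TV^k}$. The only difference is that you spell out in more detail the biconjugate computation and the Hahn--Banach argument behind it, whereas the paper states these facts directly.
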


\begin{proof}
  As $u \mapsto \frac12 \norm[Y]{Ku - f}^2$ is G\^ateaux differentiable, 
  it is continuous with unique subgradient, so, by 
  subdifferential calculus, optimality of $u^*$ is equivalent to
  $K^*(f - Ku^*) \in \alpha \subgrad \TV^k(u^*)$ which can also
  be expressed as
  \[
  u^* \in \subgrad (\TV^k)^* \Bigl( \frac{K^*(f - Ku^*)}{\alpha} \Bigr).
  \]
  Now since $\TV^k = \mI_{\mB_{\TV^k}}^*$, it follows that
  $(\TV^k)^* = \mI_{\mB_{\TV^k}}^{**} = \mI_{\closure{\mB_{\TV^k}}} $
  , so $\subgrad (\TV^k)^* = \mN_{\TV^k}$.
\end{proof}

\begin{remark}
  \label{rem:tv^k-tikh-hilbert-bound}
  In the situation of Proposition~\ref{prop:tv^k-tikh-optim}, it is
  also possible to give an a-priori estimate for the solutions
  of~\eqref{eq:tv^k-tikh-hilbert} in case $K$ is injective on
  $\poly^{k-1}$. Indeed, with $R: \LPspace{p}{\Omega} \to \poly^{k-1}$
  the continuous projection operator on the kernel of $\TV^k$ and
  $C > 0$ the coercivity constant, i.e.,
  $\norm[p]{u - Ru} \leq C \TV^k(u)$ for all
  $u \in \LPspace{p}{\Omega}$, by optimality, a solution $u^*$
  satisfies $\alpha \TV^k(u^*) \leq \frac12 \norm[Y]{f}^2$ and
  consequently,
  $\norm[p]{u - Ru} \leq \frac1{2\alpha} C \norm[Y]{f}^2$. Likewise,
  comparing with $u^* - Ru^*$, optimality also gives
  $\norm[Y]{Ku^* - f}^2 \leq \norm[Y]{K(u^* - Ru^*) - f}^2$, which is
  equivalent to
  $\norm[Y]{KRu^*}^2 \leq 2 \scp{KRu^*}{f - K(u^* - Ru^*)}$. Using
  $ab \leq \frac14 a^2 + b^2$, the latter leads to
  $
  \norm[Y]{KRu^*}^2 \leq 4  \norm[Y]{f - K(u^* - Ru^*)}^2
  $,
  where the right-hand side can further be estimated, using
  $(a+b)^2 \leq (1 + \varepsilon) ( a^2 + \frac1{\varepsilon} b^2)$ with
  $\varepsilon = \frac1{4\alpha^2} C^2 \norm{K}^2$ to give
  \[
  \norm[Y]{KRu^*}^2
  \leq 4 \Bigl(1 + \frac{C^2 \norm{K}^2}{4\alpha^2} \Bigr)
    (1 + \norm[Y]{f}^2)\norm[Y]{f}^2.
  \]
  Now, as $K$ is injective on $\poly^{k-1} = \range{R}$, there is a
  $c > 0$ such that $c \norm[p]{Ru} \leq \norm[Y]{KRu}$ for all
  $u \in \LPspace{p}{\Omega}$. Consequently, employing the triangle
  inequality and estimating yields
  \begin{equation}
    \label{eq:tv^k-tikh-hilbert-bound}
    \norm[p]{u^*}
    \leq \frac1{2\alpha } \Bigl( \frac2c \sqrt{4 \alpha^2 + C^2 \norm{K}^2} + C \Bigr) \sqrt{1 + \norm[Y]{f}^2} \norm[Y]{f},
  \end{equation}
  which is an a-priori bound that only requires the knowledge of the
  Poincaré--Wirtinger-type constant $C$, the constant $c$ in the
  inverse estimate for $K$ on $\poly^{k+1}$, as well as an estimate on
  $\norm{K}$. Beyond being of theoretical interest, such a bound can
  for instance be used in numerical algorithms, see
  Section~\ref{sec:numerical_algorithms},
  Example~\ref{ex:discrete_l2_tv2}.

  If the Kullback--Leibler divergence is used instead of the quadratic
  Hilbert space discrepancy, i.e., $S_f(v) = \KL(v,f)$,
  $Y = \LPspace{1}{\Omega'}$, and data $f \geq 0$ a.e., then one has
  to choose a $u^0 \in \BV^k(\Omega)$ such that
  $\KL(Ku^0,f) < \infty$.  Set
  $C_f = \KL(Ku^0,f) + \alpha\TV^k(u^0)$. Then, an optimal solution
  $u^*$ will satisfy $\TV^k(u^*) \leq \frac{C_f}{\alpha}$. Further,
  we have $\norm[1]{v} \leq 2 \KL(v,f) + 2\norm[1]{f}$ for
  $v \in \LPspace{1}{\Omega'}$ with $v \geq 0$ a.e., see Lemma~\ref{lem:kl_basic_properties}, such that, if $c > 0$ is a constant
  with $c \norm[p]{Ru} \leq \norm[1]{KRu}$ for all
  $u \in \LPspace{p}{\Omega}$, we get
  \[
    \fl
    \norm[p]{Ru^*} \leq \frac1c \bigl(\norm[1]{Ku^*} +
    \norm{K}\norm[p]{u^* - Ru^*} \bigr) \leq \frac1c \Bigl(
    \frac{2 \alpha +  C\norm{K}}{\alpha} C_f + 2 \norm[1]{f} \Bigr),
  \]
  and finally arrive at
  \begin{equation}
    \label{eq:tv^k-tikh-kl-bound}
    \norm[p]{u^*} \leq \frac1c \Bigl( \frac{2\alpha + C \norm{K} +
      cC}{\alpha} C_f + 2 \norm[1]{f} \Bigr).
  \end{equation}
  This constitutes an a-priori estimate similar
  to~\eqref{eq:tv^k-tikh-hilbert-bound} for the Kullback--Leibler
  discrepancy, however, with the difference that also a suitable constant
  $C_f$ has to determined.
\end{remark}

\begin{figure}
  \centering
  \begin{tabular}{c@{\ }c@{\ }c@{\ }c}
    \includegraphics[width=0.22\linewidth]{pics_affine_denoising_noise.png}
    & 
    \includegraphics[width=0.22\linewidth]{pics_affine_denoising_tv1.png}
    &
    \includegraphics[width=0.22\linewidth]{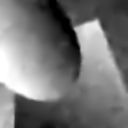}
    &
    \includegraphics[width=0.22\linewidth]{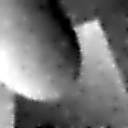}
    \\
    (a) & (b) & (c) & (d)
  \end{tabular}

  \caption{Second-order total-variation
    denoising example. (a) Noisy image, (b) 
    regularisation with $\TV$, 
    (c) regularisation with $\TV^2$, (d) 
    regularisation with $\norm[\radon]{\laplace\placeholder}$.
    All parameters are tuned to give highest PSNR with respect to
    the ground truth (Figure~\ref{fig:first-order-reg} (a)).
  }
  \label{fig:second-order-reg}
\end{figure}

\begin{remark}
  \label{rem:tv2-denoising}
  In order to show the effect of $\TV^2$ regularisation in contrast to
  $\TV$ regularisation, we performed a numerical denoising experiment
  for $f$ shown in Figure~\ref{fig:second-order-reg} (a), i.e., solved
  $\min_{u \in \LPspace{2}{\Omega}} \frac12 \norm[2]{u - f}^2 +
  \mR_\alpha(u)$ where $\mR_\alpha = \alpha \TV$ or
  $\mR_\alpha = \alpha
  \TV^2$. %
  One clearly sees that $\TV^2$ regularisation
  (Figure~\ref{fig:second-order-reg} (c)) reduces the staircasing
  effect of $\TV$ regularisation (Figure~\ref{fig:second-order-reg}
  (b)) and piecewise linear structures are well recovered. However,
  $\TV^2$ regularisation also blurs the object boundaries which appear
  less sharp in contrast to $\TV$ regularisation. 

  This is due to the fact that $\TV^k$ regularisation for $k \geq 2$
  is not able to produce solutions with jump discontinuities.  Indeed,
  $\TV^k$ regularisation implies that a solution $u$ has be to
  contained in $\BV^k(\Omega)$ which embeds into the Sobolev space
  $\Hspace[1]{k-1}{\Omega} \embed \Hspace[1]{1}{\Omega}$. As we have
  seen, for instance, in Example~\ref{ex:sobolev-no-jump}, this means
  that characteristic functions cannot be solutions. More generally,
  for $u \in \Hspace[1]{1}{\Omega} \subset \BV(\Omega)$, the
  derivative $\grad u$ interpreted as a measure is absolutely
  continuous with respect to the Lebesgue measure such that the
  singular part satisfies $\grad^s u =
  0$. Theorem~\ref{thm:bv_grad_decomp} then implies that
  the jump set $J_u$ is a $\hausdorff{d-1}$-negligible set, i.e., $u$ cannot jump on
  $(d-1)$-dimensional hypersurfaces.
\end{remark}

\begin{remark}
  \label{rem:laplace-tv-reg}
  Instead of taking higher-order TV which bases on the full gradient,
  one could also try to regularise with other differential operators,
  for instance with the (weak) Laplacian:
  \[
  \mR_\alpha(u) = \alpha \norm[\radon]{\laplace u}.
  \]
  However, the kernel of this seminorm is the space of 
  $p$-integrable harmonic functions
  on $\Omega$, the Bergman spaces, which are infinite-dimensional.
  Therefore, in view of Theorem \ref{thm:general_reg_existence_linear},
  to use $\mR_\alpha$ for the regularisation of ill-posed
  linear inverse problems, the forward operator $K$ must be
  continuously invertible on a complement of $\kernel{\mR_\alpha}\cap \ker(K)$, i.e., well-posed.
  This limits the applicability of this regulariser.
  Nevertheless, denoising problems can, for instance, still be solved,
  see Figure~\ref{fig:second-order-reg} (d), leading to
  ``speckle'' artefacts in the solutions.
  Another possibility would be to add more regularising
  functionals, which is discussed
  in the next section.
\end{remark}

\paragraph{Higher order TV for multichannel images.} 
In analogy to TV, also higher order TV can be extended to colour and multichannel images represented by functions mapping into a vector space, say $\R^m$. 
This is achieved by testing with $\Sym^k(\RR^d)^m$-valued tensor fields, where
\[ \Sym^k(\RR^d)^m = \set{ \xi = (\xi_1,\ldots,\xi_m) }{ \xi_i \in \Sym^k(\RR^d), \ i=1,\ldots,m} \]
and requires to choose a norm for this space. While, also in view of the Frobenius norm used in $\Sym^k(\RR^d)$, the most natural choice seems to pick the norm that is induced by the inner product
\[ \xi \cdot \eta = \sum_{i=1}^m \xi_i \cdot \eta_i \quad \text{for }\xi,\eta \in \Sym^k(\RR^d)^m,\]
as with $\TV$, this is not the only possible choice and different norms imply different types of coupling of the multiple channels. Generally, we can take $\abs[\circ]{\placeholder}$ to be any norm on $\Sym^{k}(\RR^d)^m$, set $\abs[*]{\placeholder}$ to be the corresponding dual norm and extend $\TV^k$ to functions $u \in \LPlocspace{1}{\Omega,\R^m}$ as
  \begin{equation}
    \label{eq:higher_order_tv_color_def}
    \fl
    \TV^k(u) = \sup\ \Bigset{\int_\Omega u \inprod \divergence^k \varphi \dd{x}
    }{ \varphi \in \Ccspace{k}{\Omega, \Sym^k(\RR^d)^ m}, \ 
    \norm[\infty,*]{\varphi} \leq 1
    }
  \end{equation}
where $\norm[\infty,*]{\varphi}$ is the pointwise supremum of the scalar function $x \mapsto \abs[*]{\varphi(x)}$. By equivalence of norms in finite dimensions, the functional-analytic properties of $\TV^k$ and the results on regularisation for inverse problems transfer one-to-one to its multichannel extension. Further, $\TV^k$ is invariant under rotations whenever the tensor norm $\abs[*]{\placeholder}$ is unitarily invariant in the sense that for any orthonormal matrix $O \in \RR^{d \times d}$ and $(\xi_1,\ldots,\xi_m) \in \Sym^k(\RR^d)^m$ it holds that 
\[\abs[*]{(\xi_1O ,\ldots,\xi_m O)} = \abs[*]{(\xi_1 ,\ldots,\xi_m )},\] where we define $(\xi_i O) (a_1,\ldots,a_k) = \xi_i(Oa_1,\ldots,Oa_k)$ for $i=1,\ldots,m$.

\paragraph{Fractional-order TV.} 
Recently, ideas from fractional calculus started to be transferred to
construct new classes of higher-order TV, namely fractional-order
total variation. The latter bases on fractional partial
differentiation with respect to the coordinate axes. The partial
fractional derivative of a non-integral order $\alpha > 0$ of a
function $u$ compactly supported on the interval ${]{a,b}[} \to \RR$
can, for instance, be defined as
\[
  \frac{\partial^\alpha_{[a,b]} u}{\partial x^\alpha} (x) =
  \frac12 \Bigl(\frac{\partial^\alpha_{[a,x]} u}{\partial x^\alpha} + (-1)^k
  \frac{\partial^\alpha_{[x,b]} u}{\partial x^\alpha} \Bigr),
\]
where $k \in \NN$ is such that $k-1 < \alpha < k$ and, denoting by
$\Gamma$ the Gamma-function, i.e., $\Gamma(t) = \int_0^\infty s^{t-1} \expE^{-t}
\dd{s}$,
\[
  \frac{\partial^\alpha_{[a,x]} u}{\partial x^\alpha} = \frac1{\Gamma(k - \alpha)}
  \frac{\partial^k}{\partial x^k} \int_a^x \frac{u(t)}{(x -
    t)^{\alpha-k+1}} \dd{t}
\]
as well as
\[
  \frac{\partial^\alpha_{[x,b]} u}{\partial x^\alpha} =
  \frac{(-1)^k}{\Gamma(k - \alpha)} \frac{\partial^k}{\partial x^k}
  \int_x^b \frac{u(t)}{(t - x)^{\alpha - k + 1}} \dd{t}.
\]
This fractional-order derivative corresponds to a central version of
the Riemann--Liouville definition
\cite{zhang2015fractionaltv,oldham1974fractional}. However, one has to
mention that there are also other possibilities to define
fractional-order derivatives \cite{podlubny1998fractional}. On a
rectangular domain
$\Omega = {]{a_1,b_1}[} \times \ldots \times {]{a_d,b_d}[} \subset
\RR^d$ and for test vector fields $\varphi \in \Ccspace{k}{\Omega,\RR^d}$,
the fractional divergence of order $\alpha$ can then be defined as
$\divergence^\alpha \varphi = \sum_{i=1}^d \frac{\partial^\alpha_{[a_i,b_i]}
  \varphi_i}{\partial x_i^\alpha}$ which is still a bounded
function. Consequently, the fractional total variation of order
$\alpha$ for $u \in \LPspace{1}{\Omega}$ is given as
\[
  \TV^\alpha(u) = \sup \ \Bigset{\int_\Omega u \inprod
    \divergence^\alpha \varphi \dd{x}}{\varphi \in
    \Ccspace{k}{\Omega,\RR^d}, \ \norm[\infty]{\varphi} \leq 1}.
\]
It is easy to see that this defines a proper, convex and lower
semi-continuous functional on each $\LPspace{p}{\Omega}$ which makes
the functional suitable as a regulariser for denoising
\cite{zhang2015fractionaltv,williams2016fractionaltv}, typically for
$1 < \alpha < 2$. The use of $\TV^\alpha$ for the regularisation of
linear inverse problems, however, seems to be unexplored so far, and
not many properties of the solutions appear to be known.

\section{Combined approaches}
\label{sec:combined_approaches}

We have seen that employing higher-order total variation for regularisation yields well-posedness results for general linear inverse problems that are comparable to first-order $\TV$ regularisation, where the use of higher-order differentiation even weakens the continuity requirements on the forward operator. On the other hand, $\TV^k$ regularisation, for $k>1$, does not allow to recover jump discontinuities, as we have shown analytically and observed numerically (see Remark~\ref{rem:tv2-denoising}). An interesting question in this context is how combinations of TV functionals with different orders behave with respect to these properties. As we will see, this crucially depends on how such functionals are combined.

\subsection{Additive multi-order regularisation}

In this section, we consider the additive combination of total variation functionals with different orders. That is, we are interested in the following Tikhonov approach:
  \begin{equation}
    \label{eq:tvk-multiorder-tihk}
    \min_{u \in \LPspace{p}{\Omega}} S_f(Ku)
    +  \alpha_{1} \TV^{k_1}(u) +  \alpha_{2} \TV^{k_2}(u)
  \end{equation}
  with $\alpha_{i} > 0 $ for $i=1,2$ and $1 \leq k_1 < k_2$. With $k_1=1, k_2=2$, such an approach has for instance been considered in \cite{papafitsoros2014firstandsecondorder_mh} for the regularisation of linear inverse problems.
  
  The following proposition summarises, in the general setting of seminorms, 
  basic properties of the function spaces arising
  from the additive combination of two different regularisers. Its proof is
  straightforward.

\begin{proposition}
  \label{prop:abstract-sum-space}
  Let $\abs[1]{\placeholder}$ and $\abs[2]{\placeholder}$
  be two lower semi-continuous seminorms on the Banach space $X$.
  Then,
  \begin{enumerate}
  \item 
    The functional
    $\abs{\placeholder} = 
    \abs[1]{\placeholder} + \abs[2]{\placeholder}$ is 
    a seminorm on $X$.
  \item
    We have
    \[
      \kernel{\abs{\placeholder}} = 
    \kernel{\abs[1]{\placeholder}} \cap \kernel{\abs[2]{\placeholder}}.
    \]
  \item
    The seminorm $\abs{\placeholder}$ is lower semi-continuous
    and
    \[
    Y = \set{x \in X}{\abs{\placeholder} < \infty}, 
    \qquad \norm[Y]{x} = \norm[X]{x} + \abs{x}
    \]
    constitutes a Banach space.
  \item
  	With $Y_i$ the Banach spaces arising from the norms
    $\norm[X]{\placeholder} + \abs[i]{\placeholder}$, $i=1,2$ (see 
    Lemma~\ref{lem:banach_space_plus_lsc_seminorm}), 
    \[ Y \hookrightarrow Y_i \quad \text{for } i=1,2. \]
  \end{enumerate}
\end{proposition}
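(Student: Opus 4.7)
The plan is to verify each item in sequence, noting that they are essentially direct consequences of the corresponding properties of the individual seminorms together with Lemma~\ref{lem:banach_space_plus_lsc_seminorm}.

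For (i), I would check positive homogeneity and the triangle inequality pointwise: for any $x \in X$ and $\lambda \in \RR$, $\abs{\lambda x} = \abs[1]{\lambda x} + \abs[2]{\lambda x} = |\lambda| (\abs[1]{x} + \abs[2]{x}) = |\lambda| \abs{x}$, and analogously for subadditivity. Since both $\abs[1]{\placeholder}$ and $\abs[2]{\placeholder}$ take values in $[0,\infty]$, the sum is well-defined with values in $[0,\infty]$. For (ii), the identity $\kernel{\abs{\placeholder}} = \kernel{\abs[1]{\placeholder}} \cap \kernel{\abs[2]{\placeholder}}$ is immediate: $\abs{x} = 0$ if and only if both non-negative summands vanish.

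For (iii), the key step is to establish lower semi-continuity of $\abs{\placeholder}$; once this is done, the Banach-space property of $Y$ with the norm $\norm[Y]{x} = \norm[X]{x} + \abs{x}$ follows directly from Lemma~\ref{lem:banach_space_plus_lsc_seminorm} applied to the lower semi-continuous seminorm $\abs{\placeholder}$ on $X$. To show lower semi-continuity, I would take $x^n \to x$ in $X$ and use the assumed lower semi-continuity of each $\abs[i]{\placeholder}$ together with the elementary inequality $\liminf_n a_n + \liminf_n b_n \leq \liminf_n (a_n + b_n)$ for sequences in $[0,\infty]$ to conclude $\abs{x} = \abs[1]{x} + \abs[2]{x} \leq \liminf_n \abs[1]{x^n} + \liminf_n \abs[2]{x^n} \leq \liminf_n \abs{x^n}$.

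For (iv), the continuous embeddings $Y \hookrightarrow Y_i$ follow directly from the pointwise estimate $\norm[Y_i]{x} = \norm[X]{x} + \abs[i]{x} \leq \norm[X]{x} + \abs[1]{x} + \abs[2]{x} = \norm[Y]{x}$ for $i = 1,2$, valid whenever $x \in Y$.

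The main ``obstacle'' here is really just the care needed in handling the extended-real-valued setting ($[0,\infty]$-valued seminorms), which is why I would explicitly invoke the $\liminf$ inequality for non-negative extended-real sequences; all other steps are straightforward and parallel the scalar-valued situation. No compactness or duality arguments are required, and the Banach-space statement is reduced to the already-established Lemma~\ref{lem:banach_space_plus_lsc_seminorm}.
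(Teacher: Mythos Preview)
Your proposal is correct and matches the paper's intent: the paper itself omits the proof entirely, stating only that it ``is straightforward,'' and your verification of each item---checking the seminorm axioms and kernel identity pointwise, establishing lower semi-continuity via the $\liminf$ inequality and then invoking Lemma~\ref{lem:banach_space_plus_lsc_seminorm}, and deducing the embeddings from the obvious norm comparison---is exactly the expected direct argument.
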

Setting $\abs[i]{\placeholder} = \alpha_i \TV^{k_i}$ for $i=1,2$ shows in particular that the function space associated with the additive combination of the $\TV^{k_i}$ is embedded in $\BV^{k_2}(\Omega)$, i.e., the BV space corresponding to the highest order. Hence non-trivial combinations of different $\TV^{k_i}$ again do not allow to recover jumps and, as the following proposition shows, in fact even yield the same space as the single $\TV$ term with the highest order.

\begin{theorem} \label{thm:tvk_sum_poincare}
  Let $1 \leq k_1 < k_2$, $\alpha_1 > 0$, $\alpha_2 > 0$ and $\Omega$ be a bounded Lipschitz domain.
  For $X = \LPspace{1}{\Omega}$ and the seminorm $\abs{\placeholder} = \alpha_1 \TV^{k_1} + \alpha_2 \TV^{k_2}$, let
  $Y$ be the associated Banach space 
  according to Lemma \ref{lem:banach_space_plus_lsc_seminorm}.  Then,
  \[ Y = \BV^{k_2}(\Omega) \]
  in the sense of Banach space equivalence, and 
  for $p \in [1,\infty]$, $p \leq d/(d-k_2)$ if $k_2 < d$,
  $R: \LPspace{p}{\Omega} \to \kernel{\TV^{k_1}}$ a continuous, linear projection, there is a $C > 0$ independent of $u$ such that
  \[
  \norm[p]{u - Ru} \leq C\min\{ \alpha_1,\alpha_2\}^{-1} (\alpha_1 \TV^{k_1} + \alpha_2 \TV^{k_2})(u) 
  \]
  for all $u \in \LPspace{p}{\Omega}$.
  \begin{proof}
  For the claimed norm equivalence, one estimate is immediate, while the other one follows from Theorem \ref{thm:l1-tvx-equiv-norm}.
  Denoting by $R_2: \LPspace{p}{\Omega} \to \kernel{\TV^{k_2}}$ a continuous, linear projection,
  the estimate on $\norm[p]{u - Ru}$ follows from Corollary \ref{cor:tv^k-coercive-scalar} and norm
  equivalence in finite-dimensional spaces as
   \begin{eqnarray*}
   \norm[p]{u - Ru}  
   &\leq \norm[p]{u - R_2u} + \norm[p]{Ru -  R_2u} \\
   & \leq C  \left( \TV^ {k_2}(u) + \norm[1]{Ru -  R_2u}\right) \\
   &  \leq   C  \left( \TV^ {k_2}(u) + \norm[1]{ u - Ru} + \norm[1]{ u -  R_2u}\right) \\
   & \leq C  \left( \TV^ {k_1}(u) + \TV^ {k_2}(u) \right),\\
      & \leq C\min\{ \alpha_1,\alpha_2\}^{-1}   \left( \alpha_1 \TV^ {k_1}(u) + \alpha_2 \TV^ {k_2}(u) \right),
   \end{eqnarray*}
   with $C>0$ a generic constant.
  \end{proof}
\end{theorem}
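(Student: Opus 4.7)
My plan has two parts corresponding to the two claims. For the norm equivalence $Y = \BV^{k_2}(\Omega)$, the inclusion $Y \hookrightarrow \BV^{k_2}(\Omega)$ is immediate since $\norm[1]{u} + \alpha_2 \TV^{k_2}(u) \leq \norm[Y]{u}$, up to the factor $\min(1,\alpha_2)^{-1}$. For the reverse, I would invoke Theorem~\ref{thm:l1-tvx-equiv-norm}, which tells us that on $\BV^{k_2}(\Omega)$ we have $\norm[1]{u} + \norm[\radon]{\symgrad^{k_2} u} \sim \sum_{m=0}^{k_2} \norm[\radon]{\symgrad^m u}$. Since $\TV^{k_1}(u) = \norm[\radon]{\symgrad^{k_1} u}$ is one of the summands on the right-hand side, this yields $\TV^{k_1}(u) \leq C(\norm[1]{u} + \TV^{k_2}(u))$ and hence $\norm[Y]{u} \leq C'(1 + \alpha_1 + \alpha_2)(\norm[1]{u} + \TV^{k_2}(u))$, which is the desired continuous embedding $\BV^{k_2}(\Omega) \hookrightarrow Y$.

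For the Poincaré-type estimate, the strategy is to introduce a second continuous linear projection $R_2 : \LPspace{p}{\Omega} \to \ker(\TV^{k_2})$ onto the larger kernel (note $\ker(\TV^{k_1}) \subset \ker(\TV^{k_2})$ by Proposition~\ref{prop:td^k-kernel}), split via the triangle inequality
\[
  \norm[p]{u - Ru} \leq \norm[p]{u - R_2 u} + \norm[p]{R_2 u - Ru},
\]
and control each summand separately. The first term is bounded by $C\, \TV^{k_2}(u)$ directly from Corollary~\ref{cor:tv^k-coercive-scalar} applied with the kernel $\ker(\TV^{k_2})$. For the second term, since $Ru \in \ker(\TV^{k_1}) \subset \ker(\TV^{k_2})$ and $R_2 u \in \ker(\TV^{k_2})$, the difference $R_2 u - Ru$ lies in the finite-dimensional polynomial space $\ker(\TV^{k_2}) = \poly^{k_2-1}$, on which all $\LPspace{q}$-norms are equivalent, so $\norm[p]{R_2 u - R u} \leq C \norm[1]{R_2 u - R u}$.

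Now applying the triangle inequality once more together with coercivity of the individual $\TV^{k_i}$ (Corollary~\ref{cor:tv^k-coercive-scalar}, with the $\LPspace{1}$-norm on the left-hand side, which is valid for every allowed $p$), I obtain
\[
  \norm[1]{R_2 u - Ru} \leq \norm[1]{u - Ru} + \norm[1]{u - R_2 u}
  \leq C\bigl(\TV^{k_1}(u) + \TV^{k_2}(u)\bigr).
\]
Combining everything yields $\norm[p]{u - Ru} \leq C(\TV^{k_1}(u) + \TV^{k_2}(u))$ with a constant $C$ depending only on $\Omega, p, k_1, k_2, R, R_2$, and factoring out $\min(\alpha_1,\alpha_2)^{-1}$ gives the stated bound.

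The only step that warrants extra care is verifying that $\norm[1]{u - Ru} \leq C\, \TV^{k_1}(u)$, since Corollary~\ref{cor:tv^k-coercive-scalar} as stated provides this for a projection onto $\ker(\TV^{k_1})$ and for the exponents $p$ admissible for $k_1$; here $1$ is admissible because $1 \leq d/(d-k_1)$ trivially. Otherwise the argument is purely a chaining of previously established coercivity estimates with a finite-dimensional norm equivalence, so I expect no genuine obstacle.
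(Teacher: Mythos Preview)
Your proposal is correct and follows essentially the same route as the paper: both parts use Theorem~\ref{thm:l1-tvx-equiv-norm} for the norm equivalence, and the coercivity estimate is obtained via the same splitting through an auxiliary projection $R_2$ onto $\ker(\TV^{k_2})$, norm equivalence on the finite-dimensional space $\poly^{k_2-1}$, and Corollary~\ref{cor:tv^k-coercive-scalar} applied in $L^1$ to each of the two projections. Your presentation is slightly more explicit about why each step is valid (in particular the admissibility of $p=1$ for $k_1$), but the argument is the same.
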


\paragraph*{Tikhonov regularisation.}

For employing $\alpha_1 \TV^{k_1} + \alpha_2 \TV^{k_2}$ as regularisation in a Tikhonov setting, the coercivity estimate in Theorem \ref{thm:tvk_sum_poincare} is crucial since it allows to transfer the well-posedness result of Theorem \ref{thm:general_reg_existence_linear}. Observe in particular that $\ker(\alpha_1 \TV^{k_1} + \alpha_2 \TV^{k_2})$ is finite-dimensional, such that assumption (i) in Theorem~\ref{thm:general_reg_existence_linear} is satisfied.

\begin{proposition} \label{prop:well_posed_sum_of_tvk} With $X=L^p(\Omega)$, $p \in {]{1,\infty}[}$, $\Omega$ a bounded Lipschitz domain, $Y$ a Banach space, $K: X \to Y$ linear and continuous, $S_f: Y \to {[{0,\infty}]}$ proper, convex, lower semi-continuous and coercive, %
  $1 \leq k_1 < k_2$, $\alpha_1 > 0$, $\alpha_2 > 0$ the Tikhonov minimisation problem
  \begin{equation}
    \label{eq:general_sum_of_tvk_min}
    \min_{u \in \LPspace{p}{\Omega}} \ S_f(Ku) +  \alpha_1 \TV^{k_1}(u) + \alpha_2 \TV^{k_2}(u).
  \end{equation}
  is well-posed in the sense of Theorem \ref{thm:general_reg_existence_linear} whenever $p \leq d/(d-k_2)$ if $k_2 < d$. %
\end{proposition}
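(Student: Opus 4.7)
The plan is to deduce the result as a direct application of Theorem~\ref{thm:general_reg_existence_linear} with $X = \LPspace{p}{\Omega}$ and the seminorm $\abs{\placeholder} = \alpha_1 \TV^{k_1} + \alpha_2 \TV^{k_2}$. Since $p \in {]{1,\infty}[}$, the space $X$ is reflexive, $K$ and $S_f$ satisfy the required hypotheses by assumption, so the only non-trivial task is to check that $\abs{\placeholder}$ is a lower semi-continuous seminorm on $X$ for which the coercivity estimate and the structural condition on $\ker(\abs{\placeholder})$ in Theorem~\ref{thm:general_reg_existence_linear} hold.

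First, each of the functionals $\TV^{k_i}$ is a proper, lower semi-continuous seminorm on $\LPspace{p}{\Omega}$ by Proposition~\ref{prop:tdk_semi_norm}. Applying Proposition~\ref{prop:abstract-sum-space} then shows that $\abs{\placeholder}$ is itself a lower semi-continuous seminorm on $X$ with kernel
\[
\kernel{\abs{\placeholder}} = \kernel{\TV^{k_1}} \cap \kernel{\TV^{k_2}} = \poly^{k_1 - 1} \cap \poly^{k_2 - 1} = \poly^{k_1-1},
\]
where the last equality uses $k_1 < k_2$ together with the kernel description in Proposition~\ref{prop:td^k-kernel}. In particular, $\kernel{\abs{\placeholder}}$ is finite-dimensional, which verifies the structural alternative (i) in Theorem~\ref{thm:general_reg_existence_linear} and guarantees the existence of a continuous linear projection $R: \LPspace{p}{\Omega} \to \kernel{\abs{\placeholder}}$.

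Next, the required Poincaré-type coercivity bound
\[
\norm[p]{u - Ru} \leq C\,\abs{u} \qquad \text{for all } u \in \LPspace{p}{\Omega}
\]
is exactly the estimate supplied by Theorem~\ref{thm:tvk_sum_poincare}, where the assumption $p \leq d/(d-k_2)$ if $k_2 < d$ is needed to invoke the underlying Sobolev embedding of $\BV^{k_2}(\Omega)$ (Corollary~\ref{cor:tv^k-coercive-scalar}). Note that the projection appearing in Theorem~\ref{thm:tvk_sum_poincare} is onto $\kernel{\TV^{k_1}}$, which coincides with $\kernel{\abs{\placeholder}}$; hence both projections may be identified.

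Having verified the hypotheses of Theorem~\ref{thm:general_reg_existence_linear}, the well-posedness (existence, stability, subsequential weak convergence of minimisers up to shifts by elements of $\ker(K) \cap \ker(\abs{\placeholder})$, etc.) follows immediately. There is no substantial obstacle in this argument: the main work was already carried out in Theorem~\ref{thm:tvk_sum_poincare}, whose coercivity estimate is the decisive ingredient. The only point requiring a little care is to ensure that the same projection $R$ features simultaneously in the coercivity bound and in the abstract theorem, which is automatic as soon as one observes the coincidence of the two kernels.
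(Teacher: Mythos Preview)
Your proposal is correct and follows essentially the same route as the paper: the text preceding the proposition explicitly notes that the coercivity estimate of Theorem~\ref{thm:tvk_sum_poincare} and the finite-dimensionality of $\ker(\alpha_1\TV^{k_1}+\alpha_2\TV^{k_2})$ are precisely what is needed to invoke Theorem~\ref{thm:general_reg_existence_linear}. Your additional observation that $\ker(\abs{\placeholder})=\poly^{k_1-1}$ coincides with the kernel appearing in the coercivity estimate is a useful clarification, but otherwise the arguments match.
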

It is interesting to note that the necessary coercivity estimate on $\alpha_1 \TV^{k_1} + \alpha_2 \TV^{k_2}$ uses a projection to the smaller kernel of $\TV^{k_1}$ and an $L^p$ norm with a larger exponent corresponding to $\TV^{k_2}$. Hence, in view of the assumptions in Theorem \ref{thm:general_reg_existence_linear}, the additive combination of $\TV^{k_1}$ and $\TV^{k_2}$ inherits the best properties of the two summands, i.e., the ones that are the least restrictive for applications in an inverse problems context.

Regarding the convergence result of Theorem \ref{thm:tvk_reg_convergence} and the rates of Proposition \ref{prop:tvk_reg_convergence_rate}, a direct extension to regularisation with $\alpha_1 \TV^{k_1} + \alpha_2 \TV^{k_2}$ can be obtained by regarding the weights $\alpha_1,\alpha_2$ to be fixed and introducing an additional factor $\alpha>0$ for both terms, which then acts as the regularisation parameter. A more natural approach, however, would be to regard both $\alpha_1,\alpha_2$ as regularisation parameters and study the limiting behaviour of the method as as $\alpha_1,\alpha_2$  converge to zero in some sense. 
This is covered by the following theorem.

\begin{theorem} 
  \label{thm:sum_tvk_reg_convergence}
  In the situation of Proposition~\ref{prop:well_posed_sum_of_tvk},
  let for each 
$\delta > 0$ the data $f^\delta$ be given such that
$S_{f^\delta}(f^\dagger) \leq \delta$, let
$\seq{S_{f^\delta}}$ be equi-coercive and converge to
  $S_{f^\dagger}$ for some data $f^\dagger \in Y$ in the sense of \eqref{eq:discrepancy_convergence} with $S_{f^\dagger}(v) = 0$ if and only if $v = f^\dagger$.
  
  Choose the positive parameters $\alpha = (\alpha_1,\alpha_2)$
  in dependence of $\delta$ such that
  \[
  \max\{\alpha_1,\alpha_2\} \to 0, \quad
  \frac{\delta}{\max\{\alpha_1,\alpha_2\}} \to 0, \qquad \text{as} \qquad \delta \to 0,
  \]
  and $(\tilde{\alpha}_1,\tilde{\alpha}_2) = (\alpha_1,\alpha_2)/\max\{\alpha_1,\alpha_2\} \rightarrow (\alpha_1^\dagger,\alpha_2^\dagger)$ as $\delta \to 0$.
 Set 
 \[ %
   k = \left\{
     \begin{array}{rl}
       k_1 & \text{if} \ \alpha_2^\dagger = 0, \\
             k_2 & \text{else},
     \end{array}
   \right.
 \]
 and assume $p \leq d/(d-k)$ in case of $k < d$,
 and
 that there exists $u_0 \in \BV^k(\Omega)$ such that
 $Ku_0 = f^\dagger$.
  Then, up to shifts in $\ker(K) \cap \poly^{k_1-1}$, any sequence $\seq{u^{\alpha,\delta}}$, with each $u^{\alpha,\delta}$ being a solution to \eqref{eq:tvk-multiorder-tihk} for parameters $(\alpha_1,\alpha_2)$ and data $f^\delta$, has at least one $L^p$-weak accumulation
  point. Each $L^p$-weak accumulation point is a 
  minimum-$(\alpha_1^\dagger \TV^{k_1}+\alpha_2^\dagger \TV^{k_2})$-solution of $Ku = f^\dagger$ and $\lim_{\delta \to 0}
  (\tilde{\alpha}_1\TV^ {k_1} + \tilde{\alpha}_2 \TV^{k_2} )(u^{\alpha,\delta}) = ( \alpha_1^\dagger \TV^ {k_1}+\alpha_2^\dagger \TV^ {k_2} ) (u^\dagger)$.

\end{theorem}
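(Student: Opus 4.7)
The plan is to mirror the proof of Theorem~\ref{thm:tvk_reg_convergence}, working with the normalised weights $\tilde\alpha_i$. Setting $\alpha^\ast = \max\{\alpha_1,\alpha_2\}$, testing the minimality of $u^{\alpha,\delta}$ against $u_0$ yields
\[
  S_{f^\delta}(Ku^{\alpha,\delta}) + \alpha_1 \TV^{k_1}(u^{\alpha,\delta}) + \alpha_2 \TV^{k_2}(u^{\alpha,\delta}) \leq \delta + \alpha_1 \TV^{k_1}(u_0) + \alpha_2 \TV^{k_2}(u_0).
\]
Division by $\alpha^\ast$ together with $\delta/\alpha^\ast \to 0$ and $\tilde\alpha_i \to \alpha_i^\dagger$ then delivers $S_{f^\delta}(Ku^{\alpha,\delta}) \to 0$ as well as
\[
  \limsup_{\delta \to 0} \bigl(\tilde\alpha_1\TV^{k_1} + \tilde\alpha_2\TV^{k_2}\bigr)(u^{\alpha,\delta}) \leq \bigl(\alpha_1^\dagger\TV^{k_1} + \alpha_2^\dagger\TV^{k_2}\bigr)(u_0).
\]

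Since $\max\{\alpha_1^\dagger,\alpha_2^\dagger\}=1$, the weight $\tilde\alpha_j$ attaining this maximum is eventually bounded below, and the definition of $k$ in the hypotheses is tailored so that $\TV^{k}(u^{\alpha,\delta})$ is uniformly bounded. Corollary~\ref{cor:tv^k-coercive-scalar} with a continuous projection $R: \LPspace{p}{\Omega} \to \poly^{k-1}$ then bounds $\norm[p]{u^{\alpha,\delta} - Ru^{\alpha,\delta}}$; passing to representatives modulo $\ker(K) \cap \poly^{k_1-1}$ as in the proof of Theorem~\ref{thm:general_reg_existence_linear}---with the residual polynomial component in a complement of $\ker(K)\cap\poly^{k_1-1}$ inside $\poly^{k-1}$ being controlled through $\norm[Y]{Ku^{\alpha,\delta}}$ (bounded by the equi-coercivity of $\seq{S_{f^\delta}}$) together with the $\TV^{k_1}$-optimality of $u^{\alpha,\delta}$ against shifts in $\ker(K)\cap\poly^{k-1}$, on which $\TV^{k_1}$ acts as a norm---yields, up to such shifts, an $L^p$-bounded sequence. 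Extracting a weak accumulation point $u^*$ along a subsequence, lower semi-continuity combined with $S_{f^\delta}(Ku^{\alpha,\delta}) \to 0$ and~\eqref{eq:discrepancy_convergence} forces $S_{f^\dagger}(Ku^*)=0$, hence $Ku^* = f^\dagger$, while lower semi-continuity of the weighted regulariser produces $(\alpha_1^\dagger\TV^{k_1} + \alpha_2^\dagger\TV^{k_2})(u^*) \leq (\alpha_1^\dagger\TV^{k_1} + \alpha_2^\dagger\TV^{k_2})(u_0)$, identifying $u^*$ as a minimum-$(\alpha_1^\dagger\TV^{k_1}+\alpha_2^\dagger\TV^{k_2})$-solution. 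Reinserting $u^*$ in place of $u_0$ delivers the reverse $\liminf$-inequality and the asserted convergence of functional values.

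The main obstacle is the case $\alpha_2^\dagger = 0$, in which the hypothesis only supplies $u_0 \in \BV^{k_1}(\Omega)$ and $\TV^{k_2}(u_0)$ may be infinite, so the right-hand side of the initial comparison estimate degenerates. I plan to circumvent this by mollification: smoothing produces $u_0^\varepsilon \in \Cspace{\infty}{\Omega}\cap\BV^{k_1}(\Omega)$ with $\TV^{k_2}(u_0^\varepsilon) < \infty$, $\TV^{k_1}(u_0^\varepsilon) \to \TV^{k_1}(u_0)$ and $u_0^\varepsilon \to u_0$ in $\LPspace{p}{\Omega}$, in the spirit of Lemma~\ref{lem:bvk_strict_convergence}. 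Continuity of $K$ yields $Ku_0^\varepsilon \to f^\dagger$ in $Y$, and the $\limsup$-part of~\eqref{eq:discrepancy_convergence} combined with suitable regularity of $S_{f^\dagger}$ at $f^\dagger$ (which holds for the standard quadratic and Kullback--Leibler discrepancies of interest) permits coupling $\varepsilon = \varepsilon(\delta) \to 0$ to $\delta \to 0$ slowly enough that $\alpha_2\TV^{k_2}(u_0^\varepsilon) \to 0$ and $S_{f^\delta}(Ku_0^{\varepsilon(\delta)})/\alpha^\ast \to 0$; with $u_0^{\varepsilon(\delta)}$ replacing $u_0$, the preceding argument then goes through verbatim.
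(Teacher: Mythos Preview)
Your overall strategy matches the paper's, but two points of execution differ. First, the paper tests minimality of $u^{\alpha,\delta}$ directly against a minimum-$(\alpha_1^\dagger\TV^{k_1}+\alpha_2^\dagger\TV^{k_2})$-solution $u^\dagger$ (whose existence follows from Theorem~\ref{thm:general_reg_existence_linear} applied with $u_0$), not against $u_0$ itself. This makes the conclusion that a weak accumulation point $u^*$ is a \emph{minimum} solution immediate, whereas your route only yields a comparison with $u_0$ and must be re-run against an arbitrary competitor; this is harmless when $\alpha_2^\dagger>0$ since any competitor with finite limit functional automatically lies in $\BV^{k_2}(\Omega)$. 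Second, for compactness in the case $k=k_2$, the paper argues more cleanly: optimality of $u^{\alpha,\delta}$ against shifts in $\ker(K)\cap\poly^{k_2-1}$ together with Lemma~\ref{lem:tv-k1_tv-k2_coercivity_estimate} and injectivity of $K$ on a finite-dimensional complement bounds $\TV^{k_1}(u^{\alpha,\delta})$ directly, after which the standard shift argument in $\ker(K)\cap\poly^{k_1-1}$ applies. Your polynomial-decomposition sketch can be made to work but is more roundabout.

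The obstacle you flag for $\alpha_2^\dagger=0$ is real: when the competitor lies only in $\BV^{k_1}(\Omega)$, the term $\alpha_2\TV^{k_2}(\cdot)$ on the right of the comparison estimate may be infinite for every $\delta>0$. The paper's proof, as written, does not address this either---it tests against $u^\dagger$ without justifying $\TV^{k_2}(u^\dagger)<\infty$. Your mollification fix is natural but, as you yourself note, does \emph{not} prove the theorem under its stated hypotheses: the $\limsup$-part of~\eqref{eq:discrepancy_convergence} is assumed only for constant sequences, so controlling $S_{f^\delta}(Ku_0^{\varepsilon(\delta)})$ along a diagonal requires additional continuity near $f^\dagger$ that the theorem does not assume. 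At the stated level of generality this corner case remains open in both your proposal and the paper's own argument.
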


\begin{proof} First note that, as consequence of Theorem \ref{thm:general_reg_existence_linear} and the fact that $u_0 \in \BV^k(\Omega)$ with $Ku_0 = f^\dagger$, there exists a minimum-$(\alpha_1^\dagger \TV^ {k_1}+\alpha_2^\dagger \TV^ {k_2})$-solution to $Ku=f^\dagger$, that we denote by $u^\dagger$, such that $\TV^k(u^\dagger) < \infty$.
  Using optimality of $u^{\alpha,\delta}$
  compared to $u^\dagger$ gives
  \[
    \fl
  S_{f^\delta}(Ku^{\alpha,\delta}) + \left( \alpha_1 \TV^{k_1} + \alpha_2 \TV^{k_2} \right)(u^{\alpha,\delta})
  \leq \delta + \left( \alpha_1 \TV^{k_1} + \alpha_2 \TV^{k_2} \right)(u^\dagger).
  \]
  Since $\max\{\alpha_1,\alpha_2\} \to 0$ 
  as $\delta \rightarrow 0 $, we have that
  $S_{f^\delta}(Ku^{\alpha,\delta}) \to 0$ as $\delta \to 0$. 
  Moreover, as also $\delta/\max\{\alpha_1,\alpha_2\} \to 0$, it follows that
  \begin{eqnarray*}
    \fl
    \limsup_{\delta \rightarrow 0} \ \left( \alpha^\dagger_{1} \TV^{k_1} 
    + \alpha^\dagger_2 \TV^{k_2}\right) (u^{\alpha,\delta})
  & \leq \limsup_{\delta \rightarrow 0} \
  \left( \tilde{\alpha}_1 \TV^{k_1} + \tilde{\alpha}_2 \TV^{k_2} \right)(u^{\alpha,\delta})  \\
  & \leq  \left( \alpha_1 ^\dagger \TV^{k_1} + \alpha_2^\dagger \TV^{k_2} \right)(u^\dagger)
  \end{eqnarray*}
  The choice of $k$ allows to conclude that $\seq{\TV^k(u^{\alpha,\delta})}$
  is bounded, which, in case $k=k_1$, means that $\seq{\TV^{k_1}(u^{\alpha,\delta})}$ is bounded.
  Now we show that also in the other case when $k=k_2$, $\seq{\TV^{k_1}(u^{\alpha,\delta})}$ is bounded.
  To this aim, denote by $R:L^p(\Omega) \rightarrow \ker(\TV^{k_2})$ and $P_Z: \ker(\TV^{k_2}) \rightarrow Z$
  linear, continuous projections, where $Z$ is a complement of $\ker(K)\cap \ker(\TV^{k_2})$ in $\ker(\TV^{k_2})$,
  i.e., $\id-P_Z$ projects $\kernel{\TV^{k_2}}$ to $\ker(K)\cap \ker(\TV^{k_2})$. Then, by optimality and invariance of $K$ and $\TV^{k_2}$ on  $\ker(K)\cap \ker(\TV^{k_2})$ we estimate
  \begin{eqnarray*}
    \fl
    S_{f^\delta}(Ku^{\alpha,\delta}) + \left( \alpha_1 \TV^{k_1} + \alpha_2 \TV^{k_2} \right)(u^{\alpha,\delta}) 
    & \leq
    S_{f^\delta}(Ku^{\alpha,\delta})+ \alpha_2 \TV^{k_2} (u^{\alpha,\delta}) \\
    & \quad +  \alpha_1 \TV^{k_1} \left(u^{\alpha,\delta} - (\id - P_Z)R u^{\alpha,\delta}\right),
  \end{eqnarray*}
  which, together with Lemma \ref{lem:tv-k1_tv-k2_coercivity_estimate}, norm equivalence on finite-dimensional spaces and injectivity of $K$ on the finite-dimensional space $Z$, yields
  \[
    \fl      
    \begin{array}{rl}
      \displaystyle
      \TV^{k_1} (u^{\alpha,\delta}) \!\!\!\!\!\!
    & \displaystyle \leq \TV^{k_1}\left(u^{\alpha,\delta} - (\id - P_Z)R u^{\alpha,\delta}\right) 
      \leq \TV^{k_1} \left(u^{\alpha,\delta} - R u^{\alpha,\delta}\right) + \TV^{k_1}\left(P_ZR u^{\alpha,\delta} \right) \\
    & \displaystyle \leq C \left( \TV^{k_2} (u^{\alpha,\delta}) + \| P_ZR u^{\alpha,\delta}\|_p \right)
      \leq C \left( \TV^{k_2} (u^{\alpha,\delta}) + \| K P_ZR u^{\alpha,\delta}\|_Y  \right) \\
    & \displaystyle  \leq C \left( \TV^{k_2} (u^{\alpha,\delta}) +  \| K ( u^{\alpha,\delta} - R u^{\alpha,\delta} + P_ZR u^{\alpha,\delta})\|_Y + \|K\|\|(u^{\alpha,\delta}-R u^{\alpha,\delta})\|_p \right) \\
    & \displaystyle  \leq C \left( \TV^{k_2} (u^{\alpha,\delta}) + \| K u^{\alpha,\delta}\|_Y \right).
    \end{array}
  \]
  Now, the last expression is bounded due to
  boundedness of $\TV^{k_2}(u^{\alpha,\delta})$ and equi-coercivity of $\seq{S_{f^\delta}}$.
  Hence, %
  $\seq{\TV^{k_1}(u^{\alpha,\delta})}$ is always bounded and, again using the equi-coercivity of $\seq{S_{f^\delta}}$ and the techniques
  in the proof of Theorem~\ref{thm:general_reg_existence_linear}, one sees
  that with possible shifts in $\ker(K) \cap \poly^{k_1-1}$, one can achieve that
  $\seq{u^{\alpha,\delta}}$ is bounded
  in $\BV^{k}(\Omega)$. Therefore, by continuous embedding and reflexivity, it admits a weak accumulation point in $L^p(\Omega)$.
  
  Next, let $u^*$ be a $L^p$-weak accumulation point associated with
  $\seq{\delta_n}$, $\delta_n \to 0$ as well as the corresponding
  parameters $\seq{\alpha_n}=\seq{(\alpha_{1,n},\alpha_{2,n})}$. Then,
  $S_{f^\dagger}(Ku^*) \leq \liminf_{n \to \infty}
  S_{f^{\delta_n}}(Ku^{\alpha_n,\delta_n}) = 0$
  by convergence of $S_{f^\delta}$ to $S_{f^\dagger}$, so
  $Ku^* = f^\dagger$.  Moreover,
  \begin{eqnarray*}
    \fl
  \left( \alpha_1 ^\dagger \TV^{k_1} + \alpha_2^\dagger \TV^{k_2} \right)(u^*) 
  & \leq \liminf_{n \to \infty} \ \tilde{\alpha}_{1,n} \TV^{k_1}(u^{\alpha_n,\delta_n})  + \liminf_{n \to \infty} \  \tilde{\alpha}_{2,n} \TV^{k_2}(u^{\alpha_n,\delta_n}) \\
   & \leq \liminf_{n \rightarrow \infty}
  \left(  \tilde{\alpha}_{1,n} \TV^{k_1} +  \tilde{\alpha}_{2,n} \TV^{k_2} \right)(u^{\alpha_n,\delta_n}) \\
  & \leq  \left( \alpha_1 ^\dagger \TV^{k_1} + \alpha_2^\dagger \TV^{k_2} \right)(u^\dagger),
  \end{eqnarray*}
  hence, $u^*$ is a minimum-$(\alpha_1 ^\dagger \TV^{k_1} + \alpha_2^\dagger \TV^{k_2})$-solution.
  In particular,
  \[\left( \alpha_1 ^\dagger \TV^{k_1} + \alpha_2^\dagger \TV^{k_2} \right)(u^*) =  \left( \alpha_1 ^\dagger \TV^{k_1} + \alpha_2^\dagger \TV^{k_2} \right)(u^\dagger), \] so
  \[ \lim_{n \to \infty}
    \left( \tilde{\alpha}_{1,n}  \TV^{k_1} +  \tilde{\alpha}_{2,n} \TV^{k_2} \right)(u^{\alpha_n,\delta_n})=  \left( \alpha_1 ^\dagger \TV^{k_1} + \alpha_2^\dagger \TV^{k_2} \right)(u^\dagger). \]
  Finally, each sequence of $\seq{\delta_n}$, $\delta_n \to 0$
  contains another subsequence (not relabelled) %
  for which
  $( \tilde{\alpha}_{1,n}  \TV^{k_1} +  \tilde{\alpha}_{2,n} \TV^{k_2} )(u^{\alpha_n,\delta_n}) \to ( \alpha_1 ^\dagger \TV^{k_1} + \alpha_2^\dagger \TV^{k_2} )(u^\dagger)$ as $n \to \infty$,
  so we have $( \tilde{\alpha}_1  \TV^{k_1} +  \tilde{\alpha}_2 \TV^{k_2} ) (u^{\alpha,\delta}) \to ( \alpha_1 ^\dagger \TV^{k_1} + \alpha_2^\dagger \TV^{k_2} )(u^\dagger)$ as $\delta \to 0$.
\end{proof}

\begin{remark}
  \mbox{}
  \begin{itemize}
   	 \item Theorem \ref{thm:sum_tvk_reg_convergence} shows that, with the additive combination
   	 higher-order $\TV$ functionals, the maximum of the parameters plays the role of the
   	 regularisation parameter. The regularity assumption on $u_0$ such that $Ku_0 = f^\dagger$
   	 on the other hand depends on whether some parameters converge to zero faster than 
   	 the maximum or not. Assuming for instance that $\alpha_2 /\max\{ \alpha_1,\alpha_2\} \rightarrow 0$
   	 leads to the weaker $\BV^{k_1}$-regularity requirement for $u_0$.
      \item
    Although~\eqref{eq:tvk-multiorder-tihk} incorporates multiple orders,
    a solution is always contained in $\BV^{k_2}(\Omega)$. Since
    $k_2 \geq 2$, this space is always contained in 
    $\Hspace[1]{1}{\Omega}$, so jump discontinuities cannot appear.
    One can observe that for numerical solutions, this is reflected in
    blurry reconstructions of edges while higher-order smoothness
    is usually captured quite well, see Figure~\ref{fig:second-order-reg}
    (c).
  \item Naturally, it is also possible to consider the weighted sum
  	of more than two $\TV$-type functionals for regularisation, i.e., 
    \begin{equation}
      \label{eq:tikh-multi-order-sum}
      \min_{u \in \LPspace{p}{\Omega}} S_f(Ku) +
      \Bigl(\sum_{i=1}^m \alpha_i \TV^{k_i}\Bigr)(u).
    \end{equation}
    with orders $k_1,\ldots,k_m \geq 1$ and weights $\alpha_1,\ldots,\alpha_m>0$. 
    Solutions then exist, for appropriate $p$, in the space
    $\BV^{k}(\Omega)$ for $k = \max\sett{k_1,\ldots,k_m}$.
  \end{itemize} 
  \end{remark}
 \paragraph*{Optimality conditions.} As for $\TV^k$, one can also consider optimality conditions for variational problems with $\alpha_1\TV^{k_1} + \alpha_2\TV^{k_2}$ as regularisation. Again, in the case that $Y$ is a Hilbert space, $q=2$ and $S_f(v) = \frac{1}{2}\norm[Y]{v-f}^2$,
  one can argue according to Proposition~\ref{prop:tv^k-tikh-optim} and obtain that
  $u^*$ is optimal for~\eqref{eq:tvk-multiorder-tihk} if and only if
  \[
  K^*(f - Ku^*) \in \subgrad\bigl( \alpha_1\TV^{k_1} + \alpha_2\TV^{k_2}
  \bigr)\bigl(u^* \bigr)
  \]
or, equivalently, 
  \[
  u^* \in \subgrad\bigl( (\alpha_1\TV^{k_1} + \alpha_2\TV^{k_2})^* 
  \bigr)\bigl(K^*(f - Ku^*) \bigr).
  \]
  A difficulty with a further specification of these statements, however, is that it is not
  immediate that either the subdifferential is additive in this situation or that the dual of the 
  sum of the $\TV^{k_i}$ equals the infimal convolution of the duals (see Definition~\ref{def:infconv} in the next subsection for a definition of the infimal convolution). A possible remedy is to consider the original minimisation problem in the space $\BV^{k_1}(\Omega)$ instead, such that $\TV^{k_1}$ becomes continuous. This, however, yields subgradients in the dual of $\BV^{k_1}(\Omega)$ instead of $L^{p*}(\Omega)$ making the optimality conditions again difficult to interpret.

\paragraph*{A-priori estimates.} In order to obtain a bound on a
solution $u^*$ for a quadratic Hilbert-norm discrepancy, i.e.,
$S_f(v) = \frac12\norm[Y]{v - f}^2$, $Y$ Hilbert space, on can proceed
analogously to Remark~\ref{rem:tv^k-tikh-hilbert-bound}, provided that
$K$ is injective on the space $\poly^{k_1-1}$. We then also arrive
at~\eqref{eq:tv^k-tikh-hilbert-bound}, with $\alpha$ replaced by
$\alpha_1$, $C$ being the coercivity constant for $\TV^{k_1}$ and $c$
the inverse bound for $K$ on $\poly^{k_1-1}$.  Of course, in case $K$
is still injective on the larger space $\poly^{k_2-1}$, the analogous
bound can be obtained with $\alpha_2$ instead of $\alpha_1$ and
respective constants $C, c$. In case of the Kullback--Leibler
discrepancy, i.e., $S_f(v) = \KL(v,f)$, the analogous statements apply
to the estimate~\eqref{eq:tv^k-tikh-kl-bound}.

\paragraph*{Denoising performance.}
Figure~\ref{fig:additive-reg} shows the effect of $\alpha_1\TV +  \alpha_2\TV^2$
regularisation compared to 
pure $\TV$-regularisation.  While staircase artefacts are slightly reduced,
the overall image is more blurry than the one obtained with $\TV$, see Figure~\ref{fig:additive-reg} (b) and (c).
This is expected as additive regularisation inherits the analytical properties of the stronger
regularisation term, hence $\alpha_1\TV +  \alpha_2\TV^2$ is not able to recover jumps. 
The result is not much different when $\norm[\radon]{\laplace\placeholder}$
is used instead of $\TV^2$, see Figure~\ref{fig:additive-reg} (d). Nevertheless,
although not discussed in this paper, the
issue of limited applicability of $\norm[\radon]{\laplace\placeholder}$
for regularisation of general inverse problems, as mentioned in Remark \ref{rem:laplace-tv-reg}, is overcome in an 
additive combination with $\TV$ since the properties of $\TV$ are sufficient to guarantee well-posedness results.

\begin{figure}
  \centering
  \begin{tabular}{c@{\ }c@{\ }c@{\ }c}
    \includegraphics[width=0.22\linewidth]{pics_affine_denoising_noise.png}
    & 
    \includegraphics[width=0.22\linewidth]{pics_affine_denoising_tv1.png}
    &
    \includegraphics[width=0.22\linewidth]{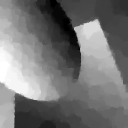}
    & 
    \includegraphics[width=0.22\linewidth]{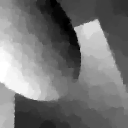}
    \\
    (a) & (b) &
    (c) & (d)
  \end{tabular}
  \caption{Additive multi-order denoising example. (a) Noisy image, (b) 
    regularisation with $\TV$, (c) regularisation with 
    $\alpha_1 \TV+\alpha_2 \TV^2$, 
    (d) regularisation with $\alpha_1 \TV +
    \alpha_2 \norm[\radon]{\laplace \placeholder}$. 
    All parameters are tuned to give highest PSNR
    with respect to
    the ground truth (Figure~\ref{fig:first-order-reg} (a)).}
  \label{fig:additive-reg}
\end{figure}

\subsection{Multi-order infimal convolution}
\label{subsec:tvk-infconv}

In order to overcome the smoothing effect of 
total variation of order two and higher, and additive combinations thereof, another idea
 would be to model an
image $u$ as the sum of a first-order part and a second order part, i.e.,
\[
u =u_1 + u_2
\qquad
\text{with} \qquad
u_1 \in \BV(\Omega), u_2 \in \BV^2(\Omega).
\]
This has originally been proposed in \cite{chambolle1997inftv_mh}, and different variants have subsequently been analysed in \cite{Bergounioux15infcon_analysis} and considered in \cite{Setzer08_mh,setzer2011infimal} in a discrete setting.

Obviously, such a decomposition exists for each $u \in \BV(\Omega)$ but
is, of course, not unique.
The parts $u_1$ and $u_2$ are now regularised 
with first and second-order total 
variation associated with some weights $\alpha_1 > 0$,
$\alpha_2 > 0$. The %
associated %
Tikhonov minimisation problem
reads as
\[
\min_{u_1 \in \BV(\Omega), \atop u_2 \in \BV^2(\Omega)}
\ S_f(K(u_1 + u_2)) + \alpha_1 \TV(u_1) + \alpha_2 \TV^2(u_2).
\]
As we are only interested in $u$, we rewrite this problem as
\begin{equation} \label{eq:infcon_tv_regularization_general}
\min_{u \in \LPspace{p}{\Omega}} \ S_f(Ku)
+ \inf_{u_1 \in \BV(\Omega), \atop
    {u_2 \in \BV^2(\Omega), \atop
    u= u_1 + u_2}} \ \alpha_1 \TV(u_1) + \alpha_2 \TV^2(u_2).
\end{equation}
This regularisation functional is called \emph{infimal convolution}
of $\alpha_1 \TV$ and $\alpha_2 \TV^2$.
\begin{definition}
  \label{def:infconv}
  Let $F_1, F_2: X \to {]{-\infty,\infty}]}$. Then,
  \[
  (F_1 \infconv F_2)(u) = \inf_{u_1 + u_2 = u} \ F_1(u_1) + F_2(u_2)
  \]
  is the \emph{infimal convolution} of $F_1$ and $F_2$.

  An infimal convolution is called \emph{exact}, if for each 
  $u \in X$ there
  is a pair $u_1,u_2 \in X$ with
  \[
  u_1 + u_2 = u \qquad
  \text{and} \qquad
  F_1(u_1) + F_2(u_2) = (F_1 \infconv F_2)(u).
  \]
\end{definition}
The infimal convolution may or may not be exact and may or may not be lower semi-continuous, even if both $F_1$, $F_2$ are lower semi-continuous.
The next proposition, which should be compared to Proposition \ref{prop:abstract-sum-space} above, provides basic properties and the function spaces associated with infimal convolutions. 
\begin{proposition}
  \label{prop:abstract-inf-conv-space}
  Let $\abs[1]{\placeholder}$ and $\abs[2]{\placeholder}$
  be two lower semi-continuous seminorms on the Banach space $X$.
  Then,
  \begin{enumerate}
  \item 
    The functional
    $\abs{\placeholder} = 
    \abs[1]{\placeholder} \infconv \abs[2]{\placeholder}$ is 
    a seminorm on $X$.
  \item
    We have
    \[
    \kernel{\abs[1]{\placeholder}} + \kernel{\abs[2]{\placeholder}} \subset
    \kernel{\abs{\placeholder}}
    \]
    with equality if $\abs[1]{\placeholder} \infconv
    \abs[2]{\placeholder}$ is exact.
  \item
    If $\abs[1]{\placeholder} \infconv \abs[2]{\placeholder}$ is lower semi-continuous, 
	then
    \[
    Y = \set{u \in X}{\abs{u} < \infty}, 
    \qquad \norm[Y]{u} = \norm[X]{u} + \abs{u}
    \]
    constitutes a Banach space.
  \item
    With $Y_i$ the Banach spaces arising from the norms
    $\norm[X]{\placeholder} + \abs[i]{\placeholder}$, $i=1,2$ (see 
    Lemma~\ref{lem:banach_space_plus_lsc_seminorm}), 
    \[ Y_i \hookrightarrow Y \quad \text{for } i=1,2. \]
  \item It holds that  $(\abs[1]{\placeholder} \infconv
    \abs[2]{\placeholder})^ * = \abs[1]{\placeholder}^ * + 
    \abs[2]{\placeholder}^ * $ and if $\abs[1]{\placeholder} \infconv
    \abs[2]{\placeholder}$ is exact, then
    \[ \subgrad (\abs[1]{\placeholder}^ * + \abs[2]{\placeholder}^ * )= \subgrad \abs[1]{\placeholder}^ * + \subgrad \abs[2]{\placeholder}^ * \]
  \end{enumerate}
  
\end{proposition}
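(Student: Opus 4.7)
The plan is to address the five parts sequentially, with (i)--(iv) following from direct infimum manipulations and part (v) carrying most of the analytical weight. For (i), positive homogeneity is immediate by scaling any admissible decomposition, and the triangle inequality arises by observing that for $u = u_1 + u_2$ and $v = v_1 + v_2$ the decomposition $u+v = (u_1+v_1)+(u_2+v_2)$ is admissible, whence $|u+v| \leq |u_1|_1 + |v_1|_1 + |u_2|_2 + |v_2|_2$; taking the infima over the two decompositions separately then yields $|u+v| \leq |u|+|v|$. For (ii), the inclusion $\ker(|\cdot|_1) + \ker(|\cdot|_2) \subset \ker(|\cdot|)$ follows by plugging in the natural decomposition, while under exactness $|u|=0$ produces a decomposition $u=u_1+u_2$ with $|u_1|_1+|u_2|_2=0$, so both summands lie in the respective kernels.

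For (iii), once $|\cdot|$ is known to be a lower semi-continuous seminorm, the argument of Lemma~\ref{lem:banach_space_plus_lsc_seminorm} carries over verbatim. For (iv), for $u \in Y_i$ the admissible decomposition $u+0$ (respectively $0+u$) together with the vanishing of seminorms at $0$ yields $|u| \leq |u|_i$, hence $\|u\|_Y \leq \|u\|_{Y_i}$, which is the claimed continuous embedding.

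For the non-trivial part (v), the identity $(|\cdot|_1 \infconv |\cdot|_2)^* = |\cdot|_1^* + |\cdot|_2^*$ is the classical Fenchel formula, obtained by writing the dual as a double supremum and exchanging operations; exactness plays no role here. For the subdifferential identity, the inclusion $\subgrad|\cdot|_1^*(\xi) + \subgrad|\cdot|_2^*(\xi) \subset \subgrad(|\cdot|_1^* + |\cdot|_2^*)(\xi)$ is immediate from adding the two subgradient inequalities. For the reverse, the plan is to apply the Fenchel inequality to $w \in \subgrad(|\cdot|_1^* + |\cdot|_2^*)(\xi)$, giving
\[
  \langle w, \xi\rangle = |\xi|_1^* + |\xi|_2^* + (|\cdot|_1^* + |\cdot|_2^*)^*(w),
\]
and then use biconjugation to identify $(|\cdot|_1^* + |\cdot|_2^*)^*(w) = (|\cdot|_1 \infconv |\cdot|_2)(w)$. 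Exactness at $w$ provides a decomposition $w = w_1 + w_2$ with $(|\cdot|_1 \infconv |\cdot|_2)(w) = |w_1|_1 + |w_2|_2$, and rearranging the resulting identity as a sum of two nonnegative Fenchel gaps $|\xi|_i^* + |w_i|_i - \langle w_i, \xi\rangle \geq 0$ for $i=1,2$ forces each gap to vanish, which is precisely $w_i \in \subgrad|\cdot|_i^*(\xi)$.

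The main obstacle is the biconjugation step in (v): it requires $(|\cdot|_1 \infconv |\cdot|_2)^{**}(w) = (|\cdot|_1 \infconv |\cdot|_2)(w)$, i.e.\ the infimal convolution must coincide with its lower semi-continuous hull at the point $w$. This is the tacit hypothesis that makes the reverse subdifferential inclusion go through; it holds automatically in the setting of (iii) where lower semi-continuity is assumed, and should be verified or assumed separately in any otherwise self-contained proof.
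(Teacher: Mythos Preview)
Your proposal is correct and follows essentially the same route as the paper's proof, including the Fenchel-identity argument for the nontrivial inclusion in (v): take an element of $\subgrad(\abs[1]{\placeholder}^* + \abs[2]{\placeholder}^*)$, use exactness to split it optimally, and force the two resulting Fenchel inequalities to be equalities. Your explicit flag on the biconjugation step $(\abs[1]{\placeholder} \infconv \abs[2]{\placeholder})^{**} = \abs[1]{\placeholder} \infconv \abs[2]{\placeholder}$ is a careful observation the paper itself passes over silently; the paper simply writes the Fenchel identity with the infimal convolution in place of $(\abs[1]{\placeholder}^* + \abs[2]{\placeholder}^*)^*$ without comment, so your caveat is a genuine refinement rather than a defect in your argument.
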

\begin{proof}
  The seminorm axioms 
  can easily be verified for 
  $\abs{\placeholder}$. If $u = u_1 + u_2$ for
  $u_i \in \kernel{\abs[i]{\placeholder}}$, $i=1,2$,
  then 
  \[
  \abs{u} \leq \abs[1]{u_1} + \abs[2]{u_2} = 0.
  \]
  The converse inclusion follows directly from the exactness.
  The third statement is a direct consequence of 
  Lemma~\ref{lem:banach_space_plus_lsc_seminorm} while the forth
  immediately follows from 
  $\norm[X]{u} + \abs{u} \leq \norm[X]{u} + \abs[i]{u}$ for 
  $i=1,2$.

  For the fifth statement, the assertion on the Fenchel dual follows by
  direct computation. Regarding equality of the subdifferentials, let $w \in X^*$ and
  $u \in X$ such that $u \in \subgrad ( \abs[1]{\placeholder}^ * + \abs[2]{\placeholder}^ *)(w)$. 
  Then, the Fenchel
  identity yields
  \begin{equation}
    \label{eq:inf_conv_fenchel}%
    \fl
    \scp{w}{u} = %
    (\abs[1]{\placeholder} \infconv \abs[2]{\placeholder})(u) +
    \abs[1]{\placeholder}^*(w) + \abs[2]{\placeholder}^*(w) +
    =
    \abs[1]{u_1} + \abs[1]{\placeholder}^*(w)
    + \abs[1]{u_2} + \abs[2]{\placeholder}^*(w)
  \end{equation}
  for the minimising $u_1,u_2 \in X$ with $u_1 + u_2 = u$.
  As by %
  the Fenchel inequality
  \begin{equation}
    \label{eq:inf_conv_fenchel2}%
    \fl
    \scp{w}{u_1} \leq %
    \abs[1]{u_1} + \abs[1]{\placeholder}^*(w)
    \qquad
    \text{and} \qquad
    \scp{w}{u_2} \leq %
    \abs[2]{u_2} + \abs[2]{\placeholder}^*(w),
  \end{equation}
  the equation~\eqref{eq:inf_conv_fenchel} can only be true when
  there is equality in~\eqref{eq:inf_conv_fenchel2}.
  But this means, in turn, that
  $u_1 \in \subgrad \abs[1]{\placeholder}^*(w)$ and $u_2 \in \subgrad
  \abs[2]{\placeholder}^*(w)$.
  Hence, 
  $\subgrad(\abs[1]{\placeholder}^* + \abs[2]{\placeholder}^*)
  \subset \subgrad \abs[1]{\placeholder}^* + \subgrad \abs[2]{\placeholder}^*$.
  The other inclusion holds trivially.
\end{proof}
The statement (v) will be relevant for obtaining optimality conditions and we note that, as can be seen from the proof, it holds true for arbitrary convex functionals, not necessarily seminorms.

The previous proposition shows in particular that lower semi-continuity and exactness of the infimal convolution are important for obtaining an appropriate function space setting. Regarding the infimal convolution of TV functionals, this holds true on $L^p$-spaces as follows.

\begin{proposition}
  Let $\Omega$ be a bounded Lipschitz domain, $1 \leq k_1 < k_2$ and $p \in {[{1,\infty}]}$ with
  $p \leq d/(d-k_1)$ if $k_1 < d$. Then, for $\alpha = (\alpha_1,\alpha_2)$, $\alpha_1 > 0$,
  $\alpha_2 > 0$, the infimal convolution
  \begin{equation}
    \label{eq:tv-inf-conv-reg}
    \mR_\alpha = \alpha_1 \TV^{k_1} \infconv \alpha_2 \TV^{k_2},
  \end{equation}
  is exact and lower semi-continuous in $\LPspace{p}{\Omega}$.
\end{proposition}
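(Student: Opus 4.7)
The plan is to show both exactness and lower semi-continuity by a single compactness argument, exploiting the fact that $\poly^{k_1-1} \subset \poly^{k_2-1}$. Fixing a continuous linear projection $R_1: \LPspace{p}{\Omega} \to \poly^{k_1-1}$, I observe first that for any decomposition $u = u_1 + u_2$ and any $q \in \poly^{k_1-1}$, the shifted pair $(u_1 - q, u_2 + q)$ again decomposes $u$ with
\[
\TV^{k_1}(u_1 - q) = \TV^{k_1}(u_1), \qquad
\TV^{k_2}(u_2 + q) = \TV^{k_2}(u_2),
\]
since $q \in \ker(\TV^{k_1})$ and $q \in \poly^{k_1-1} \subset \poly^{k_2-1} = \ker(\TV^{k_2})$ by Proposition~\ref{prop:td^k-kernel}. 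Hence, in any decomposition we may assume $R_1 u_1 = 0$ without changing the cost.

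For lower semi-continuity, let $u^n \to u$ in $\LPspace{p}{\Omega}$; I may pass to a (not relabelled) subsequence realising $\liminf_{n\to\infty} \mR_\alpha(u^n)$ and assume this liminf is finite. For each $n$, choose a decomposition $u^n = u_1^n + u_2^n$ that is $1/n$-near-optimal and, by the shift just described, satisfies $R_1 u_1^n = 0$. Corollary~\ref{cor:tv^k-coercive-scalar} applied to $\TV^{k_1}$ then yields $\norm[p]{u_1^n} \leq C \TV^{k_1}(u_1^n)$, and the right-hand side is bounded by construction. Combined with Theorem~\ref{thm:l1-tvx-equiv-norm}, the sequence $\seq{u_1^n}$ is bounded in $\BV^{k_1}(\Omega)$.

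Next I extract a convergent subsequence. By the compact embedding $\BV^{k_1}(\Omega) \embed \LPspace{1}{\Omega}$ from Theorem~\ref{thm:tvk-sobolev-embedding}, a subsequence (not relabelled) converges in $\LPspace{1}{\Omega}$ to some $u_1^*$. Using additionally the $\norm[p]{\placeholder}$-bound, reflexivity of $\LPspace{p}{\Omega}$ for $p \in {]{1,\infty}[}$, or weak-$*$ compactness of bounded sets in $L^\infty$ (for $p = \infty$), identifies $u_1^* \in \LPspace{p}{\Omega}$ with weak (or weak-$*$) convergence on a further subsequence; for $p = 1$ the strong $L^1$-convergence already suffices. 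Consequently $u_2^n = u^n - u_1^n \to u - u_1^* =: u_2^*$ in the same sense, so $u = u_1^* + u_2^*$ and the lower semi-continuity of $\TV^{k_1}$ and $\TV^{k_2}$ (Proposition~\ref{prop:tdk_semi_norm}) gives
\[
\mR_\alpha(u) \leq \alpha_1 \TV^{k_1}(u_1^*) + \alpha_2 \TV^{k_2}(u_2^*)
\leq \liminf_{n \to \infty} \ \bigl(\alpha_1 \TV^{k_1}(u_1^n) + \alpha_2 \TV^{k_2}(u_2^n) \bigr) = \liminf_{n \to \infty} \ \mR_\alpha(u^n),
\]
which is the desired inequality.

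Exactness follows from the same argument applied to a fixed $u \in \LPspace{p}{\Omega}$ with $\mR_\alpha(u) < \infty$: starting from a minimising sequence of decompositions $u = u_1^n + u_2^n$ with $R_1 u_1^n = 0$, the same compactness yields an accumulation point $(u_1^*, u_2^*)$ with $u_1^* + u_2^* = u$ realising the infimum. The principal obstacle to handle carefully is the overlap of the kernels of $\TV^{k_1}$ and $\TV^{k_2}$, which a priori obstructs coercivity of the decomposition; the normalisation $R_1 u_1^n = 0$ via the smaller kernel $\poly^{k_1-1}$, made possible by the inclusion $\poly^{k_1-1} \subset \poly^{k_2-1}$, is the technical device that resolves this and enables the Poincaré--Wirtinger estimate to produce a bounded minimising (respectively recovery) sequence in $\BV^{k_1}(\Omega)$.
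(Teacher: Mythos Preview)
Your proof is correct and follows essentially the same approach as the paper: normalise the first component via a projection onto $\poly^{k_1-1}$ (using that $\poly^{k_1-1}\subset\poly^{k_2-1}$), apply the Poincar\'e--Wirtinger estimate from Corollary~\ref{cor:tv^k-coercive-scalar} to bound $\seq{u_1^n}$ in $\BV^{k_1}(\Omega)$, extract a weakly convergent subsequence via the embedding of Theorem~\ref{thm:tvk-sobolev-embedding}, and conclude by lower semi-continuity of $\TV^{k_1}$ and $\TV^{k_2}$; exactness then follows from the constant sequence. The only cosmetic difference is that the paper first reduces to $p<\infty$ by continuous embedding, whereas you handle the cases $p=1$, $1<p<\infty$, $p=\infty$ separately.
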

\begin{proof}
  By continuous embedding, we may assume without loss of generality that $p < \infty$.
  Take a sequence $\seq{u^n}$ converging to some $u$ in $\LPspace{p}{\Omega}$ for which
  $\liminf_{n \to \infty} \ \mR_\alpha(u^n)
  < \infty$. %
For each $n$, we can select $u_1^n, u_2^n \in \BV^{k_1}(\Omega)$ such that $u^n = u_1^n + u_2^n$,
\[ \alpha_1 \TV^{k_1}(u_1^n) + \alpha_2 \TV^{k_2} (u_2^n) \leq \left( \alpha_1 \TV^{k_1} \infconv \alpha_2 \TV^{k_2} \right) (u^n)  + \frac1n, \]
and $u_1^n$ is in the complement of $\ker(\TV^{k_1})$ in the sense that $Ru_1^n = 0$
for $R: \LPspace{p}{\Omega} \to \ker(\TV^{k_1})$ a linear and continuous projection.
The latter condition can always be satisfied since
both $\TV^{k_1}$ and $\TV^{k_2}$ are invariant on $\ker(\TV^{k_1})$. %
Now, by coercivity of $\TV^{k_1}$ as in Corollary \ref{cor:tv^k-coercive-scalar}, we get that $\seq{u^n_1}$ is bounded in $\BV^{k_1}(\Omega)$. Hence, by the embedding of $\BV^{k_1}(\Omega)$ in either $\LPspace{\infty}{\Omega}$ or $\LPspace{d/(d-k_1)}{\Omega}$ in case of $k_1 < d$ as in Theorem \ref{thm:tvk-sobolev-embedding}, the choice of $p$ and convergence of $\seq{u^n}$ in $\LPspace{p}{\Omega}$, we can extract (non-relabelled) subsequences of $\seq{u_1^n}$ and $\seq{u_2^n}$ converging weakly to some $u_1$ and $u_2$ in $\LPspace{p}{\Omega}$, respectively, such that $u = u_1 + u_2$. Thus, lower semi-continuity of both $\TV^{k_1}$ and $\TV^{k_2}$ implies
\begin{eqnarray*}
\left( \alpha_1 \TV^{k_1} \infconv \alpha_2 \TV^{k_2} \right) (u) 
&\leq \alpha_1 \TV^{k_1}(u_1) + \alpha_2 \TV^{k_2} (u_2) \\
&\leq \liminf_{n \to \infty} \ \left( \alpha_1 \TV^{k_1}(u^n_1) + \alpha_2 \TV^{k_2} (u^n_2) \right) \\
&= \liminf_{n \to \infty} \ \left( \alpha_1 \TV^{k_1} \infconv \alpha_2 \TV^{k_2} \right) (u^n)
\end{eqnarray*}
such that lower semi-continuity holds. Finally, exactness for $u \in \LPspace{p}{\Omega}$ with $\mR_\alpha(u)<\infty $ follows from choosing $\seq{u^n}$
as the sequence that is constant $u$.
\end{proof}
Given this, the special case $\abs[i]{\placeholder} = \alpha_i \TV^{k_i}$ and $X = \LPspace{1}{\Omega}$ of Proposition \ref{prop:abstract-inf-conv-space} shows that both $\BV^{k_1}(\Omega)$ and $\BV^{k_2}(\Omega)$ are embedded in the Banach space $Y$. Hence, in contrast to the sum of different TV terms, their infimal convolution allows to recover jumps whenever $k_1=1$, independent of $k_2$. In fact, as the following proposition shows, the space $Y$ is even equivalent to the BV space corresponding to the lowest order, in particular to $\BV(\Omega)$ for $k_1 = 1$. Again, the result should be compared to Theorem \ref{thm:tvk_sum_poincare} above.
\begin{theorem} \label{thm:tvk_infcon_poincare}
  Let $1 \leq k_1 < k_2$, $\alpha_1 > 0$, $\alpha_2 > 0$,
  $\Omega $ be a bounded Lipschitz domain, and $Y$ be the Banach space associated with $X = \LPspace{1}{\Omega}$ and
  total-variation infimal
  convolution according to~\eqref{eq:tv-inf-conv-reg}. %
  Then,
  \[ Y = \BV^{k_1}(\Omega) \]
  in the sense of Banach space equivalence, and 
  for $p \in [1,\infty]$, $p \leq d/(d-k_1)$ if $k_1 < d$,
  and for $R: \LPspace{p}{\Omega} \to \kernel{\TV^{k_2}}$ a linear, continuous projection there exists a $C > 0$ such that
  \begin{equation}
    \label{eq:tvk-inf-conv-coercivity}
    \norm[p]{u - Ru} \leq C \min \sett{\alpha_1, \alpha_2}^{-1} (\alpha_1 \TV^{k_1} \infconv \alpha_2 \TV^{k_2})(u)
  \end{equation}
  for all $u \in \LPspace{p}{\Omega}$.
\end{theorem}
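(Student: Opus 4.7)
The plan is to establish the two claims in parallel, since both rely on the same trick of rebalancing a decomposition using the kernel inclusion $\ker(\TV^{k_1}) = \poly^{k_1-1} \subset \poly^{k_2-1} = \ker(\TV^{k_2})$. The embedding $\BV^{k_1}(\Omega) \embed Y$ already comes for free from Proposition~\ref{prop:abstract-inf-conv-space}(iv), so the non-trivial direction for the Banach space equivalence is to bound $\TV^{k_1}(u)$ by a multiple of $\|u\|_Y = \|u\|_1 + \mR_\alpha(u)$.

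For the coercivity estimate, I would start from the exactness of the infimal convolution: given $u \in \LPspace{p}{\Omega}$ with $\mR_\alpha(u) < \infty$, pick $u_1,u_2$ with $u = u_1 + u_2$ and $\mR_\alpha(u) = \alpha_1 \TV^{k_1}(u_1) + \alpha_2 \TV^{k_2}(u_2)$. Fixing a continuous linear projection $R_1: \LPspace{p}{\Omega} \to \ker(\TV^{k_1})$, I would then replace $(u_1,u_2)$ by $(u_1 - R_1u_1, u_2 + R_1u_1)$; since $R_1 u_1 \in \ker(\TV^{k_1}) \subset \ker(\TV^{k_2})$, both $\TV^{k_1}(u_1)$ and $\TV^{k_2}(u_2)$ are unchanged, but now we may assume $R_1 u_1 = 0$. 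Writing $u - Ru = (u_1 - Ru_1) + (u_2 - Ru_2)$, Corollary~\ref{cor:tv^k-coercive-scalar} applied to $u_2$ (the hypothesis $p \leq d/(d-k_1)$ implies $p \leq d/(d-k_2)$ since $k_1 < k_2$) gives $\|u_2 - Ru_2\|_p \leq C \TV^{k_2}(u_2)$, while for the other term I would estimate $\|u_1 - Ru_1\|_p \leq (1 + \|R\|) \|u_1\|_p = (1 + \|R\|) \|u_1 - R_1 u_1\|_p \leq C \TV^{k_1}(u_1)$, again by Corollary~\ref{cor:tv^k-coercive-scalar}. Factoring out $\min\{\alpha_1,\alpha_2\}^{-1}$ yields the claim.

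For the Banach space equivalence, the same rebalanced decomposition will be used: after arranging $R_1 u_1 = 0$, I would estimate
\[
\TV^{k_1}(u) \leq \TV^{k_1}(u_1) + \TV^{k_1}(u_2) \leq \TV^{k_1}(u_1) + \TV^{k_1}(u_2 - R_2 u_2) + \TV^{k_1}(R_2 u_2),
\]
where $R_2: \LPspace{p}{\Omega} \to \ker(\TV^{k_2})$ is a fixed continuous projection. The middle term is controlled by $C\TV^{k_2}(u_2)$ via Lemma~\ref{lem:tv-k1_tv-k2_coercivity_estimate}. The last term lives in the finite-dimensional space $\poly^{k_2-1}$, so norm equivalence gives $\TV^{k_1}(R_2 u_2) \leq C \|R_2 u_2\|_1 \leq C\|u_2\|_1 \leq C(\|u\|_1 + \|u_1\|_1)$, and $\|u_1\|_1 \leq C\TV^{k_1}(u_1)$ because $R_1 u_1 = 0$. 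Collecting terms produces $\TV^{k_1}(u) \leq C(\|u\|_1 + \mR_\alpha(u))$ so that $\BV^{k_1}(\Omega) = Y$ isomorphically.

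The main obstacle I expect is bookkeeping rather than anything deep: the two projections $R$, $R_1$, $R_2$ onto distinct (but nested) polynomial subspaces must be handled carefully so that the argument does not depend on any compatibility assumption between them, and one has to be careful that the rebalancing trick leaves the exact minimisers of the infimal convolution admissible. A minor sanity check worth recording is that the estimate is independent of the particular choice of $R$: if $R'$ is another projection onto $\ker(\TV^{k_2})$, then $Ru - R'u = (R-R')(u - Ru)$ because both projections fix $\ker(\TV^{k_2})$ pointwise, hence the bound transfers from $R$ to $R'$ up to a multiplicative constant.
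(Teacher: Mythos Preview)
Your proposal is correct and takes a genuinely different route from the paper.

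The paper does not exploit exactness of the infimal convolution. For the norm equivalence it first proves, by a contradiction/compactness argument in the spirit of Proposition~\ref{prop:tv^k-coercive}, the intermediate inequality
\[
  \TV^{k_1}(u) \leq C\bigl(\norm[1]{u} + \TV^{k_1}(u - Rw)\bigr)
\]
valid for \emph{all} $u,w \in \BV^{k_1}(\Omega)$, then inserts $\TV^{k_1}(u-Rw) \leq \TV^{k_1}(u-w) + \TV^{k_1}(w-Rw)$ and Lemma~\ref{lem:tv-k1_tv-k2_coercivity_estimate}, and finally takes the infimum over $w$. For the coercivity estimate it again argues by contradiction and compact embedding, exactly as in Proposition~\ref{prop:tv^k-coercive}. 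By contrast, you fix the exact decomposition $u = u_1 + u_2$ from the start, rebalance it so that $R_1 u_1 = 0$, and estimate each piece separately via Corollary~\ref{cor:tv^k-coercive-scalar} and Lemma~\ref{lem:tv-k1_tv-k2_coercivity_estimate}. Your argument is more direct and constructive (no compactness, no contradiction), at the price of relying on the previously established exactness of the infimal convolution; the paper's route is self-contained in this respect and produces the intermediate two-variable estimate~\eqref{eq:inf-conv-norm-eq} as a by-product, which is of some independent interest. Both approaches hinge on Lemma~\ref{lem:tv-k1_tv-k2_coercivity_estimate} and the kernel inclusion $\kernel{\TV^{k_1}} \subset \kernel{\TV^{k_2}}$.
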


\begin{proof}
  We first show the claimed norm equivalence. For this purpose, note that one estimate corresponds to 
  the fourth statement in Proposition~\ref{prop:abstract-inf-conv-space}.
  For the converse estimate, let 
  $u \in \BV^{k_1}(\Omega)$ and $R: \LPspace{1}{\Omega} \to 
  \kernel{\TV^{k_2}}$ be a projection. Then,
  \begin{equation}
    \label{eq:inf-conv-norm-eq}%
    \TV^{k_1}(u) \leq C \bigl(\norm[1]{u} + \TV^{k_1}(u - Rw) \bigr)
  \end{equation}
  for $C$ independent of $u,w \in \BV^{k_1}(\Omega)$.
  Indeed, if for $\seq{u^n}$ and $\seq{w^n}$ we have
  $\TV^{k_1}(u^n) = 1$ and $\norm[1]{u^n} \to 0$ as well as
  $\TV^{k_1}(u^n - Rw^n) \to 0$, meaning $u^n \to 0$ in $\LPspace{1}{\Omega}$
  and $\grad^{k_1}(u^n - Rw^n) \to 0$ in 
  $\radonspace{\Omega,\Sym^{k_1}(\RR^d)}$. The latter implies that 
  $\seq{\grad^{k_1}Rw^n}$ is bounded in a finite-dimensional space,
  hence there is a convergent subsequence (not relabelled) with limit
  $v \in \radonspace{\Omega,\Sym^{k_1}(\RR^d)}$. 
  Then, $\grad^{k_1}u^n \to v$ in $\radonspace{\Omega,\Sym^{k_1}(\RR^d)}$
  and the closedness of $\grad^{k_1}$ yields $v = 0$ which is 
  a contradiction to $\TV^{k_1}(u^n) = 1$ for all $n$.
  
  Using this, together with the estimate
  \begin{equation}
    \label{eq:inf-conv-norm-eq2}%
    \TV^{k_1}(w - Rw)
    \leq
    C \TV^{k_2}(w)
  \end{equation}
  from Lemma \ref{lem:tv-k1_tv-k2_coercivity_estimate},
  it holds for
  $u \in \BV^{k_1}(\Omega)$ and $w \in \BV^{k_2}(\Omega)$ that  
  \begin{eqnarray*}
    \TV^{k_1}(u) &\leq C \bigl(
    \norm[1]{u} + \TV^{k_1}(u - Rw) \bigr) \\
    & \leq C \bigl( \norm[1]{u} + \TV^{k_1}(u - w) + \TV^{k_1}(w - Rw)
    \bigr) \\
    &\leq C \bigl(\norm[1]{u} + 
    \TV^{k_1}(u - w) + \TV^{k_2}(w) \bigr).
  \end{eqnarray*}
  Taking the infimum over all $w \in \BV^{k_2}(\Omega)$, adding
  $\norm[1]{u}$ on both sides as well as observing that
  $\TV^{k_1} \infconv \TV^{k_2} \leq \min \sett{\alpha_1,
    \alpha_2}^{-1} (\alpha_1 \TV^{k_1} \infconv \alpha_2 \TV^{k_2})$
  yields
  \[
    \norm[1]{u} + \TV^{k_1}(u) \leq C \bigl(
    \norm[1]{u} + \min \sett{\alpha_1,
      \alpha_2}^{-1} (\alpha_1 \TV^{k_1} \infconv \alpha_2 \TV^{k_2})(u) \bigr),
  \]
  and, consequently, the desired norm estimate.
  Likewise, %
  the estimate
  $\|u-Ru\|_p \leq C (\TV^{k_1} \infconv \TV^{k_2})(u)$ follows in
  analogy to Proposition~\ref{prop:tv^k-coercive} and
  Corollary~\ref{cor:tv^k-coercive-scalar}, which immediately
  gives the claimed
  estimate for arbitrary $\alpha_1 > 0$, $\alpha_2 > 0$.
\end{proof}

\paragraph*{Tikhonov regularisation.}
Again, the second estimate in Theorem \ref{thm:tvk_infcon_poincare} is crucial as it allows to apply the well-posedness result of Theorem \ref{thm:general_reg_existence_linear}. 

\begin{proposition} \label{prop:well_posed_infconv_of_tvk} With $X=L^p(\Omega)$,
  $p \in {]{1,\infty}[}$, $\Omega$ being a bounded Lipschitz domain, $Y$ a Banach space, $K: X \to Y$ linear and continuous, $S_f: Y \to [0,\infty]$ proper, convex lower semi-continuous and coercive,  $1 \leq k_1 < k_2$, $\alpha_1 > 0$, $\alpha_2 > 0$, the Tikhonov minimisation problem
  \begin{equation}
    \label{eq:general_infcon_of_tvk_min}
    \min_{u \in \LPspace{p}{\Omega}} \ S_f(Ku) + \left(\alpha_1 \TV^{k_1} \infconv \alpha_2 \TV^{k_2}\right) (u).
  \end{equation}
  is well-posed in the sense of Theorem \ref{thm:general_reg_existence_linear} whenever $p \leq d/(d-k_1)$ if $k_1 < d$.
\end{proposition}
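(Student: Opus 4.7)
The plan is to apply the general well-posedness result of Theorem~\ref{thm:general_reg_existence_linear} to the seminorm $\abs{\placeholder} = \alpha_1 \TV^{k_1} \infconv \alpha_2 \TV^{k_2}$ on $X = \LPspace{p}{\Omega}$. Since all assumptions on $Y$, $K$ and $S_f$ are directly inherited from the hypotheses, the work reduces to verifying the properties of $\abs{\placeholder}$.

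First I would check that $\abs{\placeholder}$ is a lower semi-continuous seminorm on $\LPspace{p}{\Omega}$. This is exactly the content of the proposition preceding Theorem~\ref{thm:tvk_infcon_poincare}: under the assumption $p \leq d/(d-k_1)$ (if $k_1 < d$), the infimal convolution $\alpha_1 \TV^{k_1} \infconv \alpha_2 \TV^{k_2}$ is exact and lower semi-continuous on $\LPspace{p}{\Omega}$. Combined with statement (i) of Proposition~\ref{prop:abstract-inf-conv-space}, it is also a seminorm.

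Next I would identify the kernel. By exactness and Proposition~\ref{prop:abstract-inf-conv-space}(ii), together with the inclusion $\kernel{\TV^{k_1}} = \poly^{k_1-1} \subset \poly^{k_2-1} = \kernel{\TV^{k_2}}$ from Proposition~\ref{prop:td^k-kernel}, we have
\[
\kernel{\abs{\placeholder}} = \kernel{\TV^{k_1}} + \kernel{\TV^{k_2}} = \poly^{k_2-1},
\]
which is finite-dimensional. Hence assumption (i) of Theorem~\ref{thm:general_reg_existence_linear} is satisfied, and one can take $R: \LPspace{p}{\Omega} \to \poly^{k_2-1}$ to be any linear continuous projection onto the kernel.

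Finally, the required coercivity estimate $\norm[p]{u - Ru} \leq C \abs{u}$ is precisely what Theorem~\ref{thm:tvk_infcon_poincare} provides, under the hypothesis $p \leq d/(d-k_1)$ in case $k_1 < d$. All prerequisites of Theorem~\ref{thm:general_reg_existence_linear} are therefore met, and the well-posedness of~\eqref{eq:general_infcon_of_tvk_min} follows. I do not expect any significant obstacle, as the essential analytic work (exactness, lower semi-continuity, and the Poincaré-type inequality for the infimal convolution) has already been carried out in the preceding results; the proof is essentially a bookkeeping assembly of these ingredients into the abstract framework.
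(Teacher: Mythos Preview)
Your proposal is correct and matches the paper's approach exactly: the paper does not give a separate proof but states that the coercivity estimate of Theorem~\ref{thm:tvk_infcon_poincare} is precisely what allows one to invoke Theorem~\ref{thm:general_reg_existence_linear}, and your write-up fills in the straightforward verification of the remaining hypotheses (lower semi-continuity and exactness of the infimal convolution, finite-dimensionality of the kernel $\poly^{k_2-1}$).
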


Compared to the sum of different TV terms, we see that now the necessary coercivity estimate incorporates a projection to the larger kernel of $\TV^{k_2}$ and an $L^p$ norm with a smaller exponent corresponding to $\TV^{k_1}$. Hence, in view of the assumptions of Theorem \ref{thm:general_reg_existence_linear}, the infimal convolution of $\TV^{k_1}$ and $\TV^{k_2}$ inherits the worst properties of the two summands, i.e., the ones that are more restrictive for applications in an inverse problems context.
Nevertheless, such a slightly more restrictive assumption on the continuity of the forward operator is compensated by the fact that the infimal convolution with $k_1 =1$ allows to reconstruct jumps. In addition, each solution $u^*$ of a Tikhonov functional admits an optimal decomposition 
$u^* = u_1^* + u_2^*$ with $u_i^* \in\BV^{k_i}(\Omega)$, $i=1,2$, which follows from the
  exactness of the infimal convolution.

Regarding the convergence result of Theorem \ref{thm:tvk_reg_convergence} and the rates of Proposition \ref{prop:tvk_reg_convergence_rate}, again a direct extension to regularisation with $\alpha_1 \TV^{k_1} \infconv \alpha_2 \TV^{k_2}$ can be obtained by regarding the weights $\alpha_1,\alpha_2$ to be fixed and introducing a additional factor $\alpha>0$ for both terms, which then acts as the regularisation parameter. Considering the limiting behaviour for both weights converging to zero, a counterpart of Theorem  \ref{thm:sum_tvk_reg_convergence} can be obtained as follows. There, we allow also for infinite weights $\alpha_i$, i.e., for $\alpha_i=\infty$, we set $\alpha_i \TV^{k_i}(u) = 0$ if $ u \in {\ker(\TV^{k_i})}$ and $\alpha_i \TV^{k_i}(u) = \infty$ else. We first need a lower semi-continuity result.

\begin{lemma}
  \label{lem:inf_conv_lim_inf}
  Let $\Omega$ be a bounded Lipschitz domain,
  $p \in {[{1,\infty}[}$ with $1 \leq p \leq d/(d-k_1)$ if $k_1 < d$,
  $\seq{(\alpha_{1,n},\alpha_{2,n})}$ be a sequence of positive parameters converging to some $(\alpha^\dagger_1,\alpha_2^ \dagger)\in {]{0,\infty}]}^2 $ and $\seq{u^n}$ be a sequence in $L^p(\Omega)$  weakly converging to $u^* \in L^p(\Omega)$. Then,
  \[
  \left( \alpha_1 ^\dagger \TV^{k_1} \infconv \alpha_2^\dagger \TV^{k_2} \right)(u^*) 
    \leq  \liminf_{n \rightarrow \infty} \ 
  \left(  \alpha_{1,n} \TV^{k_1} \infconv  \alpha_{2,n} \TV^{k_2} \right)(u^n) .
  \]
\end{lemma}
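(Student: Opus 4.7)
The plan is a standard direct-method argument: exploit the exactness of the infimal convolution to fix a minimising decomposition of each $u^n$, extract strongly/weakly convergent subsequences of the parts, and then pass to the limit with lower semi-continuity. The only subtlety is the handling of possibly infinite weights.

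First I would assume without loss of generality that $\liminf_{n\to\infty}(\alpha_{1,n}\TV^{k_1}\infconv \alpha_{2,n}\TV^{k_2})(u^n)<\infty$ and, passing to a subsequence, that this liminf is attained as a limit $L$. By the exactness established above for the infimal convolution, I pick, for each $n$, a decomposition $u^n=u_1^n+u_2^n$ with $u_i^n\in \BV^{k_i}(\Omega)$ satisfying
\[
\alpha_{1,n}\TV^{k_1}(u_1^n)+\alpha_{2,n}\TV^{k_2}(u_2^n) = (\alpha_{1,n}\TV^{k_1}\infconv\alpha_{2,n}\TV^{k_2})(u^n).
\]
Using that $\ker(\TV^{k_1})\subset \ker(\TV^{k_2})$ (both are polynomial spaces with $k_1<k_2$), I may, by shifting $u_1^n$ by elements of $\ker(\TV^{k_1})$ and compensating in $u_2^n$, additionally arrange $Ru_1^n=0$ for a fixed continuous linear projection $R:L^p(\Omega)\to\ker(\TV^{k_1})$, without changing either $\TV^{k_i}$-value.

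Since $\alpha_{1,n}\to\alpha_1^\dagger>0$ (possibly $=\infty$), eventually $\alpha_{1,n}\geq \tfrac{1}{2}\min\{\alpha_1^\dagger,1\}>0$, so $\seq{\TV^{k_1}(u_1^n)}$ is bounded; analogously $\seq{\TV^{k_2}(u_2^n)}$ is bounded. The coercivity estimate from Corollary \ref{cor:tv^k-coercive-scalar} together with $Ru_1^n=0$ gives $\norm[p]{u_1^n}\leq C\TV^{k_1}(u_1^n)$, so $\seq{u_1^n}$ is bounded in $\BV^{k_1}(\Omega)$. The Sobolev embedding of Theorem \ref{thm:tvk-sobolev-embedding} and the hypothesis $p\leq d/(d-k_1)$ provide compactness in $L^p(\Omega)$ for $p$ strictly below the critical exponent (and for $p$ equal to the critical exponent one still has weak, resp.\ weak*, relative compactness), so along a further subsequence $u_1^n\to u_1^*$ in $L^p$ (or at least weakly). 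Since $u^n\rightharpoonup u^*$, it follows that $u_2^n=u^n-u_1^n$ converges at least weakly in $L^p$ to $u_2^*:=u^*-u_1^*$, hence $u^*=u_1^*+u_2^*$.

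Finally, lower semi-continuity of $\TV^{k_1}$ and $\TV^{k_2}$ with respect to weak $L^p$-convergence (since both are suprema of continuous linear functionals) yields $\TV^{k_i}(u_i^*)\leq \liminf_n \TV^{k_i}(u_i^n)$. Multiplying by $\alpha_i^\dagger$ and summing gives
\[
(\alpha_1^\dagger\TV^{k_1}\infconv \alpha_2^\dagger\TV^{k_2})(u^*)\leq \alpha_1^\dagger\TV^{k_1}(u_1^*)+\alpha_2^\dagger\TV^{k_2}(u_2^*)\leq \liminf_{n\to\infty}\bigl(\alpha_{1,n}\TV^{k_1}(u_1^n)+\alpha_{2,n}\TV^{k_2}(u_2^n)\bigr)=L,
\]
which is the claim. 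The main obstacle is the case $\alpha_i^\dagger=\infty$: there the product $\alpha_i^\dagger\TV^{k_i}(u_i^*)$ must be read with the convention $\infty\cdot 0=0$, and this is consistent because boundedness of $\alpha_{i,n}\TV^{k_i}(u_i^n)$ together with $\alpha_{i,n}\to\infty$ forces $\TV^{k_i}(u_i^n)\to 0$, so by lower semi-continuity $\TV^{k_i}(u_i^*)=0$, which both makes $u_i^*\in\ker(\TV^{k_i})$ (so the claimed infimal convolution at $u^*$ is finite only via this part) and justifies the final chain of inequalities.
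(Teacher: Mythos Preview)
Your proposal is correct and follows essentially the same approach as the paper's proof: pass to a subsequence realising the liminf, use exactness to pick optimal decompositions normalised so that $Ru_1^n=0$, bound the weights below uniformly (the paper sets $\hat\alpha_i=\inf_n\alpha_{i,n}>0$), use coercivity and the embedding to extract weakly convergent subsequences of $u_1^n$ and hence $u_2^n$, and conclude by lower semi-continuity, treating the case $\alpha_i^\dagger=\infty$ exactly as you do. The only cosmetic difference is that the paper does not discuss strong versus weak compactness at the critical exponent, since weak $L^p$-convergence already suffices for the lower semi-continuity of $\TV^{k_i}$.
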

\begin{proof}
 By moving to a subsequence, we can assume that $ \left( \alpha_{1,n} \TV^{k_1} \infconv  \alpha_{2,n} \TV^{k_2} \right)(u^n) $ converges to the limes inferior on the right-hand side of the claimed assertion and that the latter is finite.
  Choose $\seq{u_1^n}$, $\seq{u_2^n}$ sequences such that for each $n$, we have $u_1^n + u_2^n = u^n$, %
  $\left( \alpha_{1,n} \TV^{k_1} \infconv  \alpha_{2,n} \TV^{k_2} \right)(u^n) = \alpha_{1,n} \TV^{k_1} (u_1^n) +  \alpha_{2,n} \TV^{k_2} (u_2^n)$ 
  and $u_1^n$ being
  in a complement of $\ker(\TV^{k_1})$ in the sense that $u_1^n \in \kernel{R}$
  for a linear, continuous projection $R: \LPspace{p}{\Omega} \to \kernel{\TV^{k_1}}$.
  Setting $\hat{\alpha}_i = \inf \ \{ \alpha_{i,n} \}>0$, we obtain
  \[ \hat{\alpha}_1 \TV^{k_1}(u_1^n) + \hat{\alpha}_2 \TV^{k_2}(u_2^n) \leq  \alpha_{1,n} \TV^{k_1}(u_1^n) + \alpha_{2,n} \TV^{k_2}(u_2^n). \]
  In particular, for a constant $C>0$ it holds that
  \[ \norm[p]{u_1^n} \leq C \hat{\alpha}_1 \TV^{k_1} (u_1^n) \leq C (  \alpha_{1,n} \TV^{k_1}(u_1^n) + \alpha_{2,n} \TV^{k_2}(u_2^n) ) ,\]
  which implies that $\seq{u_1^n}$ is bounded in $\BV^{k_1}(\Omega)$. By the embedding of Theorem \ref{thm:tvk-sobolev-embedding} and since $\seq{u^n}$ is convergent, both $\seq{u_1^n}$ and $\seq{u_2^n}$ admit
  subsequences (not relabelled) weakly converging to some $u_1^*$ and $u_2^*$ in
  $L^p(\Omega)$, respectively. 
  Now, in case $\alpha_i^\dagger < \infty$, we can conclude
  \[ \alpha_i^\dagger \TV^{k_i} (u_i^*) \leq \liminf_{n \to \infty} \ \alpha_{i,n} \TV^{k_i} (u_i^n) .\]
  Otherwise, we get by boundedness of $\alpha_{i,n} \TV^{k_i} (u_i^n)$ that $\TV^{k_i} (u_i^n) \rightarrow 0$ and by lower semi-continuity that $\TV^{k_i}(u^*_i) = 0$. Together, this implies
  \begin{eqnarray*}
   \left( \alpha_1 ^\dagger \TV^{k_1} \infconv \alpha_2 ^\dagger \TV^{k_2} \right) (u^*)  \leq 
   \alpha_1 ^\dagger \TV^{k_1} (u_1^*) + \alpha_2 ^\dagger \TV^{k_2}(u_2^*) \\ 
    \leq \liminf_{n \to \infty} \ \left( \alpha_{1,n} \TV^{k_1} (u_1^n) + \alpha_{2,n} \TV^{k_2}(u_2^n) \right) \\
    = \liminf_{n \to \infty} \ \left( \alpha_{1,n}\TV^{k_1} \infconv \alpha_{2,n} \TV^{k_2}\right) (u^n). %
  \end{eqnarray*}
  This implies the desired statement.
\end{proof}
 
\begin{theorem}
  \label{thm:infconv_tvk_reg_convergence}
  In the situation of Proposition~\ref{prop:well_posed_infconv_of_tvk} and
  for $p \in {]{1,\infty}[}$ with $p \leq d/(d - k_1)$ in case of $k_1 < d$, 
  let for each 
$\delta > 0$ the data $f^\delta$ be given such that
$S_{f^\delta}(f^\dagger) \leq \delta$, let %
$\seq{S_{f^\delta}}$ be equi-coercive and converge to
  $S_{f^\dagger}$ for some data $f^\dagger$ in $Y$ in the sense of \eqref{eq:discrepancy_convergence} and $S_{f^\dagger}(v) = 0$ if and only if $v = f^\dagger$.
  
  Choose the parameters $\alpha = (\alpha_1,\alpha_2)$ in dependence of $\delta$ such that
  \[
  \min\{\alpha_1,\alpha_2\} \to 0, \quad
  \frac{\delta}{\min\{\alpha_1,\alpha_2\}} \to 0, \qquad \text{as} \qquad \delta \to 0,
  \]
  and assume that $(\tilde{\alpha}_1,\tilde{\alpha}_2) = (\alpha_1,\alpha_2)/\min\{\alpha_1,\alpha_2\} \rightarrow (\alpha_1^\dagger,\alpha_2^\dagger) \in {]{0,\infty}]}^2$ as $\delta \to 0$.
 Set 
 \[ %
   k = \left\{
     \begin{array}{rl}
       k_1 & \text{if} \ \alpha_1^\dagger < \infty, \\
       k_2 & \text{else},
     \end{array}
   \right.
 \]
  and assume that there exists $u_0 \in \BV^ k(\Omega)$ such that
 $Ku_0 = f^\dagger$.

 Then, up to shifts in $\kernel{K} \cap \poly^{k_2-1}$, any sequence $\seq{u^{\alpha,\delta}}$, with each $u^{\alpha,\delta}$ being a solution to \eqref{eq:general_infcon_of_tvk_min} %
 for parameters $(\alpha_1,\alpha_2)$ and data $f^\delta$, has at least one $L^p$-weak accumulation
  point. Each $L^p$-weak accumulation point is a 
  minimum-$(\alpha_1^\dagger \TV^ {k_1} \infconv \alpha_2^\dagger \TV^ {k_2})$-solution of $Ku = f^\dagger$ and $\lim_{\delta \to 0}
  (\tilde{\alpha}_1\TV^ {k_1} \infconv \tilde{\alpha}_2 \TV^{k_2} )(u^{\alpha,\delta}) = ( \alpha_1^\dagger \TV^ {k_1} \infconv \alpha_2^\dagger \TV^ {k_2} ) (u^\dagger)$.
\end{theorem}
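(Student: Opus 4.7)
The plan is to mirror the proof of Theorem \ref{thm:sum_tvk_reg_convergence} but replace every Poincaré-type and lower-semicontinuity step with its infimal-convolution counterpart (Theorem \ref{thm:tvk_infcon_poincare} and Lemma \ref{lem:inf_conv_lim_inf}). First I would establish existence of a minimum-$(\alpha_1^\dagger\TV^{k_1}\infconv\alpha_2^\dagger\TV^{k_2})$-solution $u^\dagger$ of $Ku=f^\dagger$: the assumed $u_0\in\BV^k(\Omega)$ gives finite value of the limit functional at $u_0$ (using the decomposition $u_0=u_0+0$ if $k=k_1$, and $u_0=0+u_0$ if $k=k_2$, and noting in the latter case that $\tilde\alpha_1^\dagger=\infty$ only penalises non-polynomial parts), so a standard direct method—bounding a minimising sequence via the coercivity of Theorem \ref{thm:tvk_infcon_poincare} and shifting by elements of $\ker(K)\cap\ker(\TV^{k_2})$—delivers such a $u^\dagger$.

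Next, I would exploit optimality of $u^{\alpha,\delta}$ against $u^\dagger$, giving
\[
S_{f^\delta}(Ku^{\alpha,\delta})+(\alpha_1\TV^{k_1}\infconv\alpha_2\TV^{k_2})(u^{\alpha,\delta})\leq \delta+(\alpha_1\TV^{k_1}\infconv\alpha_2\TV^{k_2})(u^\dagger).
\]
Dividing by $\min\{\alpha_1,\alpha_2\}$ and using that $\delta/\min\{\alpha_1,\alpha_2\}\to 0$, both terms on the left individually stay controlled. The discrepancy term then forces $S_{f^\delta}(Ku^{\alpha,\delta})\to 0$. For the regulariser term I bound the right hand side by picking an exact decomposition $u^\dagger=u_1^\dagger+u_2^\dagger$ realising $(\alpha_1^\dagger\TV^{k_1}\infconv\alpha_2^\dagger\TV^{k_2})(u^\dagger)$ and observing that $\tilde\alpha_i\TV^{k_i}(u_i^\dagger)\to\alpha_i^\dagger\TV^{k_i}(u_i^\dagger)$ (with the convention $\infty\cdot 0=0$, since $\alpha_1^\dagger=\infty$ forces $u_1^\dagger\in\ker(\TV^{k_1})$ in any exact decomposition). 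Hence $\limsup_{\delta\to 0}(\tilde\alpha_1\TV^{k_1}\infconv\tilde\alpha_2\TV^{k_2})(u^{\alpha,\delta})\leq(\alpha_1^\dagger\TV^{k_1}\infconv\alpha_2^\dagger\TV^{k_2})(u^\dagger)$.

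To extract a weak accumulation point I combine the coercivity estimate of Theorem \ref{thm:tvk_infcon_poincare} (applied with the normalised weights $(\tilde\alpha_1,\tilde\alpha_2)$, which are bounded below by $1$ so the constant $C/\min\{\tilde\alpha_1,\tilde\alpha_2\}=C$ stays uniform) with a shift argument in $\ker(K)\cap\ker(\TV^{k_2})\subset\ker(K)\cap\poly^{k_2-1}$ exactly as in the proof of Theorem \ref{thm:general_reg_existence_linear}: after subtracting $(\id-P_Z)Ru^{\alpha,\delta}$, the residual $Ru^{\alpha,\delta}$ lies in a complement $Z$ of $\ker(K)\cap\ker(\TV^{k_2})$ on which $K$ is injective, and equi-coercivity of $\seq{S_{f^\delta}}$ together with continuity of $K$ yield a uniform $L^p$-bound. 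By reflexivity we pass to a weak subsequential limit $u^*$. The closing step is to apply Lemma \ref{lem:inf_conv_lim_inf} to get $(\alpha_1^\dagger\TV^{k_1}\infconv\alpha_2^\dagger\TV^{k_2})(u^*)\leq\liminf(\tilde\alpha_{1,n}\TV^{k_1}\infconv\tilde\alpha_{2,n}\TV^{k_2})(u^{\alpha_n,\delta_n})$ and combine with the convergence $S_{f^\dagger}(Ku^*)\leq\liminf S_{f^{\delta_n}}(Ku^{\alpha_n,\delta_n})=0$ to conclude $Ku^*=f^\dagger$ and that $u^*$ is a minimiser. Squeezing lim inf against lim sup then gives full convergence of the functional values, and a standard subsequence-of-subsequence argument upgrades this to the limit along the whole family.

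The main obstacle I expect is the bookkeeping when one of the $\alpha_i^\dagger$ is $+\infty$: both the choice of comparison decomposition and the application of Lemma \ref{lem:inf_conv_lim_inf} have to treat the infinite weight consistently (i.e., restricting admissible decompositions in the limit to those with $u_1^\dagger\in\ker(\TV^{k_1})$) so that the upper bound obtained from $u^\dagger$ genuinely matches the lower-semicontinuous envelope and the minimum solution concept. All other steps are routine adaptations of the additive case, relying on the exactness of the infimal convolution proved earlier.
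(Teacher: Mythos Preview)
Your proposal is correct and follows essentially the same route as the paper's proof: optimality inequality against $u^\dagger$, division by $\min\{\alpha_1,\alpha_2\}$, coercivity via Theorem~\ref{thm:tvk_infcon_poincare} plus shifts in $\ker(K)\cap\poly^{k_2-1}$, and the lower-semicontinuity Lemma~\ref{lem:inf_conv_lim_inf} for the accumulation point. The only stylistic difference is in the $\limsup$ bound for $(\tilde\alpha_1\TV^{k_1}\infconv\tilde\alpha_2\TV^{k_2})(u^\dagger)$: the paper uses monotonicity of the infimal convolution in the weights together with $\tilde\alpha_i\leq(1+\varepsilon)\alpha_i^\dagger$ eventually, then lets $\varepsilon\to 0$, whereas you fix an exact decomposition for the limit weights and pass to the limit in each summand---both arguments are equally valid and handle the $\alpha_i^\dagger=\infty$ case in the same spirit.
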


\begin{proof}
  First note that, with $R:L^p(\Omega) \rightarrow \ker(\TV^{k_2})$ a linear, continuous projection, for any $u \in L^p(\Omega)$, we have
\[ \| u - Ru\|_p \leq C( \TV^{k_1}\infconv \TV^{k_2})(u) \leq C(\alpha^\dagger_1 \TV^{k_1}\infconv \alpha^\dagger_2\TV^{k_2})(u), \]
and by the choice of $k$ as well as $u_0 \in \BV^k(\Omega)$, that
$(\alpha_1^\dagger \TV^{k_1} \infconv \alpha_2^\dagger\TV^{k_2})(u_0) < \infty$.
Hence, as a consequence of Theorem \ref{thm:general_reg_existence_linear}, there exists a minimum-$(\alpha_1^\dagger \TV^ {k_1} \infconv \alpha_2^\dagger \TV^ {k_2})$-solution $u^\dagger \in \BV^k(\Omega)$ to $Ku=f^\dagger$.
  Using optimality of $u^{\alpha,\delta}$
  compared to $u^\dagger$ gives
  \[
    \fl
  S_{f^\delta}(Ku^{\alpha,\delta}) + \left( \alpha_1 \TV^{k_1} \infconv \alpha_2 \TV^{k_2} \right)(u^{\alpha,\delta})
  \leq \delta + \left( \alpha_1 \TV^{k_1} \infconv \alpha_2 \TV^{k_2} \right)(u^\dagger).
  \]
  Now since $\left( \alpha_1 \TV^{k_1} \infconv \alpha_2 \TV^{k_2} \right)(u^\dagger) \leq \min_{i=1,2} \{ \alpha_{i} \TV^{k_i}(u^\dagger)\}$ and  $\min\{\alpha_1,\alpha_2\} \to 0$ 
  as $\delta \rightarrow 0 $, we have that
  $S_{f^\delta}(Ku^{\alpha,\delta}) \to 0$ as $\delta \to 0$. 
  Moreover, as also $\delta/\min\{\alpha_1,\alpha_2\} \to 0$, it follows that
  \begin{eqnarray*}
     \limsup_{\delta \rightarrow 0} \
  \left( \tilde{\alpha}_1 \TV^{k_1} \infconv \tilde{\alpha}_2 \TV^{k_2} \right)(u^{\alpha,\delta}) %
  & \leq  \limsup_{\delta \rightarrow 0} \ \left( \tilde{\alpha}_1 \TV^{k_1} \infconv \tilde{\alpha}_2 \TV^{k_2} \right)(u^\dagger)  \\
  & \leq   (1 + \varepsilon) \left(\alpha_1^ \dagger \TV^{k_1} \infconv \alpha_2^ \dagger \TV^{k_2} \right)(u^\dagger)
  \end{eqnarray*}
  for $\varepsilon > 0$ independent of $u^{\alpha,\delta}$ and letting $\epsilon \rightarrow 0$, we obtain
  \[ \limsup_{\delta \rightarrow 0}
  \left( \tilde{\alpha}_1 \TV^{k_1} \infconv \tilde{\alpha}_2 \TV^{k_2} \right)(u^{\alpha,\delta}) \leq \left( \alpha_1^ \dagger \TV^{k_1} \infconv  \alpha_2^ \dagger \TV^{k_2} \right)(u^\dagger) .\]
  In particular, using~\eqref{eq:tvk-inf-conv-coercivity}, we can conclude that $\seq{u^{\alpha,\delta} - Ru^{\alpha,\delta}}$
  is bounded in $\LPspace{p}{\Omega}$. By introducing appropriate
  shifts in $\kernel{K} \cap \poly^{k_2 - 1}$ as done in Theorem~\ref{thm:general_reg_existence_linear} and using the equi-coercivity of $\seq{S_{f^\delta}}$, one can then achieve that $\seq{u^{\alpha,\delta}}$ is bounded in $\LPspace{p}{\Omega}$ such that by reflexivity, it admits a $L^p$-weak accumulation point.

  Next, let $u^*$ be a $L^p$-weak accumulation point associated with
  $\seq{\delta_n}$, $\delta_n \to 0$ as well as the corresponding
  parameters $\seq{\alpha_n}=\seq{(\alpha_{1,n},\alpha_{2,n})}$. Then,
  $S_{f^\dagger}(Ku^*) \leq \liminf_{n \to \infty} \ 
  S_{f^{\delta_n}}(Ku^{\alpha_n,\delta_n}) = 0$
  by convergence of $S_{f^\delta}$ to $S_{f^\dagger}$, so
  $Ku^* = f^\dagger$.  
  Moreover, employing Lemma~\ref{lem:inf_conv_lim_inf}, we get
  \begin{eqnarray*}
  \left( \alpha_1 ^\dagger \TV^{k_1} \infconv \alpha_2^\dagger \TV^{k_2} \right)(u^*) 
   & \leq  \liminf_{n \to \infty} \ 
  \left(  \tilde{\alpha}_{1,n} \TV^{k_1} \infconv  \tilde{\alpha}_{2,n} \TV^{k_2} \right)(u^{\alpha_n,\delta_n}) \\
  & \leq  \left( \alpha_1 ^\dagger \TV^{k_1} \infconv \alpha_2^\dagger \TV^{k_2} \right)(u^\dagger),
  \end{eqnarray*}
  hence, $u^*$ is a minimum-$ \left( \alpha_1 ^\dagger \TV^{k_1} \infconv \alpha_2^\dagger \TV^{k_2} \right)(u^\dagger)$-solution.
  The remaining assertions follow as in the proof of Theorem \ref{thm:sum_tvk_reg_convergence} by replacing the sum with the infimal convolution.  
\end{proof}

\begin{remark}
  \mbox{}
  \begin{itemize}
  \item It is also possible to construct
    infimal convolutions of more than two $\TV$-type functionals
    and, of course, other functionals than $\TV^k$.
  \item
    Introducing orders $k_1,\ldots,k_m \geq 1$ and
    weights $\alpha_1,\ldots,\alpha_m > 0$, one can consider 
    \begin{equation}
      \label{eq:tikh-multi-order-inf-conv}
      \min_{u \in \LPspace{p}{\Omega}} S_f(Ku)+
      \Bigl(\bigtriangleup_{i=1}^m \alpha_i \TV^{k_i}\Bigr)(u).
    \end{equation}
    Solutions then exist, for appropriate $p$, in the space
   $\BV^{k}(\Omega)$ for $k = \min\sett{k_1,\ldots,k_m}$.
  \item
    The latter is in contrast to the multi-order $\TV$ regularisation 
    \eqref{eq:tvk-multiorder-tihk} where the solution space is
    determined by the highest effective order of differentiation.
    Letting $k_i=1$ for some $i$, the solution space is then
    $\BV(\Omega)$ which allows for discontinuities; a desirable 
    property for image restoration.
  \end{itemize}
\end{remark}

\paragraph*{Optimality conditions.}
Again, in the situation that $Y$ is a Hilbert space, $q=2$ and $S_f(v) = \frac{1}{2}\norm[Y]{v-f}^2$,
we obtain some first-order optimality conditions. Noting that the dual of the infimal convolution of two functions is the
sum of the respective duals, and arguing according to Proposition~\ref{prop:tv^k-tikh-optim}, an
$u^*$ is optimal for~\eqref{eq:general_infcon_of_tvk_min} if and only if
\[
  u^* \in \subgrad\bigl( (\alpha_1\TV^{k_1})^* + (\alpha_2\TV^{k_2})^* 
  \bigr)\bigl(K^*(f - Ku^*) \bigr).
\]
By Proposition~\ref{prop:abstract-inf-conv-space},
the subgradients are additive, so in terms of the normal cones
introduced in Proposition~\ref{prop:tv^k-tikh-optim}, the optimality
condition reads as
\begin{equation}
  \label{eq:tv-inf-conv-optim}
  u^* \in \mN_{\TV^{k_1}} \Bigl( \frac{K^*(f - Ku^*)}{\alpha_1} \Bigr)
  + \mN_{\TV^{k_2}} \Bigl( \frac{K^*(f - Ku^*)}{\alpha_2} \Bigr).
\end{equation}

\paragraph*{A-priori estimates.} Also here, in the above Hilbert space
situation, i.e., $S_f(v) = \frac12 \norm[Y]{v - f}^2$ and $Y$ Hilbert
space, an a-priori bound of solutions $u^*$ can be derived thanks to
the coercivity estimate~\eqref{eq:tvk-inf-conv-coercivity}. One indeed
has
$\norm[p]{u - Ru} \leq \frac1{2\min\sett{\alpha_1,\alpha_2}} C
\norm[Y]{f}^2$ with $R$ and $C$ coming
from~\eqref{eq:tvk-inf-conv-coercivity}. Hence, assuming that $K$ is
injective on $\poly^{k_2 - 1}$, which leads to
$c \norm[p]{Ru} \leq \norm[Y]{KRu}$ for all $u$ and some $c > 0$, one
proceeds analogously to Remark~\ref{rem:tv^k-tikh-hilbert-bound} to
obtain the bound~\eqref{eq:tv^k-tikh-hilbert-bound} with $\alpha$
replaced by $\min\sett{\alpha_1,\alpha_2}$. By analogy, for
$S_f(v) = \KL(v,f)$ being the Kullback--Leibler discrepancy, an
a-priori estimate of the type~\eqref{eq:tv^k-tikh-kl-bound} follows.

Moreover, it is possible to control $w^*$ up to $\poly^{k_1 - 1}$
whenever
$(\alpha_1 \TV^{k_1} \infconv \alpha_2 \TV^{k_2})(u^*) = \alpha_1
\TV^{k_1}(u^* - w^*) + \alpha_2 \TV^{k_2}(w^*)$.
Let, in the following, $C_f \geq 0$ be an
a-priori estimate for the optimal functional value, for instance,
$C_f = \frac12 \norm[Y]{f}^2$ in case of
$S_f(v) = \frac12 \norm[Y]{v-f}^2$, and
$C_f = \KL(Ku^0,f) + (\alpha_1 \TV^{k_1} \infconv \alpha_2\TV^{k_2}) (u^0)$ for a $u^0 \in \BV^{k_1}(\Omega)$
with $\KL(Ku^0,f) < \infty$ in case of $S_f(v) = \KL(v,f)$.
Further, denoting by
$\tilde C > 0$ a constant such that
$\TV^{k_1}(u) \leq \tilde C \bigl(\norm[1]{u} +
\min\sett{\alpha_1,\alpha_2}^{-1} (\alpha_1 \TV^{k_1} \infconv
\alpha_2 \TV^{k_2})(u) \bigr)$ for all $u \in \BV^{k_1}(u)$ (which
exists by virtue of the norm equivalence in
Theorem~\ref{thm:tvk_infcon_poincare}), we see that
$\TV^{k_1}(u^*) \leq \tilde C \bigl( \abs{\Omega}^{1/p} \norm[p]{u^*}
+ \min\sett{\alpha_1,\alpha_2}^{-1} C_f \bigr)$, hence
\[
  \fl
  \alpha_1 \TV^{k_1}(w^*) \leq \alpha_1 \TV^{k_1}(u^* - w^*) +
  \alpha_1 \TV^{k_1}(u^*) + \alpha_2 \TV^{k_2}(w^*) \leq \alpha_1
  \TV^{k_1}(u^*) + C_f.
\]
Consequently, we obtain the bound
  \begin{equation}
  \TV^{k_1}(w^*) \leq \tilde C \abs{\Omega}^{1/p}
  \norm[p]{u^*} + \frac{(\tilde C + 1)C_f}{\min\sett{\alpha_1,\alpha_2}},
\label{eq:tvk-inf-conv-hilbert-bound}  
\end{equation}
which gives an a-priori estimate when plugging in the already-obtained
bound on $\norm[p]{u^*}$.  Moreover, this estimate implies a bound on
$\norm[p]{w^* - Rw^*}$ by the Poincaré--Wirtinger inequality. However,
the norm of $w^*$ can not fully be controlled since adding an element
in $\poly^{k_1 - 1} = \ker(\TV^{k_1})$ to $w^*$ would still realise
the infimum in the infimal convolution. Thus, an estimate of the
type~\eqref{eq:tvk-inf-conv-hilbert-bound} is the best one could
except in the considered setting.

\paragraph*{Denoising performance.}
Figure~\ref{fig:inf-conv-reg} shows that it is indeed beneficial 
for denoising
to regularise with $\alpha_1\TV \infconv \alpha_2\TV^2$ compared to
pure $\TV$-regularisation: Higher-order features as well as edges
are recognised by this image model. Nevertheless, staircase artefacts 
are still
present, see Figure~\ref{fig:inf-conv-reg} (c). Essentially,
this does not
change when the second-order component of the infimal convolution is
replaced, for instance by $\norm[\radon]{\laplace\placeholder}$ as
in Remark~\ref{rem:laplace-tv-reg}, see Figure~\ref{fig:inf-conv-reg} (d).
(For the latter penalty functional, 
basically the same problems as the ones mentioned in 
Remark~\ref{rem:laplace-tv-reg}
appear.)

\begin{figure}
  \centering
  \begin{tabular}{c@{\ }c@{\ }c@{\ }c}
    \includegraphics[width=0.22\linewidth]{pics_affine_denoising_noise.png}
    & 
    \includegraphics[width=0.22\linewidth]{pics_affine_denoising_tv1.png}
    &
    \includegraphics[width=0.22\linewidth]{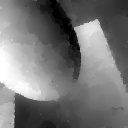}
    & 
    \includegraphics[width=0.22\linewidth]{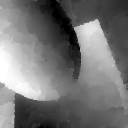}
    \\
    (a) & (b) &
    (c) & (d)
  \end{tabular}
  \caption{Infimal-convolution denoising example. (a) Noisy image, (b) 
    regularisation with $\TV$, (c) regularisation with 
    $\alpha_1 \TV \infconv \alpha_2 \TV^2$, 
    (d) regularisation with $\alpha_1 \TV \infconv 
    \alpha_2 \norm[\radon]{\laplace \placeholder}$. 
    All parameters are tuned to give highest PSNR
    with respect to
    the ground truth (Figure~\ref{fig:first-order-reg} (a)).}
  \label{fig:inf-conv-reg}
\end{figure}

\section{Total generalised variation (TGV)}
\label{sec:tgv}

\subsection{Basic concepts}

As a motivation for TGV, consider the formal predual ball associated
with the infimal convolution $\mR_\alpha = \alpha_1 \TV \infconv \alpha_0 \TV^2$ for
$\alpha = (\alpha_0,\alpha_1)$, $\alpha_0, \alpha_1 > 0$. Then 
\[
\fl
\mR^*_\alpha = (\alpha_1 \TV)^* + (\alpha_0 \TV^2)^*
= \mI_{\closure{\mB}}
\qquad
\text{with}
\qquad
\closure{\mB} = \alpha_1 \closure{\mB_{\TV}} \cap \alpha_0 \closure{\mB_{\TV^2}}
\]
and
\begin{eqnarray*}
  \alpha_1 \mB_{\TV} &= \set{\divergence \varphi_1}{\varphi_1 \in \Ccspace{1}{\Omega,\RR^d}, 
    \ \norm[\infty]{\varphi_1} \leq \alpha_1}, \\
  \alpha_0 \mB_{\TV^2} &= \set{\divergence^2 \varphi_2}{\varphi_2 
    \in \Ccspace{2}{\Omega,\Sym^2(\RR^d)}, 
    \ \norm[\infty]{\varphi_2} \leq \alpha_0}.  
\end{eqnarray*}
Neglecting the closure for a moment, this leads to the predual ball
according to
\begin{equation} \label{eq:infcon_predual_ball}
\fl
\mB = \set{\varphi}{\varphi = \divergence \varphi_1
  = \divergence^2 \varphi_2, \ \varphi_i \in \Ccspace{i}{\Omega,\Sym^i(\RR^d)},
  \ \norm[\infty]{\varphi_i} \leq \alpha_{2-i}, \ i=1,2}.
\end{equation}
Each $\varphi \in \mB$ possesses a representation as an $\infty$-bounded
first and second-order divergence of some $\varphi_1$ and $\varphi_2$. 
However, as the kernel of the
divergence is non-trivial (and even infinite-dimensional for $d \geq 2$),
we can only conclude that $\varphi_1 = \divergence \varphi_2 + \eta$ for some 
$\eta$ with $\divergence \eta = 0$. Enforcing $\eta = 0$ thus gives the
set
\[
\mB_{\TGV^2_\alpha} = \set{\divergence^2 \varphi}{\varphi \in 
  \Ccspace{2}{\Omega,\Sym^2(\RR^d)}, \ \norm[\infty]{\divergence^m \varphi} 
  \leq \alpha_m, \ m=0,1}
\]
which leads, interpreted as a predual ball, to a seminorm which also
incorporates first- and second-order derivatives but is different
from infimal convolution: the total generalised variation 
\cite{bredies2010tgv}.

There is also a primal version of this 
motivation via the $(\TV$-$\TV^2)$-infimal convolution which reads as follows:
Writing
 \[ (\alpha_1 \TV \infconv \alpha_0 \TV^2) (u) = \inf_{v \in \BV^2(\Omega)} \alpha_1 \|\nabla u - \nabla v \| _\M + \alpha_0 \| \nabla^2 v \|_\M \]
 we see that the infimal convolution allows to subtract the a vector field $w = \nabla v$ from the derivative of $u$ at the cost of 
 penalising its derivative $\nabla w= \symgrad w$, where the equality is due to symmetry of the weak Hessian $\nabla ^2v$. While, by the  embedding $\BD(\Omega,\RR^d) \embed L^{d/(d-1)}(\Omega,\RR^d)$, necessarily 
 $w \in \BD(\Omega,\RR^d)$, it is not arbitrary among such functions but still restricted to be the gradient of $v \in \BV^2(\Omega)$.
 Omitting this additional constraint (in the predual version above, this corresponds to enforcing $\eta=0$), we arrive at
   \begin{equation}
   \mR_\alpha(u) = \inf_{w \in \BD(\Omega,\RR^d)} \alpha_1 \|\nabla u - w \| _\M + \alpha_0 \| \symgrad w \|_\M,\label{eq:tgv2_primal}
 \end{equation}
 which is, as will be shown in this section, is an equivalent formulation of the TGV functional.

 \begin{definition}
   \label{def:tgv}
  Let $\Omega \subset \RR^d$ be a domain, $k \geq 1$ and
  $\alpha_0, \ldots,
  \alpha_{k-1} > 0$.
  Then, the \emph{total generalised variation} of order $k$
  with weight $\alpha$ for
  $u \in \LPlocspace{1}{\Omega}$ is defined as the value of the
   functional
  \begin{equation}
    \label{eq:tgv_def}
    \fl
    \TGV^k_\alpha(u) = \sup\ \Bigl\{\int_\Omega u \divergence^k \varphi \dd{x}
    \ \Bigl| \ \varphi \in \Ccspace{k}{\Omega, \Sym^k(\RR^d)}, \ 
    \underbrace{\norm[\infty]{\divergence^m \varphi} \leq 
      \alpha_m}_{m = 0,\ldots, k-1}
    \Bigr\}
  \end{equation}
  which takes the value $\infty$ in case the respective set is unbounded from
  above.

  For symmetric tensors $u \in \LPlocspace{1}{\Omega,\Sym^l(\RR^d)}$
  of order $l \geq 0$, the total generalised variation is given by
  \begin{equation}
    \label{eq:tensor-tgv-def}
    \fl
    \TGV^{k,l}_\alpha(u) = \sup\ \Bigl\{\int_\Omega u \inprod
    \divergence^k \varphi \dd{x}
    \ \Bigl| \ \varphi \in \Ccspace{k}{\Omega, \Sym^{k+l}(\RR^d)}, \ 
    \underbrace{\norm[\infty]{\divergence^m \varphi} \leq 
      \alpha_m}_{m = 0,\ldots, k-1}
    \Bigr\}.
  \end{equation}
  The space
  \begin{eqnarray*}
    \BGV^{k}_\alpha(\Omega, \Sym^l(\RR^d)) = 
    \set{u \in \LPspace{1}{\Omega,\Sym^l(\RR^d)}}{\TGV_\alpha^{k,l}(u) < \infty}, 
    \\
    \norm[\BGV_\alpha^{k,l}]{u} = \norm[1]{u} + \TGV_\alpha^{k,l}(u)
  \end{eqnarray*}
  is called the space of \emph{symmetric tensor fields
    of bounded generalised variation}
  of order $k$ with weight $\alpha$.
  The special case $l=0$ is denoted by
  $\BGV^k_\alpha(\Omega)$.
\end{definition}

\begin{remark}
  \label{rem:tgv_eq_tv}
  For $k=1$ and $\alpha > 0$, the definition coincides, up to a factor,
  with the total deformation of symmetric tensor
  fields of order $l$, i.e., $\TGV_\alpha^{1,l} = \alpha \TD$, in particular
  $\TGV_\alpha^{1,0} = \TGV_\alpha^1 = \alpha \TV$.
  Hence, we can identify the spaces $\BGV_\alpha^{1}(\Omega,\Sym^l(\RR^d)) 
  = \BD(\Omega,\Sym^l(\RR^d))$. In particular,
  $\BGV^{1}_\alpha(\Omega) = \BV(\Omega)$.
\end{remark}

In the following, we will derive some basic properties of the total
generalised variation.

\begin{proposition}
  \label{prop:tgv_basic_prop}
  The following basic properties hold:
  \begin{enumerate}
  \item 
    $\TGV_\alpha^{k,l}$ is a lower semi-continuous seminorm on
    $\LPspace{p}{\Omega,\Sym^l(\RR^d)}$ for each $p \in [1,\infty]$.
  \item
    The kernel satisfies
    $\kernel{\TGV_\alpha^{k,l}} = \kernel{\TD^k}$ 
    for the $k$-th order
    total deformation for symmetric tensor fields of order $l$.
    In particular, $\kernel{\TGV_\alpha^{k,l}}$ is a finite-dimensional
    subspace of polynomials of order less than $k+l$. For $l=0$,
    we have $\kernel{\TGV^k_\alpha} = \poly^{k-1}$.
  \item
    $\BGV^k_\alpha(\Omega,\Sym^l(\RR^d))$ is a Banach space independent
    of $\alpha$.
  \end{enumerate}
\end{proposition}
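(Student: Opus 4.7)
The plan is to treat the three assertions in order, using the pattern already developed for $\TD^k$ in Proposition~\ref{prop:tdk_semi_norm} together with Lemma~\ref{lem:banach_space_plus_lsc_seminorm}.

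For (i), I would argue as for $\TD^k$. For every admissible test field $\varphi \in \Ccspace{k}{\Omega,\Sym^{k+l}(\RR^d)}$ the function $\divergence^k \varphi$ is continuous with compact support, hence in every $\LPspace{p^*}{\Omega,\Sym^l(\RR^d)}$. Thus $u \mapsto \int_\Omega u \inprod \divergence^k \varphi \dd{x}$ is a continuous linear functional on $\LPspace{p}{\Omega,\Sym^l(\RR^d)}$, and $\TGV_\alpha^{k,l}$ is the pointwise supremum of such functionals; this yields convexity and lower semi-continuity immediately. Positive homogeneity and the triangle inequality follow since the admissible set of $\varphi$'s is symmetric (closed under $\varphi\mapsto -\varphi$) and linearity of $\divergence^k$ in $\varphi$ (combined with taking suprema) gives the subadditivity.

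For (ii), the inclusion $\kernel{\TD^k}\subset \kernel{\TGV_\alpha^{k,l}}$ is immediate from the definitions since any $\varphi$ admissible in~\eqref{eq:tensor-tgv-def} is a fortiori admissible in the definition of $\TD^k$ (up to the pointwise-norm-one constraint). For the reverse inclusion, let $u$ satisfy $\TGV_\alpha^{k,l}(u)=0$ and fix an arbitrary $\varphi \in \Ccspace{k}{\Omega,\Sym^{k+l}(\RR^d)}$ with $\norm[\infty]{\varphi}\leq 1$. Since $\varphi$ is compactly supported and $k$-times continuously differentiable, the quantities $M_m = \norm[\infty]{\divergence^m \varphi}$ are all finite for $m=0,\dots,k-1$; hence we can pick $\varepsilon>0$ so small that $\varepsilon M_m \leq \alpha_m$ for every such $m$, making $\varepsilon\varphi$ admissible in~\eqref{eq:tensor-tgv-def}. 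Applying the symmetric pair $\pm\varepsilon\varphi$ and using $\TGV_\alpha^{k,l}(u)=0$ forces $\int_\Omega u \inprod \divergence^k \varphi \dd{x}=0$, so $\TD^k(u)=0$. The polynomial description then follows from Proposition~\ref{prop:td^k-kernel}.

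For (iii), applying Lemma~\ref{lem:banach_space_plus_lsc_seminorm} to the lower semi-continuous seminorm $\TGV_\alpha^{k,l}$ on the Banach space $X=\LPspace{1}{\Omega,\Sym^l(\RR^d)}$ directly gives that $\BGV_\alpha^k(\Omega,\Sym^l(\RR^d))$ is a Banach space. Independence of $\alpha$ (as a set, with equivalent norms) is a scaling argument: for two weight vectors $\alpha,\alpha'\in{]{0,\infty}[}^k$, set $c = \min_{m=0,\dots,k-1} (\alpha_m/\alpha'_m)$, and observe that if $\varphi$ is admissible for $\TGV_{\alpha'}^{k,l}$ then $c\varphi$ is admissible for $\TGV_\alpha^{k,l}$, yielding $c\,\TGV_{\alpha'}^{k,l}\leq \TGV_\alpha^{k,l}$; the reverse bound follows by symmetry, so the associated Banach spaces coincide with equivalent norms.

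The only non-routine step is the reverse kernel inclusion in (ii); the remaining points are essentially a direct transfer of the arguments used for $\TD^k$ together with the scaling observation in (iii).
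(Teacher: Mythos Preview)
Your proposal is correct and follows essentially the same approach as the paper: all three parts use the same ideas (pointwise supremum of continuous linear functionals for (i), Lemma~\ref{lem:banach_space_plus_lsc_seminorm} plus a predual-ball scaling argument for (iii), and reduction to $\symgrad^k u = 0$ in the distributional sense for (ii)). Your treatment of (ii) is in fact more explicit than the paper's, which simply asserts that $\TGV_\alpha^{k,l}(u)=0$ is equivalent to $\int_\Omega u\inprod\divergence^k\varphi\dd{x}=0$ for all test fields; your $\varepsilon$-scaling argument is precisely what justifies this equivalence.
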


\begin{proof}
  Observe that $\TGV_\alpha^{k,l}$ is the seminorm associated with the
  predual ball
  \begin{equation}
    \label{eq:tgv_predual_unit_ball}
    \fl
    \mB_{\TGV_\alpha^{k,l}}
    = \{\divergence^k \varphi \,\bigl|\, \varphi \in \Ccspace{k}{\Omega,
      \Sym^{k+l}(\RR^d)}, \ \norm[\infty]{\divergence^m \varphi} \leq \alpha_m,
    \ m = 0,\ldots, k-1\}.
  \end{equation}
  By definition, each element of $\mB_{\TGV_\alpha^{k,l}}$ can be associated with an
  element of the dual space of $\LPspace{p}{\Omega,\Sym^l(\RR^d)}$,
  so $\TGV_\alpha^{k,l}$ is convex and lower semi-continuous as pointwise
  supremum over a set of linear and continuous functionals.
  The positive homogeneity finally follows from 
  $\lambda \mB_{\TGV_\alpha^{k,l}} \subset \mB_{\TGV_\alpha^{k,l}}$ 
  for each $\abs{\lambda} \leq 1$.
  
  The statement about the kernel of $\TGV_\alpha^{k,l}$ 
  is a consequence of $\TGV^{k,l}_\alpha(u) = 0$
  if and only if $\scp{u}{\divergence^k \varphi} = 0$ for each
  $\varphi \in \Ccspace{k}{\Omega,\Sym^{k+l}(\RR^d)}$ (compare with 
  Proposition~\ref{prop:td^k-kernel}). 

  Finally, $\BGV^k_\alpha(\Omega,\Sym^l(\RR^d))$ is a Banach space by 
  Lemma~\ref{lem:banach_space_plus_lsc_seminorm}. The equivalence for
  parameter sets $\alpha_0,\ldots,\alpha_{k-1} > 0$ and
  $\tilde\alpha_0, \ldots, \tilde\alpha_{k-1} > 0$ can be seen as follows.
  Choosing $C > 0$ large enough, we can achieve that
  \[
  \mB_{\TGV_\alpha^{k,l}} \subset \mB_{\TGV_{C\tilde\alpha}^{k,l}}
  = C \mB_{\TGV_{\tilde\alpha}^{k,l}}.
  \]
  This implies
  \[
  \TGV_\alpha^{k,l} \leq C \TGV_{\tilde\alpha}^{k,l}.
  \]
  Interchanging roles we get $\TGV_{\tilde\alpha}^{k,l} \leq C 
  \TGV_{\alpha}^{k,l}$, so the spaces $\BGV_\alpha^k(\Omega,\Sym^l(\RR^d))$
  and $\BGV_{\tilde\alpha}^k(\Omega,\Sym^l(\RR^d))$ have equivalent
  norms.
\end{proof}

\begin{remark}
  As $\BGV_\alpha^k(\Omega,\Sym^l(\RR^d))$ are all equivalent
  for different $\alpha$, we will drop, in the following, the subscript
  $\alpha$.
\end{remark}

\begin{proposition}
  The scalar total generalised variation, i.e., $\TGV_\alpha^k$ possesses
  the following invariance and scaling properties:
  \begin{enumerate}
  \item
    $\TGV_\alpha^k$ is translation invariant,
    i.e.~for $x_0 \in \RR^d$ and $u \in \BGV^k(\Omega)$
    we have that $\tilde u$ given by $\tilde u(x) = u(x+x_0)$
    is in $\BGV^k(\Omega - x_0)$ and 
    $\TGV_\alpha^k(\tilde u) = \TGV_\alpha^k(u)$,
  \item $\TGV_\alpha^k$ is rotationally invariant,
    i.e.~for each orthonormal
    matrix $O \in \RR^{d \times d}$ and $u \in \BGV^k(\Omega)$
    we have, defining $\tilde u(x) = u(Ox)$, that
    $\tilde u \in \BGV^k(O^\transp\Omega)$ with
    $\TGV_\alpha^k(\tilde u)
    = \TGV_\alpha^k(u)$,
  \item
    for $r > 0$ and $u \in \BGV^k(\Omega)$, we have, defining
    $\tilde u(x) = u(rx)$, that $\tilde u \in \BGV^k(r^{-1}\Omega)$
    with \[
      \TGV_{\alpha}^k(\tilde u) = r^{-d} \TGV_{\tilde \alpha}^k(u),
      \qquad \tilde \alpha_m = \alpha_m r^{k-m} \quad \text{for} \ m=0,\ldots,k-1.
    \]
  \end{enumerate}
\end{proposition}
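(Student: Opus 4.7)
The strategy for all three items is the same: starting from a test tensor field $\varphi \in \Ccspace{k}{\cdot,\Sym^k(\RR^d)}$ admissible for one side of the identity, I exhibit a bijective transformation $\varphi \mapsto \tilde\varphi$ onto the admissible test fields for the other side, show that it respects the constraints $\norm[\infty]{\divergence^m\tilde\varphi}\leq\tilde\alpha_m$ in the desired way, and then apply a change of variables in the integral $\int u \divergence^k\tilde\varphi \dd{x}$. Taking the supremum yields the claim.

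For (i), let $\tilde u(x) = u(x+x_0)$ on $\Omega - x_0$. Given $\varphi \in \Ccspace{k}{\Omega - x_0, \Sym^k(\RR^d)}$, set $\tilde\varphi(y) = \varphi(y - x_0) \in \Ccspace{k}{\Omega, \Sym^k(\RR^d)}$. Since $\divergence^m$ has constant coefficients, $\divergence^m \tilde\varphi(y) = (\divergence^m \varphi)(y - x_0)$, so the $L^\infty$-norms of every iterated divergence are preserved. A substitution $y = x + x_0$ in $\int_{\Omega - x_0} \tilde u(x) \divergence^k \varphi(x) \dd{x}$ gives equality with $\int_\Omega u(y) \divergence^k \tilde\varphi(y) \dd{y}$, and taking the supremum yields $\TGV_\alpha^k(\tilde u) = \TGV_\alpha^k(u)$.

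For (ii), let $\tilde u(x) = u(Ox)$ on $O^\transp\Omega$. For $\varphi \in \Ccspace{k}{O^\transp\Omega, \Sym^k(\RR^d)}$ define $\tilde\varphi(y)(a_1,\ldots,a_k) = \varphi(O^\transp y)(O^\transp a_1,\ldots,O^\transp a_k)$. The key calculation, which is the main technical step, is that iterated divergences transform covariantly under this $O(d)$-action: a componentwise computation using the chain rule and the identity $O^\transp O = I$ yields
\[
\divergence^m \tilde\varphi(y)(a_1,\ldots,a_{k-m}) = (\divergence^m\varphi)(O^\transp y)(O^\transp a_1,\ldots,O^\transp a_{k-m}).
\]
Since the Frobenius norm is unitarily invariant (on both sides of the tensor product), $\norm[\infty]{\divergence^m \tilde\varphi} = \norm[\infty]{\divergence^m \varphi}$ for every $m = 0,\ldots,k-1$, so the constraints match. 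The substitution $y = Ox$ (with $\abs{\det O}=1$) then gives $\int_{O^\transp\Omega} u(Ox) \divergence^k\varphi(x) \dd{x} = \int_\Omega u(y) \divergence^k \tilde\varphi(y) \dd{y}$. Bijectivity of $\varphi\mapsto\tilde\varphi$ (its inverse uses $O$ in place of $O^\transp$) then yields the invariance.

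For (iii), let $\tilde u(x) = u(rx)$ on $r^{-1}\Omega$. For $\varphi \in \Ccspace{k}{r^{-1}\Omega, \Sym^k(\RR^d)}$ define $\tilde\varphi(y) = \varphi(y/r)\in \Ccspace{k}{\Omega, \Sym^k(\RR^d)}$. The chain rule gives $\divergence^m \tilde\varphi(y) = r^{-m}(\divergence^m \varphi)(y/r)$, so $\norm[\infty]{\divergence^m\tilde\varphi} = r^{-m}\norm[\infty]{\divergence^m\varphi}$. The substitution $y = rx$ with $\dd{x} = r^{-d}\dd{y}$ and $\divergence^k\varphi(y/r) = r^k \divergence^k\tilde\varphi(y)$ yields
\[
\int_{r^{-1}\Omega} \tilde u(x)\divergence^k\varphi(x)\dd{x} = r^{k-d}\int_\Omega u(y)\divergence^k\tilde\varphi(y)\dd{y}.
\]
As $\varphi$ ranges over admissible fields with $\norm[\infty]{\divergence^m\varphi}\leq \alpha_m$, $\tilde\varphi$ ranges bijectively over fields with $\norm[\infty]{\divergence^m\tilde\varphi}\leq r^{-m}\alpha_m$. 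Using the positive homogeneity of the parameter vector (scaling $\psi$ by $c$ scales all constraint bounds by $c$, so $\TGV_{c\beta}^k = c\TGV_\beta^k$) with factor $c=r^k$ converts the bounds $r^{-m}\alpha_m$ into $r^{k-m}\alpha_m = \tilde\alpha_m$ at the cost of a factor $r^k$, producing $\TGV_\alpha^k(\tilde u) = r^{k-d} \cdot r^{-k} \TGV_{\tilde\alpha}^k(u) = r^{-d}\TGV_{\tilde\alpha}^k(u)$. The main obstacle is the index computation in (ii), but it is the standard fact that divergence commutes with rigid motions.
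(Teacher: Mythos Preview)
Your proof is correct. The paper itself does not give an argument and simply refers to \cite{bredies2010tgv}; your change-of-variables approach, pushing and pulling test tensor fields through the affine map and tracking how iterated divergences and the Frobenius norm transform, is exactly the standard route taken there. The only minor remark is that in (iii) the detour through the homogeneity identity $\TGV_{c\beta}^k = c\,\TGV_\beta^k$ is unnecessary: one may instead define $\tilde\varphi(y) = r^k\varphi(y/r)$ directly, so that $\norm[\infty]{\divergence^m\tilde\varphi} = r^{k-m}\norm[\infty]{\divergence^m\varphi} \leq \tilde\alpha_m$ and the factor $r^{-d}$ falls out of the substitution alone. But this is cosmetic; your argument is complete as written.
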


\begin{proof}
  See \cite{bredies2010tgv}.
\end{proof}

\paragraph*{The derivative versus the symmetrised derivative.}
In both ways to motivate the second-order TGV functional as presented at the beginning of this section, we see that symmetric tensor fields and a symmetrised derivative appear naturally in the penalisation of higher-order derivatives. Indeed, in the motivation via the predual ball $\mB$ of the infimal convolution, symmetric tensor fields (resulting in a symmetrised derivative in the primal version) appear as the most economic way to write the predual ball, since for $\varphi \in  \Ccspace{2}{\Omega,\tensorspace[2]{\RR^d}}$, $\divergence^2 \varphi = \divergence ^2 (\interleave \varphi)$. In the primal version, the symmetrised derivative results from writing $\nabla^2 v = \symgrad \nabla v$ and then relaxing $\nabla v$ to be an arbitrary vector field $w$. Nevertheless, also %
non-symmetric tensor fields and the equality $\nabla^2 v = \nabla (\nabla v)$ could have been used in these motivations.
For the TGV functional, this would have resulted in a primal version of second-order TGV according to 
\[ \mR_\alpha (u) = \inf_{w \in \BV(\Omega,\RR^d)} \ \ \alpha_1 \|\nabla u - w \| _\M + \alpha_0 \| \nabla w \|_\M, \]
which is genuinely different from the definition in~\eqref{eq:tgv2_primal}.
The following example %
provides some insight 
on the differences between using the derivative and the symmetrised derivative of vector fields of bounded variation in a Radon-norm penalty.
\begin{example} \label{ex:symgrad_example}
On $\Omega = \set{(x_1,x_2) \in \R^2}{x_1^2 + x_2^2 < \frac14}$ define, for given $\nu,n \in \mathcal{S}^1$ (the unit sphere in $\RR^2$),
\[ w(x)= \cases{
w_1 = \nu_1 n + \nu_2 n^\perp  &if  $x \cdot n >0$, \\
w_2 = -(\nu_1 n + \nu_2 n^\perp)  &if $x \cdot n <0 $,
}
\]
where $n^\perp = (n_2,-n_1)$. 
Then, $w \in \BV(\Omega,\RR^ 2)$ and, with $L= \set{ \lambda n^\perp }{\lambda \in {]{-\frac12,\frac12}[}}$,
\[ \nabla w = (w_1 -w_2) \otimes n \,\hausdorff{1} \restricted  L.
\]
A direct computation shows that
\[ \|\nabla w\|_\M =2, \qquad \|\symgrad w\|_\M =2\sqrt{\nu_1^2 + \frac{\nu_2^2}{2}}, \]
thus, the symmetrised derivative depends on the angle of the vector field relative to the jump set, while the derivative does not. In particular, whenever $\nu_2=0$ such that the vector field can be written as the gradient of a function in $\BV^2(\Omega)$, the two notions coincide. See Figure \ref{fig:symgrad_example} for a visualisation of $w$ for different values of $\nu$.

\end{example}

\begin{figure}
\centering
  \begin{tikzpicture}[scale=1]

  \begin{scope}
  \def\st{0.012} %
  \def\pxa{-1} %
  \def\pya{1}
  \def\pxb{1} %
  \def\pyb{-1}
  \def\vx{1} %
  \def\vy{1}

  \begin{scope}
    \clip(-1.42,1.42) -- (1.42,-1.42) -- (1.42,1.42) -- cycle;
    \clip (0,0) circle (1.41421356);
   \shade[bottom color = blue, top color = white,shading angle=-45] (-1.42,-1.42) rectangle (1.42,1.42);    
   \end{scope}

     \draw[->,thick] (0.5*\pxa + 0.5*\pxb + 0.2*\vx,0.5*\pya + 0.5*\pyb + 0.2*\vy) -- (0.5*\pxa + 0.5*\pxb + 0.4*\vx,0.5*\pya + 0.5*\pyb + 0.4*\vy) node[right=0.5]{\scriptsize $w_1$};

   \begin{scope}
    \clip(-1.42,1.42) -- (1.42,-1.42) -- (-1.42,-1.42) -- cycle;
    \clip (0,0) circle (1.41421356);
	\foreach \x in {0,5,...,90}
	{	
   \shade[bottom color = blue, top color = white,shading angle=135] (-1.42,-1.42) rectangle (1.42,1.42);    
	}
   \end{scope}
      \draw[->,thick] (-0.5*\pxa - 0.5*\pxb - 0.2*\vx,-0.5*\pya - 0.5*\pyb - 0.2*\vy) -- (-0.5*\pxa - 0.5*\pxb - 0.4*\vx,-0.5*\pya - 0.5*\pyb - 0.4*\vy) node[left=0.5]{\scriptsize $w_2$};
   
   \draw[thick] (0,0) circle (1.41421356);
  \draw[thick] (-1,1) -- (1,-1);
   \draw (0,-1.8) node{\scriptsize $n=\frac1{\sqrt{2}}(1,1)$, $\nu = (1,0)$};
   \draw (0,-2.2) node{\scriptsize $\|\nabla w\| = 2$, $\|\symgrad w\| = 2$};
  \end{scope}

  \begin{scope}[shift={(4,0)}]
  \def\st{0.03075} %
  \def\pxa{-1} %
  \def\pya{1.42}
  \def\pxb{-1} %
  \def\pyb{-1}
  \def\vx{1} %
  \def\vy{0}

  \begin{scope}
    \clip(-1.42,1.42) -- (1.42,-1.42) -- (1.42,1.42) -- cycle;
    \clip (0,0) circle (1.41421356);
	\foreach \x in {0,2.5,...,90}
	{	
        \shade[bottom color = blue, top color = white,shading angle=-90] (-1.42,-1.42) rectangle (1.42,1.42);    
	}
   \end{scope}
   \draw[->,thick] (0.5*\pxa + 0.5*\pxb + 1.2*\vx,0.5*\pya + 0.5*\pyb + 1.2*\vy) -- (0.5*\pxa + 0.5*\pxb + 1.4*\vx,0.5*\pya + 0.5*\pyb + 1.4*\vy) node[right=0.5]{\scriptsize $w_1$};
   \begin{scope}
    \clip(-1.42,1.42) -- (1.42,-1.42) -- (-1.42,-1.42) -- cycle;
    \clip (0,0) circle (1.41421356);
	\foreach \x in {0,2.5,...,90}
	{	
		\shade[bottom color = blue, top color = white,shading angle=90] (-1.42,-1.42) rectangle (1.42,1.42);    
	}
   \end{scope}
   \draw[->,thick] (-0.5*\pxa - 0.5*\pxb - 1.2*\vx,-0.5*\pya - 0.5*\pyb - 1.2*\vy) -- (-0.5*\pxa - 0.5*\pxb - 1.4*\vx,-0.5*\pya - 0.5*\pyb - 1.4*\vy) node[left=0.5]{\scriptsize $w_2$};

     \draw[thick] (0,0) circle (1.41421356);
  \draw[thick] (-1,1) -- (1,-1);
   \draw (0,-1.8) node{\scriptsize $n=\frac1{\sqrt{2}}(1,1)$, $\nu = \frac1{\sqrt{2}}(1,1)$};
   \draw (0,-2.2) node{\scriptsize $\|\nabla w\| = 2$, $\|\symgrad w\| = \sqrt{3}$};
  \end{scope}

  \begin{scope}[shift={(8,0)}]
  \def\st{0.025} %
  \def\pxa{0} %
  \def\pya{2}
  \def\pxb{-1} %
  \def\pyb{1}
  \def\vx{1} %
  \def\vy{-1}

  \begin{scope}
    \clip(-1.42,1.42) -- (1.42,-1.42) -- (1.42,1.42) -- cycle;
    \clip (0,0) circle (1.41421356);
	\foreach \x in {0,2.5,...,90}
	{	
		\shade[bottom color = blue, top color = white,shading angle=-135] (-1.42,-1.42) rectangle (1.42,1.42);    
	}
   \end{scope}
   \draw[->,thick] (0.5*\pxa + 0.5*\pxb + 0.8*\vx,0.5*\pya + 0.5*\pyb + 0.8*\vy) -- (0.5*\pxa + 0.5*\pxb + 1*\vx,0.5*\pya + 0.5*\pyb + 1*\vy) node[right=0.5]{\scriptsize $w_1$};
   \begin{scope}
    \clip(-1.42,1.42) -- (1.42,-1.42) -- (-1.42,-1.42) -- cycle;
    \clip (0,0) circle (1.41421356);
	\foreach \x in {0,2.5,...,90}
	{	
		\shade[bottom color = blue, top color = white,shading angle=45] (-1.42,-1.42) rectangle (1.42,1.42);    
	}
   \end{scope}
   \draw[->,thick] (-0.5*\pxa - 0.5*\pxb - 0.8*\vx,-0.5*\pya - 0.5*\pyb - 0.8*\vy) -- (-0.5*\pxa - 0.5*\pxb - 1*\vx,-0.5*\pya - 0.5*\pyb - 1*\vy) node[left=0.5]{\scriptsize $w_2$};

     \draw[thick] (0,0) circle (1.41421356);
  \draw[thick] (-1,1) -- (1,-1);
   \draw (0,-1.8) node{\scriptsize $n=\frac1{\sqrt{2}}(1,1)$, $\nu = (0,1)$};
      \draw (0,-2.2) node{\scriptsize $\|\nabla w\| = 2$, $\|\symgrad w\| = \sqrt{2}$};
  \end{scope}
  \end{tikzpicture}
  \caption{\label{fig:symgrad_example}Visualisation of the function $w$ of Example \ref{ex:symgrad_example} and values of $\|\nabla w\|_\M$ and $\|\symgrad w\|_\M$ for different choices of $\nu$. The blue lines show the level lines of a function $v$ such that $w = \nabla v$. } 
\end{figure}
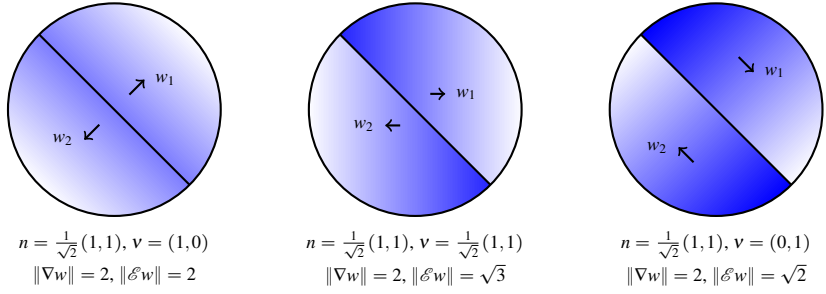
\subsection{Functional analytic and regularisation properties}

We would like to characterise $\TGV_\alpha^{k,l}$ in terms of a minimisation
problem. This characterisation will base on 
\emph{Fenchel-Rockafellar duality}. Here, the following theorem
by \cite{Attouch86duality_mh} is employed. Recall that the \emph{domain} of a function
$F: X \to {]{-\infty,\infty}]}$ is defined as $\dom(F) = \set{x \in X}{F(x) < \infty}$.

\begin{theorem}
  \label{thm:fenchel-rockafellar-duality}
  Let $X,Y$ be Banach spaces and $\Lambda: X \to Y$ linear and continuous.
  Let $F: X \to {]{-\infty,\infty}]}$ and $G: Y \to {]{-\infty,\infty}]}$
  be proper, convex and lower semi-continuous. Assume that
  \begin{equation}
    \label{eq:fr-duality-qual}
    \bigcup_{\lambda \geq 0} \lambda \bigl( \dom(G) - \Lambda\dom(F) \bigr) = Y.
  \end{equation}
  Then,
  \begin{equation}
    \label{eq:fr-duality}
    \inf_{x \in X} \ F(x) + G(\Lambda x) = \max_{y^* \in Y^*} 
    \ -F^*(-\Lambda^* y^*) - G^*(y^*).
  \end{equation}
  In particular, the maximum on the right-hand side is attained.
\end{theorem}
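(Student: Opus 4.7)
The plan is to split the proof into weak duality, which follows easily from the Fenchel inequality, and strong duality with attainment, which is the substantive part and relies on a perturbation analysis controlled by the qualification condition~\eqref{eq:fr-duality-qual}.

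For weak duality, I would pick arbitrary $x \in X$ and $y^* \in Y^*$ and apply the Fenchel inequality twice: once to $F$ at $x$ paired with $-\Lambda^* y^* \in X^*$, giving $F(x) + F^*(-\Lambda^* y^*) \geq -\scp{y^*}{\Lambda x}$, and once to $G$ at $\Lambda x$ paired with $y^*$, giving $G(\Lambda x) + G^*(y^*) \geq \scp{y^*}{\Lambda x}$. Adding these cancels the $\pm\scp{y^*}{\Lambda x}$ terms and rearranges to $F(x) + G(\Lambda x) \geq -F^*(-\Lambda^* y^*) - G^*(y^*)$. Taking the infimum on the left and the supremum on the right yields $\inf \geq \sup$. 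If $\inf = -\infty$, weak duality forces $\sup = -\infty$ and any $y^*$ trivially attains the dual maximum; if $\inf = +\infty$, both sides are $+\infty$ (using that $F, G$ are proper together with standard consequences). Thus, I may henceforth assume the primal value $p = \inf_x F(x) + G(\Lambda x)$ is finite.

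For strong duality and attainment, I would introduce the perturbation function
\[
  v: Y \to [-\infty,\infty], \qquad v(y) = \inf_{x \in X} \ F(x) + G(\Lambda x + y),
\]
so that $v(0) = p$. By construction $v$ is convex with $\dom(v) = \dom(G) - \Lambda\dom(F)$, so the qualification condition~\eqref{eq:fr-duality-qual} reads as $\bigcup_{\lambda \geq 0} \lambda \dom(v) = Y$. The goal is to show that $\partial v(0)$ is non-empty, since any $-y^* \in \partial v(0)$ will then satisfy $v(y) \geq v(0) - \scp{y^*}{y}$ for all $y \in Y$; unpacking the infimum in $v$, taking the supremum over $y$ and $x$ separately, and using $F^*$, $G^*$ delivers $-F^*(-\Lambda^* y^*) - G^*(y^*) \geq p$, which together with weak duality closes~\eqref{eq:fr-duality} and shows the supremum is attained at $y^*$.

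The main obstacle is the existence of a subgradient of $v$ at $0$, since in infinite dimensions convexity plus finiteness on a cone spanning $Y$ does not by itself yield continuity or subdifferentiability. Here I would follow the Attouch--Brezis strategy \cite{Attouch86duality_mh}: use the fact that $v$ is the marginal of the jointly convex and lower semi-continuous functional $(x,y) \mapsto F(x) + G(\Lambda x + y)$, so its epigraph has good closure properties; write $Y = \bigcup_{n \in \NN} n \closure{\dom(v)}$ thanks to the qualification condition, and invoke the Baire category theorem to conclude that $n \closure{\dom(v)}$ has non-empty interior for some $n$. A translation/scaling argument based on convexity then shows that $v$ is bounded above on a neighbourhood of some interior point $y_0 \in \dom(v)$, and a standard convex-analytic interpolation between $y_0$ and $0$ transfers this local boundedness to $0$, yielding continuity of $v$ at $0$ and hence $\partial v(0) \neq \emptyset$ by the Hahn--Banach theorem. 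This step is where all the analytical weight of the qualification condition is spent; the remaining translations back to the dual variables are routine.
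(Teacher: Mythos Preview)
The paper does not supply its own proof of this theorem; it is stated and attributed directly to Attouch--Brezis \cite{Attouch86duality_mh}. Your proposal is a faithful sketch of precisely that argument---the perturbation function $v(y) = \inf_x F(x) + G(\Lambda x + y)$, the Baire-category step exploiting~\eqref{eq:fr-duality-qual} to obtain local boundedness, and then continuity of $v$ at $0$ yielding $\partial v(0) \neq \emptyset$---so you are fully aligned with the proof the paper invokes.

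One minor remark: your treatment of the degenerate case $\inf = +\infty$ is not quite right. The dual functional $-F^*(-\Lambda^* y^*) - G^*(y^*)$ can never equal $+\infty$ (conjugates of proper functions are never $-\infty$), so the dual supremum cannot reach $+\infty$; in that case the identity~\eqref{eq:fr-duality} would fail as stated. In practice this case is excluded, since the qualification condition together with properness is typically paired with the assumption that the primal is not identically $+\infty$, and the Attouch--Brezis statement should be read under that implicit hypothesis.
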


As a preparation for employing this duality result, we note:

\begin{lemma} \label{lem:cascade_element_finite}
  Let $l \geq 0$, $i \geq 1$ and $w_{i-1} \in \Cspace[0]{i-1}{\Omega, \Sym^{l+i-1}(\RR^d)}^*$,
  $w_i \in \Cspace[0]{i}{\Omega, \Sym^{l+i}(\RR^d)}^*$ be distributions
  of order $i-1$ and $i$, respectively.
  Then,
  \begin{equation}
    \label{eq:symgrad_diff_predual}
    \fl
    \norm[\radon]{\symgrad w_{i-1} - w_i} 
    = \sup \ \set{ \scp{w_{i-1}}{\divergence \varphi} + \scp{w_i}{\varphi}}
    {\varphi \in 
      \Ccspace{i}{\Omega, \Sym^{l+i}(\RR^d)}, \ \norm[\infty]{\varphi} \leq 1}
  \end{equation}
  with the right-hand side being finite if and only if $\symgrad w_{i-1} 
  - w_i \in \radonspace{\Omega,\Sym^{l+i}(\RR^d)}$ in the distributional
  sense.
\end{lemma}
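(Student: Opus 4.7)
The plan is to view $T := \symgrad w_{i-1} - w_i$ as a distribution acting on test fields $\varphi \in \Ccspace{i}{\Omega, \Sym^{l+i}(\RR^d)}$ via the distributional symmetrised derivative, namely
\[
\scp{T}{\varphi} = -\scp{w_{i-1}}{\divergence \varphi} - \scp{w_i}{\varphi}.
\]
This is well-defined since $\divergence \varphi \in \Ccspace{i-1}{\Omega, \Sym^{l+i-1}(\RR^d)}$ matches the order of $w_{i-1}$, and $\varphi \in \Ccspace{i}{}$ matches the order of $w_i$. Because $\varphi \mapsto -\varphi$ is a symmetry of the constraint set $\{\varphi \in \Ccspace{i}{} : \norm[\infty]{\varphi}\leq 1\}$, the supremum on the right-hand side of \eqref{eq:symgrad_diff_predual} coincides with $\sup\{\scp{T}{\varphi} : \varphi \in \Ccspace{i}{\Omega,\Sym^{l+i}(\RR^d)},\ \norm[\infty]{\varphi}\leq 1\}$. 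So the claim reduces to a standard duality statement relating this supremum to the Radon norm.

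For the direction ``$T$ is a measure $\Rightarrow$ the supremum equals $\norm[\radon]{T}$'', I would invoke Proposition~\ref{prop:duality_radon_measures} which identifies $\radonspace{\Omega,\Sym^{l+i}(\RR^d)}$ with $\Cspace[0]{}{\Omega,\Sym^{l+i}(\RR^d)}^*$: the norm $\norm[\radon]{\mu}$ equals $\sup\{\int \varphi \inprod \dd\mu : \varphi \in \Cspace[0]{}{},\ \norm[\infty]{\varphi}\leq 1\}$. Since $\Ccspace{\infty}{} \subset \Ccspace{i}{}$ is dense in $\Cspace[0]{}{}$ (by standard mollification and cutoff arguments on $\Omega$), the two suprema agree.

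For the converse direction, if the right-hand side of \eqref{eq:symgrad_diff_predual} is finite with value $C$, then $\varphi \mapsto \scp{T}{\varphi}$ is a linear functional on the dense subspace $\Ccspace{\infty}{\Omega,\Sym^{l+i}(\RR^d)} \subset \Cspace[0]{}{}$ satisfying the uniform estimate $|\scp{T}{\varphi}| \leq C \norm[\infty]{\varphi}$. Hence it extends uniquely by continuity to an element of $\Cspace[0]{}{\Omega,\Sym^{l+i}(\RR^d)}^*$ of operator norm at most $C$, which by Proposition~\ref{prop:duality_radon_measures} is represented by a unique $\mu \in \radonspace{\Omega,\Sym^{l+i}(\RR^d)}$ with $\norm[\radon]{\mu} \leq C$. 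Since $\mu$ and $T$ coincide on test fields, the distributional identity $\symgrad w_{i-1} - w_i = \mu$ holds, and combining both directions yields equality with $\norm[\radon]{\mu}$.

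There is essentially no analytical obstacle here beyond bookkeeping — the only things to be careful about are the sign convention (handled by the $\varphi \mapsto -\varphi$ symmetry) and verifying that the order hypotheses on $w_{i-1}$, $w_i$ indeed ensure $T$ is a well-defined distribution on $\Ccspace{i}{}$. Everything else is a direct application of the duality already stated in Proposition~\ref{prop:duality_radon_measures}.
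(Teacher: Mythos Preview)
Your proposal is correct and follows essentially the same approach as the paper: identify the distributional action of $\symgrad w_{i-1} - w_i$ (the paper uses the opposite sign $w_i - \symgrad w_{i-1}$, which you handle via the $\varphi\mapsto-\varphi$ symmetry), then use density of $\Ccspace{\infty}{\Omega,\Sym^{l+i}(\RR^d)}$ in $\Cspace[0]{}{\Omega,\Sym^{l+i}(\RR^d)}$ together with the Riesz-type duality of Proposition~\ref{prop:duality_radon_measures} to obtain the equivalence and the norm identity.
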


\begin{proof}
  Note that in the distributional sense,
  $\scp{w_i - \symgrad w_{i-1}}{\varphi} = \scp{w_{i-1}}{\divergence \varphi} 
  + \scp{w_i}{\varphi}$ 
  for all $\varphi \in \Ccspace{\infty}{\Omega,\Sym^{l+i}(\RR^d)}$.
  Since $\Ccspace{\infty}{\Omega,\Sym^{l+i}(\RR^d)}$ is dense in 
  $\Cspace[0]{}{\Omega,\Sym^{l+i}(\RR^d)}$, the distribution $w_i - 
  \symgrad w_{i-1}$ can be extended to an element in 
  $\Cspace[0]{}{\Omega,\Sym^{l+i}(\RR^d)}^* = 
  \radonspace{\Omega,\Sym^{l+i}(\RR^d)}$ 
  if and only if the supremum in~\eqref{eq:symgrad_diff_predual} is
  finite. In case of finiteness, it coincides with the Radon norm
  by definition.
\end{proof}

This enables us to derive the problem which is dual to the 
maximisation problem in~\eqref{eq:tensor-tgv-def}. We will refer to the resulting problem
as the \emph{minimum representation} of $\TGV_\alpha^{k,l}$.

\begin{theorem} \label{thm:tgv_primal}  
  For $k \geq 1$, $l \geq 0$, $\Omega$ a bounded Lipschitz domain and 
  $\TGV_\alpha^{k,l}$ according to~\eqref{eq:tensor-tgv-def}, 
  we have for
  each $u \in \LPspace{1}{\Omega,\Sym^l(\RR^d)}$:
  \begin{equation} \label{eq:tgv_primal}
    \TGV^{k,l}_\alpha(u) = \min_{w_i \in \BD(\Omega, \Sym^{l+i}(\RR^d)), \atop
      {i=0,\ldots,k, \atop
      w_0 = u, \ w_k = 0}}
    \ \sum_{i=1}^k \alpha_{k-i} \norm[\radon]{\symgrad{w_{i-1}} - w_i} %
  \end{equation}
  with the minimum being finite if and only if
  $u \in \BD(\Omega,\Sym^l(\RR^d))$ and attained for some 
  $w_0,\ldots,w_k$ where $w_i \in \BD(\Omega, \Sym^{l+i}(\RR^d))$ for 
  $i=0,\ldots,k$
  and $w_0 = u$ as well as
  $w_k = 0$ in case of $u \in \BD(\Omega,\Sym^l(\RR^d))$.
\end{theorem}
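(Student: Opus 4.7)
The proof naturally splits into two inequalities. For the direction $\TGV_\alpha^{k,l}(u) \leq \text{RHS}$, I would fix an admissible tuple $(w_i)_{i=0}^k$ with $w_0 = u$, $w_k = 0$, $w_i \in \BD(\Omega, \Sym^{l+i}(\RR^d))$, set $\mu_i = \symgrad w_{i-1} - w_i$, and test against $\varphi \in \Ccspace{k}{\Omega,\Sym^{k+l}(\RR^d)}$ satisfying $\norm[\infty]{\divergence^m \varphi} \leq \alpha_m$. Writing $a_i = \scp{w_i}{\divergence^{k-i}\varphi}$, distributional integration by parts yields $a_{i-1} + a_i = -\scp{\mu_i}{\divergence^{k-i}\varphi}$, which by Lemma \ref{lem:cascade_element_finite} has absolute value at most $\alpha_{k-i}\norm[\radon]{\mu_i}$. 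Telescoping from $a_0$ using $a_k = 0$ then gives $\bigabs{\int_\Omega u \inprod \divergence^k \varphi \dd{x}} \leq \sum_{i=1}^k \alpha_{k-i}\norm[\radon]{\mu_i}$; taking suprema and infima establishes the inequality. This also shows the finiteness of the RHS forces $\symgrad u = w_1 + \mu_1$ to be a Radon measure, so $u \in \BD(\Omega, \Sym^l(\RR^d))$; conversely, if $u \in \BD$, the choice $w_1 = \cdots = w_{k-1} = 0$ makes the RHS equal to $\alpha_{k-1}\norm[\radon]{\symgrad u}$, so both quantities are finite simultaneously.

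The reverse inequality and the attainment of the minimum both follow from Fenchel--Rockafellar duality (Theorem \ref{thm:fenchel-rockafellar-duality}). I would take $X = \Cspace[0]{k}{\Omega,\Sym^{k+l}(\RR^d)}$, $Y = \prod_{m=0}^{k-1} \Cspace[0]{}{\Omega, \Sym^{k+l-m}(\RR^d)}$, $\Lambda \varphi = (\varphi, \divergence\varphi, \ldots, \divergence^{k-1}\varphi)$, the continuous linear functional $F(\varphi) = -\int_\Omega u \inprod \divergence^k \varphi \dd{x}$, and $G$ the sum of indicators of the balls $\norm[\infty]{y_m} \leq \alpha_m$, so that $-\TGV_\alpha^{k,l}(u) = \inf_\varphi F(\varphi) + G(\Lambda\varphi)$. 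The qualification condition~\eqref{eq:fr-duality-qual} is immediate because $\Lambda(0) = 0$ lies in the norm interior of $\dom G$. Standard Fenchel-conjugate computations give $G^*(y^*) = \sum_m \alpha_m \norm[\radon]{y_m^*}$ on $Y^* = \prod_m \radonspace{\Omega,\Sym^{k+l-m}(\RR^d)}$, while $F^*(-\Lambda^* y^*) = 0$ precisely on the feasibility constraint, which by density of $\Ccspace{\infty}{\Omega,\Sym^{k+l}(\RR^d)}$ in $X$ reads $\sum_{m=0}^{k-1} (-1)^m \symgrad^m y_m^* = (-1)^k \symgrad^k u$ in the distributional sense.

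It remains to identify the FR dual with the claimed minimum representation. Reindexing $\mu_i = (-1)^{k-i} y_{k-i}^*$ for $i=1,\ldots,k$, the objective becomes $\sum_{i=1}^k \alpha_{k-i}\norm[\radon]{\mu_i}$ and the constraint becomes $\sum_{j=1}^k \symgrad^{k-j}\mu_j = \symgrad^k u$. Given such $(\mu_i)$, I would define distributions $w_i = \symgrad^i u - \sum_{j=1}^i \symgrad^{i-j}\mu_j$, which by construction satisfy $w_0 = u$, $w_k = 0$, and $\symgrad w_{i-1} - w_i = \mu_i$. Starting from $\symgrad w_{k-1} = \mu_k \in \radonspace{\Omega,\Sym^{k+l}(\RR^d)}$ and proceeding backwards, each step applies Theorem \ref{thm:symgrad_measure_dist} to upgrade $w_i$ from a distribution to an element of $\BD(\Omega, \Sym^{l+i}(\RR^d))$, since $\symgrad w_{i-1} = w_i + \mu_i$ is then a sum of two Radon measures. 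Attainment on the dual side of Fenchel--Rockafellar then carries over to attainment of the minimum. I expect the main obstacle to lie in this backward bootstrap: one must verify at each step that the candidate $w_i$ is a well-defined distribution before invoking Theorem \ref{thm:symgrad_measure_dist}, and that the chain of distributional identities linking the $w_i$, the $\mu_i$, and $u$ is handled rigorously.
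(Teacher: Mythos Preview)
Your proposal is correct and proves the theorem, but via a genuinely different Fenchel--Rockafellar setup than the paper's. The paper decouples the test function into independent variables $\varphi_1,\ldots,\varphi_k$ (with $\varphi_i$ playing the role of $\divergence^{k-i}\varphi$), puts the constraint $\varphi_i + \divergence \varphi_{i+1} = 0$ into $G = \mI_{\sett{0}}$, and keeps the ball indicators together with the linear functional in $F$. The dual variables then \emph{are} the $w_i$ directly, and the computation of $F^*(-\Lambda^*w) + G^*(w)$ falls apart into the sum $\sum_i \alpha_{k-i}\norm[\radon]{\symgrad w_{i-1} - w_i}$ via Lemma~\ref{lem:cascade_element_finite} without any reconstruction step; the qualification condition, however, requires a short backward recursion. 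Your setup keeps a single $\varphi$, puts the ball indicators in $G$, and obtains as dual variables the \emph{increments} $\mu_i = \symgrad w_{i-1} - w_i$, which you then have to reassemble into the $w_i$; in exchange, your qualification condition is trivial since $0$ lies in the interior of $\dom G$. Both routes invoke Theorem~\ref{thm:symgrad_measure_dist} in the same backward bootstrap, and that step is unproblematic since your formula $w_i = \symgrad^i u - \sum_{j\leq i}\symgrad^{i-j}\mu_j$ already defines a genuine distribution. One small bookkeeping slip: with your reindexing $\mu_i = (-1)^{k-i}y_{k-i}^*$ the constraint actually reads $\sum_j \symgrad^{k-j}\mu_j = (-1)^k\symgrad^k u$, so for odd $k$ your $w_k$ would not vanish; adjusting the sign in the definition of $\mu_i$ (e.g.\ $\mu_i = (-1)^{i}y_{k-i}^*$) fixes this immediately. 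Your separate telescoping argument for the easy inequality is fine but redundant, since the Fenchel--Rockafellar identity already delivers both inequalities at once.
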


\begin{proof}
  First, take $u \in  \LPlocspace{1}{\Omega,\Sym^l(\RR^d)}$ such
  that $\TGV_\alpha^{k,l}(u) < \infty$.
  We will employ Fenchel--Rockafellar duality. For this purpose, 
  introduce the Banach spaces
  \begin{eqnarray*}
    X &= \Cspace[0]{1}{\Omega, \Sym^{1+l}(\RR^d)}
    \times \ldots \times \Cspace[0]{k}{\Omega, \Sym^{k+l}(\RR^d)}, \\
    Y &= \Cspace[0]{1}{\Omega, \Sym^{1+l}(\RR^d)}
    \times \ldots \times \Cspace[0]{k-1}{\Omega, \Sym^{k-1+l}(\RR^d)},
  \end{eqnarray*}
  the linear operator
  \[
  \Lambda \in \linspace{X}{Y}, \qquad \Lambda \varphi =
  \left(\begin{array}{c}
    -\varphi_1 - \divergence \varphi_2 \\
    \cdots \\
    -\varphi_{k-1} - \divergence \varphi_k
  \end{array}\right),
  \]
  and the proper, convex and lower semi-continuous functionals
  \[
  \begin{array}{rlrl}
    F&: X \rightarrow {]{-\infty,\infty}]}, & F(\varphi) &= 
    -\scp{u}{\divergence \varphi_1} + 
    \sum_{i=1}^k \mI_{\sett{\norm[\infty]{\placeholder} \leq
        \alpha_{k-i}}}(\varphi_i) , \\
    G&: Y \rightarrow {]{-\infty,\infty}]}, & G(\psi) &= \mI_{\sett{(0,\ldots,0)}}(\psi).
  \end{array}
  \]
  With these choices, the identity
  \[
  \TGV_\alpha^{k,l}(u) = \sup_{\varphi \in X} \ -F(\varphi) - G(\Lambda \varphi)
  \]   
  follows from the definition in~\eqref{eq:tensor-tgv-def}.
  In order to show the representation of $ \TGV^{k,l}_\alpha (u) $ as in \eqref{eq:tgv_primal}, we would like to obtain
  \begin{equation} \label{eq:tgv_from_attouch}
  \TGV_\alpha^{k,l}(u) = \min _{w \in Y^*} \ F^*(-\Lambda^* w) + G^* (w).
  \end{equation}
  This follows as soon as~\eqref{eq:fr-duality-qual} is verified.
  For the purpose of showing~\eqref{eq:fr-duality-qual}, 
  let $\psi \in Y$ and define backwards 
  recursively: $\varphi_k = 0 \in 
  \Cspace[0]{k}{\Omega,\Sym^{k+l}(\RR^d)}$, $\varphi_i = \psi_i - \divergence \varphi_{i+1}
  \in \Cspace[0]{i}{\Omega,\Sym^{i+l}(\RR^d)}$ for $i=k-1,\ldots,1$. Hence,
  $\varphi \in X$ and $-\Lambda \varphi = \psi$. Moreover, choosing $\lambda > 0$ large 
  enough, one can achieve that $\norm[\infty]{\lambda^{-1} \varphi} \leq 
  \alpha_{k-i}$ for all $i=1,\ldots,k$, so $\lambda^{-1} \varphi \in \dom(F)$
  and since $0 \in \dom(G)$, we get the representation
  $\psi = \lambda(0 - \Lambda \lambda^{-1} \varphi)$.
  Thus, the identity \eqref{eq:tgv_from_attouch} holds and the 
  minimum is attained in
  $ Y^* $. Now, $ Y^* $ can be written as 
  \[
  Y^* = \Cspace[0]{1}{\Omega, \Sym^{1+l}(\RR^d)}^* \times \ldots \times \Cspace[0]{k-1}{\Omega, \Sym^{k-1+l}(\RR^d)}^*,
  \]
  with elements $ w=(w_1,\ldots,w_{k-1}) $, $ w_i \in  \Cspace[0]{i}{\Omega, \Sym^{i+l}(\RR^d)}^* $, 
  for $ 1\leq i \leq k-1 $. Therefore,
  with $w_0 = u$ and $w_k = 0$ we get, as $G^*=0$, that
  \[
  \begin{array}{rl}
    \fl
    F^*(-\Lambda^* w) + G^*(w) &= \displaystyle \sup_{\varphi \in X} \ 
    \Bigl( \langle -\Lambda ^*w, \varphi \rangle  
    + \scp{u}{\divergence \varphi_1} 
    - \sum_{i=1}^k \mI_{\sett{\norm[\infty]{\placeholder} \leq \alpha_{k-i}}}(\varphi_i) 
    \Bigr) \\
    &= \displaystyle  \sup_{\varphi \in X, \atop
      {\|\varphi_i\|_\infty \leq \alpha_{k-i}, \atop
      i=1,\ldots,k}}
    \ \Bigl( \scp{u}{\divergence \varphi_1} + 
    \sum_{i=1}^{k-1} \scp{w_i}{\divergence \varphi_{i+1}} + \scp{w_{i}}{\varphi_i}
    \Bigr) \\
    &= \displaystyle  \sum_{i=1}^{k} \alpha _{k-i} \Bigl( 
    \sup_{\varphi_i \in \Cspace[0]{i}{\Omega, \Sym^{i+l}(\RR^d)}, \atop
      \|\varphi_i\|_\infty \leq 1} 
    \scp{w_{i-1}}{\divergence \varphi_i} + \scp{w_i}{\varphi_i} 
    \Bigr).
  \end{array}
  \]
  From Lemma~\ref{lem:cascade_element_finite} we obtain that
  each supremum is finite and coincides with 
  $\norm[\radon]{\symgrad w_{i-1} - w_i}$ if and only if
  $\symgrad w_{i-1} - w_i \in \radonspace{\Omega,\Sym^{k+i}(\RR^d)}$
  for $i=1,\ldots,k$. 
  Then, as $w_k = 0$, 
  according to Theorem~\ref{thm:symgrad_measure_dist}, 
  this already yields $w_{k-1} 
  \in \BD(\Omega,\Sym^{k+l-1}(\RR^d))$, in particular $w_{k-1} \in 
  \radonspace{\Omega, \Sym^{k+l-1}(\RR^d)}$. Proceeding inductively, we
  see that $w_i \in \BD(\Omega,\Sym^{k+i}(\RR^d))$ for each $i=0,\ldots,k$.
  Hence, it suffices to take the minimum in~\eqref{eq:tgv_from_attouch} 
  over all $\BD$-tensor fields which gives~\eqref{eq:tgv_primal}.
  
  In addition, the minimum in \eqref{eq:tgv_primal} is finite if 
  $ u\in \BD(\Omega,\Sym^l(\RR^d)) $.
  Conversely, if $\TD(u) = \infty$, also $\norm[\radon]{\symgrad w_0 
   - w_1} = \infty$ for all $w_1 \in \BD(\Omega,\Sym^{l+1}(\RR^d))$. Hence,
  the minimum in~\eqref{eq:tgv_primal} has to be $\infty$.
\end{proof}

\begin{remark} \label{rem:tgv_minimum_representation_scalar}
  In the scalar case, i.e., $l=0$ it holds that
  \begin{equation}
    \TGV^{k}_\alpha(u) = \min_{w_i \in \BD(\Omega, \Sym^{i}(\RR^d)), \atop
      i=0,\ldots,k, \ w_0 = u, \ w_k = 0}
    \ \sum_{i=1}^k \alpha_{k-i} \norm[\radon]{\symgrad{w_{i-1}} - w_i}.
  \end{equation}
\end{remark}

\begin{remark}
  The minimum representation also allows to define $\TGV_\alpha^{k,l}$ 
  recursively:
  \begin{equation}
    \label{eq:tgv_recursive_def}
    \fl
    \left\{
      \begin{array}{rl}
        \TGV^{1,l}_{\alpha_0}(u) &= \alpha_0 \norm[\radon]{\symgrad u} \\[\smallskipamount]
        \TGV^{k+1,l}_{\alpha}(u) &= \displaystyle
        \min_{w \in \BD(\Omega,\Sym^{l+1}(\RR^d))} \alpha_k 
        \norm[\radon]{\symgrad u - w} + \TGV_{\alpha'}^{k,l+1}(w)
      \end{array}
    \right.
  \end{equation}
  where $\alpha' = (\alpha_0,\ldots,\alpha_{k-1})$ if $\alpha = 
  (\alpha_0,\ldots,\alpha_k)$.
\end{remark}

\begin{remark}
  For the scalar 
  $\TGV_\alpha^2$, the minimum representation reads as
  \[
  \TGV_\alpha^2(u) = \min_{w \in \BD(\Omega,\Sym^1(\RR^d))} \ \alpha_1
  \norm[\radon]{\grad u - w} + \alpha_0 \norm[\radon]{\symgrad w}.
  \]
  This can be interpreted as follows. For $u \in \BV(\Omega)$,
  $\grad u$ is a measure which can be decomposed into a regular
  and singular component with respect to the Lebesgue measure.
  The singular part is always penalised with the Radon norm where
  from the regular part, an optimal bounded deformation vector field $w$
  is extracted. This vector field is penalised by $\TD$ which, like
  $\TV$, implies certain regularity but also allows for jumps. 
  Thus, $\symgrad w$ essentially contains the second-order derivative
  information of $u$. 

  Provided that $w$ is optimal, 
  the total generalised variation of second order then
  penalises the first-order remainder $\grad u - w$ which essentially
  contains the jumps of $u$ as well as
  the second-order information $\symgrad w$.
\end{remark}

The next step is to examine the spaces $\BGV^k(\Omega,\Sym^l(\RR^d))$.
Our aim is to prove that these space coincide with 
$\BD(\Omega,\Sym^l(\RR^d))$ for fixed $l \geq 0$ and 
all $k \geq 1$. We will proceed 
inductively with respect to $k$ and hence vary $k,l$ but leave 
$\Omega$ fixed and assume that $\Omega$ is a bounded Lipschitz domain.
For what follows, we choose a family of
projection operators onto the kernel of $\TGV^{k,l}_\alpha = 
\kernel{\symgrad^k}$ (see Proposition~\ref{prop:tgv_basic_prop}). 
\begin{definition}
  For each $k \geq 1$ and $l \geq 0$,
  denote by $R_{k,l}: \LPspace{d/(d-1)}
  {\Omega,\Sym^l(\RR^d)} \rightarrow \kernel{\symgrad^k}$ a linear and
  continuous projection. 
\end{definition}
As $\kernel{\symgrad^k}$ (on $\Omega$ and for 
symmetric tensor fields of order $l$) is finite-dimensional, such 
a $R_{k,l}$ always exists but is not necessarily unique. 
A coercivity estimate in $\LPspace{d/(d-1)}{\Omega,\Sym^l(\RR^d)}$
for $\TGV^{k,l}_\alpha$
will next be formulated and proven in terms of
these projections.
As we will see, the induction step in the proof requires an
intermediate estimate as follows.
\begin{lemma} 
  \label{lem:preparatory_tgvk_topological_equivalence}
  For each $k \geq 1$ and $l \geq 0$ there
  exists a constant $C > 0$, only depending on $\Omega, k$ and $l$
  such that for each $u \in \BD(\Omega,\Sym^{l}(\RR^d))$ and 
  $w \in \LPspace{d/(d-1)}{\Omega,\Sym^{l+1}(\RR^d)}$,
  \[
  \|\symgrad u \|_{\radon} \leq C \bigl(
  \|\symgrad u - R_{k,l+1}w\|_{\radon} + \|u\|_1 \bigr).
  \]
\end{lemma}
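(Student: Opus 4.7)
The plan is to argue by contradiction using the finite-dimensionality of $\kernel{\symgrad^k}$ on $\Sym^{l+1}(\RR^d)$-valued tensor fields (recall Proposition~\ref{prop:grad_kernel_nonsmooth}: this kernel consists of polynomials of degree at most $k+l$) together with the distributional closedness of $\symgrad$. The triangle inequality $\|\symgrad u\|_\radon \leq \|\symgrad u - R_{k,l+1}w\|_\radon + \|R_{k,l+1}w\|_\radon$ reduces the task to controlling the projected part $R_{k,l+1}w$, which lives in a fixed finite-dimensional space; on this space all norms are equivalent, so compactness should take over.

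Concretely, suppose the inequality fails. After rescaling (by $\|\symgrad u\|_\radon$, which we may assume positive, since otherwise the inequality is trivial), one obtains sequences $\seq{u^n} \subset \BD(\Omega,\Sym^l(\RR^d))$ and $\seq{w^n} \subset \LPspace{d/(d-1)}{\Omega,\Sym^{l+1}(\RR^d)}$ with $\|\symgrad u^n\|_\radon = 1$, $\|u^n\|_1 \to 0$ and $\|\symgrad u^n - p^n\|_\radon \to 0$, where $p^n := R_{k,l+1} w^n \in \kernel{\symgrad^k}$. The triangle inequality then forces $\|p^n\|_\radon \to 1$. Since $p^n$ lies in a finite-dimensional space of polynomials, I can invoke norm equivalence to pass to a (non-relabelled) subsequence with $p^n \to p^*$ uniformly on $\closure{\Omega}$, for some $p^* \in \kernel{\symgrad^k}$. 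In particular $\|p^*\|_\radon = \|p^*\|_1 = 1$, so $p^* \not\equiv 0$.

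The contradiction arises by comparing two distributional limits of $\symgrad u^n$. On the one hand, $u^n \to 0$ in $\LPspace{1}{\Omega,\Sym^l(\RR^d)}$ yields $\symgrad u^n \to 0$ in the distributional sense, since for any test field $\varphi$ we have $\scp{\symgrad u^n}{\varphi} = -\int_\Omega u^n \inprod \divergence \varphi \dd{x} \to 0$. On the other hand, $\symgrad u^n = (\symgrad u^n - p^n) + p^n$ converges to $p^*$ in Radon norm, hence also distributionally. Uniqueness of distributional limits gives $p^* = 0$, contradicting $\|p^*\|_1 = 1$.

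The main (and essentially only) obstacle is simply to notice that the whole difficulty of the estimate is concentrated in the projected piece $R_{k,l+1}w$, and to exploit that $\range{R_{k,l+1}}$ is a fixed finite-dimensional polynomial space independent of $w$; once this is seen, the compactness/distributional-closedness argument is routine and no new machinery beyond Proposition~\ref{prop:grad_kernel_nonsmooth} and the duality $\Cspace[0]{}{\Omega,\Sym^{l+1}(\RR^d)}^* = \radonspace{\Omega,\Sym^{l+1}(\RR^d)}$ is needed.
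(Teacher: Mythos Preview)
Your proof is correct and follows essentially the same contradiction argument as the paper: normalise so that $\|\symgrad u^n\|_\radon = 1$, use finite-dimensionality of $\kernel{\symgrad^k}$ to extract a convergent subsequence of the projections $R_{k,l+1}w^n$, and then obtain a contradiction from the closedness of $\symgrad$ (equivalently, uniqueness of distributional limits). The paper phrases the final step as ``closedness of the weak symmetrised gradient'' rather than comparing two distributional limits, but this is the same mechanism.
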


\begin{proof}
  If this is not true for some $k$ and $l$, then there exist 
  $\seq{u^n}$ in $\BD(\Omega,\Sym^{l}(\RR^d))$ 
  and $\seq{w^n}$ in $\LPspace{d/(d-1)}{\Omega,\Sym^{l+1}(\RR^d)}$
  such that
  \[ 
  \| \symgrad u^n \| _\mathcal{M} = 1 \quad 
  \mbox{and} \quad \frac{1}{n}\geq \| u^n \|_1 
  + \| \symgrad u^n - R_{k,l+1} w^n \| _\mathcal{M}. 
  \] 
  This implies that $\seq{R_{k,l+1}w^n}$ is bounded in terms of 
  $\norm[\radon]{\placeholder}$ in the finite-dimensional space 
  $\kernel{\TGV_\alpha^{k,l+1}} = \kernel{\symgrad^k}$, see 
  Proposition~\ref{prop:tgv_basic_prop}. 
  Consequently, there exists a subsequence, again denoted 
  by $\seq{w^n}$, %
  such that 
  $R_{k,l+1}w^n \rightarrow w $ as $n \to \infty$ with respect to $ \|\cdot \|_{1} $. 
  Hence, $ \symgrad u^n \rightarrow w$ as $n \to \infty$. Further, we have that 
  $ u^n \rightarrow 0 $ as $n \to \infty$ and thus, by closedness of the weak 
  symmetrised gradient, $ \symgrad u^n \rightarrow 0 $ as $n \to \infty$ in $\radonspace{\Omega, \Sym^{l+1}(\RR^d)}$, which contradicts 
  $ \| \symgrad u^n \|_\mathcal{M} = 1 $ for all $n$.
\end{proof}

\begin{proposition}
  \label{prop:tgvk_basic_topological_equivalence}
For each $k\geq 1$ and $l\geq 0$, there exists 
a constant $C > 0$
such that
\begin{eqnarray}
  \label{eq:tgvk_basis_topological_equivalence}
  \fl
  \norm[\radon]{\symgrad u} &\leq C \bigl(\norm[1]{u} 
  + \min\sett{\alpha_0, \ldots, \alpha_{k-1}}^{-1} \TGV_\alpha^{k,l}(u) \bigr)  
  \qquad
                              \text{as well as} \\
  \label{eq:tgvk_coercivity}
  \fl
  \norm[d/(d-1)]{u - R_{k,l}u} &\leq C
                                 \min\sett{\alpha_0, \ldots, \alpha_{k-1}}^{-1}                                 \TGV_\alpha^{k,l}(u)
\end{eqnarray}
for all $u \in \BD(\Omega,\Sym^l(\RR^d))$.
\end{proposition}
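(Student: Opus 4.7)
The plan is to prove both estimates jointly by induction on $k \geq 1$, with $l \geq 0$ allowed to vary throughout.

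For the base case $k=1$, by Remark~\ref{rem:tgv_eq_tv} we have $\TGV_{\alpha}^{1,l}(u) = \alpha_0 \norm[\radon]{\symgrad u}$, so \eqref{eq:tgvk_basis_topological_equivalence} holds trivially (in fact with equality and $C = 1$), while \eqref{eq:tgvk_coercivity} is precisely the Sobolev--Korn inequality (Theorem~\ref{thm:sobolev_korn}) applied with the projection $R_{1,l}$.

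For the inductive step, assume the pair of estimates holds at level $k$ for every $l \geq 0$, and fix $l$ for the step to $k+1$. Given $u \in \BD(\Omega,\Sym^l(\RR^d))$, exploit the recursive formula \eqref{eq:tgv_recursive_def} to pick a minimiser $w \in \BD(\Omega,\Sym^{l+1}(\RR^d))$ with
\[
  \alpha_k \norm[\radon]{\symgrad u - w} + \TGV_{\alpha'}^{k,l+1}(w) = \TGV_\alpha^{k+1,l}(u),
\]
where $\alpha' = (\alpha_0,\ldots,\alpha_{k-1})$. Applying Lemma~\ref{lem:preparatory_tgvk_topological_equivalence} to $u$ and $w$, then splitting $\symgrad u - R_{k,l+1}w = (\symgrad u - w) + (w - R_{k,l+1}w)$, gives
\[
  \norm[\radon]{\symgrad u} \leq C\bigl(\norm[\radon]{\symgrad u - w} + \norm[\radon]{w - R_{k,l+1}w} + \norm[1]{u}\bigr).
\]
Since $w - R_{k,l+1}w \in L^1(\Omega,\Sym^{l+1}(\RR^d))$, its Radon norm equals its $L^1$ norm, which by boundedness of $\Omega$ and Hölder's inequality is controlled by its $L^{d/(d-1)}$ norm. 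The inductive hypothesis \eqref{eq:tgvk_coercivity} at level $(k,l+1)$ then bounds this by $C \min\{\alpha_0,\ldots,\alpha_{k-1}\}^{-1} \TGV_{\alpha'}^{k,l+1}(w)$. Combining with $\alpha_k\norm[\radon]{\symgrad u - w} \leq \TGV_\alpha^{k+1,l}(u)$ and $\TGV_{\alpha'}^{k,l+1}(w) \leq \TGV_\alpha^{k+1,l}(u)$, and using that $\min\{\alpha_0,\ldots,\alpha_k\}^{-1} = \max(\alpha_k^{-1}, \min\{\alpha_0,\ldots,\alpha_{k-1}\}^{-1})$, yields \eqref{eq:tgvk_basis_topological_equivalence} at level $k+1$.

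For \eqref{eq:tgvk_coercivity} at level $k+1$, I would argue by contradiction in the spirit of Proposition~\ref{prop:tv^k-coercive}. Assuming failure yields a sequence $\seq{v^n}$ with $\norm[d/(d-1)]{v^n - R_{k+1,l}v^n} = 1$ and $\TGV_\alpha^{k+1,l}(v^n) \to 0$; replacing $v^n$ by $v^n - R_{k+1,l}v^n$ we may assume $R_{k+1,l}v^n = 0$ and $\norm[d/(d-1)]{v^n} = 1$. The newly proved estimate \eqref{eq:tgvk_basis_topological_equivalence} makes $\norm[\radon]{\symgrad v^n}$ bounded, so $\seq{v^n}$ is bounded in $\BD(\Omega,\Sym^l(\RR^d))$. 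By the compact embedding of Theorem~\ref{thm:bd_embedding} we extract a subsequence converging in $L^1$ to some $v$; lower semi-continuity gives $v \in \kernel{\TGV_\alpha^{k+1,l}} = \kernel{\symgrad^{k+1}}$, while $R_{k+1,l}v^n = 0$ together with the (continuity of the) finite-rank projection forces $v = 0$. The estimate \eqref{eq:tgvk_basis_topological_equivalence} then drives $v^n \to 0$ in the BD-norm, and the continuous embedding $\BD \embed L^{d/(d-1)}$ of Theorem~\ref{thm:bd_embedding} yields $\norm[d/(d-1)]{v^n} \to 0$, contradicting $\norm[d/(d-1)]{v^n} = 1$.

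The main technical obstacle is the careful bookkeeping in the inductive step: one must pass cleanly between the Radon norm on $w - R_{k,l+1}w$ (needed to enter Lemma~\ref{lem:preparatory_tgvk_topological_equivalence}) and the $L^{d/(d-1)}$ norm (needed to invoke the inductive hypothesis), relying on the fact that BD-representatives are $L^1$ functions and that $\Omega$ is bounded. A secondary point is to verify that the projection $R_{k+1,l}$ can be chosen so that $R_{k+1,l}v^n \to R_{k+1,l}v$ from $L^1$-convergence — this can always be arranged, since $\range{R_{k+1,l}}$ is finite-dimensional and one may construct $R_{k+1,l}$ by pairing against smooth test tensor fields so that it is continuous on every $L^p$.
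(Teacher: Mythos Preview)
Your proposal is correct and follows essentially the same route as the paper: induction on $k$ with the Sobolev--Korn inequality as base case, Lemma~\ref{lem:preparatory_tgvk_topological_equivalence} combined with the recursive representation~\eqref{eq:tgv_recursive_def} and the inductive coercivity estimate for \eqref{eq:tgvk_basis_topological_equivalence}, and a compactness/contradiction argument for \eqref{eq:tgvk_coercivity}. The only minor difference is that the paper carries out the contradiction for $\alpha=(1,\ldots,1)$ and then deduces the general case from the monotonicity of $\TGV_\alpha^{k+1,l}$ in each $\alpha_m$, whereas you leave the $\alpha$-uniformity implicit; this is a small bookkeeping point and does not affect the substance of the argument.
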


\begin{proof}
We prove the result by induction with respect to $k$.
In the case $ k=1 $ and $l \geq 0$ arbitrary, 
the first inequality is immediate while the
second is equivalent to the 
Sobolev--Korn inequality in $\BD(\Omega, \Sym^l(\RR^d))$,
see Theorem~\ref{thm:sobolev_korn}.

Now assume that both inequalities hold for a fixed $k$ and each $l\geq 0$
and perform an induction step with respect to $k$, i.e., we fix
$l\in \NN$, $ \alpha = (\alpha _0 , \ldots ,\alpha _k) $ with $ \alpha _i >0 $
for $i=0,\ldots,k$.
We assume 
that assertion \eqref{eq:tgvk_basis_topological_equivalence} holds for 
$\alpha' = (\alpha _0 , \ldots ,\alpha _{k-1}) $ %
and any
$l'\in \NN$. %

We will first show the uniform estimate for $\|\symgrad u\|_{\radon}$
for which it suffices to consider $u \in \BD(\Omega, \Sym^{l}(\RR ^d)) $, 
as otherwise, according to Theorem~\ref{thm:tgv_primal}, 
$\TGV^{k+1,l}_\alpha(u) = \infty$. 
Then, with the projection $R_{k,l+1}$, the help of Lemma 
\ref{lem:preparatory_tgvk_topological_equivalence},
the continuous embeddings \[
\fl \BD(\Omega,\Sym^{l+1}(\RR^d)) \embed
L^{d/(d-1)}(\Omega,\Sym^{l+1}(\RR^d)) 
\embed L^{1}(\Omega,\Sym^{l+1}(\RR^d))
\] 
and the induction
hypothesis, we can estimate for arbitrary $w \in \BD(\Omega,\Sym^{l+1}(\RR^d))$,
\begin{eqnarray*}
\|\symgrad u \|_{\radon} &\leq  C( \|\symgrad u - R_{k,l+1} w\|_{\radon} + \|u\|_1  )\\
&\leq C( \|\symgrad u - w\|_{\radon} + \|w - R_{k,l+1} w\|_{d/(d-1)} 
+ \|u\|_1 )\\
&\leq C(\|\symgrad u - w\|_{\radon} +  \min\sett{\alpha_0,\ldots,\alpha_{k-1}}^{-1} \TGV^{k,l+1}_{\alpha'} (w) + \|u\|_1 ) \\
&\leq C\bigl( \min\sett{\alpha_0,\ldots,\alpha_k}^{-1} (\alpha_k  \|\symgrad u - w\|_{\radon} + \TGV^{k,l+1}_{\alpha'} (w)) + \|u\|_1 \bigr)
\end{eqnarray*}
for  $ C >0 $ suitable generic constants. Taking the minimum over all such $w \in 
\BD(\Omega,\Sym^{l+1} (\RR^d))$ then yields
\[
\|\symgrad u\|_{\radon} \leq C \bigl(\|u\|_1
+ \min\sett{\alpha_0,\ldots,\alpha_k}^{-1} \TGV_\alpha^{k+1,l}(u) \bigr)
\]
by virtue of the recursive minimum 
representation~\eqref{eq:tgv_recursive_def}.

The coercivity estimate can be shown analogously to 
Proposition~\ref{prop:tv^k-coercive} and 
Corollary~\ref{cor:tv^k-coercive-scalar}.
First, assume that the inequality does not hold true for $\alpha=(1,\ldots,1)$.
Then, there is a sequence $\seq{u^n}$ in 
$L^{d/(d-1)}(\Omega,\Sym^l(\RR^d))$ such that
\[
\|u^n - R_{k+1,l}u^n\|_{d/(d-1)} = 1 \quad \text{and} \quad
 \frac{1}{n}\geq \TGV^{k+1,l}_\alpha (u^n).
\]
By $\kernel{\TGV_\alpha^{k+1,l}} %
= \range{R_{k+1,l}}$, we have
$\TGV_{\alpha}^{k+1,l}(u^n - R_{k+1,l}u^n) = \TGV_\alpha^{k+1,l}(u^n)$ for each $n$. 
Thus, since we already know the first estimate in \eqref{eq:tgvk_basis_topological_equivalence} to hold,
\begin{equation}
  \label{eq:coercitiy_intermediate}
  \fl
  \|\symgrad (u^n - R_{k+1,l}u^n)\|_{\radon} \leq 
  C \bigl(  \TGV_\alpha^{k+1,l}(u^n) + \|u^n - R_{k+1,l}u^n\|_1 \bigr),
\end{equation}
implying, by continuous embedding, that
$\seq{u^n - R_{k+1,l}u^n}$ is bounded in $\BD(\Omega,\Sym^l(\RR^d))$. By
compact embedding (see Proposition \ref{thm:bd_embedding}),
we may therefore conclude that $u^n - R_{k+1,l}u^n \rightarrow 
u$ in $L^{1}(\Omega,\Sym^l(\RR^d))$ for some 
subsequence (not relabelled). Moreover, 
as $R_{k+1,l}(u^n - R_{k+1,l}u^n) = 0$ for all $n$, the limit has to
satisfy $R_{k+1,l}u = 0$. On the other hand, by lower semi-continuity 
(see Proposition~\ref{prop:tgv_basic_prop}),
\[
0 \leq \TGV_\alpha^{k+1,l}(u) \leq \liminf_{n \rightarrow \infty} 
\ \TGV_\alpha^{k+1,l}(u^n) = 0,
\]
hence $u \in \kernel{\symgrad^{k+1}} = \range{R_{k+1,l}}$. Consequently,
$\lim_{n \rightarrow \infty} u^n - R_{k+1,l}u^n =  u = R_{k+1,l}u = 0$. 
From~\eqref{eq:coercitiy_intermediate} it follows that also
$\symgrad(u^n - R_{k+1,l}u^n) \rightarrow 0$ in 
$\radon(\Omega,\Sym^{l+1}(\RR^d)) $, so $u^n - R_{k+1,l}u^n \rightarrow 0$
in $\BD(\Omega,\Sym^l(\RR^d))$ and by continuous embedding also in
$L^{d/(d-1)}(\Omega,\Sym^l(\RR^d))$.
However, this contradicts 
$\|u^n - R_{k+1,l}u^n\|_{d/(d-1)} = 1$ for all $n$, and thus, the claimed
coercivity for the particular choice $\alpha=(1,\ldots,1)$ holds.
The result for general $\alpha$ then follows from monotonicity of $\TGV^{k+1,l}_\alpha$
with respect to each component of $\alpha$.
\end{proof}

  \begin{corollary} \label{cor:bv_bgv_equivalence}
    For $k\geq 1$ and $l \geq 0$ 
    there exist $ C,c>0 $ such that for all $ u\in\BD(\Omega, \Sym^l(\RR^d)) $
    we have
    \begin{equation}
      \label{eq:bv_bgv_equivalence}
      c \bigl( \|u\|_1 + \TGV^{k,l}_\alpha (u) \bigr) \leq  \|u\|_1 + \TD(u) 
      \leq C \bigl( \|u\|_1 + \TGV^{k,l}_\alpha (u) \bigr).
    \end{equation}
    In particular, $\BGV^{k}(\Omega,\Sym^l(\RR^d)) = \BD(\Omega,\Sym^l(\RR^d))$
    in the sense of Banach space isomorphy.
  \end{corollary}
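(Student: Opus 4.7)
The plan is to derive both inequalities in \eqref{eq:bv_bgv_equivalence} directly from results already established in the excerpt, then deduce the Banach space isomorphy from norm equivalence together with set equality.

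For the left inequality, I would use the minimum representation in Theorem~\ref{thm:tgv_primal}. Given $u \in \BD(\Omega,\Sym^l(\RR^d))$, the tuple $w_0 = u$, $w_1 = w_2 = \cdots = w_k = 0$ is admissible since each $w_i$ lies in $\BD(\Omega,\Sym^{l+i}(\RR^d))$ and it matches the required boundary conditions. Evaluating the sum at this competitor gives
\[
\TGV^{k,l}_\alpha(u) \leq \alpha_{k-1}\norm[\radon]{\symgrad u} + \sum_{i=2}^k \alpha_{k-i}\norm[\radon]{0 - 0} = \alpha_{k-1}\TD(u),
\]
which, combined trivially with $\norm[1]{u} \leq \norm[1]{u}$, yields the left inequality with $c = \min\sett{1, \alpha_{k-1}^{-1}}$.

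For the right inequality, I would invoke Proposition~\ref{prop:tgvk_basic_topological_equivalence}, specifically the estimate \eqref{eq:tgvk_basis_topological_equivalence}, which directly states
\[
\TD(u) = \norm[\radon]{\symgrad u} \leq C\bigl(\norm[1]{u} + \min\sett{\alpha_0,\ldots,\alpha_{k-1}}^{-1}\TGV^{k,l}_\alpha(u)\bigr).
\]
Adding $\norm[1]{u}$ to both sides and absorbing the minimum of the $\alpha_i^{-1}$ into a new generic constant gives the right inequality. Neither step requires a calculation beyond citing the earlier results.

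For the Banach space isomorphy, I would first note that the two spaces coincide as sets: if $u \in \BGV^k(\Omega,\Sym^l(\RR^d))$, then the right inequality (which only used the finiteness of $\TGV^{k,l}_\alpha(u)$ and $\norm[1]{u}$) yields $\TD(u) < \infty$, hence $u \in \BD(\Omega,\Sym^l(\RR^d))$; conversely, if $u \in \BD(\Omega,\Sym^l(\RR^d))$, then the left inequality gives $\TGV^{k,l}_\alpha(u) < \infty$, i.e., $u \in \BGV^k(\Omega,\Sym^l(\RR^d))$. Combined with the norm equivalence \eqref{eq:bv_bgv_equivalence}, the identity map is a bounded linear bijection with bounded inverse, establishing the Banach space isomorphy. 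No genuine obstacle arises here, since all the analytic content has already been assembled in Proposition~\ref{prop:tgvk_basic_topological_equivalence} and Theorem~\ref{thm:tgv_primal}; the only subtlety is making sure that the trivial competitor for the minimum representation is admissible, which it is because the zero tensor field lies in every $\BD(\Omega,\Sym^{l+i}(\RR^d))$.
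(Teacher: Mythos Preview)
Your proof is correct and follows exactly the paper's approach: the left inequality via the trivial competitor in the minimum representation~\eqref{eq:tgv_primal} giving $\TGV_\alpha^{k,l} \leq \alpha_{k-1}\TD$, and the right inequality directly from~\eqref{eq:tgvk_basis_topological_equivalence}. The paper's proof is a two-sentence citation of these same two ingredients, so your proposal is simply a fleshed-out version of it.
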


\begin{proof}
  The estimate on the right is a consequence 
  of~\eqref{eq:tgvk_basis_topological_equivalence} while 
  the estimate on the left follows by the minimum 
  representation~\eqref{eq:tgv_primal} which gives $\TGV_\alpha^{k,l} \leq 
  \alpha_{k-1} \TD$.
\end{proof}

\paragraph*{Tikhonov regularisation.}
Once again, the second estimate in Proposition \ref{prop:tgvk_basic_topological_equivalence} is crucial to transfer the well-posedness result of Theorem \ref{thm:general_reg_existence_linear} as follows.

\begin{proposition} \label{prop:well_posed_tgvk} With $X=L^p(\Omega)$, $p \in {]{1,\infty}[}$, $\Omega$ being a bounded Lipschitz domain, $Y$ a Banach space, $K: X \to Y$ linear and continuous, $S_f: Y \to [0,\infty]$ proper, convex, lower semi-continuous and coercive, $k \geq 1$, $\alpha = (\alpha_0,\ldots,\alpha_{k-1})$ with $\alpha_i > 0$ for $i=0,\ldots, k-1$, the Tikhonov minimisation problem
  \begin{equation}
    \label{eq:general_tgvk_min}
    \min_{u \in \LPspace{p}{\Omega}} \ S_f(Ku) +\TGV^{k}_\alpha (u) .
  \end{equation}
  is well-posed in the sense of Theorem \ref{thm:general_reg_existence_linear} whenever $p \leq d/(d-1)$ if $d>1$.
\end{proposition}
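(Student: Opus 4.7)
The plan is to apply Theorem \ref{thm:general_reg_existence_linear} with $X = \LPspace{p}{\Omega}$ and $\abs{\placeholder} = \TGV^k_\alpha$. First, I would verify the abstract prerequisites: since $p \in {]{1,\infty}[}$, the space $X = \LPspace{p}{\Omega}$ is reflexive; the assumptions on $K$ and $S_f$ are part of the hypothesis. By Proposition~\ref{prop:tgv_basic_prop}, the functional $\TGV^k_\alpha$ is a lower semi-continuous seminorm on $\LPspace{p}{\Omega}$ whose kernel equals $\poly^{k-1}$, which is finite-dimensional. Hence a linear continuous projection $R \colon \LPspace{p}{\Omega} \to \poly^{k-1}$ exists, and assumption (i) of Theorem~\ref{thm:general_reg_existence_linear} is automatically satisfied.

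The main step, and the only one requiring real work, is to establish the coercivity estimate
\[
\norm[p]{u - Ru} \leq C\, \TGV^k_\alpha(u) \quad \text{for all } u \in \LPspace{p}{\Omega}.
\]
For this, I would invoke estimate~\eqref{eq:tgvk_coercivity} of Proposition~\ref{prop:tgvk_basic_topological_equivalence}, applied with $l = 0$ and the scalar projection $R_{k,0}$, yielding
\[
\norm[d/(d-1)]{u - R_{k,0} u} \leq C \min\sett{\alpha_0,\ldots,\alpha_{k-1}}^{-1} \TGV^k_\alpha(u),
\]
where the right-hand side is finite precisely on $\BGV^k(\Omega) = \BV(\Omega)$ (Corollary~\ref{cor:bv_bgv_equivalence}). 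Since $\Omega$ is bounded and $p \leq d/(d-1)$ (with the convention $d/(d-1) = \infty$ for $d=1$, so that the $L^\infty$-estimate that is provided for $d=1$ suffices), the continuous embedding $\LPspace{d/(d-1)}{\Omega} \embed \LPspace{p}{\Omega}$ transfers the estimate to the $p$-norm. Since any two projections onto the finite-dimensional space $\poly^{k-1}$ differ only by a continuous linear operator with image in this space, switching from $R_{k,0}$ to $R$ changes $C$ by at most an equivalence-of-norms factor on $\poly^{k-1}$.

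With the coercivity inequality at hand, all conditions of Theorem~\ref{thm:general_reg_existence_linear} are met, so well-posedness of~\eqref{eq:general_tgvk_min} in the sense described there follows directly. I do not expect a genuine obstacle: the bulk of the analytical work has already been done in Proposition~\ref{prop:tgvk_basic_topological_equivalence} and Corollary~\ref{cor:bv_bgv_equivalence}, and the remaining argument is just a bookkeeping combination of lower semi-continuity, finite-dimensionality of the kernel, and a Sobolev-type embedding on bounded domains.
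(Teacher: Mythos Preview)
Your proposal is correct and matches the paper's approach exactly: the paper does not spell out a proof but simply notes that the coercivity estimate~\eqref{eq:tgvk_coercivity} from Proposition~\ref{prop:tgvk_basic_topological_equivalence} is what is needed to apply Theorem~\ref{thm:general_reg_existence_linear}. Your write-up is a faithful expansion of this, including the embedding $\LPspace{d/(d-1)}{\Omega} \embed \LPspace{p}{\Omega}$ and the finite-dimensionality of $\kernel{\TGV^k_\alpha} = \poly^{k-1}$.
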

Regarding the assumptions of Theorem \ref{thm:general_reg_existence_linear} on the kernel of the seminorm and the constraint on the exponent $p$ in the underlying $L^p$-space, we see that, as one would expect, $\TGV^k$ resembles the situation of the infimal convolution of TV-type functionals rather than their sum, in particular the constraint $p\leq d/(d-1)$ is the same as with first-order $\TV$ regularisation.

This is also true for the following convergence result, which should be compared to the results of Theorems \ref{thm:sum_tvk_reg_convergence} and \ref{thm:infconv_tvk_reg_convergence} for the sum and the infimal convolution of higher-order TV functionals, respectively. Here, similar as with the infimal convolution, we extend $\TGV_\alpha^k$ to weights in $]0,\infty]$ by using the minimum representation and defining $\alpha_i \norm[\radon]{\placeholder} = \mathcal{I}_{\{ 0\}}$ for $\alpha_i = \infty$.

\begin{theorem}
  \label{thm:tgvk_reg_convergence}
  In the situation of Proposition~\ref{prop:well_posed_tgvk} and $p \in {]{1,\infty}[}$ with $p \leq d/(d-1)$ if $d > 1$,
  let for each 
$\delta > 0$ the data $f^\delta$ be such that
$S_{f^\delta}(f^\dagger) \leq \delta$, and let the discrepancy functionals $\seq{S_{f^\delta}}$ be equi-coercive and converge to
  $S_{f^\dagger}$ for some data $f^\dagger$ in $Y$ in the sense of \eqref{eq:discrepancy_convergence} and $S_{f^\dagger}(v) = 0$ if and only if $v = f^\dagger$.
  
  Choose the parameters $\alpha = (\alpha_0,\ldots,\alpha_{k-1})$
  in dependence of $\delta$ such that
  \[
  \min\{\alpha_0,\ldots,\alpha_{k-1}\} \to 0, \quad
  \frac{\delta}{\min\{\alpha_0,\ldots,\alpha_{k-1}\}} \to 0, \qquad \text{as} \qquad \delta \to 0,
  \]
  and assume that $(\tilde{\alpha}_0,\ldots,\tilde{\alpha}_{k-1}) = (\alpha_0,\ldots,\alpha_{k-1})/\min\{\alpha_0,\ldots,\alpha_{k-1}\} \rightarrow (\alpha_0^\dagger,\ldots,\alpha_{k-1}^\dagger) \in ]0,\infty]^k$ as $\delta \to 0$.
 Set 
 \[ m = \min\set{ m'\in \{1,\ldots,k\}}{\alpha^\dagger _{k-m'} \neq \infty } 
 \]
  and assume that there exists $u_0 \in \BV^ m(\Omega)$ such that
 $Ku_0 = f^\dagger$.

 Then, up to shifts in $\kernel{K} \cap \poly^{k-1}$, any sequence $\seq{u^{\alpha,\delta}}$, with each $u^{\alpha,\delta}$ being a solution to~\eqref{eq:general_tgvk_min} with
 parameters $(\alpha_0,\ldots,\alpha_{k-1})$ and data $f^\delta$,
 has at least one $L^p$-weak accumulation point. Each $L^p$-weak accumulation point is a 
  minimum-$\TGV_{\alpha^\dagger}^k$-solution of $Ku = f^\dagger$ and $\lim_{\delta \to 0}
  \TGV^k_{\tilde{\alpha}}(u^{\alpha,\delta}) = \TGV_{\alpha^\dagger}^k (u^\dagger)$.
 \begin{proof}
   The proof is analogous to the one of \cite[Theorem 4.8]{holler14inversetgv_mh}, which considers the case $S_f(v) = \frac1q\|v-f\|_Y^q$ for $q \in [1,\infty[$.
   Alternatively, one can proceed along the lines of the proof of Theorem~\ref{thm:infconv_tvk_reg_convergence} with the infimal convolution replaced by $\TGV$ to obtain the result.
 \end{proof}
\end{theorem}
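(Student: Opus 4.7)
My plan is to mirror the proof of Theorem \ref{thm:infconv_tvk_reg_convergence} for the infimal convolution, replacing the two-term infimal convolution by the $k$-term cascade representation of $\TGV^k_\alpha$ given in Theorem \ref{thm:tgv_primal} / Remark \ref{rem:tgv_minimum_representation_scalar}. The preliminary step is to show that a minimum-$\TGV^k_{\alpha^\dagger}$-solution $u^\dagger$ actually exists under the stated regularity of $u_0$. Using the recursive representation \eqref{eq:tgv_recursive_def} and the convention $\alpha_i \norm[\radon]{\cdot} = \mI_{\sett{0}}$ for $\alpha_i = \infty$, the choice $w_i = \symgrad^i u_0$ for $i = 0,\ldots, m-1$ forces $\symgrad w_{i-1} - w_i = 0$ on the indices with infinite weight, so $\TGV^k_{\alpha^\dagger}(u_0) \leq \alpha^\dagger_{k-m} \norm[\radon]{\symgrad^m u_0} < \infty$ thanks to $u_0 \in \BV^m(\Omega)$. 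Existence of $u^\dagger$ then follows from Theorem \ref{thm:general_reg_existence_linear} together with the coercivity estimate \eqref{eq:tgvk_coercivity}.

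Next, using optimality of $u^{\alpha,\delta}$ against $u^\dagger$ and dividing by $\min\sett{\alpha_0,\ldots,\alpha_{k-1}}$ yields
\[
  \frac{S_{f^\delta}(Ku^{\alpha,\delta})}{\min_i\alpha_i} + \TGV^k_{\tilde\alpha}(u^{\alpha,\delta})
  \leq \frac{\delta}{\min_i\alpha_i} + \TGV^k_{\tilde\alpha}(u^\dagger).
\]
Since $\TGV^k_{\tilde\alpha}(u^\dagger) \to \TGV^k_{\alpha^\dagger}(u^\dagger) < \infty$ (again via the representation with $w_i = \symgrad^i u_0$ and continuity of the finitely many finite weights), the parameter choice rule forces $S_{f^\delta}(Ku^{\alpha,\delta}) \to 0$ and $\limsup_{\delta\to 0}\TGV^k_{\tilde\alpha}(u^{\alpha,\delta}) \leq \TGV^k_{\alpha^\dagger}(u^\dagger)$. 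The coercivity estimate \eqref{eq:tgvk_coercivity} then bounds $u^{\alpha,\delta} - R_{k,0} u^{\alpha,\delta}$ in $L^{d/(d-1)}(\Omega)$; combined with equi-coercivity of $\seq{S_{f^\delta}}$ and the argument used in Theorem \ref{thm:general_reg_existence_linear} (modifying by elements of $\ker(K)\cap\poly^{k-1}$), $\seq{u^{\alpha,\delta}}$ can be assumed bounded in $L^p(\Omega)$ and thus admits a weak accumulation point $u^*$.

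The identification $Ku^* = f^\dagger$ is standard from $\seq{S_{f^\delta}}$ converging to $S_{f^\dagger}$ in the sense of \eqref{eq:discrepancy_convergence} and the assumption on zeros of $S_{f^\dagger}$. What remains is the lower semi-continuity statement
\[
  \TGV^k_{\alpha^\dagger}(u^*) \leq \liminf_{\delta\to 0}\TGV^k_{\tilde\alpha}(u^{\alpha,\delta}),
\]
which I expect to be the main obstacle because the weights $\tilde\alpha$ may blow up to $\infty$ in some components. The strategy, mirroring Lemma \ref{lem:inf_conv_lim_inf}, is to pick, for each admissible sequence index, a minimiser $\sett{w_i^{\alpha,\delta}}_{i=0}^k$ in the representation \eqref{eq:tgv_primal} with $w_0^{\alpha,\delta} = u^{\alpha,\delta}$, $w_k^{\alpha,\delta} = 0$; then derive uniform bounds $\|\symgrad w_{i-1}^{\alpha,\delta} - w_i^{\alpha,\delta}\|_\radon \leq C/\tilde\alpha_{k-i}$ (with the right-hand side vanishing when $\alpha^\dagger_{k-i}=\infty$), and inductively bound the $w_i^{\alpha,\delta}$ in $\BD$ after projecting off the kernel of $\symgrad$ by using Proposition \ref{prop:tgvk_basic_topological_equivalence} plus the embedding of Theorem \ref{thm:bd_embedding}. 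Extracting a diagonal subsequence yields weak accumulation points $w_i^*$ with $w_0^* = u^*$, $w_k^* = 0$, and $\symgrad w_{i-1}^* = w_i^*$ on the infinite-weight indices (by weak lower semi-continuity of $\|\cdot\|_\radon$), so $\sett{w_i^*}$ is admissible in the representation of $\TGV^k_{\alpha^\dagger}(u^*)$, establishing the desired inequality via termwise $\liminf$. Combining this with $\limsup \leq$ above gives $u^*$ as minimum-$\TGV^k_{\alpha^\dagger}$-solution and $\TGV^k_{\tilde\alpha}(u^{\alpha,\delta}) \to \TGV^k_{\alpha^\dagger}(u^\dagger)$ via the usual subsequence argument.
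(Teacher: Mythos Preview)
Your proposal is correct and follows precisely the route the paper itself indicates (adapting the proof of Theorem~\ref{thm:infconv_tvk_reg_convergence}, with Lemma~\ref{lem:inf_conv_lim_inf} replaced by its cascade analogue for $\TGV$). One minor correction in the lower semi-continuity step: you cannot ``project the $w_i^{\alpha,\delta}$ off $\ker(\symgrad)$'' as in Lemma~\ref{lem:inf_conv_lim_inf}, because in the cascade \eqref{eq:tgv_primal} the variable $w_i$ enters the $i$-th summand directly (as $\symgrad w_{i-1}-w_i$), not only through $\symgrad w_i$, so such a shift would change the functional value. The correct way to obtain boundedness of each $w_i^{\alpha,\delta}$ is inductive, starting from $w_0^{\alpha,\delta}=u^{\alpha,\delta}$: boundedness of $\norm[1]{w_0^{\alpha,\delta}}$ together with the $\TGV^k_{\tilde\alpha}$-bound and \eqref{eq:tgvk_basis_topological_equivalence} gives a bound on $\norm[\radon]{\symgrad w_0^{\alpha,\delta}}$, hence on $\norm[1]{w_1^{\alpha,\delta}}$, and one iterates using the recursion \eqref{eq:tgv_recursive_def}; this is exactly the mechanism behind the a-priori estimates \eqref{eq:tgv_tikh_hilbert_bound}.
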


\paragraph*{A-priori estimates.} In case of Hilbert-space data and
quadratic norm discrepancy, i.e., $S_f(v) = \frac12\norm[Y]{v - f}^2$
for $Y$ Hilbert space, one can, in the situation of
Proposition~\ref{prop:well_posed_tgvk} once again find an a-priori
bound thanks to the coercivity estimate~\eqref{eq:tgvk_coercivity}.
Let $C> 0$ be a constant such that
$\norm[p]{u - Ru} \leq C \min\sett{\alpha_0,\ldots,\alpha_{k-1}}^{-1}
\TGV_\alpha^k(u)$ for a linear and continuous projection operator $R$
onto $\poly^{k-1}$ for all $u \in \BV(\Omega)$.  Further, assume that
$K$ is injective on $\poly^{k-1}$ and $c > 0$ is chosen such that
$c \norm[p]{Ru} \leq \norm[Y]{KRu}$ for all
$u \in \LPspace{p}{\Omega}$.  Then, for a solution $u^*$ of the minimisation
problem
\[
  \min_{u \in \LPspace{p}{\Omega}} \ \frac12 \norm[Y]{Ku - f}^2 +
  \TGV_\alpha^k(u),
\]
the norm $\norm[p]{u^*}$ obeys the a-priori
estimate~\eqref{eq:tv^k-tikh-hilbert-bound} with $\alpha$ replaced by
$\min \sett{\alpha_0,\ldots,\alpha_{k-1}}$.  Also here, if the
discrepancy is replaced by the Kullback--Leibler discrepancy
$S_f(v) = \KL(v,f)$, then $\norm[p]{u^*}$ can be estimated analogously
in terms of~\eqref{eq:tv^k-tikh-kl-bound}.  Let, again, $C_f \geq 0$
be an a-priori estimate for the optimal functional value, analogous to
the $C_f$ that leads to~\eqref{eq:tvk-inf-conv-hilbert-bound}.
Moreover, analogous to the multi-order infimal-convolution case in
Subsection~\ref{subsec:tvk-infconv}, it is
possible to estimate each tuple $(w_1^*, \ldots, w_{k-1}^*)$ that
realises the minimum in the primal representation~\eqref{eq:tgv_primal}
of $\TGV_\alpha^k(u^*)$.  Now, in order to estimate, for instance,
$\norm[1]{w_1^*}$, set $w_0^* = u^*$ and note that we already have the
bound $\norm[1]{w_0^*} \leq \abs{\Omega}^{1/p} \norm[p]{u^*}$
where~\eqref{eq:tv^k-tikh-hilbert-bound} or~\eqref{eq:tv^k-tikh-kl-bound} provides an a-priori estimate
of the right-hand side.  Choosing a $C_1 > 0$ such that
$\norm[\radon]{\symgrad w_0} \leq C_1 (\norm[1]{w_0} +
\min\sett{\alpha_0,\ldots,\alpha_{k-1}}^{-1} \TGV_{\alpha}^k(w_0))$
for all $w_0 \in \BD(\Omega,\Sym^0(\RR^d)) = \BV(\Omega)$, we obtain
$\norm[\radon]{\symgrad w_0^*} \leq C_1 (\norm[1]{w_0^*} +
\min\sett{\alpha_0,\ldots,\alpha_{k-1}}^{-1} C_f)$ and, consequently,
\[
  \fl
  \begin{array}{rl}
    \alpha_{k-1} \norm[1]{w_1^*}
    & \leq
      \alpha_{k-1} \norm[\radon]{\symgrad w_0^*} +
      \alpha_{k-1} \norm[\radon]{\symgrad w_0^* - w_1^*}
      \leq \alpha_{k-1} \norm[\radon]{\symgrad w_0^*} + \TGV_\alpha^k(u^*)
    \\[\smallskipamount]
    & \leq \alpha_{k-1} \norm[\radon]{\symgrad w_0^*} + C_f.
  \end{array}
\]
We thus obtain the bound
\begin{equation}
  \norm[1]{w_1^*} \leq C_1 \norm[1]{w_0^*} + \frac{(C_1 + 1)C_f}{\min\sett{\alpha_0,\ldots,\alpha_{k-1}}},
  \label{eq:tgv_tikh_hilbert_bound1}
\end{equation}
which is similar to~\eqref{eq:tvk-inf-conv-hilbert-bound}, but
involves a norm and not a seminorm due to the structure of
$\TGV_\alpha^k$. Using this line of argumentation, one can now
inductively obtain bounds on $w_2,\ldots,w_{k-1}$ according to
\begin{equation}
  \norm[1]{w_i^*} \leq C_i \norm[1]{w_{i-1}^*} + \frac{(C_i + 1)C_f}{
    \min\sett{\alpha_0, \ldots, \alpha_{k-1}}}
  \label{eq:tgv_tikh_hilbert_bound}
\end{equation}
for $i = 1,\ldots,k-1$, where each $C_i > 0$ is a constant such that
$\norm[\radon]{\symgrad w_{i-1}} \leq C_i (\norm[1]{w_{i-1}} +
\min\sett{\alpha_0,\ldots,\alpha_{k-i}}^{-1}
\TGV_{(\alpha_0,\ldots,\alpha_{k-i})}^{k-i+1}(w_{i-1}))$ for all
$w_{i-1} \in \BD(\Omega, \Sym^{i-1}(\RR^d))$, whose existence is
guaranteed by
Proposition~\ref{prop:tgvk_basic_topological_equivalence}. This
provides an a-priori estimate for $u^*$ and $w_1^*,\ldots, w_{k-1}^*$.

\paragraph*{Denoising performance.}
In Figure~\ref{fig:tgv2-reg}, one can see how second-order TGV regularisation (Figure~\ref{fig:tgv2-reg} (d))
performs in comparison to first-order TV (Figure~\ref{fig:tgv2-reg} (b)) and $\alpha_1\TV \infconv \alpha_2\TV^2$ (Figure~\ref{fig:tgv2-reg} (c)) as
regulariser for image denoising.
It is apparent that TGV covers higher-order features more accurately than 
the associated infimal-convolution regulariser with the staircase effect being absent, while at the same time, jump discontinuities are preserved as for first-order TV. This is in particular reflected
in the underlying function space for TGV being $\BV(\Omega)$, see Proposition~\ref{cor:bv_bgv_equivalence}. In conclusion, the total generalised variation can be
seen as an adequate model for piecewise smooth images and will, in the following,
be the preferred regulariser for this class of functions.

\begin{figure}
  \centering
  \begin{tabular}{c@{\ }c@{\ }c@{\ }c}
    \includegraphics[width=0.22\linewidth]{pics_affine_denoising_noise.png}
    & 
    \includegraphics[width=0.22\linewidth]{pics_affine_denoising_tv1.png}
    &
    \includegraphics[width=0.22\linewidth]{pics_affine_denoising_inf_tv_tv2.png}
    & 
    \includegraphics[width=0.22\linewidth]{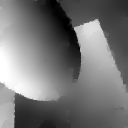}
    \\
    (a) & (b) &
    (c) & (d)
  \end{tabular}
  \caption{Total generalised variation denoising
    example. (a) Noisy image, (b) 
    regularisation with $\TV$, (c) regularisation with 
    $\alpha_1 \TV \infconv \alpha_2 \TV^2$, 
    (d) regularisation with $\TGV_\alpha^2$. 
    All parameters are tuned to give highest PSNR
    with respect to
    the ground truth (Figure~\ref{fig:first-order-reg} (a)).}
  \label{fig:tgv2-reg}
\end{figure}

\subsection{Extensions} \label{sec:tgv_extensions}
\paragraph{TGV for multichannel images.} 
Again, in analogy to TV and higher-order TV, TGV can also be extended to colour and multichannel images represented by functions mapping into the vector space $\R^m$ by testing with $\Sym^k(\RR^d)^m$-valued tensor fields. This requires to define pointwise norms on $\Sym^l(\RR^d)^m$ for $l=1,\ldots,k$ where, apart from the standard Frobenius norm, one can take any norm $\abs[\circ_l]{\placeholder}$ on $\Sym^{l}(\RR^d)^m$, noting that different norms imply different types of coupling of the multiple channels.
With each $\abs[*_l]{\placeholder}$ denoting the dual norm of $\abs[\circ_l]{\placeholder}$, $\TGV^k_\alpha$ can be extended to functions $u \in \LPlocspace{1}{\Omega,\R^m}$ as
  \begin{equation}
    \label{eq:tgv_color_def}
    \fl
    \TGV^k_\alpha(u) = \sup\ \Bigl\{\int_\Omega u \inprod \divergence^k \varphi \dd{x}
    \ \Bigl| \ \varphi \in \Ccspace{k}{\Omega, \Sym^k(\RR^d)^ m}, \ 
    \underbrace{\norm[\infty,*_l]{\divergence^l \varphi} \leq 
      \alpha_l}_{l = 0,\ldots, k-1},
    \Bigr\}
  \end{equation}
  where $\norm[\infty,*_l]{\psi}$ %
  is the pointwise supremum of the scalar function $x \mapsto \abs[*_l]{\psi(x)}$ on $\Omega$ for $\psi \in \Ccspace{}{\Omega,\Sym^l(\RR^d)}$. As before, by equivalence of norms in finite dimensions, the functional-analytic and regularisation properties of TGV transfer to its multichannel extension, see e.g. \cite{holler15tgvrec_p1_mh,Bredies14_multichannel_mh}. Rotationally invariance holds whenever all tensor norms $\abs[*_l]{\placeholder}$ are unitarily invariant. For $k=2$, particular instances that are unitarily invariant can be constructed by choosing $\abs{\placeholder}_{*_1}$ as a unitarily invariant matrix norm and $\abs{\placeholder}_{*_2}$ as either the Frobenius tensor norm or $\abs{\xi}_{*_2} = \sum_{i=1}^m \abs{\xi_i}_{*_1}$, i.e., a decoupled norm. This allows, for instance, to penalise the nuclear norm of first-order derivatives and the Frobenius tensor norm of the second order component, as it was done, e.g., in \cite{knoll15mr_pet_tgv_mh}.

\paragraph{Infimal-convolution TGV.}
Beyond the realisation of different couplings of multiple colour channels, the extension to arbitrary pointwise tensor norms in the definition of TGV can also be beneficial in the context of scalar-valued functions.
In \cite{holler14ictv}, the infimal convolution of different TGV functionals with different, anisotropic norms was considered in the context of dynamic data as well as anisotropic regularisation for still images. With $\TGV_{\beta_i}^{k_i}$ for $i=1,\ldots,n$ denoting TGV functionals according to~\eqref{eq:tgv_color_def} for $m=1$ of order $k_i$ and each $\beta_i$ denoting a tuple of pointwise norms, the functional $\ICTGV^n_\beta$ can be defined for $u \in \LPlocspace{1}{\Omega}$ as
\[ \ICTGV^n_\beta (u) = \inf_{ u_i \in \LPlocspace{1}{\Omega}, \atop  u_0 = u, \ u_n = 0 } \ \sum_{i=1}^n \TGV^{k_i}_{\beta_i}(u_{i-1} - u_i). \]
As shown in \cite{holler14ictv}, this functional is equivalent to $\TGV^k_\alpha$ for $k = \max\{ k_i\}$ and $\alpha$ any parameter vector,  and, in case $\ICTGV^n_\beta(u)< \infty$, the minimum is attained for $u_i \in \LPspace{d/(d-1)}{\Omega} $ for $i=1,\ldots,n-1$. Hence, the coercivity estimate on $\TGV^k_\alpha$ transfers to $\ICTGV^n_\beta$ and again, all results in the context of Tikhonov regularisation apply. 

For applications in the context of dynamic data, the norms for the different $\TGV^{k_i}_{\beta_i}$ can be chosen to realise different weightings of spatial and temporal derivatives. This allows, in a convex setting, for an adaptive regularisation of video data via a motion-dependent separation into different components, see, for instance, Figure \ref{fig:ictgv_juggler_example}.

Similarly, for still image regularisation, one can choose $\TGV^{k_1}_{\beta_1}$ to employ isotropic norms and correspond to the usual total generalised variation, and each $\TGV^{k_i}_{\beta_i}$ for $i=2,\ldots,n$ to employ different anisotropic norms that favour one particular direction. This yields again an adaptive regularisation of image data via a decomposition into an isotropic and several anisotropic parts and can be employed, for instance, to recover certain line structures for denoising \cite{holler14ictv} or applications in CT imaging \cite{kongskov2017tomographic}.

\begin{figure} 
\centering
  \begin{tikzpicture}
    \scriptsize
    \node at (0,0) {\includegraphics[width=5.01cm]{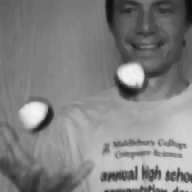}};

    \node at (3.76,1.26) {\includegraphics[width=2.5cm]{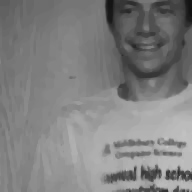}};
    \node at (3.76+2.51,1.26) {\includegraphics[width=2.5cm]{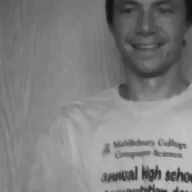}};
    \node at (3.76+2*2.51,1.26) {\includegraphics[width=2.5cm]{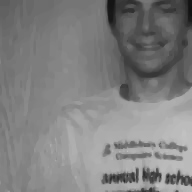}};

    \node at (3.76,-1.26) {\includegraphics[width=2.5cm]{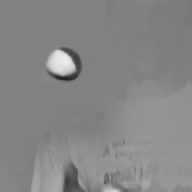}};
    \node at (3.76+2.51,-1.26) {\includegraphics[width=2.5cm]{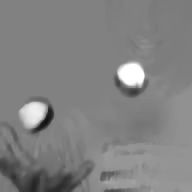}};
    \node at (3.76+2*2.51,-1.26) {\includegraphics[width=2.5cm]{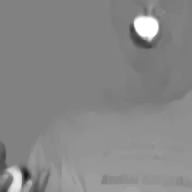}};
	\end{tikzpicture}
	\caption{\label{fig:ictgv_juggler_example} Frame of an image sequence showing a juggler (left), and three frames of a decomposition into components capturing slow (top right images) and fast (bottom right images) motion that was achieved with $\ICTGV$ regularisation.}

\end{figure}

\paragraph{Oscillation TGV and its infimal convolution.}
The total generalised variation model can also be extended to account
for functions with piecewise oscillatory behaviour, which is, for
instance, useful to model texture in images
\cite{Bredies2018oscillation_mh}. The basic idea to include
oscillations is to fix a direction $\omega \in \RR^d$, $\omega \neq 0$
and to modify the definition of second-order TGV such that its kernel
corresponds to oscillatory functions in the
$\omega/\abs{\omega}$-direction with frequency $\abs{\omega}$:
\[
  \fl
  \begin{array}{rl}
    \displaystyle
    \TGV^{\osci}_{\alpha,\omega}(u) = \sup \ \Bigl\{\int_\Omega u
    (\divergence^2 \varphi + \varphi \inprod \omega \tensor \omega)
    \dd{x} 
    & \Bigl| \  \varphi \in \Ccspace{2}{\Omega, \Sym^2(\RR^d)}, \\
    & \ \ \norm[\infty]{\varphi} \leq \alpha_0, \ \norm[\infty]{\divergence
      \varphi} \leq \alpha_1 \Bigr\},
  \end{array}
\]
where, as before, $\alpha = (\alpha_0,\alpha_1)$, $\alpha_0, \alpha_1 > 0$.
Indeed, the kernel of $\TGV_{\alpha,\omega}^{\osci}$ is spanned by the
functions $x \mapsto \sin(x \inprod \omega)$ and
$x \mapsto \cos(x \inprod \omega)$. Further, the functional is proper,
convex and lower semi-continuous in each $\LPspace{p}{\Omega}$, and
admits the minimum representation
\[
  \TGV_{\alpha,\omega}^{\osci}(u) = \min_{w \in \BD(\Omega)} \
  \alpha_1 \norm[\radon]{\grad u - w} + \alpha_0
  \norm[\radon]{\symgrad w + (\omega \tensor \omega) u}.
\]
With
$R_\omega: \LPspace{d/(d-1)}{\Omega} \to
\ker(\TGV_{\alpha,\omega}^{\osci})$ a linear and continuous
projection, a coercivity estimate holds as follows:
\[
  \norm[d/(d-1)]{u - R_\omega u} \leq C \TGV_{\alpha,\omega}^{\osci}(u)
\]
for all $u \in \BV(\Omega)$, see \cite{Bredies2018oscillation_mh}.
The functional can therefore be used as a regulariser in all cases
where $\TV$ is applicable. 

In order to obtain a texture-aware image model, one can now take the
infimal convolution of $\TGV_{\alpha_0}^2$ with parameter vector
$\alpha_0 \in {]{0,\infty}[}^2$ and $\TGV_{\alpha_i,\omega_i}^{\osci}$ for parameter
vectors $\alpha_1,\ldots, \alpha_n \in {]{0,\infty}[}^2$ and directions
$\omega_1,\ldots,\omega_n \in \RR^d$ with $\omega_i \neq 0$ for
$i=1,\ldots,n$, i.e.,
\[
  \ICTGV_{\alpha,\omega}^{\osci}%
  =
  \TGV^2_{\alpha_0} \infconv \TGV_{\alpha_1,\omega_1}^{\osci} \infconv
  \cdots \infconv \TGV_{\alpha_n,\omega_n}^{\osci},
\]
which again yields a proper, convex and lower semi-continuous
regulariser on each $\LPspace{p}{\Omega}$ which is coercive in the
sense that
$\norm[d/(d-1)]{u - Ru} \leq C \ICTGV_{\alpha,\omega}^{\osci}(u)$ for
a linear and continuous projection
$R: \LPspace{d/(d-1)}{\Omega} \to \ker(\ICTGV_{\alpha,\omega}^{\osci})
= \ker(\TGV_{\alpha_0}^2) + \ker(\TGV_{\alpha_1,\omega_1}^{\osci}) +
\ldots + \ker(\TGV_{\alpha_n,\omega_n}^{\osci})$, see again
\cite{Bredies2018oscillation_mh}. It is therefore again applicable as
a regulariser for inverse problems whenever $\TV$ is applicable. See
Figure~\ref{fig:ictgv_osci} for an example of $\ICTGV^{\osci}$-based
denoising and its benefits for capturing and reconstructing textured
regions.

\begin{figure}[t]
  \centering
  \begin{tabular}{c@{\ \ }c@{\ \ }c}
    \includegraphics[width=0.315\linewidth]{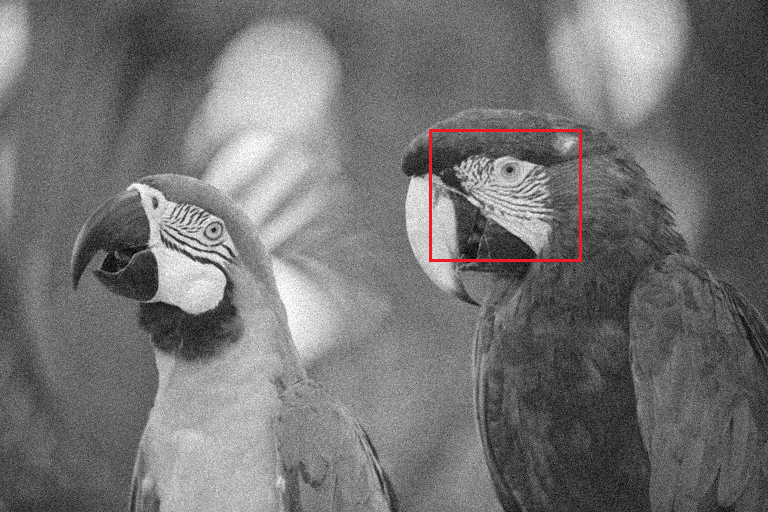}
    &
      \includegraphics[width=0.315\linewidth]{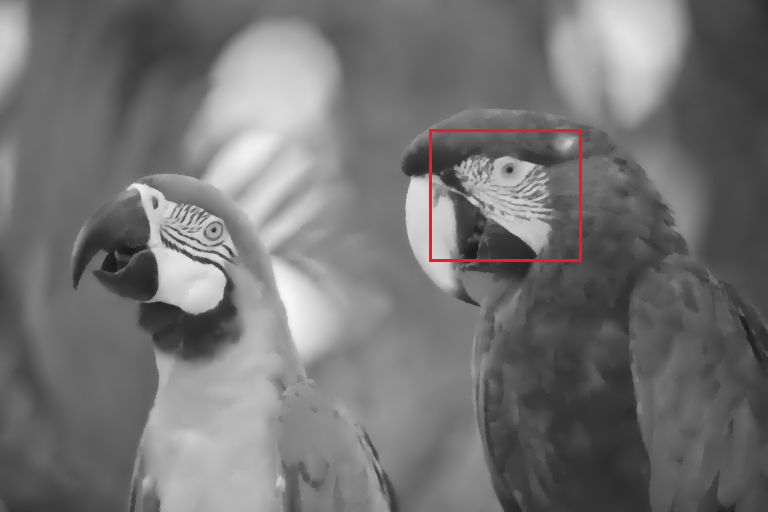}
    &
      \includegraphics[width=0.315\linewidth]{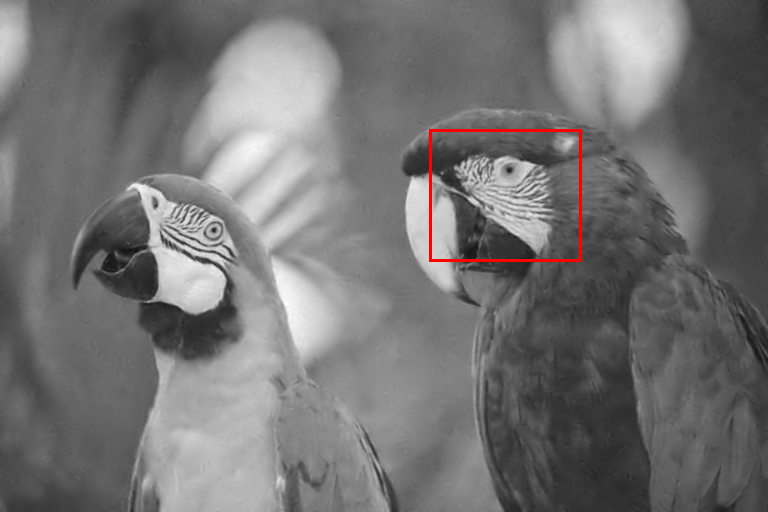}
    \\[0.5\smallskipamount]
    \includegraphics[width=0.315\linewidth]{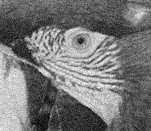}
    &
      \includegraphics[width=0.315\linewidth]{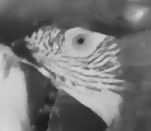}
    &
      \includegraphics[width=0.315\linewidth]{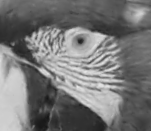} \\
    (a) & (b) & (c)
  \end{tabular}
  \caption{Example of $\ICTGV^{\osci}$ denoising.  In the top row, the
    whole image is depicted, while a closeup of the respective marked
    region is shown in the bottom row.  (a) A noisy image. (b) Results
    of $\TGV^2$-denoising. (c) Results of $\ICTGV^{\osci}$ denoising.
    Parameters were optimised towards best peak signal-to-noise
    ratio with respect to the ground truth (not shown).}
  \label{fig:ictgv_osci}
\end{figure}

\paragraph{TGV for manifold-valued data.}

In different applications in inverse problems and imaging, the data of interest takes values not in a vector space but rather a non-linear space such as a manifold. Examples are sphere-valued data in synthetic aperture radar (SAR) imaging or data with values in the space of positive matrices, equipped with the Fisher--Rao metric, which is used diffusion tensor imaging. Motivated by such applications, TV regularisation has been extended to cope with manifold-valued data, using different approaches and numerical algorithms \cite{cremers13tv_manifold_mh,Lellmann13_manifold_tv_mh,weinmann2014total_mh,grohs2016tv_manifold}.
A rather simple extension of TV for discrete and finite, univariate signals $(u_i)_i $ living in a complete Riemannian manifold $\Mc \subset \R^d$ with metric $d_{\Mc}$ is given as
\[ \TV(u) = \sum_i d_\Mc(u_{i+1},u_i). \]
For this setting, and an extension to bivariate signals, the work \cite{weinmann2014total_mh} provides simple numerical algorithms which yield, in case $\Mc$ is a Hadamard space, globally optimal solutions of variational TV denoising for manifold-valued data. While this allows in particular to extend edge-preserving regularisation to non-linear geometric data, it can again be observed that TV regularisation has a tendency towards piecewise constant solutions with artificial jump discontinuities. To overcome this, different works have proposed extensions of this approach to higher-order TV \cite{Bavcak2016tv2_manifold_mh}, the ($\TV$-$\TV^2$)-infimal convolution \cite{Steidl17_infcon_manifold_mh,bergmann2018ictv_tgv_mh} and second-order TGV \cite{holler18tgvm_mh,bergmann2018ictv_tgv_mh}. Here we briefly sketch the main underlying ideas, presented in \cite{holler18tgvm_mh}, for an extension of TGV to manifold-valued data. For simplicity, we consider only the case of univariate signals $(u_i)_i$ and assume that length-minimising geodesics are unique (see \cite{holler18tgvm_mh} for the general case and details on the involved differential-geometric concepts).

From the continuous perspective, a natural approach to extend $\TGV$ for manifold-valued data, at least in a smooth setting, would be to use tangent spaces for first-order derivatives and, for the second-order term, invoke a connection on the manifold for the differentiation of vector fields. In contrast to that, the motivation for the definition of TGV as in \cite{holler18tgvm_mh} was to exploit a discrete setting in order to avoid high-level differential-geometric concepts but rather to come up with a definition of TGV that can be written only in terms of the distance function on the manifold. To this aim, we identify tangential vectors $v \in T_{a}\M$, with $T_{a}\M$ denoting the tangent space at a point $a \in \M$, with point tuples $[a,b]$ via the exponential map $b = \exp_a (v)$. A discrete gradient operator then maps a signal $(u_i)_i$ to a sequence of point-tuples $([u_{i},u_{i+1}])_i$, where we regard $(\nabla u)_i = [u_{i},u_{i+1}]$, which generalises first-order differences in vector spaces, since in this case, $\exp_{u_i}(u_{i+1} - u_i) = u_{i+1}$. Vector fields whose base points are $(u_i)_i$ can then be identified with a sequence $([u_i,y_i])_i$ with each $y_i \in \Mc$ and, assuming $D:\M^2 \times \M^2 \rightarrow [0,\infty)$  to be an appropriate distance-type function for such tuples, an extension of second-order $\TGV$ can be given as
\[ \fl \text{M-}\TGV( (u_i)_i) = \min_{(y_i)_i} \ \sum_i  \alpha_1 D( [u_{i},u_{i+1}],[u_i,y_i] ) + \alpha_0 D( [u_i,y_i],[u_{i-1},y_{i-1}]) .\]
The difficulty here is in particular how to define $D$ for two point tuples with different base points, as those represent vectors in different tangent spaces. To overcome this, a variant for $D$ as proposed in \cite{holler18tgvm_mh} uses the Schild's ladder \cite{kheyfets2000schild_mh} construction as a discrete approximation of the parallel transport of vector fields between different tangent spaces.
In order to describe this construction, denote by $[u,v]_t$ for $u,v \in \Mc$ and $t \in \RR$ the point reached at time $t$ after travelling on the geodesics from $u$ to $v$, i.e., $[u,v]_t = \exp_{u}(t \log_u (v))$, where $\log$ is the inverse exponential map. Then, the parallel transport of $[u,v]$ (which represents $\log_u(v) \in T_u\Mc$) to the base point $x \in \Mc$ is approximated by $[x,y']$ where $y' = [u,[x,v]_{\frac{1}{2}}]_2$ (which represents $\log_x(y') \in T_x \Mc$).
Using this, a distance on point tuples, denoted by $D_S$, can be given as
\[ D_S([x,y],[u,v]) = d_{\Mc}( y',y) \quad \text{ with } \quad y' = [u,[x,v]_{\frac{1}{2}}]_2.\]
Exploiting the fact that $D_S ([u,v],[u,w]) = d_\Mc(v,w)$ for tuples having the same base point, a concrete realisation of discrete second order $\TGV$ for manifold-valued data is then given as
\[ \text{S-}\TGV( (u_i)_i) = \min_{(y_i)_i} \ \sum_i  \alpha_1 d_\Mc( u_{i+1},y_i) + \alpha_0 D_S( [u_i,y_i],[u_{i-1},y_{i-1}]) .\]
The $\text{S-}\TGV$ denoising problem for $(f_i)_i$ some given data with $f_i \in \Mc$ then reads as
\[ \min_{(u_i)_i} \ \text{S-}\TGV((u_i)_i) + \lambda \sum_i  d_\Mc (u_i, f_i)^2,
\]
and a numerical solution (which can only be guaranteed to deliver stationary points due to non-convexity) can be obtained, for instance, using the cyclic proximal point algorithm \cite{bavcak2013means_meadians_mhadamard_mh,holler18tgvm_mh}. Figure \ref{fig:tgvm_example_s2} shows the results for this setting using both $\TV$ and second-order $\TGV$ regularisation for the denoising of $\mathcal{S}^2$ valued image data, which is composed of different blocks of smooth data with sharp interfaces. It can be seen that both TV and TGV are able to recover the sharp interfaces, but TV suffers from piecewise-constancy artefacts which are not present with TGV.

\begin{figure}
\begin{center}
\includegraphics[width=0.3\linewidth]{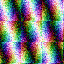}
\includegraphics[width=0.3\linewidth]{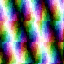}
\includegraphics[width=0.3\linewidth]{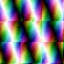}
\end{center}
\caption{ \label{fig:tgvm_example_s2}
  Example of variational denoising for manifold-valued data.
  The images show noisy $\mS^2$-valued data (left) which is denoised with TV-regulariser (middle) and TGV-regulariser (right). The sphere $\mS^2$ is colour-coded with hue and value representing the longitude and latitude, respectively. All parameters are selected optimally. }
\end{figure}

\paragraph{Image-driven TGV.}
In case of denoising problems, i.e., $f \in \LPspace{p}{\Omega}$, second-order
$\TGV$ can be modified to incorporate directional information obtained from
$f$, resulting in \emph{image-driven TGV} (ITGV) \cite{ranftl2012stereo}.
The latter is defined by introducing a diffusion tensor field
into the functional:
\[
\ITGV_\alpha^2(u) = \min_{w \in \BD(\Omega)}
\ \alpha_1 \int_\Omega \dd{\abs{D^{1/2}\grad u - w}}
+ \alpha_0 \int_\Omega \dd{\abs{\symgrad w}}
\]
where $D: \closure{\Omega} \to \Sym^2(\RR^d)$ is assumed to be 
continuous and positive semi-definite in each point. Denoting by $f_\sigma = f \conv G_\sigma$ a smoothed version
of the data $f$ obtained by convolution with a Gaussian kernel $G_\sigma$ 
of variance $\sigma > 0$ and suitable extension outside of $\Omega$, the 
diffusion tensor field $D$ may be chosen according to
\[
D = \id - (1 - \expE^{-\gamma \abs{\grad f_\sigma}^\beta}) 
\frac{\grad f_\sigma}{\abs{\grad f_\sigma}}
\tensor \frac{\grad f_\sigma}{\abs{\grad f_\sigma}}
\]
with parameters $\gamma > 0$ and $\beta > 0$. If the smallest eigenvalue of $D$
is uniformly bounded away from $0$ in $\closure{\Omega}$, then $\ITGV$ admits
the same functional-analytic and regularisation properties as second-order
TGV. We refer to \cite{ranftl2012stereo} for an application and numerical results regarding
this regularisation approach in stereo estimation.

\paragraph{Non-local TGV.}
The concept of \emph{non-local total variation} (NLTV) \cite{gilboa2008nonlocaloperators_mh} can also be transferred
to the total generalised variation. Recall that instead of taking the derivative,
non-local total variation penalises the differences of the function
values of $u$ for each pair of points by virtue of a weight function:
\[
\NLTV(u) = \int_\Omega \int_\Omega a(x,y) \abs{u(y) - u(x)} \dd{y} \dd{x},
\]
where the weight function 
$a: \Omega \times \Omega \to [0,\infty]$ is measurable and
a.e.~bounded from below by a positive constant. We note that, alternatively, the weight function
$a$ may also be chosen as $a(x,y) = |x-y|^{-(\theta + d)}$ with $\theta \in (0,1)$ such that low-order Sobolev--Slobodeckij seminorms can be realised \cite{Nezza12_fractional_order_sobolev_mh}.
In the context of non-local total variation, $a$ allows to incorporate a-priori information 
for the image to reconstruct. For instance, if one already knows disjoint
segments $\Omega_1,\ldots,\Omega_n$ where the solution is piecewise constant, 
one can set
\[
a(x,y) =
\left\{
  \begin{array}{rl}
    c_1 & \text{if} \ x,y \in \Omega_i \ \text{for some} \ i, \\
    c_0 & \text{else},
  \end{array}
\right.
\]
where $c_1 \gg c_0 > 0$. This way, the difference between two function
values of $u$ in $\Omega_i$ is forced to $0$, meaning $u$ is constant
in $\Omega_i$.

Non-local total generalised variation now gives the possibility 
to enforce piecewise linearity of $u$ in the segments
by incorporating the vector field
$w$ corresponding to the slope of the linear part in a non-local cascade.
This results in
\[
\fl
\begin{array}{rl}
  \displaystyle \NLTGV^2(u) = \inf_{w \in \LPspace{1}{\Omega,\RR^d}} 
  & \displaystyle\int_\Omega \int_\Omega a_1(x,y) \abs{u(y) - u(x) - w(x) \inprod (y-x)} 
  \dd{y} \dd{x} \\[\medskipamount]
  &
  \displaystyle+ \int_\Omega \int_\Omega a_0(x,y) \abs{w(y) - w(x)} \dd{y} \dd{x}.
\end{array}
\]
with two weight functions $a_0, a_1: \Omega \times \Omega \to [0,\infty]$,
again measurable and bounded a.e.~away from zero \cite{ranftl2014opticalflow}.
In analogy to $\NLTV$, a-priori information on, for instance, disjoint segments
where the sought solution is piecewise linear, allows to choose weight functions
such that the associated $\NLTGV^2$ regulariser properly reflect this information.
See Figure~\ref{fig:nltgv-reg} for a denoising example where non-local TGV
turns out to be beneficial, in particular in the regions near the jump discontinuities of sought solution.

\begin{figure}
  \centering%
  \begin{tabular}{c@{\ \ }c@{\ \ }c@{\ \ }c}
    \includegraphics[width=0.20\linewidth]{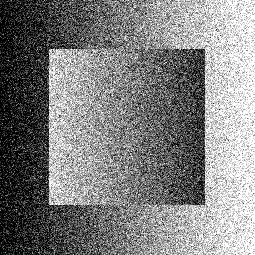}
    & 
    \includegraphics[width=0.20\linewidth]{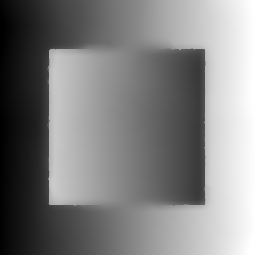}
    \includegraphics[width=0.17\linewidth]{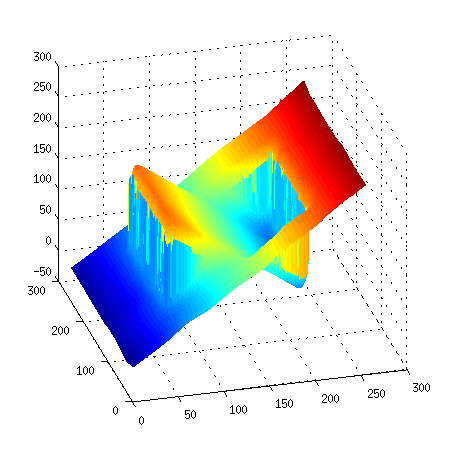}
    &
    \includegraphics[width=0.20\linewidth]{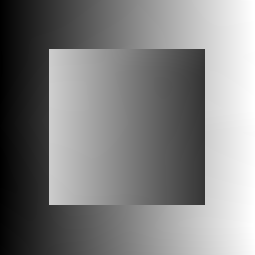}
    \includegraphics[width=0.17\linewidth]{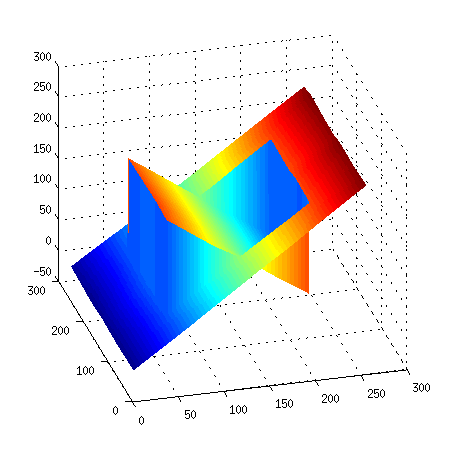}
    \\
    (a) & (b) &
    (c) 
  \end{tabular}
  \caption{Example for non-local total generalised variation
    denoising.  (a) An noisy piecewise linear image. (b) Results of
    $\TGV_\alpha^2$-denoising together with a surface plot of its
    graph.  (c) Results of non-local TGV-denoising together with a
    surface plot of its graph. Images taken from \cite{ranftl2014opticalflow}.
    Reprinted by permission from Springer Nature.}
  \label{fig:nltgv-reg}
\end{figure}

\section{Numerical algorithms}
\label{sec:numerical_algorithms}

Tikhonov regularisation with higher-order total variation, its
combination via addition or infimal convolution, as well as total
generalised variation poses a non-smooth optimisation problem in an
appropriate Lebesgue function space. In practice, these minimisation
problems are discretized and solved by optimisation algorithms that
exploit the structure of the discrete problem. While there are many
possibilities for a discretization of the considered regularisation
functionals as well as for numerical optimisation, most of the algorithms
that can be found in the literature base on finite-difference
discretization and first-order proximal optimisation methods.  In the
following, we provide an overview of the building blocks necessary to
solve the considered Tikhonov functional minimisation problems
numerically.  We will exemplarily discuss the derivation of respective
algorithms on the basis of the popular primal-dual algorithm with
extragradient \cite{pock2011primaldual_mh} and, as alternative, 
briefly address implicit and preconditioned optimisation methods.

\subsection{Discretization of higher-order TV functionals} \label{sec:discretization}

We discretize the discussed
functionals in 2D, higher dimensions follow by analogy.
Moreover, for the sake of simplicity, we assume a rectangular domain,
i.e., $\Omega = {]{0,N_1}[} \times {]{0,N_2}[} \subset \RR^2$ for some
positive $N_1,N_2 \in \NN$. A generalisation to non-rectangular
domains will be straightforward.

Following essentially the presentation in \cite{bredies2010tgv}
we first replace $\Omega$ by the discretized grid
\[
\Omega_h = \set{(i,j)}{i,j \in \NN, \ 1 \leq i \leq N_1, \ 1 \leq j \leq N_2}.
\]
One consistent way of discretizing higher-order derivatives is to define
partial derivatives as follows: A discrete partial derivative takes
the difference between two neighbouring elements in the grid with
respect to a specified axis. This difference is associated with the
midpoint between the two grid elements, resulting in staggered grids.
For a finite sequence of directions $p \in \bigcup_{k \geq 0} \sett{1,2}^k$,
this results, on the one hand, in the recursively defined grids
\begin{eqnarray}
  \label{eq:discrete_grid_recursion}
  \begin{array}{rl}
    \displaystyle \Omega_h^{()}
    &= \Omega_h, \\
    \displaystyle \Omega_h^{(1, p_k,\ldots,p_{1})}
    &= \set{(i+\frac12, j)}{(i,j), (i+1,j) \in \Omega_h^{(p_k,\ldots,p_1)}} \\
    \displaystyle \Omega_h^{(2, p_k,\ldots,p_{1})}
    &= \set{(i, j+\frac12)}{(i,j), (i,j+1) \in \Omega_h^{(p_k,\ldots,p_1)}}
  \end{array}
\end{eqnarray}
Note that $\Omega_h^p$ does not depend on the order of the $p_i$ and
one could use multiindices in $\NN^2$ instead. Likewise, the discrete
partial derivatives recursively given by
\begin{eqnarray}
  \label{eq:discrete_derivative_recursion}
  \begin{array}{rl}
    \partial^{()} u &= u, \\
    (\partial^{(1,p_k,\ldots,p_1)}_h u)_{i, j} &= (\partial^{(p_k,\ldots,p_1)}_h u)_{i+\frac12, j} - (\partial^{(p_k,\ldots,p_1)}_h u)_{i-\frac12, j}, \\
    (\partial^{(2,p_k,\ldots,p_1)}_h u)_{i, j} &= (\partial^{(p_k,\ldots,p_1)}_h u)_{i,j+\frac12} - (\partial^{(p_k,\ldots,p_1)}_h u)_{i,j-\frac12},
  \end{array}
\end{eqnarray}
yield well-defined functions $\partial_h^p u: \Omega_h^p \to \RR$ for
$u: \Omega_h \to \RR$ which do not depend on the order of the entries
in $p$.  The discrete gradient $\grad_h^k u$ of order
$k \geq 1$ for $u: \Omega_h \to \RR$ is then the tuple that collects
all the partial derivatives of order $k$:
\[
  \grad_h^k u = (\partial_h^p u)_{p \in \sett{1,2}^k}
  \in  \bigtimes_{p \in \sett{1,2}^k}\{ u_p: \Omega_h^p \rightarrow \RR\}.
\]
Note that due to this construction, the partial derivatives
$\partial_h^p u$ are generally defined on different grids.
However, in order to define the Frobenius norm of $\grad^k_h$, and,
consequently, an $\ell^1$-type norm, a common grid is needed. There are
several possibilities for this task (such as interpolation) which has
been studied mainly for the first-order total variation. Here, we
discuss a %
strategy
that results in a simple definition of a
discrete higher-order total variation. It bases on collecting,
for $(i,j) \in \ZZ^2$, all the nearby points in the different
$\Omega_h^p$. This can, for instance, be done by moving
half-steps forward and backward in the directions indicated by
$p \in \sett{1,2}^k$:
\begin{equation}
  \label{eq:discrete_point_shift}
  (i_p, j_{p}) = (i,j)
  + \frac12 \sum_{m=1}^k (-1)^{m+1} e_{p_m}
\end{equation}
where $e_1, e_2$ are the unit vectors in $\RR^2$. The Frobenius norm
in a point $(i,j) \in \ZZ^2$ is then given by
\begin{equation}
  \label{eq:discrete_deriv_frob}
  \abs{\grad_h^k u}_{i,j} = \Bigl( \sum_{p \in \sett{1,2}^k}
  \abs{(\partial_h^p u)_{i_p,j_p}}^2 \Bigr)^{1/2},
\end{equation}
where $\partial_h^p u$ is extended by zero outside of
$\Omega_h^p$. Note that here, although $\partial^p_h$ does not depend
on the order of discrete differentiation, the point $(i_p, j_p)$
does. Thus, a different Frobenius norm for the $k$-th discrete
derivative would be constituted by symmetrisation, which means
symmetrising $\grad_h^k u$ and taking the Frobenius norm
afterwards. In this context, is makes sense %
to average over the grid points as follows.  Denoting by
$\alpha(p) \in \NN^2$ the multiindex associated with
$p \in \sett{1,2}^k$, i.e., $\alpha(p)_i = \#\set{m}{p_m = i}$, we
define
\begin{equation}
  \label{eq:discrete_point_average}
  (i_\alpha, j_\alpha) = \binom{\abs{\alpha}}{\alpha}^{-1}
  \sum_{p \in \sett{1,2}^{\abs{\alpha}}, \alpha(p) = \alpha} (i_p, j_p)
\end{equation}
for $(i,j) \in \ZZ^2$ and $\alpha \in \NN^2$, where 
$\binom{\abs{\alpha}}{\alpha}=
\frac{(\alpha_1+\alpha_2)!}{\alpha_1!\alpha_2!}$.
Then, the grid
associated with an $\alpha \in \NN^2$ reads as
\[
  \fl
  \quad
  \Omega_{h}^\alpha = \set{(i_\alpha,j_\alpha)}{(i,j) \in
    \ZZ^2, \ (i_p,j_p) \in \Omega_h^p \ \text{for some}
    \ p \in \{1,2\}^k \ \text{with} \ \alpha(p) = \alpha},
\]
while the $\alpha$-component of the symmetrised gradient is given by
\[
  (\interleave \partial^\alpha_h u)_{i_\alpha,j_\alpha} =
  \binom{\abs{\alpha}}{\alpha}^{-1} \sum_{p \in
    \sett{1,2}^{\abs{\alpha}}, \alpha(p) = \alpha} (\partial^p_h u)_{i_p, j_p}
\]
where $(i,j) \in \ZZ^2$ is chosen such that
$(i_\alpha,j_\alpha) \in \Omega_h^\alpha$ and
$(\partial^p_h u)_{i_p, j_p}$ is zero for points outside of
$\Omega_h^p$. This results in the symmetrised derivative as follows:
\[
  \symgrad_h^k u = (\interleave \partial^\alpha_h u)_{\alpha \in
    \NN^2, \abs{\alpha}=k} \in \bigtimes_{\alpha \in \NN^2,
    \abs{\alpha} = k} \sett{u_\alpha: \Omega_h^\alpha \to \RR}.
\]
The Frobenius norm of $\symgrad_h^k u$ in a point $(i,j) \in \ZZ^2$
can finally be obtained by
\begin{equation}
  \label{eq:discrete_symgrad_frob}
  \abs{\symgrad^k_h u}_{i,j} = \Bigl( \sum_{\alpha \in \NN^2, \abs{\alpha} =
    k} \binom{\abs{\alpha}}{\alpha} \abs{(\interleave
    \partial^\alpha_h u)_{i_\alpha,j_\alpha}}^2  \Bigr)^{1/2}.
\end{equation}

\begin{remark}
  For $\alpha \in \NN^2$ with $\abs{\alpha}$ even, we have
  $(i_\alpha,j_\alpha) = (i,j)$ for each $(i,j) \in \ZZ$. Indeed, for
  $p \in \sett{1,2}^{\abs{\alpha}}$ and the reversed tuple
  $\bar p = (p_{\abs{\alpha}},\ldots,p_1)$ it holds
  $\alpha(p) = \alpha(\bar p)$.  Further, either $p = \bar p$
  leading to $(i_p,j_p) = (i,j)$ or $p \neq \bar p$ leading to
  $(i_p,j_p) + (i_{\bar p}, j_{\bar p}) = 2(i,j)$. Consequently,
  $(i_\alpha,j_\alpha) = (i,j)$ according to the definition. In other
  words, the symmetrisation of the discrete gradient is a natural way
  of aligning the different grids $\Omega_h^p$ to a common grid in
  this case.

  For $\abs{\alpha}$ odd, the grid points still do not align. However,
  we can say that for $(i,j)$, the point $(i_\alpha,j_\alpha)$ lies on
  the line connecting $(i+\frac12, j)$ and $(i, j+\frac12)$. Indeed,
  for $p \in \sett{1,2}^{\abs{\alpha}}$ with $\alpha(p) = \alpha$ we
  can consider
  $\bar p = (p_{\abs{\alpha}-1}, \ldots, p_1, p_{\abs{\alpha}})$. If
  $p = \bar p$, then $(i_p,j_p)$ is either $(i+\frac12, j)$ or
  $(i, j+\frac12)$. In the case $p \neq \bar p$, the point
  $\frac12(i_p,j_p) + \frac12(i_{\bar{p}}, j_{\bar{p}})$ is either
  $(i+\frac12,j)$ or $(i, j+\frac12)$. As $(i_\alpha, j_{\alpha})$ is
  a convex combination of such points, it lies on the line connecting
  $(i+\frac12, j)$ and $(i, j+\frac12)$.  Hence, the symmetrisation of
  the gradient leads to more localised grid points.
\end{remark}

We now have everything at hand to define two versions of a discrete
total variation of arbitrary order.

\begin{definition}
  Let $k \in \NN$, $k \geq 1$ a differentiation order. Then,
  for $u: \Omega_h \to \RR$, the discrete total variation is defined as
  \[
    \TV_h^k(u) = \norm[1]{\grad_h^k u} = \sum_{(i,j) \in \ZZ^2}
    \abs{\grad_h^k u}_{i,j}
  \]
  with $\abs{\grad_h^k u}_{i,j}$ according
  to~\eqref{eq:discrete_deriv_frob}, and discrete total variation for
  the symmetrised gradient is defined as
  \[
    \TV_{h,\sym}^k(u) =
    \norm[1]{\symgrad_h^k u} = \sum_{(i,j) \in \ZZ^2}
    \abs{\symgrad_h^k u}_{i,j},
  \]
  with $\abs{\symgrad_h^k u}_{i,j}$ according
  to~\eqref{eq:discrete_symgrad_frob}.
\end{definition}
In order to define a discrete version of the total generalised
variation, we still need to discuss the discretization of the total
deformation for discrete symmetric tensor fields. For this purpose, we
say that the components of a discrete symmetric tensor field of order
$l \in \NN$, live on the grids $\Omega_h^\alpha$, resulting in
\[
  u = (u_\alpha)_{\alpha \in \NN^2, \abs{\alpha} = l} \in
  \bigtimes_{\alpha \in \NN^2, \abs{\alpha} = l}  \sett{u_\alpha: \Omega_h^\alpha \to \RR},
\]
realising a discrete symmetric tensor field of order $l$. Its Frobenius norm
is given in the points $(i,j) \in \ZZ^2$ according to
\begin{equation}
  \label{eq:discrete_sym_tensor_frob}
  \abs{u}_{i,j} = \Bigl( \sum_{\alpha \in \NN^2, \abs{\alpha} = l}
  \binom{\abs{\alpha}}{\alpha} \abs{(u_\alpha)_{i_\alpha,
      j_\alpha}}^2 \Bigr)^{1/2},
\end{equation}
which is compatible with~\eqref{eq:discrete_symgrad_frob} if one plugs
in $\symgrad_h^l u$ for some $u: \Omega_h \to \RR$.  The partial
derivative of order $k$ described by $p \in \sett{1,2}^k$ applied to
$u_\alpha$ is then also given
by~\eqref{eq:discrete_derivative_recursion}, but acts on the grid
$\Omega_h^\alpha$ and results in a discrete function on the grid
$\Omega_h^{\alpha,p}$ which given 
in analogy
to~\eqref{eq:discrete_grid_recursion} by replacing $\Omega_h$ with
$\Omega_h^\alpha$. The symmetrised derivative $\symgrad_h^k u$, whose
components are indexed by $\beta \in \NN^2$, $\abs{\beta} = k+l$, is
then defined in a point $(i_{\beta},j_{\beta}) \in \Omega_h^\beta$
where $(i,j)\in\ZZ^2$ by
\begin{equation}
  \label{eq:discrete_symgrad_tensor}
  \fl
  \bigl( (\symgrad_h^k u)_\beta \bigr)_{i_\beta,j_\beta} =
  \binom{\abs{\beta}}{\beta}^{-1}
  \sum_{\alpha \in \NN^2, p \in \sett{1,2}^k, \alpha + \alpha(p) =
    \beta} \binom{\abs{\alpha}}{\alpha} (\partial^p_h u_\alpha)_{i_{\alpha,p}, j_{\alpha,p}}
\end{equation}
where
\[
  (i_{\alpha,p},j_{\alpha,p}) = (i_\alpha, j_\alpha) +
  \frac12 \sum_{m=1}^k (-1)^{l+m+1} e_{p_m}.
\]
This is sufficient to define a discrete
total deformation.

\begin{definition}
  Let $k,l \in \NN$, $k \geq 1$ and $l \geq 0$. Then, for
  $u = (u_\alpha)_{\alpha \in \NN^2, \abs{\alpha} = l}$, the discrete total
  deformation of order $k$ is defined as
  \[
    \TD_h^k(u) = \norm[1]{\symgrad_h^k u} = \sum_{(i,j) \in \ZZ^2}
    \abs{\symgrad_h^k u}_{i,j},
  \]
  with $\symgrad_h^k u$ according to~\eqref{eq:discrete_symgrad_tensor}
  and $\abs{\placeholder}_{i,j}$ according
  to~\eqref{eq:discrete_sym_tensor_frob}.
\end{definition}
For the sake of completeness, the respective definitions for
non-symmetric tensor fields of order $l$ read as
  \begin{equation}
  \fl
  u = (u_p)_{p \in \sett{1,2}^l} \in \bigtimes_{p \in \sett{1,2}^l}
  \sett{u_p: \Omega_h^p \to \RR}, \qquad \abs{u}_{i,j} = \Bigl( \sum_{p \in \sett{1,2}^l}
  \abs{(u_p)_{i_p,j_p}}^2 \Bigr)^{1/2},\label{eq:discrete_tensor_abs}
\end{equation}
and the $p$-th component, $p \in \sett{1,2}^{k+l}$, of the discrete
gradient of order $k$ is given as
\begin{equation}
  \label{eq:discrete_grad_tensor}
  (\grad_h^k u)_p = \partial_h^{(p_{k+l}, \ldots, p_{l+1})}
  u_{(p_l,\ldots,p_1)}.
\end{equation}

For numerical algorithms, it is necessary to write the discrete total
variation and total deformation as the $1$-norm of a (symmetric)
tensor with respect to the respective discrete differentiation operator.
We therefore introduce the underlying spaces.

\begin{definition}
  \label{def:discrete_spaces}
  Let $l \in \NN$, and $q \in [1,\infty]$. The $\ell^q$-space of
  discrete $l$-tensors on $\Omega_h$ is given by
  \[
    \ell^q(\Omega_h, \tensorspace[l]{\RR^2})
    = \set{u}{u = (u_p)_{p \in \sett{1,2}^l}, \ u_p: \Omega_h^p \to \RR \ \text{for all} \ p\in\sett{1,2}^l}
  \]
  with $\Omega_h^p$ according to~\eqref{eq:discrete_grid_recursion}, and norm
  \[
    \begin{array}{rlrrl}
      \norm[q]{u}
      &= \displaystyle \Bigl( \sum_{(i,j) \in \ZZ^2} \abs{u}_{i,j}^q\Bigr)^{1/q}
      & \text{if} \ q < \infty,
      &
        \quad
        \norm[\infty]{u} &= \displaystyle \max_{(i,j) \in \ZZ^2} \abs{u}_{i,j},
    \end{array}
  \]
  with pointwise norm according to~\eqref{eq:discrete_tensor_abs}.
  The space $\ell^2(\Omega_h, \tensorspace[l]{\RR^2})$ is equipped with
  the scalar product
  \[
    \scp[\ell^2(\Omega_h, {\tensorspace[l]{\RR^2})}]{u}{v} = \sum_{(i,j)
      \in \ZZ^2} \sum_{p \in \sett{1,2}^l} (u_p)_{i_p,j_p}
    (v_p)_{i_p,j_p}.
  \]
  Analogously, the $\ell^q$-space of discrete symmetric $l$-tensors on
  $\Omega_h$ is defined as
  \[
    \fl
    \ell^q(\Omega_h, \Sym^l(\RR^2))
    = \set{u}
    {u= (u_\alpha)_{\alpha \in \NN^2, \abs{\alpha} = l}, \ 
      u_\alpha: \Omega_h^\alpha \to \RR \ \text{for all} \ \alpha \in \NN^2, 
    \abs{\alpha} = l}
  \]
  with an analogous norm using~\eqref{eq:discrete_sym_tensor_frob} as
  pointwise norm. The scalar product on
  $\ell^2(\Omega_h, \Sym^l(\RR^2))$ is given by
  \[
    \scp[\ell^2(\Omega_h, {\Sym^l(\RR^2)})]{u}{v} = \sum_{(i,j) \in
      \ZZ^2} \sum_{\alpha \in \NN^2, \abs{\alpha} = l}
    \binom{\abs{\alpha}}{\alpha}
    (u_\alpha)_{i_\alpha,j_\alpha} (v_\alpha)_{i_\alpha,j_\alpha}.
  \]
\end{definition}
For $k \in \NN$, Equation~\eqref{eq:discrete_symgrad_tensor} then
defines a linear operator mapping
\[
  \symgrad_h^k: \ell^2(\Omega_h, \Sym^l(\RR^2)) \to \ell^2(\Omega_h,
  \Sym^{k+l}(\RR^2))
\]
and~\eqref{eq:discrete_grad_tensor} induces a linear operator mapping
\[
  \grad_h^k: \ell^2(\Omega_h, \tensorspace[l]{\RR^2}) \to
  \ell^2(\Omega_h, \tensorspace[k+l]{\RR^2}).
\]
The norm of these operators can easily be estimated:
\begin{lemma} \label{lem:norm_estimate_grad_symgrad}
  We have $\norm{\grad_h^k} \leq 8^{k/2}$ and
  $\norm{\symgrad_h^k} \leq 8^{k/2}$ independent of $l$.
\end{lemma}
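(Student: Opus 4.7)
The plan is to reduce both bounds to the elementary estimate that each single-direction forward-difference operator $\partial_h^{(i)}$, $i \in \{1,2\}$, has operator norm at most $2$ on the corresponding scalar $\ell^2$ spaces on staggered grids. This elementary bound is immediate from the pointwise inequality $|a-b|^2 \leq 2(|a|^2 + |b|^2)$ together with the observation that each grid value of $u$ enters at most two differences $(\partial_h^{(i)} u)$.

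For $\grad_h^k$ the components decouple cleanly. Writing $p \in \{1,2\}^{k+l}$ as the concatenation of the differentiation tuple $p' = (p_{k+l},\ldots,p_{l+1})$ and the tensor index $q = (p_l,\ldots,p_1)$ and using \eqref{eq:discrete_grad_tensor}, the definition of the norm in Definition~\ref{def:discrete_spaces} together with the fact that each shift $(i,j) \mapsto (i_p,j_p)$ is a bijection of $\ZZ^2$ gives
\[
  \norm[2]{\grad_h^k u}^2 = \sum_{p \in \{1,2\}^{k+l}} \norm{\partial_h^{p'} u_q}^2.
\]
Iterating $\norm{\partial_h^{(i)}} \leq 2$ through the $k$-fold composition yields $\norm{\partial_h^{p'} u_q}^2 \leq 4^k \norm{u_q}^2$ for each tuple $p'$ of length $k$; summing over the $2^k$ possible choices of $p'$ and over $q \in \{1,2\}^l$ produces the factor $2^k \cdot 4^k = 8^k$, independently of $l$.

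For $\symgrad_h^k$ the same single-direction bound is the ultimate ingredient, but the weighted symmetrisation in \eqref{eq:discrete_symgrad_tensor} has to be handled carefully. Applying Cauchy--Schwarz to the weighted sum defining $(\symgrad_h^k u)_\beta$ with weights $\binom{|\alpha|}{\alpha}$, and evaluating the total weight by Vandermonde's identity
\[
  \sum_{\substack{\alpha \in \NN^2,\, p \in \{1,2\}^k \\ \alpha + \alpha(p) = \beta}} \binom{|\alpha|}{\alpha}
  = \sum_{\alpha} \binom{l}{\alpha}\binom{k}{\beta - \alpha}
  = \binom{|\beta|}{\beta},
\]
yields the pointwise estimate
\[
  \binom{|\beta|}{\beta}\,\bigl|(\symgrad_h^k u)_\beta(i_\beta,j_\beta)\bigr|^2
  \leq \sum_{\alpha + \alpha(p) = \beta} \binom{|\alpha|}{\alpha}\,\bigl|(\partial_h^p u_\alpha)(i_{\alpha,p},j_{\alpha,p})\bigr|^2.
\]
Summing over $\beta$ with $|\beta|=k+l$ and over $(i,j) \in \ZZ^2$, swapping the order so that each pair $(\alpha,p)$ contributes once, and noting that for each fixed $(\alpha,p)$ the map $(i,j) \mapsto (i_{\alpha,p}, j_{\alpha,p})$ is a translation bijection of $\ZZ^2$, the spatial sum collapses to $\norm{\partial_h^p u_\alpha}^2$. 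A final application of the single-direction bound $\norm{\partial_h^p u_\alpha}^2 \leq 4^k \norm{u_\alpha}^2$ and summation over the $2^k$ tuples $p$ yields $\norm[2]{\symgrad_h^k u}^2 \leq 8^k \sum_\alpha \binom{l}{\alpha} \norm{u_\alpha}^2 = 8^k \norm[2]{u}^2$.

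The main technical obstacle is organising the weighted Cauchy--Schwarz step and verifying Vandermonde's identity for the combinatorial weights; once that is in place, and after the bijective reindexing of the spatial sum is properly recorded, the remaining ingredients (the estimate $\norm{\partial_h^{(i)}} \leq 2$ and iteration over the order $k$) are routine bookkeeping.
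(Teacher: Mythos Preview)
Your proof is correct. The ingredients are the same as in the paper --- the elementary bound $\norm{\partial_h^{(i)}} \leq 2$, Cauchy--Schwarz with the multinomial weights, and Vandermonde's identity --- but the organisation differs. The paper exploits the factorisations $\grad_h^k = \grad_h \cdots \grad_h$ and $\symgrad_h^k = \symgrad_h \cdots \symgrad_h$ to reduce to the case $k=1$ (for arbitrary tensor order $l$), where Vandermonde degenerates to the Pascal recurrence $\binom{l+1}{\beta} = \binom{l}{\beta} + \binom{l}{\beta - e_i}$; the bound $8^{k/2}$ then follows by multiplying $k$ copies of the single-step estimate. You instead attack the order-$k$ formula~\eqref{eq:discrete_symgrad_tensor} directly with one Cauchy--Schwarz application and the full Vandermonde identity. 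Your route avoids relying on the consistency $\symgrad_h^{k_1}\symgrad_h^{k_2} = \symgrad_h^{k_1+k_2}$, which the paper states but does not verify in detail for the discrete staggered-grid operators; the paper's route is a little more modular and makes the independence from $l$ transparent at the level of a single step.
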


\begin{proof}
  As $\grad^k_h = \grad_h \cdots \grad_h$ on the respective discrete
  tensor fields, it is sufficient to prove the statement for $k=1$ and
  $l$ arbitrary.  For this purpose, observe that for
  $u: \Omega_h^p \to \RR$, $p \in \sett{1,2}^l$, we have
  \begin{eqnarray*}
    \norm[2]{\partial_h^{(1,p_l,\ldots,p_1)}u}^2 = 
    \sum_{(i,j) \in \ZZ^2}
    \abs{\partial_h^{(1,p_l,\ldots,p_1)}u}_{i_p+1/2,j_p}^2
    &\leq 2
      \sum_{(i,j) \in \ZZ^2} \abs{u}^2_{i+1,j} + \abs{u}^2_{i,j}
      \\ &\leq 4
    \sum_{(i,j) \in \ZZ^2} \abs{u}_{i,j}^2 =4 \norm[2]{u}^2,
  \end{eqnarray*}
  and an analogous estimate for
  $\norm[2]{\partial_h^{(2,p_l,\ldots,p_1)}u}^2$.  Consequently,
  $\norm[2]{\grad_h u}^2 \leq 8\norm[2]{u}^2$ for such $u$. If
  $u \in \ell^2(\Omega_h,\tensorspace[l]{\RR^2})$, then
  \[
    \norm[2]{\grad_h u}^2 = \sum_{p \in \sett{1,2}^l} \norm[2]{\grad_h
      u_p}^2 \leq 8 \sum_{p \in \sett{1,2}^l} \norm[2]{u_p}^2 = 8
    \norm[2]{u}^2,
  \]
  so the claim follows.

  For the symmetrised gradient, it is possible to pursue the same
  strategy since $\symgrad_h^k = \symgrad_h \cdots \symgrad_h$ on the
  respective discrete symmetric tensor fields. Indeed, with the
  Cauchy--Schwarz inequality and Vandermonde's identity (which reduces to the standard recurrence relation for binomial coefficients in most cases), one obtains
  \begin{eqnarray*}
    \abs{\symgrad_h u}_{i,j}^2
    &= \sum_{\beta
      \in \NN^2, \abs{\beta} = l + 1} \binom{\abs{\beta}}{\beta}
      \bigabs{\bigl( (\symgrad_h u)_\beta \bigr)_{i_\beta,j_\beta}}^2 \\
    &\leq
      \sum_{\beta
      \in \NN^2, \abs{\beta} = l + 1}
      \ \ \sum_{\alpha \in \NN^2,
      \abs{\alpha} = l, p \in \{1,2\}, \alpha + \alpha(p) = \beta}
      \binom{\abs{\alpha}}{\alpha}
      \abs{(\partial^p_h u_\alpha)_{i_{\alpha,p},j_{\alpha,p}}}^2 \\
    &= \sum_{p=1}^2 \sum_{\abs{\alpha} = l} \binom{\abs{\alpha}}{\alpha}
      \abs{(\partial^p_h u_\alpha)_{i_{\alpha,p},j_{\alpha,p}}}^2 =
      \sum_{p=1}^2 \abs{\partial^p_h u}^2_{i_p, j_p}.
  \end{eqnarray*}
  It is then easy to see that
  $\norm[2]{\symgrad_h u}^2 \leq \sum_{p=1}^2 \norm[2]{\partial_h^p
    u}^2 \leq 8 \norm[2]{u}^2$.
\end{proof}

\begin{remark}
  For $p \in \sett{1,2}^l$ and $p_0 \in \sett{1,2}$, consider the
  discrete partial derivative
  $\partial_h^{p_0}: \Omega_h^p \to \Omega_h^{(p_0,p)}$ and its
  negative adjoint $\partial_{h,0}^{p_0}$, i.e.,
  $\scp{\partial_h^{p_0} u}{v} = - \scp{u}{\partial_{h,0}^{p_0}
    v}$ for $u: \Omega_h^p \to \RR$, $v: \Omega_h^{(p_0,p)} \to
  \RR$. For $u: \Omega_h^{(p_0,p)} \to \RR$, this results in
  \[
    (\partial_{h,0}^{p_0} u)_{i_p,j_p} =
    \left\{
      \begin{array}{cc}
        u_{i_p+\frac12,j_p} - u_{i_p-\frac12,j_p} & \text{if} \ p_0 = 1,\\
        u_{i_p,j_p+\frac12} - u_{i_p,j_p-\frac12} & \text{if} \ p_0 = 2,
      \end{array}
    \right.
  \]
  for $(i_p,j_p) \in \Omega_h^{p}$, where $u$ is extended by $0$
  outside of $\Omega_h^{(p_0,p)}$. (In contrast, $\partial_h^{p_0}u$
  is only defined for $(i_p,j_p) \in \Omega_h^{(p_0,p_0,p)}$.)

  Consequently, the negative adjoint of the discrete gradient induces
  a divergence for discrete tensor fields
  $u \in \ell^2(\Omega_h, \tensorspace[l+1]{\RR^2})$ such that
  $\divergence_h u \in \ell^2(\Omega_h, \tensorspace[l]{\RR^2})$ and
  \[
    (\divergence_h u)_p = \partial^{1}_{h,0} u_{(1,p)} +
    \partial^{2}_{h,0} u_{(2,p)}
  \]
  for $p \in \sett{1,2}^l$. For the discrete divergence that arises as
  the negative adjoint of the symmetrised gradient
  on $\ell^2(\Omega_h, \Sym^l(\RR^d))$, %
  one has to take the symmetrisation into
  account: For $u \in \ell^2(\Omega_h,
  \Sym^{l+1}(\RR^2))$, we have
  \[
    (\divergence_h u)_{\alpha} = \partial_{h,0}^1 u_{\alpha + e_1} +
    \partial_{h,0}^2 u_{\alpha + e_2}.
  \]
  Here, the operators
  $\partial_{h,0}^{p_0}$ act on functions on the grid
  $\Omega_{h}^{\alpha +
    \alpha(p_0)}$ and yield functions on the grid
  $\Omega_h^\alpha$.  Note that in the grid point
  $(i_\alpha,j_\alpha)$, these partial derivatives have to be
  evaluated in the grid points $(i_{\alpha + e_1} + \frac12 (-1)^l,
  j_{\alpha + e_1})$ and $(i_{\alpha + e_2}, j_{\alpha + e_2} + \frac12
  (-1)^l)$, respectively.  This way, the discrete divergence operator
  is consistently defined.
\end{remark}

\subsection{A general saddle-point framework}
\label{subsec:general_saddle_point}

Having appropriately discretized versions of higher-order regularisation functionals available, we now deal with the numerical solution of corresponding Tikhonov approaches for inverse problems. To this aim, we first consider a general framework and then derive concrete realisations for different regularisation approaches.

Let $\Omega_h$ be the discretized grid of Subsection \ref{sec:discretization} and define $U_h = \ell^2(\Omega_h)$. We assume a discrete linear forward operator $K_h: U_h \rightarrow Y_h$, with $(Y_h,\|\cdot \|_{Y_h})$ a finite-dimensional Hilbert space, and a proper, convex, lower semi-continuous and coercive discrepancy term $S_{f_h}:Y_h \rightarrow [0,\infty]$ with corresponding discrete data $f_{h}$ to be given.
Further, we define $\mR_\alpha :U_h \rightarrow [0,\infty]$ to be a regularisation functional given in a general form as $\mR_\alpha(u) = \min _{w \in W_h} \| D_h (u,w)\|_{1,\alpha}$, with $D_h :U_h \times W_h \rightarrow V_h$, $(u,w) \mapsto D_h^1 u + D_h^2 w$ a discrete differential operator and $W_h, V_h$ finite-dimensional Hilbert spaces. The expression $\|\cdot \|_{1,\alpha}$ here denotes an appropriate $\ell^1$-type norm weighted using the parameters $\alpha$ and will be specified later for concrete examples. Its dual norm is denoted by $\norm[\infty,\alpha^{-1}]{\placeholder}$.
We consider the general minimisation problem
\begin{equation} \label{eq:discrete_min_prob_single_variable}
\min _{ u \in U_h } \,   S_{f_h} (K_hu) + \mR_\alpha(u) ,
\end{equation}
for which we will numerically solve the equivalent reformulation
\begin{equation} \label{eq:discrete_min_prob_general}
\min _{ (u,w) \in U_h \times W_h} \,   S_{f_h} (K_hu) + \| D_h (u,w)\|_{1,\alpha}.
\end{equation}

\begin{remark}
  Note that here, the auxiliary variable $w$ and the space $W_h$ are used to include balancing-type regularisation approaches such as the infimal convolution of two functionals. Setting, for example, $W_h = U_h$, $V_h = \ell^2(\Omega, \tensorspace[1]{\RR^2}) \times \ell^2(\Omega,\tensorspace[2]{\RR^2})$, $D_h(u,w) = (\nabla _h u - \nabla _h w,\nabla _h ^2 w )$ and $\| (v_1,v_2) \|_{1,\alpha} = \alpha_1 \|v_1\|_1 + \alpha_2 \|v_2\|_1$ for positive $\alpha = (\alpha_1, \alpha_2)$ yields
\[ \mR_\alpha (u) = \min_{w \in W_h} \ \alpha_1 \| \nabla _h u - \nabla _h w\|_1 + \alpha_2 \|\nabla_h^2 w\|_1 = \left( \alpha _1 \TV\infconv \alpha_2 \TV^2 \right) (u).\]
Total-variation regularisation can, on the other hand, be obtained by choosing $W_h = \{0\}$,  $V_h = \ell^2(\Omega_h, \tensorspace[1]{\RR^2})$, $D_h(u,0) = \nabla _h u$ and $\| v \|_{1,\alpha} = \alpha \|v\|_1$ for $\alpha > 0$.
\end{remark}

\begin{remark}
  \label{rem:norm_fenchel_dual}
  The dual norm $\norm[\infty,\alpha^{-1}]{\placeholder}$ will become
  relevant in the context of primal-dual algorithms via its Fenchel dual. Indeed, we have
  the identity
  $(\norm[X]{\placeholder})^* =
  \mI_{\sett{\norm[X^*]{\placeholder} \leq
      1}}$
  for
  $\norm[X]{\placeholder}$ the norm of a general
  Banach space $X$ and $\norm[X^*]{\placeholder}$ its dual norm on
  $X^*$.
  This is a consequence of
  $\scp{w}{u} - \norm[X^*]{w} \norm[X]{u} \leq 0$ for all $u \in X$ and $w \in X^*$, so
  $(\norm[X]{\placeholder})^*(w) = 0$ for $\norm[X^*]{w} \leq 1$. For
  $\norm[X^*]{w} > 1$ one can find a $u \in X$, $\norm[X]{u} \leq 1$
  such that, for a $c > 0$, $\scp{w}{u} \geq 1+c \geq \norm[X]{u} + c$.
  For each $t > 0$, we get
  $\scp{w}{tu} - \norm[X]{tu} \geq tc$, hence
  $(\norm[X]{\placeholder})^*(w) = \infty$.
\end{remark}

\begin{remark}
  While the setting of \eqref{eq:discrete_min_prob_single_variable} allows to include rather general forward operators $K_h$ and discrepancy terms $S_{f_h}$, it will still not capture all applications of higher-order regularisation that we consider later in Subsections \ref{sec:applications_imaging_computer_vision} and \ref{sec:applications_medical_imaging_reconstruction}. It rather comprises a balance between general applicability and uniform presentation, and we will comment on possible extensions later on such that the interested reader should be able to adapt the setting presented here to the concrete problem setting at hand.
\end{remark}

From a more general perspective, the reformulation~\eqref{eq:discrete_min_prob_general} of~\eqref{eq:discrete_min_prob_single_variable} constitutes a non-smooth, convex optimisation problem of the form
\begin{equation} \label{eq:min_problem_vectorized_general}
\min _{x \in \mX}\, \mF(\mK x) + \mG(x),
\end{equation}
with $ \mX,\mY$ Hilbert spaces, $\mF:\mY \rightarrow [0,\infty]$, $\mG:\mX \rightarrow [0,\infty]$ proper, convex and lower semi-continuous functionals and $\mK:\mX \rightarrow \mY$ linear and continuous. For this class of problems, duality-based first-order optimisation algorithms of ascent/descent-type have become very popular in the past years as they are rather generally applicable and yield algorithms for the solution of \eqref{eq:min_problem_vectorized_general} that provably converge to a global optimum, while allowing
a simple implementation and practical stepsize choices, such as constant stepsizes.
The algorithm of \cite{chambolle2004algorithm_mh}, for instance, constitutes a relatively early step in this direction, as it solves the TV-denoising problem with constant stepsizes %
in terms of a dual problem.

For problems of the type \eqref{eq:min_problem_vectorized_general},
it is often beneficial to consider a primal-dual saddle-point reformulation instead of the dual problem alone, in particular in view of general applicability.
This is given as 
\begin{equation} \label{eq:saddle_point_problem_vectorized}
\min _{x \in \dom \mG} \  \max_{y \in \dom \mF^*} \,\langle \mK x ,y \rangle _{\mY}+ \mG(x) - \mF^*(y),
\end{equation}
with $\langle \cdot ,\cdot \rangle _{\mY}$ denoting the inner product in $\mY$.
By interchanging minimum and maximum and minimising with respect to $x$,
one
further arrives at the %
dual problem which reads as
\begin{equation}
  \label{eq:dual_problem_vectorize_general}
  \max_{y \in \mY} \ -\mF^*(y) - \mG^*(-\mK^*y).
\end{equation}
Under certain conditions, the minimum in~\eqref{eq:min_problem_vectorized_general} and maximum in~\eqref{eq:dual_problem_vectorize_general} admit the same value and primal/dual solution pairs for~\eqref{eq:min_problem_vectorized_general} an~\eqref{eq:dual_problem_vectorize_general} correspond to solutions of the saddle-point problem~\eqref{eq:saddle_point_problem_vectorized}, see below. 

Now, indeed, many different algorithmic approaches for solving \eqref{eq:saddle_point_problem_vectorized} are nowadays available (see for instance \cite{Pesquet15primal_dual_review_mh,pock2011primaldual_mh,chambolle2015ergodic_primal_dual_mh,
bredies2015tvtgvpdr_mh,Bredies16accelerated_dr}) and which one of them delivers the best performance typically depends on the concrete problem instance. Here, as exemplary algorithmic framework, we consider the popular primal-dual algorithm of \cite{pock2011primaldual_mh} (see also \cite{Zhu08primal_dual_hypbrid_mh,Pock09pd_algorithm_early}), which has the advantage of being simple and yet rather generally applicable.

Conceptually, the algorithm of \cite{pock2011primaldual_mh} solves the saddle-point problem \eqref{eq:saddle_point_problem_vectorized} via implicit gradient descent and ascent steps with respect to the primal and dual variables, respectively. 
With $\mL(x,y) =  \langle \mK x ,y \rangle _{\mY}+ \mG(x) - \mF^*(y)$, carrying out these implicit steps simultaneously in both variables would correspond to computing the iterates $\seq{(x^n,y^n)}$ via 
\begin{equation} 
\left\{
\begin{array}{cl}
y^{n+1} & = \quad y^n  + \sigma \subgrad_y \mL(x^{n+1},y^{n+1}),\\
x^{n+1} & = \quad x^n  - \tau \subgrad_x \mL(x^{n+1},y^{n+1}),
\end{array}
\right.
\end{equation}
where $\subgrad _x$ and $\subgrad_y$ denotes the subgradient with respect to the first and second variable, respectively, and $\sigma$, $\tau$ are positive constants. 
To obtain computationally feasible iterations, the implicit step for $y^{n+1}$ in the primal-dual algorithm uses an extrapolation $\overline{x}^n = 2x^n - x^{n-1}$ of the previous iterate instead of $x^{n+1}$, such that the descent and ascent steps decouple and can be re-written as
\begin{equation} \label{eq:primal_dual_iterates}
\left\{
\begin{array}{cl}
y^{n+1} & = \quad (\id + \sigma \subgrad \mF^*)^{-1}(y^n + \sigma \mK\overline{x}^n),\\
x^{n+1} & = \quad (\id + \tau \subgrad \mG)^{-1}(x^n - \tau \mK^*y^{n+1}),\\
\overline{x}^{n+1} & = \quad  2 x^{n+1} - x^n.
\end{array}
\right.
\end{equation}
The mappings $(\id + \sigma \subgrad \mF^*)^{-1}$ and $(\id + \tau \subgrad \mG)^{-1}$ used here are so-called proximal mappings of $\mF^*$ and $\mG$, respectively, which, as noted in Proposition \ref{def:prox-mapping} below, are well-defined and single valued whenever $\mG,\mF^*$ are proper, convex and lower semi-continuous. 
The resulting algorithm can then be interpreted as proximal-point algorithm (see \cite{He12overrelaxedpd_mh,Rockafellar76proximal_point_mh}) and weak convergence
in the sense that $(x^n,y^n) \rightharpoonup (x^*,y^*)$ for $(x^*, y^*)$ being a solution to the saddle-point problem~\eqref{eq:saddle_point_problem_vectorized} can be ensured for positive stepsize choices $\sigma,\tau$ such that $\sigma \tau \|\mK\| ^2< 1$, see for instance
\cite{chambolle2015ergodic_primal_dual_mh,pock2011diagonal_mh}, or \cite{pock2011primaldual_mh} for the finite-dimensional case.
In %
contrast, explicit methods for non-smooth optimisation problems such as
subgradient descent, for instance, usually require stepsizes that converge to zero \cite{nesterov2004convexoptimization_mh} and could stagnate numerically.

Overall, the efficiency of the iteration steps in \eqref{eq:primal_dual_iterates} crucially depends on the ability to evaluate $\mK$ and $\mK^*$ and to compute the proximal mappings efficiently. Regarding the latter, this is possible for a large class of functionals, in particular for many functionals that are defined pointwise, which is one of the reasons for the high popularity of these kind of algorithms. We now consider proximal mappings in more detail and provide concrete examples later on. 
\begin{proposition}
  \label{def:prox-mapping}
  Let $H$ be a Hilbert space, $F: H \to {]{-\infty, \infty}]}$
  proper, convex and lower semi-continuous, and $\sigma > 0$.
  
  \begin{enumerate}
  \item 
    Then, the mapping 
    \begin{equation}
      \label{eq:prox-mapping}
      \prox_{\sigma F}: H \to H, 
      \qquad
      u \mapsto \argmin_{\bar u \in H} \ \frac{\norm[H]{\bar u - u}^2}{2}
      + \sigma F(\bar u)
    \end{equation}
    is well-defined.
  \item
     For $u \in H$, $u^* = \prox_{\sigma F}(u)$ solves
     the inclusion relation
     \[
     u \in u^* + \sigma \subgrad F(u^*),
     \]
     i.e., $\prox_{\sigma F} = (\id + \sigma \subgrad F)^{-1}$.
   \item
     The mapping $\prox_{\sigma F}$ is 
     Lipschitz-continuous with constant not exceeding
     $1$.
  \end{enumerate}
\end{proposition}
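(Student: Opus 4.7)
The plan is to handle the three assertions in turn, each time exploiting the fact that the auxiliary objective $\Phi_u(\bar u) = \tfrac{1}{2}\norm[H]{\bar u - u}^2 + \sigma F(\bar u)$ inherits strict convexity and coercivity from its quadratic part, while inheriting properness and lower semi-continuity from $F$.

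For (i), I would apply the direct method in the calculus of variations to $\Phi_u$. Properness and lower semi-continuity of $F$, together with continuity of $\bar u \mapsto \tfrac12 \norm[H]{\bar u - u}^2$, make $\Phi_u$ proper and weakly lower semi-continuous; the quadratic term ensures $\Phi_u(\bar u) \to \infty$ as $\norm[H]{\bar u} \to \infty$ (after using that, by properness and convex lower semi-continuity, $F$ is bounded below by an affine function). Along a minimising sequence, boundedness then yields a weakly convergent subsequence whose limit minimises $\Phi_u$. Uniqueness follows from strict convexity of $\tfrac12 \norm[H]{\bar u - u}^2$, which makes $\Phi_u$ strictly convex on its (convex) domain.

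For (ii), I would characterise optimality via Fermat's rule: $u^*$ minimises $\Phi_u$ if and only if $0 \in \subgrad \Phi_u(u^*)$. Since $\bar u \mapsto \tfrac12 \norm[H]{\bar u - u}^2$ is continuous (in fact Fréchet-differentiable) on all of $H$, the Moreau--Rockafellar sum rule applies and gives $\subgrad \Phi_u(u^*) = (u^* - u) + \sigma \subgrad F(u^*)$. Rearranging the inclusion $0 \in (u^* - u) + \sigma \subgrad F(u^*)$ yields $u \in u^* + \sigma \subgrad F(u^*)$, i.e., $u^* = (\id + \sigma \subgrad F)^{-1}(u)$, where single-valuedness of the resolvent has already been established in (i).

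For (iii), I would use the monotonicity of $\subgrad F$, which is standard for subdifferentials of proper, convex, lower semi-continuous functionals. Given $u_1,u_2 \in H$ with $u_i^* = \prox_{\sigma F}(u_i)$, part (ii) yields $(u_i - u_i^*)/\sigma \in \subgrad F(u_i^*)$ for $i=1,2$, and monotonicity gives
\[
\bigscp{(u_1 - u_1^*) - (u_2 - u_2^*)}{u_1^* - u_2^*} \geq 0,
\]
which rearranges to $\norm[H]{u_1^* - u_2^*}^2 \leq \scp{u_1 - u_2}{u_1^* - u_2^*}$. Applying Cauchy--Schwarz then yields $\norm[H]{u_1^* - u_2^*} \leq \norm[H]{u_1 - u_2}$. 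None of these steps is a serious obstacle: the only mildly delicate point is verifying coercivity of $\Phi_u$ in (i), where one must recall the standard fact that a proper, convex, lower semi-continuous function on a Hilbert space is minorised by a continuous affine functional so that the quadratic term dominates.
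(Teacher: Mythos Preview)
Your proposal is correct and provides a complete, self-contained argument. The paper, by contrast, does not prove this proposition at all: it simply cites \cite[Proposition IV.1.5, Corollary IV.1.3]{Showalter2013monotone_mh} and \cite[Proposition 12.15, Example 23.3, Corollary 23.10]{Combettes11_convex_analysis_book_mh}. What you have written is essentially the standard textbook proof that those references contain (direct method plus strict convexity for (i), Fermat's rule with the sum rule for (ii), monotonicity of the subdifferential for (iii)), so there is no genuine methodological difference --- you have simply supplied the details the paper chose to outsource.
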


\begin{proof} See, for instance,
  \cite[Proposition IV.1.5, Corollary IV.1.3]{Showalter2013monotone_mh},
  or \cite[Proposition 12.15, Example 23.3, Corollary 23.10]{Combettes11_convex_analysis_book_mh}.
\end{proof}

In general, the computation of proximal mappings can be as difficult as solving the original optimisation problem itself. However, if, for instance, the corresponding functional can be
``well separated'' into simple building blocks, then proximal mappings can be reduced to 
some basic ones which are simple and easy to compute. 
\begin{lemma}
  \label{lem:prox-mapping-separable}
  Let $H = H_1 \perp \cdots \perp H_n$ with closed
  subspaces $H_1,\ldots, H_n$, the mappings $P_1,\ldots,P_n$ their
  orthogonal projectors, 
  \[
  F(u) = \sum_{i=1}^n F_i(P_iu)
  \]
  with each $F_i: H_i \to {]{-\infty,\infty}]}$ proper, convex
  and lower semi-continuous. Then,
  \[
  \prox_{\sigma F}(u) = \sum_{i=1}^n \prox_{\sigma F_i}(P_iu).
  \]
\begin{proof}
This is immediate since the corresponding minimisation problem decouples. 
\end{proof}
\end{lemma}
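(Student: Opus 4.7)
The plan is to reduce the proximal minimisation problem to $n$ independent subproblems by exploiting the orthogonality of the decomposition $H = H_1 \perp \cdots \perp H_n$. Specifically, I will write every $\bar u \in H$ uniquely as $\bar u = \sum_{i=1}^n \bar u_i$ with $\bar u_i = P_i \bar u \in H_i$, and likewise $u = \sum_{i=1}^n P_i u$. By the Pythagorean identity for orthogonal direct sums, the quadratic term splits as
\[
\frac{\norm[H]{\bar u - u}^2}{2} = \sum_{i=1}^n \frac{\norm[H_i]{\bar u_i - P_i u}^2}{2},
\]
while by assumption the regulariser splits as $F(\bar u) = \sum_{i=1}^n F_i(\bar u_i)$.

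Plugging these identities into the definition \eqref{eq:prox-mapping} of $\prox_{\sigma F}(u)$ yields
\[
\prox_{\sigma F}(u) = \argmin_{\bar u_1 \in H_1, \ldots, \bar u_n \in H_n} \ \sum_{i=1}^n \Bigl( \frac{\norm[H_i]{\bar u_i - P_i u}^2}{2} + \sigma F_i(\bar u_i) \Bigr),
\]
where I use that minimising over $\bar u \in H$ is the same as minimising independently over $(\bar u_1, \ldots, \bar u_n) \in H_1 \times \cdots \times H_n$. Since the objective is a sum of terms each depending on a single $\bar u_i$, the minimum is attained componentwise, with the $i$-th component being precisely $\prox_{\sigma F_i}(P_i u)$ by definition. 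Summing these components then gives the claimed formula.

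Each step is routine once the orthogonal decomposition is used, so there is no real obstacle: well-definedness of each $\prox_{\sigma F_i}$ is guaranteed by Proposition~\ref{def:prox-mapping} applied to the Hilbert space $H_i$ with the proper, convex, lower semi-continuous functional $F_i$. The only point requiring a moment's care is ensuring that $F$ is itself proper, convex and lower semi-continuous on $H$ so that $\prox_{\sigma F}$ is well-defined to begin with; this follows immediately from the fact that each $P_i$ is linear and continuous and that pointwise sums of proper, convex, lower semi-continuous functionals retain these properties (properness on $H$ holding as soon as each $F_i$ is proper on $H_i$).
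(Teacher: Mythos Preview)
Your proof is correct and follows exactly the approach indicated by the paper, which simply states that the minimisation problem decouples; you have just made this decoupling explicit via the Pythagorean identity and the separability of $F$. The additional remarks on well-definedness of $\prox_{\sigma F}$ and each $\prox_{\sigma F_i}$ are accurate and complete the argument nicely.
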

Furthermore, \emph{Moreau's identity} (see \cite{Rockafellar_convex_mh}, for instance) provides a relation between the proximal mapping of a function $F$ and the proximal mapping of its dual $F^*$ according to
\begin{equation} \label{eq:moreaus_identity}
 u = (\id + \sigma \subgrad F)^{-1}(u) + \sigma \Bigl(\id + \frac{1}{\sigma}\subgrad F^* \Bigr)^ {-1} \Bigl(\frac{u}{\sigma} \Bigr).
 \end{equation}
 This immediately implies that for general $\sigma > 0$, the computation of 
$(\id + \sigma \subgrad F)^{-1}$ is essentially as difficult as the computation of 
$(\id + \sigma \subgrad F^*)^{-1}$, in particular the latter can be obtained from the former as follows.
\begin{lemma} \label{lem:moreau_proximal}
  Let $H $ be a Hilbert space and $F: H \to {]{-\infty,\infty}]}$ be proper, convex
  and lower semi-continuous. Then, for $\sigma >0$, 
  \[
  \prox_{\sigma F}(u) = u - \sigma \prox_{\frac{1}{\sigma}F^*} \Bigl(\frac{u}{\sigma}\Bigr),
  \quad \ \ \prox_{\sigma F^*}(u) = u - \sigma \prox_{\frac{1}{\sigma}F} \Bigl(\frac{u}{\sigma}\Bigr).
  \]
\end{lemma}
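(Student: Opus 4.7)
The plan is to derive both identities as direct consequences of Moreau's identity~\eqref{eq:moreaus_identity} stated just above, combined with the characterisation $\prox_{\sigma F} = (\id + \sigma \subgrad F)^{-1}$ from Proposition~\ref{def:prox-mapping}(ii). First I would rewrite \eqref{eq:moreaus_identity} by substituting each resolvent with its proximal-mapping form:
\[
u = \prox_{\sigma F}(u) + \sigma \prox_{\frac{1}{\sigma} F^*}\Bigl(\frac{u}{\sigma}\Bigr),
\]
and simply solving for $\prox_{\sigma F}(u)$ produces the first claimed identity.

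For the second identity, the natural move is to apply the first identity with $F$ replaced by $F^*$. This substitution is legitimate because $F^*$ is automatically proper, convex and lower semi-continuous on the Hilbert space $H$ as a supremum of continuous affine functionals, so it satisfies the hypotheses under which \eqref{eq:moreaus_identity} and Proposition~\ref{def:prox-mapping} were invoked. Applying the first identity to $F^*$ and using the Fenchel–Moreau theorem, which gives $(F^*)^* = F$ under the current standing assumptions, yields
\[
\prox_{\sigma F^*}(u) = u - \sigma \prox_{\frac{1}{\sigma} F}\Bigl(\frac{u}{\sigma}\Bigr),
\]
as desired.

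There is essentially no substantial obstacle: the entire argument amounts to a rearrangement of a stated identity and an appeal to biconjugation. The only subtlety worth flagging explicitly is to verify that the hypotheses transfer under $F \leftrightarrow F^*$; this is standard but should be mentioned for completeness so that the second formula does not appear to rely on a circular use of the same lemma.
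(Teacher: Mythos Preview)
Your proposal is correct and matches the paper's approach: the paper states the lemma without proof, presenting it as an immediate consequence of Moreau's identity~\eqref{eq:moreaus_identity} combined with the resolvent characterisation of Proposition~\ref{def:prox-mapping}(ii), which is exactly the rearrangement and biconjugation argument you give.
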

In some situations, the computation of the proximal mappings of the sum of two functions decouples into the composition of two mappings.
\begin{lemma}  \label{lem:proximal_mapping_decoupling}
 Let $H $ be a Hilbert space, $F: H \to {]{-\infty,\infty}]}$ be proper, convex
  and lower semi-continuous and $\sigma >0$. 
  \begin{enumerate}
  \item If $F(u) = G(u) + \frac{\alpha}{2} \|u - u_0\|_H^2$ with $G:H\rightarrow {]{-\infty,\infty}]}$ proper, convex and lower semi-continuous, $u_0 \in H$ and $\alpha > 0$, then 
    \[
      \fl
      \prox_{\sigma F} (u) = \prox_{\frac{\sigma}{1+\sigma\alpha} G} \circ \prox_{\sigma\frac{ \alpha}{2}\|\cdot - u_0\|_H^2} (u) =  \prox_{\frac{\sigma}{1+\sigma\alpha} G} \left(\frac{u + \sigma \alpha u_0}{1+\sigma \alpha} \right) \]
  \item If $H=\R^M$ equipped with the Euclidean norm and
    $F(u) =  \sum_{m=1}^M \mI_{[a_m,b_m]}(u_m) + F_m(u_m)$ with
    $\dom(F_m) \cap [a_m,b_m] \neq \emptyset$ for each $m=1,\ldots, M$, then 
    \[ \prox_{\sigma F} (u)_m = \proj_{[a_m,b_m]}\circ  \prox_{\sigma F_m} (u_m) \]
    for $m = 1,\ldots,M$,
  where 
  \[ \proj_{[a_m,b_m]} (t) = \prox_{\sigma \mI_{[a_m,b_m]}}(t) = 
  \left\{\begin{array}{cl}
  t & \text{if} \ t \in [a_m,b_m], \\
  a_m & \text{if} \ t < a_m, \\
  b_m & \text{if} \ t >b_m,
  \end{array}\right.
  \]
  is the projection to $[a_m,b_m]$ in $\RR$. 
  \end{enumerate}
\begin{proof}
Regarding (i), we note that by first-order optimality conditions (additivity of the subdifferential follows from \cite[Proposition I.5.6]{Ekeland_book_mh}), we have the following equivalences:
\begin{eqnarray*}
u^* = \prox_{\sigma F} (u) 
&\quad  \Leftrightarrow \quad 0 \in  u^*-u + \sigma \alpha (u^*-u_0) + \sigma \subgrad G(u) \\
&\quad  \Leftrightarrow \quad 0 \in  u^*- \frac{u + \sigma \alpha u_0}{1 + \sigma \alpha } + \frac{\sigma }{1 + \sigma \alpha} \subgrad G(u) \\
&\quad  \Leftrightarrow \quad u^* = \prox_{\frac{\sigma }{1 + \sigma \alpha}  G} \left( \frac{u + \sigma \alpha u_0}{1 + \sigma \alpha } \right),
\end{eqnarray*}
which proves the explicit form of $\prox_{\sigma F}$. The intermediate equality follows from $\prox_{\sigma\frac{ \alpha}{2}\|\cdot - u_0\|_H^2} (u) =  \frac{u + \sigma \alpha u_0}{1+\sigma \alpha}$, which can again be seen from the optimality conditions.

In order to show (ii), first note that, using Lemma \ref{lem:prox-mapping-separable}, it suffices to show the assertion for $F(u) = \mI_{[a,b]}(u) + f(u)$ with $u \in \R$, $a \leq b$ and $f: \RR \to {]{-\infty,\infty}]}$ proper, convex and lower semi-continuous such that $\dom(f) \cap [a,b] \neq \emptyset$. Also, the identity $\prox_{\sigma \mI_{[a,b]}} = \proj_{[a,b]}$ as well as the explicit form of the projection is immediate. Now, denote by $u^* = \proj_{[a,b]}\circ  \prox_{\sigma f} (u) $ and write it as $u^* = \nu u_F + (1-\nu)u_f$ with $u_F = \prox_{\sigma F}(u)$ and $u_f = \prox_{\sigma f}(u)$ and $\nu \in [0,1]$. To see that this is possible, note that in case of $u_f \in [a,b]$, $u^* = u_f$, in case $u_f < a$ we have that $u_f < a = u^* \leq u_F$ and similarly in case of $u_f > b$. But, with $E(\bar u)= \frac{|\bar u-u|^2}{2} + \sigma f(\bar u)$, convexity and minimality of $u_f$ implies that
\[ E(u^*)  \leq \nu E(u_F) + (1-\nu)E(u_f) \leq \nu E(u_F) + (1-\nu)E(u_F) = E(u_F).
\]
Since both $u^*$ and $u_F$ are in $[a,b]$, the result follows from uniqueness of minimizers.
\end{proof}
\end{lemma}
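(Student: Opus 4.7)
My plan for part (i) is to work directly with the first-order optimality conditions. The proximal mapping $u^* = \prox_{\sigma F}(u)$ is characterised by $0 \in u^* - u + \sigma\,\partial F(u^*)$. Since $G$ is convex and lower semi-continuous and the quadratic $\frac{\alpha}{2}\|\cdot - u_0\|_H^2$ is finite everywhere (so the qualification condition for additivity of subdifferentials is trivially satisfied, e.g.\ \cite[Proposition I.5.6]{Ekeland_book_mh}), we may split $\partial F(u^*) = \partial G(u^*) + \alpha(u^* - u_0)$. Substituting and rearranging, I would group the terms linear in $u^*$ and divide by $1+\sigma\alpha$ to bring the inclusion into the standard form $0 \in u^* - v + \tilde\sigma\,\partial G(u^*)$ with $v = (u+\sigma\alpha u_0)/(1+\sigma\alpha)$ and $\tilde\sigma = \sigma/(1+\sigma\alpha)$. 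Recognising this as the defining inclusion for $\prox_{\tilde\sigma G}(v)$ yields the claimed identity. The intermediate equality $\prox_{\sigma\frac{\alpha}{2}\|\cdot - u_0\|_H^2}(u) = (u+\sigma\alpha u_0)/(1+\sigma\alpha)$ is the special case $G = 0$, which follows from the same calculation.

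For part (ii), I would first invoke Lemma~\ref{lem:prox-mapping-separable} to decouple across coordinates, which reduces the claim to the one-dimensional statement: for $f: \RR \to {]{-\infty,\infty}]}$ proper, convex, l.s.c.\ with $\dom(f)\cap[a,b]\neq\emptyset$, we have $\prox_{\sigma(f+\mI_{[a,b]})}(u) = \proj_{[a,b]}(\prox_{\sigma f}(u))$. The explicit form of $\proj_{[a,b]}$ as the proximal mapping of $\mI_{[a,b]}$ is a routine verification. The one-dimensional identity is the substantive part: I would denote $u_F = \prox_{\sigma F}(u)$, $u_f = \prox_{\sigma f}(u)$, and $u^* = \proj_{[a,b]}(u_f)$, and compare the objective $E(\bar u) = \tfrac12|\bar u - u|^2 + \sigma f(\bar u)$ at $u^*$ and $u_F$.

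The main obstacle, and the key idea, lies in this comparison. I would show that $u^*$ can be written as a convex combination $u^* = \nu u_F + (1-\nu)u_f$ for some $\nu \in [0,1]$: this is automatic if $u_f \in [a,b]$ (then $u^* = u_f$ and $\nu = 0$), while if $u_f < a$ one has $u_f < a = u^* \leq u_F$ (since $u_F \in [a,b]$), so $u^*$ lies on the segment $[u_f, u_F]$, and symmetrically when $u_f > b$. By convexity of $E$ together with the minimality of $u_f$ for $E$, we obtain
\[
E(u^*) \leq \nu E(u_F) + (1-\nu)E(u_f) \leq E(u_F).
\]
Since both $u^*$ and $u_F$ lie in $[a,b] \cap \dom(f)$, this forces $E(u^*) = E(u_F)$, and strict convexity of $E$ (inherited from the quadratic term) gives $u^* = u_F$. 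The delicate point, which I flagged above, is justifying the collinearity of $u^*, u_F, u_f$ in the case $u_f \notin [a,b]$, for which one exploits that $u_F \in [a,b]$ is the constrained minimiser so necessarily lies on the same side of the interval as $u^*$ relative to $u_f$.
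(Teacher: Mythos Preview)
Your proposal is correct and follows essentially the same route as the paper's proof: part (i) via the subdifferential optimality condition with additivity justified by the continuity of the quadratic term, then rearranging and rescaling; part (ii) via the separability lemma to reduce to one dimension, then the convex-combination argument showing $u^* = \proj_{[a,b]}(u_f)$ lies between $u_f$ and $u_F$, followed by the convexity estimate $E(u^*) \le E(u_F)$ and uniqueness of the minimiser. The only cosmetic difference is that you invoke strict convexity of $E$ explicitly where the paper says ``uniqueness of minimisers'', which amounts to the same thing.
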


In the following we provide some examples of explicit proximal mappings for some particular choices of $F$ that are relevant for applications. For additional examples and further, general results on proximal mappings, we refer to \cite{Combettes11_convex_analysis_book_mh,Pesquet11_splitting_review}.

\begin{lemma} \label{lem:proximal_mapping_examples} Let $H$ be a Hilbert space, $\sigma >0$ and $F:H \rightarrow {]{-\infty,\infty}]}$. Then, the following identities hold.
  \mbox{}
  \begin{enumerate}
  \item
    For $F(u) = \frac\alpha 2\norm[H]{u - f}^2$ with $f \in H$, $\alpha >0$, 
    \[
      \prox_{\sigma F}(u) = \frac{ u + \alpha \sigma f}{1 + \alpha \sigma},
      \qquad
    \prox_{\sigma F^*}(u) = \frac{ u - \sigma f }{1 + \sigma/\alpha}.
    \]
  \item 
    For $F = \mI_C$ for some non-empty, convex and closed set $C \subset H$,
    \[ \prox_{\sigma F} = \proj_C,\]
    with $\proj_C$ denoting the orthogonal projection onto $C$.
  \item For $F(u) = G(u-u_0)$ with $G:H \rightarrow {]{-\infty,\infty}]}$ and $u_0 \in H$,
   \[ \prox_{\sigma F}(u) = \prox_{\sigma G}(u-u_0) + u_0.\]
 \item For
   $H = \mH_1 \times \cdots \times \mH_M$ product of the Hilbert spaces $\mH_1,\ldots,\mH_M$,
   $\norm[H]{u}^2 = \sum_{m=1}^M |u_m|_{\mH_m}^2$ with each $\abs[\mH_m]{\placeholder}$
   denoting the norm on $\mH_m$, $\alpha \in {]{0,\infty}[}^M$, and
  $F = \mI_{ \{\|\cdot \|_{\infty,\alpha^{-1}} \leq 1 \}}$ where $\|u\|_{\infty,\alpha^{-1}} = \max_{m=1,\ldots,M} \alpha_m^{-1} |u_m|_\mH$,
  \[
    \prox_{\sigma F}(u)_m =  \proj_{ \{|\placeholder |_{\mH_m} \leq \alpha_m \}}(u_m)= \frac{u_m}{\max\{1, \alpha_m^{-1}\abs{u_m}_{\mH_m}\}}.
    \]
  \item %
    In the situation of (iv) and $F(u) = \sum_{m=1}^M \alpha_m  |u_m|_{\mH_m}$,
  \[ \prox_{\sigma F} (u)_m = 
  \left\{\begin{array}{cl}
  (1 - \sigma \alpha_m |u_m|_{\mH_m}^{-1}) u_m  & \text{if } |u_m|_{\mH_m} > \sigma \alpha_m, \\
  0 & \text{else, }
  \end{array}\right. \]
  and 
  \[ \prox_{\sigma F^*} (u)_m = \proj_{ \{|\placeholder |_{\mH_m} \leq \alpha_m \}}(u_m)\]
  with $\proj_{ \{|\placeholder |_{\mH_m} \leq \alpha_m \}}$ as in (iv).
  \end{enumerate}
  
\begin{proof}
  The assertion on $\prox_{\sigma F}$ in (i) follows from first-order optimality conditions as in Lemma \ref{lem:proximal_mapping_decoupling}, the assertion on $\prox_{\sigma F^*}$ is a consequence of Lemma \ref{lem:moreau_proximal}.  Assertion (ii) is immediate from the definition of the orthogonal projection in Hilbert spaces and (iii) follows from a simple change of variables for the minimisation problem in the definition of the proximal mapping. Regarding (iv), using Lemma \ref{lem:prox-mapping-separable} and noting that %
$\norm[\infty,\alpha^{-1}]{u} \leq 1$ if and only if $\abs[\mH_m]{u_m} \leq \alpha_m$ for $m=1,\ldots,M$,
  it suffices to show that for each $u_m \in \mH_m$ and $m = 1,\ldots,M$, 
\[ \proj_{ \{ |\cdot |_{\mH_m} \leq \alpha_m \} } (u_m)  =  \frac{u_m}{\max\{1, \alpha_m^{-1}\abs{u_m}_{\mH_m}\}} .\]
To this aim, %
observe that
by definition of the projection in Hilbert spaces,
\begin{eqnarray*}
 \proj_{ \{ |\cdot |_{\mH_m} \leq \alpha_m \} } (u_m) 
& = \argmin_{|\bar u|_{\mH_m} \leq \alpha_m}  |\bar u-u_m|_{\mH_m}
= \argmin_{|\bar u|^2_{\mH_m} \leq \alpha_m^2}  |\bar u -u_m|^2_{\mH_m} \\
& = \argmin_{\bar{u}_1 \in \lspan\{ u_m\}, \ \bar{u}_2 \in \lspan\{ u_m\}^\perp, \atop |\bar{u}_1|^2_{\mH_m} + |\bar{u}_2|^2_{\mH_m} \leq \alpha_m^2}  |\bar{u}_1-u_m|^2_{\mH_m} + |\bar{u}_2|^2_{\mH_m}  \\
& = u_m \Biggl( \argmin_{t\in \RR, \ |t u_m|^2_\mH \leq \alpha_m^2}  |t - 1|^2|u_m|^2_{\mH_m} \Biggr).
\end{eqnarray*}
From this, it is easy to see that the minimum in the last line is achieved for $t = 1$ in case $|u_m|_{\mH_m} \leq \alpha_m$ and $t = \alpha_m /|u_m|_{\mH_m}$ otherwise, from which the explicit form of the projection follows. Considering assertion (v),
we have by Remark~\ref{rem:norm_fenchel_dual} that
\[
  \fl
  F^*(u) = \sum_{m=1}^M (\alpha_m \abs[\mH_m]{\placeholder})^*(u_m)
  = \sum_{m=1}^M \mI_{\sett{\abs[\mH_m]{\placeholder} \leq \alpha_m}}(u_m) =
  \mI_{\sett{\norm[\infty,\alpha^{-1}]{\placeholder} \leq 1}}(u),
\]
so
the statements follow by Lemma \ref{lem:moreau_proximal} and assertion (iv).
\end{proof}
\end{lemma}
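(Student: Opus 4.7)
The plan is to prove the five assertions in turn, each by reduction to a basic optimality argument, exploiting separability and Moreau's identity wherever possible.

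For (i), the objective $\frac{1}{2}\|\bar u - u\|_H^2 + \sigma \frac{\alpha}{2}\|\bar u - f\|_H^2$ is strongly convex and Fréchet differentiable, so setting the gradient equal to zero yields $(1+\sigma\alpha)\bar u = u + \sigma\alpha f$, giving the stated formula for $\prox_{\sigma F}$. The dual formula then follows by computing $F^*(w) = \frac{1}{2\alpha}\|w\|_H^2 + \scp{w}{f}$ and applying the same first-order argument, or more directly by invoking Moreau's identity from Lemma~\ref{lem:moreau_proximal}.

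Assertion (ii) is immediate from the definitions: $\prox_{\sigma\mI_C}(u) = \argmin_{\bar u \in C} \frac{1}{2}\|\bar u - u\|_H^2$ is precisely the orthogonal projection of $u$ onto $C$. For (iii), the change of variables $\tilde u = \bar u - u_0$ transforms $\frac{1}{2}\|\bar u - u\|_H^2 + \sigma G(\bar u - u_0)$ into $\frac{1}{2}\|\tilde u - (u - u_0)\|_H^2 + \sigma G(\tilde u)$, whose minimizer is $\prox_{\sigma G}(u - u_0)$; shifting back gives the claim.

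For (iv) and (v), I would first invoke Lemma~\ref{lem:prox-mapping-separable} to reduce to a single factor $\mH_m$, since both $F$ and $F^*$ are sums of componentwise terms. For a fixed $u_m \in \mH_m \setminus \{0\}$ (the trivial case $u_m = 0$ being straightforward), I would decompose any candidate $\bar u_m$ as $\bar u_m = t\, u_m/|u_m|_{\mH_m} + v$ with $v \perp u_m$ and $t \in \RR$. In case (iv), the constraint $|\bar u_m|_{\mH_m} \leq \alpha_m$ becomes $t^2 + |v|_{\mH_m}^2 \leq \alpha_m^2$, and the distance $(t - |u_m|_{\mH_m})^2 + |v|_{\mH_m}^2$ is minimized at $v = 0$ and $t = \min\{|u_m|_{\mH_m}, \alpha_m\}$, which yields exactly the radial shrinkage formula. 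In case (v), the penalty $\sigma\alpha_m |\bar u_m|_{\mH_m} = \sigma\alpha_m\sqrt{t^2 + |v|_{\mH_m}^2}$ depends only on the magnitude; again $v = 0$ is optimal, and the remaining one-dimensional problem $\min_{t \in \RR} \frac{1}{2}(t - |u_m|_{\mH_m})^2 + \sigma\alpha_m|t|$ is classical soft thresholding, giving $t^* = \max\{0, |u_m|_{\mH_m} - \sigma\alpha_m\}$, which rewrites to the stated expression. The dual formula in (v) then follows either via Moreau's identity (Lemma~\ref{lem:moreau_proximal}) or by observing from Remark~\ref{rem:norm_fenchel_dual} that $F^* = \mI_{\{\|\cdot\|_{\infty,\alpha^{-1}} \leq 1\}}$ and invoking (iv).

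The main obstacle is the Hilbert-space (as opposed to scalar) nature of each component in (iv) and (v): one must justify carefully that the orthogonal component $v$ is suboptimal, so that the multi-dimensional projection or shrinkage collapses to a one-dimensional calculation along the ray through $u_m$. Once this reduction is made rigorous via the orthogonal decomposition above, the remaining arguments are entirely standard.
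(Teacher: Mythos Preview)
Your proof is correct and follows essentially the same approach as the paper for (i)--(iv): first-order optimality for (i), immediate definitions for (ii)--(iii), and separability plus orthogonal decomposition along $\lspan\{u_m\}$ for (iv). For (v) there is a small organizational difference: you prove the soft-thresholding formula for $\prox_{\sigma F}$ directly via the same orthogonal decomposition and a one-dimensional computation, then deduce $\prox_{\sigma F^*}$; the paper instead first computes $F^* = \mI_{\{\norm[\infty,\alpha^{-1}]{\placeholder} \leq 1\}}$ via Remark~\ref{rem:norm_fenchel_dual}, obtains $\prox_{\sigma F^*}$ from (iv), and then recovers $\prox_{\sigma F}$ by Moreau's identity. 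Your route is slightly more self-contained (it does not rely on knowing the Fenchel dual of a norm), while the paper's route avoids repeating the decomposition argument by recycling (iv); both are equally valid and of comparable length.
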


Now considering the minimisation problem \eqref{eq:discrete_min_prob_general}, our approach is to rewrite it as a saddle-point problem such that, when applying the iteration \eqref{eq:primal_dual_iterates}, the involved proximal mappings decouple into simple and explicit mappings. To achieve this while allowing for general forward operators $K_h$, we dualise both the regularisation and the data-fidelity term and arrive at the following a saddle-point reformulation of \eqref{eq:discrete_min_prob_general}:
\begin{equation} \label{eq:discrete_saddle_point_problem_general}
\fl \min _{ (u,w) \in U_h \times W_h}  \ \max_{(v,\lambda) \in V_h \times Y_h} \,  \langle D_h(u,w), v \rangle_{V_h} + \langle K_h u, \lambda \rangle_{Y_h}  
- \mI_{\sett{\norm[\infty,\alpha^{-1}]{\placeholder} \leq 1}}(v) - S_{f_h}^* (\lambda),
\end{equation}
recalling that the dual norm of $\norm[1,\alpha]{\placeholder}$ is
denoted by $\norm[\infty,\alpha^{-1}]{\placeholder}$ and that $(\|\placeholder \|_{1,\alpha})^* =  \mI_{\{ \|\placeholder \|_{\infty ,\alpha^{-1}}\leq 1 \}}$, see Remark~\ref{rem:norm_fenchel_dual}.
The following lemma provides some instances of $\norm[1,\alpha]{\placeholder}$ that arise in the context of higher-order TV regularisers and its generalisations. In particular, for these instances, the corresponding dual norm $\norm[\infty,\alpha^{-1}]{\placeholder}$ and the proximal mappings $\prox_{\sigma\mI_{\sett{\norm[\infty,\alpha^{-1}]{\placeholder} \leq 1}}} = \proj_{\sett{\norm[\infty,\alpha^{-1}]{\placeholder} \leq 1}}$ will be provided.
Concrete examples will be discussed in Example \ref{ex:prox_regularizers} below.
\begin{lemma} \label{lem:explicit_prox_for_ell2_space} With $M \in \N$, $\alpha \in {]{0,\infty}[}^M$, $l_1,\ldots,l_M \in \N$, and $\mH_m  \in \{ \tensorspace[l_m]{\RR^2}, \Sym^{l_m}(\RR^2)\}$ for $m=1,\ldots,M$, let
\[
 V_h = \bigtimes _{m=1}^{M} \ell^2(\Omega_h, \mH_m) \quad \text{and}\quad  \|v\|_{1,\alpha} = \sum_{m=1}^M \alpha_{m} \|v_m\|_1,
\]
for $v = (v_1,\ldots,v_M) \in V_h$,
where $V_h$ is equipped with the induced inner product $\langle u,v \rangle _{V_h} = \sum_{m=1}^M \langle u_m,v_m \rangle_{\ell^2(\Omega_h, \mH_m)}$ and norm, and
the $1$-norm on each $\ell^2(\Omega,\mH_M)$ is given in Definition~\ref{def:discrete_spaces}. Then,
the dual norm $\norm[\infty,\alpha^{-1}]{\placeholder}$ satisfies
 \[
 \|v\|_{\infty ,\alpha^{-1}} = \max_{m=1,\ldots,M} \ \alpha_{m}^{-1}\|v_m\|_\infty,
\]
with the $\infty$-norm again according to Definition~\ref{def:discrete_spaces}.
  Further, we have, for $m=1,\ldots,M$, that
 \begin{eqnarray*}
  \prox_{\sigma (\|\placeholder \|_{1,\alpha}) ^*}(v)_m  &=
  \proj_{ \{\|\cdot \|_{\infty,\alpha^{-1}} \leq 1 \}}  (v)_m  %
  = \frac{v_m}{\max\{1,\alpha_{m}^{-1}|v_m|_{\mH_m}\}},
 \end{eqnarray*}
 where the right-hand side has to be interpreted in the pointwise sense, i.e.,
 for $u \in \ell^2(\Omega_h,\mH_m)$ and $(i,j) \in \Omega_h$, it holds that
 $\bigl(\max\{1, \alpha_m^{-1}\abs[\mH_m]{u}\}^{-1} u \bigr)_{i,j} =
 \max\{1, \alpha_m^{-1}\abs[\mH_m]{u_{i,j}}\}^{-1} u_{i,j}$.
 \end{lemma}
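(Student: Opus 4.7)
The plan is to establish both claims by successive reductions via separability, combined with the general results already developed earlier in the paper. There is essentially no novel analytic obstacle here; the main task is to verify that the product structure over $m=1,\ldots,M$ and the pointwise structure over $(i,j) \in \Omega_h$ both decouple nicely.

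First I would treat the dual norm. By the product Hilbert-space structure of $V_h$ together with the definition of $\|\cdot\|_{1,\alpha}$, the dual norm is given by
\[
  \|v\|_{\infty,\alpha^{-1}} = \sup_{\|u\|_{1,\alpha} \leq 1} \ \sum_{m=1}^M \langle u_m,v_m\rangle_{\ell^2(\Omega_h,\mH_m)},
\]
which, after rescaling each $u_m$ by $\alpha_m$, splits into a computation of the dual of $\|\cdot\|_1$ on each $\ell^2(\Omega_h,\mH_m)$. Thus it suffices to show that this pointwise $\ell^1$-type norm has as its dual the pointwise $\ell^\infty$-type norm, which follows from decomposing both the sum over $(i,j) \in \Omega_h$ and the pointwise pairing via Cauchy--Schwarz, attained by choosing $u_{i,j}$ concentrated at a worst-case grid point and aligned with $v_{i,j}$ in $\mH_m$. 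This yields the claimed formula for $\|\cdot\|_{\infty,\alpha^{-1}}$.

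Next I would compute the prox. By Remark~\ref{rem:norm_fenchel_dual}, we have $(\|\cdot\|_{1,\alpha})^* = \mI_{\{\|\cdot\|_{\infty,\alpha^{-1}} \leq 1\}}$, so $\prox_{\sigma (\|\cdot\|_{1,\alpha})^*}$ coincides with the orthogonal projection $\proj_{\{\|\cdot\|_{\infty,\alpha^{-1}} \leq 1\}}$ as in Lemma~\ref{lem:proximal_mapping_examples}(ii). The key observation is that the constraint $\|v\|_{\infty,\alpha^{-1}} \leq 1$ is equivalent to the collection of pointwise constraints $\alpha_m^{-1}|v_m(i,j)|_{\mH_m} \leq 1$ for every $m$ and every $(i,j) \in \Omega_h$, so the constraint set is a Cartesian product of balls (over $m$ and over grid points). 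By Lemma~\ref{lem:prox-mapping-separable}, the projection therefore decouples into the product of the componentwise and pointwise projections.

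Finally, each factor is a projection onto a ball $\{|\cdot|_{\mH_m} \leq \alpha_m\}$ in the finite-dimensional Hilbert space $\mH_m$, whose explicit form is exactly the one given by Lemma~\ref{lem:proximal_mapping_examples}(iv), namely $u \mapsto u/\max\{1,\alpha_m^{-1}|u|_{\mH_m}\}$. Applying this pointwise yields the stated formula. The only subtlety worth checking is that the pointwise $\mH_m$-inner product used to define the projection is consistent with the inner product on $\ell^2(\Omega_h,\mH_m)$ from Definition~\ref{def:discrete_spaces}, which introduces only positive multinomial weights $\binom{|\alpha|}{\alpha}$ in the symmetric case and is diagonal across grid points---hence it does not affect the decoupling, only a rescaling that cancels in the projection formula.
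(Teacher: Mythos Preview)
Your proposal is correct and follows essentially the same route as the paper: establish the dual norm via the pointwise Cauchy--Schwarz/Hölder bound with a concentrated maximiser, then reduce the prox to pointwise projections onto balls. The only cosmetic difference is that the paper packages the second step by directly invoking Lemma~\ref{lem:proximal_mapping_examples}(v) on the finite product $V_h = \mH_1^{N_1N_2} \times \cdots \times \mH_M^{N_1N_2}$, whereas you unfold this into Remark~\ref{rem:norm_fenchel_dual}, separability via Lemma~\ref{lem:prox-mapping-separable}, and Lemma~\ref{lem:proximal_mapping_examples}(iv); these are the same steps in a different order, and your remark on the multinomial weights is a helpful clarification the paper leaves implicit.
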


 \begin{proof}
   By definition, we have for $u,v \in V_h$ with $\norm[1,\alpha]{u} \leq 1$ that
   \[
     \fl
     \scp[V_h]{v}{u} \leq \sum_{m=1}^M \alpha_m^{-1} \norm[\infty]{v_m}
     \alpha_m \norm[1]{u_m} \leq \max_{m=1,\ldots,M} \alpha_m^{-1} \norm[\infty]{v_m}
     = \max_{m=1,\ldots,M, \atop (i,j) \in \Omega_h} \alpha_m^{-1} \abs[\mH_m]{(v_m)_{i,j}},
   \]
   hence, $\norm[\infty,\alpha^{-1}]{v} \leq \max_{m=1,\ldots,M} \alpha_m^{-1} \norm[\infty]{v_m}$.
   With $(m,i,j)$
   a maximising argument of the right-hand side above,
   equality follows, in case of $v \neq 0$,
   from choosing $u$ according to $(u_m)_{i,j} = \alpha_m^{-1} (v_m)_{i,j}/\abs[\mH_m]{(v_m)_{i,j}}$ and $0$ everywhere else. The case $v = 0$ is trivial.
   Also, since $\Omega_h = \sett{1,\ldots,N_1} \times \sett{1,\ldots,N_2}$ is finite,
   one can interpret each $\ell^2(\Omega,\mH_m)$ as $\mH_m^{N_1N_2}$,
   such that Lemma~\ref{lem:proximal_mapping_examples} (v) applied to $H =V_h
   = \mH_1^{N_1N_2} \times \cdots \times \mH_M^{N_1N_2}$
   immediately yields
   the stated pointwise identity for the proximal mapping/projection.
\end{proof}

Under mild assumptions on $S_{f_h}$, equivalence of the primal problem \eqref{eq:discrete_min_prob_general} and the saddle-point problem \eqref{eq:discrete_saddle_point_problem_general} indeed holds and existence of a solution to both (as well as a corresponding dual problem) can be ensured.
\begin{proposition} \label{prop:duality_result_discrete}
  Under the assumptions stated for problem~\eqref{eq:discrete_min_prob_general},
  there exists a solution.
  Further, if $S_{f_h}$ is such that $Y_h = \bigcup_{t \geq 0} t\bigl(\dom(S_{f_h}) + \range{K_h}\bigr)$, then there exists a solution to the dual problem
\begin{equation}\label{eq:discrete_dual_problem_general}
\fl \max_{(v,\lambda) \in V_h \times Y_h}  -\mI_{ \{0\}}\bigl( (D_h^1)^* v + K_h^* \lambda \bigr) - \mI_{\{0\}} \bigl((D_h^2)^* v \bigr) - \mI_{\sett{\norm[\infty,\alpha^{-1}]{\placeholder} \leq 1}} (v) - S_{f_h}^* (\lambda)
\end{equation}
and to the saddle-point problem \eqref{eq:discrete_saddle_point_problem_general}. Further, strong duality holds and the problems are equivalent in the sense that $((u,w),(v,\lambda))$ is a solution to \eqref{eq:discrete_saddle_point_problem_general} if and only if $(u,w)$ solves \eqref{eq:discrete_min_prob_general} and $(v,\lambda)$ solves \eqref{eq:discrete_dual_problem_general}.
\begin{proof}
  At first note that %
  existence for \eqref{eq:discrete_min_prob_general} can be shown as in Theorem \ref{thm:general_reg_existence_linear}. 
  By virtue of Theorem~\ref{thm:fenchel-rockafellar-duality}, choosing $X = U_h \times W_h$, $Y = V_h \times Y_h$, $F: X \to {]{-\infty,\infty}]}$ as $F=0$, $G: Y \to {]{-\infty,\infty}]}$ as $G(v,\lambda) = \norm[1,\alpha]{v} + S_{f_h}(\lambda)$, and $\Lambda: X \to Y$ as $\Lambda(u,w) = \bigl(D_h(u,w), K_hu\bigr)$, we only need to verify~\eqref{eq:fr-duality-qual} to obtain existence of dual solutions and strong duality. But since $\dom(F) = X$ and $\dom(G) = V_h \times \dom(S_{f_h})$, the latter condition is equivalent to $Y_h = \bigcup_{t \geq 0} t \bigl(\dom(S_{f_h}) + \range{K_h} \bigr)$. Also, since $F^* = \mI_{\sett{0}}$ and $(\norm[1,\alpha]{\placeholder})^* = \mI_{\sett{\norm[\infty,\alpha^{-1}]{\placeholder} \leq 1}}$, see Remark~\ref{rem:norm_fenchel_dual}, the maximisation problem in~\eqref{eq:fr-duality} corresponds to~\eqref{eq:discrete_dual_problem_general}.
  Finally, the equivalence of the saddle-point problem \eqref{eq:discrete_saddle_point_problem_general} to \eqref{eq:discrete_min_prob_general} and~\eqref{eq:discrete_dual_problem_general} then follows from \cite[Proposition III.3.1]{Ekeland_book_mh}.
\end{proof}
\end{proposition}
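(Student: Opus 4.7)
The plan is to handle the three assertions sequentially, leveraging the Fenchel--Rockafellar duality theorem stated earlier as Theorem~\ref{thm:fenchel-rockafellar-duality} and adapting the existence argument of Theorem~\ref{thm:general_reg_existence_linear} to the finite-dimensional discrete setting.

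First, I would establish existence for \eqref{eq:discrete_min_prob_general} by mimicking the proof of Theorem~\ref{thm:general_reg_existence_linear}: for a minimising sequence $\seq{(u^n, w^n)}$, the coercivity of $S_{f_h}$ controls $\seq{K_h u^n}$, while $\|D_h(u^n, w^n)\|_{1,\alpha}$ bounded controls $u^n, w^n$ up to the finite-dimensional kernel of $D_h$, which can be absorbed by a projection argument since all spaces here are finite-dimensional. Reflexivity and lower semi-continuity then yield a limit point which is a minimiser.

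Next, for dual existence and strong duality, I would apply Theorem~\ref{thm:fenchel-rockafellar-duality} with $X = U_h \times W_h$, $Y = V_h \times Y_h$, $\Lambda(u,w) = (D_h(u,w), K_h u)$, $F \equiv 0$, and $G(v, \lambda) = \|v\|_{1,\alpha} + S_{f_h}(\lambda)$. All three are proper, convex, lower semi-continuous, and the left-hand side of \eqref{eq:fr-duality} clearly reduces to the primal problem \eqref{eq:discrete_min_prob_general}. The qualification condition \eqref{eq:fr-duality-qual} amounts to requiring that every $(v_0, \lambda_0) \in V_h \times Y_h$ can be written as $t((v_1, \lambda_1) - (D_h(u,w), K_h u))$ for some $t \geq 0$, $(u,w) \in X$, $v_1 \in V_h$, $\lambda_1 \in \dom(S_{f_h})$. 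The $v$-component is trivially satisfied because $\|\cdot\|_{1,\alpha}$ is everywhere finite (hence $\dom G$ contains all $v_1$), while the $\lambda$-component reduces precisely to the hypothesised condition $Y_h = \bigcup_{t \geq 0} t(\dom(S_{f_h}) + \range{K_h})$. Computing the adjoints, $F^* = \mI_{\{0\}}$, $\Lambda^*(v, \lambda) = ((D_h^1)^*v + K_h^*\lambda,\ (D_h^2)^*v)$, and $G^*(v, \lambda) = \mI_{\{\|\cdot\|_{\infty,\alpha^{-1}} \leq 1\}}(v) + S_{f_h}^*(\lambda)$ by Remark~\ref{rem:norm_fenchel_dual} and separability of $G$. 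Substituting into the right-hand side of \eqref{eq:fr-duality} yields exactly \eqref{eq:discrete_dual_problem_general}, and Theorem~\ref{thm:fenchel-rockafellar-duality} delivers both existence of a dual maximiser and the equality of primal and dual optimal values.

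Finally, for the saddle-point reformulation, I would appeal to standard convex-duality results (e.g.~\cite[Proposition III.3.1]{Ekeland_book_mh}): once strong duality is established and both primal and dual admit solutions, the set of saddle points of the Lagrangian $\mathcal{L}((u,w),(v,\lambda)) = \scp[V_h]{D_h(u,w)}{v} + \scp[Y_h]{K_h u}{\lambda} - \mI_{\{\|\cdot\|_{\infty,\alpha^{-1}} \leq 1\}}(v) - S_{f_h}^*(\lambda)$ coincides with the Cartesian product of primal and dual solution sets, which is exactly the equivalence claimed. The main delicacy in the whole argument is the precise verification of the qualification condition~\eqref{eq:fr-duality-qual} — specifically, noting that the $V_h$-factor contributes nothing because $\|\cdot\|_{1,\alpha}$ is finite everywhere, so the hypothesis is imposed only on the data-fidelity side; the rest is essentially bookkeeping of dualities and adjoints.
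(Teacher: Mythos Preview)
Your proposal is correct and follows essentially the same route as the paper's proof: primal existence via the argument of Theorem~\ref{thm:general_reg_existence_linear}, Fenchel--Rockafellar duality applied with exactly the same choices $X = U_h \times W_h$, $Y = V_h \times Y_h$, $F \equiv 0$, $G(v,\lambda) = \|v\|_{1,\alpha} + S_{f_h}(\lambda)$, $\Lambda(u,w) = (D_h(u,w), K_h u)$, the same reduction of~\eqref{eq:fr-duality-qual} to the hypothesised condition on $Y_h$, and the same appeal to \cite[Proposition III.3.1]{Ekeland_book_mh} for the saddle-point equivalence. Your added detail on $\Lambda^*$ and on why the $V_h$-component of the qualification condition is automatic is accurate bookkeeping beyond what the paper spells out.
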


\begin{remark}
  \label{rem:alternative_duals}
  In some applications, it is beneficial to add an additional penalty term on $u$ in form of $\Phi: U_h \to [0,\infty]$ proper, convex and lower semi-continuous to the energy of \eqref{eq:discrete_min_prob_general}, whereas in other situations when $u \mapsto S_{f_h} (K_hu)$ has a suitable structure, a dualization of the data term is not necessary, see the discussion below. Regarding the former, the differences when extending Proposition \ref{prop:duality_result_discrete} is that existence for the primal problem needs to be shown differently and that the domain of $\Phi$ needs to be taken into account for obtaining strong duality. Existence can, for instance, be proved when assuming that either $\Phi$ is the indicator function of a polyhedral set (see \cite[Proposition 1]{holler15mpeg_mh}), or that $\ker(K_h) \cap \ker(D_h) = \{ 0\}$. Duality is further obtained when $Y_h = \bigcup_{t \geq 0} t \bigl( \dom(S_{f_h}) - K_h \dom(\Phi)\bigr)$. Regarding the latter, a version of Proposition \ref{prop:duality_result_discrete} without the dualization of the data term $S_{f_h}(K_hu)$ holds even without the assumption on the domain of $S_{f_h}$, however, with a different associated dual problem and saddle-point problem.

  In particular, not dualising the data term has impact on the primal-dual optimisation algorithms. In view of the iteration~\eqref{eq:primal_dual_iterates}, the evaluation of the proximal mapping for $u \mapsto S_{f_h}(K_hu)$ then becomes necessary, so this dualization strategy is only practical if the latter proximal mapping can easily be computed. 
  Furthermore, in case of a sufficiently smooth data term, dualization of $S_{f_h}$ can also be avoided by using explicit descent steps for $S_{f_h}$ instead of proximal mappings, where the Lipschitz constant of the derivative of $S_{f_h}$ usually enters in the stepsize bound. See \cite{chambolle2015ergodic_primal_dual_mh} for an extension of the primal-dual algorithm in that direction.
\end{remark}

In view of Proposition \ref{prop:duality_result_discrete}, we now address the numerical solution of the saddle-point problem \eqref{eq:discrete_saddle_point_problem_general}. Applying the iteration \eqref{eq:primal_dual_iterates}, this results in Algorithm \ref{alg:primal_dual_general}, which is given in a general form. A concrete implementation still requires an explicit form of the proximal mapping $\prox_{\sigma S_{f_h}^*}$, a concrete choice of $V_h$, $W_h$ and $D_h$ as well as an estimate on $\|(D_h,K_h)\|$ for the stepsize choice and a suitable stopping criterion. These building blocks will now be provided for different choices of $\mR_\alpha$ and $S_{f_\alpha}$ in a way that they can be combined to a arrive at a concrete, application-specific algorithm. After that, two examples will be discussed.

\begin{algorithm}[t]
  \begin{algorithmic}[1]
\onehalfspacing

\Function{Tikhonov}{$K_h$, $f_h$, $\alpha$}

\State $(u,w,\overline{u},\overline{w}) \gets  (0,0,0,0), (v,\lambda) \gets (0,0)$
\State choose $\sigma, \tau > 0$ such that $\sigma \tau \left \|\left(
  \begin{array}{cc}
    D_h^1 & D_h^2 \\ K_h & 0
  \end{array}
\right) \right\|^2 < 1$
\Repeat
	\State \textbf{Dual updates}
	\State $ v \gets \proj_{ \{\|\cdot \|_{\infty,\alpha^{-1}} \leq 1 \}} \left( v + \sigma (D_h^1 \overline{u} + D_h^2\overline{w}) \right)$
	\State $\lambda \gets \prox_{\sigma S_{f_h}^*} (\lambda + \sigma K_h \overline{u})$
		\State \textbf{Primal updates}
	\State $u_+ \gets u - \tau \bigl ( (D_h^1)^* v + K_h^*\lambda \bigr)$
	\State $w_+ \gets w - \tau \bigl( (D_h^2)^* v \bigr)$
		\State \textbf{Extrapolation and update}
	\State $(\overline{u},\overline{w}) \gets 2(u_+,w_+) - (u,w)$
	\State $(u,w) \gets (u_+,w_+)$
\Until{stopping criterion fulfilled}
\State \Return{$u$}
\EndFunction
\end{algorithmic}
\caption{Primal-dual scheme for the numerical solution of \eqref{eq:discrete_saddle_point_problem_general}}\label{alg:primal_dual_general}
\end{algorithm}

\subsubsection*{Proximal mapping of $S_{f_h}^*$.} Depending on the application of interest, and in particular on the assumption on the underlying measurement noise, different choices of $S_{f_h}$ are reasonable. The one which is probably most relevant in practice is 
\[ S_{f_h}(\lambda) = \frac{1}{2}\| \lambda - f_h \|_2 ^2, \]
which, from a statistical perspective, is the right choice under the assumption of Gaussian noise. In this case, as discussed in Lemma \ref{lem:proximal_mapping_examples}, the proximal mapping of the dual is given as
\[ \prox_{\sigma S_{f_h}^*}(\lambda) = \frac{\lambda - \sigma f_h}{1 + \sigma}.
\]
A second, practically relevant choice is the Kullback-Leibler divergence as in \eqref{eq:kl_definition}. For
discrete data $\bigl((f_h)_i\bigr)_i$ satisfying $(f_h)_i \geq 0$ for each $i$, and
a corresponding discrete signal $(\lambda_i)_i$, this corresponds to 
\begin{equation} \label{eq:kl_divergence_discrete}
\fl S_{f_h} (\lambda) = \KL(\lambda, f_h) =  \left\{\begin{array}{ll}
\sum_i \lambda_i - (f_h)_i - (f_h)_i \log(\frac{\lambda_i}{(f_h)_i}) & \text{if }\lambda_i \in {[{0,\infty}[} \text{ for all }i, \\
\infty & \text{else,}
\end{array} \right.
\end{equation}
where we again use the convention $(f_h)_i \log(0) = -\infty$ for $(f_h)_i>0$ and $0\log(\frac{\lambda_i}{0}) = 0$ whenever $\lambda_i \geq 0$. A direct computation (see for instance \cite{holler16mrpet_mh}) shows that in this case
\[  \prox_{\sigma S_{f_h}^*}(\lambda)_i = \lambda_i - \frac{\lambda_i - 1 + \sqrt{ (\lambda_i- 1)^2 + 4 \sigma (f_h)_i}}{2}.
\]
Another choice that is relevant in the presence of strong data outliers (e.g., due to transmission errors) is
\[ S_{f_h}(\lambda) = \| \lambda - f_h \|_1 \]
in which case
\[
 \prox_{\sigma S_{f_h}^*}(\lambda)_i = \frac{\lambda_i - \sigma (f_h)_i}{\max\{1,|\lambda_i - \sigma (f_h)_i|\}}
  \] can be obtained from Lemmas \ref{lem:moreau_proximal} and \ref{lem:proximal_mapping_examples}. 
  
  As already mentioned in Remark~\ref{rem:alternative_duals}, in case the discrepancies term is not dualised, a corresponding version of the algorithm of \cite{pock2011primaldual_mh} requires the proximal mappings of $\tau S_{f_h}$ which can either be computed directly or obtained from $\prox_{\sigma S_{f_h}^*}$ using Moreau's identity as in Lemma \ref{lem:moreau_proximal}. Further, there are many other choices of $S_{f_h}$ for which the $\prox_{\sigma S_{f_h}^*}$ is simple and explicit, such as, for instance, equality constraints on a subdomain in the case of image inpainting or box constraints in case of dequantization or image decompression.

\subsubsection*{Choice of $\mR_\alpha$ and proximal mapping.} As we show now, the general form $\mR_\alpha(u) = \min _{w \in W_h} \| D_h(u,w)\|_{1,\alpha}$ covers all higher-order regularisation approaches discussed in the previous sections. 

\begin{example}\label{ex:prox_regularizers}\mbox{}
  
\begin{description}
\item[Higher-order total variation.]
   The choice $\mR_\alpha(u) = \alpha \|\nabla ^k u \|_1$, with $k \geq 1$ the order of differentiation, can be realised with
\[ \fl W_{h} = \{ 0\}, \quad V_h = \ell^2(\Omega_h, \tensorspace[k]{\RR^2}), \quad D_h = \nabla_h ^k ,\quad \|v \|_{1,\alpha} = \alpha \|v  \|_1 , \]
which yields, according to Lemma \ref{lem:explicit_prox_for_ell2_space},
for $(i,j) \in \Omega_h$ that
\begin{equation} \label{eq:higher_order_tv_projection_explicit}
  \fl
  \proj_{ \{\|\cdot \|_{\infty,\alpha^{-1}} \leq 1\}}(v)_{i,j} = \proj_{ \{\|\cdot \|_{\infty} \leq \alpha \}}(v)_{i,j}  = \frac{v_{i,j}}{\max\{1,\alpha^{-1}|v|_{i,j}\}}.
  \end{equation}
Here, we used that whenever $W_h = \{ 0\}$, one can ignore the second argument of $D_h: U_h \times W_h \to V_h$ and regard it as operator $D_h: U_h \rightarrow V_h$. %

\item[Sum of higher-order TV functionals.] The choice $\mR_\alpha(u) = \alpha_1 \|\nabla_h ^{k_1} u \|_1 + \alpha_2 \|\nabla_h ^{k_2} u \|_1$, with $k_2 > k_1 \geq 1$ and $\alpha_i > 0$ for $i=1,2$ differentiation orders and weighting parameters, respectively, can be realised with
  \[
    \fl
    \begin{array}l
    W_{h} = \{ 0\}, \quad
    V_h = \ell^2(\Omega_h, \tensorspace[k_1]{\RR^2}) \times \ell^2(\Omega_h, \tensorspace[k_2]{\RR^2})
    \quad D_h  =
    \left( \begin{array}c
\nabla_h ^{k_1} \\ \nabla_h ^{k_2}
           \end{array}\right) , \\
       \| (v_1,v_2) \|_{1,\alpha} = \alpha_1  \|v_1   \|_1 + \alpha_2  \|v_2   \|_1,
    \end{array}
 \]
and yields, according to Lemma \ref{lem:explicit_prox_for_ell2_space},
\begin{equation} \label{eq:sum_of_tv_prox_explicit}
  \fl
  \proj_{ \{\|\cdot \|_{\infty,\alpha^{-1}} \leq 1 \}}(v_1,v_2) = \left( \proj_{ \{\|\cdot \|_{\infty} \leq \alpha_1 \}}(v_1),\ \proj_{ \{\|\cdot \|_{\infty} \leq \alpha_2 \}}(v_2) \right)
\end{equation}
with $\proj_{ \{\|\cdot \|_{\infty} \leq \alpha_i \}}$ as in \eqref{eq:higher_order_tv_projection_explicit}.

\item[Infimal convolution of higher-order TV functionals.] The infimal convolution
  \[ \mR_\alpha(u)=  \min_{w \in \ell^2(\Omega_h)} \alpha_1 \|\nabla _h^{k_1} u - \nabla_h ^{k_1} w  \|_1 + \alpha_2 \|\nabla _h^{k_2} w \|_1  \] can be realised via
  \[
    \fl
    \begin{array}{l}
      W_{h} = \ell^ 2(\Omega_h), \quad
      V_h = \ell^2(\Omega_h, \tensorspace[k_1]{\RR^2}) \times \ell^2(\Omega_h, \tensorspace[k_2]{\RR^2}),\\[\smallskipamount]
      D_h  = (D_h^1 \,|\, D_h^2 ) = \left( \begin{array}{c}
                      \nabla_h ^{k_1} \\0 
                    \end{array}\left|
      \begin{array}{c}
        - \nabla _h^{k_1} \\
        \nabla _h^{k_2}
      \end{array}
      \right)\right. , \quad
      \| (v_1,v_2) \|_{1,\alpha} = \alpha_1  \|v_1   \|_1 + \alpha_2  \|v_2   \|_1,
    \end{array}
  \]
  where $\proj_{ \{\|\cdot \|_{\infty,\alpha^{-1}} \leq 1 \}}$ is given as in \eqref{eq:sum_of_tv_prox_explicit}.
  \item[Second-order total generalised variation.] Let $\alpha_0,\alpha_1 > 0$. The choice
\[ \mR_\alpha(u) = \TGV_\alpha ^2 (u) =  \min_{w \in \ell^2(\Omega_h, \Sym^1(\RR^2)) } \alpha_1 \|\nabla_h  u -  w \|_1 + \alpha_0 \| \symgrad_h  w \|_1  \]
 can be realised via
 \[ \fl
   \begin{array}{l}
     W_{h} = \ell^2(\Omega_h, \Sym^1(\RR^2)), \quad
     V_h = \ell^2(\Omega_h, \Sym^1(\RR^2)) \times \ell^2(\Omega_h, \Sym^2(\RR^2)),
     \\[\smallskipamount]
     D_h  = (D_h^1 \,|\, D_h^2 ) = \left( \begin{array}{c}
                     \nabla _h \\ 0 
                   \end{array}\left|
     \begin{array}{c}
       - \id \\  \symgrad_h
     \end{array}
     
     \right)\right. ,
     \quad
     \| (v_1,v_2) \|_{1,\alpha} = \alpha_1  \|v_1   \|_1 + \alpha_0  \|v_2   \|_1,
   \end{array}
 \]
 where $\proj_{ \{\|\cdot \|_{\infty,\alpha^{-1}} \leq 1 \}}$ is given again as in \eqref{eq:sum_of_tv_prox_explicit} with $\alpha_2$ replaced by $\alpha_0$.
\item[Total generalised variation of order $k$.]
The total generalised variation functional of arbitrary order $k\in \NN$, $k \geq 1$, and weights $\alpha = (\alpha_0,\ldots,\alpha_{k-1}) \in {]{0,\infty}[}^k$, i.e.,
\[ \mR_\alpha(u) = \TGV_\alpha^k(u) = \min_{ w = (w_1,\ldots,w_{k-1})\in W_h \atop w_0 = u, \, w_k = 0} \sum_{m=1}^k \alpha_{k-m}  \| \symgrad_h w_{m-1} - w_m  \|_1 \]
can be realised via
\[
  \fl
  \begin{array}{l}
    \displaystyle W_{h} = \bigtimes _{m=1}^{k-1} \ell^2(\Omega_h, \Sym^m(\RR^2)) ,
    \quad
    V_h = \bigtimes _{m=1}^{k} \ell^2(\Omega_h, \Sym^m(\RR^2)),
    \\[2\medskipamount]
    D_h  = \left(
    \begin{array}{c}
      \nabla _h \\ 0\rlap{\phantom{\vdots}} \\ \vdots \\ \vdots \\ 0
    \end{array}\left|
    \begin{array}{cccc}
       - \id & 0  & \ldots&  0 \\
      \symgrad_h & -\id & \ddots & \vdots \\
        \ddots &\ddots  & \ddots & 0\\
         & \ddots & \symgrad_h & - \id \\
       \cdots  & \cdots & 0 & \symgrad _h
    \end{array}\right)\right. , \quad
    \displaystyle \| v \|_{1,\alpha} =  \sum_{m=1}^k \alpha_{k-m} \|v_m \|_1.  
  \end{array}
\]
In this case,
\[
  \fl
  \bigl(\proj_{ \{\|\cdot \|_{\infty,\alpha^{-1}} \leq 1 \}} (v)\bigr)_m
  = \proj_{ \{\|\cdot \|_{\infty} \leq \alpha_{k-m} \}} (v_m), \qquad
  m=1, \ldots, k,
\]
where $\proj_{ \{\|\cdot \|_{\infty} \leq \alpha_{m} \}}$ is given as in \eqref{eq:higher_order_tv_projection_explicit}.
\end{description}
\end{example}

\subsubsection*{Stepsize choice and stopping rule.}
Algorithm \ref{alg:primal_dual_general} requires to choose stepsizes $\sigma,\tau >0$ such that $ \sigma \tau \|\mK\|^2 < 1$ where
$\mK = \left(
  \begin{array}{cc}
    D_h^1 & D_h^2 \\ K_h & 0 
  \end{array}
\right)$. This, in turn, requires to estimate $\|\mK\|$ which we discuss on the following. The operator $K_h$ is application-dependent and we assume an upper bound for its norm to be given. Regarding the differential operator $D_h = (D_h^1, D_h^2)$, an estimate on the norm of its building blocks $\nabla_h^k$, $\symgrad_h^k$ is provided in Lemma~\ref{lem:norm_estimate_grad_symgrad}. As the following proposition shows, an upper bound on $\|\mK\|$ as well as on the norm of more general block-operators, can then be obtained by computing a simple singular value decomposition of a usually low-dimensional matrix.
\begin{lemma} \label{lem:norm_estimate_vectorized}
Assume that $\mK:\mX \rightarrow \mY$ with $\mX = \mX_1 \times \ldots \times \mX_N$, $\mY = \mY_1 \times \ldots \times \mY_M$ is given as
\[ \mK = \left( \begin{array}{ccc}
\mK_{1,1} & \cdots & \mK_{1,N} \\
\vdots & & \vdots \\
\mK_{M,1} & \cdots&  \mK_{M,N}
\end{array} \right)
\]
and that $\|\mK_{m,n}\| \leq L_{m,n}$ for each $m=1,\ldots,M$, $n=1,\ldots,N$. 
Then, \[ \|\mK \| \leq \sigma_{\max} (
\left( \begin{array}{ccc}
L_{1,1} & \cdots & L_{1,N} \\
\vdots & & \vdots \\
L_{M,1} & \cdots&  L_{M,N}
\end{array} \right))
\]
where $\sigma_{\max}$ denotes the largest singular value of a matrix.
\begin{proof} For $x = (x_1,\ldots,x_N) \in \mX$ we estimate
\begin{eqnarray*}
\| \mK x\|_2 ^2 
& = \sum_{m=1}^M   \left \| \sum_{n=1}^N \mK_{m,n} x_n  \right \|_2 ^2
 \leq  \sum_{m=1}^M \left(  \sum_{n=1}^N L_{m,n} \| x_n\|_2  \right)^2  \\
&  = 
\left\| \left( \begin{array}{ccc}
L_{1,1}& \cdots & L_{1,N} \\
\vdots & & \vdots \\
L_{M,1} & \cdots&  L_{M,N}
\end{array} \right)
 \left( \begin{array}{ccc}
\|x_1\|_2  \\
\vdots \\
\|x_N\|_2
\end{array} \right) \right\|_2 ^2,
\end{eqnarray*}
from which the claimed assertion follows since the matrix norm induced by the $2$-norm corresponds to the largest singular value.
\end{proof}
\end{lemma}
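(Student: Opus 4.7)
The plan is to reduce the bound on the block-operator norm to the standard fact that the spectral norm of a real matrix equals its largest singular value.

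First I would unfold the definition of $\|\mK\|$ via the supremum over unit vectors in $\mX$. For a typical $x = (x_1,\ldots,x_N) \in \mX$, the block structure gives $(\mK x)_m = \sum_{n=1}^N \mK_{m,n} x_n \in \mY_m$, so
\[
  \|\mK x\|_\mY^2 = \sum_{m=1}^M \Bigl\| \sum_{n=1}^N \mK_{m,n} x_n \Bigr\|_{\mY_m}^2.
\]
I would then apply the triangle inequality inside each $\mY_m$-norm together with the hypothesis $\|\mK_{m,n}\| \leq L_{m,n}$ to obtain the scalar estimate $\|(\mK x)_m\|_{\mY_m} \leq \sum_{n=1}^N L_{m,n}\|x_n\|_{\mX_n}$.

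Second, the right-hand side is precisely the $m$-th component of the finite-dimensional vector $L \xi$, where $L = (L_{m,n})_{m,n}$ and $\xi = (\|x_1\|_{\mX_1},\ldots,\|x_N\|_{\mX_N}) \in \RR^N$. Squaring, summing over $m$, and collecting terms yields $\|\mK x\|_\mY^2 \leq \|L\xi\|_2^2$.

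Finally, invoking the elementary fact that the matrix $2$-norm coincides with the largest singular value, $\|L\xi\|_2 \leq \sigma_{\max}(L)\|\xi\|_2$, and noting that $\|\xi\|_2^2 = \sum_n \|x_n\|_{\mX_n}^2 = \|x\|_\mX^2$, I obtain $\|\mK x\|_\mY \leq \sigma_{\max}(L)\|x\|_\mX$ for every $x$, which gives the claim after taking the supremum. There is no real obstacle here; the only subtlety is to make sure one passes cleanly from the pointwise blockwise triangle inequality to the matrix-vector formulation before invoking the singular-value identity.
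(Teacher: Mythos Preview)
Your proposal is correct and follows essentially the same approach as the paper: apply the triangle inequality and the block norm bounds to reduce $\|\mK x\|_\mY$ to $\|L\xi\|_2$ with $\xi = (\|x_n\|_{\mX_n})_n$, then invoke that the induced matrix $2$-norm equals the largest singular value. The paper presents this as a single compact chain of inequalities, but the content is identical.
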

This result can be applied in the
setting~\eqref{eq:discrete_saddle_point_problem_general}, i.e.,
$\mK = \left(
  \begin{array}{cc}
    D_h^1 & D_h^2 \\ K_h & 0 
  \end{array}
\right)$,
leading to
\[
  \fl
  \norm{\mK}^2 \leq \frac{\norm{D_h^1}^2 + \norm{D_h^2}^2 + \norm{K_h}^2
    + \sqrt{\bigl(\norm{D_h^1}^2 + \norm{D_h^2}^2 + \norm{K_h}^2\bigr)^2 - 4
    \norm{D_h^2}^2 \norm{K_h}^2}}{2}
\]
Alternatively, one could use the result when $D_h^1$ or $D_h^2$ have
block structures and a norm estimate is known for each block in addition to an estimate on $\norm{K_h}$.  Two concrete examples will be provided at the end
of this section below.
\begin{remark}
In practice, provided that $L_{m,n}$ is a good upper bound for $\|\mK_{m,n}\|$, the norm estimate of Lemma \ref{lem:norm_estimate_vectorized} is rather tight such that, depending on $\|\mK\|$, the admissible stepsizes can be sufficiently large. Furthermore, the constraint $\sigma \tau \|\mK\|^2 < 1$ still allows to choose an arbitrary positive ratio $\theta = \sigma/\tau$ and, in our experience, often a choice $ \theta \ll 1 $ or $\theta \gg 1$ can accelerate convergence significantly. Finally, we also note that in case no estimate on $\|\mK\|$ can be obtained, or in case an explicit estimate only allows for prohibitively small stepsizes, also an adaptive stepsize choice without prior knowledge of $\|\mK\|$ is possible, see for instance \cite{holler15tgvrec_p2_mh}.
\end{remark}
\begin{remark}
It is worth mentioning that, in case of a uniformly convex functional in the saddle-point formulation~\eqref{eq:discrete_saddle_point_problem_general} (which is not the case in the setting considered here), a further acceleration can be achieved by adaptive stepsize choices, see for instance \cite{pock2011primaldual_mh}.
\end{remark}

\begin{remark}
Regarding a suitable stopping criterion, we note that often, the primal-dual gap, i.e., the gap between the energy of the primal and dual problem \eqref{eq:min_problem_vectorized_general} and \eqref{eq:dual_problem_vectorize_general} evaluated at the current iterates, provides a good measure for optimality. Indeed, with
\[\Gap(x,y) = \mF(\mK x) + \mG(x) + \mG^* (-\mK^*y) + \mF^* (y), \]
$\Gap(x,y) \geq 0$ and $\Gap(x,y) = 0$ if and only if $(x,y)$ is optimal
such that, in principle, the condition $\Gap(x^n,y^n)< \varepsilon$
with $(x^n,y^n)$ the iterates of \eqref{eq:primal_dual_iterates} can
be used as stopping criterion. In case this condition is met, $x^n$ as
well as $y^n$ are both optimal up to an $\varepsilon$-tolerance in
terms of the objective functionals for the primal and dual problem,
respectively.

In the present situation of~\eqref{eq:discrete_saddle_point_problem_general}, however, the primal and dual problem~\eqref{eq:discrete_min_prob_general} and~\eqref{eq:discrete_dual_problem_general} yield
\[
  \fl
  \Gap(u,w,v,\lambda) =
  \left\{
    \begin{array}{ll}
      \begin{minipage}{0.4\linewidth}
        $
        S_{f_h}(K_hu) + \norm[1,\alpha]{D_h(u,w)}$

        \mbox{}\hfill$
        + \mI_{\sett{\norm[\infty,\alpha^{-1}]{\placeholder} \leq 1}}(v) + S_{f_h}^*(\lambda)$
      \end{minipage}
      &
        \begin{minipage}{0.25\linewidth}
          if $(D_h^1)^*v + K_h^*\lambda = 0$

          \vspace*{0.5\smallskipamount}
          \mbox{}\hfill and $(D_h^2)^* v = 0$,
        \end{minipage}
\\[\smallskipamount]
      \infty & \text{else}.
    \end{array}
  \right.
\]
While for the iterates $(u^n, w^n, v^n, \lambda^n)$, we always have
$\norm[\infty,\alpha^{-1}]{v^n} \leq 1$ as well as
$\lambda^n \in \dom(S_{f_h}^*)$,
Algorithm~\ref{alg:primal_dual_general} does not guarantee that
$K_h u^n \in \dom(S_{f_h})$ and, $(D_h^1)v^n + K_h^*\lambda^n = 0$ as
well as $(D_h^2)^*v^n = 0$, such that the primal-dual gap is always
infinite in practice and the stopping criterion is never met.  With
some adaptations, however, it is sometimes still possible to obtain a
primal-dual gap that converges to zero and hence, to deduce a stopping criterion
with optimality guarantees. There are several possibilities for
achieving this. Let us, for simplicity, assume that both $S_{f_h}$ and
$S_{f_h}^*$ are finite everywhere and hence, continuous. This is, for
example, the case for $S_{f_h} = \frac12 \norm{\placeholder -
  f_h}^2$. Next, assume that a-priori norm estimates are available for
all solution pairs $(u^*,w^*)$, say $\norm[\tilde U_h]{u^*} \leq C_u$
and $\norm[\tilde W_h]{w^*} \leq C_w$ for norms
$\norm[\tilde U_h]{\placeholder}$, $\norm[\tilde W_h]{\placeholder}$
on $U_h$, $W_h$ that do not necessarily correspond to the Hilbert
space norms. Such estimates may, for instance, be obtained from the
observation that
$S_{f_h}(u^*) + \norm[1,\alpha]{D_h(u^*,w^*)} \leq S_{f_h}(0)$ and
suitable coercivity estimates, as discussed in
Sections~\ref{sec:higher_order_tv},~\ref{sec:combined_approaches}
and~\ref{sec:tgv}.  Then, the primal problem can, for instance, be
replaced by
\[
  \fl
  \min_{(u, w) \in U_h \times W_h} \ S_{f_h}(K_hu) +
  \norm[1,\alpha]{D_h(u,w)} + \frac12 \bigl(\norm[\tilde U_h]{u} - C_u\bigr)_+^2
  + \frac12 \bigl(\norm[\tilde W_h]{w} - C_w \bigr)_+^2
\]
where $(t)_+ = \max\sett{0,t}$ for $t \in \RR$,
which has, by construction, the same minimizers as the original
problem~\eqref{eq:discrete_min_prob_general}, but a dual problem that
reads as
\[
  \fl
  \begin{array}{rl}
    \displaystyle \max_{(v, \lambda) \in V_h \times Y_h}
    &
      \displaystyle - \frac12 \bigl(
      \norm[\tilde U_h^*]{(D_h^1)^*v + K_h^* \lambda}^2 + C_u \bigr)^2 +
      \frac{C_u^2}2 - \frac12 \bigl( \norm[\tilde W_h^*]{(D_h^2)^*v}^2 +
      C_w \bigr)^2 + \frac{C_w^2}2  \\
    &\qquad -
      \mI_{\sett{\norm[\infty,\alpha^{-1}]{\placeholder} \leq 1}}(v) -
      S_{f_h}^*(\lambda),
  \end{array}
\]
where $\norm[\tilde U_h^*]{\placeholder}$ and
$\norm[\tilde W_h^*]{\placeholder}$ denote the respective dual norms.
By duality and since the minimum of the primal problem did not change,
the modified dual problem also has the same solutions as the original dual
problem.  Now, as the iterates $(v^n, \lambda^n)$ satisfy
$\norm[\infty,\alpha^{-1}]{v^n} \leq 1$ and
$\lambda^n \in \dom S_{f_h}^*$, the dual objective is finite for the
iterates and converges to the maximum as $n \to \infty$. Analogously,
plugging in the sequence $(u^n,w^n)$ into the modified primal problem
yields convergence to the minimum, hence, the respective primal-dual
gap converges to zero for the primal-dual iterates
$(u^n, w^n, v^n, \lambda^n)$. In summary, the functional
\[
  \fl
  \begin{array}{rl}
    \tilde \Gap(u,w,v,\lambda)
    = & 
        \displaystyle
        S_{f_h}(K_hu) + \norm[1,\alpha]{D_h(u,w)}
        + \mI_{\sett{\norm[\infty,\alpha^{-1}]{\placeholder} \leq 1}}(v)
        + S_{f_h}^*(\lambda) \\[\smallskipamount]
      & \displaystyle \quad
        + \frac12 \bigl( \norm[\tilde U_h]{u} - C_u \bigr)_+^2 + \frac12 \bigl( \norm[\tilde W_h]{w} - C_w \bigr)_+^2 \\[\smallskipamount]
    & \displaystyle \quad
        + \frac12 \bigl(\norm[\tilde U_h^*]{(D_h^1)^*v + K_h^* \lambda}^2 + C_u \bigr)^2 - \frac{C_u^2}{2} + \frac12 \bigl( \norm[\tilde W_h^*]{(D_h^2)^*v}^2 +
      C_w \bigr)^2 - \frac{C_w^2}2
  \end{array}
\]
yields the stopping criterion
$\tilde \Gap(u^n, w^n, v^n, \lambda^n) < \varepsilon$ which will be met
for some $n$ and gives $\varepsilon$-optimality of $(u^n,w^n)$ for the
original primal problem~\eqref{eq:discrete_min_prob_general}.

The examples below show how this primal-dual gap reads for specific
applications. For other strategies of modifying the primal-dual gap to
a functional that is positive and finite, converges to zero and
possibly provides an upper bound on optimality of the iterates in
terms of the objective functional, see, for instance
\cite{holler12tvjpeg_mh,holler15tgvrec_p2_mh,bredies2015tvtgvpdr_mh}.
\end{remark}

\subsubsection*{Concrete examples.} \mbox{}

\begin{example}
  \label{ex:discrete_l2_tv2}
As first example, we consider the minimisation problem 
\begin{equation} \label{eq:tv2_l2_example_numerics}
 \min _{u \in U_h} \ \frac12 \|K_h u-f_h \|_2 ^2 +  \alpha \|\nabla ^2_h u\|_1, 
 \end{equation}
i.e., second order-TV regularisation for a linear inverse problem with Gaussian measurement noise.
In this setting, we choose $W_h = \{ 0 \}$, $V_h = \ell^2(\Omega_h, \Sym^2(\RR^2))$,  $D_h = \nabla _h ^2$ and $\|v\|_{1,\alpha} = \alpha \| v\|_1$. 
Assuming that $\|K_h\| \leq 1$ (after possible scaling of $K_h$), Lemma~\ref{lem:norm_estimate_vectorized} together with the estimate $\|\nabla _h^2 \|\leq 8$ from Lemma \ref{lem:norm_estimate_grad_symgrad} yields
\[
  \|\left(
    \begin{array}{c}
      \nabla_h^2 \\ K_h
    \end{array} \right) \| \leq
  \sigma_{\max}( \left( \begin{array}{c}
8 \\ 1
\end{array}
\right) )=  \sqrt{65}.
\]
The resulting concrete realisation of Algorithm \ref{alg:primal_dual_general} can be found in Algorithm \ref{alg:tv2_l2}.
\begin{algorithm}[t]
  \begin{algorithmic}[1]
\onehalfspacing

\Function{$L^2$-$\TV^2$-Tikhonov}{$K_h$, $f_h$, $\alpha$}
\Comment{Requirement: $\norm{K_h} \leq 1$}

\State $(u,\overline{u}) \gets  (0,0), (v, \lambda) \gets (0,0)$
\State choose $\sigma, \tau > 0$ such that $\sigma \tau < \frac1{65}$
\Repeat
	\State \textbf{Dual updates}
	\State $ v \gets \proj_{ \{\|\cdot \|_{\infty} \leq \alpha \}} \left( v + \sigma \nabla_h^2 \overline{u} \right)$
	\State $\lambda \gets  (\lambda + \sigma (K_h \overline{u} - f_h)) /(1 + \sigma)$
		\State \textbf{Primal updates}
	\State $u_+ \gets u - \tau (\divergence _h ^2 v + K_h^* \lambda)$
		\State \textbf{Extrapolation and update}
	\State $\overline{u} \gets 2u_+ - u$
	\State $u \gets u_+$
\Until{stopping criterion fulfilled}
\State \Return{$u$}
\EndFunction
\end{algorithmic}
\caption{Implementation for solving the $L^2$-$\TV^2$ problem \eqref{eq:tv2_l2_example_numerics} \label{alg:tv2_l2} }
\end{algorithm} 
Here, $\proj_{ \{\|\cdot \|_{\infty} \leq \alpha \}}$ is given explicitly in \eqref{eq:higher_order_tv_projection_explicit}, $\divergence_ h^2 = \divergence_h \divergence _h $ is the adjoint of $\nabla_h^2$ and the modified primal-dual gap $\tilde \Gap$ evaluated on the iterates $(u,v, \lambda)$ of the algorithm reduces to
\[
  \fl
  \begin{array}{rl}
    \Gap(u,v,\lambda) = &\displaystyle
                         \frac12 \norm{K_h u - f_h}^2 + \alpha \norm[1]{\grad_h^2 u} +
                     \mI_{\sett{\norm[\infty]{\placeholder} \leq \alpha}}(v) + \frac12\norm{\lambda + f_h}^2 - \frac12\norm{f_h}^2 \\
                   & \displaystyle + \frac12 \bigl( \norm[2]{u} - C_u \bigr)_+^2 + \frac12 \bigl(
                     \norm[2]{\divergence_h^2 v + K_h^* \lambda} + C_u \bigr)^2 - \frac{C_u^2}2
  \end{array}
\]
where $C_u > 0$ is an a-priori bound on $\norm[2]{u^*}$ for solutions
$u^*$ according to~\eqref{eq:tv^k-tikh-hilbert-bound}, for instance.
\end{example}

\begin{example}
As second example, we consider $\TGV_\alpha^2$ regularisation for an inverse problem with Poisson noise and discrete non-negative data $\bigl((f_h)_i\bigr)_i$, which corresponds to solving
\begin{equation} \label{eq:tgv2_kl_example_numerics}
 \min _{u \in U_h }\,  \KL(K_h u,f_h) +  \TGV_\alpha^2(u),
 \end{equation}
with $\KL$ being the discrete Kullback--Leibler divergence as in \eqref{eq:kl_divergence_discrete}.

In this setting, we choose $W_h =  \ell^2 (\Omega_h , \Sym^1 (\RR^2))$,
$V_h = \ell^2(\Omega_h, \Sym^1(\RR^2)) \times \ell^2(\Omega, \Sym^2(\RR^2))$,
$ D_h  = \left( \begin{array}{cc}
\nabla_h  & - I  \\0 &  \symgrad_h
\end{array}\right) $ and $ \| (v_1,v_2) \|_{1,\alpha} = \alpha_1  \|v_1   \|_1 + \alpha_0  \|v_2   \|_1
$.
Setting \[ \mK  = \left( \begin{array}{cc}
\nabla _h & - \id  \\0 &  \symgrad _h\\
K_h & 0 
\end{array}\right) 
\]  and again assuming $\|K_h\| \leq 1$, Lemma \ref{lem:norm_estimate_vectorized} together with the estimates $\|\nabla _h \|\leq \sqrt{8}$ and $\|\symgrad _h \|\leq \sqrt{8}$ from Lemma \ref{lem:norm_estimate_grad_symgrad} yields
\[
\| \mK  \| \leq  \sigma_{\max}( \left( 
\begin{array}{cc}
\sqrt{8} & 1 \\
0 & \sqrt{8} \\
1 & 0
\end{array}
\right) ) = \sqrt{\sqrt{8} + 9} \approx  \sqrt{\frac{71}{6}}.
\]
The resulting, concrete implementation of Algorithm \ref{alg:primal_dual_general} can be found in Algorithm \ref{alg:tgv2_kl}.
\begin{algorithm}[t]
  \begin{algorithmic}[1]
\onehalfspacing

\Function{KL-$\TGV_\alpha^2$-Tikhonov}{$K_h$, $f_h$, $\alpha$}
\Comment{Requirement: $\norm{K_h} \leq 1$}

\State $(u, w, \overline{u},\overline{w}) \gets (0,0,0,0), (v_1, v_2, \lambda) \gets (0, 0, 0)$
\State choose $\sigma, \tau > 0$ such that $\sigma \tau \leq \frac{6}{71}$
\Repeat
	\State \textbf{Dual updates}
	\State $ v_1 \gets \proj_{ \{\|\cdot \|_{\infty} \leq \alpha_1 \}} \left( v_1 + \sigma(\nabla _h \overline{u} - \overline{w})  \right)$
		\State $ v_2 \gets \proj_{ \{\|\cdot \|_{\infty} \leq \alpha_0 \}} \left( v_2 + \sigma \symgrad _h  \overline{w}  \right)$
	\State $\lambda \gets \lambda + \sigma K_h \overline{u}$
	\State $\lambda \gets \lambda - \frac{\lambda - 1 + \sqrt{ (\lambda - 1)^2 + 4 \sigma f_h}}{2}$
		\State \textbf{Primal updates}
	\State $u_+ \gets u + \tau (\divergence _h v_1 - K_h^*\lambda)$
	\State $w_+ \gets w + \tau (v_1 + \divergence_h v_2)$
		\State \textbf{Extrapolation and update}
	\State $(\overline{u},\overline{w}) \gets 2(u_+,w_+) - (u,w)$
	\State $(u,w) \gets (u_+,w_+)$
\Until{stopping criterion fulfilled}
\State \Return{$u$}
\EndFunction
\end{algorithmic}
\caption{Implementation for solving the KL-$\TGV_\alpha^2$ problem \eqref{eq:tgv2_kl_example_numerics} \label{alg:tgv2_kl} }
\end{algorithm}
Here, again $\proj_{ \{\|\cdot \|_{\infty} \leq \alpha_i \}}$ is given explicitly in \eqref{eq:higher_order_tv_projection_explicit}, and, abusing notation, $\divergence_ h $ is both the negative adjoint of $\nabla_h$ and $\symgrad_h$, depending on the input. The modified primal-dual gap $\tilde \Gap$ evaluated on the iterates $(u,w, v_1,v_2, \lambda)$ of the algorithm reduces to
\[
  \fl
  \begin{array}{rl}
    \tilde \Gap(u, w, v_1, v_2, \lambda)
    = & \displaystyle
        \!\!\!\!%
        \KL(K_h u, f_h) + \alpha_1 \norm[1]{\grad_h u - w} + \alpha_0 \norm[1]{\symgrad_h w} + \mI_{\sett{\norm[\infty]{\placeholder} \leq \alpha_1}}(v_1)
    \\[\smallskipamount]
    & \displaystyle \!\!\!\! +
  \mI_{\sett{\norm[\infty]{\placeholder} \leq \alpha_0}}(v_2) +
      \KL^*(\lambda, f_h) + \frac12 \bigl(\norm[2]{u} - C_u\bigr)_+^2
      + \frac12 \bigl( \norm[1]{w} - C_w  \bigr)_+^2 \\[\smallskipamount]
    & \displaystyle \!\!\!\!+ \frac12 \bigl( \norm[2]{K_h^* \lambda - \divergence_h v_1} + C_u \bigr)^2 - \frac{C_u^2}{2} + \frac12 \bigl( \norm[\infty]{v_1 + \divergence_h v_2} + C_w \bigr)^2 - \frac{C_w^2}{2},
  \end{array}
\]
where $\KL^*(\lambda,f_h) = - \sum_i (f_h)_i \log(1 - \lambda_i)$ whenever $\lambda_i \leq 1$ for each $i$ where $(f_h)_i \log(0) = \infty$ for $(f_h)_i > 0$, $0 \log(0) = 0$,  and $\KL^*(\lambda,f_h) = \infty$ else. Further, $C_u$ is an a-priori bound on the $2$-norm of $u^* = w_0^*$ analogous to~\eqref{eq:tv^k-tikh-kl-bound} while $C_w$ is an a-priori bound on the $1$-norm of
$w^* = w_1^*$ according to~\eqref{eq:tgv_tikh_hilbert_bound1}.
\end{example}

We refer to, e.g., \cite{Bredies14_multichannel_mh} for more examples
of primal-dual-based algorithms for TGV regularisation.

\subsection{Implicit and preconditioned optimisation methods}

Let us shortly discuss other proximal algorithms for the solution
of~\eqref{eq:discrete_min_prob_general}. One popular method is the
alternating direction method of multipliers (ADMM) \cite{glowinski1975admm,gabay1983admm} which bases
on augmented Lagrangian formulations
for~\eqref{eq:discrete_min_prob_general}, for instance,
\begin{equation}
  \fl
  \min_{(u,w,v,\lambda) \in U_h \times W_h \times V_h \times Y_h}
  \ S_{f_h}(\lambda) + \norm[1,\alpha]{v} \quad
  \text{subject to} \quad
  \left\{
    \begin{array}{rl}
      \lambda &= K_h u, \\
      v &= D_h^1 u + D_h^2 w,
    \end{array}
  \right.\label{eq:tikhonov_admm_form}
\end{equation}
which results in the augmented Lagrangian
\[
  \fl
  \begin{array}{rl}
    \mL_\tau (u,w,v,\lambda,\bar v, \bar \lambda)
    = &S_{f_h}(\lambda) + \norm[1,\alpha]{v} +
        \scp[Y_h]{K_hu - \lambda}{\bar \lambda} + \scp[V_h]{D_h^1 u + D_h^2 w - v}{\bar v} \\[\smallskipamount]
    &
  \displaystyle + \frac{1}{2\tau} \norm[Y_h]{K_hu - \lambda}^2
  + \frac1{2\tau}\norm[V_h]{D_h^1 u + D_h^2 w - v}^2
  \end{array}
\]
where $\tau > 0$. For~\eqref{eq:tikhonov_admm_form}, the ADMM
algorithm amounts to
\[
  \fl
  \left\{
    \begin{array}{rl}
      (u^{k+1}, w^{k+1})
      & \in
        \argmin_{(u,w) \in U_h \times W_h} \ \mL_\tau(u,w,v^k,\lambda^k, \bar v^k, \bar \lambda^k), \\[\smallskipamount]
      (v^{k+1}, \lambda^{k+1})
      &= \argmin_{(v, \lambda) \in V_h \times Y_h}
        \ \mL_\tau (u^{k+1}, w^{k+1}, v, \lambda, \bar v^k, \bar \lambda^k),
      \\[\smallskipamount]
      \bar v^{k+1} &= \displaystyle \bar v^k + \frac1\tau (D_h^1 u^{k+1} + D_h^2 w^{k+1} - v^{k+1}),
      \\[\smallskipamount]
      \bar \lambda^{k+1} &= \displaystyle \bar \lambda^k + \frac1\tau (K_h u^{k+1} - \lambda^{k+1}).
    \end{array}\right.
\]
Here, the first subproblem amounts to solving a least-squares problem and
the associated normal equation is usually stably solvable since
$D_h^1$ and $D_h^2$ involve discrete differential operators and hence,
the normal equation essentially corresponds to the solution of a
discrete elliptic equation that is perturbed by $K_h^*K_h$. For this
reason, ADMM is usually considered an implicit method. The second step
turns out to be the application of the proximal mappings for $S_{f_h}$ and
$\norm[1,\alpha]{\placeholder}$ while the last update steps have an
explicit form, see Algorithm~\ref{alg:admm_general}. By virtue of
Moreau's identity~\eqref{eq:moreaus_identity} (also see Lemma~\ref{lem:moreau_proximal}), the operators
$\prox_{\tau\norm[1,\alpha]{\placeholder}}$ and $\prox_{\tau S_{f_h}}$
can easily be computed knowing
$\proj_{ \{\|\cdot \|_{\infty,\alpha^{-1}} \leq 1\}}$ and $\prox_{\tau^{-1} S_{f_h}^*}$.  We have, for instance,
\[
  \prox_{\tau \norm[1,\alpha]{\placeholder}}(v) = v - \proj_{ \{\|\cdot \|_{\infty, \alpha^{-1}} \leq \tau\}}(v) = v - \tau \proj_{\sett{\norm[\infty,\alpha^{-1}]{\placeholder} \leq 1}}
  \Bigl( \frac{v}{\tau} \Bigr)
\]
where the projection operator usually has an explicit representation,
see Lemma~\ref{lem:explicit_prox_for_ell2_space} and Example~\ref{ex:prox_regularizers}. Further, for the discrepancies
discussed in Subsection~\ref{subsec:general_saddle_point}, it holds that
\[
  \fl
  \begin{array}{c}
    \displaystyle
    \prox_{\tau \frac12 \norm[2]{\placeholder - f_h}^2}(\lambda)
  = \frac{\lambda + \tau f}{1 + \tau}, \qquad
  \prox_{\tau \norm[1]{\placeholder - f_h}}(\lambda) = \lambda - \tau \frac{\lambda - f_h}{\max\{\tau, \abs{\lambda - f_h}\}}, \\[\smallskipamount]
    \displaystyle
    \prox_{\tau \KL(\placeholder, f_h)}(\lambda) = \frac{\sqrt{(\tau - \lambda)^2 + 4 \tau f_h} + \lambda - \tau}{2}.
  \end{array}
\]
\begin{algorithm}[t]
  \begin{algorithmic}[1]
\onehalfspacing

\Function{Tikhonov-ADMM}{$K_h$, $f_h$, $\alpha$}
\State $(u, w) \gets (0,0), (v, \lambda) \gets (0, 0), (\bar v, \bar \lambda)
\gets (0,0)$
\State choose $\tau > 0$
\Repeat
\State \textbf{Linear subproblem}
\State
$(u,w)$ $\gets$ Solution of
\[ \fl \quad \quad
\left( \begin{array}{cc}
         K_h^*K_h + (D_h^1)^*D_h^1 & (D_h^1)^* D_h^2 \\
         (D_h^2)^* D_h^1 & (D_h^2)^*D_h^2 
\end{array}\right) \left(
\begin{array}{c}
  u \\ w
\end{array}
\right) =
\left(\begin{array}{c}
  K_h^*(\lambda - \tau \bar \lambda) + (D_h^1)^*(v - \tau \bar v) \\
  (D_h^2)^*(v - \tau \bar v)
\end{array}
\right)
\]
\State \textbf{Proximal subproblem}
\State $v \gets \prox_{\tau \norm[1,\alpha]{\placeholder}}(D_h^1 u + D_h^2 w + \tau \bar v)$
\State $\lambda \gets \prox_{\tau S_{f_h}}(K_hu + \tau \bar\lambda)$
\State \textbf{Lagrange multiplier}
\State $\bar v \gets \bar v + \frac1\tau (D_h^1 u + D_h^2 w - v)$
\State $\bar \lambda \gets \bar \lambda + \frac1\tau (K_hu - \lambda)$
\Until{stopping criterion fulfilled}
\State \Return{$u$}
\EndFunction
\end{algorithmic}
\caption{ADMM scheme for the numerical solution of \eqref{eq:tikhonov_admm_form}}\label{alg:admm_general}
\end{algorithm}

While ADMM has the advantage of converging for arbitrary stepsizes
$\tau > 0$ (see, e.g.~\cite{boyd2011admm}), the main drawback is often considered
the linear update step which amounts to solving a linear equation (or,
alternatively, a least-squares problem) which can be computationally
expensive. The latter can be avoided, for instance, with
preconditioning techniques \cite{deng2016gadmm,bredies2017padmm}.  Denoting again by $\mK = \left(
  \begin{array}{cc}
    D_h^1 & D_h^2 \\ K_h & 0
  \end{array}
\right)$, the linear solution step amounts to solving $\mK^* \mK \left(
  \begin{array}{c}
    u \\ w
  \end{array}
\right) = \mK^* \left(
  \begin{array}{c}
    v - \tau \bar v \\ \lambda - \tau \bar\lambda
  \end{array}
\right)$.  Introducing the additional variables
$(u', w') \in U_h \times W_h$ as well as the
constraint $(u', w') = (\rho \id - \mK^*\mK)^{1/2} (u,w)$ for
$\rho > \norm{\mK}^2$, we can consider the problem
\[
  \begin{array}{l}
    \displaystyle
    \min_{(u,w,u',w',v,\lambda) \in (U_h \times W_h)^2 \times V_h \times Y_h} \ S_{f_h}(\lambda) + \norm[1,\alpha]{v} \\[1.5\medskipamount]
    \text{subject to} \quad
    \left\{
    \begin{array}{rl}
      (\rho \id - \mK^*\mK)^{1/2}(u,w) &= (u', w') \\
      \mK (u,w) & = (v, \lambda) 
    \end{array}
                  \right.  
  \end{array}
\]
which is equivalent to~\eqref{eq:tikhonov_admm_form}. The associated
ADMM procedure, however, simplifies. In particular, the linear
subproblem only involves $\rho \id$ whose solution is trivial. Also,
the Lagrange multipliers of the additional constraint are always zero
within the iteration and the evaluation of the square
root $(\rho \id - \mK^*\mK)^{1/2}$ can be avoided. This leads to
the linear subproblem of Algorithm~\ref{alg:admm_general} being
replaced by the linear update step
\[
  \fl
  \left(
  \begin{array}{c}
    u \\ w
  \end{array}\right) \leftarrow
\left(
  \begin{array}{c}
    u + \frac1\rho \bigl( K_h^*(\lambda - \tau \bar \lambda - K_h u) + (D_h^1)^*(v - \tau \bar v - D_h^1u - D_h^2 w ) \bigr)   \\
    w + \frac1\rho(D_h^2)^*(v - \tau \bar v - D_h^1 u - D_h^2 w)
  \end{array}\right).
\]
Also, the procedure then requires, in each iteration, only one
evaluation of $K_h$, $D_h^1$, $D_h^2$ and their respective adjoints as
well as the evaluation of proximal mappings, such that the
computational effort is comparable to
Algorithm~\ref{alg:primal_dual_general}. As a special variant of the
general ADMM algorithm, the above preconditioned version converges for
$\tau > 0$ if $\rho > \norm{\mK}^2$ is satisfied. Thus, an estimate
for $\norm{\mK}$ is required which can, e.g., be obtained by
Lemma~\ref{lem:norm_estimate_vectorized} (also confer the concrete
examples in Subsection~\ref{subsec:general_saddle_point}).  While this
is the most common preconditioning strategy for ADMM, there are many
other possibilities for transforming the original linear subproblem
into a simpler one such that, e.g., the preconditioned problem amounts
to the application of one or more steps of a symmetric Gauss--Seidel
iteration or a symmetric successive over-relaxation (SSOR) procedure
\cite{bredies2017padmm}.

Another class of methods for
solving~\eqref{eq:discrete_min_prob_general} is given by the
Douglas--Rachford iteration \cite{Lions79dr_splitting_mh,eckstein1992maximalmonotoneoperators_mh}, which is an iterative procedure for
solving monotone inclusion problems of the type
\[
  0 \in Az + Bz
\]
in Hilbert space, where $A$, $B$ are maximally monotone operators.
It proceeds as follows:
\[
  \left\{
    \begin{array}{rl}
      z^{k+1} & =(\id + \sigma A)^{-1}(\bar z^k), \\
      \bar z^{k+1} &= \bar z^k + (\id + \sigma B)^{-1}(2 z^{k+1} - \bar z^k) - z^{k+1},
    \end{array}
  \right.
\]
where $\sigma > 0$ is a stepsize parameter. As only the resolvent
operators $(\id + \sigma A)^{-1}$ and $(\id + \sigma B)^{-1}$ are involved,
the Douglas--Rachford iteration is also considered an implicit scheme.
In the context of optimisation problems, the operators $A$ and $B$ are
commonly chosen based on first-order optimality conditions, which are
subgradient inclusions \cite{gabay1983admm,briceno-arias2011monotoneskew}. Here, we choose the saddle-point
formulation~\eqref{eq:discrete_saddle_point_problem_general} and the
associated optimality conditions:
\[
  \left(
    \begin{array}{c}
      0 \\ 0 \\ 0 \\ 0
    \end{array}
  \right)
  \in
  \left(
    \begin{array}{c}
    (D_h^1)^* v + K_h^* \lambda \\
    (D_h^2)^* v \\
    -D_h^1 u - D_h^2 w \\
    -K_hu
    \end{array}
  \right) + \left(
    \begin{array}{c}
      0 \\ 0 \\ \partial \mI_{\sett{\norm[\infty,\alpha^{-1}]{\placeholder} \leq 1}}(v)
      \\ \partial S_{f_h}^* (\lambda).
    \end{array}
  \right)
\]
For instance, choosing $A$ and $B$ as the first and second operator in the above splitting, respectively, leads to the iteration outlined in
Algorithm~\ref{alg:douglas_rachford_general}: Indeed, in terms of
$x = (u,w)$, $y = (v,\lambda)$ and $\mK =
\left(\begin{array}{cc}
  D_h^1 & D_h^2 \\ K_h & 0
\end{array}\right)$,
the resolvent for the linear operator $A$ corresponds to solving the linear system
\[
  \left\{
    \begin{array}{rl}
      x + \sigma \mK^*y
      & = \bar x, \\
      y - \sigma \mK x &= \bar y,
    \end{array}
  \right.
  \quad
  \Leftrightarrow
  \quad
  \left\{
    \begin{array}{rl}
      (\id + \sigma^2 \mK^*\mK) x & = \bar x - \sigma \mK^* \bar y, \\
      y &= \bar y + \sigma \mK x,
    \end{array}
  \right.
\]
which is reflected by the linear subproblem and dual update in Algorithm~\ref{alg:douglas_rachford_general}. The resolvent for $B$ further corresponds to the application of proximal mappings, also see Proposition~\ref{def:prox-mapping}, where the involved proximal operators are the same 
as for the primal-dual iteration in
Algorithm~\ref{alg:primal_dual_general}.
The iteration can be shown to converge for each $\sigma > 0$, see, e.g.,
\cite{briceno-arias2011monotoneskew}.
\begin{algorithm}[t]
  \begin{algorithmic}[1]
\onehalfspacing

\Function{Tikhonov-DR}{$K_h$, $f_h$, $\alpha$}
\State $(u, w) \gets (0,0), (v, \lambda) \gets (0, 0), (\bar v, \bar \lambda)
\gets (0,0)$
\State choose $\sigma > 0$
\Repeat
\State \textbf{Linear subproblem}
\State
$(u,w) \gets 
\left( \begin{array}{cc}
         \id + \sigma^2 \bigl( K_h^*K_h + (D_h^1)^*D_h^1 \bigr) & \sigma^2 (D_h^1)^* D_h^2 \\
         \sigma^2 (D_h^2)^* D_h^1 & \id + \sigma^2 (D_h^2)^*D_h^2 
\end{array}\right)^{-1}$

\mbox{}\hfill$\cdot
\left(\begin{array}{c}
  u - \sigma \bigl( K_h^*\bar \lambda + (D_h^1)^*\bar v \bigr) \\
  w - \sigma (D_h^2)^*\bar v
\end{array}
\right)$
\State \textbf{Dual update}
\State $v \gets \bar v + \sigma (D_h^1 u + D_h^2 w)$
\State $\lambda \gets \bar \lambda + \sigma K_h u$
\State \textbf{Proximal update}
\State $\bar v \gets \bar v + \proj_{\{\|\cdot \|_{\infty,\alpha^{-1}} \leq 1 \}}(2v - \bar v) - v$
\State $\bar\lambda \gets \bar \lambda + \prox_{\sigma S_{f_h}^*}(2\lambda - \bar \lambda) - \lambda$
\Until{stopping criterion fulfilled}
\State \Return{$u$}
\EndFunction
\end{algorithmic}
\caption{Douglas--Rachford scheme for the numerical solution of~\eqref{eq:discrete_saddle_point_problem_general}}\label{alg:douglas_rachford_general}
\end{algorithm}

As for ADMM, the linear subproblem in
Algorithm~\ref{alg:douglas_rachford_general} can be avoided by
preconditioning. Basically, for the above Douglas--Rachford iteration,
the same types of preconditioners can be applied as for ADMM, ranging
from the Richardson-type preconditioner that was discussed in detail
before to symmetric Gauss--Seidel and SSOR-type preconditioners
\cite{bredies2015pdr_mh}. 
In particular, the potential of the latter for TGV-regularised imaging problems was shown in
 \cite{bredies2015tvtgvpdr_mh}.

While all three discussed classes of algorithms, i.e., the primal-dual
method, ADMM, and the Douglas--Rachford iteration can in principle be
used to solve the discrete Tikhonov minimisation problem we are
interested in, experience shows that the primal-dual method is usually
easy to implement as it only involves forward evaluations of the
involved linear operators and simple proximal operators, and thus
suitable for prototyping. It needs, however, norm estimates for the
forward operator and a possible rescaling.  ADMM is, in turn, a very
popular algorithm whose advantage lies, for instance, in its
unconditional convergence (the parameter $\tau > 0$ can be chosen
arbitrarily). Also, in comparison to the primal-dual method, ADMM is
observed to admit, in relevant cases, a more stable convergence behaviour,
meaning less oscillations and faster objective functional reduction in
the first iteration steps. However, ADMM requires the solution of a
linear subproblem in each iteration step which might be expensive or
call for preconditioning.  The same applies to the Douglas--Rachford
iteration which is also unconditionally convergent, comparably stable
and usually involves the solution of a linear subproblem in each
step. In contrast to ADMM it bases, however, on the same saddle-point
formulation as the primal-dual methods such that translating a
prototype primal-dual implementation into a more efficient
Douglas--Rachford implementation with possible preconditioning is more
immediate.

\section{Applications in image processing and computer vision} \label{sec:applications_imaging_computer_vision}

\subsection{Image denoising and deblurring}

\begin{figure}[t]
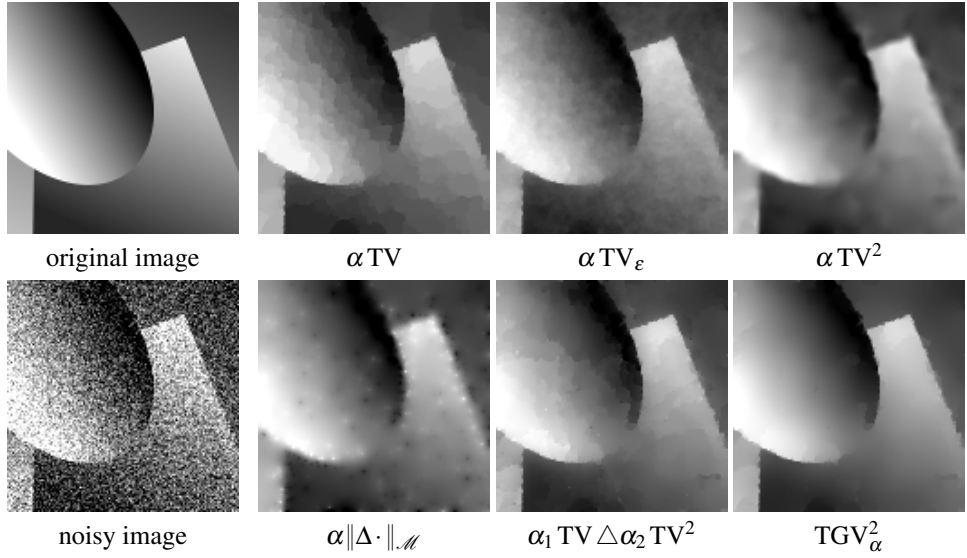

  \centering
  \begin{tabular}{c@{\ \ \ }c@{\ }c@{\ }c}
    \includegraphics[width=0.235\textwidth]{pics_affine.png} &
    \includegraphics[width=0.235\textwidth]{pics_affine_denoising_tv1.png} &
    \includegraphics[width=0.235\textwidth]{pics_affine_denoising_vese.png} & 
    \includegraphics[width=0.235\textwidth]{pics_affine_denoising_tv2.png} \\
    original image &
    $\alpha\TV$ & 
    $\alpha\TV_\varepsilon$ & 
    $\alpha \TV^2$
    \\[2pt]
    \includegraphics[width=0.235\textwidth]{pics_affine_denoising_noise.png} &
    \includegraphics[width=0.235\textwidth]{pics_affine_denoising_laplace_tv.png} &
    \includegraphics[width=0.235\textwidth]{pics_affine_denoising_inf_tv_tv2.png} &
    \includegraphics[width=0.235\textwidth]{pics_affine_denoising_tgv2.png} \\
    noisy image &
    $\alpha \norm[\mathcal{M}]{\laplace \placeholder}$ &
    $\alpha_1 \TV \infconv \alpha_2 \TV^2$ &
    $\TGV^2_\alpha$
  \end{tabular}
  
  \caption{Comparison of different first- and second-order
    image models for variational image 
    denoising with $L^2$-discrepancy. Left column: The original
    image (top) and noisy input image (bottom). Columns 2--4: Results for
    variational denoising with different regularisation terms. The 
    parameters were optimised for best PSNR.
    }
  \label{fig:models}
\end{figure}

Image denoising is a simple yet heavily addressed problem in image processing (see for instance \cite{Lebrun12denoising_review_mh} for a review) as it is practically relevant by itself and, in addition, allows to investigate the effect of different smoothing and regularisation approaches independent of particular measurements setups or forward models.
The standard formulation of variational denoising assumes Gaussian noise and, consequently, employs an $L^2$-type data fidelity. Allowing for more general noise models, the denoising problem reads as
\[ \min_{u \in L^p(\Omega)} S_f(u) + \mR_\alpha(u) ,\]
where we assume $S_f:L^p(\Omega) \rightarrow [0,\infty]$, with $p \in [1,\infty]$, to be proper, convex, lower semi-continuous and coercive, and $\mR_\alpha$ to be an appropriate regularisation functional. This setting covers, for instance, Gaussian noise (with $S_f(u) = \frac12\|u-f\|_2 ^2$), impulse noise (with $S_f(u) = \|u-f\|_1$) and Poisson noise (with $S_f(u) = \KL(u,f)$). 
With first- or higher-order $\TV$ regularisation, additive or infimal-convolution-based combinations thereof, or $\TGV$ regularisation, the denoising problem is well-posed for any of the above choices of $S_f$. For $S_f(u) = \frac1q\|u - f\|_q^q$ and $q>1$, also regularisation with $\mR_\alpha(u) = \alpha \|\laplace u\|_\M $ is well-posed and Figure \ref{fig:models} summarises, once again, the result of these different approaches for $q=2$ and Gaussian noise on a piecewise affine test image. It further emphasises again the appropriateness of $\TGV_\alpha^2$ as a regulariser for piecewise smooth images.

In order to visualise difference between different orders of TGV regularisation, Figure \ref{fig:tgv3_test} considers a piecewise smooth image corrupted by Gaussian noise and compares TGV regularisation with orders $k \in \{2,3\}$. It can be seen there that third-order TGV yields a better approximation of smooth structures, resulting in an improved PSNR, while the second-order TGV regularised image has small defects resulting from a piecewise linear approximation of the data.

\begin{figure}[t]
  \centering
  \begin{tabular}{c@{\ }c@{\ }c@{}c}
    \includegraphics[width=0.2\textwidth]{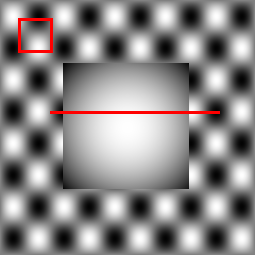} &
    \includegraphics[width=0.2\textwidth]{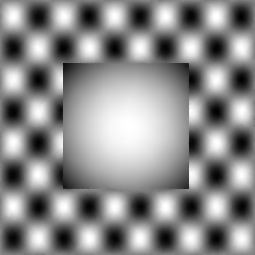} &
    \includegraphics[width=0.3\textwidth,trim=0 6 0 8, clip]{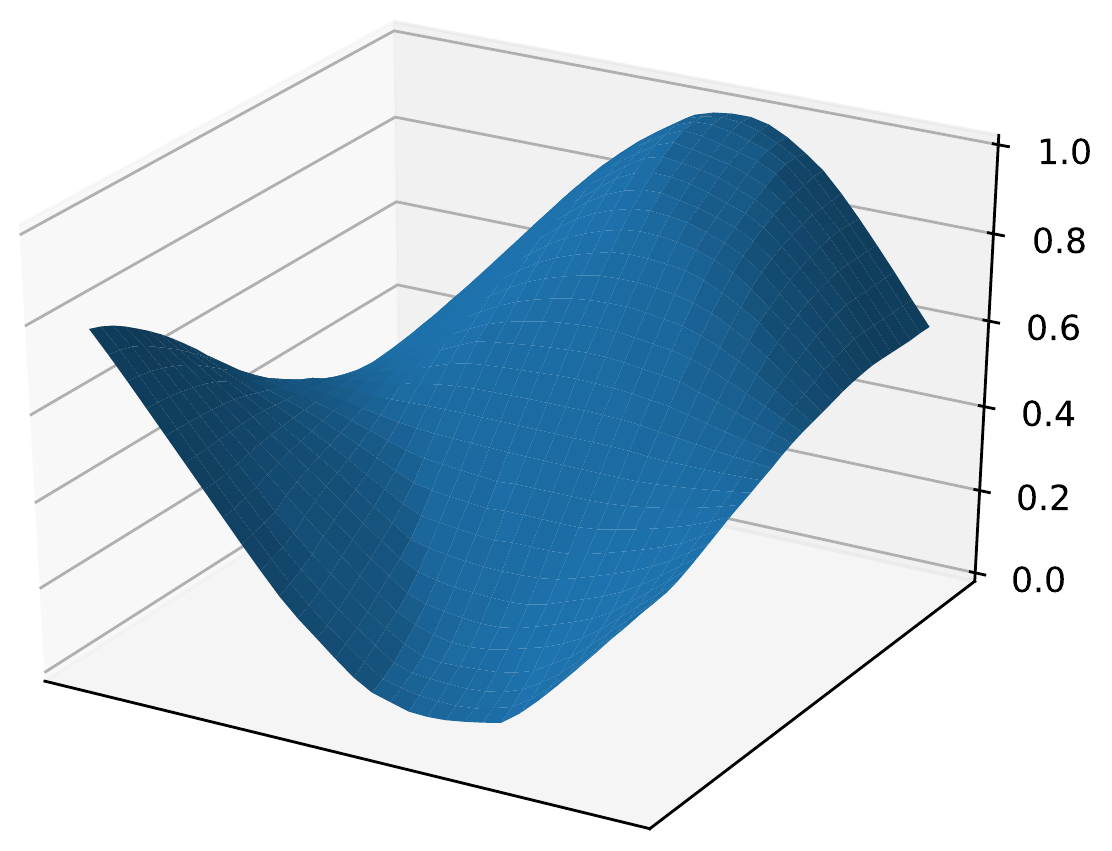} & 
    \includegraphics[width=0.28\textwidth,trim=10 0 10 0, clip]{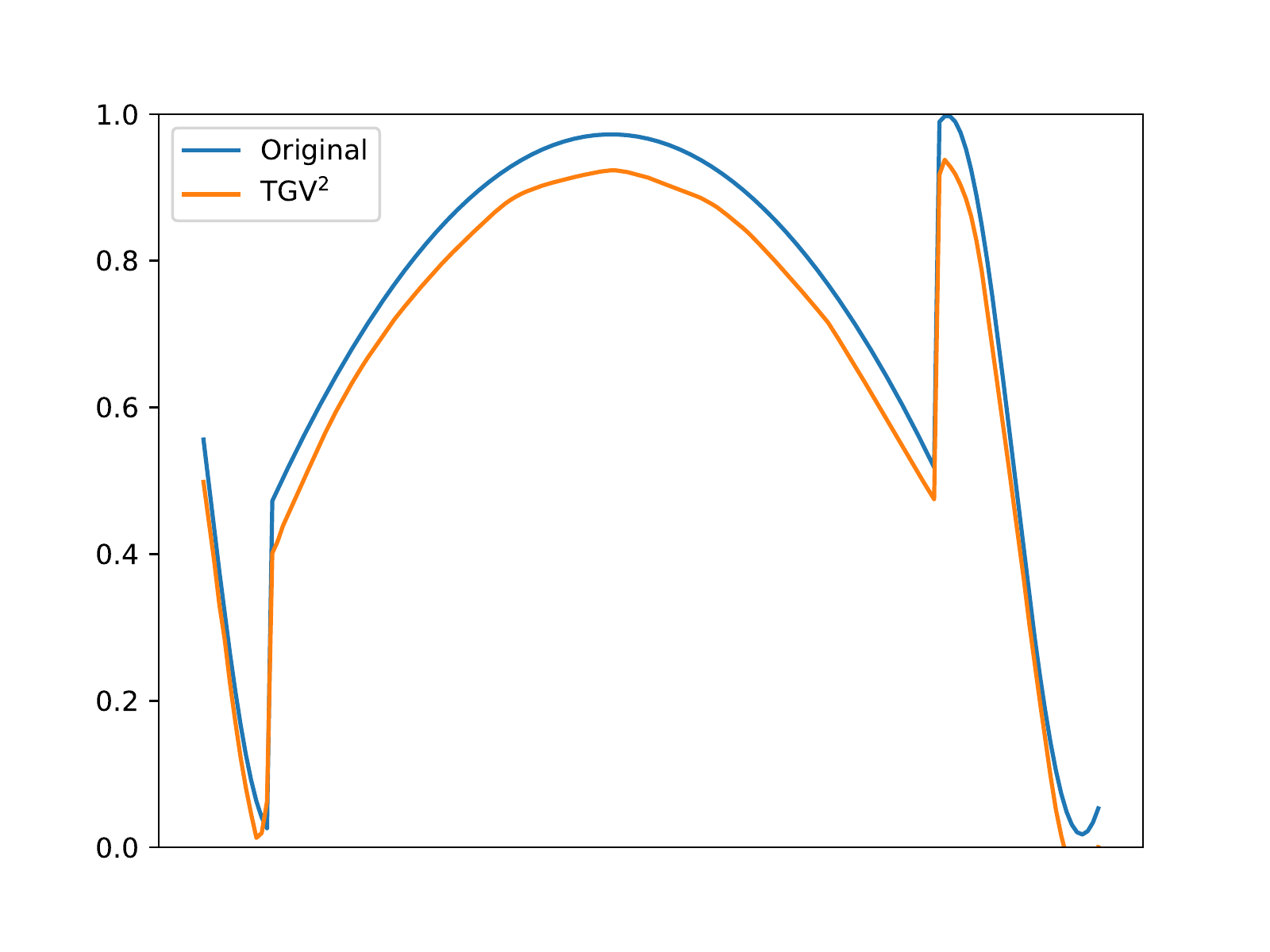} \\
    original image &
    $\TGV_\alpha^2$-denoising & 
    $\TGV_\alpha^2$ surface plot & 
    $\TGV_\alpha^2$ line plot
    \\[2pt]
    \includegraphics[width=0.2\textwidth]{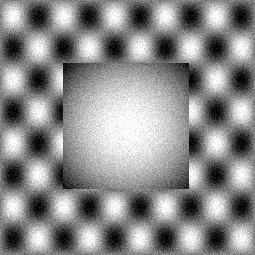} &
    \includegraphics[width=0.2\textwidth]{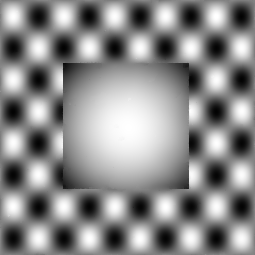} &
    \includegraphics[width=0.3\textwidth,trim=0 6 0 8, clip]{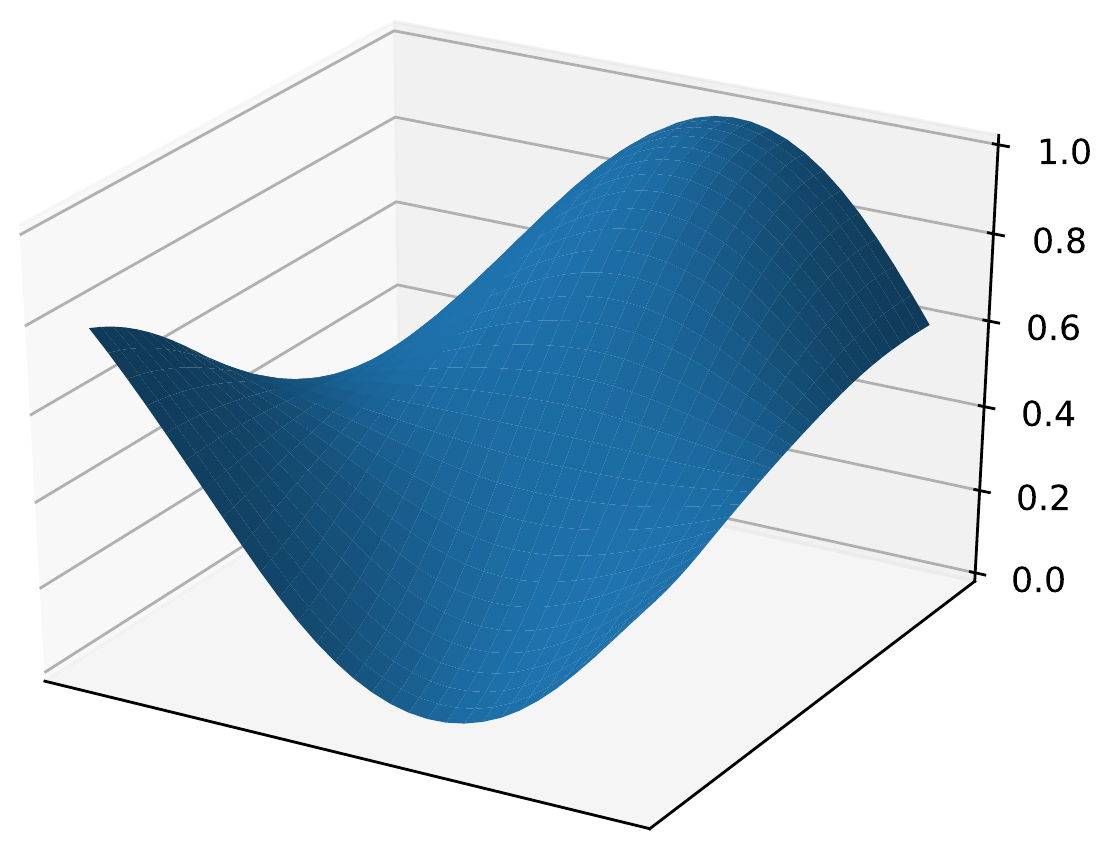} & 
    \includegraphics[width=0.28\textwidth,trim=10 0 10 0, clip]{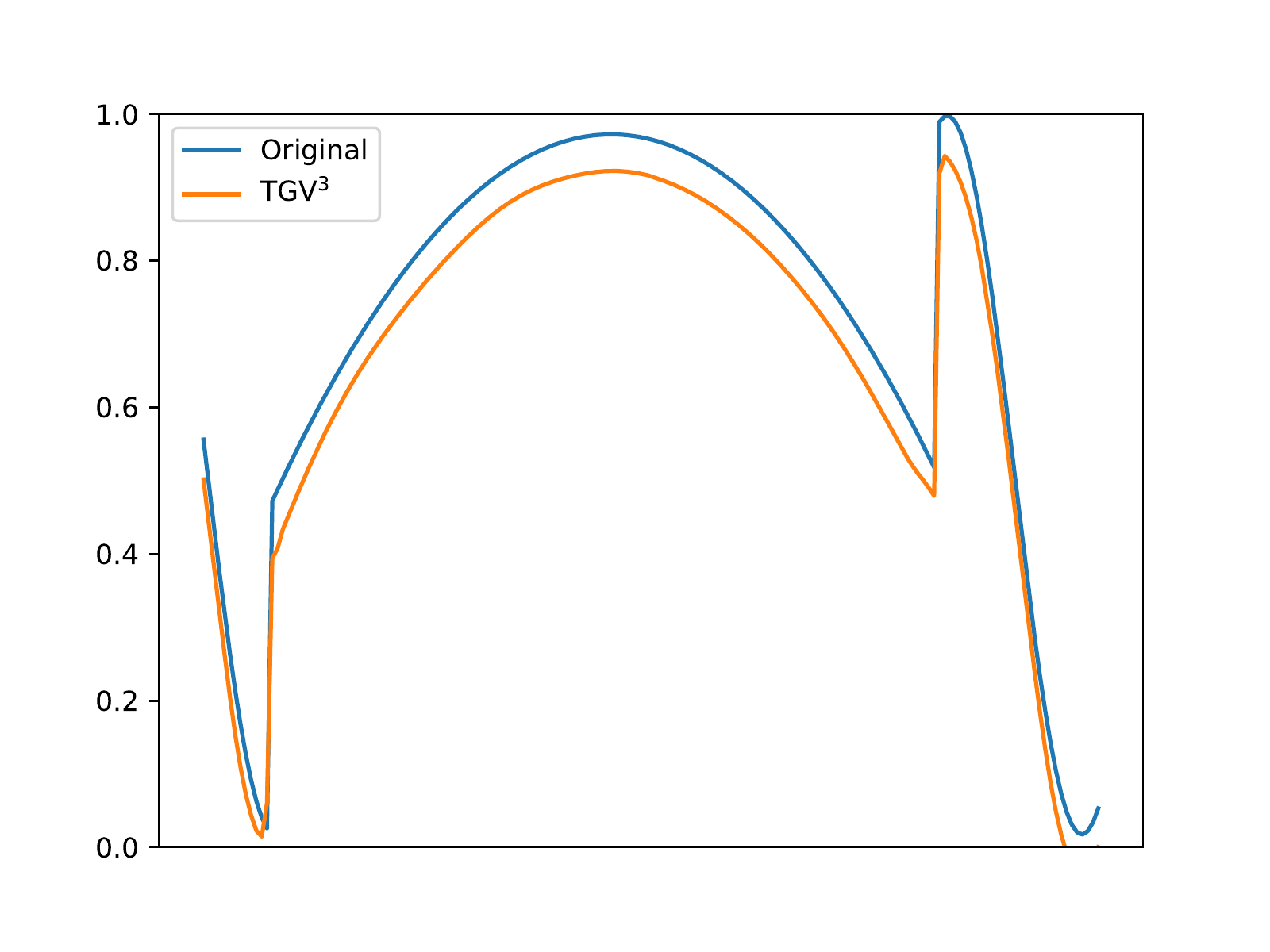} \\
    noisy image &
    $\TGV_\alpha^3$-denoising & 
    $\TGV_\alpha^3$ surface plot & 
    $\TGV_\alpha^3$ line plot
  \end{tabular}
  
  \caption{Comparison of second- and third-order TGV for denoising for a piecewise smooth noisy image (PSNR: $26.0$dB). The red lines in the original image indicate the areas used in the line and surface plots. 
    A close look on these plots reveals piecewise-linearity defects of $\TGV^2_\alpha$, while the $\TGV^3_\alpha$ reconstruction yields a better approximation of smooth structures and an improved PSNR ($\TGV_\alpha^2$: $40.7$dB, $\TGV_\alpha^3$: $42.3$dB). Note that in the line plots, the
    value $0.05$ was subtracted from the $\TGV$-denoising results in order to prevent the respective plots from significantly overlapping with the plots of the original data.  
    }
  \label{fig:tgv3_test}
\end{figure}

Another problem class is 
image deblurring which can be considered as a standard test problem for the ill-posed inversion of linear operators in imaging. Pick a blurring kernel $k \in L^\infty(\Omega_0)$ with bounded domains $\Omega _0,\Omega' \subset \RR^d$  such that $\Omega' - \Omega_0 \subset \Omega$. Then, $K:L^1(\Omega) \rightarrow L^2(\Omega')$ given by
\[
(Ku)(x) = \int_{\Omega_0} u(x-y)k(y) \dd{y}, \quad x \in \Omega'
\]
is well-defined, linear and continuous. Consequently, by Theorems \ref{thm:tv_reg_existence}, \ref{thm:tv_reg_stability} and Proposition \ref{prop:well_posed_tgvk}, 
\[
\min_{u \in \LPspace{p}{\Omega}} \ \frac12 \int_{\Omega'} 
\abs{(u \conv k)(x) - f(x)}^2 \dd{x} + \mR_\alpha(u),
\]
for $1 < p \leq d/(d-1)$ and $\mR_\alpha \in \{ \alpha \TV, \TGV_\alpha^2\}$ admits a solution that stably depends on the data $f \in \LPspace{2}{\Omega'}$, which we assume to be a noise-contaminated image blurred by the convolution operator $K$. A numerical solution can again be obtained with the framework described in Section \ref{sec:numerical_algorithms} and a comparison of the two choices of $\mR_\alpha$ for a test image can be found in Figure \ref{fig:tgv_deconv}. We can observe that both $\TV$ and $\TGV_\alpha^2$ are able to remove noise and blur from the image, however, the $\TV$ reconstruction suffers from staircasing artefacts which are not present with $\TGV_\alpha^2$.

\begin{figure}
  \centering
  \begin{tabular}{c@{\ }c}
    \includegraphics[width=0.45\linewidth]{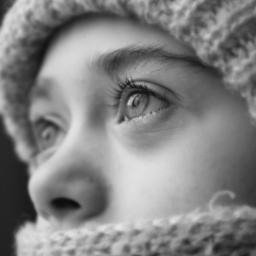} &
    \includegraphics[width=0.45\linewidth]{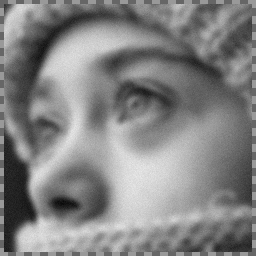} \\[-0.5ex]
    $u_{\mathrm{orig}}$ & $f$ \\[\smallskipamount]
    \includegraphics[width=0.45\linewidth]{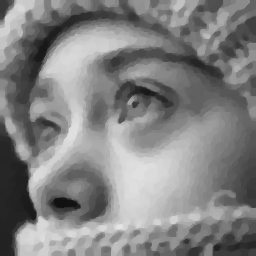} &
    \includegraphics[width=0.45\linewidth]{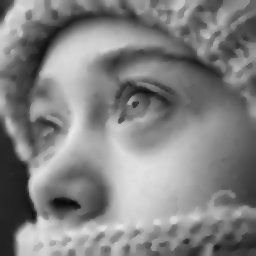} \\[-0.5ex]
    $u_{\TV}$ %
    & $u_{\TGV_\alpha^2}$ %
  \end{tabular}
  \caption{Deconvolution example. The original image $u_\mathrm{orig}$ 
    \cite{alinas_eye} has been blurred and contaminated by 
    noise resulting in $f$. The images $u_{\TV}$ 
    and $u_{\TGV_\alpha^2}$ are the regularised solutions recovered from $f$.}
  \label{fig:tgv_deconv}
\end{figure}

\subsection{Compressed sensing}

The next problem we would like to discuss is compressive sampling with
total variation and total generalised variation
\cite{Bredies14_multichannel_mh}.  More precisely, we aim at
reconstructing a single-channel image from `single-pixel camera' data
\cite{duarte2008compressive_sampling}, an inverse problem with
finite-dimensional data space.  Here, an image is not observed
directly but only the accumulated grey values over finitely many
random pixel patterns are sequentially measured by one sensor, the
`single pixel'. This can be modelled as follows.  For a bounded Lipschitz image
domain $\Omega \subset \RR^2$, let the measurable sets $E_1, \ldots, E_M \subset \Omega$
be the collection of random patterns
where each $E_m$ is associated with the $m$-th measurement. The image
$u$ is then determined by solving the inverse problem
\[
  Ku = f \qquad \text{where} \qquad (Ku)_m = \int_{E_m} u \dd{x} \quad
  \text{for} \quad m = 1,\ldots,M
\]
and $f \in \RR^M$ is the measurement vector, i.e., each $f_m$ is the
output of the sensor for the pattern $E_m$.  As the set of $u$ solving
this inverse problem is an affine space with finite codimension, the
compressive imaging approach assumes that the image $u$ is sparse in a
certain representation which is usually translated into the discrete
total variation $\TV(u)$ being small. A way to reconstruct $u$ from
$f$ is then to solve
\begin{equation}
  \min_{u \in \BV(\Omega)} \ S_f(Ku) + \TV(u), \qquad S_f(v) = \mathcal{I}_{\sett{f}}(v).\label{eq:compressive_imaging_tv}
\end{equation}
In this context, also higher-order regularisers may be used as
sparsity constraint. For instance, in
\cite{Bredies14_multichannel_mh}, total generalised variation of order
$2$ has numerically been tested:
\begin{equation}
  \label{eq:compressive_imaging_tgv}
  \min_{u \in \BV(\Omega)} \ S_f(Ku) + \TGV_\alpha^2(u),
  \qquad S_f(v) = \mathcal{I}_{\sett{f}}(v).
\end{equation}
Figure~\ref{fig:compressive_imaging_example} shows example
reconstructions for real data according to discretized versions
of~\eqref{eq:compressive_imaging_tv}
and~\eqref{eq:compressive_imaging_tgv}. As supported by the theory of
compressed sensing
\cite{candes2006inaccuratemeasurements_mh,candes2006uncertaintyprinciples_mh},
the image can essentially be recovered from a few single-pixel
measurements. Here, TGV-minimisation helps to reconstruct smooth
regions of the image such that in comparison to TV-minimisation, more
features can still be recognised, in particular, when reconstructing
from very few samples. Once again, staircasing artefacts are clearly
visible for the TV-based reconstructions, a fact that recently was
made rigorous in \cite{Carioni18sparsity_mh,Boyer19representer_mh}.

\begin{figure}[t]
  \centering
  \begin{tabular}{c@{\ }c@{\ }c@{\ }c}
    \multicolumn{4}{c}{$\TV$-based compressive imaging reconstruction} \\
    \includegraphics[width=0.235\linewidth]{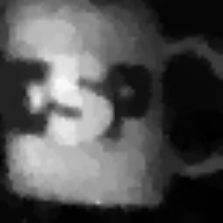} &
    \includegraphics[width=0.235\linewidth]{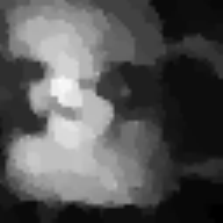} &
    \includegraphics[width=0.235\linewidth]{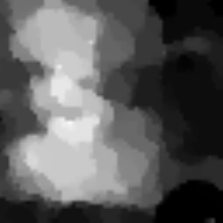} &
    \includegraphics[width=0.235\linewidth]{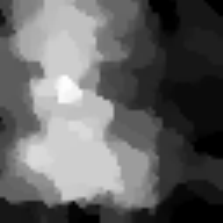}
    \\
    $768$ samples & $384$ samples & $256$ samples & $192$ samples \\[4pt]
    \multicolumn{4}{c}{$\TGV^2$-based compressive imaging reconstruction} \\
    \includegraphics[width=0.235\linewidth]{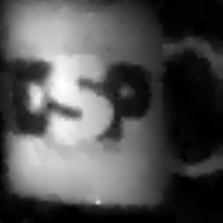} &
    \includegraphics[width=0.235\linewidth]{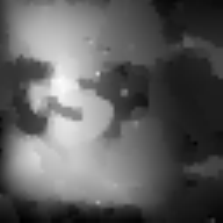} &
    \includegraphics[width=0.235\linewidth]{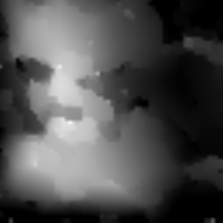} &
    \includegraphics[width=0.235\linewidth]{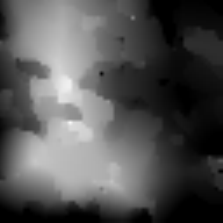}
    \\
    $768$ samples & $384$ samples & $256$ samples & $192$ samples 
  \end{tabular}
  \caption{Example for $\TV$/$\TGV^2$ compressive imaging reconstruction
    for real single-pixel camera data \cite{cs_camera_data}.
    Top: $\TV$-based 
    reconstruction of a $64 \times 64$ image from 18{.}75\%, 9{.}375\%,
    6{.}25\% and 4{.}6875\% of the data (from left to right).
    Bottom: $\TGV^2$-based reconstruction obtained from the same data.
    Figure taken from \cite{Bredies14_multichannel_mh}. Reprinted by permission from Springer Nature.
  }
  \label{fig:compressive_imaging_example}
\end{figure}

\subsection{Optical flow and stereo estimation}

Another important fundamental problem in image processing and computer
vision is the determination of the optical flow \cite{HornSchunck81_optical_flow_mh} of an image
sequence. Here, we consider this task for two consecutive frames $f_0$
and $f_1$ in a sequence of images. This is often modelled by
minimising a possibly joint discrepancy
$S_{f_0,f_1}\bigl(u(0), u(1) \bigr)$ for
$u: [0,1] \times \Omega \to \RR$ subject to the optical flow
constraint $\frac{\partial u}{\partial t} + \grad u \inprod v = 0$,
see, for instance, \cite{borzi2003optimal_mh}.
Here, $v: [0,1] \times \Omega \to \RR^d$ is the optical flow field
that shall be determined. In order to deal with ill-posedness,
ambiguities as well as occlusion, the vector field $v$
needs to be regularised by a penalty term. %
This leads to the PDE-constrained problem
\[
  \min_{u,v} \ S_{f_0,f_1}\bigl(u(0), u(1) \bigr) + \mR_\alpha(v) \qquad
  \text{subject to} \qquad \frac{\partial u}{\partial t} + \grad u \inprod
  v = 0,
\]
where $\mR_\alpha $ is a suitable convex regulariser for vector field
sequences. %
Usually,
$S_{f_0, f_1}$ is chosen such that the initial condition $u(0)$ is
fixed to $f_0$, for instance, $S_{f_0, f_1}(u_0, u_1) = \mI_{\sett{f_0}}(u_0)
+ \frac12 \norm[2]{u_1 - f_1}^2$, see \cite{hinterberger2001opticalflow,borzi2003optimal_mh,keeling2005opticalflow,chen2011opticalflow}.

In many approaches, this problem is reformulated to a correspondence
problem.  This means, on the one hand, %
replacing the optical flow constraint by the displacement introduced
by a vector field $v_0: \Omega \to \RR^2$, i.e., $u(0) = u_0$ and
$u(1) = u_0 \compose (\id + v_0)$. %
The image $u_0: \Omega \to \RR$ is either prespecified or
subject to optimisation. For instance, choosing again
$S_{f_0, f_1}(u_0, u_1) = \mI_{\sett{f_0}}(u_0) + \frac12\norm[2]{u_1
  - f_1}^2$ leads to the classical
correspondence %
problem
\[
  \min_{v_0} \ \frac12\norm[2]{f_0 \compose (\id + v_0)
  - f_1}^2 + \mR_\alpha(v_0),
\]
see, for instance, \cite{HornSchunck81_optical_flow_mh},
which uses the square of the $H^1$-seminorm as a regulariser.
On the other hand, other approaches have been considered for the
discrepancy (and regularisation), see
\cite{brox2004high_mh,zach2007opticalflow}.  In this context, a
popular concept is the \emph{census transform}
\cite{zabih1994censustransform} that describes the local relative
behaviour of an image and is invariant to brightness changes.  For an
image $f: \Omega \to \RR$, measurable patch $\Omega' \subset \RR^2$ and threshold
$\varepsilon > 0$, it is defined as
\[
  \fl
  C_f: \Omega \times \Omega' \to \sett{-1,0,1},
  \quad
  C_f(x,y) =
  \left\{
  \begin{array}{cl}
    \sgn \bigl(f(x+y) - f(x) \bigr)
    &
      \text{if} \ x,x+y \in \Omega \ \text{and} \\
    &
    \ \ \abs{f(x+y) - f(x)} > \varepsilon, \\
    0 & \text{else}.
  \end{array}\right.
\]
Here, one usually
sets $u_0 = f_0$ and $u_1 = f_1$ such that the discrepancy only depends
on the vector field $v_0$, such as, for instance,
\[
  S_{f_0, f_1}(v_0) %
  =
  \int_\Omega \int_{\Omega'} \min \bigl(1, \bigabs{C_{f_0}(x, y)
    - C_{f_1 \compose (\id + v_0)}(x, y)}\bigr)
  \dd{y} \dd{x},
\]
leading to the optical-flow problem
\[
  \min_{v_0} \ S_{f_0, f_1}(v_0) + \mR_\alpha (v_0),
\]
see, for instance, \cite{mueller2011opticalflow,vogel2013evaluation}.
A closely related problem is stereo estimation which can also be
modelled as a correspondence problem. In this context, $f_0$ and $f_1$
constitute a stereo image pair, for instance, $f_0$ being the left
image and $f_1$ being the right image. The stereo information is then
usually reflected by the \emph{disparity} which describes the
displacement of the right image with respect to the left image. This
corresponds to setting the vertical component of the displacement
field $v_0$ to zero, for instance, $(v_0)_2 = 0$. Census-transform based
discrepancies are also used for this task \cite{ranftl2012stereo}, leading to the
stereo-estimation model
\begin{equation}
  \min_{w_0} \ S_{f_0, f_1}\bigl((w_0,0) \bigr) + \mR_\alpha(w_0)\label{eq:stereo_estimation_general}
\end{equation}
with a suitable convex regulariser $\mR_\alpha$ for scalar disparity images.

Both optical flow and stereo estimation are non-convex due to the
non-convex data terms and require dedicated solution techniques. One
possible approach is to smooth the discrepancy functional such that is
becomes (twice) continuously differentiable, and approximate it, for each
$x \in \Omega$, by either first or second-order Taylor expansion.  For
the latter case, if one also projects the pointwise Hessian to the
positive semi-definite cone, one arrives at the convex problem
\[
  \fl
  \min_{v} \ \int_\Omega S(v_0) + \grad S(v_0) \inprod (v - v_0)
  + \frac12 \proj_{S^+}\bigl(\grad^2 S(v_0) \bigr)(v - v_0) \inprod (v - v_0)
  \dd{x} + \alpha \mR(v),
\]
where $v_0$ is the base vector field for the Taylor expansion and
$\proj_{S^+}: S^{2 \times 2} \to S^{2 \times 2}_+$ denotes the
orthogonal projection to the cone of positive semi-definite
matrices $S^{2 \times 2}_+$. Besides classical regularisers such as the $H^1$-seminorm,
the total variation has been chosen \cite{werlberger2012thesis}, i.e.,
$\mR_\alpha = \alpha \TV$, which allows the identification of jumps in
the displacement field associated with object boundaries. The
displacement field is, however, piecewise smooth such that
$\TGV_\alpha^2$ turns out to be advantageous. Further improvements can
be achieved by non-local total generalised variation $\NLTGV^2$, see
\cite{ranftl2014opticalflow}, leading to sharper and more accurate
motion boundaries, see Figure~\ref{fig:optical-flow}.  For stereo
estimation, a similar approach using first-order Taylor expansion and
image-driven total generalised variation $\ITGV_\alpha^2$ also yields
very accurate disparity images \cite{ranftl2012stereo}.

\begin{figure}
  \centering
  \begin{tabular}{c@{\ }c@{\ }c@{\ }c}
    \includegraphics[width=0.3\linewidth]{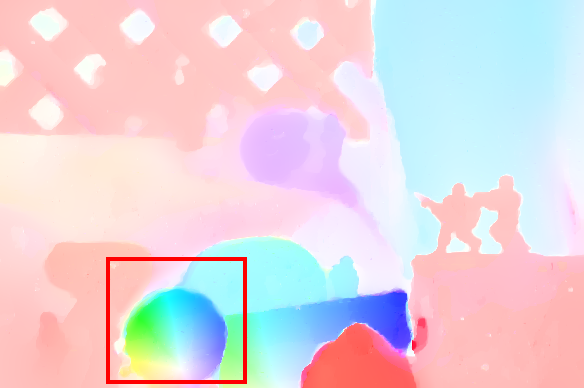}
    & 
    \includegraphics[width=0.15\linewidth]{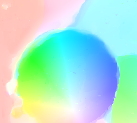}
    &
    \includegraphics[width=0.3\linewidth]{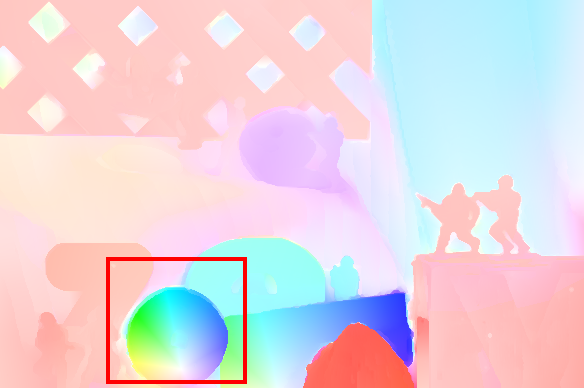}
    &
    \includegraphics[width=0.15\linewidth]{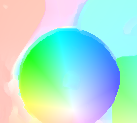}
    \\
    (a) & (b) & (c) & (d)
  \end{tabular}

  \caption{Example for higher-order approaches for optical flow
    determination.  (a) An optical flow field obtained on a sample
    dataset from the Middlebury benchmark \cite{baker2011middlebury} using a
    $\TGV_\alpha^2$ regulariser. (b) An enlarged detail of (a). (c)
    The optical flow field obtained by a $\NLTGV^2$ regulariser.  (d)
    An enlarged detail of (c). Images taken from \cite{ranftl2014opticalflow}. Reprinted by permission from Springer Nature.}
  \label{fig:optical-flow}
\end{figure}

A different concept for solving the non-convex optical flow/stereo
estimation problem is \emph{functional lifting} \cite{DalMaso03_calibration,chambolle2001convex_mh}. For the stereo
estimation problem, this means to recover the characteristic function
of the subgraph of the disparity image, i.e.,
$\chi_{\sett{t \leq w_0}}$. Assume that the discrepancy for the
disparity $w_0$ can be written in integral form, i.e.,
$S_{f_0,f_1}(w_0) = \int_\Omega g\bigl(x, w_0(x)\bigr) \dd{x}$ with a
suitable $g: \Omega \times \RR \to \RR$ that is possibly non-convex
with respect to the second argument.  If $w_0$ is of bounded
variation, then $\chi_{\sett{t \leq w_0}}$ is also of bounded
variation and the weak derivative with respect to $x$ and $t$,
respectively, are Radon measures. Denoting by $v_x$ and $v_t$ the
respective components of a vector, i.e.,
$v = (v_x, v_t) \in \RR^2 \times \RR$, these derivatives satisfy the
identity
$\frac{\partial}{\partial t} \chi_{\sett{t \leq w_0}} = \bigl(
\frac{\grad \chi_{\sett{t \leq w_0}}}{\abs{\grad \chi_{\sett{t \leq w_0}}}}\bigr)_t \abs{\grad \chi_{\sett{t \leq w_0}}}$ as well as
$\grad_x \chi_{\sett{t \leq w_0}} = \bigl( \frac{\grad \chi_{\sett{t \leq w_0}}}{\abs{\grad \chi_{\sett{t \leq w_0}}}}\bigr)_x
\abs{\grad \chi_{\sett{t \leq w_0}}}$.  The discrepancy term
can then be written in the form
\[
  S_{f_0,f_1}\bigl( (w_0, 0) \bigr) =  \int_{\Omega \times \RR}
  g \dd{\Bigabs{\frac{\partial}{\partial t} \chi_{\sett{t \leq w_0}}}}
\]
which is
convex with respect to $\chi_{\sett{t \leq w_0}}$.
In many cases, regularisation functionals
can also be written in terms of $\chi_{\sett{t \leq w_0}}$, for
instance, by the coarea formula,
\[
  \TV(w_0) = \int_{\Omega \times \RR}
  \dd{\abs{\grad_x \chi_{\sett{t \leq w_0}}}},
\]
which is again convex with respect to $\chi_{\sett{t \leq w_0}}$.
As the set of all $\chi_{\sett{t \leq w_0}}$ is still non-convex, this
constraint is usually relaxed to a convex set, for instance, to the conditions
\begin{equation}
  u \in \BV(\Omega \times \RR), \quad 0 \leq u \leq 1,
  \quad
  \lim_{t \to -\infty} u(t, \placeholder) = \ones,
  \quad \lim_{t \to \infty} u(t,\placeholder) = 0,
  \label{eq:functional_lifting_constraint}
\end{equation}
where the limits have to be understood in a suitable sense. Then,
the stereo problem~\eqref{eq:stereo_estimation_general} with total-variation
regularisation can be relaxed to the convex problem
\[
  \min_{u \in \BV(\Omega \times \RR)} \
  \int_{\Omega \times \RR}
  g \dd{\Bigabs{\frac{\partial u}{\partial t}}}
  + \alpha \int_{\Omega \times \RR} \dd{\abs{\grad_x u}}
  \qquad \text{subject to} \ \eqref{eq:functional_lifting_constraint}.
\]
Then, optimal solutions $u^*$ for the above problem yield minimizers
of the original problem when thresholded, i.e., for $s \in {]{0,1}[}$,
the function $\chi_{\sett{s \leq u^*}}$ is the characteristic function
of the subgraph of a $w_0$ that is optimal
for~\eqref{eq:stereo_estimation_general} for the assumed discrepancy
and total-variation regularisation \cite{pock2010convex_relaxation_mh}.

Unfortunately, a straightforward adaptation of this strategy to
higher-order total-variation-type regularisation
functionals is not possible.  For
$\TGV^2$, one can nevertheless benefit from the convexification approach.
Considering the $\TGV_2^\alpha$-regularised problem 
\begin{equation}
  \fl
  \min_{w_0 \in \BV(\Omega), \ w \in \BD(\Omega)}
  \ \int_\Omega g(x, w_0(x) \bigr) \dd{x}
  + \alpha_1 \int_\Omega \dd{\abs{\grad w_0 - w}}  + \alpha_0
  \int_\Omega \dd{\abs{\symgrad w}},\label{eq:stereo_tgv_problem}
\end{equation}
one sees that the problem is convex in $w$ and minimisation with
respect to $w_0$ can still be convexified by functional lifting. For fixed
$w$, the latter
leads to
\[
  \fl
  \begin{array}{rl}
    \displaystyle
    \min_{u \in \BV(\Omega \times \RR)} \ \int_\Omega
    g \dd{\Bigabs{\frac{\partial u}{\partial t}}} + \alpha_1
    \sup \ \Bigl\{ \int_{\Omega \times \RR} u \divergence \varphi \dd{x}
    \ \Bigl| & \displaystyle
              \varphi \in \Ccspace{\infty}{\Omega, \RR^2 \times \RR},
    \abs{\varphi_x(x,t)} \leq 1, \\
    &  \varphi_x(x,t) \inprod w(x) \leq
              \varphi_t(x,t) \ \text{a.e.~in}
      \ \Omega \times \RR \Bigr\} \\
    \text{subject to} \ \eqref{eq:functional_lifting_constraint},
  \end{array}
\]
which is again convex and whose solutions can again be thresholded to
yield a $w_0^*$ that is optimal with respect to $w_0$ for a fixed
$w$. Alternating minimisation then provides a robust solution strategy
for~\eqref{eq:stereo_tgv_problem} based on convex optimisation
\cite{ranftl2013tgvnonconvex}, see Figure~\ref{fig:stereo-est} for
an example.
\begin{figure}
  \centering
  \begin{tabular}{r@{\ }c}
    (a) & \includegraphics[width=0.85\linewidth]{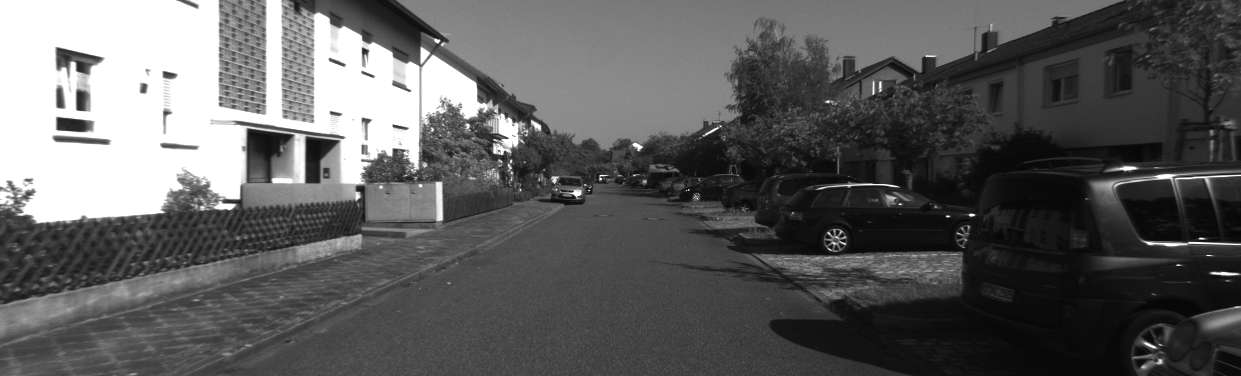} \\
    (b) & \includegraphics[width=0.85\linewidth]{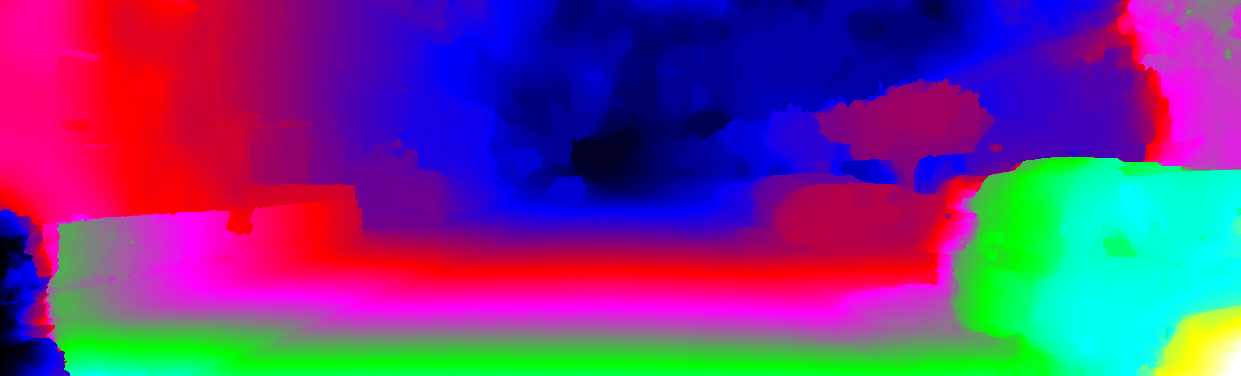} 
  \end{tabular}
  
  \caption{Total-generalised-variation-regularised
    stereo estimation based on functional lifting and convex
    optimisation for an image pair of the KITTI dataset
    \cite{geiger2012kitti}. (a) The reference image.  (b) The
    disparity image obtained with $\TGV^2$-regularisation \cite{ranftl2013tgvnonconvex}. Images
    taken from \cite{tgv_stereo_kitti}.}
  \label{fig:stereo-est}
\end{figure}
In this context, algorithms realising functional lifting strategies
for TV and TGV regularisation have recently further been refined, for
instance, in order to lower the computational complexity associated
with the additional space dimension introduced by the lifting, see,
e.g. \cite{moellenhoff2016sublabel,strecke2019sublabeltgv}.

\subsection{Image and video decompression}

Pixelwise representations of image or image sequence 
data require, on the one hand,
a large amount of digital storage but contain, on the other hand, enough
redundancy to enable compression. Indeed, most digitally stored
images and image sequences, e.g., on cameras, mobile phones or the
world-wide web are compressed. Commonly-used lossy compression standards
such as JPEG, JPEG2000 for images and MPEG for image sequences, however,
suffer from visual artefacts in decompressed data, especially for
high compression rates.

Those artefacts result from errors in the compressed data due to quantisation, which is not accounted for
in the decompression procedure. These errors, however, can be well described using the data
that is available in the compressed file and in particular, precise bounds on the difference of the available data
and the unknown, ground truth data can be obtained. This observation motivates a generic approach for an improved
decompression of such compressed image or video data, which consists of minimising a regularisation functional subject
to these error bounds, see for instance \cite{Zhong97_mh,Alter05_mh,holler12tvjpeg_mh} for TV-based works in this context.
Following this generic approach, we present here a TGV-based reconstruction method (see \cite{holler15tgvrec_p1_mh,holler15tgvrec_p2_mh}) that allows for a variational reconstruction of still
images from compressed data that is directly applicable to the major image compression standards such as JPEG, JPEG2000 or the image compression layer of the DjVu document compression format \cite{holler18djvu}. A further extension of this model to the decompression of MPEG encoded video data will be addressed afterwards.

The underlying principle of a broad class of image and video compression standards, and in particular of JPEG and JPEG 2000 compression, is as follows: First, a linear transformation is used to transform the image data to a different representation where information that is more and less important for visual image quality is well separated. Then, a weighted quantisation of this data (according to its expected importance for visual image quality) is carried out and the quantised data (together with information that allows to obtain the quantisation accuracy) is stored. Thus, defining $K$ to be the linear transformation used in the compression process and $D$ to be a set of admissible, transformed image data that can be obtained using the information available in the compressed file, decompression amounts to finding an image $u$ such that $Ku \in D$. Using the TGV functional to regularise this compression procedure and considering colour images $u:\Omega \rightarrow \R^3$, we arrive at the following minimisation problem:
\begin{equation} \label{eq:cont_problem}
\min_{u\in L^2 (\Omega,\R^3)} \ \mathcal{I}_{U_D} (u) + \TGV_\alpha^k(u), 
\qquad U_D = \set{ u \in L^2 (\Omega,\R^3) }{ Ku \in D }
\end{equation}
 where $ K: L^2(\Omega,\R^3) \rightarrow \ell^2 $ is an analysis operator related to a Riesz basis of $ L^2(\Omega,\R^3)$, and a Frobenius-norm-type coupling of the colour channels is used in $\TGV$, see Subsection \ref{sec:tgv_extensions}. The coefficient dataset 
 $ D \subset \ell ^2 $ reflects interval restrictions on the coefficients, i.e., is defined as $D = \set{ v \in \ell^2 }{ v_n \in J_n \ \text{for all} \ n \in \NN}$ for $\seq{J_n}$ a family of closed intervals. 
 In case $D$ is bounded, well-posedness of this approach can be obtained via a direct extension of Proposition \ref{prop:well_posed_tgvk} to $\R^3$-valued functions, which in particular requires a multi-channel version of the Poincaré inequality for TGV as in Proposition  \ref{prop:tgvk_basic_topological_equivalence}. The latter can straightforwardly be obtained by equivalence of norms in finite dimensions, see for instance
  \cite{Bredies14_multichannel_mh,holler15tgvrec_p1_mh}. 
  Beyond that, existence of a solution to \eqref{eq:cont_problem} can be guaranteed also in case of a non-coercive discrepancy when arbitrarily many of the intervals $J_n$ are unbounded, provided that only finitely many of them are half-bounded, i.e., are the form $J_n = {]{- \infty,c_n}]}$ or $J_n = {[{c_n,\infty}[}$ for $c_n \in \R$, see \cite{holler15tgvrec_p1_mh}. In compression, half-bounded intervals would correspond to knowing only the sign but not the precision of the respective coefficient, a situation which does not occur in JPEG, JPEG2000 and DjVu. %
  Thus, in all relevant applications, all intervals are either bounded or all of $\R$, and hence, solutions exist. %
  Further, under the assumption that all but finitely many intervals have a width that is uniformly bounded from below, again an assumption which holds true in all anticipated applications, optimality conditions for \eqref{eq:cont_problem} can be obtained. 
 
In the application to JPEG decompression, colour images are processed in the YCbCr colour space and the basis transformation operator $K$ corresponds to a colour subsampling followed by a  block- and channel-wise discrete cosine transformation, which together can be expressed as Riesz-basis transform. The interval sequence $\seq{J_n}$ can be obtained using a quantisation matrix that is available in the encoded file and each interval $J_n$ is bounded. 

In the application to JPEG2000 decompression, again the YCbCr colour space is used and $K$ realises a colour-component-wise biorthogonal wavelet transform using Le Gall 5/3 or CDF 9/7 wavelets as defined in \cite[Table 6.1 and Table 6.2]{Cohen92_mh}. Obtaining bounds on the precision of the wavelet coefficients is more involved than with JPEG (see \cite[Section 4.3]{holler15tgvrec_p1_mh}), but can be done by studying the bit truncation scheme of JPEG2000 in detail. As opposed to JPEG, however, the intervals $J_n$ might either be bounded or unbounded.

A third application of the model \eqref{eq:cont_problem} is a variational decompression of the image layers of a DjVu compressed document. DjVu \cite{haffner1998high_mh} is a storage format for digital documents. It encodes document pages via a separation into fore- and background layers as well as a binary switching mask, where the former are encoded using a lossy, transform-based compression and the latter using a dictionary-based compression. While the binary switching mask typically encodes fine details such as written text, the fore- and background layer encode image data, which again suffers from compression artefacts that can be reduced via variational decompression. Here, the extraction of the relevant coefficient data together with error bounds has to account for the particular features of the DjVu compression standard (we refer to \cite{holler18djvu} and its supplementary material for a detailed description and software that extracts the relevant data from DjVu compressed files), but the overall model for the image layers is again similar to the one of JPEG and JPEG2000 decompression. In particular, encoding of the fore- and background layer can be modelled with the operator $K$, in this case corresponding to a colour-component-wise wavelet transformation using the Dubuc--Deslauriers--Lemire (DDL) (4, 4) wavelets \cite{deslauriers1999famille}, and the data intervals $J_n$, which are again either bounded or all of $\R$.

In all of the above applications, a numerical solution of the corresponding particular instance of the minimisation problem \eqref{eq:cont_problem} can be obtained using the primal-dual framework \cite{holler15tgvrec_p2_mh} as described in Section \ref{sec:numerical_algorithms} (see \cite{holler15tgvrec_p2_mh} for details). We refer to Figure \ref{fig:image_compression_examples} for exemplary results using second-order TGV regularisation. Regarding the implementation, relevant differences arise depending on whether the projection onto the dataset $U_D$ can be carried out explicitly or not, the latter requiring a dualization of this constraint and an additional dual variable. Only in the application to JPEG decompression, this projection is explicit due to orthonormality of the cosine transform and the particular structure of the colour subsampling operator. This has the particular advantage that, at any iteration of the algorithm, the solution is feasible and one can, for instance, apply early stopping techniques to obtain already quite improved decompressed images in a computationally cheap way. 

 \paragraph*{Variational MPEG decompression.} The MPEG video compression standard builds on JPEG compression for storing frame-wise image data, but incorporates additional motion prediction and correction steps which can significantly reduce storage size of video data. In MPEG-2 compression, which is a tractable blueprint for the MPEG compression family, video data is processed as subsequent groups of pictures (typically 12-15 frames) which can be handled separately. In each group of pictures, different frame types (I, P and B frames) are defined, and, depending to the frame type, image data is stored by using motion prediction and correction followed by a JPEG-type compression of the corrected data. Similar to JPEG compression, colour images are processed in the YCbCr colour space and additional subsampling of colour components is allowed. 
 
While these are the main features of a typical MPEG video encoder, as usual for most compression standards, the MPEG standard defines the decompression procedure rather than compression. Hence, since compression might differ for different encoders, we build a variational model for MPEG decompression that works with a decoding operator (see \cite{holler15mpeg_mh} for more details on MPEG and the model): Using the information (in particular motion correction vectors and quantisation tables) that is stored in the MPEG compressed files, we can define a linear operator $K$ that maps encoded, motion corrected cosine-transform coefficient data to (colour subsampled) video data. Furthermore, bounds on the coefficient data can be obtained. Using a second operator $S$
to model colour subsampling and choosing a right-inverse $\hat{S}$, MPEG decompression amounts to finding a video $u$ such that
\[ u = s + \hat{S}Kv, \]
where $v \in D$ with $D$ being the admissible set of cosine coefficient data, and $s \in \ker(S)$ compensates for the colour upsampling of $\hat{S}$. Incorporating the infimal-convolution of second-order spatio-temporal TGV functionals as regularisation for video data (see Subsection \ref{sec:tgv_extensions} and \cite{holler15mpeg_mh,holler14ictv}), decompression then amounts to solve
\[ \min_{v \in D, \ s \in \ker(S)}  \ICTGV (s+ \hat{S}Kv) .\]
Again, the minimisation problem can be solved using duality-based convex optimisation methods as described in Section \ref{sec:numerical_algorithms} and we refer to Figure \ref{fig:mpeg_example} for a comparison of standard MPEG-2 decompression and the result obtained with this model.

\begin{figure}
  \centering
  \begin{tabular}{c@{\ }c}
    \includegraphics[width=0.47\linewidth]{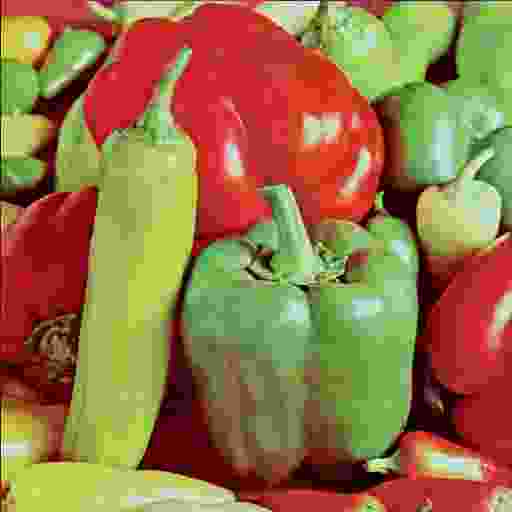} &
    \includegraphics[width=0.47\linewidth]{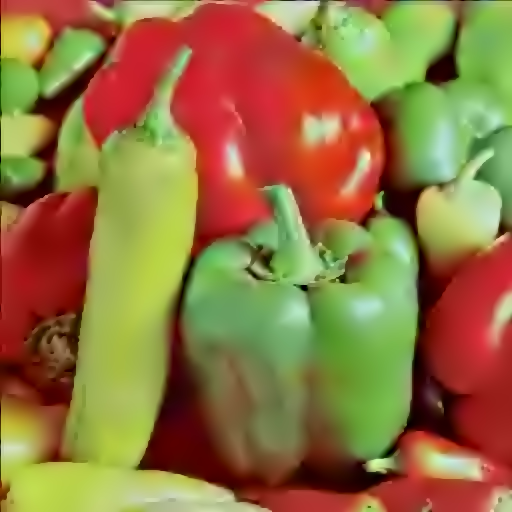} \\
    \includegraphics[width=0.47\linewidth]{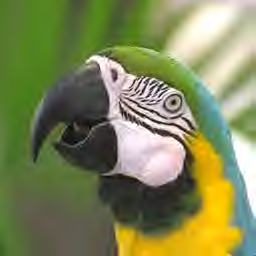} &
    \includegraphics[width=0.47\linewidth]{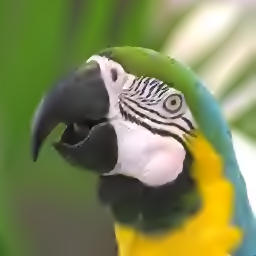} \\
    \scalebox{0.76}{
    	\begin{tikzpicture}
	\draw (0,0) node{\includegraphics[width=4cm]{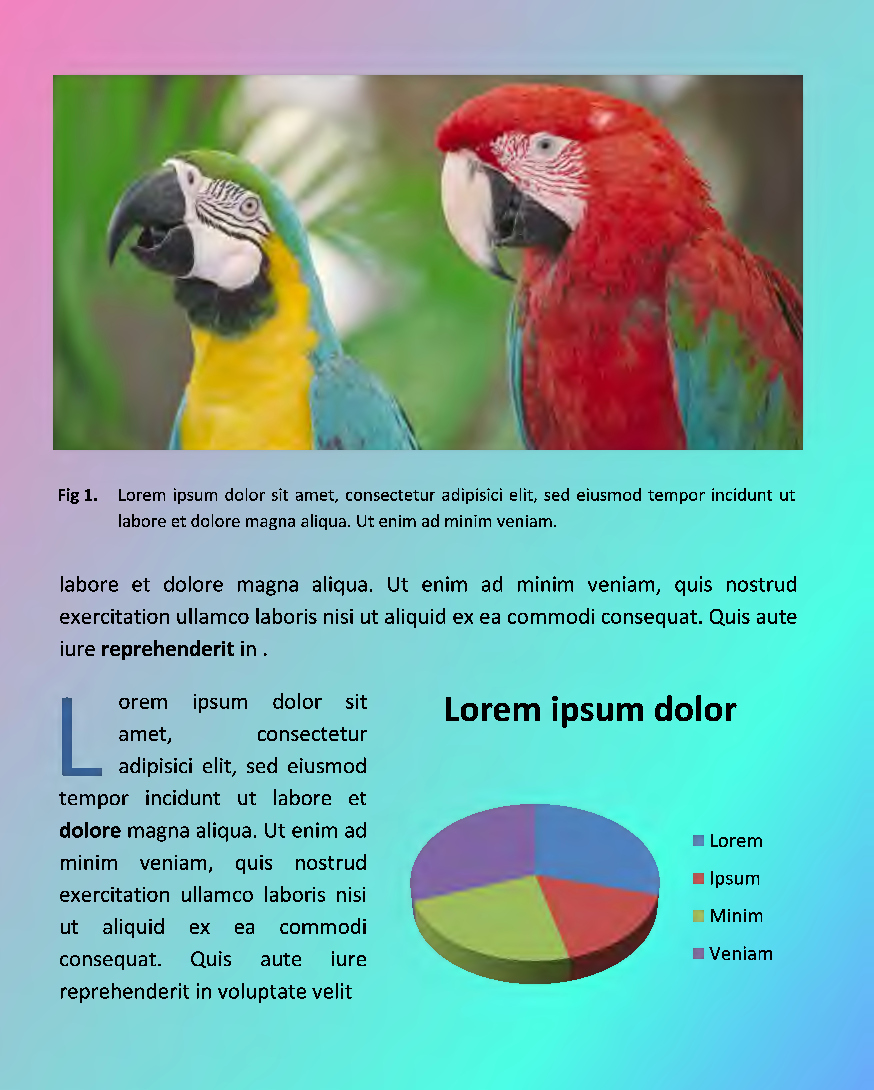}};

	\draw (4.1cm,-1.26) node{\includegraphics[trim = 15cm 3.2cm 8cm 30.4cm, clip, width=4cm]{pics_img_layers_combined_std.jpg}};

	\draw (4.1cm,1.26) node{\includegraphics[trim = 13.5cm 31.4cm 9.5cm 2.2cm, clip, width=4cm]{pics_img_layers_combined_std.jpg}};
	\end{tikzpicture}
	}
	& 
    \scalebox{0.76}{
    	\begin{tikzpicture}
	\draw (0,0) node{\includegraphics[width=4cm]{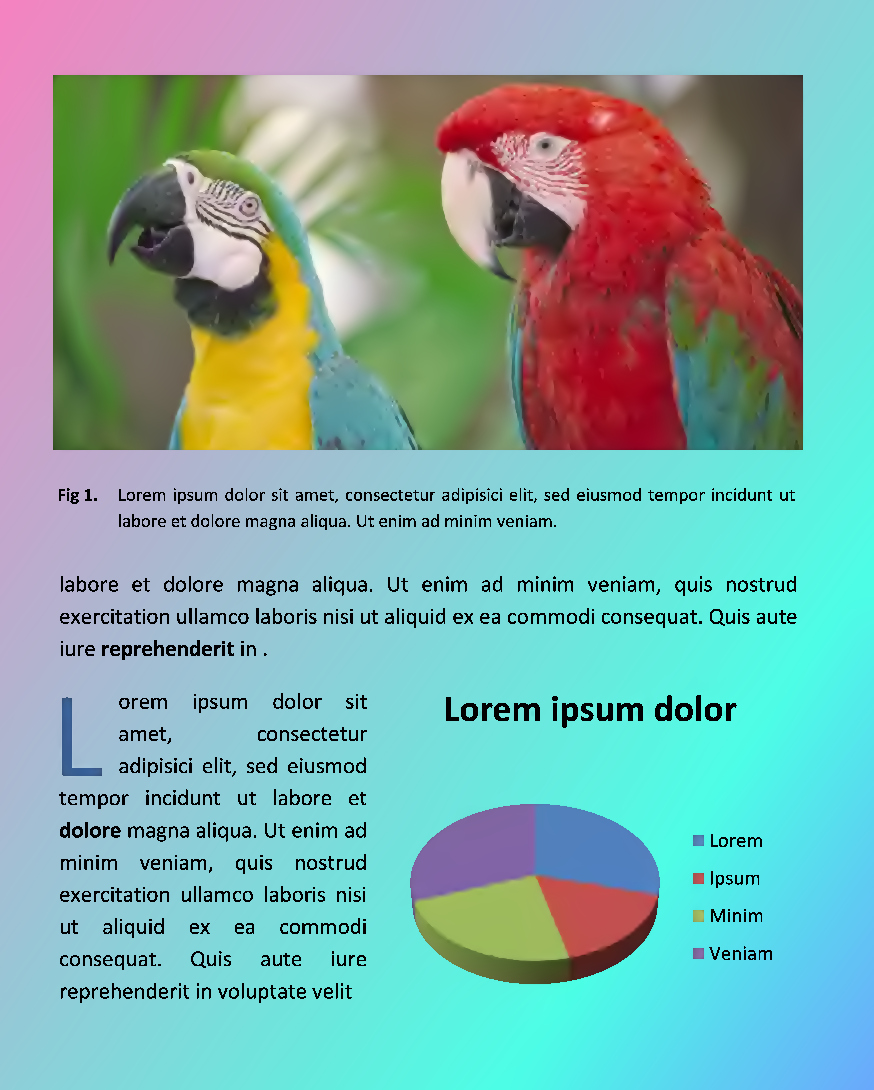}};

	\draw (4.1cm,-1.26) node{\includegraphics[trim = 15cm 3.2cm 8cm 30.4cm, clip, width=4cm]{pics_img_layers_combined_tgv.jpg}};

	\draw (4.1cm,1.26) node{\includegraphics[trim = 13.5cm 31.4cm 9.5cm 2.2cm, clip, width=4cm]{pics_img_layers_combined_tgv.jpg}};
	\end{tikzpicture}
	}
  \end{tabular}
  \caption{\label{fig:image_compression_examples} Example of variational image decompression. Standard (left column) and TGV-based (right column) decompression for a JPEG image (top row) compressed to 0.15 bits-per-pixel (bpp), a JPEG2000 image (middle row) compressed to 0.3 bpp, and a DjVu-compressed document page (bottom row) with close-ups. Results from \cite{holler15tgvrec_p2_mh} (rows 1--2) and \cite{holler18djvu} (bottom row).}
\end{figure}

\begin{figure}[t]
\centering
\includegraphics[scale=0.54]{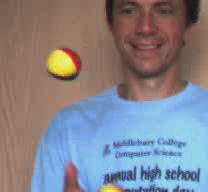}\ \ 
\includegraphics[scale=0.27]{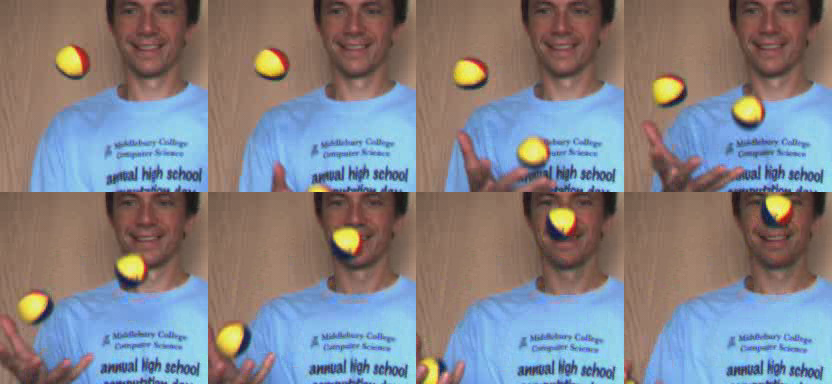}\\[4pt]
\includegraphics[scale=0.54]{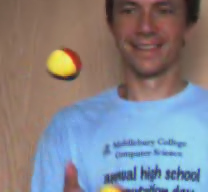}\ \ 
\includegraphics[scale=0.27]{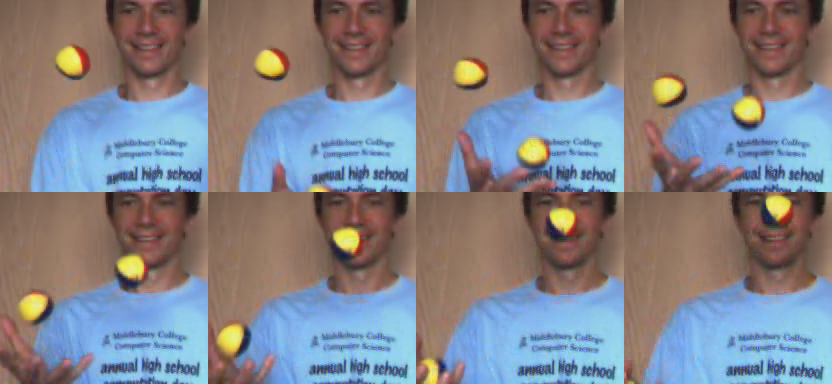}
\caption{\label{fig:mpeg_example} Example of variational MPEG decompression. Standard (top row) and ICTGV-based (bottom row) decompression of the \emph{Juggler} image sequence from \cite{baker2011middlebury}. On the left, the second frame (P-frame) is shown in detail while on the right, all 8 frames are depicted. Figure taken from \cite{holler15mpeg_mh}. Reprinted by permission from Springer Nature.}
\end{figure}

\section{Applications in medical imaging and image reconstruction} \label{sec:applications_medical_imaging_reconstruction}

\subsection{Denoising of dual-energy computed-tomography 
  (CT) data}

Since its development in the 1970s, computed X-ray tomography (CT)
became a standard tool in medical imaging.
As CT bases on X-rays,
the health risks associated with 
ionising radiation is certainly a drawback of this imaging technique. 
Further, the acquired images
do in general not allow to differentiate objects with the same
density.  For the former point, a low radiation dose is an important
goal, being, of course in conflict with the demand of
a high signal-to-noise ratio (SNR).  Regarding the
differentiation of objects with the same density \cite{flohr2006dualenergyct,johnson2007dualenergyct}, a recently developed
approach bases on an additional dataset from a second X-ray source
(typically placed in a 90 degree offset) which possess a different
spectrum (or energy) compared to the standard X-ray emitter in CT, the
\emph{dual-energy CT} device, see Figure~\ref{fig:dual_energy_ct} (a).

Objects of different material having the same response for one X-ray
source may admit a different response for the second source, making
a differentiation possible.
A relevant application of this principle is, for instance, the
quantification of contrast agent concentration. Adjusting a dual-energy
CT device such that normal tissue is insensitive for both X-ray sources
and sensitive for an administered contrast agent allows to infer its
concentration from the difference of the two acquired images, 
see Figure~\ref{fig:dual_energy_ct} (b). This
may be useful, for instance, for recognising perfusion deficits and thus aid 
the diagnosis of, e.g., pulmonary embolism in the lung \cite{lu2010dualenergyct}.
However, due to low doses for the dual-energy CT scan as 
well as a limited sensitivity with respect to the contrast agent, 
the difference image can be noisy and denoising is required in order
to obtain a meaningful interpretation, see 
Figure~\ref{fig:dual_energy_ct} (c).

In the following, a variational denoising approach is derived that
takes the structure of the problem into account.
First, let $A_0$ and
$B_0$ be the noisy CT-reconstructions associated with the respective X-ray 
source. %
Then, as the difference image contains the relevant information, we 
would like to impose regularity on the difference image $A - B$ as well 
as a ``base'' image $B$ instead of penalising each image separately.
As we may assume that the contrast agent concentration as well as the
density is piecewise smooth, is admits a low total generalised variation,
and hence, we choose this functional as a penalty, for instance of second
order. Furthermore, as the results should be usable for a
quantification, we have to account for that and therefore choose an
$L^1$-fidelity term as this is known to possess desirable 
contrast-preservation properties in conjunction with 
$\TV$ and $\TGV$ \cite{chan2005l1tv,bredies2013l1tgv2}. In total,
this leads to the variational problem
\[
\min_{(A,B) \in \LPspace{1}{\Omega}^2}
\ \norm[1]{A - A_0} + \norm[1]{B - B_0} + \TGV^2_\alpha(B)
+ \TGV_{\alpha'}^2(A - B)
\]
where $A_0, B_0 \in \LPspace{1}{\Omega}$ is given and
$\alpha = (\alpha_0,\alpha_1)$ as well as
$\alpha' = (\alpha_0',\alpha_1')$ are positive regularisation
parameters. Having the application in mind, the domain $\Omega$ is typically
a bounded three-dimensional domain with Lipschitz boundary.  Then,
existence of minimizers can be obtained using the tools from
Section~\ref{sec:tgv}, see Proposition~\ref{prop:well_posed_tgvk},
which nevertheless requires some straightforward adaptations. Due to
the lack of strict convexity, however, the solutions might be
non-unique.  Further, numerical algorithms can be developed along the
lines of Section~\ref{sec:numerical_algorithms}, for instance, a
primal-dual algorithm as outlined in
Subsection~\ref{subsec:general_saddle_point}. In case of
non-uniqueness, the minimisation procedure ``chooses'' one solution in
the sense that it converges to one element of the solution set, such
that variational model and optimisation algorithm cannot clearly be
separated and other results might be possible using different
optimisation algorithms.

Figure~\ref{fig:dual_energy_ct} (d) shows denoising results for the
primal-dual algorithm, where a clear improvement of image quality for
the difference image in comparison to Figure~\ref{fig:dual_energy_ct}
(c) can be observed. In particular, the total generalised variation
model is suitable to recover the smooth distribution of the contrast
agent within the lung, including the perfusion deficit region, as well
as the discontinuities induced by bones, vessels,  etc.  Further, one can
see that the dedicated modelling of the problem as a denoising problem
for a difference image based on two datasets turns out to be
beneficial. A denoising procedure that only depends on the noisy
difference image would not allow for such an improvement of image
quality.
 
\begin{figure}[t]
  \centering
  \begin{tabular}{r@{\ }c@{\ \ }c@{\ }l}
    (a) &\includegraphics[width=0.264\textwidth]{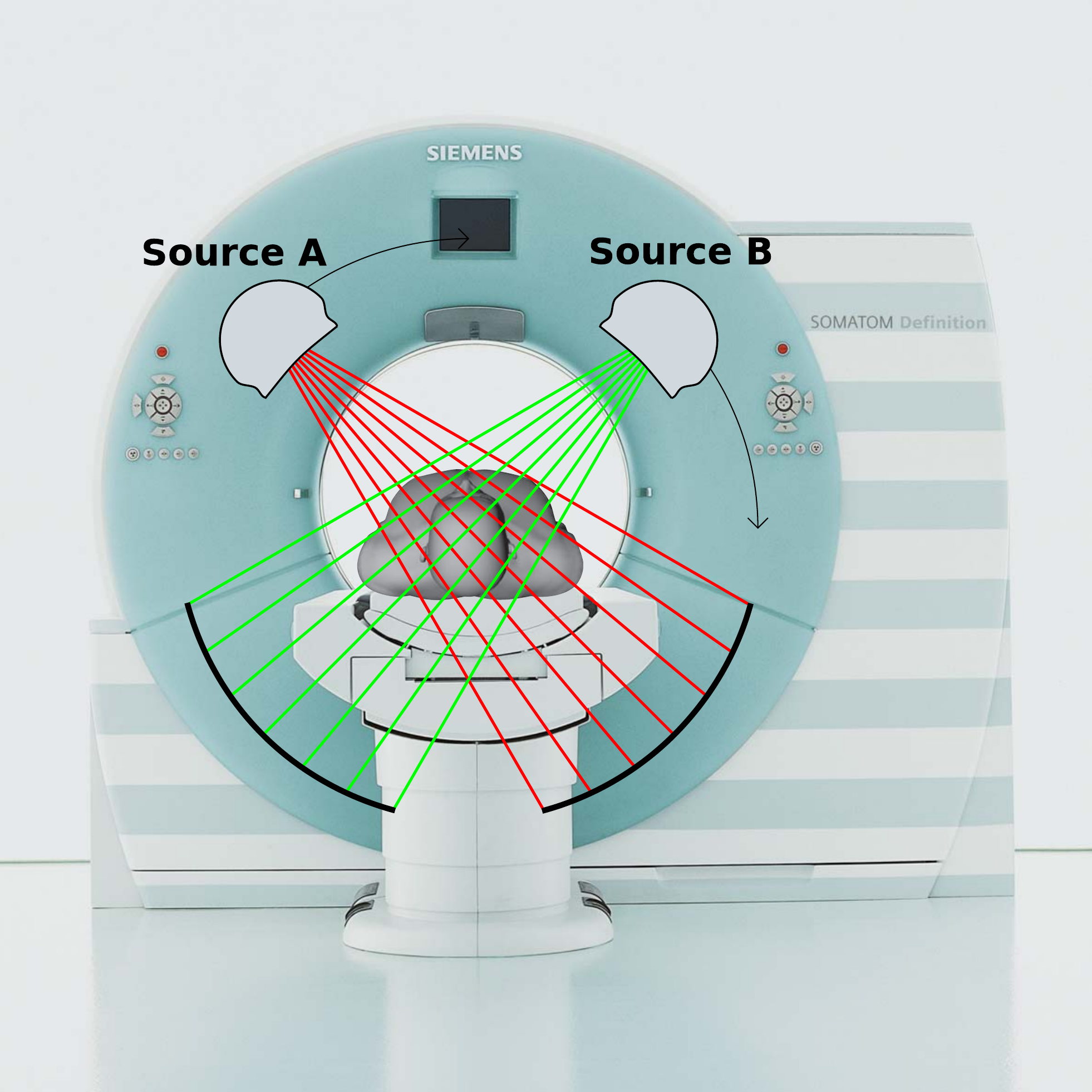}
    & \includegraphics[width=0.264\columnwidth]{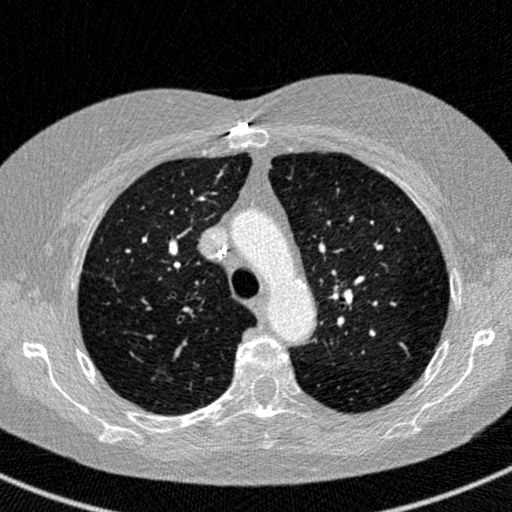}%
      \ \includegraphics[width=0.264\columnwidth]{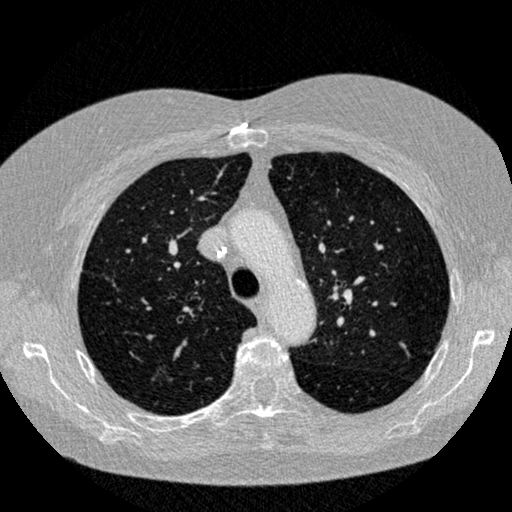}
    & (b)
  \end{tabular}

  \begin{tabular}{r@{\ }c@{\ \ }c@{\ }l}
    (c) &
    \includegraphics[width=0.4\columnwidth]{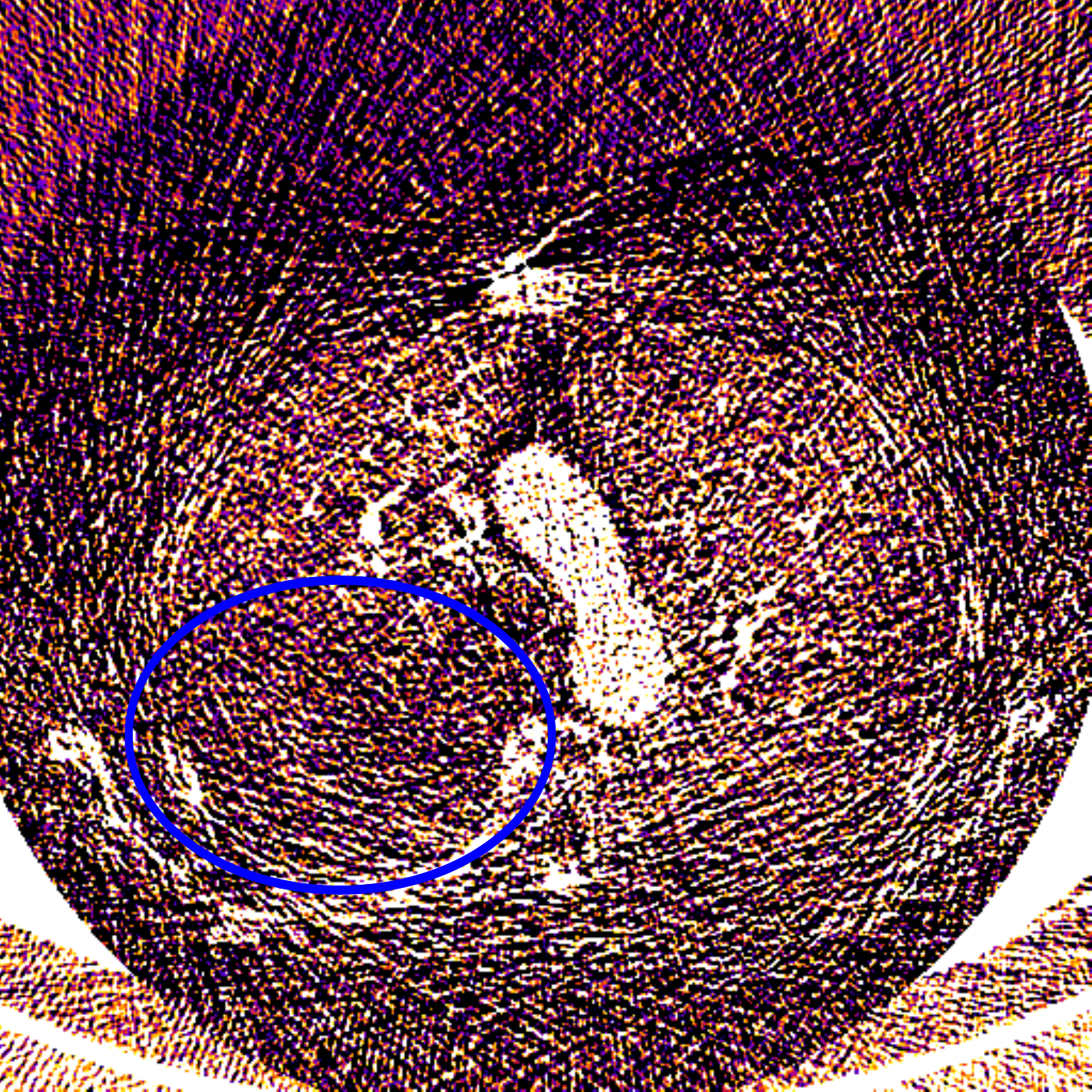}
    &
    \includegraphics[width=0.4\columnwidth]{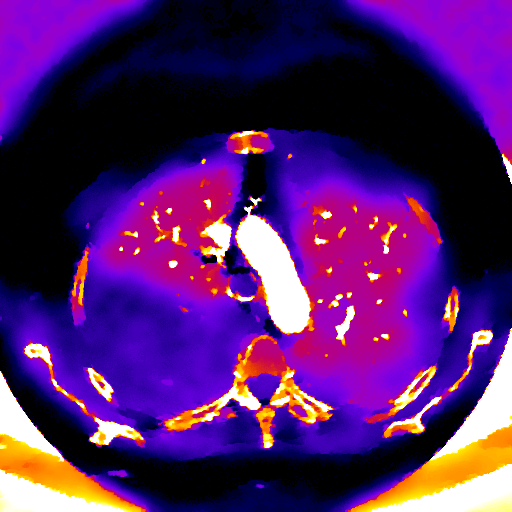}
    & (d)
  \end{tabular}
  \caption{
    Example of $L^1$-$\TGV^2$ denoising for dual energy
    computed tomography.
    (a) A schematic of a dual-energy CT device.
    (b) A pair of (reconstructed) dual-energy CT images.
    (c) A noisy
    difference image with marked perfusion deficit region.
    (d) Difference image of the TGV-denoised dataset (3D denoising,
    only one slice is shown).
  }
  \label{fig:dual_energy_ct}
\end{figure}

\subsection{Parallel reconstruction in 
  magnetic resonance imaging} \label{sec:parallel_mri}

Magnetic resonance imaging (MRI) is a tomographic imaging technique that is heavily used in medical imaging and beyond. It builds on an interplay of magnetic fields and radio-frequency pulses, which allows for localised excitation and, via induction of current in receiver coils, for a subsequent measurement of the proton density inside the object of interest \cite{Brown14_mri_book_mh}.
In the standard setting, MRI delivers qualitative images visualising the density of hydrogen protons, e.g., inside the human body. Its usefulness is in particular due to an excellent soft tissue contrast (as opposed to computed tomography) and a high spatial resolution of MR images. The trade-off, in particular for the latter, is the long measurement time, which comes with obvious drawbacks such as patient discomfort, limitations on patient throughput and imaging artefacts resulting from temporally inconsistent data due to patient motion.

Subsampled data acquisition and parallel imaging \cite{Sodickson1997_multicoil_mr,Pruessman1999_sense_mh,Griswold2002_grappa_mh} (combined with appropriate reconstruction methods) are nowadays standard techniques to accelerate MRI measurements. As the data in an MR experiment is acquired sequentially, a reduced number of measurements directly implies a reduced measurement time, however, in order to maintain the same image resolution, the resulting lack of data needs to be compensated for by other means. Parallel imaging achieves this to some extent by using not a single but multiple measurement coils and combining the corresponding measured signals for image reconstruction. On top of that, advanced mathematical reconstruction methods such as compressed sensing techniques \cite{block2007multiplecoils_mh,lustig2007sparsemri_mh}  or, more general, variational reconstruction have been shown to allow for a further, significant reduction of measurement time with a negligible loss of image quality.

In this context, transform-based regularisation techniques \cite{lustig2007sparsemri_mh,Kutyniok18shearlet_3d_mri} and derivative-based techniques \cite{block2007multiplecoils_mh,knoll2011tgv2mri_mh} are among the most popular approaches. More recently, also learning-based methods building on the structure of variational approaches have become very popular \cite{Hammernik2018learning_mri_mh,rueckert19_dmri_learning}.
Here, we focus on variational regularisation approaches with first- and higher-order derivatives. To this aim, we first deal with the forward model of parallel, static MR imaging.

In a standard setting, the MR measurement process can be modelled as measuring Fourier coefficients of the unknown image. In order to include measurements from multiple coils, spatially varying sensitivity profiles of these coils also need to be included in the forward model via a pointwise multiplication in image space. Subsampled data acquisition then corresponds to measuring the Fourier coefficients only on a certain measurement domain in Fourier space, which is defined by a subsampling pattern. 
Let $c_1,\ldots,c_k \in \mathcal{C}_0(\R^d,\CC)$ be functions modelling some fixed coil sensitivity profiles for $k$ receiver coils, let $\sigma$ be a positive, finite Radon measure on $\RR^d$ that defines the sampling pattern, and let $\Omega \subset \RR^d$ be a bounded Lipschitz domain that represents the image domain. Then, following the lines of \cite{Bredies2019optimal_tp_dynamic_mh}, we define, for $p \in [1,\infty]$, the MR measurement operator $K: \LPspace{p}{\Omega, \CC} \rightarrow L^2_\sigma(\R^d,\CC)^k$ as 
\begin{equation} \label{eq:parallel_mr_forward_operator}
(Ku)_i(\xi) = \widehat{c_iu}(\xi) = \frac{1}{(2\pi)^{d/2}}\int_{\RR^d} c_i(x) u(x) \e^{-\imag\xi \cdot x } \dd x, \quad \xi \in \RR^d,
\end{equation} 
where we %
extend $u$ by zero to $\RR^d$. Note that for each $u \in \LPspace{p}{\Omega,\CC}$,
$Ku$ as a function on $\RR^d$ is bounded and continuous which follows from %
\[ |(Ku)_i(\xi)| \leq \frac{\|c_i\|_\infty}{(2\pi)^{d/2}} \|u\|_{1} \leq C\frac{\|c_i\|_\infty}{(2\pi)^{d/2}} \|u\|_p. \]
Thus, since $\sigma$ is finite, $K$ indeed linearly and continuously maps into $\LPspace[\sigma]{2}{\RR^d,\CC}$.

While here, we assume the coil sensitivities to be known (such that the forward model is linear), obtaining them prior to image reconstruction is non-trivial and we refer to \cite{block2007multiplecoils_mh,schloegl15_coil_mh,Walsh00coil_recon_mh,Uecker14espirit_mh} for some existing methods. In the experiments discussed below, we followed the approach of \cite{block2007multiplecoils_mh} and employed, for each individual coil, a variational reconstruction with a quadratic regularisation on the derivative ($H^1$-regularisation) followed by the convolution with a smoothing kernel. For each coil, the sensitivity profile was then obtained by division with the \emph{sum-of-squares} image (which is, despite its name, the pointwise square root of the sum of the squared modulus of the individual coil images).

A regularised reconstruction from MR measurement data $f \in \LPspace[\sigma]{2}{\RR^d,\CC}^k$ can be obtained by solving 
\begin{equation} \label{eq:mri_energy_general}
  \min_{u \in \LPspace{p}{\Omega,\CC}} \ \frac12 \sum_{i=1}^k \|(Ku)_i - f_i\|_2%
  ^2 + \mR_\alpha(u),
\end{equation}
where we test with both $\mR_\alpha = \alpha\TV$ and $\mR_\alpha = \TGV_\alpha^2$, in which case we can choose $1 < p \leq d/(d-1)$. Note that well-posedness for \eqref{eq:mri_energy_general} follows from Theorems \ref{thm:tv_reg_existence}, \ref{thm:tv_reg_stability} in the case of TV and from Proposition \ref{prop:well_posed_tgvk} in the case of $\TGV_\alpha^2$ (where straightforward adaptions are necessary to include complex-valued functions).
Numerically, the optimisation problem can be solved using the algorithmic framework described in Section \ref{sec:numerical_algorithms}, where again, some modifications are necessary to deal with complex-valued images.

Figure \ref{fig:recons_good_sens} compares the results between these two choices of regularisation functionals and a conventional reconstruction based on direct Fourier inversion using non-uniform fast Fourier 
transform (NUFFT)~\cite{fessler2003nufft} for different subsampling factors and a dataset for which a fully sampled ground truth is available.
Undersampled 2D radial spin-echo measurements of the human brain were 
performed with a clinical 3T scanner
using a receive-only 12 channel head coil.
Sequence parameters 
were: $T_R = $ \unit[2500]{ms}, $T_E = $ \unit[50]{ms}, matrix size 
$256 \times 256$, slice thickness \unit[2]{mm}, in-plane resolution 
\unit[0.78]{mm}$\times$\unit[0.78]{mm}.
The sampling direction of every second spoke was reversed to 
reduce artefacts from off-resonances~\cite{Block08thesis},
and numerical experiments were performed using
96, 48 and 24 projections.
As 
$\frac{\pi}{2} N$ projections ($402$ for $N=256$ in our case) have to be 
acquired to obtain a fully sampled dataset in line with the Nyquist 
criterion~\cite{bernstein2004handbook_mri_mh}, this corresponds to 
undersampling factors of approximately 4, 8 and 16. The raw data was 
exported from the scanner, and image reconstruction was performed offline.

It can be seen that in particular at higher subsampling factors, variational, derivative-based reconstruction reduces artefacts stemming from limited Fourier measurements. Both TV and TGV perform well, while a closer look reveals that staircasing artefacts present with TV can be avoided using second-order TGV regularisation.

\begin{figure}
  \centering
  \begin{tabular}{r@{\ }c@{\ }c@{\ }c@{\ }c}
    & NUFFT & TV & TGV \\
    & \includegraphics[width=0.30\textwidth]{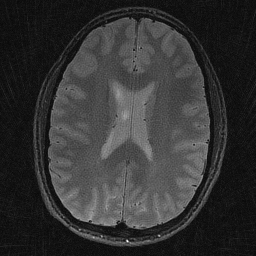}
            & \includegraphics[width=0.30\textwidth]{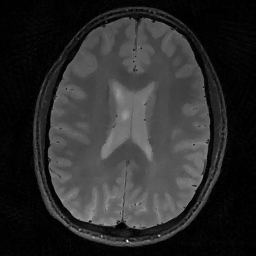}
                 & \includegraphics[width=0.30\textwidth]{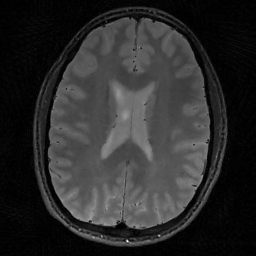}
    \\[-1.5\smallskipamount]
    96 & \includegraphics[trim=140 40 60 190,clip,width=0.30\textwidth]{pics_recon_96_nufft.png}
            & \includegraphics[trim=140 40 60 190,clip,width=0.30\textwidth]{pics_recon_96_tv.png}
              & \includegraphics[trim=140 40 60 190,clip,width=0.30\textwidth]{pics_recon_96_tgv.png}
      \\
    & \includegraphics[width=0.30\textwidth]{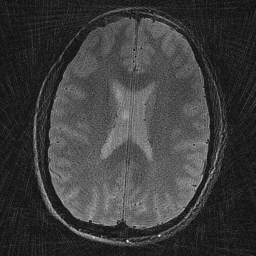}
            & \includegraphics[width=0.30\textwidth]{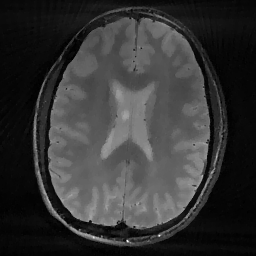}
                 & \includegraphics[width=0.30\textwidth]{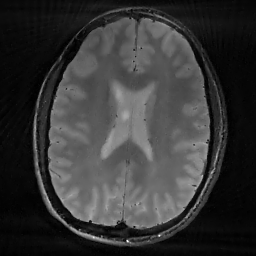}
    \\[-1.5\smallskipamount]
    48 & \includegraphics[trim=140 40 60 190,clip,width=0.30\textwidth]{pics_recon_48_nufft.png}
            & \includegraphics[trim=140 40 60 190,clip,width=0.30\textwidth]{pics_recon_48_tv.png}
              & \includegraphics[trim=140 40 60 190,clip,width=0.30\textwidth]{pics_recon_48_tgv.png}
    \\
     & \includegraphics[width=0.30\textwidth]{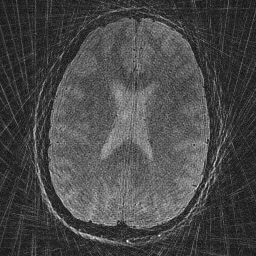}
            & \includegraphics[width=0.30\textwidth]{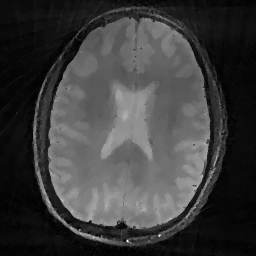}
                 & \includegraphics[width=0.30\textwidth]{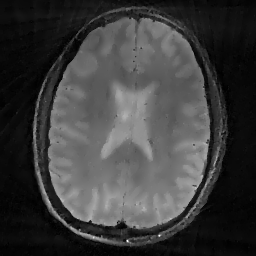}
    \\[-1.5\smallskipamount]
    24 & \includegraphics[trim=140 40 60 190,clip,width=0.30\textwidth]{pics_recon_24_nufft.png}
            & \includegraphics[trim=140 40 60 190,clip,width=0.30\textwidth]{pics_recon_24_tv.png}
              & \includegraphics[trim=140 40 60 190,clip,width=0.30\textwidth]{pics_recon_24_tgv.png}
  \end{tabular}
  
  \caption{Parallel undersampling MRI of the human brain (256$\times$256 pixels) from 96, 48 and 24 radial projections (top, middle, bottom row). Left column: Conventional NUFFT reconstruction. Middle column: Reconstruction with TV regularisation.
    Right column: Reconstruction with $\TGV^2$ regularisation. All reconstructed images
  are shown with a closeup of the lower right brain region.}
  \label{fig:recons_good_sens}
\end{figure}

\subsection{Diffusion tensor imaging}

Magnetic resonance imaging offers, apart from obtaining morphological
images as outlined in Subsection~\ref{sec:parallel_mri}, many other
possibilities to acquire information about the imaged objects.
Among these possibilities, diffusion tensor imaging (DTI) is one of the more
recent developments. It aims at measuring the diffusion directions of
water protons in each spatial point. The physical background is given
by the Bloch--Torrey equation which describes the spatio-temporal
evolution of magnetisation vector taking diffusion processes into
account \cite{torrey1956diffusion}.
Based on this, \emph{diffusion-weighted imaging} can be performed
which uses dedicated MR sequences depending on a direction vector
$q \in \RR^3$ in order to obtain displacement information associated
with that direction.

This leads to the following model. Assume that
$\rho_0: \RR^{3} \to \RR$ is the proton density to recover and
$\rho_t: \RR^3 \times \RR^3 \to \RR$ is the function such that for
each $x, x' \in \RR^3$, the value $\rho_t(x,x')$ represents the
probability of a proton moving from $x$ to $x'$ during the time
$t > 0$. By applying a diffusion-sensitive sequence (such as, e.g.,
a pulsed-gradient spin echo \cite{stejskal1965pgse}) associated with the vector $q \in \RR^3$, one is able to
measure in $k$-space as follows:
\[
  S(k, q) = \frac{1}{(2\pi)^3} \int_{\RR^3} \rho_0(x) \expE^{- \imag k
    \inprod x} \int_{\RR^3} \rho_t(x, x') \expE^{- \imag q \inprod (x'
    - x)} \dd{x'} \dd{x},
\]
where $k \in \RR^3$, see \cite{callaghan1991principlesmri,cory1990displacementnmr}.  Note that in practice, also the
coil sensitivity profile would influence the measurement as
outlined in Subsection~\ref{sec:parallel_mri}, however, for the sake
of simplicity, we neglect this aspect in the following.  Now, sampling
$q$ across $\RR^3$ would then, in principle, allow to recover the
6-dimensional function $u: (x,x') \mapsto \rho_0(x) \rho_t(x,x')$ by
inverse Fourier transform, since $S(k,q) = (\fourier u)(k - q, q)$ for
each $k,q \in \RR^3$. The 6D-space spanned by the coordinates $k$ and
$q$ is called \emph{$kq$-space}.  Assuming that for a fixed
$q \in \RR^3$, the $k$-space is fully sampled then allows to recover
$f_q: \RR^3 \to \CC$ by inverse Fourier transform, where
\[
  f_q(x) = \rho_0(x) \frac1{(2\pi)^{3/2}} \int_{\RR^3} \rho_t(x,
  x') \expE^{-\imag q \inprod (x' - x)} \dd{x'}.
\]
Obtaining and analysing $f_q$ for a coverage of the $q$-space is
called \emph{$q$-space imaging} which also is the basis of
orientation-based analysis such as \emph{$q$-ball imaging}
\cite{tuch2004qball}. However, as these techniques require too much
measurement time in practice, one usually makes assumptions about the
structure of $\rho_t$ in order to avoid the measurement of $f_q$ for
too many $q$.

Along this line, the probably simplest model is to assume that for
each $x$, $\rho_t(x, \placeholder)$ follows a Gaussian distribution
centred around $x$ with symmetric positive definite covariance matrix
$2t D(x) \in S^{3 \times 3}$, i.e.,
\[
  \rho_t(x,x') = \frac{1}{\sqrt{(4\pi t)^3 \abs{\det D(x)}}}
  \expE^{-\frac{1}{4t} (x' - x) \inprod D(x)^{-1} (x' - x)}.
\]
For fixed $x \in \RR^3$, this can be interpreted as the fundamental
solution of the diffusion equation
\[
  \frac{\partial \rho}{\partial t}
  - \divergence \bigl(D(x) \grad_{x'} \rho \bigr) = 0
  \quad
  \text{in} \quad  {]{0,\infty}[}
\]
shifted by $x$ and evaluated at time $t$. The model for $\rho_t$
thus indeed reflects linear diffusion
through a homogeneous medium. This makes sense as diffusion during the
measurement process is usually orders of magnitudes smaller than the
spatial scale one is interested in, but the homogeneity assumption
might also be violated in case when microstructures are present.
Nevertheless, with this assumption, in the above case of full
$k$-space sampling, one gets
\begin{equation}
  f_q(x) = \rho_0(x) \expE^{-t q \inprod D(x)q}.\label{eq:dti_model}
\end{equation}
Clearly, for $q = 0$, we have $f_0 = \rho_0$, and assuming $\rho_0 > 0$ almost
everywhere leads to the following pointwise equation that is linear in
$D$:
\[
  D \inprod (q \tensor q) = -\frac1t \log \Bigl( \frac{f_q}{f_0}
  \Bigr).
\]
Hence, one can recover by $D$ by measuring $f_0$ and
$f_{q_1}, \ldots f_{q_m}$ for $q_1,\ldots,q_m \in \RR^3$ suitably
chosen, i.e., such that in particular, $D$ is uniquely determinable
from $D \inprod (q_i \tensor q_i)$ for $i=1,\ldots,m$. In particular,
one requires that the symmetric tensors
$q_1 \tensor q_1,\ldots, q_m \tensor q_m$ span the space
$\Sym^2(\RR^3)$, meaning that $m$ must be at least $6$. Note that
according to~\eqref{eq:dti_model}, $f_q$ must be real and
non-negative, such that in practice, it suffices to reconstruct the
absolute value of $f_q$, for instance, by computing the sum-of-squares
image.

The inverse problem for $D$ can then be described as
follows. Restricting the considerations to a bounded domain
$\Omega \subset \RR^3$ and letting $p \in [1,\infty]$ such that
$g_1, \ldots, g_m \in \LPspace{p}{\Omega}$ where
$g_i = -\frac1t \log(f_{q_i}/f_0)$, we aim at solving
\begin{equation}
  D \inprod (q_i \tensor q_i) = g_i \quad
  \text{for} \quad i = 1,\ldots,m,\label{eq:dti_direct}
\end{equation}
for $D \in \LPspace{p}{\Omega, \Sym^2(\RR^3)}$. It is easy to see that
this problem is well-posed, but regularisation is still necessary in
practice as the measurements and the reconstruction are usually very
noisy. To do so, one can, in the case $p=2$, minimizer a Tikhonov
functional with quadratic discrepancy term and positive
semi-definiteness constraints:
\begin{equation}
  \min_{D \in \LPspace{2}{\Omega,\Sym^2(\RR^3)}} \ \frac12 \sum_{i =
    1}^m \norm[2]{D \inprod (q_i \tensor q_i) - g_i}^2 + \mR_\alpha(D) +
  \mI_{\sett{D \geq 0}}(D),\label{eq:dti_tikh}
\end{equation}
Here, $\sett{D \geq 0}$ denotes the set of symmetric tensor fields
that are positive semi-definite almost everywhere in $\Omega$.
Further, the regulariser $\mR_\alpha$ is preferably tailored to the
structure of symmetric tensor fields. Since the $D$ to recover can be
assumed to admit discontinuities, for instance, at tissue borders, the
total deformation $\TD$ for $\Sym^2(\RR^3)$-valued functions as
described in Subsection~\ref{subsec:higher_order_tv}, constitutes a
meaningful regulariser. In this context, higher-order regularisation
via $\TGV_\alpha^2$ for $\Sym^2(\RR^3)$-valued functions according to
Definition~\ref{def:tgv} can be beneficial as, e.g., principal
diffusion directions might smoothly vary within the same tissue type
\cite{valkonen2013tgvdti_mh,valkonen2013tgvdticompare_mh}.
In both cases, problem~\eqref{eq:dti_tikh} is well-posed, admits a
unique solution which can, once discretized, be found numerically by
the algorithms outlined in Section~\ref{sec:numerical_algorithms}.

Once the diffusion tensor field $D$ is obtained, one can use it to
visualise some of its properties. For instance, in the context of
medical imaging, the eigenvectors and eigenvalues of $D$ play a role
in interpreting DTI data. Based on the \emph{fractional anisotropy}
\cite{basser1996quantitativedti}, which is defined as
\[
  \mathrm{FA}_D = \sqrt{\frac{1}{2}} \frac{\sqrt{(\lambda_1 - \lambda_2)^2 + (\lambda_2 - \lambda_3)^2 + (\lambda_3 - \lambda_1)^2}}{\sqrt{\lambda_1^2 + \lambda_2^2 + \lambda_3^2}}
\]
where $\lambda_1,\lambda_2,\lambda_3: \Omega \to {[{0,\infty}[}$ are
the eigenvalues of $D$ as a function in $\Omega$, one is able to
identify isotropic regions ($\mathrm{FA}_D \approx 0$) as well as
regions where diffusion only takes place in one direction
($\mathrm{FA}_D \approx 1$). The latter case indicates the presence of
fibres whose orientation then corresponds to a principal eigenvalue of
$D$. Figure~\ref{fig:dti-recon} shows an example of DTI reconstruction
from noisy data using $\TD$ and $\TGV^2$ regularisation for symmetric
tensor fields, using principal-direction/fractional-anisotropy-based
visualisation. It turns out that also here, higher-order
regularisation is beneficial for image reconstruction
\cite{valkonen2013tgvdti_mh}. In particular, the faithful recovery of
piecewise smooth fibre orientation fields may improve advanced
visualisation techniques such as DTI-based tractography.
\begin{figure}
  \centering
  \begin{tabular}{c@{\ }c@{\ }c}
    \includegraphics[width=0.25\linewidth]{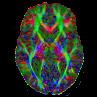} 
    &
    \includegraphics[width=0.25\linewidth]{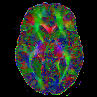} 
    &
    \includegraphics[width=0.25\linewidth]{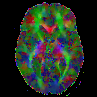} \\[-0.4ex]
    (a) %
    & (b) %
    & (c) %
    \\[0.6ex]
    \includegraphics[width=0.25\linewidth]{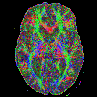} 
    &
    \includegraphics[width=0.25\linewidth]{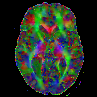} 
    &
    \includegraphics[width=0.25\linewidth]{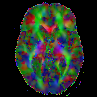} \\[-0.4ex]
    (d) %
    & (e) %
    & (f) %
  \end{tabular}
  \newlength{\colw}
  \newlength{\w}
  \setlength{\colw}{0.235\textwidth}
  \setlength{\w}{0.5\colw}
  \begin{tikzpicture}
    \scriptsize
    
    \pgftext[at=\pgfpoint{-0.25\w}{-0.25\w},left,bottom]{%
      \includegraphics[height=0.5\w]{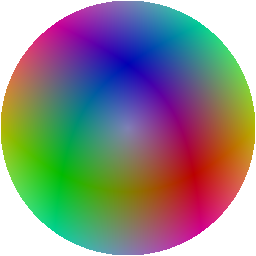}%
    }
    
    \draw[->,color=white] (0,0)--node[above] {$x$} (0.1414\w,-0.1\w);
    \draw[->,color=white] (0,0)--node[below] {$y$} (-0.1414\w,-0.1\w);
    \draw[->,color=white] (0,0)--node[left] {$z$} (0,0.1414\w);
    
    \def\mp#1#2{\begin{minipage}{#1}
        #2
        Colour-coding of a principal eigenvector %
        of $D$.
        The intensity is the function
        \mbox{$\min\{1,\mbox{FA}_D+1/3\}$}
        of the fractional anisotropy $\mbox{FA}_D$.
      \end{minipage}}

    \node[below] at (0,-0.4\w) {\mp{0.65\colw}{\raggedright}};
  \end{tikzpicture}
  \caption{
    Example of $\TD$- and $\TGV^2$-regularised diffusion tensor imaging reconstruction. (a) Ground truth, (d) direct reconstruction from noisy data, (b) direct inversion of~\eqref{eq:dti_direct} followed by $\TD$-denoising, (c) Tikhonov regularisation according to~\eqref{eq:dti_tikh} with $\TD$-regulariser, (e) direction inversion of~\eqref{eq:dti_direct} followed by $\TGV^2$-denoising, (f) Tikhonov regularisation according to~\eqref{eq:dti_tikh} with $\TGV^2$-regulariser. All images visualise one slice of the respective 3D tensor fields.
  }
  \label{fig:dti-recon}
\end{figure}

\subsection{Quantitative susceptibility mapping}

Magnetic resonance imaging also has capabilities for the
quantification of certain material properties. One of these properties
is the \emph{magnetic susceptibility} which quantifies the ability of
a material to magnetise in a magnetic field such as the static field
that is used in MRI. Recovering the susceptibility distribution of an
object is called \emph{quantitative susceptibility mapping} (QSM) \cite{shmueli2009qsm,deistung2017qsmoverview}.

Assuming that the static field is
aligned with the $z$-axis of a three-dimensional coordinate system,
this susceptibility can be related to the $z$-component of the static
field inhomogeneity $\delta B_0: \RR^3 \to \RR$ that is caused by the
material, which in turn induces a shift in resonance frequency and,
consequently, a phase shift in the complex image data. For instance,
if $\varphi_t: \RR^3 \to {[{-\pi,\pi}[}$ denotes the phase of an MR
image acquired with a gradient echo (GRE) sequence with echo time
$t > 0$, the relation between $\delta B_0$ and $\varphi_0$ can be
stated as:
\[
  \varphi_t = \varphi_0 + 2\pi \gamma t (\delta B_0) \quad
  \mod \quad 2\pi
\]
where $\varphi_0: \RR^3 \to {[{-\pi,\pi}[}$ is the time-independent
phase offset induced by a single measurement coil and $\gamma$ is the
gyromagnetic ratio. Using multiple coils, the phase offset $\varphi_0$
can be recovered \cite{robinson2017phase} such that we may assume, in
the following, that $\varphi_0 = 0$.  Pursuing a Lorentzian sphere
approach and assuming that in the near field, the magnetic dipoles
moments that cause the magnetisation are randomly distributed, one is able
to relate $\delta B_0$ with the magnetic susceptibility $\chi$
associated with the static field orientation approximately as follows
\cite{schweser2016qsm}:
\[
  \delta B_0 = B_0 (\chi \conv d),
\]
where $B_0$ is the static field strength and $d: \RR^3\without\sett{0} \to \RR$ is the
dipole kernel according to
\[
  d(x,y,z) =  \frac{1}{4\pi} \frac{2z^2 - x^2 - y^2}{(x^2 + y^2 + z^2)^{5/2}}.
\]
Assuming further that the susceptibility is isotropic, i.e., does not
depend on the orientation of the static field, it may be recovered
from the phase data $\varphi_t$. However, the phase image $\varphi_t$
is only well-defined where the magnitude of the MR image is non-zero
(or above a certain threshold). Denoting by $\Omega \subset \RR^3$ a
Lipschitz domain that describes where $\varphi_t$ is available,
recovering $\chi$ then amounts to solving
\[
  2\pi \gamma t B_0 (\chi \conv d) = \varphi_t \quad
  \mod \quad 2\pi \qquad \text{in} \qquad \Omega
\]
for $\chi: \RR^3 \to \RR$. This problem poses several
challenges. First, the values on the left-hand side are only available
up to integer multiples of $2\pi$, such that \emph{phase unwrapping}
becomes necessary. There is a plethora of methods available for doing
this for discrete data \cite{robinson2017phase}, however, in regions
of fast phase change, these methods might not correctly resolve the
ambiguities introduced by phase wrapping. Consequently, the unwrapped
phase image $\varphi_t^{\unwrap}$ might be inaccurate.

With unwrapped phase data being available, the next challenge is to
obtain $\chi$ on the whole space from a noisy version of
$\chi \conv d$ on $\Omega$, which is an underdetermined problem. The
usual approach for this challenge is to split $\chi$ into its
contributions on $\Omega$ and $\RR^3 \without \Omega$ and only aim at
reconstructing $\chi$ on $\Omega$. Now, as the dipole kernel $d$ is
harmonic on $\RR^3 \without \sett{0}$, the function
$\chi|_{\RR^3 \without \Omega} \conv d$ is harmonic in $\Omega$.
Thus, one can write
\begin{equation}
  \left\{
    \begin{array}{rll}
      2 \pi \gamma t B_0(\chi|_\Omega \conv d) + \psi
      & = \varphi_t^{\unwrap} & \text{in} \ \Omega,\\
      \laplace \psi &= 0 & \text{in} \ \Omega,\\
    \end{array}
  \right.\label{eq:qsm_unwrap}
\end{equation}
and solve this equation instead. For QSM, one often estimates $\psi$
first and subtracts this estimate from the data. This step is called
\emph{background field removal} in this context and there are many
different approaches for that \cite{schweser2017background}.
Depending on the accuracy of the background field estimate, this step
may introduce further errors into the data.  Nevertheless, the
procedure results in a foreground field estimate $\varphi_t^{\fg}$
for which only the deconvolution problem
\[
  2\pi \gamma t \chi|_\Omega \conv d = \varphi_t^{\fg} \quad \text{in}
  \quad \Omega
\]
has to be solved. As this problem is ill-posed, it needs to be
regularised. A Tikhonov regularisation approach can then be phrased
as follows:
\[
  \min_{\chi \in \LPspace{p}{\Omega}} \ \frac1p \int_\Omega \abs{\chi \conv d_t - \varphi_t^{\fg}}^p \dd{x} + \mR_\alpha(\chi)
\]
for $1 < p < \infty$, $d_t = 2\pi \gamma t d$ and $\mR_\alpha$ a
regularisation functional on $\LPspace{p}{\Omega}$. As the convolution
with $d_t$ results in a singular integral, the operation
$\chi \mapsto \chi \conv d_t$ is only continuous
$\LPspace{p}{\Omega} \to \LPspace{p}{\Omega}$ by the Calderón--Zygmund
inequality %
\cite{calderon1952singularintegral}, i.e., does not increase
regularity. In this context, first-order regularisers ($H^1$ and
$\TV$) have been used \cite{bilgic2013l1qsm}, but also $\TGV^2$ has
been employed \cite{chatnuntawech2017qsmtvtgv}. Note that in these
approaches, one usually considers $p=2$ which might cause problems
regarding well-posedness for $\TV$ and $\TGV^2$ as in 3D, coercivity
only holds in $\LPspace{3/2}{\Omega}$. This problem can for instance
be avoided by setting $d_t$ to zero in a small ball around zero; a
strategy that also seems consistent with the modelling of the forward
problem \cite{schweser2016qsm}. A numerical solution for $\chi$ then
finally gives a susceptibility map of the region of interest
$\Omega$. However, since the overall procedure involves three
sequential steps, each possibly introducing an error that propagates,
an integrative variational model that essentially only depends on the
original wrapped phase data $\varphi_t$ is desirable.

Such a model can indeed be derived. First, observe that
in case of sufficient regularity, the Laplacian of the unwrapped phase
can easily and directly be obtained from $\varphi_t$:
\begin{equation}
  \laplace \varphi_t^{\unwrap} = \Imag \bigl( (\laplace \expE^{\imag
    \varphi_t}) \expE^{-\imag \varphi_t} \bigr),\label{eq:laplace_phase_unwrap}
\end{equation}
such that $\varphi_t^{\unwrap}$ is known up to an additive harmonic
contribution. Indeed, this is the concept behind \emph{Laplacian phase
  unwrapping} \cite{schofield2003phaseunwrap}. Further, introducing
the wave-type operator 
\[
\wave = \frac13 \Bigl ( \frac{\partial^2}{\partial x^2}
+ \frac{\partial^2}{\partial y^2}\Bigr) 
- \frac23 \frac{\partial^2}{\partial z^2}
\]
and noticing that $d = \wave \Gamma$, where
$\Gamma: \RR^3 \without \sett{0} \to \RR$ is the fundamental solution
of the Laplace equation, i.e.,
$\Gamma(x,y,z) = \frac1{4\pi} (x^2 + y^2 + z^2)^{-1/2}$, it follows
from~\eqref{eq:qsm_unwrap} that
\begin{equation}
  2\pi \gamma t \wave \chi = \laplace \varphi_t^{\unwrap} \qquad
  \text{in} \qquad \Omega.\label{eq:qsm_pde}
\end{equation}
In particular, the harmonic contribution from the background field
vanishes and the data obtained in~\eqref{eq:laplace_phase_unwrap} can
directly be used on the right-hand side. Thus, only a wave-type
partial differential equation has to be solved in which there is no
longer the need for background field correction. The equation is, however,
missing boundary conditions such that one cannot expect to recover
$\chi$ in all circumstances. Under a-priori assumptions on $\chi$, the
lack of boundary conditions can be mitigated by the introduction of a
regularisation functional. Indeed, assuming that $\chi$ is piecewise
constant and of bounded variation, the minimisation of $\TV$ subject
to~\eqref{eq:qsm_pde} recovers $\chi$ up to an additive constant
\cite{bredies2018pdetv}.

Since the data $\varphi_t$ might be noisy, the variational model
should also account for errors on the right-hand side
of~\eqref{eq:qsm_pde} and introduce a suitable discrepancy
term. Assuming Gaussian noise for $\varphi_t$, the right-hand side
$\laplace \varphi_t^{\unwrap}$ is perturbed by noise in
$H^{-2}(\Omega)$ which suggests a $H^{-2}$-discrepancy term
for~\eqref{eq:qsm_pde}. The latter can be realised by requiring
$\laplace \psi = 2\pi \gamma t \wave \chi - \laplace
\varphi_t^{\unwrap}$ for a $\psi \in \LPspace{2}{\Omega}$ and
measuring the $L^2$-norm of $\psi$. In total, this leads to
\begin{equation}
  \label{eq:qsm_all_in_one}
  \left\{
    \begin{array}{c}
      \displaystyle
      \min_{\chi \in \LPspace{p}{\Omega}, \psi \in \LPspace{2}{\Omega}} \ \frac12 \int_{\Omega} |\psi|^2 \ \mathrm{d}x
      + \mR_\alpha(\chi) \\[\medskipamount]
      \text{subject to} \quad \laplace \psi = 2\pi \gamma t\wave \chi - \laplace \varphi_t^{\unwrap} \quad
      \text{in} \quad \Omega,
    \end{array}\right.
\end{equation}
where $1 \leq p < \infty$, the constraint has to be understood in the
distributional sense, and $\mR_\alpha$ is a regularisation functional
on $\LPspace{p}{\Omega}$ realising a-priori assumptions on $\chi$ that
compensate for the lack of boundary conditions. In
\cite{langkammer2015qsmtgv_mh}, the choice
$\mR_\alpha = \TGV_\alpha^2$ was proposed and studied. Choosing
$p = \frac32$, the functional in~\eqref{eq:qsm_all_in_one} is coercive
up to finite dimensions and the linear PDE-constraint is closed, so one can
easily see that an optimal solution always exists and yields finite
values once there is a pair
$(\chi,\psi) \in \LPspace{2}{\Omega} \times \BV(\Omega)$ that
satisfies the constraints.

A numerical algorithm for the discrete solution
of~\eqref{eq:qsm_all_in_one} with $\TGV^2$-regularisation can easily
be derived by employing the tools of
Section~\ref{sec:numerical_algorithms} and, e.g., finite-difference
discretizations of the operator $\wave$. In
\cite{langkammer2015qsmtgv_mh}, a primal-dual algorithm has been
implemented and tested for synthetic as well as real-life data. It
turns out that the integrative approach~\eqref{eq:qsm_all_in_one} is
very robust to noise and can in particular be employed for fast 3D
MRI-acquisition schemes that may yield low signal-to noise ratio such
as 3D echo-planar imaging (EPI) \cite{poser2010epi3d}.
It has been tested on raw phase data, see
Figure~\ref{fig:integrative_qsm}, where the benefits of higher-order
regularisation also become apparent. Due to the short scan time that
is possible by this approach as well as its robustness, it might
additionally contribute to advance QSM further towards clinical
applications.

\begin{figure}[t]
  \centering
  \begin{tabular}{c@{\ }c@{\ }c}
    \includegraphics[width=0.31\columnwidth]{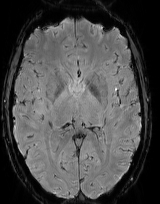} &
    \includegraphics[width=0.31\columnwidth]{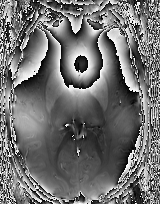} &
    \includegraphics[width=0.31\columnwidth]{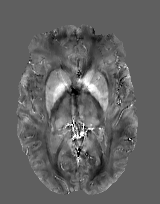}  \\
    (a) & (b) & (c)
  \end{tabular}%
  \caption{Example for integrative
    TGV-regularised susceptibility reconstruction from wrapped
    phase data. (a) Magnitude image (for brain mask extraction).
    (b) Input phase image $\varphi_t$ 
    (single gradient echo, echo time: 27ms, field strength: 3T). 
    (c) Result of the integrative approach~\eqref{eq:qsm_all_in_one} (scale 
    from -0.15 to 0.25ppm).
    All images visualise one slice of the respective 3D image.}
  \label{fig:integrative_qsm}
\end{figure}

\subsection{Dynamic MRI reconstruction} \label{sec:dynamic_mri}
As mentioned in Subsection \ref{sec:parallel_mri}, data acquisition in MR imaging is relatively slow. 
This can be compensated by subsampling and variational reconstruction techniques such that in controlled environments, as for instance with brain or knee imaging, a good reconstruction quality can be obtained. The situation is more difficult when imaging parts of the body that are affected, for instance, by breathing motion, or when one aims to image certain dynamics such as with dynamic contrast enhanced MRI or heart imaging.
Regarding unwanted motion, there exists a large amount of literature on motion correction techniques (see \cite{Zaitsev2015motion_mri_review_mh} for a review) which can be separated into prospective and retrospective motion correction and which often rely on additional measurements to estimate and correct for unwanted motion. In contrast to that, dynamic MRI aims to capture certain dynamic processes such as heartbeats, the flow of blood or contrast agent. 
Here, the approach is often to acquire highly subsampled data, possibly combined with gating techniques, such that motion consistency can be assumed for each single frame of a time series of measurements.
The severe lack of data for each frame can then only be mitigated by exploiting temporal correspondences between different measurement times. One way to achieve this is via Tikhonov regularisation of the dynamic inverse problem, which, for instance, amounts to
\begin{equation} \label{eq:dynamic_mri_general}
 \min _{u \in \LPspace{p}{{]{0,T}[} \times \Omega}} \ \frac12 \sum_{i=1}^k  \int_0^T \|(K_tu_t)_i - (f_t)_i\|_2^2  \dd{t} + \mR_\alpha(u),
\end{equation}
where  $p \in [1,\infty]$, $T > 0$, %
$\Omega \subset \RR^d$ is the image domain, and for almost every $t \in {]{0,T}[}$, $\sigma_t$ is a positive, finite Radon measure on $\RR^d$ that represents the possibly time-dependent Fourier sampling pattern at time $t$, such that $K_t: \LPspace{p}{\Omega} \to \LPspace[\sigma_t]{2}{\RR^d, \CC}^k$ according to~\eqref{eq:parallel_mr_forward_operator} models the MR forward operator, and $f_t \in \LPspace[\sigma_t]{2}{\RR^d, \CC}^k$ represents the associated measurement data. Further, $u_t$ denotes the evaluation of $u$ at time $t$ which is almost everywhere a function in $\LPspace{p}{\Omega}$.
As usual, $\mR_\alpha$ corresponds to the regularisation functional that can be used to enforce additional regularity constraints. Note that in order to obtain a well-defined formulation for the time-dependent integral, the $\sigma_t$ have to vary in a measurable way with $t$ such that the associated $K_t$ and the data $f_t$ are also measurable in a suitable sense.
We refer to \cite{Bredies2019optimal_tp_dynamic_mh} for %
details on the necessary notion and spaces,
and an analysis of the above problem 
in the context of optimal-transport-based regularisation.

In the context of clinical MR applications, temporal Fourier transforms \cite{Jung09_kt_focuss_mh}, temporal derivatives \cite{Adluru07_dmri_tv_mh} or combinations thereof \cite{Feng13_kt_sparse_sense_mh} have, for instance, been proposed for temporal regularisation. More recently, methods that build on motion-dependent additive decomposition of the dynamic image data into different components have been successful. The work \cite{Otazo14_l_plus_s_mri} achieves this in a discrete setting via low-rank and sparse decomposition which, for the low-rank component, penalises the singular values of the matrix containing the vectorised frames in each column. In contrast to that, by employing the $\ICTGV$ functional presented in Subsection \ref{sec:tgv_extensions}, the work \cite{holler17ictgvmri_mh} achieves an additive decomposition and adaptive regularisation of the dynamic data via penalising differently weighted spatio-temporal derivatives.
There, problem \eqref{eq:dynamic_mri_general} is solved for the choice 
\[\mR_\alpha(u) = \ICTGV_\alpha^2(u) = \inf_{w \in \BV({]{0,T}[} \times \Omega)} \ \TGV^2_{\beta_1} (u-w) + \TGV_{\beta_2}^2 (w),\]
where the $\TGV_{\beta_i}^2$ are second-order spatio-temporal TGV functionals that employ different weightings of the components of the spatio-temporal derivatives in such a way that for $\TGV_{\beta_1}^2$, changes in time are penalised stronger than changes in space while $\TGV_{\beta_2}^2$ acts the other way around. The numerical solution of \eqref{eq:dynamic_mri_general} can again be obtained within the algorithmic framework presented in Section \ref{sec:numerical_algorithms} and we refer to \cite{ictgv_mri_code_mh} for a GPU-accelerated open source implementation and demo scripts.

Figure \ref{fig:ictgv_mri_cine} shows the result of ICTGV-regularised reconstruction of a multi-coil cardiac cine dataset (subsampled with factor $\approx{}11$) and compares to the straightforward sum-of-squares (SOS) reconstruction. Since the SOS reconstruction does not account for temporal correspondences, it is not able to obtain a useful result for a high subsampling factor while the ICTGV-based reconstruction resolves fine details as well as motion dynamics rather well. 
Figure \ref{fig:ictgv_mri_cine_ls} shows a comparison to the low-rank and sparse (L+S) method of \cite{Otazo14_l_plus_s_mri} for a second cine dataset with a different view. Here, the parameters for the L+S method where optimised for each experiment separately using the (in practice unknown) ground truth while for ICTGV, the parameters were trained a-priori on a different dataset and fixed afterwards. It can be seen in Figure \ref{fig:ictgv_mri_cine_ls} that both methods perform rather well up the high subsampling factors, where the ICTGV-based is able to recover fine details (highlighted by arrows) that are lost with L+S reconstruction.

\begin{figure}
  \centering
  
\scalebox{0.60}{
\begin{tikzpicture}
         \pgfmathsetmacro{\off}{-1.0}
         \pgfmathsetmacro{\offv}{0}
		 
         \node at (0,4) {\LARGE sum of squares};
         \draw (0,0) node{ \includegraphics[scale=1.5]{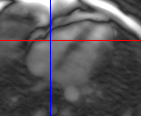} };
         \draw (0,-4.5) node{\includegraphics[scale=1.5]{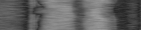} };
         \draw[red,very thick,<-] (-4,-5.5) -- (-4,-3.5) -- (3.7,-3.5);
         \node[rotate=90] at (-4.5,-4.5) {\Large time};
         \draw (5,0) node{\includegraphics[scale=1.5]{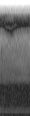} };
         \draw[blue,very thick,->] (4,-3.1) -- (4,3.4) -- (6,3.4);
         \node at (5,4) {\Large time};
         
         \pgfmathsetmacro{\off}{10.8}
         \node at (0+\off,4) {\LARGE ICTGV regularized};
         \draw (0+\off,0) node{ \includegraphics[scale=1.5]{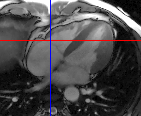} };
         \draw (0+\off,-4.5) node{\includegraphics[scale=1.5]{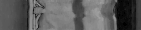} };
         \draw[red,very thick,<-] (-4+\off,-5.5) -- (-4+\off,-3.5) -- (3.7+\off,-3.5);
         \node[rotate=90] at (-4.5+\off,-4.5) {\Large time};
         \draw (5+\off,0) node{\includegraphics[scale=1.5]{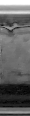} };
         \draw[blue,very thick,->] (4+\off,-3.1) -- (4+\off,3.4) -- (6+\off,3.4);
         \node at (5+\off,4) {\Large time};
         
         \pgfmathsetmacro{\offb}{10.4}
         \pgfmathsetmacro{\voff}{-0.5}
       \end{tikzpicture}
       }  
       \caption{\label{fig:ictgv_mri_cine} Comparison of straightforward sum-of-squares (left) and ICTGV-regularised (right) reconstruction for a dynamic MR dataset with subsampling factor $\approx 11$.
       Each image shows one frame of the reconstructed image sequence along with the temporal evolution of one horizontal and vertical cross section indicated by the red and blue line, respectively.}
\end{figure}

\begin{figure}
\includegraphics[width=\textwidth]{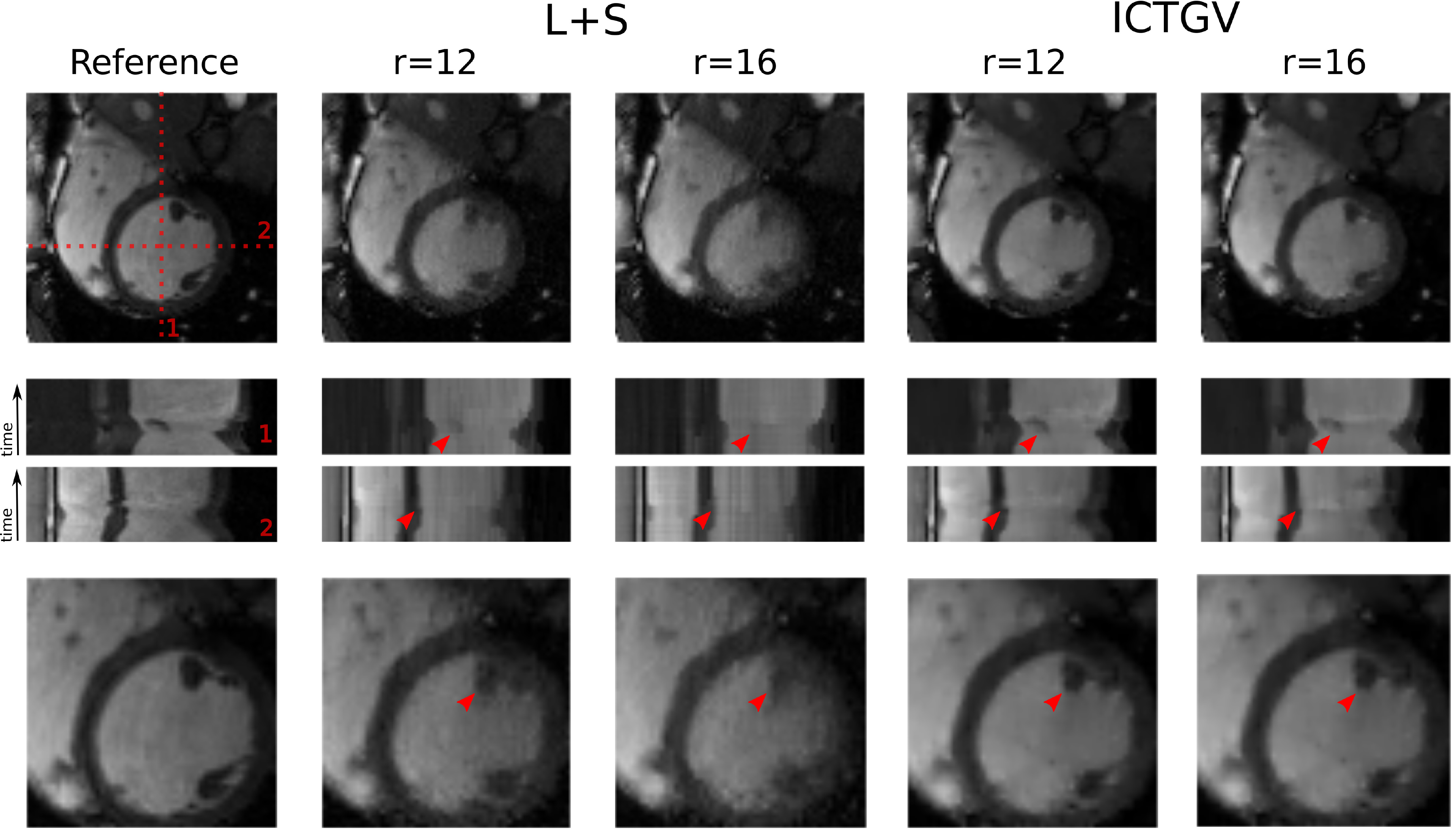}
\caption{\label{fig:ictgv_mri_cine_ls} Comparison of L+S- and ICTGV-regularised dynamic MR reconstruction. The first column shows, from top to bottom, a frame of the ground truth image sequence along with the temporal evolution of a vertical and horizontal cross section (indicated by red dotted lines) as well as a close up. Columns 2--3 depict the reconstruction results for L+S regularisation, while columns 4--5 depict the corresponding results for ICTGV regularisation (subsampling factors $r=12$ and $r=16$). The red arrows indicate details that are lost by L+S regularisation but maintained with ICTGV regularisation. 
  Figure taken from \cite{holler17ictgvmri_mh}. Reprinted by permission from John Wiley and Sons.}
\end{figure}

\subsection{Joint MR-PET reconstruction} \label{sec:mri_pet_application}
We have seen in Subsections \ref{sec:parallel_mri} and \ref{sec:dynamic_mri} that image reconstruction from parallel, subsampled MRI data is non-trivial and can greatly be improved with variational regularisation. Beyond MRI and CT, a further medical imaging modality of high clinical relevance is positron emission tomography (PET). As opposed to standard MR imaging, PET imaging is quantitative and builds on reconstructing the spatial distribution of a radioactive tracer that is injected into the patient prior to the measurement. The forward model in PET imaging is  the X-ray transform (often combined with resolution modelling) and, since measurements correspond to photon counts, the noise in PET imaging is typically assumed to be Poisson distributed. Reconstructing images from PET measurement data is a non-trivial inverse problem, where difficulties arise, for instance, from high Poisson noise due to limited data acquisition time, dosage restrictions for the radioactive tracer, 
as well as from limited measurement resolution due to finite detector size and photon acollinearity. As a result, variational reconstruction methods and in particular TV regularisation are employed also in PET imaging to improve reconstruction (see, for instance, \cite{Jonsson1998tv_pet_mh,sawatzky2013_em_tv}). 

In a clinical workflow, often both MR and PET images are acquired, which provides two complementary sources of information for diagnoses.
This can also be exploited for reconstruction and in particular, for MR-prior-based PET reconstruction methods, which incorporate structural information from the MR image for PET reconstruction, are now well established in theory \cite{holler18structural_mh} and in practice \cite{Vunckx2012_pet_prior_eval,Ehrhardt_PET_prior_mh,holler17pls_mh}. While those methods regard an a-priori reconstructed MR image as fixed, anatomical prior for PET, also joint, synergistic reconstruction is possible and recently became more popular due to the availability of joint MR-PET scanners \cite{holler16mrpet_mh,Ehrhardt_PET_MRI_mh}. An advantage of the latter is that neither of the two images is fixed a-priori and, in principle, a mutual benefit for both modalities due to joint reconstruction is possible. To this aim, the regularisation term needs to incorporate an appropriate coupling of the two modalities, and here we discuss the coupled TGV-based approach of \cite{holler16mrpet_mh} that allows to achieve this.

At first, we consider the forward model for PET imaging, which consists of a convolution followed by an attenuated the X-ray transform and additive corrections. With $\Omega \subset \R^d$ the image domain such that $ \Omega \subset B_R(0)$ for some $R>0$, the X-ray transform can be defined as linear operator $P:L^p(\Omega) \rightarrow L^1_\mu(\Sigma)$, where $p \in [1,\infty]$, $\Sigma \subset  \set{(\vartheta,x)}{\vartheta \in \mathcal{S}^{d-1}, \, x \in \sett{\vartheta}^\perp,\, \|x\|< R}$, $\Sigma$  is a non-empty and open subset of the tangent bundle to $\mathcal{S}^{d-1}$, via
 \[ Pu(\vartheta,x)= \int_\R u(x+t\vartheta) \dd t .\]
Note that here, $u$ is extended by zero outside $\Omega$ and the measure $\mu$ on $\Sigma$ is induced by the functional
\[ \varphi \mapsto \int_{\mS} \int_{\sett{\vartheta}^\perp} \varphi(\vartheta,x) \dd{\hausdorff{d-1}(x)} \dd{\hausdorff{d-1}(\vartheta)}, \]
see \cite[Section 3.4]{Markoe06_mh} for details. We further denote by $k\in L^1(B_r(0))$ a convolution kernel with width $r>0$ that models physical limitations in PET imaging, for instance, due to finite detector size and photon acollinearity, see \cite{rahmim2013resolution_pet_mh}. The PET forward model is then defined as $\kpet :L^p(\Omega) \rightarrow L^1_\mu(\Sigma)$
\[u \mapsto   \kpet u + c, \quad \kpet u(\vartheta,x) = a(\vartheta,x) P(u\ast k)(\vartheta,x) ,
\]
where $u \ast k$ denotes the convolution of $u$ and $k$ (using again zero extension), $a \in L^\infty_\mu(\Sigma)$ with $a >0$ a.e.~includes a correction for attenuation and detector sensitivities and $c \in L^1_\mu(\Sigma)$ with $c \geq 0$ a.e.~accounts for additive errors due to random and scattered events. Assuming the noise in PET to be Poisson distributed, we use the Kullback--Leibler divergence as defined in \eqref{eq:kl_definition} for data fidelity. 

For the MR forward model, we use again the parallel MR operator $\kmr$ according to~\eqref{eq:parallel_mr_forward_operator} in Subsection \ref{sec:parallel_mri} that includes coil sensitivity profiles and a measurement trajectory defined via a finite, positive Radon measure $\sigma$ on $\RR^d$.

For regularisation, we use an extension of second-order TGV to multi-channel data as discussed in Subsection \ref{sec:tgv_extensions}, which, as in parallel MR reconstruction, is adapted to complex-valued data.
 That is, we define $\TGV_\alpha^2$ for $u = (u_1,u_2) \in \LPlocspace{1}{\Omega,\CC}$ similar to \eqref{eq:tgv_color_def}, where we use the spectral norm as pointwise dual norm on $\Sym^1(\CC^d)^2$ and the Frobenius norm on $\Sym^2(\CC^d)^2$. In the primal version of TGV analogous to~\eqref{eq:tgv2_primal}, this results in particular in a pointwise nuclear-norm penalisation of the first-order derivative information $\nabla u -w$ and is motivated by the goal of enforcing pointwise rank one of $\nabla u - w$ in a discretized setting and hence an alignment of level sets.
 
 With these building blocks, a variational model for coupled MR-PET reconstruction can be written as
 \begin{equation*} %
   \fl
   \min_ {u = (u_1,u_2) \in L^ p(\Omega,\CC)^2} \ \lambda_1\sum_{i=1}^ k \| (\kmr u_1)_i - (f_1)_i\|_{2,\sigma} +  \lambda _2 \KL(\kpet u_2 + c, f_2)
   +  \TGV_\alpha^ 2(u)
\end{equation*}
where $(f_1)_1, \ldots, (f_1)_k \in \LPspace[\sigma]{2}{\RR^d,\CC}$ and $f_2 \in \LPspace[\mu]{1}{\Sigma}$, $f_2 \geq 0$ almost everywhere is the given measurement data for MR and PET, respectively, and $\lambda_1,\lambda_2>0$ are the weights for the different data terms. Well-posedness for this model follows again by a straightforward adaptation of Proposition \ref{prop:well_posed_tgvk} to the multi-channel setting. Regarding the regularisation parameters $\lambda_1,\lambda_2$, this is a particular case of coupled multi-discrepancy regularisation and we refer to \cite{holler18coupled_mh} for results on convergence and parameter choice for vanishing noise.  A numerical solution can again be obtained with the techniques described in Section \ref{sec:numerical_algorithms}, where the discrete forward operator $K_h$ is vectorised as $K_h = \diag(\kmrh,\kpeth)$ with discretized operators $\kmrh$ and $\kpeth$, and the discrepancy $S_{f_h}$ is the component-wise sum of the two discrepancies above in a discrete version.

Numerical results for 3D in-vivo data using this method, together with a comparison to a standard method, can be found in Figure \ref{fig:mr_pet_example} (see also \cite{holler16mrpet_mh} for a more detailed evaluation). As can be seen there, the coupling of the two modalities yield improved reconstruction results in particular for the PET channels, making sharp features and details more visible.  

\begin{figure}
\begin{tabular}{c@{\ }c@{\ }c}
    \includegraphics[width=0.31\columnwidth]{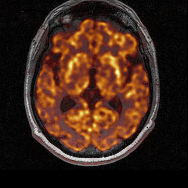} &
    \includegraphics[width=0.31\columnwidth]{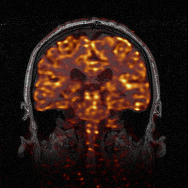} &
    \includegraphics[width=0.31\columnwidth]{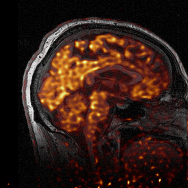} \\
    
    \includegraphics[width=0.31\columnwidth]{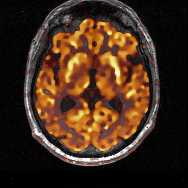} &
    \includegraphics[width=0.31\columnwidth]{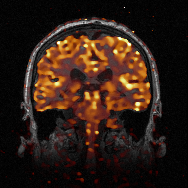} &
    \includegraphics[width=0.31\columnwidth]{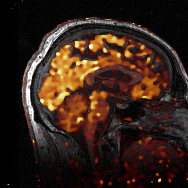} 

  \end{tabular}%
\caption{\label{fig:mr_pet_example} Slices of fused, reconstructed 3D in-vivo MR and PET images from an MPRAGE contrast with 2-fold subsampling and a 10min fluorodeoxyglucose (FDG) PET head scan, respectively (left to right: transversal view, coronal view, sagittal view). Top row: Standard methods (CG sense \cite{Pruessmann2001_cgsense} for MR, expectation maximisation \cite{Shepp1982_em_pet_mh} for PET). Bottom row: Nuclear-norm-TGV-based variational reconstruction.}
\end{figure}

\subsection{Radon inversion for multi-channel electron microscopy}

Similar to joint MR-PET reconstruction, coupled higher-order regularisation can also be used in multi-channel electron microscopy imaging for improving reconstruction quality.
As a particular technique in electron microscopy, scanning transmission electron microscopy (STEM) allows for a three-dimensional imaging of nanomaterials down to atomic resolution and is heavily used in material sciences and nanotechnology, e.g. for quality control and troubleshooting in the production of microchips. Beyond providing pure density images, spectroscopy methods in STEM imaging also allow to image the 3D elemental and chemical make-up of a sample. 

Standard techniques for density and spectroscopy imaging in STEM are high-angle annular dark-field (HAADF) imaging and energy-dispersive X-ray spectroscopy (EDXS), respectively. For both imaging methods, measurement data can be acquired simultaneously while raster-scanning the material sample with a focused electron beam. HAADF imaging records the number of electrons scattered
to a specific annular range while EDXS allows to record characteristic X-rays for specific elements which are emitted when electrons change their shell position. 
For each position of the electron beam, both HAADF and EDXS measurements correspond to measuring (approximately) the density of a weighted sum of all elements and single element, respectively, integrated along the line of the electron beam that intersects the sample. Scanning over the entire sample orthogonal to an imaging plane, the acquired signals hence correspond to a slice-wise Radon transform of an overall density image (HAADF) and different elemental maps (EDXS). 

Due to physical restrictions in the imaging system, the number of available projections (i.e., measurement angles) as well as the signal-to-noise ratio, in particular for EDXS, is limited and volumetric images obtained with standard image reconstruction methods, such as the simultaneous iterative reconstruction technique (SIRT) \cite{GILBERT1972105}, suffer from artefacts and noise. 
As a result, regularised reconstruction is increasingly used also for electron tomography, with total-variation-based methods being a popular example \cite{goris_electron_2012_mh}. While TV regularisation works well for piecewise-constant density distributions with sharp interfaces, the presence of gradual changes between different sample regions, e.g., due to diffusion at interfaces, motivates the usage of higher-order regularisation approaches for electron tomography \cite{holler18tgvtem_mh}, also in a single-channel setting \cite{Al-aleef_electron_2015_mh}.
In a multi-channel setting as discussed here, an additional coupling of different measurement channels is very beneficial in particular for the reconstruction of elemental maps and has been carried out with first-order TV regularisation in \cite{ZHONG201834,ZHONG2018133} and second-order TGV regularisation in \cite{holler18tgvtem_mh}. In the following, we discuss the TGV-based approach of \cite{holler18tgvtem_mh} in more detail and provide experimental results.

With $\Sigma  = \Sigma_1 \times \Sigma_2$,
$\Sigma_1 \subset \mS^1 \times {]{-R,R}[}$  %
non-empty, open, $\Sigma_2 \subset {]{-R,R}[}$ non-empty, open,
for some $R>0$ and $\mu = (\mathcal{H}^1\restricted \mS^1 \times \mathcal{L}^1) \times \mathcal{L}^1$, we define, for $\Omega = B_R(0) \times {]{-R,R}[}$ where $B_R(0) \subset \RR^2$ and $p \in [1,\infty]$, the forward operator for electron tomography as
$\ktem:L^p(\Omega) \rightarrow L^1_\mu(\Sigma)$ via
\[ \ktem u (\vartheta,s,z) = \int_{\smallset{x \in B_R(0)}{x \inprod \vartheta = s}} u(x,z) \dd{\hausdorff{1}(x)} \]
which corresponds to a slice-wise 2D Radon transform. By continuity of the Radon transform from $L^1(B_R(0))$ to $L^1(\mS^1 \times {]{-R,R}[})$ (see \cite[Section 3.4]{Markoe06_mh}) there is a $C>0$  such that, for every $u \in L^p(B_R(0))$ and  almost every $z \in {]{-R,R}[}$,
\[ \int_{\Sigma_1 } | \ktem u (\vartheta,s,z)|  \dd{(\mathcal{H}^1\times  \mathcal{L}^1)(\vartheta,s)} \leq C \int_{B_R(0)} |u(x,z)| \dd{x}.
\]
Integrating over $\Sigma_2$, it follows that $\ktem$ is bounded from $L^p(\Omega)$  to $ L^1_\mu(\Sigma)$.
Now assume $f_1,\ldots,f_n$ to be given, multi-channel measurement data and the forward model for the $i$-th %
measurement channel to be described with $(\ktem)_i \in \mathcal{L}(L^p(\Omega),L^1_\mu(\Sigma))$ for $i=1,\ldots,n$. In the example considered below, $f = (f_{\mathop{\rm HAADF}},f_{\mathop{\rm Yb}},f_{\mathop{\rm Al}},f_{\mathop{\rm Si}})$, with $f_{\mathop{\rm HAADF}}$ the HAADF data, and $(f_{\mathop{\rm Yb}},f_{\mathop{\rm Al}},f_{\mathop{\rm Si}})$ the EDXS data for Ytterbium, Aluminum and Silicon, respectively. With $\TGV_\alpha^2$ the multi-channel extension of second-order TGV as discussed in Subsection \ref{sec:tgv_extensions}, using a Frobenius-norm coupling of the different channels, we consider
\begin{equation*} 
  \min_ {u \in L^ p(\Omega)^n }  \ \sum_{i=1}^ {n} \lambda_i \KL(\ktem^i u_i,f_i)
  + \TGV_\alpha^ 2(u)
\end{equation*}
for the reconstruction of multi-channel image data, for which well-posedness again results from a multi-channel extension of Proposition \ref{prop:well_posed_tgvk}.
A numerical solution can be obtained using the framework as described in Section \ref{sec:numerical_algorithms} where again the discrete forward operator $K_h$ and the discrepancy term $S_{f_h}$ are vectorised accordingly, similar as in Subsection \ref{sec:mri_pet_application}.

Experimental results for this setting and a comparison to other methods can be found in Figure \ref{fig:tgv_tem_results}, where in particular,
separate TGV regularisation of each channel %
is compared to the
Frobenius-norm-based coupling as mentioned above. It can be seen in Figure \ref{fig:tgv_tem_results} that using TGV regularisation significantly improves upon the standard SIRT method. Also, a coupling of the different channels is very beneficial in particular for the elemental maps, making material inclusions visible that can hardly be seen with an uncoupled reconstruction. We refer to \cite{holler18tgvtem_mh} for a more detailed evaluation (and comparison to TV-based regularisation).

\begin{figure}
\includegraphics[width=\textwidth]{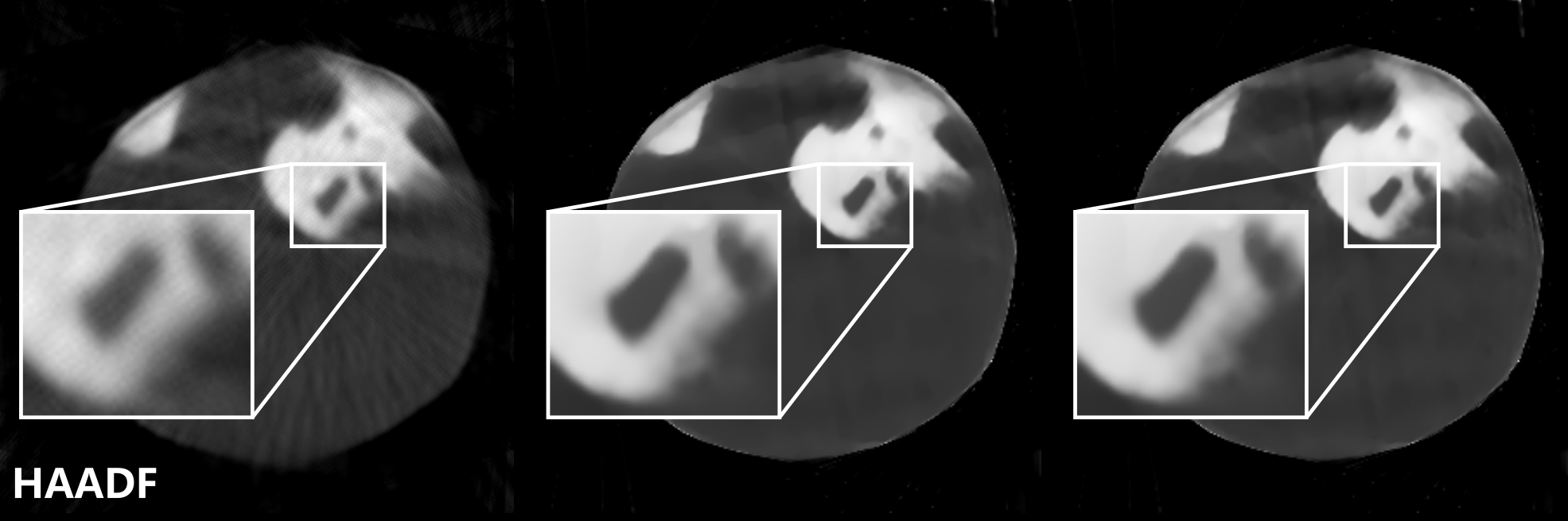}
\includegraphics[width=\textwidth]{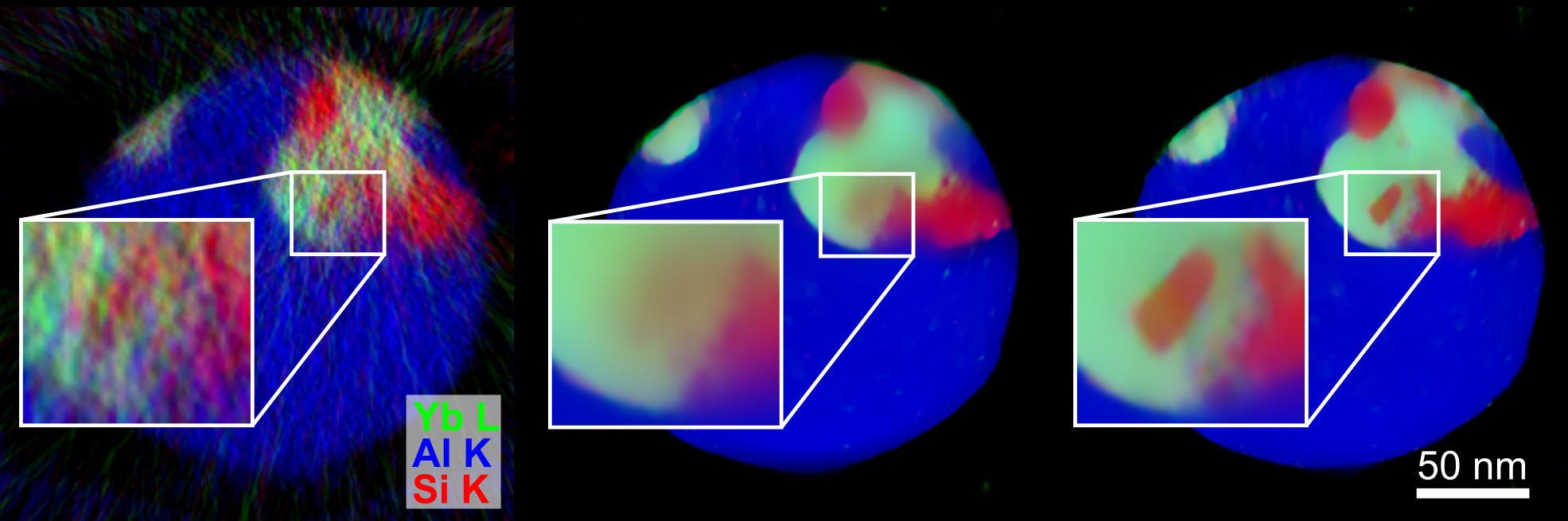}
\caption{\label{fig:tgv_tem_results}
  Example of multi-channel electron tomography. 
  The images show density maps (top row) and elemental maps (bottom row) of one slice of a 3D multi-channel electron tomography reconstruction using different reconstruction strategies \cite{holler18tgvtem_mh}. Left column: SIRT \cite{GILBERT1972105} method. Middle column: Uncoupled TGV-based regularisation. Right column: Coupled TGV-based regularisation. Images taken from \cite{holler18tgvtem_mh}.}
\end{figure}

\section{Conclusions}

The higher-order total variation strategies and application examples
discussed in this review show once again that while regularisation
makes it possible to solve ill-posed inverse problems in the first
place, the actual choice of the regularisation strategy has a
tremendous impact on the qualitative properties of the regularised
solutions and can be decisive on whether the inverse problem is
considered being solved in practice. In the considered context of
Tikhonov regularisation, convex regularisation functionals offer a
great flexibility in terms of functional-analytic properties and
a-priori assumptions on the solutions. With the total variation being
an established regulariser sharing desirable properties such as the
ability to recover discontinuities, higher-order total variation
regularisers offer additional possibilities, mainly the efficient
modelling of piecewise smooth regions in which the derivative of some
order may jump. We have seen in this paper that a regularisation
theory for these functionals can be established and the overall theory is now
sufficiently advanced such that the favourable properties of both
first- and higher-order TV can be obtained with suitable functionals,
for instance by infimal convolution. The underlying concepts are in
particular suitable for various generalisations. The total generalised
variation, for instance, bases on TV-type penalties for a
multiple-order differentiation cascade and thus enables to realise the
a-priori assumption of piecewise smoothness with jump
discontinuities. Further, due to higher-order derivatives being
intrinsically connected to symmetric tensor fields, a generalisation
to dedicated regularisation approaches for the latter is immediate.
All these approaches and generalisations are indeed beneficial for
applications and the solution of concrete inverse problems. This is in
particular the case for inverse problems in medical imaging.

Of course, there are still several directions of future research, open
questions and topics that have not been covered by this review. For
instance, one of the major differences between first-order TV and
higher-order approaches is the availability of a co-area formula that
can be used to describe the total variation of scalar function in
terms of its sublevel-sets. Generalisations to vector-valued functions
or higher-order derivatives do either not exist or are not practical
from the view of regularisation theory for inverse problems. As the
co-area formula allows, for instance, to obtain geometrical properties
for TV-regularisation
\cite{chambolle2016geometrictv,iglesias2018tvinverse}, it would be
interesting to bridge the gap to higher-order TV approaches such that
similar statements can be made. Some recent progress in this
direction might be the connection between the solutions of certain
linear inverse problems and the extremal points of the sublevel sets
of the regulariser \cite{Carioni18sparsity_mh,Boyer19representer_mh},
since the extremal points of the TV-balls are essentially characteristic
functions. However, for higher-order TV and the generalisations
discussed in this paper, a characterisation of its extremal points is
not known to date. Further, in the context of TV-regularisation, only
natural orders of differentiation have been considered in detail so
far, with regularisation theory for fractional-order TV just emerging
\cite{zhang2015fractionaltv,williams2016fractionaltv,davoli2018fractionaltgv}.
Indeed, there are many open questions for fractional-order TV
regularisation ranging from the properties of the fractional
derivative operators and their underlying spaces to optimal selection
of the fractional differentiation parameter as well as the
construction of efficient numerical algorithms. Finally, with all the
possibilities of combining distributional differentiation and
Radon-norm-penalisation, which are the essential building blocks of
the regularisers discussed in this paper, the question arises whether
their structure, parameters and differential operators can also be
learned by data-driven optimisation. Some results in this direction
can already be found in the literature
\cite{Calatroni16bilevel_mh,DelosReyes2016_bilevel_parameter_mh}, and
we expect that more will follow in the future.

\section*{References}
\addcontentsline{toc}{section}{References}

\bibliographystyle{iopart-num}
\bibliography{mh_lit_dat,paper} %

\appendix
\renewcommand{\thesection}{\Alph{section}} 

\section{Additional proofs}

\begin{lemma} \label{lem:kl_basic_properties}
  With $\Omega' \subset \RR^d$ measurable, in accordance with Equation \eqref{eq:kl_definition}, let the %
  functional $\KL$ on $L^1(\Omega')^2$ be given as
\[(v,f) \mapsto  \KL(v,f) = 
\left\{\begin{array}{rl}
\int_{\Omega'} f \Bigl( 
\frac{v}{f} - \log\Bigl( \frac{v}{f} \Bigr) - 1 \Bigr) \dd{x} 
& \text{if } f\geq 0,\, v \geq 0 \text{ a.e.},\\ 
\infty & \text{else,}
\end{array}\right.
\]
where we set the integrand to $v$ where $f = 0$ and
to $\infty$ where $v = 0$ and $f > 0$.
Then, $\KL$ is well-defined, non-negative, convex and lower semi-continuous. In case $f\geq 0$ a.e., it holds that $\KL(v,f) = 0 $ if and only if $v=f$. Further, for all $v,f \in L^1(\Omega')$,
\begin{equation} \label{eq:kl_l1_estimate}
 \|v-f\|_1^2 \leq \left( \frac{2}{3}\|f\|_1 + \frac{4}{3}\|v\|_1 \right) \KL(v,f),
 \end{equation}
 and in particular,
 \begin{equation} \label{eq:kl_coercivity_estimate} \|v\|_1 \leq 2 \bigl(\KL(v,f) + \|f\|_1\bigr) \quad  \text{and} \quad \|f\|_1 \leq 2 \bigl(\KL(v,f) + \|v\|_1\bigr)
 \end{equation}
 for all $f,v \in L^1(\Omega')$. 
\begin{proof}
At first note that, in case $f,v\geq 0$, $\KL$ is given by integrating
$\kl:[0,\infty[^2 \rightarrow [0,\infty]$ with $\kl(x,y) = x-y - y\log(\frac{x}{y})$ for $x,y \in {[{0,\infty}[}$, where we use the conventions $0\log(\frac{v}{0}) = 0$ for $v \geq 0$ and $-f\log(\frac{0}{f}) = \infty$ for $f >0$. It is easy to see that $\kl$ is non-negative, convex and lower semi-continuous, hence $\KL$ is well-defined, non-negative, convex, lower semi-continuous and, in case $f\geq 0$ a.e., $\KL(v,f) = 0 $ if and only if $v=f$.
Also, a simple computation (see \cite{Borwein91_mh}) shows that for all $x,y \in [0,\infty[$,
\[(x-y)^2 \leq (\frac{2y}{3} + \frac{4x}{3}) \kl(x,y).\]
from which the estimate \eqref{eq:kl_l1_estimate} follows with the Cauchy--Schwarz inequality applied to the square root of the above estimate. 
Now, for the first estimate in \eqref{eq:kl_coercivity_estimate}, we take $f,v \in L^1(\Omega')$ and note that in case $\|v\|_1 \leq \|f\|_1$, the estimate holds trivially. In the other case, $v \neq 0$ and we observe that \eqref{eq:kl_l1_estimate} implies 
\[ \|v\|_1 ^2 - 2 \|v\|_1 \|f\|_1 \leq \frac{2}{3}\|f\|_1\KL(v,f) + \frac{4}{3}\|v\|_1 \KL(v,f) \]
from which the claimed estimate follows from rearranging, dividing by $\|v\|_1$ and noting that $\|f\|_1 /\|v\|_1 \leq 1$. The second estimate in \eqref{eq:kl_coercivity_estimate} follows analogously.
\end{proof}
\end{lemma}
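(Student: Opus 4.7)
The natural strategy is to reduce everything to pointwise statements about the scalar function $g:{[{0,\infty}[}^2 \to {[{0,\infty}]}$ defined by $g(x,y) = x - y - y\log(x/y)$ (with the same conventions at the boundary as stated for the integrand), and then integrate. For the first claim, I would verify directly that $g$ is convex, non-negative, and lower semi-continuous on ${[{0,\infty}[}^2$: convexity follows from computing the Hessian on the open positive quadrant (or more cleanly from noting that $(x,y) \mapsto -y \log(x/y) = y\log y - y\log x$ is the perspective function of the convex $\log$-barrier), non-negativity from the elementary inequality $\log t \leq t - 1$ applied to $t = x/y$, and lower semi-continuity from handling the two boundary conventions by hand. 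Well-definedness, non-negativity and lower semi-continuity of $\KL$ then follow: lower semi-continuity comes from Fatou's lemma after passing to an a.e.-convergent subsequence of $\seq{(v^n,f^n)}$ extracted by the usual $L^1$-subsequence argument, and convexity from pointwise convexity of $g$. The identity statement $\KL(v,f) = 0 \Leftrightarrow v=f$ when $f\geq 0$ a.e.\ reduces to checking that $g(x,y) = 0$ forces $x=y$, which is immediate from strict concavity of the logarithm.

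The main analytical content lies in the inequality \eqref{eq:kl_l1_estimate}, and I would derive it from the sharper pointwise estimate
\[
(x-y)^2 \leq \Bigl(\frac{2y}{3} + \frac{4x}{3}\Bigr) g(x,y) \qquad \text{for all } x,y \in {[{0,\infty}[}.
\]
This is the hard step. I would attempt it by fixing $y > 0$, setting $t = x/y$, and showing $(t-1)^2 \leq \frac{1}{3}(2 + 4t)(t - 1 - \log t)$; i.e., the auxiliary function $\varphi(t) = (2+4t)(t-1-\log t) - 3(t-1)^2$ is non-negative on ${[{0,\infty}[}$ with $\varphi(1) = 0$. A direct check of $\varphi'$ and $\varphi''$ should work: at $t=1$ one has $\varphi(1) = \varphi'(1) = 0$, and $\varphi''(t)$ should have a single sign change so that $t=1$ is a global minimum. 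The boundary cases $y = 0$ and $x = 0$ with $y > 0$ then have to be verified against the conventions.

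With the pointwise inequality in hand, the integral version \eqref{eq:kl_l1_estimate} follows from taking square roots pointwise, writing $|v-f| \leq \sqrt{\tfrac{2f + 4v}{3}} \cdot \sqrt{g(v,f)}$, integrating, and applying the Cauchy--Schwarz inequality in $L^2(\Omega')$ with the two factors $\sqrt{(2f+4v)/3}$ and $\sqrt{g(v,f)}$ (so that the product of the $L^2$-norms gives exactly the right-hand side of \eqref{eq:kl_l1_estimate}). It is worth emphasising that this step is where the constants $2/3, 4/3$ are preserved; a crude Cauchy--Schwarz with different weights would give worse constants but the same structural estimate.

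For the coercivity estimates \eqref{eq:kl_coercivity_estimate}, I would proceed by cases. The estimate $\|v\|_1 \leq 2(\KL(v,f) + \|f\|_1)$ is trivial when $\|v\|_1 \leq \|f\|_1$. Otherwise $v \not\equiv 0$ and one has $\|v-f\|_1 \geq \|v\|_1 - \|f\|_1 \geq 0$, so $\|v-f\|_1^2 \geq (\|v\|_1 - \|f\|_1)^2 = \|v\|_1^2 - 2\|v\|_1\|f\|_1 + \|f\|_1^2$; plugging this into \eqref{eq:kl_l1_estimate}, dropping the non-negative $\|f\|_1^2$, dividing by $\|v\|_1$, and using $\|f\|_1/\|v\|_1 \leq 1$ to bound $\tfrac{2}{3}\|f\|_1\KL(v,f)/\|v\|_1 \leq \tfrac{2}{3}\KL(v,f)$ yields the desired $\|v\|_1 \leq 2\|f\|_1 + 2\KL(v,f)$. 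The symmetric bound on $\|f\|_1$ is entirely analogous. The real obstacle in the whole proof is the scalar inequality in the previous paragraph; everything else is bookkeeping with measure-theoretic conventions at the boundary values $x=0$ or $y=0$.
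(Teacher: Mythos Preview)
Your proposal is correct and follows essentially the same route as the paper's proof: reduce to the pointwise integrand $g(x,y) = x - y - y\log(x/y)$, invoke the scalar inequality $(x-y)^2 \leq (\tfrac{2y}{3} + \tfrac{4x}{3})g(x,y)$, pass to \eqref{eq:kl_l1_estimate} via Cauchy--Schwarz on the square roots, and derive \eqref{eq:kl_coercivity_estimate} by the same case split and division by $\|v\|_1$. The paper simply cites a reference for the scalar inequality rather than proving it via the auxiliary function $\varphi$ you sketch, and omits the Fatou/subsequence detail for lower semi-continuity, but the structure is identical.
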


\begin{lemma} \label{lem:kl_lim_inf_sup_estimates}
  For $\seq{f^n}$ and $f$ in $L^1(\Omega')$, let $\KL(f,f^n) \rightarrow 0$. Then, %
  $\|f-f^n\|_1 \rightarrow 0$ and %
  for each sequence
$\seq{v^n}$ in
  $\LPspace{1}{\Omega'}$ with $v^n \wrightarrow v$ as $n \to \infty$ for
  $v \in \LPspace{1}{\Omega'}$, it holds that
  \[
  \KL(v,f) \leq \liminf_{n \to \infty} \ \KL(v^n,f^n).
  \]
  If, in addition, $f^n \leq Cf$ a.e. in $\Omega'$ for all $n$ and some $C>0$, then for all $v \in L^1(\Omega')$, we have
  \[\limsup_{n \to \infty} \ \KL(v,f^n) \leq \KL(v,f).\]
\end{lemma}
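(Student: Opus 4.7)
The proof naturally splits according to the three assertions.

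For the $L^1$-convergence $f^n \to f$, the plan is to combine the two quantitative estimates from Lemma \ref{lem:kl_basic_properties}. First apply the second bound in \eqref{eq:kl_coercivity_estimate} (with $v$ replaced by $f$ and $f$ replaced by $f^n$) to obtain $\|f^n\|_1 \leq 2(\KL(f,f^n) + \|f\|_1)$, so that $\seq{f^n}$ is $L^1$-bounded by the hypothesis $\KL(f,f^n) \to 0$. Then the analogous substitution in \eqref{eq:kl_l1_estimate} yields $\|f - f^n\|_1^2 \leq (\tfrac{2}{3}\|f^n\|_1 + \tfrac{4}{3}\|f\|_1)\KL(f,f^n)$, and the right-hand side tends to zero.

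For the $\liminf$ estimate, I would invoke that $\KL$ is jointly convex and strongly lower semi-continuous on $L^1(\Omega')^2$ (by Lemma \ref{lem:kl_basic_properties}). A convex, strongly lower semi-continuous functional on a Banach space is automatically weakly sequentially lower semi-continuous via Mazur's lemma. Since the first part supplies $f^n \to f$ strongly (hence weakly) in $L^1$ and $v^n \rightharpoonup v$ is assumed, the pair $(v^n,f^n)$ converges weakly to $(v,f)$ in $L^1(\Omega')^2$, and weak sequential lower semi-continuity delivers the bound.

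The $\limsup$ estimate is the most delicate, and constructing a sufficiently tight dominating function is the main obstacle. The case $\KL(v,f) = \infty$ is trivial, so assume $\KL(v,f) < \infty$. The idea is to exploit the scaling identity $\kl(x,y) = x\phi(y/x)$ on $\{x>0\}$, where $\phi(t) = 1 - t + t\log t$ is non-negative and convex with $\phi(0) = 1$, and is therefore bounded on any $[0,M]$ by $\phi(0) + \phi(M)$. Applied pointwise with $M = Cf/v$ and combined with $f^n \leq Cf$, this gives $\kl(v,f^n) \leq v + \kl(v,Cf)$ a.e. on $\{v > 0\}$. The set $\{v=0, f>0\}$ has measure zero since $\KL(v,f) < \infty$, and on $\{v=0, f=0\}$ the bound $f^n \leq Cf$ forces $f^n = 0$ so the integrand vanishes. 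A direct algebraic manipulation then yields the pointwise identity $\kl(v,Cf) = (1-C)v + C\kl(v,f) + Cf\log C$, so that $\KL(v,Cf) = (1-C)\|v\|_1 + C\KL(v,f) + C\log C \|f\|_1 < \infty$, and the dominating function $g = v + \kl(v,Cf)$ lies in $L^1(\Omega')$.

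To conclude the $\limsup$ bound, I would use the first part of the lemma to get $f^n \to f$ in $L^1$, so that any subsequence has a further subsequence converging a.e. Along such a subsequence, continuity of $y \mapsto \kl(v,y)$ at the limit (with $\kl(v,0) = v$ covering the boundary case) gives $\kl(v, f^{n_k}) \to \kl(v,f)$ a.e., and dominated convergence yields $\KL(v,f^{n_k}) \to \KL(v,f)$. The subsequence principle then shows that the full sequence $\KL(v, f^n)$ converges to $\KL(v,f)$, which is more than enough for the required $\limsup$ inequality.
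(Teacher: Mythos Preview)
Your proof is correct. The first two parts (the $L^1$-convergence and the $\liminf$ estimate) match the paper's proof essentially verbatim.

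For the $\limsup$ estimate you take a genuinely different route. The paper also passes to an a.e.-convergent subsequence realising the $\limsup$, but instead of constructing a global dominating function it rewrites the integrand algebraically as
\[
v - \frac{f^{n_k}}{f}\, f\log\Bigl(\frac{v}{f}\Bigr) - f^{n_k}\log\Bigl(\frac{f}{f^{n_k}}\Bigr) - f^{n_k},
\]
and handles the pieces separately: the first and last terms converge by $L^1$-convergence of $f^{n_k}$, the third integrates to something that tends to zero because $\KL(f,f^{n_k})\to 0$, and the second is treated by dominated convergence using only the bound $f^{n_k}/f \leq C$ together with $f\log(v/f)\in L^1$. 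Your argument instead dominates the full integrand $\kl(v,f^n)$ by $v+\kl(v,Cf)$ via the convexity of $\phi(t)=1-t+t\log t$, and then checks $\kl(v,Cf)\in L^1$ through the explicit scaling identity. Both approaches rely on the same hypotheses ($f^n\leq Cf$ and $\KL(v,f)<\infty$); your route is a clean single application of dominated convergence, while the paper's decomposition avoids introducing the auxiliary quantity $\KL(v,Cf)$ at the cost of a slightly more intricate bookkeeping of the separate terms.
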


\begin{proof} Assume that $\KL(f,f^n) \rightarrow 0$. It follows from the second estimate in \eqref{eq:kl_coercivity_estimate} that $\seq{\KL(f,f^n)}$ bounded implies $\seq{\|f^n\|_1}$ bounded which, using \eqref{eq:kl_l1_estimate}, yields that $f^n \rightarrow f$ in $L^1 (\Omega')$.
The $\liminf$ estimate then follows from lower semi-continuity as in Lemma \ref{lem:kl_basic_properties}.
 Now assume that additionally, $f^n \leq C f$ a.e.~in $\Omega'$ for all $n$ and some $C > 0$. By $L^1$-convergence we can
 take a  subsequence $\seq{f^{n_k}}$ such that $f^{n_k} \to f$ pointwise
  a.e.,  and $\lim_{k\rightarrow \infty}\KL(v,f^{n_k}) = \limsup_{n \to \infty} \KL(v, f^n)$.   
  As $\KL(f, f^{n_k}) \to 0$ as $k\to \infty$, we have
  $\int_{\Omega'} f^{n_k} \log(f/f^{n_k}) \dd{x} \to 0$. Also, since
  $f \log(v/f) \in \LPspace{1}{\Omega'}$ and $f^{n_k}/f$, where we set $f^{n_k}/f=0$ where $f=0$,
  is bounded a.e.~uniformly with respect to $k$, we have
  $\int_{\Omega'} (f^{n_k}/f) f \log(v/f) \dd{x} \to \int_{\Omega'} f
  \log(v/f) \dd{x}$ by virtue of Lebesgue's theorem.
  Together, we get
  \begin{eqnarray*}
    \fl
    \limsup_{n \to \infty} \KL(v,f^n) &= \lim_{k \to \infty} \KL(v,
    f^{n_k}) \\
    & = \lim_{k \to \infty} \int_{\Omega'} v \dd{x} -
    \int_{\Omega'} \frac{f^{n_k}}{f} f \log \Bigl( \frac{v}{f} \Bigr)
    \dd{x} - \int_{\Omega'} f^{n_k} \log \Bigl( \frac{f}{f^{n_k}} \Bigr)
    \dd{x} - \int_{\Omega'} f^{n_k} \dd{x} \\
    &= \int_{\Omega'} v - f \log \Bigl( \frac{v}{f} \Bigr) - f \dd{x} 
      = \KL(v,f),
  \end{eqnarray*}
  which is what we wanted to show.  
\end{proof}

\begin{lemma}
  \label{lem:kernel_symgrad}
  Let $k\geq 1$, $l\geq 0$ and $u: \Omega \to \Sym^l(\RR^d)$ be
  $(k+l)$-times continuously differentiable such that $\symgrad^ku = 0$
  in $\Omega$. Then, $\grad^{k+l} \tensor u = 0$ in $\Omega$.
\end{lemma}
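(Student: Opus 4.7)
I would proceed by induction on the tensor order $l \geq 0$, with the order of differentiation $k \geq 1$ free. The base case $l = 0$ is immediate, since for a scalar function $u$ the mixed partials of $u \in \Cspace{k}{\closure{\Omega}}$ commute, so $\grad^k u$ is automatically symmetric. Hence $\symgrad^k u = \grad^k u$, and $\symgrad^k u = 0$ gives $\grad^{k+l}\tensor u = \grad^k u = 0$. For the inductive step, assume the statement holds for tensor order $l$ and all $k \geq 1$, and let $u \in \Cspace{k+l+1}{\closure\Omega, \Sym^{l+1}(\RR^d)}$ with $\symgrad^k u = 0$. The idea is to reduce to tensor order $l$ by contracting $u$ with a fixed direction $\xi \in \RR^d$: set
\[
u_\xi(y)(a_1, \ldots, a_l) := u(y)(\xi, a_1, \ldots, a_l),
\]
so that $u_\xi \colon \Omega \to \Sym^l(\RR^d)$ is still of class $\Cspace{k+l+1}{\closure\Omega}$ (and in particular $\Cspace{(k+1)+l}{}$). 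The goal is then to prove $\symgrad^{k+1} u_\xi = 0$, apply the inductive hypothesis with $k+1$ in place of $k$ to conclude that $u_\xi$ is a polynomial of degree at most $k+l$ in $y$, and finally observe that since $\xi$ was arbitrary every component $u^{i_0 i_1 \cdots i_l}(y) = u_{e_{i_0}}(y)(e_{i_1}, \ldots, e_{i_l})$ is polynomial of degree at most $k+l = (k + l + 1) - 1$, giving $\grad^{k+l+1}\tensor u = 0$.

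\textbf{Key identity.} The heart of the proof is the recurrence relation
\[
(l+1)\symgrad u_\xi \;=\; (l+2)(\symgrad u)_\xi \;-\; \partial_\xi u,
\]
where $(\symgrad u)_\xi(y)(a_0, \ldots, a_l) := (\symgrad u)(y)(a_0, \ldots, a_l, \xi)$ denotes the contraction of $\symgrad u \in \Sym^{l+2}(\RR^d)$ with $\xi$ in any slot, and $\partial_\xi u$ is the directional derivative in $\xi$. I would derive this by expanding $(\symgrad u)(y)(a_0, \ldots, a_l, \xi)$ according to the definition as $\tfrac{1}{(l+2)!}\sum_{\pi \in S_{l+2}} \partial_{a_{\pi(0)}} u(y)(a_{\pi(1)}, \ldots, a_{\pi(l+1)})$ and splitting the permutations $\pi$ into two cases: those sending the position of $\xi$ to the derivative slot $0$ (yielding $(l+1)!$ copies of $\partial_\xi u(y)(a_0, \ldots, a_l)$ after using symmetry of $u$), and those placing $\xi$ into one of $u$'s argument slots (yielding, after using the symmetry of $u$ and $u_\xi$, exactly $(l+1) \cdot (l+1)!$ copies summing to $(l+1)^2 \, l! \cdot \symgrad u_\xi$). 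Dividing by $(l+1)!$ gives the claimed identity.

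\textbf{Iterating the identity.} Applying $\symgrad^k$ to both sides and using that $\symgrad^k \partial_\xi u = \partial_\xi \symgrad^k u = 0$, I obtain $(l+1)\symgrad^{k+1} u_\xi = (l+2)\symgrad^k(\symgrad u)_\xi$. Applying the same identity (with $u$ replaced by $\symgrad u$ and $l$ by $l+1$) and then $\symgrad^{k-1}$ to the result, the analogous cancellation $\partial_\xi \symgrad^k u = 0$ gives $(l+2)\symgrad^k(\symgrad u)_\xi = (l+3)\symgrad^{k-1}(\symgrad^2 u)_\xi$. Iterating this $k$ times yields the cascade
\[
(l+1)\symgrad^{k+1} u_\xi \;=\; (l+k+2)\,(\symgrad^{k+1} u)_\xi,
\]
and since $\symgrad^{k+1} u = \symgrad(\symgrad^k u) = 0$, the right-hand side vanishes, so $\symgrad^{k+1} u_\xi = 0$ as desired. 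The main obstacle is setting up this recursion carefully: in particular keeping track of how $\symgrad$ interacts with the contraction operation $(\cdot)_\xi$ (which requires the slot-exchange symmetry of $u$) and ensuring that every intermediate term containing a factor of $\partial_\xi \symgrad^j u$ indeed collapses using $\symgrad^k u = 0$. The smoothness assumption $u \in \Cspace{k+l+1}{\closure\Omega, \Sym^{l+1}(\RR^d)}$ is exactly what is needed so that all the $\symgrad^j$ and $\partial_\xi$ operations make classical sense and commute; once the cascade is established the remainder of the argument — invoking the inductive hypothesis and recovering the components of $u$ from its contractions — is straightforward.
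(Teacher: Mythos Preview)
Your argument is correct and takes a genuinely different route from the paper's proof. The paper proceeds directly, without induction: for fixed vectors $a_1,\ldots,a_{2l+k}$ it expresses each $l$-th derivative of $\symgrad^k u$ (contracted with a subset $L$ of the $a_i$'s) as a linear combination of the quantities $\frac{\partial^{k+l} u_M}{\partial a_{\complement M}}$, and observes that the resulting linear map is, up to a scalar, the adjacency matrix of the Kneser graph $K_{2l+k,l}$. Since the eigenvalues of this matrix are all nonzero, the map is invertible, and $\symgrad^k u = 0$ forces every $\frac{\partial^{k+l} u_M}{\partial a_{\complement M}}$ to vanish --- in particular $(\grad^{k+l}\tensor u)(a_1,\ldots,a_{2l+k}) = 0$.

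Your inductive contraction argument is more elementary in that it avoids the spectral theory of Kneser graphs, relying instead on the single recurrence $(l+1)\symgrad u_\xi = (l+2)(\symgrad u)_\xi - \partial_\xi u$ and the telescoping cascade it generates. The paper's approach, on the other hand, is non-inductive and gives a more global algebraic picture of how the full tensor derivative is linearly recoverable from the symmetrised one. Both proofs use the regularity hypothesis in the same way --- to justify commuting the relevant differential operators --- and both yield the sharp degree bound $k+l-1$ on the resulting polynomial.
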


\begin{proof}
  The statement is a slight generalisation of \cite[Proposition
  3.1]{bredies2013boundeddeformation_mh} and its proof is
  analogous. We present it for the sake of completeness.  Choose
  $a_1,\ldots,a_{2l+k} \in \RR^d$. We show that
  $(\grad^{k+l} \tensor u)(x)(a_1,\ldots,a_{2l+k}) = 0$ for each
  $x \in \Omega$. For this purpose, let
  $L \subset \sett{1,\ldots, 2l+k}$ with $\abs{L} = l$ and denote,
  dropping the dependence on $x$, by
  \[
    u_L = u(a_{\pi(1)}, \ldots, a_{\pi(l)})
  \]
  for some bijective $\pi: \sett{1,\ldots,l} \to L$, giving a
  $(k+l)$-times differentiable $u_L: \Omega \to \RR$. Observe that by symmetry,
  $u_L$ does not depend on the choice of $\pi$ but indeed only on $L$.
  Likewise, denote by
  \[
    \frac{\partial^{k+l} u_L}{\partial a_{\complement L}} =
    (\grad^{k+l} \tensor u_L)(a_{\sigma(1)}, \ldots, a_{\sigma(k+l)})
  \]
  for some bijective $\sigma: \sett{1,\ldots,k+l} \to \complement L$.
  By symmetry of the
  derivative,
  $\frac{\partial^{k+l} u_L}{\partial a_{\complement L}}: \Omega \to
  \RR$ only depends on $L$. We also introduce an analogous notation for
  the symmetrised derivative $\symgrad^k$:
  \[
    (\symgrad^ku)_{\complement L} =
    \frac1{(k + l)!} \sum_{%
      \begin{array}{c}
        \scriptstyle \sigma: \sett{1,\ldots,k+l} \to \complement L, \\[-0.4em]
        \scriptstyle \sigma \ {\rm bijective}
      \end{array}}
    (\grad^k \tensor u)(a_{\sigma(1)}, \ldots, a_{\sigma(k+l)}),
  \]
  and, for some $\pi: \sett{1,\ldots,l} \to L$ bijective,
  \[
    \fl
    \frac{\partial^l (\symgrad^ku)_{\complement L}}{\partial a_L}
    = \frac{1}{(k+l)!} \sum_{%
      \begin{array}{c}
        \scriptstyle \sigma: \sett{1,\ldots,k+l} \to \complement L, \\[-0.4em]
        \scriptstyle \sigma \ {\rm bijective}
      \end{array}}
    (\grad^{k+l} \tensor u)(a_{\pi(1)}, \ldots, a_{\pi(l)}, a_{\sigma(1)},
    \ldots, a_{\sigma(k+l)}).
  \]
  Now, as for $\pi: \sett{1,\ldots,l} \to L$ bijective, the definitions as
  well as symmetry yield
  \[
    \fl
    \begin{array}{rl}
      \displaystyle \frac{\partial^l (\symgrad^ku)_{\complement L}}{\partial a_L}
      \!\!\!
      &
        \displaystyle = \frac{1}{(k+l)!}
        \sum_{%
        \begin{array}{c}
          \scriptstyle K \subset \complement L,
          \\[-0.4em] \scriptstyle \abs{K} = k
        \end{array}} \\[0.4em]
      & \displaystyle
        \sum_{%
        \begin{array}{c}
          \scriptstyle \sigma: \sett{1,\ldots,k+l} \to \complement L, \\[-0.4em]
        \scriptstyle \sigma(\sett{1,\ldots,k}) = K, \\[-0.4em]
        \scriptstyle \sigma(\sett{k+1,\ldots,k+l}) = \complement L\setminus K
        \end{array}}
      \hspace*{-3.2em}
      (\grad^{k+l} \tensor u)(a_{\pi(1)}, \ldots, a_{\pi(l)}, a_{\sigma(1)}, \ldots, a_{\sigma(k)}, a_{\sigma(k+1)}, \ldots, a_{\sigma(k+l)}) \\
      & \displaystyle
        = \frac{k!l!}{(k+l)!}
        \sum_{%
        \begin{array}{c}
          \scriptstyle K \subset \complement L,
          \\[-0.4em] \scriptstyle \abs{K} = k
        \end{array}}
      \frac{\partial^{k+l} u_{\complement L \setminus K}}{\partial a_{\complement (\complement L \setminus K)}}
        = \binom{k+l}{l}^{-1}
      \sum_{%
      \begin{array}{c}
        \scriptstyle M \subset \sett{1,\ldots,2l+k},
        \\[-0.4em] \scriptstyle \abs{M}=l, \ L \cap M = \emptyset
      \end{array}}
      \frac{\partial^{k+l} u_M}{\partial a_{\complement M}},
    \end{array}
  \]
  we see that each
  $\frac{\partial^l (\symgrad^ku)_{\complement L}}{\partial a_L}$ can
  be written as a linear combination of
  $\frac{\partial^{k+l} u_M}{\partial a_{\complement M}}$. Up to the
  factor $\binom{k+l}{l}^{-1}$, the linear mapping that takes the
  formal vector
  $\bigl(\frac{\partial^{k+l} u_M}{\partial a_{\complement M}}
  \bigr)_M$ indexed by all $M \subset \sett{1,\ldots,2l+k}$,
  $\abs{M} = l$ to the formal vector
  $\bigl(\frac{\partial^l (\symgrad^ku)_{\complement L}}{\partial
    a_L}\bigr)_L$ indexed by all $L \subset \sett{1,\ldots,2l+k}$,
  $\abs{L} = l$ is corresponding to the multiplication with the
  adjacency matrix of the Kneser graph $K_{2l+k,l}$, see, for
  instance, \cite{biggs1993algebraic} for a definition. The latter is
  regular, which can, for instance, be seen by looking at its
  eigenvalues which are known to be
  \[
    \lambda_m = (-1)^m \binom{k+l-m}{l-m}, \qquad
    m = 0,\ldots, l,
  \]
  see again \cite{biggs1993algebraic}.
  Thus, we can find real numbers $(c_L)_L$ indexed by all $L \subset \sett{1,\ldots,2k+l}$, $\abs{L} = l$ and independent from $u$,
  and $a_1,\ldots,a_{2l+k}$ such that for $M = \sett{k+l+1,\ldots,2l+k}$,
  the identity 
  \[
    \frac{\partial^{k+l} u_M}{\partial a_{\complement M}} =
    \sum_{%
      \begin{array}{c}
        \scriptstyle L \subset \sett{1,\ldots,2l+k},
        \\[-0.4em] \scriptstyle \abs{L}=l,\ L \cap M = \emptyset
      \end{array}}
    c_L \frac{\partial^l (\symgrad^k u)_{\complement L}}{\partial a_L}
  \]
  holds. If $\symgrad^k u = 0$, then the right-hand side is $0$ while
  the left-hand side corresponds to
  $\grad^{k+l} u(a_1,\ldots,a_{2l+k})$. This completes the proof.
\end{proof}

\begin{lemma} \label{lem:bvk_strict_convergence}
  Let $k \geq 1$ and $\Omega \subset \RR^d$ be a bounded Lipschitz domain.
  For each $u \in \BV^k(\Omega)$ and $\delta > 0$, there exists a $u^\delta \in \BV^k(\Omega) \cap C^\infty(\Omega)$ such that for $\delta \rightarrow 0 $,
\[ \|u^\delta - u \|_1 \rightarrow 0  \quad \text{ and } \quad \|\nabla^m  u^\delta \|_\M \rightarrow \|\nabla^m u  \|_\M  \quad  \text{for}\quad m=1,\ldots,k,\]
i.e., $\seq{u^\delta}$ converges strictly in $\BV^k(\Omega)$ to $u$ as $\delta \to 0$.
\end{lemma}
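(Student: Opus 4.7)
The proof adapts the classical Anzellotti--Giaquinta approximation theorem for $\BV(\Omega)$ to the higher-order setting. The key observation is that by the embedding $\BV^k(\Omega) \embed \Hspace[1]{k-1}{\Omega}$ from Theorem~\ref{thm:tvk-sobolev-embedding}, every derivative $\grad^l u$ of order $l < k$ is actually an $L^1$-function on $\Omega$, so only the top-order part $\grad^k u$ genuinely requires a strict-convergence construction; all lower orders can then be controlled via standard $L^1$-mollification.

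The construction proceeds via mollification with a partition of unity. Set $\Omega_j = \set{x \in \Omega}{\dist(x,\partial\Omega) > 1/(j+j_0)}$ for $j_0$ large enough that $\Omega_1 \neq \emptyset$, let $A_j = \Omega_{j+1} \setminus \closure{\Omega_{j-1}}$ (with $\Omega_0 = \emptyset$), and choose a smooth partition of unity $\seq{\varphi_j}_{j \geq 1}$ subordinate to the locally finite open cover $\seq{A_j}$. Given $\delta > 0$, pick $\varepsilon_j > 0$ small enough that each $\supp\rho_{\varepsilon_j} + \supp\varphi_j \subset \Omega$ and such that the $L^1$-errors of mollifying the quantities $\grad^l u \tensor \grad^{k-l}\varphi_j$ and $\varphi_j u$ against $\rho_{\varepsilon_j}$, for $l = 0,\ldots, k-1$, sum to at most $\delta$; this is possible because $\grad^l u \in \LPspace{1}{\Omega,\Sym^l(\RR^d)}$ for $l < k$. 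Define
\[
u^\delta = \sum_{j \geq 1} (\varphi_j u) \conv \rho_{\varepsilon_j}.
\]
Local finiteness of the sum makes $u^\delta \in \Cspace{\infty}{\Omega}$, and $\norm[1]{u^\delta - u} \to 0$ follows directly from the choice of $\varepsilon_j$.

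The delicate point is $\norm[\radon]{\grad^k u^\delta} \to \norm[\radon]{\grad^k u}$. Expand via the Leibniz rule:
\[
\grad^k(\varphi_j u) = \varphi_j \grad^k u + \sum_{l=0}^{k-1} \binom{k}{l} \interleave\bigl(\grad^l u \tensor \grad^{k-l}\varphi_j\bigr).
\]
Summing over $j$ and using $\sum_j \grad^{k-l}\varphi_j = 0$ for $l < k$ (which follows from $\sum_j \varphi_j = 1$), the unmollified lower-order contributions cancel identically; what remains after mollification is
\[
\grad^k u^\delta = \sum_j (\varphi_j \grad^k u) \conv \rho_{\varepsilon_j} + E^\delta,
\]
where $E^\delta$ is a finite linear combination of commutators $(\grad^l u \tensor \grad^{k-l}\varphi_j) \conv \rho_{\varepsilon_j} - \grad^l u \tensor \grad^{k-l}\varphi_j$, each small in $L^1$ by the choice of $\varepsilon_j$. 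The leading term satisfies
\[
\Bignorm[1]{\sum_j (\varphi_j \grad^k u) \conv \rho_{\varepsilon_j}} \leq \sum_j \int_\Omega \varphi_j \dd{|\grad^k u|} = |\grad^k u|(\Omega),
\]
yielding $\limsup_{\delta \to 0} \norm[\radon]{\grad^k u^\delta} \leq \norm[\radon]{\grad^k u}$; combined with lower semi-continuity (Proposition~\ref{prop:tdk_semi_norm}), this gives the desired convergence. For $m < k$ the analogous argument applied at order $m$ produces $L^1$-convergence of $\grad^m u^\delta$ to $\grad^m u$, and thus convergence of the Radon norms since these reduce to $L^1$-norms.

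The main obstacle is the careful organisation of the Leibniz expansion so that the identity $\sum_j \grad^l \varphi_j = 0$ for $l \geq 1$ can be exploited jointly with the mollification commutator estimates at every intermediate order; this requires $\grad^l u \in L^1(\Omega)$ for $l < k$, which is precisely the content of the Sobolev-type embedding from Theorem~\ref{thm:tvk-sobolev-embedding}.
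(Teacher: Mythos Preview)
Your proof is correct and follows essentially the same Anzellotti--Giaquinta strategy as the paper: mollification against a partition of unity, exploitation of the fact that $\grad^l u \in L^1$ for $l < k$ (via the Sobolev--Korn embedding), cancellation from $\sum_j \grad^{m}\varphi_j = 0$, and lower semi-continuity for the matching lower bound. The only difference is bookkeeping: you expand directly via the full Leibniz formula at each order $m$, whereas the paper introduces auxiliary mollifications $v_l^\delta = \sum_n (\varphi^n \grad^l u) \conv \rho_{\varepsilon_n}$ and telescopes $\grad^m u^\delta = \sum_{l=1}^{m-1} \symgrad^{m-l}(\symgrad v_{l-1}^\delta - v_l^\delta) + \symgrad v_{m-1}^\delta$, which requires controlling the commutators in $H^{m-l,1}$ rather than only in $L^1$; your organisation is slightly more direct in this respect.
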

\begin{proof}
The proof builds on the result \cite[Lemma 5.4]{holler14inversetgv_mh} and techniques from  \cite{Ambrosio_mh,evans1992measure_mh}.
Choose a sequence of open sets $ \seq{\Omega_n} $ such that 
  $ \Omega = \bigcup _{n \in \NN} \Omega_n$, 
  $ \overline{\Omega}_n \compactin \Omega $ for all 
  $ n \in \NN $ and any point of $ \Omega $ belongs to at most four sets 
  $ \Omega_n $ (cf. \cite[Theorem 3.9]{Ambrosio_mh} 
  for a construction 
  of such sets). Further, let $ \seq{\varphi^n} $ be a partition 
  of unity relative to $ \seq{\Omega_n} $, i.e.,
  $ \varphi^n \in \Ccspace{\infty}{\Omega_n}$ with $ \varphi^n \geq 0$ 
  for all $ n \in \NN $ and $ \sum _{n=1}^\infty \varphi^n = 1$ pointwise in $\Omega$. 
  Finally, let $\rho \in \Ccspace{\infty}{\RR^d}$ be a standard 
  mollifier, i.e., $\rho$ is radially symmetric,
  non-negative and satisfies $\int_{\RR^d} \rho \dd{x} = 1$.
  Denote by $\rho_{\epsilon}$ the function given by 
  $\rho_\epsilon(x) = \epsilon^{-d} \rho(x/\epsilon)$ for $\epsilon > 0$.

  As $\rho$ is a mollifier and $\varphi^n$ has compact support in $\Omega_n$,
  we can find, for any $n \in \NN$, 
  an $\epsilon_n > 0$ such that 
  $\supp((v\varphi^n) \conv \rho_{\epsilon_n}) \subset \Omega_n$ for any $v \in \BD(\Omega,\Sym^l(\Omega))$, $l\in \N$.
  Further, as shown in \cite[Lemma 5.4]{holler14inversetgv_mh}, for any $v\in \BD(\Omega,\Sym^l(\Omega))$ fixed, for any $\delta > 0 $ we can 
  pick a sequence $\seq{\epsilon_n^\delta}$ with each $\epsilon_n^\delta$ being small enough such that with $v^\delta = \sum_{n=1}^\infty (v \varphi^n ) \ast \rho _{\epsilon_n^\delta}$, we have
  \[ \|v^\delta - v \|_1 \leq \delta \quad \text{ and } \quad \|\symgrad v^\delta \|_\M \leq \|\symgrad v  \|_\M + \delta. \]
In particular, for $u \in \BV^k(\Omega) $ fixed and $v_l = \nabla^l u \in \BD^{k-l}(\Omega,\Sym^l(\RR^d))$ for $l=0,\ldots,k-1$, we can pick a sequence $\seq{\epsilon_n^\delta}$ with each component small enough such that
\begin{equation} \label{eq:bvk_strict_conv_up_to_km1}
  \fl
 \|v_l^\delta - v_l \|_1 \leq \delta \quad \text{ and } \quad \|\symgrad v_l^\delta \|_\M \leq \|\nabla \tensor v_l \|_\M + \delta \quad \text{for} \quad l=0,\ldots,k-1,
\end{equation}
since $\grad \tensor v_l = \symgrad v_l$.
Further we note that, as additional consequence of the Sobolev--Korn inequality of Theorem \ref{thm:sobolev_korn}, $v_l \in \Hspace{k-1-l,1}{\Omega, \Sym^l(\RR^d)}$ and hence by the product rule $\symgrad (v_{l-1}\varphi^n) = (\symgrad v_{l-1}) \varphi^n + \interleave (v_{l-1} \tensor \grad \varphi^n)$, we get 
\[ \fl \symgrad v_{l-1}^\delta - v_l^\delta   = \sum_{n=1}^\infty \symgrad (v_{l-1} \varphi^n) \ast \rho_{\epsilon^\delta_n} -(v_l  \varphi^n) \ast \rho_{\epsilon^\delta_n} = \sum_{n=1}^\infty  \interleave (v_{l-1} \otimes \nabla \varphi^n) \ast \rho_{\epsilon^\delta_n} . \]
In addition, 
\[ \sum_{n=1}^\infty  \interleave (v_{l-1} \otimes \nabla \varphi^n)  = \interleave\Bigl(  v_{l-1} \otimes \nabla \Bigl(\sum_{n=1}^\infty \varphi^n \Bigr) \Bigr) = 0 . \]
Since each $\interleave (v_{l-1} \otimes \nabla \varphi^n) \in \Hspace{k-l,1}{\Omega, \Sym^l(\RR^d)}$, by adaptation of standard mollification results \cite[Theorem 5.2.2]{evans1992measure_mh} we can further reduce any $\epsilon^\delta_n$ to be small enough such that for each $m = 1,\ldots,k$,
\[
\fl
  \Bignorm[1]{\symgrad^{m-l}  \left( \interleave(v_{l-1} \otimes \nabla \varphi^n) \ast \rho_{\epsilon^\delta_n} - \interleave (v_{l-1} \otimes \nabla \varphi^n)  \right) } \leq 2^{-n} \delta
\quad \text{for} \quad l = 1,\ldots,m-1.
\]
and consequently,
\[ \fl
  \bignorm[1]{\symgrad^{m-l} \Bigl(\symgrad v_{l-1}^\delta -  v_l^\delta  \Bigr) } = \Bignorm[1]{\symgrad^{m-l}  \Bigl( \sum_{n=1}^\infty \interleave(v_{l-1} \otimes \nabla \varphi^n) \ast \rho_{\epsilon^\delta_n} - \interleave (v_{l-1} \otimes \nabla \varphi^n) \Bigr) }\leq \delta . \]
Now, setting $u^\delta = v_0^\delta$,  we estimate for $m = 1,\ldots,k$, using
the second estimate in \eqref{eq:bvk_strict_conv_up_to_km1} and that $\symgrad v_{m-1} = \grad^m u$ as well as $\symgrad^m u^\delta = \nabla ^m u^\delta$,
\begin{eqnarray*}
  \|\nabla ^{m} u^\delta \|_\M
  &= \Bignorm[\radon]{\Bigl(\sum_{l=1}^{m-1} \symgrad^{m-l}(\symgrad v_{l-1}^\delta - v_l^\delta) \Bigr) + \symgrad v_{m-1}^\delta} \\ 
  & \leq \Bigl( \sum_{l=1}^{m-1} \| \symgrad^{m-l} ( \symgrad v_{l-1}^\delta - v_l^\delta) \|_ 1 \Bigr) + \| \symgrad v_{m-1}^\delta \|_\M  \\
 & \leq m \delta  +  \| \nabla^{m} u \|_\M.
\end{eqnarray*}
This shows in particular that $u^\delta \in \BV^k(\Omega)$ and by construction, $u^\delta \in \Cspace{\infty}{\Omega}$.
 Taking the limit $\delta \rightarrow 0$ and using the lower semi-continuity of $\TV^{m}$, we finally obtain
 \[  \|\nabla ^{m} u^\delta \|_\M  \rightarrow  \|\nabla ^{m} u \|_\M \]
 for $m= 1,\ldots,k$ which, together with the first estimate in \eqref{eq:bvk_strict_conv_up_to_km1}, implies the assertion.
\end{proof}

\end{document}